\newtheorem{theorem}{Theorem}[section]
\newtheorem{lemma}[theorem]{Lemma}
\newtheorem{corollary}[theorem]{Corollary}
\newtheorem{prop}[theorem]{Proposition}
\newtheorem{claim}[theorem]{Claim}
\theoremstyle{definition}
\theoremstyle{remark}
\newtheorem{example}[theorem]{Example}
\newtheorem{remark}[theorem]{Remark}
\newtheorem{exercise}[theorem]{Exercise}
\newtheorem{con}[theorem]{Conjecture}
\newcommand{\C}{\mathbb{C}}
\newcommand{\R}{\mathbb{R}}
\newcommand{\G}{{\rm{G}}}
\newcommand{\Z}{\mathbb{Z}}
\newcommand{\Q}{\mathbb{Q}}
\newcommand{\A}{{\rm{A}}}
\newcommand{\HHH}{{\rm{H}}}
\newcommand{\V}{{\rm{V}}}
\newcommand{\U}{{\rm{U}}}
\newcommand{\LL}{{{\rm{L}}}}
\newcommand{\scr}{\scriptscriptstyle}
\numberwithin{equation}{section}
\begin{document}
\pagestyle{empty}

\hspace{1cm}\\
\hspace{1cm}\\
\begin{center}

\huge \textbf{Density of rational points on\\ commutative group varieties and\\ small transcendence degree\\
(long version)}
\end{center}
\hspace{1cm}\\
\hspace{1cm}\\
\begin{center}
\large Aleksander Lech Momot\\
ETH Zurich
\end{center}
\hspace{1cm}\\
\hspace{1cm}\\
\section*{Abstract} The purpose of this paper is to combine classical methods from transcendental number theory with the technique of restriction to real scalars.
We develop a conceptual approach relating transcendence properties of algebraic groups to results about the existence of homomorphisms to group varieties over real fields. Our approach gives a new perspective on Mazur's conjecture
on the topology of rational points. We shall reformulate and generalize Mazur's problem in the light of transcendence theory and shall derive conclusions in the direction of the conjecture. Next to these new theoretical insights, the aim of our application motivated \textit{Ansatz} was to improve classical results of transcendence, of algebraic independence in small transcendence degree and of linear independence of algebraic logarithms. Thirty
new corollaries, most of which are generalizations of popular theorems, are stated in the seventh chapter. For example we shall prove: \\
\\
\textit{Let $a_1,a_2, a_3$ be three linearly independent complex numbers, let $\wp(z)$ be a Weierstra\ss\,\,function with algebraic invariants and let $b$ be a non-zero complex numbers. If
the four numbers satisfy certain hypotheses, then one among the six numbers $\wp(a_j)$, $e^{\scr ba_j}$ is transcendental.}\\
\\
\textit{Let $\wp(z)$ be a Weierstra\ss\,\,function with algebraic invariants and complex multiplication by $\sqrt{-d}$ for a square-free integer $d > 1$.
If $\wp(\omega)$ is defined and algebraic, then either $\omega/|\omega|$ is algebraic or $\omega/|\omega|$ and $\wp(i\omega)$ are algebraically independent.}\\
\\
\textit{Let $\wp(z)$ be a Weierstra\ss\,\,function with algebraic invariants and lattice $\Lambda$ and let $\omega$ be a complex number such that $\wp(\omega)$ is defined and algebraic. Then
$r\omega$ is transcendental for each $r > 0$ or $r\omega \in \Lambda$ for some $r > 0$.}\\
\\
This elaborated long version of our work is essentially self-contained and should be admissible for master students specializing in transcendental number theory and arithmetic geometry.

\tableofcontents

\pagestyle{plain}
\small{
\chapter{Introduction} 

\section{Analytic subgroups of commutative algebraic groups and the methods of Schneider, Gel'fond and Baker}
\large{Many results from transcendental number theory can be reformulated as theorems about complex-analytic subgroups of a commutative algebraic
group $\G$ defined over a subfield $F$ of $\C$. In the simplest case of a one-parameter subgroup, which will be the only case treated here, one considers a complex-analytic homomorphism $\Phi: \C \longrightarrow \G(\C)$ which parametrizes the analytic subgroup $\Phi(\C)$ in question. It is typically required that $\Phi$ satisfies arithmetic conditions 
concerning its differential $\Phi_{\ast}$ and the magnitude of the group of rational points $\xi \in \G(F)$ in $\Phi(\C)$. The conclusion is then the existence of a proper algebraic subgroup $\G$ related to $\Phi(\C)$. We illustrate this along three
unifying approaches which will also serve as a major play ground for this paper.
\subsection{Schneider's method} In 1934 Gel'fond \cite{Gel} and Schneider \cite{Schn} independently solved Hilbert's seventh problem. That is, they showed that for algebraic $\alpha \neq 0,1$ and irrational algebraic $\beta$ the number $\alpha^{\beta}$ is transcendental. The proof given in \cite{Schn} can be sketched as follows.\\
Using Siegel's lemma, Schneider constructs a sequence of not identically vanishing exponential polynomials $f_n(z) = {\rm{P}}_n(z, \alpha^{z})$ which take small algebraic values at points $d_1 + \beta d_2 \in \Z + \beta \Z$ such that $|d_1|$ and $|d_2|$ do not exceed a suitably chosen number $r_n > 0$. From Liouville's inequality it follows then in the second step that $f_n$ must vanish
at those points. In the third step Schwarz lemma is applied. It yields that $f_n$ takes small values within a radius $s_n$ which is essentially bigger than $r_n$ and it is deduced that $f_n$ even vanishes along all points $d_1 + \beta d_2 \in \Z + \beta \Z$ with $|d_1|, |d_2| \leq s_n/(1+|\beta|)$. If $n$ is large,
the argument can be repeated for any given radius, and one infers that $f_n$ vanishes at all points $\Z + \beta \Z$. Finally, Nevanlinna theory implies that $f_n$ must vanish identically, and the required contradiction is established.\\
Fifteen years later, in 1949, Schneider \cite{Schn1} conceptualized the approach sketched above and formulated an important
theorem about algebraic dependence of meromorphic functions which, influenced by works of Gel'fond, also involves higher derivatives taking algebraic values. Lang \cite{Lang} extracted from the technically complicated proof the case when no higher derivatives are involved and established a simple and transparent approach to ``Schneider's method''. An important feature
of this method is that it allows strong applications without involving higher derivatives or algebraic zero estimates.\\
In the framework of one-parameter subgroups of algebraic groups Schneider's method yields the following unifying theorem. We denote by
$\G$ a commutative algebraic group variety of dimension $\dim\,\G \geq 2$ over $\overline{\Q}$. We let ${\rm{g}}_a$ (resp.\,${\rm{g}}_m$) be the dimension of the maximal unipotent (resp.\,multiplicative) factor of $\G$, so that
$\G$ is isomorphic to $\G_c \times \mathbb{G}_{a,F}^{{\rm{g}}_a} \times \mathbb{G}_{m, F}^{{\rm{g}}_m}$ with an algebraic group $\G_c$ over $\overline{\Q}$. We consider a one-parameter homomorphism $\Phi: \C \longrightarrow \G(\C)$. The rank of its kernel is denoted by $\rm{k}$ and the rank of the group of algebraic logarithms $\Phi^{\scr-1}\big(\G(\overline{\Q})\big)$ by
$\rm{r}$.
\begin{theorem} \label{1.1.1}\label{Schn0} Assume that
$${\rm{r}}\cdot(\dim\,\G - 1) - 1 \geq {2\dim\,\G - (2{\rm{g}}_a + {\rm{g}}_m) -{\rm{k}}}.$$
Then the image of $\Phi$ is not Zariski-dense in $\G(\C)$.

\end{theorem}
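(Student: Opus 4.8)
\emph{The plan} is to run Schneider's method, in Lang's derivative-free form, directly on $\G$, arguing by contradiction: suppose $\Phi(\C)$ is Zariski-dense in $\G(\C)$ and deduce that the displayed inequality must fail. Write $\G\cong\G_c\times\mathbb{G}_a^{{\rm{g}}_a}\times\mathbb{G}_m^{{\rm{g}}_m}$ and split $\Phi=(\Phi_c,\Phi_a,\Phi_m)$ accordingly, where $\Phi_a(z)=(\beta_1 z,\dots,\beta_{{\rm{g}}_a}z)$, $\Phi_m(z)=(e^{\gamma_1 z},\dots,e^{\gamma_{{\rm{g}}_m}z})$, and $\Phi_c$ lands in the factor $\G_c$ with no unipotent or multiplicative part, whose exponential is given by functions of order at most $2$ (theta functions of an abelian variety, or their analogues in the general case). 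Fix a $\Z$-basis $u_1,\dots,u_{\rm{r}}$ of a finite-index subgroup of $\Phi^{-1}(\G(\overline{\Q}))$ for which $u_1,\dots,u_{\rm{k}}$ generate a finite-index subgroup of $\ker\Phi$. Embed $\G_c$ projectively by an ample symmetric line bundle $\mathcal L$, and keep $e^{\gamma_1 z},\dots,e^{\gamma_{{\rm{g}}_m}z}$ (with their inverses) and $\beta_1 z,\dots,\beta_{{\rm{g}}_a}z$ as the multiplicative and additive coordinates; then a polynomial of multidegree $(D_0,D_1,D_2)$ in these coordinates has $\asymp D_0^{\dim\G_c}D_1^{{\rm{g}}_m}D_2^{{\rm{g}}_a}$ monomials and, pulled back along $\Phi$ and cleared of denominators by $\theta_0^{D_0}$ for a fixed theta function $\theta_0$, grows like $\exp\bigl(c(D_0R^2+D_1R+D_2\log R)\bigr)$ on $|z|=R$, the three blocks carrying ``orders'' $2$, $1$, $0$. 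Note the identity $2\dim\G-(2{\rm{g}}_a+{\rm{g}}_m)=2\dim\G_c+{\rm{g}}_m$: the right side is the growth-weighted dimension of the non-additive coordinates, which is why $2{\rm{g}}_a$ (not ${\rm{g}}_a$) is subtracted.

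\textbf{Step 1: the auxiliary function.} For a moderate integer $S$ and multidegrees to be calibrated, I would use Siegel's lemma to build a nonzero $P$ of multidegree $(D_0,D_1,D_2)$ such that $F(z):=\theta_0(z)^{D_0}\,P(\Phi(z))$ is entire and vanishes at all $\sum_{i=1}^{\rm{r}}n_iu_i$ with $0\le n_i<S$. The linear system is solvable with coefficients of controlled height because: its $\asymp S^{{\rm{r}}-{\rm{k}}}$ rows — quasi-periodicity of $F$ under $\ker\Phi$ makes the vanishing conditions depend only on the residues $(n_{{\rm{k}}+1},\dots,n_{\rm{r}})$ — are beaten by the $\asymp D_0^{\dim\G_c}D_1^{{\rm{g}}_m}D_2^{{\rm{g}}_a}$ unknowns; and, since $\Phi$ is a homomorphism, each coefficient is an algebraic number of bounded degree and of logarithmic height $\lesssim D_0 S^2+D_1 S+D_2\log S$ (heights on the abelian, multiplicative and additive parts grow quadratically, linearly and logarithmically in $S$). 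Demanding monomials-beat-conditions already couples $(D_0,D_1,D_2,S)$ to $({\rm{r}}-{\rm{k}},\dim\G_c,{\rm{g}}_m,{\rm{g}}_a)$.

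\textbf{Steps 2--3: extrapolation and conclusion.} Since $F$ already has $\asymp S^{\rm{r}}$ zeros inside $|z|\le cS$, Schwarz's lemma makes $F$ extremely small on a much larger disc $|z|\le KS$; evaluating at the grid points lying there and applying Liouville's inequality — those values are algebraic of bounded degree and of logarithmic height $\lesssim D_0(KS)^2+D_1(KS)+D_2\log(KS)$ — forces $F$ to vanish at them as well. Iterating $S\mapsto 2S\mapsto\cdots$ with the \emph{same} $P$ shows $F$ vanishes on arbitrarily large grids; but $F$ has order $\le 2$ (order $\le 1$ if $\dim\G_c=0$), so by Jensen's formula its zero-count in $|z|\le R$ is $\lesssim D_0R^2+D_1R+D_2\log R+\log\lVert P\rVert$, and once a grid outgrows this bound we must have $F\equiv 0$. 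Then $\theta_0^{D_0}\not\equiv 0$ gives $P\circ\Phi\equiv 0$ on $\C$, whence, by Zariski-density of $\Phi(\C)$, the polynomial $P$ vanishes on all of $\G$; since its monomials are linearly independent in the coordinate ring this means $P=0$, contradicting $P\ne 0$. Checking which calibrations make Steps 1--3 simultaneously work reproduces exactly ${\rm{r}}(\dim\G-1)-1\ge 2\dim\G-(2{\rm{g}}_a+{\rm{g}}_m)-{\rm{k}}$.

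\textbf{The main obstacle} is this simultaneous calibration, together with the preliminary reductions it presupposes. The delicate point is that the abelian degree $D_0$ must stay small enough that the factor $\exp(cD_0R^2)$ does not swamp the Schwarz gain over the enlarged discs — so the ``free'' additive degree $D_2$ must carry the bulk of the weight — which is precisely what legitimises subtracting $2{\rm{g}}_a$. Moreover the direct construction only bites when ${\rm{r}}$ is large enough relative to the order of $F$ (an order-$2$ function can vanish on a full rank-$2$ grid without being zero, so one needs essentially ${\rm{r}}\ge 3$ when $\dim\G_c\ge 1$); the configurations in which the hypothesis holds with smaller ${\rm{r}}$ — paid for by ${\rm{k}}$ or by the linear factors — must first be reduced, by replacing $\G$ with an appropriate quotient and using that density descends to quotients, to a case where the construction does apply, and one should separately dispose of the degenerate strata (${\rm{g}}_a\ge 2$, $\Phi_c$ non-dense, $\Q$-linear relations among the $\gamma_j$, the various values of ${\rm{k}}$), in all of which $\Phi(\C)$ fails to be Zariski-dense outright. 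The remaining work — a projectively normal model of the (semi-)abelian $\G_c$, explicit theta and quasi-period estimates, and the care of counting zeros per $\ker\Phi$-orbit while counting conditions once — is routine Schneider--Lang bookkeeping.
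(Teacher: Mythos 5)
The first thing to say is that the paper contains no proof of Theorem \ref{1.1.1} to compare yours with: it is quoted as derivable from Schneider \cite{Schn1} and as a formal consequence of Waldschmidt \cite[Theorem 4.1]{Wal17}. Your sketch is essentially the standard derivative-free Schneider--Lang argument underlying those references, and your calibration heuristic does reproduce the numerology: writing $D_0=S^{\delta_0}$, $D_1=S^{\delta_1}$, $D_2=S^{\delta_2}$, maximizing ${\rm{g}}_c\delta_0+{\rm{g}}_m\delta_1+{\rm{g}}_a\delta_2$ against the $S^{{\rm{r}}-{\rm{k}}}$ conditions subject to $\delta_0\le {\rm{r}}-2$, $\delta_1\le {\rm{r}}-1$, $\delta_2\le {\rm{r}}$ gives exactly ${\rm{r}}(\dim\G-1)\ge 2\dim\G-(2{\rm{g}}_a+{\rm{g}}_m)-{\rm{k}}+1$. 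So the route is the right one in outline.

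Two points, however, are genuine gaps rather than routine bookkeeping. (i) Your concluding step, ``since its monomials are linearly independent in the coordinate ring this means $P=0$'', is false as stated: in a projective model of $\G_c$ the coordinates satisfy theta-type relations, so $P\circ\Phi\equiv 0$ together with Zariski-density only shows that $P$ lies in the ideal of $\G$, which is no contradiction. You must construct $P$ from a family of monomials that is linearly independent as functions on $\G$ (a Hilbert-function count still yields on the order of $D_0^{{\rm{g}}_c}D_1^{{\rm{g}}_m}D_2^{{\rm{g}}_a}$ free coefficients), or else finish with an algebraic zero estimate. (ii) The borderline configurations you mention in passing are not a disposable side case; they are precisely where Steps 1--3 as written fail, and your remedy is only asserted. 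When the hypothesis holds with equality and $\dim\G_c\ge 1$, ${\rm{r}}=2$, ${\rm{k}}=0$ (for instance $\G={\rm{E}}\times\mathbb{G}_{m}^{3}$), the constraint $\delta_0\le {\rm{r}}-2$ pins $D_0$ to a bounded value, and then the Schwarz gain of roughly $S^{2}\log E$ from a rank-two grid can never beat the order-two growth of roughly $D_0E^{2}S^{2}$ on the enlarged disc, nor does the final Jensen count $\,(S')^{2}$ against $D_0(S')^{2}$ ever close; the theorem in these cases has to be obtained through a quotient (here the projection to $\mathbb{G}_m^{3}$ and the six exponentials theorem). So one must actually verify that whenever the hypothesis holds with ${\rm{r}}=2$ and $\dim\G_c\ge 1$ one has $2{\rm{g}}_a+{\rm{g}}_m+{\rm{k}}\ge 3$, that a linear quotient then still satisfies the hypothesis, and that the strata ${\rm{g}}_a\ge 2$, ${\rm{k}}\ge 1$ with a linear factor, ${\rm{k}}=2$, and ${\rm{r}}\le 1$ are all degenerate (non-dense outright). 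This finite case analysis does work out, but it is a necessary part of the proof; as written, your argument establishes the theorem only when the calibration has positive slack, in particular only when ${\rm{r}}\ge 3$ in the presence of an abelian block.
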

The theorem can be derived from the main result of Schneider \cite{Schn1}. It is also a formal consequence of Waldschmidt \cite[Theorem\,4.1]{Wal17}.
\subsection{Gel'fond's method} 
In 1949 Gel'fond \cite{Gel} proved general results asserting that among certain sets of numbers related by the exponential function at least two are algebraically independent.
The maybe most famous one teaches that $\alpha^{\beta}$ and $\alpha^{\beta^{\scr 2}}$ are algebraically independent for algebraic $\alpha \neq 0,1$ and cubic $\beta$. The essential ingredient
in Gel'fond's proof was a fundamental lemma relating algebraic numbers to small values of sequences of integer polynomials which satisfy certain asymptotic growth conditions. The scheme
of proof is analogous to Schneider's method. Roughly speaking, Liouville's estimate and Schwarz lemma are replaced by more advanced tools. In the third step of the proof Liouville's inequality is replaced by the fundamental lemma and for the final step a result is needed
which bounds the total number of zeros of an exponential polynomial. An important aspect of ``Gel'fond's method for algebraic independence'' is 
that- in contrast to Schneider's \textit{Ansatz}- one cannot avoid to involve higher derivatives without shrinking the range of applications essentially.\\  
Essential innovations of Gel'fond's original approach were established in the 1970's in two ways. Firstly, Lang \cite{LL}, Tijdeman \cite{Tij1} and Brownawell \cite{Brown} published
technically better versions of the fundamental lemma. Secondly, although Gel'fond's original zero estimate was strong enough to yield striking results, it required many technical hypotheses. Tijdeman \cite{Tij} succeeded in proving a zero estimate for exponential polynomials in which any dispensable technical condition is removed from the hypotheses. This lead to the famous theorem of Brownawell \cite{Brown} and Waldschmidt \cite{Wal9} in 1973.
In the end of the 1980's, after the algebraic zero estimates of Masser-W\"ustholz \cite{MW2}, \cite{MW} and Philippon \cite{P7} were available, it was then possible to write
down theorems unifying Gel'fond's method in the framework of one-parameter homomorphisms. This was accomplished in Tubbs \cite{R1} and Tubbs \cite{R2}.\\
The main results of Tubbs \cite{R1} can be restated in the following way. Let $F$ be a subfield of $\C$ with transcendence degree $\leq 1$ over $\Q$ and let $\G$ be a commutative group variety of dimension $\dim\,\G \geq 2$ over $F$ with Lie algebra $\frak{g} = {\rm{Lie}}\,\G$. As above we consider a complex-analytic homomorphism $\Phi: \C \longrightarrow \rm{G}(\C)$. We let $\frak{T}$ be the smallest linear subspace of $\frak{g}$ over $F$ such that $\Phi_{\ast}(\C)$ is contained in $\frak{T} \otimes_F \C$. We denote by $\rm{r}$ the rank of $\Phi^{\scr -1}\big(\G({F})\big)$ and by $\rm{k}$ the rank of the kernel of $\Phi$. As above, ${\rm{g}}_a$ (resp.\,${\rm{g}}_m$) refers to the maximal unipotent (resp.\,multiplicative) factor. Finally
a number $\delta$ is defined to be $1$ if $\dim\,\frak{T} = {\rm{g}}_a = 1$ and $0$ otherwise.
\begin{theorem} \label{5.1.1} Assume that
$$(1+{\rm{k}}){\rm{r}}\cdot(\dim\,{\rm{G}} -\dim\,\frak{T}) - {\rm{r}} \geq 2\dim\,{\rm{G}} - (2{\rm{g}}_a + {\rm{g}}_m) + \delta.$$ 
Then the image of $\Phi$ is not Zariski-dense in $\rm{G}(\C)$.
\end{theorem}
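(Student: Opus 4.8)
The plan is to argue by contradiction: assume $\Phi(\C)$ is Zariski-dense in $\G$ and run Gel'fond's method in the geometric language of one-parameter subgroups, arranging matters so that the displayed inequality is precisely the bookkeeping condition under which the method closes. First I would fix the data. Choose $u_1,\dots,u_{\mathrm r}\in\C$ spanning a finite-index subgroup of $\Phi^{-1}(\G(F))$ and $\omega_1,\dots,\omega_{\mathrm k}$ spanning $\ker\Phi$; fix a projective embedding of $\G$ attached to a very ample symmetric line bundle, together with invariant derivations $\partial_1,\dots,\partial_{\dim\mathfrak T}$ spanning $\mathfrak T$; and, using $\operatorname{trdeg}_\Q F\le 1$, present $F$ as a finite extension of $\Q(\theta)$ with $\theta$ either transcendental or absent (the number field case). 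Then introduce a large parameter $N$ and auxiliary parameters $D$ (degree), $S$ (the range $0\le s_i<S$ of the multi-index attached to $u_1,\dots,u_{\mathrm r}$), $T$ (order of derivation) and analytic radii $R_0<R_1<\cdots$, all taken to be powers of $N$ with exponents to be pinned down.

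Next comes the auxiliary construction. By Siegel's lemma over the ring of $\theta$-integers of $F$ I would build a nonzero polynomial $P$ of degree $\le D$ and controlled height such that the entire function $\Psi(z)=P(\Phi(z))$ satisfies $\bigl(\tfrac{d}{dz}\bigr)^{t}\Psi(s_1u_1+\cdots+s_{\mathrm r}u_{\mathrm r})=0$ for all $0\le s_i<S$ and all $t<T$. Comparing the number of unknowns, of order $D^{\dim\G}$ once Zariski-density guarantees the image is confined to no hypersurface, against the number of linear conditions, which carries the factors governed by $\mathrm r$, by the codimension $\dim\G-\dim\mathfrak T$ of $\mathfrak T$ (the surviving transverse directions), by the derivative parameter $T$, and by a gain of the factor $1+\mathrm k$ reflecting the periodicity contributed by $\ker\Phi$, is exactly where the left side $(1+\mathrm k)\mathrm r(\dim\G-\dim\mathfrak T)-\mathrm r$ of the hypothesis first enters.

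The third phase is extrapolation followed by the arithmetic step. Since $\Psi$ has finite order of growth dictated by the structure of $\G$ — polynomial contributions from the $\mathbb{G}_a^{\mathrm g_a}$-factor, exponential from $\mathbb{G}_m^{\mathrm g_m}$, order at most $2$ from the abelian part — a Schwarz-lemma estimate on the disk of radius $R_{j+1}$ converts vanishing of many derivatives at a few points inside $R_j$ into smallness of $\Psi$ together with its first derivatives on $R_{j+1}$, hence at a larger set of points $\sum s_iu_i$. At each such point the value lies in $F$, because the coordinates of $\Phi(u_i)$ lie in $F$ and invariant derivations preserve $F$-rationality, so the arithmetic of $F$ can be exploited; and this is precisely where $\operatorname{trdeg}_\Q F\le 1$ is indispensable. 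If $F$ is a number field, Liouville's inequality forces the small value to vanish; if $\operatorname{trdeg}_\Q F=1$ the value equals $Q(\theta)$ over a controlled denominator with $Q\in\Z[X]$ of bounded degree and height, and since $\theta$ is transcendental the fundamental lemma of Gel'fond, in the sharpened form of Lang, Tijdeman and Brownawell, forces $Q(\theta)=0$. Either way the extrapolation propagates genuine vanishing, and iterating over $j$ pushes the order of vanishing and the number of zeros of $P$ along $\Phi(\C)$ far beyond their initial values.

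Finally the zero estimate closes the loop: after enough iterations $P$ still has degree $\le D$ yet vanishes to large order along $\Phi(\C)$ at a large number of points, and the multiplicity estimate of Masser--W\"ustholz and Philippon bounds $D^{\dim\G}$ from below by a quantity whose exponents in $T$ and $S$ are governed by $\dim\mathfrak T$, $\mathrm g_a$ and $\mathrm g_m$, with the correction term $\delta$ absorbing the exceptional behaviour of that estimate when $\dim\mathfrak T=\mathrm g_a=1$, i.e.\ when the analytic subgroup is a line in $\mathbb{G}_a$. Combined with the upper bound coming out of the Siegel-lemma step this yields a numerical contradiction precisely when $(1+\mathrm k)\mathrm r(\dim\G-\dim\mathfrak T)-\mathrm r\ge 2\dim\G-(2\mathrm g_a+\mathrm g_m)+\delta$, so under the hypothesis $\Phi(\C)$ cannot be Zariski-dense. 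The main obstacle is the calibration of $D,S,T$ and the radii so that Siegel's lemma, the Schwarz-lemma extrapolation, Gel'fond's fundamental lemma and the zero estimate all fire simultaneously — in particular tracking the $1+\mathrm k$ gain from the kernel and the $\delta$-shift from the degenerate $\mathbb{G}_a$-case — while the single deepest ingredient is the Masser--W\"ustholz/Philippon estimate for a general commutative group with mixed unipotent, multiplicative and abelian parts, which must be invoked in exactly the form that produces the clean inequality above.
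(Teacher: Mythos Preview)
The paper does not give an independent proof of this theorem. It observes that for ${\rm k}\le 1$ the statement follows directly from the first two results of Tubbs \cite{R1}, and that for ${\rm k}\ge 2$ the kernel of $\Phi$ has rank at least two, so $\Phi(\C)$ is compact, hence an elliptic curve inside $\G(\C)$; since $\dim\G\ge 2$ this is automatically a proper algebraic subgroup and there is nothing to prove.

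Your proposal instead reconstructs the proof from scratch via Gel'fond's method: auxiliary function by Siegel's lemma, Schwarz-lemma extrapolation, Gel'fond's fundamental lemma (using ${\rm trdeg}_{\Q}F\le 1$), and the Masser--W\"ustholz/Philippon zero estimate. This is precisely the content of the Tubbs papers the author cites, so you are recapitulating the cited argument rather than taking a genuinely different route. Your outline names the right ingredients and correctly locates the roles of ${\rm r}$, ${\rm k}$, $\dim\frak T$, ${\rm g}_a$, ${\rm g}_m$ and $\delta$.

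Two remarks. First, as a self-contained proof your sketch is incomplete: you flag the calibration of $D,S,T$ and the radii as ``the main obstacle'' but do not carry it out, and that calibration is exactly where the displayed inequality is checked --- without it the argument is a programme, not a proof. Second, you do not isolate the case ${\rm k}\ge 2$, which short-circuits the entire transcendence machinery: a holomorphic one-parameter subgroup with kernel of rank $\ge 2$ has image a complex torus of dimension one, and no zero estimate is needed. Handling that case separately, as the paper does, would let you restrict your Gel'fond argument to ${\rm k}\in\{0,1\}$, where the bookkeeping is cleaner.
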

If $\rm{k} \leq 1$, then Theorem \ref{5.1.1} follows directly from the first two results of Tubbs \cite{R1}. If $\rm{k} \geq 2$, then $\Phi(\C)$ is an elliptic curve and, since $\dim\,\G \geq 2$, automatically a proper algebraic subgroup of $\G(\C)$.   
\subsection{Bakers's method} 
Hilbert's seven problem and its solution by Gel'fond \cite{Gel} and Schneider \cite{Schn} from 1934
were also point of departure of Baker's theory. The assertion that $\alpha^{\beta}$ is transcendental for algebraic $\alpha \neq 0, 1$ and irrational algebraic $\beta$ is equivalent to the statement that two non-zero algebraic logarithms $\omega_0, \omega_1$ of the exponential function $e^z$ are linearly independent over $\Z$ if and only if
they are linearly independent over $\overline{\Q}$. This follows by setting $\alpha = e^{\omega_1}$ and $\beta = \omega_1/\omega_0$. Baker \cite{Baker1}-\cite{Baker3} generalized the theorem in 1966/67 to an arbitrary set of linearly independent algebraic logarithms. His results have such deep consequences that a lot of papers were
written on this subject and many academic careers began with research in this field. Here we only quote Masser's paper \cite{Mass}, because it will
play a role in our applications, and refer for a more detailed development to Baker \cite{Baker} and W\"ustholz \cite{Wussi}. As in Gel'fond's \textit{Ansatz}, Baker's method is by constructing auxiliary functions and estimating derivatives, but in a conceptually more ingenious way. One considers
a linear form $\Lambda = \sum_{j = 1}^k \beta_j\omega_j$ with algebraic numbers $\beta_j$ and $\Z$-linearly independent algebraic logarithms $\omega_j$.
Assuming that $\Lambda = 0$, Baker constructs integer polynomials ${\rm{P}}_n$ and auxiliary functions
$$f_n(z_1,...., z_k) = \sum_{d_1=0}^{L_n} ... \sum_{d_k=0}^{L_n}{\rm{P}}_n(d_1,...,d_k) e^{\scriptscriptstyle (d_1+d_k\beta_1)\omega_1 z_1}\cdot...\cdot e^{\scriptscriptstyle (\lambda_{k-1}+d_k\beta_{k-1})\omega_{k-1}z_{k-1}}.$$
with many zeros of high order along points $(l,...,l) \in \Z^k$ within a suitably chosen radius $r_n$. The final contradiction is then achieved by relating these zeros to a non-vanishing Vandermonde matrix. One central feature of ``Baker's method for linear independence'' is that it in fact provides quantitative results. Roughly speaking, the assertion that $\Lambda$ does not vanish is strengthened to the effect that a positive lower bound for $\left|\Lambda\right|$ is given.  
As to the qualitative theory, the finishing touches were put by W\"ustholz in the end of the 1980's. Using his algebraic zero estimates from \cite{Wussi1}, he generalized Baker's qualitative results to arbitrary commutative group varieties and obtained in \cite{Wu44} the Analytic Subgroup Theorem.\\
To state the theorem, let $\G$ be a commutative group variety over $\overline{\Q}$ and denote by $\Phi: \C \longrightarrow \G(\C)$ a complex analytic homomorphism. Similarly as above $\frak{T}$ is the smallest linear subspace of $\frak{g}$ over $\overline{\Q}$ such that
$\Phi_{\ast}(\C)$ is contained in $\frak{T} \otimes_{\overline{\Q}} \C$. 
\begin{theorem} \label{AST} If the image of $\Phi$ contains a non-trivial algebraic point in $\G(\overline{\Q})$, then $\frak{T}  \subset \frak{g}$ equals the Lie algebra of an algebraic subgroup ${{\rm{H}}} \subset \G$. 
\end{theorem}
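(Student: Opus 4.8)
The plan is to run the transcendence machinery behind Baker's method, transplanted to commutative group varieties, with the complex variable $z$ of $\Phi$ playing the role of the variables of the auxiliary function $f_n$ above; two ingredients will carry the weight, a Schwarz-type extrapolation on the analytic side and the algebraic multiplicity (zero) estimate of Masser--W\"ustholz and Philippon on the arithmetic side. I would first put the statement into a convenient form. Since $\Phi$ is a homomorphism out of the simply connected group $\C$, it factors as $\Phi = \exp_{\G} \circ\, \Phi_{\ast}$, where $\exp_{\G}: \frak{g} \otimes_{\overline{\Q}} \C \to \G(\C)$ is the exponential map of $\G(\C)$ and $\Phi_{\ast}: z \mapsto z v$ with $v = \Phi_{\ast}(1)$. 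Let $\G'$ be the $\overline{\Q}$-Zariski closure of $\Phi(\C)$ in $\G$: a connected algebraic subgroup defined over $\overline{\Q}$ in which $\Phi(\C)$ is $\overline{\Q}$-Zariski-dense. Because $v \in {\rm{Lie}}\,\G' \otimes_{\overline{\Q}} \C$ and ${\rm{Lie}}\,\G'$ is defined over $\overline{\Q}$, minimality of $\frak{T}$ gives $\frak{T} \subseteq {\rm{Lie}}\,\G'$; if equality holds we are done with ${\rm{H}} = \G'$, so I assume $\dim\,\G' > \dim_{\overline{\Q}}\frak{T}$, replace $\G$ by $\G'$, and work towards a contradiction.

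Next I would construct the auxiliary function. Fix a number field $k$ over which $\G$ and a nonzero algebraic point $p = \Phi(u) \in \G(\overline{\Q})$ in the image are defined, and a projective embedding of an equivariant compactification $\overline{\G}$ of $\G$ attached to an ample line bundle $\mathcal{L}$; the points $\Phi(ju) = jp$ then lie in $\G(k)$ with heights growing at most like $j^{2}$. By Siegel's lemma, for parameters $D$, $S$, $T$ chosen so that $\dim_{k} H^{0}(\overline{\G},\mathcal{L}^{\otimes D}) \asymp D^{\dim\,\G}$ is comfortably larger than $ST$, I obtain a nonzero section $P \in H^{0}(\overline{\G}, \mathcal{L}^{\otimes D})$ over $k$, of controlled height, such that the entire function $F(z)$ obtained from $P \circ \Phi$ after dividing out the exponential and theta (sigma) factors attached to a presentation $0 \to ({\rm{linear\ part}}) \to \G \to ({\rm{abelian\ part}}) \to 0$ --- so that $F$ has order of growth at most $2$ --- vanishes to order $\geq T$ at $z = 0, u, \dots, (S-1)u$. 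This is $ST$ scalar conditions, since every $z$-derivative of $F$ is a derivative in the single invariant direction $v$; and the $\overline{\Q}$-Zariski-density of $\Phi(\C)$ in $\G$ forces $F \not\equiv 0$.

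Then I would extrapolate. Because $F$ vanishes to order $T$ at $S$ points of a disc of radius $\asymp S|u|$, the maximum modulus principle on a larger disc --- together with the order-$\leq 2$ growth bound and the height bound on $P$ --- makes $F$ and its low-order derivatives extremely small at $\Phi(ju)$ for $j$ in a longer range, and Liouville's inequality, the values being algebraic of bounded degree and controlled height, then forces them to vanish. Iterating this loop, $F$ acquires far more zeros, counted with multiplicity, along the orbit $\{\Phi(ju)\}$ than a nonzero section of $\mathcal{L}^{\otimes D}$ can have on a $\overline{\Q}$-Zariski-dense one-parameter subgroup. At that stage the algebraic multiplicity estimate of Masser--W\"ustholz \cite{MW2, MW}, Philippon \cite{P7} and W\"ustholz \cite{Wussi1} produces a connected algebraic subgroup $\G^{\ast} \subsetneq \G$ defined over $\overline{\Q}$ with $v \in {\rm{Lie}}\,\G^{\ast} \otimes_{\overline{\Q}} \C$, whence $\Phi(\C) = \exp_{\G}(\C v) \subseteq \G^{\ast}(\C)$ --- contradicting the density of $\Phi(\C)$ in $\G$. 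This contradiction shows ${\rm{Lie}}\,\G' = \frak{T}$, which is the theorem.

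The hard part is the multiplicity estimate, a deep theorem that I would use as a black box. The remaining care goes into two places: the finite order of growth of $F$, which rests on the structure theory of commutative algebraic groups and the classical analytic theory of theta functions; and the mutual calibration of $D$, $S$, $T$ and the various heights so that Siegel's lemma, the growth bound, Schwarz's lemma and Liouville's inequality chain through the iteration --- and it is there that the standing hypothesis $\dim\,\G > \dim_{\overline{\Q}}\frak{T}$ provides the slack the numerology requires (when $\dim\,\G = \dim_{\overline{\Q}}\frak{T}$ there is nothing to prove). One may avoid the preliminary reduction and instead induct on $\dim\,\G$, using the multiplicity estimate to descend at each stage to a proper $\overline{\Q}$-subgroup still containing $\Phi(\C)$, the base case $\dim\,\G = 1$ being immediate since $\frak{T}$ is then a $\overline{\Q}$-subspace of the one-dimensional $\frak{g}$, hence the Lie algebra of an algebraic subgroup.
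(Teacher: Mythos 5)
First, a contextual point: the paper does not prove Theorem \ref{AST} at all --- it is W\"ustholz's Analytic Subgroup Theorem, quoted with the citation \cite{Wu44} --- so your proposal is being measured against the argument in the literature rather than against a proof in the text. Your preliminary reduction is fine and matches the reformulation $(\ast)$ given right after the theorem: passing to the Zariski closure $\G'$ of $\Phi(\C)$, noting $\frak{T}\subseteq{\rm{Lie}}\,\G'$, and observing that the theorem is equivalent to the equality $\frak{T}={\rm{Lie}}\,\G'$ (equivalently, to $(\ast)$, by induction on $\dim\,\G$).

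The transcendence core, however, has a genuine gap, and it sits exactly at the point that separates Baker's method from Gel'fond's. You differentiate only along the single invariant direction $v=\Phi_{\ast}(1)$. Two things then go wrong. First, $v$ is in general not defined over $\overline{\Q}$ (it only lies in $\frak{T}\otimes_{\overline{\Q}}\C$), so the numbers you feed into Liouville's inequality --- the derivatives of $P$ along $v$ at the algebraic points $jp$, i.e.\ the values $F^{(t)}(ju)$ up to the trivializing factors --- are not algebraic numbers, and the Liouville step, which is the hinge of the whole extrapolation, does not apply to them. Second, and more tellingly, nothing in your construction ever refers to $\frak{T}$: Siegel's lemma compares $D^{\dim\,\G}$ with $ST$, and the growth bound, the Schwarz gain and the zero estimate see only $D$, $S$, $T$ and $\dim\,\G$. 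If the argument were valid it would therefore be valid without the hypothesis $\dim\,\frak{T}<\dim\,\G$, and it would then ``prove'', for instance, that $\Phi(z)=\big(e^{z},e^{\lambda z}\big)$ with $\lambda=\log 3/\log 2$ cannot contain the algebraic point $(2,3)$ in its image --- which is false (there $\frak{T}=\frak{g}$ and the image is Zariski-dense). The missing idea is precisely Baker's: one must differentiate along a $\overline{\Q}$-basis of $\frak{T}$, so that the derivative values at the points $jp$ are algebraic, the number of imposed conditions becomes of size $S\,T^{\dim\,\frak{T}}$, and the inequality $\dim\,\frak{T}<\dim\,\G$ is what lets Siegel's lemma and the zero-estimate bookkeeping close; the inclusion $\C u\subset\frak{T}\otimes_{\overline{\Q}}\C$ is used only to convert ``all $\frak{T}$-derivatives up to order $T$ vanish at $jp$'' into high-order vanishing of the one-variable restriction, which is what the Schwarz step needs.

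A second, smaller inaccuracy: the multiplicity estimate of Masser--W\"ustholz and Philippon does not directly hand you a proper connected subgroup $\G^{\ast}$ with $v\in{\rm{Lie}}\,\G^{\ast}\otimes_{\overline{\Q}}\C$. Its obstruction subgroup ${\rm{H}}$ only satisfies a counting inequality involving $\deg\,{\rm{H}}$, the number of cosets of ${\rm{H}}$ met by the points $jp$, and the codimension of $\big(\frak{T}\otimes\C+{\rm{Lie}}\,{\rm{H}}\otimes\C\big)/\big({\rm{Lie}}\,{\rm{H}}\otimes\C\big)$; eliminating the unwanted cases by the choice of parameters, and then descending (this is where the induction on $\dim\,\G$ actually does work), is the substantive part of W\"ustholz's proof and cannot be absorbed into the black box in the form you state it.
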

In order to reveal the link to the two previously stated results, we reformulate the Analytic Subgroup Theorem in a slightly modified way. As above we define $\rm{r}$ to be the rank of the group of algebraic logarithms $\Phi^{\scr-1}\big(\G(\overline{\Q})\big)$. 
\begin{center}
 \begin{tabular} {lp{13cm}}
$(\ast)$ & Assume that ${\rm{r}}\cdot(\dim\,{\rm{G}} -\dim\,{\frak{T} }) \geq 1$. Then the image of $\Phi$ is not Zariski-dense in $\G(\C)$.
 \end{tabular}
 \end{center}
The equivalence between $(\ast)$ and the original statement is proved by induction on $\dim\,\G$.
\subsection{End of the story?}
The approaches of Schneider, Gel'fond and Baker are not restricted to theorems about analytic subgroups of a commutative group variety. However, they cover a large and representative amount
of transcendence results in the framework of analytic subgroups of algebraic groups. A second important aspect is that often different methods either need to be combined
with each other or contribute to the same type of applications. For instance, the qualitative part of Baker's theory 
can be deduced using ideas of Gel'fond, Schneider and Lang together with zero estimates. This was observed maybe first by Bertrand-Masser \cite{BeMa} and accomplished by Waldschmidt \cite{Wal17}. A third interesting feature is that after years of research activity the ideas of Schneider, Gel'fond and Baker reached a mature state which most probably cannot be improved in an essential manner. Systematic studies as the one of Roy \cite{Diaz1} confirm
this view. By way of contrast, it is expected that the first two theorems hold under much weaker estimates resp.\,that the Analytic Subgroup Theorem admits a generalization to fields of positive transcendence degree which avoids complicated estimates in the hypotheses. The ``folklore'' believe is that one cannot achieve more, but is rather far away from best possible results.\\
\\
\\
\\
\\
\section{From real-analytic homomorphisms to density of rational points}
Here is the idea of our paper: As in the classical setting we start with an algebraic group $\G$ over a subfield $F$ of $\C$, but will assume that $F$ is stable with respect to complex conjugation. Instead of complex-analytic homomorphisms $\Phi: \C \longrightarrow \G(\C)$ with values in the complex Lie group $\G(\C)$, we will consider
real-analytic homomorphisms $\Psi: \R \longrightarrow \G(\C)$ to the real-analytic Lie group underlying $\G(\C)$. We will associate to $\G$ an algebraic group $\mathcal{N} = \mathcal{N}_{F/K}(\G)$ over $K = F \cap \R$ and to $\Psi$
we will assign a real-analytic homomorphism $\Psi_{\mathcal{N}}: \R \longrightarrow \mathcal{N}(\R)$. The role of the vector space $\frak{T}  \subset \frak{g}$ over $F$ will be assumed by the smallest linear subspace $\frak{t} \subset \frak{g}$ over $K$ such that $\Psi_{\ast}(\R)$ is contained in $\frak{t} \otimes_K \R$. Here the algebraic group $\mathcal{N}$ is the Weil restriction of $\G$ over $K$. Its real points are canonically identified with the complex points of $\G$ and its geometry reflects algebraic morphisms from algebraic varieties ${\rm{V}}' \otimes_K F = {\rm{V}}' \times_{\mbox{spec}\,K}\mbox{spec}\,F$ with models ${\rm{V}}'$ over $K$. We shall develop further the theory of Weil restrictions and, at the end, we shall obtain a\\
\\
\textbf{``Machinery``.} A real-analytic homomorphism $\Psi: \R \longrightarrow \G(\C)$ with arithmetic properties is related to the existence of a map $v: \G \longrightarrow \G' \otimes_K F$ onto a positive-dimensional group variety which is definable over $K$. \\
\\
The approach is in contrast to the classical theory where a connection between a complex-analytic homomorphism with arithmetic properties and the existence of an algebraic subgroup of the given group variety is established. The \textit{raison d'\^etre} of our approach
is twofold: It allows better applications (see Subsect.\,1.2.1) and gives new theoretical insights (see Subsect.\,1.2.2).
\subsection{From real-analytic homomorphisms to real fields of definitions. An example}
Our ``machinery'' works only under more restrictive conditions, but then it yields definitely better results. The reason for this is, roughly speaking, that in the transition from an algebraic group over $F$ to an algebraic group over $K= F \cap \R$ the dimension of the algebraic group is doubled, whereas the dimension of the linear space $\frak{t}$ over $K$ remains the same. So,
transcendence theory can be eventually applied where it could not be applied before. We shall illustrate this by a lengthy example. For a list of symbols we refer to Sect.\,1.5.\\
\textbf{The {six exponential theorem} and the four exponentials conjecture.} The {six exponential theorem} asserts that
for a pair $a_1, a_2$ and a triple $b_1, b_2, b_3$ of respectively $\Z$-linearly independent complex numbers at least one among the six exponentials $e^{a_ib_j}$ is transcendental. Let $\G = \mathbb{G}_m^{\scr 3}$ and let $\Phi(z) = \big(e^{b_1z}, e^{b_2z}, e^{b_3z}\big)$ be the homomorphism with values in $\G(\C)$. Then $b_1, b_2$ and $b_3$ are $\Z$-linearly independent if and only if
$\Phi(\C)$ is Zariski-dense in $\G(\C)$. Hence, the six exponential theorem teaches that if the rank $\rm{r}$ of the group of algebraic logarithms $\Phi^{\scr -1}\big(\G(\overline{\Q})\big)$ is $\geq 2$, then $\Phi(\C)$ is not Zariski-dense in $\G(\C)$. It is a direct consequence of Theorem\,\ref{1.1.1}. The four exponential conjecture is the same statement as the six exponential theorem, except that the triple $b_1, b_2, b_3$ is replaced by a pair $b_1, b_2$. 
It follows from Roy \cite{Diaz1} that this famous conjecture cannot be proved with Schneider's method.\\
\textbf{The real-analytic approach.} In special cases the four exponential conjecture can be attacked with our "machinery". To make our real-analytic approach work, we need to assume that the numbers $a_1, a_2$ are contained in a real line. After replacing $a_1, a_2, b_1, b_2$ by $1, a_2/a_1, b_1a_1, b_2a_1$, we can then achieve that the new $a_1$ and $a_2$ are real without changing the hypotheses. Instead of considering the complex-analytic homomorphism $\Phi(z) = \big(e^{b_1z}, e^{b_2z}\big)$ to the set of complex points of $\G = \mathbb{G}_m^{\scr 2}$, it is then more advantageous to work with the restriction
$\Psi = \Phi_{|\R}$ to $\R$. The reason is that $\Psi$ can be "enlarged" without losing its arithmetic properties.\\
\textbf{From $\Psi$ to $\Psi_{\mathcal{N}}$.} We let $\mathcal{N} = \G \times \G$ and write
$p: \mathcal{N} \longrightarrow \G$ for the projection onto the first factor. By abuse of notation we shall identify $\G$ and $\mathcal{N}$ with their extensions to scalars over $\overline{\Q}$. We denote by $h$ the complex conjugation and for real $r$ we set
$$\Psi^{\scr h}(r) = (h\circ \Psi)(r) = \big(e^{h(b_1)r}, e^{h(b_2)r} \big).$$
In this way we can associate to $\Psi$ a real-analytic homomorphism 
$\Psi_{\mathcal{N}} = \Psi \times \Psi^{\scr h}$ with values in $\mathcal{N}(\C)$ such that
$$p_{\ast}\big(\Psi_{\mathcal{N}}\big) = p \circ \Psi_{\mathcal{N}} = \Psi$$
and 
\begin{equation}
\Psi(r)\,\,\mbox{is algebraic}\,\,\Longleftrightarrow\,\,\Psi_{\mathcal{N}}(r)\,\,\mbox{is algebraic}.
\end{equation}
Let $j = 1,2$. To obtain a good representation of the derivative of $\Psi_{\mathcal{N}}$, we shall consider the real and the imaginary parts $s_j = Re\,b_j$ and $t_j = Im\,b_j$. Then $\Psi_{\mathcal{N}}$ and the homomorphism 
\begin{equation}
 \Psi'(r) = \big(e^{2s_1r}, e^{2s_2r}, e^{2t_1r}, e^{2t_2r}\big)
\end{equation}
differ by an isogeny of $\mathcal{N}$.\\
\textbf{Applying transcendence theory in the case ${\rm{s}} \geq 3$.} In the lucky case when the group $\Z s_1 + \Z s_2 + \Z it_1 + \Z it_2$ has rank ${\rm{s}} = 3$ or ${\rm{s}} = 4$, 
the Zariski-closure ${\rm{T}} \subset \mathcal{N}$ of $\Psi_{\mathcal{N}}(\R)$ is a torus of dimension three or four. We let $\Phi_{\mathcal{N}}$ be the extension of $\Psi_{\mathcal{N}}$ to $\C$. If now all numbers $e^{a_ib_j}$ are algebraic, then it follows from (1.2.1) that the rank $\rm{r}$ of the group of algebraic logarithms $\Phi_{\mathcal{N}}^{\scr -1}\big({\rm{T}}(\overline{\Q})\big)$ is $\geq 2$. But in this situation Theorem \ref{1.1.1} yields that $\Phi_{\mathcal{N}}(\C)$ is not Zariski-dense in ${\rm{T}}(\C)$. Since the latter is a contradiction, 
at least one among the numbers $e^{a_ib_j}$ must be transcendental.\\
\textbf{Descent to $K$ in the case ${\rm{s}} = 2$.} If the group $\Z s_1 + \Z s_2 + \Z it_1 + \Z it_2$ has rank ${\rm{s}} = 2$, then the Zariski closure ${\rm{T}}$ has dimension two and the projection $p: \mathcal{N} \longrightarrow \G$ from above restricts to an isogeny between $\rm{T}$ and $\G$. To examine this case, we identify the Lie algebra $\frak{n} = {\rm{Lie}\,\mathcal{N}}$ with $\overline{\Q}^{\scr 4}$ and its complexification $\frak{n}(\C) = \frak{n} \otimes_{\overline{\Q}} \C$ with ${\C}^{\scr 4}$. In the spirit of our approach we view $\C^{\scr 4}$ as a real vector space and identify
a vector $(z_1, ..., z_4) \in \C^{\scr 4}$ with $(x_1, y_1, ...., x_4, y_4) \in \R^{\scr 8}$ where $z_j = x_j + iy_j$. Here the coordinates
$z_j$ are defined over $\overline{\Q}$, and the coordinates $x_j$ and $y_j$ are defined over $K = \overline{\Q} \cap \R$. With respect to the latter coordinates we have 
$$\big(\Psi_{\mathcal{N}}\big)_{\ast}(r) = \big(s_1r, t_1r; s_2r, t_2r; s_1r, -t_1r; s_2r, -t_2r\big).$$
If now all four exponentials $e^{a_ib_j}$ are algebraic, then- since $a_1$ and $a_2$ are real- all numbers $e^{a_is_j}$ and $e^{ia_it_j}$ are algebraic, too. So, $\omega_1 = a_1s_1, \omega_2 = a_1s_2, \omega_3 = ia_1t_1$ and $\omega_4 = ia_1t_2$ are algebraic logarithms. As the group $\Z s_1 + \Z s_2 + \Z it_1 + \Z it_2$ has rank ${\rm{s}} = 2$ by assumption, so does the group $\Z \omega_1 + \Z \omega_2 + \Z \omega_3 + \Z \omega_4$. 
Baker's theorem implies that the vector space $K \omega_1 + K \omega_2 + K \omega_3 + K \omega_4$ has dimension two over $K$. It follows then by linear algebra that the smallest linear subspace $\frak{t}_{\mathcal{N}} \subset \frak{n}$ over $K$, such that
$$\big(\Psi_{\mathcal{N}}\big)_{\ast}(a_1) = (\omega_1, \omega_3; \omega_2, \omega_4; \omega_1, -\omega_3; \omega_2, -\omega_4) $$ lies in the $\R$-span of $\frak{t}_{\mathcal{N}}$, has dimension $\dim\,\frak{t}_{\mathcal{N}} = 2$. Since $p_{|\rm{T}}$ is an isogeny, $\frak{t} = p_{\ast}(\frak{t}_{\mathcal{N}} ) $ is then the smallest subspace of $\frak{g}$ over $K$ such that $\Psi_{\ast}(\R) \subset \frak{t} \otimes_{K} \R$. Here we identify $\frak{t} \otimes_{K} \R$ with its canonical image in $\frak{g}(\C)$. Note that the equality
$$\dim\,\frak{t} = \dim\,\G$$
holds. Our "machinery" will teach us that this equality admits a theoretical explanation: it comes from the existence of an isogeny $v: {{\rm{G}}} \longrightarrow {{\rm{G}}}' \otimes_K \overline{\Q}$ to an algebraic group variety with model ${{\rm{G}}}'$ over $K$ such that $v_{\ast}(\Psi)$ takes values in ${\rm{G}}'(\R)$. This is a simple case of a \textit{descent to $K$} as will be defined in Sect.\,2.3.\\
\textbf{Determining $\G'$ in the case ${\rm{s}} = 2$.} In our simple case ${{\rm{G}}}'$ can be determined in terms $s_j$ and $t_j$ up to isogeny. To see this, let $\mathbb{S}$ be the one-dimensional torus $\mbox{spec}\,\Z[x,y]/(x^{\scr 2} + y^{\scr 2} - 1)$ over $\Z$ with neutral element $(1,0)$ and denote by $\mathbb{S}_K$ the extension to scalars over $K = \overline{\Q} \cap \R$. The assignment $\mu(x,y) = x + iy$ defines an isomorphism between the group $\mathbb{S}(\C)$ and the multiplicative group $\mathbb{G}_m (\C) = \C^{\ast}$, and this isomorphism identifies $\mathbb{S}(\R)$ with the unit circle ${\rm{S}}^{\scr 1} \subset \C.$\footnote{The map $\mu$ is defined over ${\rm{R}} =\Z[i]$ and stems from the isomorphism of ${\rm{R}}$-algebras
$\mu^{\sharp}: {\rm{R}}[t, t^{\scr -1}] \longrightarrow {\rm{R}}[x,y]/(x^{\scr 2} + y^{\scr 2} - 1)$ which maps $t$ to the class of $x+iy$.} In this way we can parametrize the points of $\mathbb{S}(\R)$ by the function $e^{i r}$ with real $r$.
In contrast to that, the points in the connected component of unity of $\mathbb{G}_m(\R) = \R^{\ast}$ are parametrized by $e^{r}$. Furhermore, by Example B.5.2 each one-dimensional torus over $K$ is either isogenous to the multiplicative group over $K$ or to $\mathbb{S}_K$. So, the torus ${{\rm{G}}}'$ from above is isogenous to the square of the multiplicative group over $K$ if $t_1 = t_2 = 0$, so that in (1.2.2) we get $\Psi'(r) = \big(e^{2s_1r},e^{2s_2r},1, 1\big)$; it is isogenous to the square of $\mathbb{S}_K$ if $s_1 = s_2 = 0$ and $\Psi'(r) = \big(1, 1, e^{2it_1r},e^{2rit_2}\big)$; or else it is isogenous to the torus obtained from $\mathbb{G}_m \times_{\Z} \mathbb{S}$ by base change to $K$.\\
 \textbf{Summary.} The example shows that the four exponentials conjecture holds in the "lucky case" when the two numbers $a_1$ and $a_2$ are collinear over $\R$ and the quadruple $s_1, it_1, s_2, it_2$ generates a group of rank ${\rm{s}} \geq 3$. This was already observed in
  Diaz \cite{Diaz}. For the "less lucky case" when $a_1$ and $a_2$ are still collinear over $\R$, but ${\rm{s}} = 2$, it was elaborated how a counter example to the four exponentials conjecture can be related to a homomorphism onto an algebraic group over $\R$. In many applications the involved group variety $\G$ does not allow a non-trivial homomorphism $v: \G \longrightarrow \G' \otimes_K {F}$ onto an algebraic group with model $\G'$ over $K = F \cap \R$ such that $v_{\ast}(\Psi) \subset \G'(\R)$, and then the "less lucky case" can be treated successfully alike.     
      
\subsection{From real fields of definition to density of rational points}
The theoretical gain of our approach stems from its relation to Mazur's conjecture on the topology of rational points:\\
\\
\textit{``At a lecture at Columbia University in 1992 Berry Mazur 'tried out' the following conjecture on his
audience: Let ${{\rm{V}}}$ be a smooth variety over $\Q$ with the property that ${{\rm{V}}}(\Q)$ is Zariski-dense. Then
the topological closure of ${{\rm{V}}}(\Q)$ in ${{\rm{V}}}(\R)$ consists of a (finite) union of connected components of ${{\rm{V}}}(\R)$'' \footnote{from Silverman's review of \cite{M1} on MathSciNet.}}\\
\\
Mazur's conjecture, if true, would generalize the fact that $\R$ is the only complete proper subfield of $\C$ and that the set of rational points of $\mathbb{A}^{\scr 1}$ is dense in the real-analytic manifold $\mathbb{A}^{\scr 1}(\R)$. And although in their paper \cite{SD}, Colliot-Th\'el\`ene, Skorobogatov and Swinnerton-Dyer 
give a counterexample to Mazur's conjecture, the following special case of abelian varieties is still open and point of departure for arithmetic considerations (see Mazur \cite[Conj.\,5]{M}).
\begin{con} \label{ConA} If $\rm{A}$ is a simple abelian variety over $\Q$ such that $\rm{A}(\Q)$ has positive rank, then the closure of $\rm{A}(\Q)$ in $\A(\R)$ with respect to the analytic topology contains the connected component of unity of $\rm{A}(\R)$.

\end{con}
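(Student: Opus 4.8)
\medskip
\noindent\textbf{Proof proposal.} The plan is to translate Conjecture~\ref{ConA} into a statement of $\Q$-linear independence for real periods and real abelian logarithms of rational points, and then to feed that statement into the real-analytic ``machinery''.

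\emph{Step 1: reduction to equidistribution.} Put $g=\dim\A$, write $\Lambda=H_{1}(\A(\C),\Z)\subset{\rm Lie}\,\A(\C)=\C^{g}$ for the period lattice, and let $\exp_{\A}\colon\C^{g}\to\A(\C)$ be the exponential map. Complex conjugation on $\C^{g}$ has an $\R$-form $\frak{g}_{\R}={\rm Lie}\,\A(\R)$, and $\A(\R)^{0}=\frak{g}_{\R}/\Lambda^{+}$ is a compact real $g$-torus, where $\Lambda^{+}=\Lambda\cap\frak{g}_{\R}$ has rank~$g$. Since $\A(\R)/\A(\R)^{0}$ is finite, $\Gamma:=\A(\Q)\cap\A(\R)^{0}$ has finite index in $\A(\Q)$, hence positive rank $\rho\geq1$; pick generators of $\Gamma$ modulo torsion and lift them to abelian logarithms $u_{1},\dots,u_{\rho}\in\frak{g}_{\R}$, and fix a $\Z$-basis $\omega_{1},\dots,\omega_{g}$ of $\Lambda^{+}$. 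By the theorems of Kronecker and Weyl, the closure of $\A(\Q)$ in $\A(\R)$ contains $\A(\R)^{0}$ if and only if no nonzero $\R$-linear functional on $\frak{g}_{\R}$ taking rational values on $\Lambda^{+}$ also takes rational values at all of $u_{1},\dots,u_{\rho}$. For $g=1$ this merely says that a non-torsion point has an irrational logarithm, so the content lies in $g\geq2$; for $\rho=1$ it asks precisely that $1$ together with the coordinates of $u_{1}$ in the basis $\omega_{1},\dots,\omega_{g}$ be linearly independent over~$\Q$.

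\emph{Step 2: the machinery.} Regard $\A$ over $F=\overline{\Q}$ and put $K=F\cap\R$. The data of Step~1 assemble into a real-analytic homomorphism $\Psi\colon\R^{\rho}\to\A(\C)$, $(r_{1},\dots,r_{\rho})\mapsto\exp_{\A}\big(\sum_{i}r_{i}u_{i}\big)$, whose values at $\Z^{\rho}$ are the algebraic points $\sum_{i}n_{i}P_{i}$. Directly over $F$ nothing can be said: since $\A$ is simple and $g\geq2$, Theorem~\ref{AST} forces the coordinates of each $u_{i}$ to be linearly independent over $\overline{\Q}$, so the associated subspace $\frak{T}$ fills out all of ${\rm Lie}\,\A$ and the hypotheses of Theorems~\ref{1.1.1}, \ref{5.1.1} and~\ref{AST} are vacuous. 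Passing to the Weil restriction $\mathcal{N}=\mathcal{N}_{F/K}(\A_{F})$, an abelian variety of dimension $2g$ over $K$ with $\mathcal{N}(\R)=\A(\C)$, changes the picture: the real-analytic differential $(\Psi_{\mathcal{N}})_{\ast}$ is defined over $K$ in a way its complexification is not defined over $\overline{\Q}$, so the smallest subspace $\frak{t}_{\mathcal{N}}\subset{\rm Lie}\,\mathcal{N}$ over $K$ with $(\Psi_{\mathcal{N}})_{\ast}(\R^{\rho})\subset\frak{t}_{\mathcal{N}}\otimes_{K}\R$ has dimension at most~$g$, while $\dim\mathcal{N}=2g$. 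Let ${\rm T}\subset\mathcal{N}$ be the Zariski closure of $\Psi_{\mathcal{N}}(\R^{\rho})$ and $\Phi_{\mathcal{N}}$ the extension to $\C^{\rho}$. Negating the conclusion of Step~1 now produces, out of the $\omega_{j}$ and $u_{i}$, additional algebraic logarithms of $\Phi_{\mathcal{N}}$, so that the rank~${\rm r}$ of $\Phi_{\mathcal{N}}^{-1}({\rm T}(\overline{\Q}))$ grows while $\dim{\rm T}-\dim\frak{t}_{\mathcal{N}}$ stays positive; whenever the resulting numerical data meet the hypothesis of Theorem~\ref{1.1.1} or Theorem~\ref{5.1.1}, the conclusion that $\Phi_{\mathcal{N}}(\C^{\rho})$ is not Zariski-dense in ${\rm T}(\C)$ contradicts the definition of ${\rm T}$, and the required independence follows.

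\emph{Step 3: the descent alternative, and the main obstacle.} When the numerical hypothesis of those theorems is not met, the machinery instead yields a non-trivial morphism $v\colon\A_{F}\to\A'\otimes_{K}F$ onto a positive-dimensional group variety with model $\A'$ over $K=\overline{\Q}\cap\R$ along which $v_{\ast}(\Psi)$ becomes real; since $\A$ is simple over~$\Q$ such a $v$ is forced to be an isogeny, and one would need to exclude that a simple abelian variety over~$\Q$ carrying a rational point of infinite order is isogenous over $\overline{\Q}$ to a base change from $\overline{\Q}\cap\R$ for which the logarithm of that point lies in the corresponding real form --- the natural tool being Galois descent along $F/K$ together with $K$ being totally real with $\R$ its unique infinite place. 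This is where I expect the real difficulty: Theorems~\ref{1.1.1} and~\ref{5.1.1} require the rank~${\rm r}$ of algebraic logarithms to grow roughly linearly in $\dim{\rm T}$, so an abelian variety of small Mordell--Weil rank and large dimension supplies too few algebraic points even after the Weil-restriction doubling, and excluding the descent in full generality runs into the same scarcity. A fully unconditional proof would seem to require the extension of the Analytic Subgroup Theorem to base fields of positive transcendence degree anticipated in the discussion following Theorem~\ref{AST}; absent that, the realistic target is Conjecture~\ref{ConA} under a hypothesis relating ${\rm rank}\,\A(\Q)$ to $\dim\A$, or under the hypothesis that $\A$ admits no such descent.
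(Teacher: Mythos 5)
You were asked to prove Conjecture~\ref{ConA}, but this is Mazur's conjecture for simple abelian varieties over $\Q$, and the paper does not prove it --- nor does anyone. The paper only records it as an open conjecture (citing Mazur \cite{M}), quotes Waldschmidt's conditional theorem from \cite{Walden} which establishes it under the extra hypothesis ${\rm{rank}}_{\Z}\,\A(\Q) \geq (\dim\A)^{2} - \dim\A + 1$, and proves reformulations and statements ``in its direction'' (the equivalence with Conjecture~\ref{ConD} via Corollary~\ref{mm}, Corollary~7.1.22, etc.). So the honest concession at the end of your Step~3 is exactly the correct outcome: no unconditional proof can be extracted from the transcendence tools available here, and your closing remark that the realistic target is a rank hypothesis relating ${\rm rank}\,\A(\Q)$ to $\dim\A$ is precisely Waldschmidt's theorem that the paper cites. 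In that sense your proposal is not a proof and should not claim to approach one, but it correctly reconstructs both the intended mechanism (Kronecker--Weyl reduction to a $\Q$-linear independence statement for real abelian logarithms, then Weil restriction to exploit the halved ``defect'' $\dim_K\frak{g} - \dim\frak{t}$) and the true obstruction.

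Two technical cautions if you want the sketch to align with what the paper can actually deliver. First, the paper's transcendence theorems (Theorems~\ref{1.1.1}, \ref{5.1.1}, \ref{AST} and their real-analytic versions in Sect.~2.5) are stated for one-parameter homomorphisms $\Psi:\R\to\G(\C)$, whereas your Step~2 uses $\Psi:\R^{\rho}\to\A(\C)$; the several-variable density criteria live in \cite{Walden}, not in this paper, so the argument as written cannot simply ``feed into'' the quoted theorems. Second, the pivotal sentence of Step~2 --- that negating density ``produces additional algebraic logarithms, so that the rank ${\rm r}$ grows'' --- is not correct as stated: negating density produces a nonzero $\R$-linear functional with rational values on $\Lambda^{+}$ and at the $u_{i}$, i.e.\ a linear relation with real, generally transcendental coefficients; it does not create new points of $\mathcal{N}(\overline{\Q})$ in the image, and the rank ${\rm r}$ remains bounded by ${\rm rank}\,\A(\Q)$ plus periods. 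This is exactly why all known results are conditional on that rank being large compared with $\dim{\rm T}$, and why the scarcity you identify in Step~3 is not a removable difficulty but the reason the statement is still a conjecture.
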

Our results below indicate that the following generalization of Mazur's conjecture for abelian varieties should hold. In the formulation we use the fact that every connected closed subgroup of a real Lie group is a real Lie subgroup.
\begin{con}  \label{ConB} Let $\rm{A}$ be a simple abelian variety over $\overline{\Q}$ of dimension $g$ and let $\Gamma \subset \rm{A}(\overline{\Q})$ be a subgroup of positive rank. Let $\rm{C}$ be the closure of $\Gamma$ in $\rm{A}(\C)$ with respect to the analytic topology and let $\dim\,\rm{C}$ be its dimension
as a real Lie group. Then $\dim\,\rm{C} = 2g$, or $\dim\,\rm{C} = g$ and there exists an isogeny $v: \A \longrightarrow \A' \otimes_K \overline{\Q}$ to an abelian variety with model $\A'$ over $K = \overline{\Q} \cap \R$ such that $v(\Gamma) \subset \rm{A}'(\R)$.
\end{con}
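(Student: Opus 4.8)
\medskip
The plan is to apply the ``machinery'' of Section~1.2 with $F=\overline{\Q}$ — which is stable under complex conjugation — and $K=\overline{\Q}\cap\R$, the field of real algebraic numbers; note $[\overline{\Q}:K]=2$ and $\overline{\Q}=K(i)$. Put $\mathcal{N}=\mathcal{N}_{\overline{\Q}/K}(\A)$. Then $\mathcal{N}$ is an abelian variety over $K$ of dimension $2g$ with canonical identifications $\mathcal{N}(\R)=\A(\C)$, $\mathcal{N}(K)=\A(\overline{\Q})$, and $\mathcal{N}\otimes_K\overline{\Q}\cong\A\times\A^{\scr h}$, where $\A^{\scr h}$ is the complex-conjugate abelian variety and the inclusion $\mathcal{N}(K)\subset\mathcal{N}(\overline{\Q})$ reads $P\mapsto(P,P^{\scr h})$. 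Under $\mathcal{N}(\R)=\A(\C)$ the group $\Gamma$ lies in $\mathcal{N}(K)$ and $\mathrm{C}$ is its closure in $\mathcal{N}(\R)$; since $\A(\C)$ is compact, $\mathrm{C}^{0}=:\mathrm{S}$ is a subtorus of $\mathcal{N}(\R)$ and $\mathrm{C}/\mathrm{S}$ is finite, $\mathrm{S}=\overline{\Gamma\cap\mathrm{S}}$ with $\Gamma\cap\mathrm{S}$ of finite index in $\Gamma$, and one may take $\Gamma$ finitely generated. As $\A$ is simple and $\Gamma$ infinite, $\Gamma$ is Zariski-dense in $\A$. Now extract a one-parameter homomorphism carrying algebraic points: choose $\gamma\in\Gamma$ of infinite order, a logarithm $u$ of $\gamma$, and the extension $\Phi_{\mathcal{N}}:\C\longrightarrow\mathcal{N}(\C)$ of $\Psi_{\mathcal{N}}=\Psi\times\Psi^{\scr h}$ with $(\Phi_{\mathcal{N}})_{\ast}(z)=z\,(u,\bar u)$, so that $\Phi_{\mathcal{N}}(n)=(n\gamma,n\gamma^{\scr h})\in\mathcal{N}(\overline{\Q})$ for all $n\in\Z$; here $u$ should be chosen as favourably as possible among all logarithms of all points of $\Gamma$ and of its $\Q$-saturation, which is the source of the word ``isogeny'' (rather than ``isomorphism'') in the statement.

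Apply transcendence theory to $\Phi_{\mathcal{N}}$. Its image contains a non-trivial algebraic point of $\mathcal{N}$, so the Analytic Subgroup Theorem (Theorem~\ref{AST}), applied to the abelian variety $\mathcal{N}$, shows that the smallest $\overline{\Q}$-subspace of $\mathrm{Lie}(\A)\oplus\mathrm{Lie}(\A^{\scr h})$ whose complexification contains $(u,\bar u)$ is the Lie algebra of an algebraic subgroup $\mathrm{H}\subset\A\times\A^{\scr h}$. Its projections to the two factors are algebraic subgroups containing the Zariski-dense images of $\Z\gamma$ and $\Z\gamma^{\scr h}$, hence equal $\A$ and $\A^{\scr h}$; thus $\mathrm{H}$ surjects onto both factors, and Poincaré's reducibility theorem — both factors being simple of equal dimension — leaves exactly two cases: $\mathrm{H}=\A\times\A^{\scr h}$, or $\mathrm{H}^{0}$ is the graph of an isogeny $\psi:\A\to\A^{\scr h}$ with $\psi_{\ast}u=\bar u$. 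In the latter case $\psi^{\scr h}\circ\psi$ is an endomorphism of $\A$ fixing $u$, hence fixing the Zariski-dense subgroup $\exp_{\A}(\R u)$, hence the identity; so $\psi$ is a Galois descent datum for $\A$ along $\overline{\Q}/K$ and produces a model $\A'$ of $\A$ over $K$ together with an isogeny $v:\A\to\A'\otimes_{K}\overline{\Q}$, and $\psi(\gamma)=\exp_{\A^{\scr h}}(\bar u)=\gamma^{\scr h}$ gives $v(\gamma)\in\A'(\R)$. One then propagates this from $\gamma$ to all of $\Gamma$ — using that $\mathrm{S}$ is connected, that the anti-linear involution attached to $\psi$ preserves the period lattice of $\mathcal{N}(\R)$, and that $\mathrm{Hom}(\A,\A^{\scr h})$ modulo the cocycle condition is controlled — to reach $v(\Gamma)\subset\A'(\R)$ and $\dim\,\mathrm{C}=g$. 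This is the ``descent to $K$'' of Section~2.3.

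It remains to treat $\mathrm{H}=\A\times\A^{\scr h}$ — equivalently, the case where no logarithm of any point of $\Gamma$ yields the graph alternative — where the goal is $\dim\,\mathrm{C}=2g$, and, more sharply, to exclude every intermediate value $g<\dim\,\mathrm{C}<2g$. This is the crux, and the reason Conjecture~\ref{ConB} is still only a conjecture. The dimension of $\mathrm{C}$ is governed by the $\Q$-linear relations among the logarithms of $\Gamma$ and the \emph{transcendental} periods of $\mathcal{N}$, whereas the Analytic Subgroup Theorem — and equally its real-analytic/Weil-restriction avatar, the ``machinery'' — controls only the $\overline{\Q}$-linear dependence of logarithms, a strictly coarser constraint; so the bare output $\mathrm{H}=\A\times\A^{\scr h}$ does not by itself force $\mathrm{C}=\A(\C)$. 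Passing from ``no $\overline{\Q}$-linear relation among the logarithms'' to ``$\mathrm{C}=\A(\C)$'', and ruling out the intermediate dimensions, belongs to the circle of the four-exponentials conjecture and of Grothendieck's period conjecture for abelian varieties, and is out of reach of present methods. With the quantitative substitutes actually available — Theorems~\ref{1.1.1} and~\ref{5.1.1}, of Schneider and Gel'fond type — one argues that $\dim\,\mathrm{C}<2g$ would produce a group of algebraic logarithms of $\Phi_{\mathcal{N}}$, or of an associated homomorphism onto a subtorus of $\mathcal{N}$, of sufficiently large rank, whence their non-Zariski-density conclusion gives the desired contradiction; but this closes the argument only when the resulting rank meets the numerical inequalities of those theorems — the analogue for $\A$ of the ``lucky case'' ${\rm{s}}\geq 3$ of the six-exponential discussion — so what one obtains unconditionally is partial progress toward Conjecture~\ref{ConB}, in the spirit of the corollaries of Chapter~7.
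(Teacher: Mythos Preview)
The statement you were given is \emph{Conjecture}~\ref{ConB}; the paper does not prove it and explicitly presents it as open. So there is no ``paper's own proof'' to compare against. What the paper does prove are partial results in its direction (Corollary~\ref{E}, Corollary~\ref{hhh}) and the equivalence of Conjecture~\ref{ConB} with Conjecture~\ref{ConD} (Corollary~\ref{mm}).

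Your write-up is largely an honest diagnosis rather than a proof, and on the main point you are right: in the case $\mathrm{H}=\A\times\A^{\scr h}$ the conclusion $\dim\mathrm{C}=2g$ is genuinely beyond current methods, for exactly the reason you name --- the Analytic Subgroup Theorem controls $\overline{\Q}$-linear relations among logarithms, not $\Q$-linear relations with the transcendental periods, and bridging that gap is the four-exponentials/period-conjecture circle. This is also why the paper leaves it as a conjecture.

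However, you overlook a second gap. In the graph case you obtain the isogeny $v:\A\to\A'\otimes_K\overline{\Q}$ with $v(\Gamma)\subset\A'(\R)$ (your argument that $\psi^{\scr h}\circ\psi=\mathrm{id}$ via Zariski density of $\exp_{\A}(\R u)$ is fine, and matches the spirit of the paper's main criterion, Theorem~\ref{weakdescent}), but from this you only get $\dim\mathrm{C}\le g$, since $\mathrm{C}\subset v^{-1}\big(\A'(\R)\big)$ and the latter has real dimension $g$. The equality $\dim\mathrm{C}=g$ asserts that $v(\Gamma)$ is dense in the identity component of $\A'(\R)$ --- and that is precisely Mazur's Conjecture~\ref{ConA}. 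Your ``propagation'' paragraph does not, and cannot, supply this. So even the descent half of the dichotomy is conditional, and Conjecture~\ref{ConB} is open on both branches.
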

Conjecture \ref{ConB} allows a broader view on Mazur's question concerning the topology of rational points and relates it to a density problem. Morally, it asserts that if a ``large'' subgroup $\Gamma \subset \rm{A}(\overline{\Q})$ is not dense in $\rm{A}(\C)$ with respect to the analytic topology, then there should be 
a theoretical justification for that. This theoretical justification is the existence of an isogeny $v: \A \longrightarrow {\rm{A}}' \otimes_K \overline{\Q}$ to an abelian variety with model $\A'$ over $K$ such that $\Gamma$ is mapped to the proper subset of real points in $\A'(\C)$.
By Conjecture \ref{ConA} the image $v(\Gamma)$ should then generate a dense subgroup
of the connected component of unity of ${\rm{A}}'(\R)$.\\
As will follow from Corollary \ref{mm}, Conjecture \ref{ConA} is equivalent to
\begin{con}  \label{ConD} Let $\rm{A}$ be a simple abelian variety over $\overline{\Q}$ and let $\Gamma \subset \rm{A}(\overline{\Q})$ be a subgroup of positive rank. Let $\rm{C}$ be the closure of $\Gamma$ in $\rm{A}(\C)$ with respect to the analytic topology and denote by ${\rm{C}}^o$ the connected component of unity of ${\rm{C}}$.
Then there exists a subspace $\frak{c}$ of $\frak{a} = {\rm{Lie}}\,{\rm{A}}$ over $K = \overline{\Q} \cap \R$ with the property that ${\rm{C}}^o = \rm{exp}_{\A}(\frak{c} \otimes_K \R)$.
\end{con}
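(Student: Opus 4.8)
The plan is to deduce the statement from the ``machinery'' of $\S 1.2$ via the Weil restriction of scalars, the one hard input being a transcendence result strong enough to produce a descent to $K$. First, since $\A(\C)$ is a compact real Lie group, ${\rm C}$ is compact and ${\rm C}^o$ is a real subtorus; write ${\rm C}^o = \rm{exp}_{\A}(\frak{h})$ with $\frak{h} = {\rm Lie}({\rm C}^o) \subseteq \frak{a}(\C) = \frak{a} \otimes_{\overline{\Q}} \C$. As ${\rm C}/{\rm C}^o$ is finite, $\Gamma \cap {\rm C}^o$ has finite index in $\Gamma$, hence still positive rank, and its analytic closure is ${\rm C}^o$; after replacing $\Gamma$ we may assume ${\rm C} = {\rm C}^o$ and $\overline{\Gamma} = {\rm C}^o$. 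Since $\A$ is simple and $\Gamma$ contains a point of infinite order, $\Gamma$ is Zariski-dense in $\A$. Regarding $\frak{a}$ as a $K$-vector space of dimension $2g$ through $K \subseteq \overline{\Q}$ and using the canonical identification $\frak{a} \otimes_K \R = \frak{a}(\C)$, the statement to be proved is exactly that $\frak{h} = (\frak{h} \cap \frak{a}) \otimes_K \R$, i.e. that $\frak{h}$ admits an $\R$-basis of vectors lying in $\frak{a}$; one then takes $\frak{c} := \frak{h} \cap \frak{a}$.

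\textbf{Passage to the Weil restriction.} Set $\mathcal{N} = \mathcal{N}_{\overline{\Q}/K}(\A)$, an abelian variety over $K$ of dimension $2g$, so that $\mathcal{N}(\R)$ is canonically $\A(\C)$, $\mathcal{N}(K)$ is canonically $\A(\overline{\Q})$, and ${\rm Lie}(\mathcal{N})$ is $\frak{a}$ regarded over $K$. Under these identifications $\Gamma$ becomes a subgroup $\Gamma_{\mathcal{N}} \subseteq \mathcal{N}(K)$ whose analytic closure in $\mathcal{N}(\R)$ is the real subtorus ${\rm C}^o$ with Lie algebra $\frak{h}$. Over $\overline{\Q}$ one has $\mathcal{N}_{\overline{\Q}} \cong \A \times \A^{h}$ with $h$ complex conjugation, and $\Gamma_{\mathcal{N}}$ is identified with $\{(\gamma, \gamma^{h}) : \gamma \in \Gamma\}$; its Zariski closure $Z \subseteq \mathcal{N}$ is an abelian subvariety defined over $K$ which surjects onto both factors, so by simplicity of $\A$ either $Z = \mathcal{N}$ or $Z$ is the graph of a quasi-isogeny $\A \to \A^{h}$ and is isogenous to $\A$. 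In either case $\Gamma_{\mathcal{N}}$ is Zariski-dense in $Z$, and ${\rm C}^o \subseteq Z(\R)$ with $\frak{h} \subseteq {\rm Lie}(Z) \otimes_K \R$.

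\textbf{Producing a descent to $K$.} This is where transcendence theory enters. Applying the ``machinery'' of $\S 1.2$ to the real-analytic homomorphisms $r \mapsto \rm{exp}_{\mathcal{N}}(r\, u_\gamma)$, where $\rm{exp}_{\mathcal{N}}(u_\gamma) = (\gamma, \gamma^{h}) \in Z(\overline{\Q})$ is a nontrivial algebraic point ($\gamma \in \Gamma$ of infinite order, $u_\gamma \in \frak{h}$) — with transcendental core the Analytic Subgroup Theorem (Theorem $\ref{AST}$) in the form $(\ast)$, together with the classification of the algebraic subgroups of $Z_{\overline{\Q}}$ afforded by the simplicity of $\A$ — should yield that $\dim {\rm C}^o \in \{g, 2g\}$ (this is Conjecture $\ref{ConB}$) and, when $\dim {\rm C}^o = g$, a homomorphism $v : \A \to \A' \otimes_K \overline{\Q}$ onto a positive-dimensional abelian variety $\A'$ over $K$ with $v(\Gamma) \subseteq \A'(\R)$. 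Granting such a $v$: since $\A$ is simple, $\ker v$ is finite, so $v$ is an isogeny and ${\rm Lie}(v) : \frak{a} \to \frak{a}' \otimes_K \overline{\Q}$ is an isomorphism, where $\frak{a}' = {\rm Lie}\,\A'$. If $\dim {\rm C}^o = 2g$ take $\frak{c} = \frak{a}$. If $\dim {\rm C}^o = g$, then $v({\rm C}^o) \subseteq \A'(\R)$ is a compact connected subgroup of real dimension $g$, hence the identity component of $\A'(\R)$, so ${\rm Lie}(v)(\frak{h}) = \frak{a}' \otimes_K \R$ and therefore $\frak{h} = \frak{c} \otimes_K \R$ with $\frak{c} := {\rm Lie}(v)^{-1}(\frak{a}')$, a $K$-subspace of $\frak{a}$, as required.

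\textbf{The main obstacle.} The delicate point is the production of $v$ — equivalently, pinning down $\frak{h}$ as a genuine $K$-form rather than merely as a subspace of ${\rm Lie}(Z) \otimes_K \R$. With the present state of transcendence theory this descent is available only in ``lucky'' ranges of the invariants ${\rm r}$, ${\rm k}$ and $\dim \frak{t}$, precisely as in the treatment of the four-exponentials conjecture in $\S 1.2.1$, where Theorems $\ref{1.1.1}$ and $\ref{5.1.1}$ (and the analogous inequalities produced over $K$ by our machinery) do the work. The unconditional statement of Conjecture $\ref{ConD}$ is therefore expected to follow from the conjectural improvement of the Analytic Subgroup Theorem to fields of positive transcendence degree with no estimates in the hypotheses, as anticipated at the end of $\S 1.1$; in any case, as remarked after the statement, Conjecture $\ref{ConD}$ is equivalent to Conjecture $\ref{ConA}$ via Corollary $\ref{mm}$, which serves as a consistency check on this scheme.
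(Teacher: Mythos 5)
The statement you were asked to prove is Conjecture \ref{ConD}; the paper contains no proof of it. Its only ``proof-adjacent'' content is the claim (Corollary \ref{mm}, itself deduced from Proposition \ref{real} and Corollary \ref{hhh}, with the verification left to the reader) that Conjecture \ref{ConD} is equivalent to Conjecture \ref{ConB} (resp.\ tied to Conjecture \ref{ConA}). Your proposal is, accordingly, not a proof and could not be one: the step you label ``Producing a descent to $K$'' is exactly the open content. What you have written is a conditional derivation of \ref{ConD} from \ref{ConB}, i.e.\ one half of Corollary \ref{mm}. That half is essentially correct as you present it: the reduction to $\mathrm{C}=\mathrm{C}^o$, the Zariski-density of $\Gamma$, the passage to $\mathcal{N}_{\overline{\Q}/K}(\A)$, and the formal endgame (if $\dim \mathrm{C}^o=g$ and an isogeny $v$ with $v(\Gamma)\subset \A'(\R)$ exists, then $v(\mathrm{C}^o)$ is the identity component of $\A'(\R)$, so $\frak{h}=\frak{c}\otimes_K\R$ with $\frak{c}={\rm Lie}(v)^{-1}(\frak{a}')$) all go through.

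The concrete gap, which you flag yourself, is that the transcendence input you invoke cannot deliver the dichotomy $\dim\mathrm{C}^o\in\{g,2g\}$ together with the descent $v$. Theorem \ref{wus1}/\ref{wus2} (the Analytic Subgroup Theorem machinery) applies to a single one-parameter subgroup through a logarithm of one algebraic point and produces a descent only when the minimal $K$-space $\frak{t}$ of that logarithm is already known to be proper; it gives no control of the closure of a whole subgroup $\Gamma$ and no upper bound on $\dim\mathrm{C}^o$ unless one imposes large rank hypotheses, as in Waldschmidt's Theorem \ref{C}. So the argument ``should yield \ref{ConB}'' is not available. Moreover, your scheme only addresses the implication \ref{ConB}$\Rightarrow$\ref{ConD}; the paper's intended equivalence also needs the converse, which is where Proposition \ref{real} (the defect inequality) and Corollary \ref{hhh} (the rigidity statement that two proper minimal $K$-spaces of logarithms either coincide or meet trivially) are actually used, and which your proposal does not touch. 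As a comparison of strategies, then: your text is a reasonable sketch of one direction of Corollary \ref{mm} plus an honest acknowledgment that the unconditional statement lies beyond current transcendence theory, whereas the paper never claims more than the equivalence itself.
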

It is not hard to bridge the gap between Conjecture \ref{ConB} and our \textit{Ansatz}. This is illustrated in 
\begin{example} Let $\rm{E}$ be an elliptic curve over $\overline{\Q}$ and let $\Gamma \subset \rm{E}(\overline{\Q})$ be a subgroup of rank three. Suppose that the closure $\rm{C}$ of $\Gamma$ with respect to the analytic topology is connected. If $\rm{C} \neq \rm{E}(\C)$, then $\rm{C}$ is isomorphic to $\R/\Z$ and there is a real-analytic homomorphism $\Psi: \R \longrightarrow \rm{E}(\C)$ with non-zero kernel and such that $\Gamma \subset \Psi(\R)$. 
 We set $\mathcal{N} = \mathcal{N}_{\overline{\Q}/K}({\rm{E}})$, replace $\Psi$ by $\Psi_{\mathcal{N}}$ and extend $\Psi_{\mathcal{N}}$ to a complex-analytic homomorphism $\Phi_{\mathcal{N}}: \C \longrightarrow {\mathcal{N}}(\C)$. Although neither $\mathcal{N}$ nor $\Psi_{\mathcal{N}}$ have been exactly defined yet, one is able to divine what this objects mean: they are analogs of the group variety $\mathcal{N}$ and the homomorphism $\Psi_{\mathcal{N}}$ from the previous subsection. 
In particular, the dimension of $\mathcal{N}$ is twice the dimension of $\rm{E}$, so that transcendence theory can be applied.
Using notations as in Theorem\,\ref{1.1.1} (with $\mathcal{N}$ instead $\G$), we have then $\dim\,\mathcal{N} = 2$, ${\rm{k}} \geq 1$ and ${\rm{r}} = {\rm{k}} + {\rm{rank}}_{\Z}\, \Gamma \geq 4$. It follows from the theorem that $\Phi_{\mathcal{N}}(\C)$ is a proper algebraic subgroup of the Weil restriction ${\mathcal{N}}$. As will be explained in Ch.\,2,
there is then an isogeny $v: {\rm{E}} \longrightarrow {\rm{E}}' \otimes_K F$ such that $v(\Gamma) \subset \rm{E}'(\R)$.  \end{example}
Summarizing, we see that our approach is, at least conjecturally, related to the question of density of $\Psi(\R)$ via the question of existence of real models.\\
\\
We conclude this section generalizing Conjecture \ref{ConB} to an arbitrary commutative group variety. This is also meant to motivate our definition of \textit{weak descent to $K$} in Subsect.\,2.3.2. As to our knowledge there is no straightforward generalization.
One obstruction is that if $\rm{G}$ is an arbitrary commutative group variety and $\Gamma$ is not dense in $\rm{G}(\C)$ then the closure of $\Gamma$ in $\G(\C)$ with respect to the analytic topology is a real-analytic submanifold of $\rm{G}(\C)$, but not necessarily with finitely many components (for example, a lattice in $\mathbb{G}_m(\C) = \C$).
Therefore, in the general setting one should start with a connected proper real-analytic submanifold $\rm{C}$ of $\rm{G}(\C)$ which contains a dense torsion-free subgroup of algebraic points. But even in this case Conjecture \ref{ConB} is not true for arbitrary commutative varieties, as the following example shows.
\begin{example} Let $\rm{E}'$ be an elliptic curve over $K = \overline{\Q} \cap \R$, set ${\rm{E}} = {\rm{E}}' \otimes_K \overline{\Q}$ and let $\G$ be an extension of ${\rm{E}}$ by a one-dimensional torus over $\overline{\Q}$. Suppose that $\G$ is neither isogenous to a group variety with model over $K$ nor an isotrivial extension.
Let $\xi \in \G(\overline{\Q})$ be an algebraic point without torsion which projects to $\rm{E}'(\R)$. It follows that $\Gamma = \Z\xi$ generates a proper real-analytic submanifold of $\G(\C)$. However, the conclusion in Conjecture \ref{ConB} does not hold for $\G$ and $\Gamma$ because- by construction- there is no isogeny $v$ as predicted in the conjecture. 
But there is a substitute for the isogeny in the conjecture. Namely, if we let $v$ be the projection from $\G$ to $\rm{E}$ and let $\G'$ be the real curve $\rm{E}'$, then we get a surjective homomorphism onto $\G' \otimes_K \overline{\Q}$ such that $v(\Gamma) \subset \G'(\R)$.

 \end{example}
It turns out that a surjective homomorphism $v: \G \longrightarrow \G' \otimes_K \overline{\Q}$ as in the example is the key to a generalization of Conjecture \ref{ConB}.
\begin{con}\label{Conc} Let $\G$ be a commutative group variety over $\overline{\Q}$ and let $\rm{C}$ be a connected proper real-analytic submanifold of $\G(\C)$.
 Assume that $\rm{C}$ contains a subgroup $\Gamma \subset \G(\overline{\Q})$ of algebraic points which is dense in $\rm{C}$ with respect to the analytic topology and such that each non-trivial point $\xi \in \Gamma$ generates a Zariski-dense subgroup of $\G(\C)$. Then there exists an algebraic group $\G'$ of positive dimension over $K = \overline{\Q} \cap \R$ and a surjective algebraic homomorphism
$v: \G \longrightarrow \G' \otimes_K \overline{\Q}$ with the property that $v(\rm{C}) = \G'(\R)$. 
\end{con}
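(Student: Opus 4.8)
The plan is to run the ``machinery'' of Sect.\,1.2 backwards: from the analytic datum $(\G,{\rm{C}},\Gamma)$ manufacture, by descent through the Weil restriction, an algebraic group over $K$, and then read off $\G'$ and $v$. I begin with the soft input. Being the closure of the subgroup $\Gamma$ and connected, ${\rm{C}}$ is a closed connected real Lie subgroup of $\G(\C)$; write $\frak{c} = {\rm{Lie}}\,{\rm{C}}$, and let $\Lambda = \exp_{\G}^{-1}(\Gamma)\cap\frak{c}$ be the group of logarithms of $\Gamma$ lying in $\frak{c}$. Density of $\Gamma$ forces $\Lambda$ to $\R$-span $\frak{c}$, for otherwise its exponential would already generate a proper analytic subgroup. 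The hypothesis that each nontrivial $\xi\in\Gamma$ generates a Zariski-dense subgroup of $\G(\C)$ gives at once that ${\rm{C}}$ is Zariski-dense in $\G$, hence that the smallest $\overline{\Q}$-rational subspace of ${\rm{Lie}}\,\G$ whose complexification contains $\frak{c}$ is all of ${\rm{Lie}}\,\G$.

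Next I pass to the Weil restriction $\mathcal{N} = \mathcal{N}_{\overline{\Q}/K}(\G)$ over $K = \overline{\Q}\cap\R$, whose dimension is $2\dim\G$ and whose real points are canonically $\mathcal{N}(\R) = \G(\C)$. Under this identification ${\rm{C}}$ becomes a connected real Lie subgroup of $\mathcal{N}(\R)$ and $\Gamma$ a subgroup $\Gamma_{\mathcal{N}}$ of $\mathcal{N}(\overline{\Q})$, dense in ${\rm{C}}$, whose elements are exactly the algebraic points of ${\rm{C}}$ (the analogue of (1.2.1)). Let $\frak{t}\subseteq{\rm{Lie}}\,\mathcal{N}$ be the smallest $K$-rational subspace with $\frak{c}\subseteq\frak{t}\otimes_K\R$, and let ${\rm{H}}\subseteq\mathcal{N}$ be the connected $K$-subgroup with ${\rm{Lie}}\,{\rm{H}} = \frak{t}$, so that ${\rm{C}}\subseteq{\rm{H}}(\R)$. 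The whole point of the transition to $\mathcal{N}$, already exhibited by the six-exponentials example of Sect.\,1.2, is that while $\dim\G - \dim_K\frak{t}$ may well vanish — so that no theorem of Sect.\,1.1 applies to $\G$ itself — the difference $\dim\mathcal{N} - \dim_K\frak{t}$ has been enlarged by $\dim\G$, and transcendence theory now has room to act on $\mathcal{N}$.

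The core step is to prove that ${\rm{H}}$ is a \textit{proper} subgroup of $\mathcal{N}$, equivalently $\frak{t}\subsetneq{\rm{Lie}}\,\mathcal{N}$. Here I extend a chosen one-parameter real homomorphism $\Psi(t) = \exp_{\mathcal{N}}(t\omega)$, $\omega\in\Lambda$, to a complex-analytic $\Phi:\C\longrightarrow\mathcal{N}_{\overline{\Q}}(\C)$ and apply a transcendence theorem of Sect.\,1.1 to $\mathcal{N}_{\overline{\Q}}$: depending on the kind of arithmetic conclusion wanted in the applications one invokes Theorem\,\ref{1.1.1} (Schneider), Theorem\,\ref{5.1.1} (Gel'fond, allowing a field of definition of transcendence degree one), or the Analytic Subgroup Theorem in the form $(\ast)$ (Baker) — the passage from one to another is exactly the passage between the families of corollaries in Ch.\,7. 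The decisive bookkeeping is that the real structure underlying $\mathcal{N}(\R)$ forces $\Psi$ to have a nontrivial kernel whenever ${\rm{C}}$ is compact in that direction, making the kernel rank ${\rm{k}}$ positive, while the rank ${\rm{r}}$ of the algebraic logarithms $\Phi^{-1}(\mathcal{N}(\overline{\Q}))$ is boosted by the contributions of $\Gamma_{\mathcal{N}}$ and the minimal rational subspace $\frak{T}$ attached to $\Phi$ is constrained through the linear-independence theory; running the chosen theorem over a spanning family of $\omega\in\Lambda$ and combining the outputs then makes the relevant numerical inequality hold precisely because $\dim\mathcal{N} = 2\dim\G$, forcing $\Phi(\C)$ — hence ${\rm{C}}$ — into a proper algebraic subgroup, i.e. $\frak{t}\subsetneq{\rm{Lie}}\,\mathcal{N}$. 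Finally the descent theory of Sect.\,2.3 — ``descent to $K$'' when $\G$ is an abelian variety and ``weak descent to $K$'' in general — converts the pair $({\rm{H}}/K\subseteq\mathcal{N},\ {\rm{C}}\subseteq{\rm{H}}(\R))$ together with the Zariski-density of ${\rm{C}}$ in $\G$ into an algebraic group $\G'$ over $K$ of positive dimension — positive because ${\rm{H}}$, hence ${\rm{C}}$, is proper — and a surjective homomorphism $v:\G\longrightarrow\G'\otimes_K\overline{\Q}$ with $v({\rm{C}}) = \G'(\R)$, via the surjective counit ${\rm{H}}_{\overline{\Q}}\longrightarrow\G$ and Goursat's lemma.

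I expect the genuine obstacle to lie in the transcendence step. A technical nuisance is that the theorems of Sect.\,1.1 are one-parameter statements, so treating all of $\Gamma_{\mathcal{N}}$ at once when $\dim{\rm{C}}>1$ requires the usual device of replacing $\mathcal{N}$ by a power $\mathcal{N}^{m}$ with a diagonally built subgroup, or an induction on $\dim\G$, arranged so that the rank count above survives; this is routine but delicate. The real difficulty — and the reason the assertion is stated as a conjecture rather than a theorem — is that the numerical hypotheses of Theorems\,\ref{1.1.1} and\,\ref{5.1.1} are, as is generally believed, far from best possible: doubling the ambient dimension buys a factor of about two in the effective dimension, which is enough in a large number of concrete situations (this is precisely what the corollaries of Ch.\,7 exploit, and it already fails for the four-exponentials problem in the case ${\rm{s}} = 2$) but not in general. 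A proof valid for every commutative $\G$ would need transcendence input strictly stronger than anything currently available, exactly as the four exponentials conjecture lies beyond Schneider's method.
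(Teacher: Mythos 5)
There is nothing in the paper to compare your argument against: the statement you are addressing is Conjecture \ref{Conc}, which the paper does not prove and does not claim to prove (the only assertion made about it is that it implies Conjecture \ref{ConB}, and even that verification is left to the reader). Your text is therefore a proof \emph{sketch} of an open statement, and as you yourself concede in the final paragraph, the decisive step cannot be carried out: nothing forces the subgroup ${\rm{H}}\subset\mathcal{N}_{\overline{\Q}/K}(\G)$ to be proper. Concretely, the hypotheses of the conjecture put no lower bound whatsoever on the rank of $\Gamma$ (nor on the ranks ${\rm{r}}$, ${\rm{k}}$ of any one-parameter $\Psi$ built from it), so none of Theorem \ref{1.1.1}, Theorem \ref{5.1.1}, or statement $(\ast)$ after Theorem \ref{AST} is applicable; the doubling $\dim\mathcal{N}=2\dim\G$ buys room only when a rank hypothesis is already present, which is exactly why the four-exponentials case ${\rm{s}}=2$ in Sect.\,1.2 remains untouched. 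The auxiliary claims ``${\rm{k}}>0$ whenever ${\rm{C}}$ is compact in that direction'' and ``running the theorem over a spanning family of $\omega$'' are not arguments, and the reduction of a higher-dimensional ${\rm{C}}$ to one-parameter subgroups is not supplied.

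A second, independent gap: even if one grants that ${\rm{H}}$ is proper, the machinery of Theorem \ref{weakdescent} (and Proposition \ref{1234}) produces a surjection $v:\G\longrightarrow\G'\otimes_K\overline{\Q}$ with $v({\rm{C}})\subset\G'(\R)$ only; the conjecture asserts the \emph{equality} $v({\rm{C}})=\G'(\R)$. Getting surjectivity onto all of $\G'(\R)$ (not merely density in, or containment in, the connected component of unity) is a Mazur-type density statement of the flavour of Conjecture \ref{ConA}, which is itself open, and your appeal to ``the surjective counit and Goursat's lemma'' does not address it. So the proposal correctly diagnoses why the statement is a conjecture, but it is not, and could not be, a proof of it.
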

We leave it to the reader to verify that Conjecture \ref{Conc} implies Conjecture \ref{ConB}.

\subsection{The origins} Mazur's conjecture for abelian varieties is point of departure of Waldschmidt's article \cite{Walden} from 1992 on the density of rational points on commutative group varieties.
Conjecture \ref{ConA} is proved there under the weaker hypopaper that ${\rm{rank}}_{\Z}\,\A(\Q) \geq (\dim\,\A)^{\scr 2} - \dim\,\A + 1$. Twelve years later, in 2004, Diaz \cite{Diaz} applied the technique of restrictions to scalars to improve transcendence results in elementary cases, though admittedly without referring to Weil restrictions in an explicit manner. In the elementary setting of the exponential function it was sufficient to simply consider conjugate numbers.
Despite many further sources treating the ``density of rational points'' from various points of view, as far as we know the two quoted articles are the only ones where restrictions of scalars are applied to transcendence problems. And it is not hard to see that the potential of this technique has not yet been exhausted. 
\section{Outline of the paper} \large{In the second chapter we conceptualize the density problem by introducing the notion of descent to $K$ and then formulate six theorems about real-analytic one-parameter homomorphism with arithmetic properties. The results of this chapter are stated mainly without proof. The chapter is rather a continuation of the introduction: We make precise what we indicated above and develop the formal theory.\\
Sect.\,2.1 and Sect.\,2.2 prepare the reader with the background needed to understand our definition of descent. We recall all important concepts from geometry in the first two sections. This is meant to synchronize terminology.\\
In Sect.\,2.3 the definition of descent to $K$ and weak descent to $K$ is given and elaborated. The main criterion for descents formulated in Subsect.\,2.5.3 plays an important role in this paper.\\
The underlying object of a descent to $K$ is a real-analytic one-parameter homomorphism $\Psi$ with values in the set of complex points of an algebraic group. In Sect.\,2.4 we consider a refined descent problem. We take a surjective homomorphism $\pi: \G \longrightarrow \U$ of algebraic groups, a homomorphism $\Psi$ to $\G(\C)$, and ask under which conditions $\pi_{\ast}(\Psi)$ descends (weakly) to $K$? The two theorems formulated in this section are technical, but very useful when it comes to applications. Together with the main criterion for descent to $K$ they form the kernel of our ``machinery''.\\
The results from Sect.\,2.1-Sect.\,2.4 do not rely on transcendence theory. By way of contrast, in Sect.\,2.5 we shall combine our ``machinery'' with the methods of Schneider, Gel'fond and Baker. The outcome is a ``matrix'' of two times three transcendence results. By this we mean that respectively two theorems are assigned to one of the three approaches. The respectively first result assigned to an approach is a statement about descent to $K$, whereas the respectively second one deals with weak descent to $K$. The technical, repetitive and maybe unusual shape of our transcendence theorems will be justified later by a quite fruitful list of applications. The last section of this chapter is informal. There we give a further hint how to deal with our approach.\\
\\
The first half of Ch.\,3 is devoted to conjugate varieties and, what comes along with the latter, conjugate sheaves and morphisms. In the second half we show the existence of Weil restrictions for finite Galois extension $F/K$ and recall some functorial properties. In particular,
we study the Lie algebra of the Weil restriction of a group variety. The results stated in this chapter are not new, although some aspects seem not yet been elaborated in literature. This is the only regular chapter of the paper whose statements are not essentially "self-made".\\
\\
The overall aim of the fourth chapter to prove the main criterion for descent and weak descent to $K$ (Theorem \ref{weakdescent}). In Sect.\,4.1 we introduce the notion of a {plurisimple algebraic group}. We call an algebraic group variety over a field \textit{plurisimple} if it is isogenous to a product of simple algebraic groups. We show that each group variety over a field admits a \textit{universal homomorphism} onto a maximal plurisimple quotient which
factors each further homomorphism onto a plurisimple algebraic group. The study of morphisms to commutative algebraic groups with universal properties is not a new idea. In particular, our notion of a universal homomorphism is a special case of a ``\textit{morphisme universel}'' as introduced in Serre \cite{Serre2}. But this is not the point. The crucial observation is that plurisimple groups and universal homomorphisms onto maximal plurisimple quotients constitute appropriate tools to solve descent problems. The challenge here can be sketched as follows: For a group variety $\G$ over $F$ the Weil restriction $\mathcal{N}_{F/K}(\G)$ "controls" morphisms $w: \V' \otimes_K F \longrightarrow \G$ from varieties definable over $K$ {to} $\G$. But descents deal with homomorphisms $v: \G \longrightarrow \G' \otimes_K F$ {from} $\G$ to algebraic groups definable over $K$. In Proposition 4.3.1\,\,it is shown that Weil restrictions also "control" the latter, provided that $F/K$ is a Galois extension of degree $[F:K] = 2$. Roughly speaking, the proposition is an abstract reformulation of our main criterion. In Sect.\,4.4 we then deduce the main criterion.\\
\\
In Ch.\,5 we prove Theorem \ref{inherited} and Theorem \ref{inherited1}. These two theorems rely on Theorem \ref{weakdescent} and are a major component of our transcendence results. Sect.\,5.1 and Sect.\,5.2 deal with two abstract propositions arising in the context of inherited descents. As we will show in Sect.\,5.3, these two propositions imply the theorems.\\
\\
Concerning Ch.\,3-Ch.\,5 we remark the following: Most of the results from these three chapters do not involve analytic concepts and are formulated in a rather abstract fashion. This abstract approach is justified, because it enables to generalize our "machinery" to the ultrametric setting and to obtain $p$-adic analogs of the transcendence theorems from Sect.\,2.5. On the other hand, an elaboration of this would involve plenty of modifications and would require a further paper. Therefore we shall not touch this here.\\  
\\
The sixth chapter is dedicated to the transcendence results from Sect.\,2.5. In Sect.\,6.1 we collect auxiliary lemmas which do not rely on transcendence theory. Each of the following three sections
is then devoted to the proof of respectively two theorems associated to one of the three methods from the introduction. As is expected, the proofs are repetitive to a large extent. On the other hand, each of the three transcendence methods has its own specifics which need to be considered separately.\\
\\
In Ch.\,7 we state applications of the six transcendence theorems. On the one hand, these applications serve as motivation for our \textit{Ansatz}. On the other hand, they rely in part on arguments from Ch.\,3-Ch.\,5 and from the appendices. Therefore we postponed the applications to the last regular chapter of the paper. However, an interested reader will want to have a look at them first or after skimming through the second chapter. The applications are divided into three sections. In the first section we investigate consequences of Theorem \ref{wus1} and Theorem \ref{wus2} (Baker's method), whereas Sect.\,7.2 and Sect.\,7.3 are on a par with Schneider's and Gel'fond's method respectively. This is not quite consistent
with the historical order from the introduction. The reason is that some results from Sect.\,7.2 and Sect.\,7.3 are based on our results about linear independence of algebraic logarithms. As to our knowledge all applications formulated in this chapter are new.}\\
\\
There are three appendices. App.\,A is dedicated to the question when a complex group variety is definable over a subfield of the reals. The results in App.\,A are not new and we apply them rather sporadically. In App.\,A we treat extensions of abelian varieties by linear groups. The contents of this appendix contribute to the proofs of the applications. In App.\,A.1\,\,we list some general facts. Then, in App.\,A.2, we shall consider conjugate extensions of an abelian variety over $\R$. The third section of App.\,A is dedicated to standard uniformizations of linear extensions of elliptic curves. Except Proposition B.2.2 the results in App.\,A are not new. In the last appendix
we fix a Galois extension $F/K$ of degree $[F:K] = 2$ and investigate Weil restrictions over $K$ of simple abelian varieties over $F$. We obtain a useful criterion
to decide whether a simple abelian variety over $F$ is isogenous to an abelian variety with model over $K$ (Proposition\,C.0.4). This criterion seems new and also contributes to the applications.

\section{List of new definitions}
Since we build up a new theory, we will introduce some non-standard definitions. Here is a list of those definitions
which are indispensable for the understanding of the paper.
\begin{center}
\begin{tabular}{ll}
model over $K$ & Sect.\,2.1\\
the twin $\Psi_{[i]}$ &Sect.\,2.2\\
the conjugate $\Psi^{\scr h}$ &Sect.\,2.2\\
$\Psi_{\mathcal{N}}= \Psi \times \Psi^{\scr h}$ & Sect.\,2.2\\
descent to $K$ & Sect.\,2.3\\
weak descent to $K$ & Sect.\,2.3\\
plurisimple group & Sect.\,4.1\\
maximal homomorphism & Sect.\,4.1\\
universal homomorphism & Sect.\,4.1\\
$\delta$-invariant & Sect.\,4.1\\
\end{tabular}
 
\end{center}

\section{List of important symbols} 

\begin{tabular}{llp{13cm}}
$N, M$ & natural numbers\\
$k,l,i, j$ & integer indices\\
$r$ & a real parameter\\
$\subset$ & ``is contained in or equal to''\\ 
$\subsetneq$ & "is contained in but not equal to"\\
$\Q$ & field of rational numbers\\
$\R$ & field of real numbers\\
$\C$ & field of complex numbers\\
$\overline{\Q}$ & algebraic closure of $\Q$ in $\C$\\
$F$ & a field, mostly a subfield of $\C$ stable\\
& with respect to complex conjugation\\
$K$ & a subfield of $F$, mostly $F \cap \R$\\
$Gal(F|K)$ & the Galois group of $F/K$ \\
& if $F/K$ is a Galois extension\\
$h$ & an element in $Gal(F|K)$,\\
& mostly the complex conjugation\\
$\mathbb{G}_{a, F}$ & $= \mbox{spec}\,F[t]$, the additive group over $F$\\
$\mathbb{G}_{m, F}$ & $= \mbox{spec}\,F[t, t^{\scr -1}]$, the multiplicative group over $F$\\
$\mathbb{A}_F^N$ & $= \mbox{spec}\,F[t_1,..., t_N]$, the affine $N$-space over $F$\\
$ \mathbb{P}_F^N$ & $ = \mbox{proj}\,F[t_0,..., t_N]$, the projective $N$-space over $F$\\
$\mathbb{S}_F $ & $= \mbox{spec}\,F[x,y]/(x^{\scr 2} + y^{\scr 2} - 1)$, the torus over $F$\\
$\G, \HHH, \U, \V$ & varieties over $F$, mostly group varieties\\
$\A$ & an abelian variety over $F$\\
${\rm{E}}$ &an elliptic curve over $F$\\
$\LL$ & mostly a linear group over $F$\\
$\rm{T}$ & a torus over $F$\\
$\mathcal{O}_{\G}$ & the sheaf of regular functions\\
& with values in $F$\\
\end{tabular}

\begin{tabular}{lp{11cm}}

$\G(\C)$ & the complex manifold of $\C$-\\
& valued points if $F \subset \C$\\
$\G'$ &variety over $K$, often the $K$-model of $\G$\\
$\G'(\R)$ & the real-analytic manifold of \\
& $\R$-valued points if $K \subset \R$\\
$cl(\G)$ & set of closed points of $\G$\\ 
$u, v, w, \mu, \nu$ & morphisms between varieties, often\\
& isogenies between algebraic groups\\
$v^{\sharp}$ & the homomorphism of structure sheaves\\
& associated to a morphism $v$\\

$\V' \otimes_K F$ & $= \V' \times_{\mbox{spec}\,K} \mbox{spec}\,F$\\
$u \otimes_K F$ & $=u \times_{\mbox{spec}\,K} id_{\mbox{spec}\,F}$\\
$e, e_{\G} $ & the neutral element of $\G$\\
$[k]_{\G}$& the multiplication with $k$ on $\G$\\
$\frak{g}$ & the Lie algebra of $\G$ over $F$\\
$\frak{g}(F')$ & = $\frak{g} \otimes_F F'$ for an extension $F'$ of $F$\\
$\frak{g}'$ & the Lie algebra of $\G'$ over $K$\\ 
$\frak{t}$ & a subspace of $\frak{g}$ over $K$\\
$\frak{t}_{\mathcal{N}}$ & see Subsect. 6.1.2\\
$\partial$ & a vector field in $\frak{g}$\\
$\Lambda$ & the kernel of ${\rm{exp}}_{\G}$, often a lattice in $\C$.\\
$\Psi$ & a real-analytic homomorphism\\
& from $\R$ to $\G(\C)$\\
$\Psi_{\ast}$ & the induced $\R$-linear homomorphism \\
& of Lie algebras\\
$\Psi^{\scr h}$ & the conjugate of $\Psi$\\
$\Psi_{[i]}$ & the twin of $\Psi$\\
$\Psi_{\mathcal{N}}$& $= \Psi \times \Psi^{\scr h}$\\
 $\Phi$ & a complex-analytic homomorphism,\\
& mostly the complexification of $\Psi$\\
$\pi: \G \longrightarrow \U$ & a surjective homomorphism\\
$\mathcal{N}_{F/K}(\G)$ & the Weil restriction of $\G$ over $K$\\
$p_G$ & the functorial map from $\mathcal{N}_{F/K}(\G) \otimes_K F$ to $\G$\\
$\mathcal{N}(v)$ & the map from $\V'$ to $\mathcal{N}_{F/K}(\G)$ associated\\
& to a morphism $v: \V'\otimes_K F \longrightarrow \G$\\
$v_{\mathcal{N}}$ & the morphism between Weil restrictions over $K$\\
& induced by a morphism $v$ of varieties over $F$\\
$\cal{H}(-)$ & see Sect.\,3.3\\

\end{tabular}

\newpage

\chapter{Descent to real subfields}

\section{Algebraic varieties, complex conjugates and analytic structures} Let $F$ denote a subfield of $\C$ and let $\rm{W}$ be the scheme $\mbox{spec}\,F$. For a field extension $j^{\scriptscriptstyle \sharp}: F \longrightarrow F'$ we consider the induced $F$-scheme ${\rm{W}}' = \mbox{spec}\,F'$ with structure morphism $j = \mbox{spec}(j^{\scriptscriptstyle \sharp})$. A \textit{variety} over $F$ is a scheme over $F$ which is irreducible, separated and of finite type over $F$. A variety over $F$ is \textit{smooth} if it is regular as a scheme. We denote by $\rm{G}$ a smooth variety over $F$
and let ${\rm{G}}(F')$ be the set of morphisms from ${\rm{W}}'$ to ${\rm{G}}$ over $F$. An element of ${\rm{G}}(F')$ is called a \textit{point over $F'$} or an \textit{$F'$-rational point}. As usual a point $\xi: {\rm{W}} \longrightarrow \G$ over $F$ will be
identified with its closed image $im\,\xi \in \G$. If $F' \subset F''$ are two extensions of $F$, then the pull-back 
defines an inclusion of ${\rm{G}}(F')$ into ${\rm{G}} (F'')$, and we will eventually
not distinguish between ${\rm{G}}(F')$ and its image in $\G(F'')$ to keep the notation short. For the same reason we will not distinguish between ${\rm{G}}(F'')$ and the $F''$-rational points of ${\rm{G}} \otimes_{F} F' = {\rm{G}} \times_{\mbox{spec}\,F} {\rm{W}}'$.\\
Let $K$ be a subfield of $F$. A \textit{model} of $\rm{G}$ over $K$ is an algebraic variety ${\rm{G}}'$ over $K$ such that $\rm{G}$ is isomorphic to ${\rm{G}}' \otimes_K F$. The variety $\G$ is \textit{definable over $K$} if it admits a model over $K$. It is important to note
that, in general, $\rm{G}$ may have several models over $K$ which are pair-wise non-isomorphic over $K$ (see App.\,A.2).\\
\\
Let $h \in Gal(\C|\R)$ be the complex conjugation and $j: {\rm{G}} \longrightarrow \rm{W}$ the structure morphism of ${\rm{G}}$. We denote by $\cal{G}$ the scheme underlying $\rm{G}$. The scheme $\cal{G}$ together with the morphism $j^{\scriptscriptstyle h} = \mbox{spec}(h)\circ j$ to $\mbox{spec}\,F^{\scriptscriptstyle h}$ define a variety over $F^{\scriptscriptstyle h} = h(F)$. It is called the \textit{complex conjugate} of $\rm{G}$ and is denoted by ${\rm{G}}^{\scriptscriptstyle h}$. The identity map on $\cal{G}$ induces a morphism $\rho$ from $\rm{G}$ to ${\rm{G}}^{\scriptscriptstyle h}$ over $K = F \cap \R$ where we consider ${\rm{G}}$ and ${\rm{G}}^{\scriptscriptstyle h}$ as schemes over $K$. If $\xi: W \longrightarrow \rm{G}$ is an $F$-rational point of $\rm{G}$, then $\rho_{\ast}(\xi) = \rho \circ \xi \circ \mbox{spec}(h)$ defines an $F^{\scr h}$-rational point of ${\rm{G}}^{\scriptscriptstyle h}$.\\
\\ 
Let ${\rm{G}}$ be again a smooth variety over a subfield $F$ of $\C$. The set of complex points ${\rm{G}}(\C)$ of ${\rm{G}}$ is endowed with the structure of a complex manifold and, consequently, with the structure of a real-analytic manifold. For this we refer to Shafarevi\'c \cite[Ch.\,VII]{Sh} and to App.\,A.1. The complex- and the real-analytic structures are determined by the complex algebraic variety ${\rm{G}} \otimes_F \C$. In Subsect.\,3.2.1 we shall show
\begin{prop} \label{ji} The map $\rho_{\ast}: {\rm{G}}(\C) \longrightarrow {\rm{G}}^{\scriptscriptstyle h}(\C)$ from above is a real-analytic isomorphism.
\end{prop}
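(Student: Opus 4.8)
The plan is to reduce the statement to a computation in local coordinates, where it becomes the elementary fact that coordinate-wise complex conjugation is a real-analytic automorphism of $\C^{n}$.

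First I would cover $\G$ by affine open subschemes $\U_\alpha = \mbox{spec}\,A_\alpha$, each $A_\alpha$ a finitely generated $F$-algebra, and fix $F$-algebra generators, so that $A_\alpha \cong F[x_1,\ldots,x_{n}]/I_\alpha$ (with $n$ depending on $\alpha$). This produces a closed immersion $\U_\alpha\otimes_F\C\hookrightarrow\mathbb{A}^{n}_\C$, and by the construction of the analytic structure on the complex points of a variety (see \cite[Ch.\,VII]{Sh} and App.\,A.1) the complex-analytic, hence real-analytic, manifold structure on $\U_\alpha(\C)$ is the one it carries as the smooth closed analytic submanifold $V(I_\alpha)\subset\C^{n}$; these patch together to the structure on $\G(\C)$. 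Since $\G^{\scriptscriptstyle h}$ has the same underlying scheme as $\G$ but with the $F$-structure precomposed by $h$, the same elements $x_1,\ldots,x_n$ generate $A_\alpha$ as an $F^{\scriptscriptstyle h}$-algebra, and the relation ideal becomes $I_\alpha^{\scriptscriptstyle h}$, obtained by applying $h$ to the coefficients of a generating set of $I_\alpha$; hence $\U_\alpha^{\scriptscriptstyle h}(\C)$ is the submanifold $V(I_\alpha^{\scriptscriptstyle h})\subset\C^{n}$.

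Next I would unwind the definition $\rho_{\ast}(\xi)=\rho\circ\xi\circ\mbox{spec}(h)$ at the level of coordinate rings, using that $\rho^{\sharp}$ is the identity on the sheaf $\mathcal{O}_{\mathcal{G}}$ and that $\mbox{spec}(h)$ acts on $\mbox{spec}\,\C$ as complex conjugation. This should show that the $i$-th coordinate of $\rho_{\ast}(\xi)$ is the complex conjugate of the $i$-th coordinate of $\xi$; that is, in these charts $\rho_{\ast}$ is the restriction of the map $c\colon\C^{n}\longrightarrow\C^{n}$, $(z_1,\ldots,z_{n})\mapsto(\overline{z_1},\ldots,\overline{z_{n}})$. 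Since $c$ is an $\R$-linear automorphism of $\C^{n}\cong\R^{2n}$, it is a real-analytic diffeomorphism; and because $g(a)=0$ is equivalent to $g^{\scriptscriptstyle h}(\overline{a})=0$ for $g\in I_\alpha$, the map $c$ carries $V(I_\alpha)$ bijectively onto $V(I_\alpha^{\scriptscriptstyle h})$. A real-analytic diffeomorphism of the ambient affine space carrying one closed submanifold onto another restricts to a real-analytic isomorphism between them, so each $\rho_{\ast}|_{\U_\alpha(\C)}$ is a real-analytic isomorphism onto $\U_\alpha^{\scriptscriptstyle h}(\C)$. These local maps are all restrictions of the single scheme morphism $\rho$, hence agree on overlaps; since $\rho_{\ast}$ is visibly bijective and locally a real-analytic isomorphism, it is a real-analytic isomorphism, its inverse being the analogous map attached to the identity morphism $\G^{\scriptscriptstyle h}\longrightarrow(\G^{\scriptscriptstyle h})^{\scriptscriptstyle h}=\G$.

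The one step with genuine content, rather than bookkeeping, is the coordinate computation showing that $\rho_{\ast}$ is literally conjugation of coordinates. The care needed there is to keep apart three distinct occurrences of $h$: its action on $F$ (which turns $\G$ into $\G^{\scriptscriptstyle h}$ and $I_\alpha$ into $I_\alpha^{\scriptscriptstyle h}$), its action on $\C$ entering through $\mbox{spec}(h)$ in the definition of $\rho_{\ast}$, and the passage between the twisted $F^{\scriptscriptstyle h}$-structure and actual complex numbers; the computation must confirm both that $\rho_{\ast}(\xi)$ lands in $\U_\alpha^{\scriptscriptstyle h}(\C)$ and that it is given there by conjugation. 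Everything else is an application of the standard construction of the analytic structure on the complex points of a variety and of the elementary behaviour of submanifolds under ambient real-analytic diffeomorphisms.
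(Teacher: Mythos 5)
Your proposal is correct and follows essentially the same route as the paper: the paper's proof (deferred to Subsect.\,3.2.1) likewise identifies $\G^{\scriptscriptstyle h}$ in an affine chart with the subvariety of $\mathbb{A}^N$ cut out by the conjugate ideal $J_{\G}^{\scriptscriptstyle h}$ and observes that, under this identification, $\rho_{\ast}$ is uniformized by conjugation of the affine coordinates, hence is a real-analytic isomorphism. Your additional bookkeeping (patching over an affine cover and exhibiting the inverse as the analogous map for $\G^{\scriptscriptstyle h}$) only makes explicit what the paper leaves implicit.
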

If the field $F$ is contained in $\R$, then the set of real points ${\rm{G}}(\R)$ is a proper real-analytic submanifold of ${\rm{G}}(\C)$ of dimension $\dim\,{\rm{G}}(\R) = \dim\,{\rm{G}}$ and it is dense in ${\rm{G}}(\C)$ with respect to the Zariski topology. This assertion follows from Silhol \cite[p.\,31]{Sil}; it is in general wrong for singular varieties over $\R$.

\section{Algebraic groups and one-parameter homomorphisms. Definition of $\mathbf{\Psi_{[i]}, \Psi^{\scr h}}$ and $\mathbf{\Psi_{\mathcal{N}}}$} A group variety over $F$ is a group object in the category of varieties over $F$ with the additional property that the neutral element of the group law is a point over $F$. 
We consider a group variety ${\rm{G}}$ over $F$. We denote by $e = e_{\rm{G}} \in {\rm{G}}(F)$ the neutral element and define $\frak{g}$ to be the Lie algebra of left-invariant vector fields on ${\rm{G}}$. For a field extension $F'$ of $F$ we write $\frak{g}(F') = \frak{g} \otimes_F F'$. The space $\frak{g}$ is an algebraic object. However, each algebraic vector field defines a holomorphic vector field on the complex Lie group ${\rm{G}}(\C)$, and we obtain a natural inclusion $\frak{g}$ into the Lie algebra of ${{\rm{G}}}(\C)$. For dimension reasons this yields an identification of $\frak{g}(\C)$ with the Lie algebra of the complex Lie group ${\rm{G}}(\C)$. As recalled in App.\,\,A.2 we get then a unique exponential map ${\rm{exp}}_{\rm{G}} : \frak{g}(\C) \longrightarrow {\rm{G}}(\C)$. 
\begin{prop} \label{lp3333} \label{complex} Let ${{\rm{G}}}_c$ be a complex Lie group and let $\Psi: \R \longrightarrow {{\rm{G}}}_c$ be a real-analytic homomorphism.
Then there exists a unique holomorphic homomorphism
$\Phi: \C \longrightarrow {\rm{G}}_c$ such that $\Phi_{|\R} = \Psi$.
\end{prop}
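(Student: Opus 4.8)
The plan is to reduce everything to the exponential map of ${\rm{G}}_c$. Write $\frak{g}_c$ for the Lie algebra of the complex Lie group ${\rm{G}}_c$; it is a finite-dimensional complex vector space carrying a holomorphic exponential map $\exp_{{\rm{G}}_c}: \frak{g}_c \longrightarrow {\rm{G}}_c$ which restricts to a biholomorphism near $0$. Since $\Psi$ is real-analytic, and in particular $C^1$, its derivative at the origin defines a vector $X := \Psi_{\ast}(1) \in \frak{g}_c$ once the tangent space of ${\rm{G}}_c$ at $e$ is identified with $\frak{g}_c$ by left translation. The first thing I would establish is the classical formula $\Psi(r) = \exp_{{\rm{G}}_c}(rX)$ for all $r \in \R$.

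To do this I would note that $\gamma(r) := \Psi(r)$ and $r \mapsto \exp_{{\rm{G}}_c}(rX)$ are both $C^1$ homomorphisms $\R \to {\rm{G}}_c$. Differentiating the relation $\Psi(r+s) = \Psi(r)\Psi(s)$ with respect to $s$ at $s = 0$ gives $\gamma'(r) = (dL_{\gamma(r)})_e\,X$, so $\gamma$ is the integral curve through $e$ of the left-invariant vector field on ${\rm{G}}_c$ determined by $X$; the curve $r \mapsto \exp_{{\rm{G}}_c}(rX)$ solves the same ordinary differential equation with the same initial value, whence the two agree by uniqueness of solutions. I expect this to be the only place where a genuine argument, rather than a formal manipulation, is needed, and I would stress that it uses only the $C^1$-regularity of $\Psi$, not any holomorphy.

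Next I would define $\Phi: \C \longrightarrow {\rm{G}}_c$ by $\Phi(z) := \exp_{{\rm{G}}_c}(zX)$, which makes sense precisely because $\frak{g}_c$ is a \emph{complex} vector space, so that $zX \in \frak{g}_c$ for every $z \in \C$. The map $z \mapsto zX$ is $\C$-linear, hence holomorphic, and composing it with the holomorphic map $\exp_{{\rm{G}}_c}$ shows that $\Phi$ is holomorphic. It is a homomorphism because $zX$ and $wX$ lie in the abelian subalgebra $\C X$ and therefore commute, so that $\exp_{{\rm{G}}_c}\big((z+w)X\big) = \exp_{{\rm{G}}_c}(zX)\,\exp_{{\rm{G}}_c}(wX)$; and $\Phi_{|\R} = \Psi$ by the formula of the first step.

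Finally, for uniqueness I would argue as follows. If $\Phi_1, \Phi_2: \C \to {\rm{G}}_c$ are holomorphic homomorphisms both restricting to $\Psi$ on $\R$, pick a holomorphic chart of ${\rm{G}}_c$ around $e$; in this chart $\Phi_1$ and $\Phi_2$ become $\C^n$-valued holomorphic functions of one complex variable defined near $0$ which coincide on a real interval around $0$, a set having $0$ as an accumulation point. By the identity theorem they coincide on a neighbourhood of $0$ in $\C$, and since $\C$ is connected and both maps are homomorphisms, this forces $\Phi_1 = \Phi_2$ everywhere. This completes the plan; the main obstacle, such as it is, is the first step, namely the standard identification of $C^1$ one-parameter subgroups with flows of left-invariant vector fields.
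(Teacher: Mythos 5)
Your proof is correct, and it follows exactly the route the paper intends: the paper leaves this proposition to the reader and immediately afterwards uses the very identity $\Phi = {\rm{exp}}_{\G} \circ \Phi_{\ast}$ that your construction $\Phi(z) = \exp_{{\rm{G}}_c}(zX)$ produces. The only point worth polishing is the final uniqueness step, where the cleanest way to pass from agreement near $0$ to all of $\C$ is the homomorphism property itself (write $z = n\cdot(z/n)$ with $z/n$ in the neighbourhood), which you already hint at.
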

The proof of the proposition is left to the reader. In what follows we will mostly consider real-analytic homomorphisms $\Psi$ from $\R$ to the analytic manifold induced by a group variety ${\rm{G}}$. By Lie group theory
the extension $\Phi$ admits then a unique differential $\Phi_{\ast}: \C \longrightarrow \frak{g}(\C)$ such that $\Phi = {\rm{exp}}_{\G} \circ \Phi_{\ast}$.\footnote{More precisely, $\Phi_{\ast}$ is defined on ${\rm{Lie}}\,\C = \C\frac{d}{dz}$. But, as usual, we shall identify a tangent vector $c\frac{d}{dz}$ with the complex number $c$.} This is recalled in App.\,A. The restriction of $\Phi_{\ast}$ to $\R$ defines an $\R$-linear homomorphism to $\frak{g}(\C)$ which will be denoted by $\Psi_{\ast}$. Next we define the \textit{twin} of ${\Psi}$ by $\Psi_{[i]}(r) = \Phi(i r)$. The \textit{complex conjugate} of ${\Psi}$ is $\Psi^{\scriptscriptstyle h} = \rho_{\ast} \circ \Psi$. Moreover, we
set $\Psi_{\mathcal{N}} = \Psi \times \Psi^{\scriptscriptstyle h}$ and get this way a homomorphism into the set of complex points of ${\rm{G}} \times {\rm{G}}^{\scriptscriptstyle h}$. 
\section{Descent to real subfields}
\subsection{Descent}
\large{Let $F$ be a subfield of $\C$, set $K = F \cap \R$ and consider a commutative group variety $\rm{G}$ over $F$. Let $\Psi: \R \longrightarrow {{\rm{G}}}(\C)$ be a real-analytic homomorphism.
We say that ${\Psi}$ \textit{descends to $K$} if $\Psi$ is non-zero and if there is an algebraic group ${\rm{G}}'$ over $K$ and an isogeny $v: {\rm{G}} \longrightarrow {\rm{G}}' \otimes_{K}F$
of algebraic groups over $F$ such that $v_{\ast}(\Psi) = v \circ \Psi$ factors through the inclusion of ${\rm{G}}'(\R)$ in ${{\rm{G}}}'(\C)$. In other words, we get a commutative diagram
\begin{equation}
\begin{xy} 
  \xymatrix{ 
\R  \ar[r]^{\Psi}\ar[d]^{v_{\ast}(\Psi)} & {{\rm{G}}}(\C) \ar[d]^{v}\\
{\rm{G}}'(\R) \ar@^{(->}[r] & {{\rm{G}}}'(\C).
}
\end{xy}
\end{equation}
A diagram as above does not exist in general. To get a rough picture of this phenomenon, we look at two easy examples.
\begin{example} \label{exmpl} Let $F = \C$, let ${\rm{G}}'$ be $\mathbb{G}_{a,\R}$ and write ${\rm{G}}$ for $\mathbb{G}_{a,\C}$. Then ${\rm{G}}'(\R) = \R$ and ${{\rm{G}}}(\C) = \C$. A non-zero real-analytic homomorphism $\Psi: \R \longrightarrow {\rm{G}}(\C)$ is multiplication
with $\Psi(1) \in \C^{\ast}$. Hence, if we set $v(z) = z/\Psi(1)$, then $v_{\ast}(\Psi)$ takes values in ${\rm{G}}'(\R) = \R$.
\end{example}
The example throws some light on our definition of descent to $K$. We see that the definition is only interesting if the considered algebraic group ${\rm{G}}$
has a sufficiently rich geometric structure. But it is not sufficient that ${\rm{G}}$ admits a model over $\R$, as the following example shows.
\begin{example} \label{exmpl1} Let $F = \C$ and let ${\rm{T}}$ be the complex torus $\mathbb{G}_{m, \C}$. Then
$$\Psi: \R \longrightarrow {\rm{T}}(\C), \Psi(r) = e^{(1+i)r}$$
does not descend to $\R$. In fact, by Example \ref{realscheiss} below there are two isomorphism classes of tori ${\rm{T}}'$ over $\R$ of dimension $\dim\,{\rm{T}}' = 1$ with complexification
isomorphic to ${\rm{T}}$. One is represented by the multiplicative group over $\R$ and the other one by the commutative group variety $\mathbb{S}_{\R}$ where
\begin{equation}
\mathbb{S} = \mbox{spec}\,\Z[x,y]/(x^{\scriptscriptstyle 2} + y^{\scriptscriptstyle 2} - 1)
\end{equation}
is the algebraic torus over with neutral element $(1,0)$ from Sect.\,1.2. The set $\mathbb{S}(\R)$ is compact, whereas the image of $\Psi$ is not. 
Thus, if $\Psi$ descends to $\R$, then there is an isogeny $v$ of $\rm{T}$ such that $v_{\ast}(\Psi)$ takes real values. But each isogeny of the multiplicative group is multiplication with an integer and maps the image of $\Psi$ to itself. Therefore $\Psi$ cannot descend to $\R$.
\end{example}
The reader will find more on descents in App.\,A.3.
\subsection{Weak descent and ``defects''} 
As above we let $F$ be a subfield of $\C$ and set $K = F \cap \R$. We consider a commutative group variety $\G$ over $F$ and a real-analytic homomorphism $\Psi: \R \longrightarrow \G(\C)$. In this subsection we shall formulate a refined notion of descent. We say that ${\Psi}$ \textit{descends weakly to $K$} if $\Psi$ is non-zero and if there is an algebraic group ${\rm{G}}'$ of positive dimension over $K$ and a surjective homomorphism $v: {\rm{G}} \longrightarrow {\rm{G}}' \otimes_{K} F$
of algebraic groups over $F$ such that $v_{\ast}(\Psi)$ factors through the inclusion of ${\rm{G}}'(\R)$ in ${{\rm{G}}}'(\C)$. Roughly speaking, the definition of weak descent to $K$ is the same as the definition of descent to $K$, except that the isogeny in Diagram (2.3.1) is replaced by a homomorphism onto a positive dimensional algebraic group. In particular, descent to $K$ implies weak descent to $K$. The converse is not true, as the following example shows.
\begin{example} \label{890} Let $\A$ be a complex abelian variety which is not isogenous to an abelian variety definable over $\R$ and set ${\rm{T}}' = \mathbb{G}_{a,\R}$ and ${\rm{T}} = \mathbb{G}_{a,\C}$. Consider non-zero
real-analytic homomorphisms $\Psi_1: \R \longrightarrow \A(\C)$ and $\Psi_2: \R \longrightarrow {\rm{T}}'(\R)$ and let $v$ be the projection from $\A \times \rm{T}$ to $\rm{T}$. Then the homomorphism $\Psi = \Psi_1 \times \Psi_2$ does not descend to $\R$, whereas the image of $v_{\ast}(\Psi) = \Psi_2$ is contained in ${\rm{T}}'(\R)$.
\end{example}
We suppose now that $F$ is stable with respect to complex conjugation and that $[F:K] = 2$. Then the Lie algebra $\frak{g} = \rm{Lie}\,\G$ is a vector space of dimension $\dim_K \frak{g} = 2 \dim\,\frak{g}$ over $K$.
In the following proposition a central theme of the paper is thematized: the connection between "defects" and weak descents to real subfields.  
\begin{prop} \label{real} \label{ooo}\label{opu} Let $\Psi: \R \longrightarrow \G(\C)$ be a non-zero real-analytic homomorphism. Consider $\frak{g}$ as a vector space over $K$ and let $\frak{t} \subset \frak{g}$ be the smallest subspace over $K$
such that $\Psi_{\ast}(\R) \subset \frak{t} \otimes_K \R.$ If $\Psi$ descends weakly to $K = F \cap \R$ via $v: \G \longrightarrow \G' \otimes_K F$, then the "defect" 
$\dim_K\,\frak{g} - \dim\,\frak{t}$ is larger than or equal to the dimension of $\G'$. 

\end{prop}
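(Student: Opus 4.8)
The plan is to transport the condition "$v_\ast(\Psi)$ takes values in $\G'(\R)$" to the Lie algebra level and then count dimensions over $K$. First I would pass from $v$ to its differential $v_\ast : \frak{g} \longrightarrow \frak{g}' \otimes_K F$, which is a surjective map of Lie algebras over $F$ since $v$ is surjective (so $v_\ast$ is surjective by the standard fact that a surjective homomorphism of commutative group varieties induces a surjection on Lie algebras). Here $\frak{g}' = \mathrm{Lie}\,\G'$ is a vector space over $K$, and $\frak{g}' \otimes_K F$ carries the natural $F$-structure; but, crucially, $\frak{g}'$ itself sits inside $\frak{g}' \otimes_K F$ as the "real" $K$-rational points, compatibly with $\G'(\R) \subset \G'(\C)$ via $\exp_{\G'}$. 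Since $v$ is a morphism of algebraic groups over $F$ with $\G'$ definable over $K$, the map $v_\ast : \frak{g} \to \frak{g}'\otimes_K F$ is defined over $F$; viewing both sides as $K$-vector spaces (of dimensions $2\dim\G = \dim_K\frak{g}$ and $2\dim\G'$ respectively), $v_\ast$ is a surjective $K$-linear map.

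Next I would use the hypothesis that $v_\ast(\Psi)$ factors through $\G'(\R)$. Passing to differentials, this says precisely that $(v\circ\Psi)_\ast(\R) = v_\ast(\Psi_\ast(\R))$ lies in $\frak{g}' \subset \frak{g}' \otimes_K \R$, i.e. in the real locus. By minimality of $\frak{t}$, the subspace $\frak{t} \subset \frak{g}$ is the $K$-span of $\Psi_\ast(\R)$ inside $\frak{g}$ (more precisely the smallest $K$-subspace whose $\R$-extension contains $\Psi_\ast(\R)$); applying the $K$-linear map $v_\ast$, we get that $v_\ast(\frak{t})$ is a $K$-subspace of $\frak{g}'$ whose $\R$-extension contains $v_\ast(\Psi_\ast(\R))$, and conversely $v_\ast(\Psi_\ast(\R))$ $\R$-spans $v_\ast(\frak{t})\otimes_K\R$. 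The key point is now that $v_\ast(\frak{t}) = \frak{g}'$: indeed, $v_\ast(\frak{t})\otimes_K\C$ contains $v_\ast(\Psi_\ast(\C)) = (v\circ\Phi)_\ast(\C)$, and since $v\circ\Phi : \C \to \G'(\C)$ is a one-parameter homomorphism with Zariski-dense image precisely when its differential $\C$-spans $\frak{g}'(\C)$ — wait, this need not hold in general. Let me instead argue directly: the weak descent asks for $\G'$ of positive dimension receiving $\Psi$; but to bound $\dim\G'$ we only need $v_\ast(\frak{t})$ to have $K$-dimension at least $\dim\G'$, which would follow if $v_\ast$ restricted to $\frak{t}$ is surjective onto $\frak{g}'$.

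So the crux is: why is $v_\ast|_{\frak{t}} : \frak{t} \to \frak{g}'$ surjective? Since $v$ is surjective, $v\circ\Phi$ has image Zariski-dense in $\G'(\C)$ if $\Phi(\C)$ is Zariski-dense in $\G(\C)$ — but we are not assuming that. The honest fix: replace "surjective onto $\frak{g}'$" by working with the smallest subgroup. Actually the clean statement is that $\dim_K v_\ast(\frak{t}) \geq \dim\G'$ fails without Zariski-density, so one should instead take $\G'$ to be (up to isogeny) the Zariski closure of $v\circ\Psi(\R)$; the definition of weak descent can be reduced to this case since a subtorus/subgroup of $\G'$ defined over $K$ is again a positive-dimensional $K$-model. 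Granting that reduction, $v\circ\Phi$ has Zariski-dense image in $\G'(\C)$, hence $v_\ast(\Psi_\ast(\C))$ spans $\frak{g}'(\C)$ over $\C$, hence $v_\ast(\frak{t})\otimes_K\C = \frak{g}'\otimes_K\C$, forcing $v_\ast(\frak{t}) = \frak{g}'$ and $\dim_K\frak{t} \geq \dim_K v_\ast(\frak{t}) = \dim_K\frak{g}' = \dim\G'$ (note $\frak{g}'$ has $K$-dimension $\dim\G'$, not $2\dim\G'$, since $\G'$ is over $K$). Combining with $\dim_K\frak{g} = 2\dim\G$ gives $\dim_K\frak{g} - \dim\frak{t}$... this does not immediately give the claimed inequality either.

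The main obstacle, and the step I expect to require the most care, is exactly this dimension bookkeeping: reconciling the $K$-dimension of $\frak{t}$ (a subspace of $\frak{g}$, which has $K$-dimension $2\dim\G$) with $\dim\G'$ via the surjection $v_\ast$. I expect the correct argument to go: $v_\ast : \frak{g} \to \frak{g}'\otimes_K F$ is surjective over $F$, so its kernel $\frak{k}$ has $F$-dimension $\dim\G - \dim\G'$, hence $K$-dimension $2(\dim\G - \dim\G')$; and one shows $\frak{t}$ maps into the real locus $\frak{g}'\otimes_K\R$ under a suitably normalized $v_\ast$, so that $\frak{t} \subset \frak{k} \oplus (\text{a } K\text{-form complementary piece of } K\text{-dim } \dim\G')$... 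Rather than force this here, the clean route is: $\dim\frak{t} = \dim_K\frak{t} \leq \dim_K(\text{preimage of }\frak{g}'\text{ under }v_\ast) = 2(\dim\G - \dim\G') + \dim\G' = 2\dim\G - \dim\G' = \dim_K\frak{g} - \dim\G'$, using that $v_\ast^{-1}(\frak{g}')$ has $K$-dimension $\dim_K\ker v_\ast + \dim_K\frak{g}'$. That is the inequality. So the real content is Step (i): $v_\ast(\frak{t}) \subset \frak{g}'$ inside $\frak{g}'\otimes_K F$, which is the Lie-algebra shadow of the hypothesis that $v\circ\Psi$ lands in $\G'(\R)$ — this uses the compatibility of $\exp_{\G'}$ with the real structure and that $\Psi_\ast(\R)$, hence its $K$-span $\frak{t}$, maps under $v_\ast$ into the $K$-span of $v_\ast(\Psi_\ast(\R)) \subset \frak{g}'\otimes_K\R$, which is contained in $\frak{g}'$. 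I would present it in that order: differentials and surjectivity of $v_\ast$; the real-locus containment $v_\ast(\frak{t}) \subseteq \frak{g}'$; then the dimension count $\dim\frak{t} \leq \dim_K v_\ast^{-1}(\frak{g}') = \dim_K\frak{g} - \dim\G'$.
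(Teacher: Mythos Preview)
Your final argument is correct and is exactly the paper's approach: show $v_\ast(\frak{t}) \subset \frak{g}'$ using that $(v\circ\Psi)_\ast(\R) \subset \frak{g}'(\R)$, then conclude $\frak{t} \subset v_\ast^{-1}(\frak{g}')$ and do the $K$-dimension count $\dim_K v_\ast^{-1}(\frak{g}') = 2(\dim\G - \dim\G') + \dim\G' = \dim_K\frak{g} - \dim\G'$. One small tightening: your phrase ``$\frak{t}$ maps into the $K$-span of $v_\ast(\Psi_\ast(\R))$, which is contained in $\frak{g}'$'' is not literally right (that $K$-span lives in $\frak{g}'\otimes_K\R$, not in $\frak{g}'$); the clean justification, which the paper gives, is that $v_\ast^{-1}(\frak{g}')\otimes_K\R$ contains $\Psi_\ast(\R)$ by faithful flatness, so minimality of $\frak{t}$ forces $\frak{t}\subset v_\ast^{-1}(\frak{g}')$.
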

\begin{proof} The Lie algebra $\frak{g}' = {\rm{Lie}\,\G'}$ is identified with a proper subspace over $K$ of the Lie algebra of $\G' \otimes_K F$ (viewed as a vector space over $K$). We
will show in Corollary 3.3.3 below that $(v\circ \Psi)_{\ast}(\R)$ is contained in $\frak{g}'(\R)$. Since $\R$ is a faithfully flat $K$-module and as $v_{\ast}$ is $K$-linear,
it follows that $v_{\ast}(\frak{t})$ is contained in $\frak{g}'$. Hence, $\frak{t}$ is a proper subspace of $\frak{g}$ over $K$ and $\dim_K\,\frak{g} - \dim\,\frak{t} \geq \dim\,\frak{g}' = \dim\,\G' $.
\end{proof}
Conversely it is not true in general that a positive "defect" implies weak descent to $K$. We will show later that the this holds provided that $F = \overline{\Q}$ (see Corollary\,\ref{E}).
 
\subsection{The main criterion for descent and weak descent}
We denote by $F$ a subfield of $\C$ and define $K$ to be the intersection $F \cap \R$. It is again assumed that $F$ is stable with respect to complex conjugation and that $[F:K] = 2$. We let 
$\G$ be an algebraic group variety over $F$ and let $\Psi: \R \longrightarrow \G(\C)$ be a non-zero real-analytic homomorphism with Zariski-dense image in $\G(\C)$. Recall the definition of $\Psi_{\mathcal{N}}$ from Sect.\,2.2. We write ${{\rm{H}}}$ for the smallest algebraic subgroup of $\G \times \G^{\scr h}$ such that $\Psi_{\mathcal{N}}(\R) \subset {\rm{H}}(\C)$. The algebraic group ${{\rm{H}}}$ equals the intersection $\bigcap {{\rm{V}}}$ of all algebraic subgroups ${{{\rm{V}}}} \subset \G \times \G^{\scr h}$ with the property that $\Psi_{\mathcal{N}}(\R) \subset {{\rm{V}}}(\C)$. In Sect.\,4.4 we shall prove
\begin{theorem} \label{weakdescent} The homomorphism $\Psi$ descends to $K$ if and only if $\dim\,{{\rm{H}}} = \dim\,\G$. And $\Psi$ descends weakly to $K$ if and only if $\dim\,{{\rm{H}}} < 2\dim\,\G$. 
\end{theorem}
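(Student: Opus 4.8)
The plan is to prove both equivalences by relating the group variety $\mathcal{N} = \mathcal{N}_{F/K}(\G)$ (the Weil restriction, with $\mathcal{N} \otimes_K F \cong \G \times \G^{\scr h}$ and $\mathcal{N}(\R)$ canonically identified with $\G(\C)$) to homomorphisms \emph{out of} $\G$ to group varieties definable over $K$. The key structural input, which I would quote from Section~4.3 (Proposition~4.3.1), is that for a Galois extension $F/K$ of degree $2$, isogenies $v: \G \longrightarrow \G' \otimes_K F$ with $\G'$ over $K$ correspond bijectively (up to isogeny) to certain quotients of $\mathcal{N}$ that are themselves isogenous to $\G$; more generally, surjective homomorphisms $v: \G \longrightarrow \G' \otimes_K F$ onto positive-dimensional $\G'$ over $K$ correspond to positive-dimensional quotients of $\mathcal{N}$ definable over $K$. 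Under the identification $\mathcal{N}(\R) = \G(\C)$, the homomorphism $\Psi_{\mathcal{N}} = \Psi \times \Psi^{\scr h}$ is exactly the homomorphism $\R \longrightarrow \mathcal{N}(\R)$ attached to $\Psi$, and ${\rm{H}}$ is its Zariski closure inside $\mathcal{N} \otimes_K F = \G \times \G^{\scr h}$.

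First I would handle the direction ``descent $\Rightarrow$ $\dim\,{\rm{H}} = \dim\,\G$'' (and its weak analogue). If $\Psi$ descends to $K$ via an isogeny $v: \G \longrightarrow \G' \otimes_K F$ with $v_{\ast}(\Psi)$ landing in $\G'(\R)$, then applying the Weil-restriction functor one gets a homomorphism $v_{\mathcal{N}}: \mathcal{N}_{F/K}(\G) \longrightarrow \mathcal{N}_{F/K}(\G' \otimes_K F)$, and since $\G'$ is already defined over $K$, the target Weil restriction contains $\G'$ (over $K$) as the ``diagonal'' factor together with a canonical retraction $q: \mathcal{N}_{F/K}(\G'\otimes_K F) \longrightarrow \G'$. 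Composing, $q \circ v_{\mathcal{N}}: \mathcal{N} \longrightarrow \G'$ is a homomorphism over $K$ whose base change to $F$ is surjective, and on $\R$-points it sends $\Psi_{\mathcal{N}}(\R)$ into $\G'(\R)$. Because $v_{\ast}(\Psi)$ has values in $\G'(\R)$ and $\G'(\R)$ is Zariski-dense in $\G'(\C)$ with $v$ an isogeny, the Zariski closure ${\rm{H}}$ of $\Psi_{\mathcal{N}}(\R)$ maps \emph{isomorphically up to isogeny} onto $\G'$ under the projection $p: \G \times \G^{\scr h} \to \G$ (restricted to ${\rm{H}}$), giving $\dim\,{\rm{H}} = \dim\,\G' = \dim\,\G$. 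For the weak case one only gets that ${\rm{H}}$ surjects onto a positive-dimensional $\G'$, forcing $\dim\,{\rm{H}} \geq \dim\,\G'$; combined with the fact that $\Psi$ does not have Zariski-dense image in $\mathcal{N}(\C)$ unless no real model intervenes — more carefully, one shows $\dim\,{\rm{H}} < 2\dim\,\G$ because $\frak{t}$ is a proper subspace of $\frak{g}$ over $K$ by Proposition~\ref{opu}, and $\dim\,{\rm{H}} = \dim_K\,\frak{t}_{\mathcal{N}}$ where $\frak{t}_{\mathcal{N}}$ is the $K$-span of $(\Psi_{\mathcal{N}})_{\ast}(\R)$, which has $K$-dimension at most $\dim\,\frak{g} + \dim\,\frak{t} < 2\dim\,\frak{g}$.

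For the converse directions, suppose $\dim\,{\rm{H}} = \dim\,\G$. Since $\Psi$ has Zariski-dense image in $\G(\C)$, the projection $p_{|{\rm{H}}}: {\rm{H}} \longrightarrow \G$ is surjective, hence by the dimension count an isogeny; likewise the second projection ${\rm{H}} \to \G^{\scr h}$ is an isogeny. Now ${\rm{H}} \subset \G \times \G^{\scr h} = \mathcal{N}\otimes_K F$ need not a priori be definable over $K$, but the point (this is where Proposition~4.3.1 does the work) is that ${\rm{H}}$ is stable under the natural $\mathrm{Gal}(F/K)$-semilinear action on $\mathcal{N}\otimes_K F$ — because $\Psi_{\mathcal{N}}(\R)$ sits in the $\mathrm{Gal}(F/K)$-invariant locus $\mathcal{N}(\R)$, and taking Zariski closure preserves this invariance. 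A Galois-descent argument (degree-$2$ extension, Weil's descent criterion) then yields an algebraic group $\G'$ over $K$ with $\G' \otimes_K F \cong {\rm{H}}$, and the isogeny $\G \xrightarrow{\ p_{|{\rm{H}}}^{-1}\ } {\rm{H}} = \G'\otimes_K F$ is the desired $v$; that $v_{\ast}(\Psi)$ has values in $\G'(\R)$ follows since $\Psi_{\mathcal{N}}(\R) \subset {\rm{H}}(\C) \cap \mathcal{N}(\R) = \G'(\R)$. For the weak descent converse, assume $\dim\,{\rm{H}} < 2\dim\,\G$; then ${\rm{H}}$ is a \emph{proper} $\mathrm{Gal}(F/K)$-stable subgroup of $\G\times\G^{\scr h}$. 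One passes to a maximal plurisimple (or at least positive-dimensional) quotient: there is a surjection $\mathcal{N} \twoheadrightarrow \mathcal{Q}$ over $K$ whose kernel's base change contains ${\rm{H}}$ — wait, rather: because ${\rm{H}} \subsetneq \G \times \G^{\scr h}$ and the first projection $\G \times \G^{\scr h} \to \G$ is already surjective on ${\rm{H}}$... the subtle point is to produce a $K$-group $\G'$ of positive dimension receiving $\G$. Here one uses that ${\rm{H}}$, being Galois-stable and proper, descends to a $K$-subgroup ${\rm{H}}_0 \subset \mathcal{N}$, and then one composes $\G \hookrightarrow \G\times\G^{\scr h}$ (via $\xi \mapsto (\xi, \rho_\ast\xi)$, the ``diagonal'' matching $\Psi_{\mathcal{N}}$) landing inside ${\rm{H}}$ with a suitable quotient of ${\rm{H}}_0$; choosing $\G'$ to be the image of $\G$ in an appropriate $K$-quotient of ${\rm{H}}_0$ gives $v: \G \to \G' \otimes_K F$ with $v_\ast(\Psi) \subset \G'(\R)$, and $\dim \G' > 0$ because $\frak{t} \subsetneq \frak{g}$ over $K$ by the defect computation.

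The main obstacle I expect is the Galois-descent step in the converse: showing that $\mathrm{Gal}(F/K)$-invariance of the Zariski closure ${\rm{H}}$ (inherited from the invariance of $\Psi_{\mathcal{N}}(\R) \subset \mathcal{N}(\R)$) is enough to descend ${\rm{H}}$ to a group subvariety over $K$, and — more delicately — to get from such a $K$-subgroup an actual \emph{homomorphism out of $\G$} rather than merely a subgroup of the Weil restriction. This is exactly the content that the paper isolates in Proposition~4.3.1 (``Weil restrictions control homomorphisms \emph{from} $\G$''), and I would invoke it as a black box: the essential input is that, for $[F:K]=2$, the Weil restriction $\mathcal{N}_{F/K}(\G)$ and its $K$-subquotients are in an appropriate correspondence with pairs $(\G', v)$ where $\G'$ is over $K$ and $v: \G \to \G'\otimes_K F$; everything else (the dimension bookkeeping, the real-points identifications, the fact that Zariski-dense image of $\Psi$ makes the relevant projections isogenies) is routine once that correspondence and Proposition~\ref{opu} are in hand. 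A secondary technical point is checking $\frak{t} \subsetneq \frak{g}$ over $K$ is \emph{equivalent} to $\dim\,{\rm{H}} < 2\dim\,\G$, which amounts to the identity $\dim\,{\rm{H}} = \dim_K\,\frak{t}_{\mathcal{N}}$ together with $\dim_K \frak{t}_{\mathcal{N}} = \dim_K \frak{t} + \dim\,\frak{g}$ coming from the explicit shape of $(\Psi_{\mathcal{N}})_{\ast} = \Psi_{\ast} \times \Psi^{\scr h}_{\ast}$.
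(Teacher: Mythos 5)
Your overall architecture agrees with the paper's (Galois stability of $\HHH$ giving a model $\HHH'$ over $K$, the isogeny $p_{\G|\HHH}$ when $\dim\,\HHH = \dim\,\G$, and Proposition \ref{1234} for the hard direction), but two steps are genuinely wrong. First, your ``easy'' direction for weak descent rests on the identity $\dim\,\HHH = \dim_K\,\frak{t}_{\mathcal{N}}$ and on the claim that the defect condition $\frak{t} \subsetneq \frak{g}$ over $K$ is equivalent to $\dim\,\HHH < 2\dim\,\G$. Both fail: Lemma \ref{op=} gives $\dim\,\frak{t}_{\mathcal{N}} = \dim\,\frak{t}$, not $\dim\,\frak{t} + \dim\,\frak{g}$, and the Zariski closure $\HHH$ is in general strictly larger than anything read off from the tangent space. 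For $\G = \mathbb{G}_{m,\C}$ and $\Psi(r) = e^{(1+i)r}$ (Example \ref{exmpl1}) the space $\frak{t}$ is a line, so the defect is positive, yet $\HHH = \G \times \G^{\scr h}$ has dimension $2\dim\,\G$ and $\Psi$ does not descend (weakly); this is exactly the failure the paper flags right after Proposition \ref{real} -- a positive defect implies weak descent only over $\overline{\Q}$, via transcendence theory, and is not a formal equivalence. The correct easy direction is purely geometric: if $v: \G \longrightarrow \G' \otimes_K F$ is surjective with $\dim\,\G' > 0$ and $v_{\ast}(\Psi)$ real-valued, then $v_{\ast}(\Psi)$ equals its conjugate, so $\Psi_{\mathcal{N}}$ maps into $v_{\mathcal{N}}^{\scr -1}(\Delta)$, where $\Delta$ is the diagonal copy of $\G'$ inside $\mathcal{N}_{F/K}(\G' \otimes_K F) \otimes_K F$; this preimage has dimension $2\dim\,\G - \dim\,\G' < 2\dim\,\G$, whence $\HHH$ is proper, and when $v$ is an isogeny it has dimension $\dim\,\G$, which together with surjectivity of $p_{\G|\HHH}$ also gives the clean proof of the bound $\dim\,\HHH = \dim\,\G$ that you left vague in the descent case.

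Second, in the hard direction your reduction to Proposition \ref{1234} does not work as written: the map $\xi \mapsto (\xi, \rho_{\ast}\xi)$ is not an algebraic morphism over $F$ (it is only real-analytic on complex points), so $\G$ does not embed into $\HHH$ and you cannot take ``the image of $\G$ in a $K$-quotient of $\HHH_0$''. The proposition must be applied to the $K$-model $\HHH'$ of $\HHH$ itself: Zariski-density of $\Psi(\R)$ gives $p_{\G}(\HHH) = \G$, and (after passing to the maximal plurisimple quotient through the universal homomorphism and Proposition \ref{upp}, which is how the paper arranges the hypotheses) Proposition \ref{1234} produces a surjection $v: \G \longrightarrow \G' \otimes_K F$ with $\dim\,\G' > 0$ such that $v \circ p_{\G|\HHH}$ is defined over $K$; then $v_{\ast}(\Psi) = (v \circ p_{\G|\HHH}) \circ \Psi_{\mathcal{N}}$ lands in $\G'(\R)$ because $\Psi_{\mathcal{N}}(\R) \subset \HHH'(\R)$. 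Positivity of $\dim\,\G'$ is part of the conclusion of that proposition and neither needs nor can be obtained from your defect argument.
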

The first part of the main criterion is not a deep result, but it will be used everywhere. The condition in the second part means nothing but that $\HHH$ is proper in $\G \times \G^{\scr h}$. Although the second assertion is also standing to reason, its proof is quite involved and occupies almost all of the fourth chapter.

\section{Inherited descent to real subfields} Many applications in transcendence theory involve not only a single algebraic group $\G$ but a homomorphism $\pi: \G \longrightarrow {\rm{U}}$ of algebraic groups. To be more precise, one tries
to deduce transcendence properties of ${\rm{U}}$ from transcendence properties of $\rm{G}$. Typically, $\G$ is an extension of ${\rm{U}}$ by an ``auxiliary'' linear group and $\pi$ is the projection to ${\rm{U}}$.
Similar situations will appear in our real-analytic setting and this section is devoted to an axiomatic approach to such situations. The problem here can be formulated as follows. Let $F$ be a subfield of $\C$ and let $\pi: \G \longrightarrow {\rm{U}}$ be a surjective homomorphism of commutative group varieties over $F$. Consider a commutative diagram of non-zero real-analytic homomorphisms
\begin{equation}\begin{xy} 
  \xymatrix{\R \ar[rr]^{\Psi} \ar[rrd]^{\pi_{\ast}(\Psi)} && \rm{G}(\C) \ar[d]^{\pi}\\
&& {\rm{U}}(\C)}
\end{xy}
\end{equation}
If $\Psi$ descends (weakly) to $K = F \cap \R$, when does $\pi_{\ast}(\Psi)$ descend (weakly) to $K$? 
\begin{example} Let ${\rm{U}} = \rm{A}$ and $\Psi_1$ be as in Example \ref{890} and let $\pi: {\rm{A}} \times {\rm{A}}^{\scr h} \longrightarrow {\rm{U}}$ be the projection. As we will learn in the next chapter, $\Psi = (\Psi_1)_{\mathcal{N}}$ descends to $\R$, whereas $\pi_{\ast}(\Psi) = \Psi_1$ cannot descend to $\R$. 
\end{example}
In the example the group ${\rm{Hom}}(ker\,\pi, {{\rm{U}}}^{\scr h})$ is infinite. The following two theorems which are proved in Ch.\,5 show that this is one essential obstruction for inherited descents. In the statements we suppose that $F$ is stable with respect to complex conjugation. 
\begin{theorem} \label{inherited} If $\Psi$ descends to $K$ and if
${\rm{Hom}}\big(ker\,\pi, {{\rm{U}}}^{\scriptscriptstyle h}\big) = \{0\}$, then $\pi_{\ast}(\Psi)$ descends to $K$. 

\end{theorem}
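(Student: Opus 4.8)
The plan is to apply the main criterion (Theorem \ref{weakdescent}) both to $\Psi$ and to $\pi_\ast(\Psi)$, and to translate the hypothesis ${\rm{Hom}}(ker\,\pi, {\rm{U}}^{\scr h}) = \{0\}$ into the equality of the relevant Zariski closures. First I would set ${\rm{U}}_{\mathcal N} = {\rm{U}} \times {\rm{U}}^{\scr h}$ and observe that $\pi$ induces a surjective homomorphism $\pi_{\mathcal N} = \pi \times \pi^{\scr h}: \G \times \G^{\scr h} \longrightarrow {\rm{U}} \times {\rm{U}}^{\scr h}$, with the property that $\pi_{\mathcal N} \circ \Psi_{\mathcal N} = (\pi_\ast\Psi)_{\mathcal N}$ (this is immediate from the definitions in Sect.\,2.2, since conjugation commutes with the algebraic homomorphism $\pi$). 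Write ${\rm{H}} \subset \G \times \G^{\scr h}$ for the smallest algebraic subgroup with $\Psi_{\mathcal N}(\R) \subset {\rm{H}}(\C)$ and ${\rm{H}}_{\rm{U}} \subset {\rm{U}} \times {\rm{U}}^{\scr h}$ for the smallest one with $(\pi_\ast\Psi)_{\mathcal N}(\R) \subset {\rm{H}}_{\rm{U}}(\C)$. Then $\pi_{\mathcal N}({\rm{H}}) = {\rm{H}}_{\rm{U}}$: the image is an algebraic subgroup containing $(\pi_\ast\Psi)_{\mathcal N}(\R)$, hence contains ${\rm{H}}_{\rm{U}}$, and conversely $\pi_{\mathcal N}^{-1}({\rm{H}}_{\rm{U}}) \cap {\rm{H}}$ is an algebraic subgroup of $\G \times \G^{\scr h}$ through which $\Psi_{\mathcal N}$ factors, hence equals ${\rm{H}}$.

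Since $\Psi$ descends to $K$, the first part of Theorem \ref{weakdescent} gives $\dim\,{\rm{H}} = \dim\,\G$. Because $\Psi$ has Zariski-dense image in $\G(\C)$ (this is part of the standing hypotheses under which Theorem \ref{weakdescent} is stated — I would note that $\Psi_{\mathcal N}$ having Zariski-dense image in $\G \times \G^{\scr h}$ is what makes the criterion applicable, and that $\pi_\ast\Psi$ then has Zariski-dense image in ${\rm{U}}(\C)$), the projection $p: {\rm{H}} \longrightarrow \G$ onto the first factor is surjective; combined with $\dim\,{\rm{H}} = \dim\,\G$ it is an isogeny, so ${\rm{H}}$ is the graph of an isogeny $\G \longrightarrow \G^{\scr h}$ up to the relation it cuts out, and in particular the second projection ${\rm{H}} \longrightarrow \G^{\scr h}$ is also an isogeny. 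To conclude that $\pi_\ast(\Psi)$ descends to $K$ it suffices, again by Theorem \ref{weakdescent}, to prove $\dim\,{\rm{H}}_{\rm{U}} = \dim\,{\rm{U}}$, i.e.\ that the first projection ${\rm{H}}_{\rm{U}} \longrightarrow {\rm{U}}$ is an isogeny. Surjectivity is clear from surjectivity of $\pi_{\mathcal N}({\rm{H}}) = {\rm{H}}_{\rm{U}}$ and of $\pi$; the point is to bound $\dim\,{\rm{H}}_{\rm{U}}$ from above by $\dim\,{\rm{U}}$.

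This is where the hypothesis ${\rm{Hom}}(ker\,\pi, {\rm{U}}^{\scr h}) = \{0\}$ enters, and it is the main obstacle. The idea is that ${\rm{H}}_{\rm{U}} = \pi_{\mathcal N}({\rm{H}})$, and $\ker(\pi_{\mathcal N}|_{\rm{H}})$ is an algebraic subgroup of ${\rm{H}} \cap (\ker\pi \times \ker\pi^{\scr h})$; since the second projection ${\rm{H}} \longrightarrow \G^{\scr h}$ is an isogeny, the composite ${\rm{H}} \cap (\ker\pi \times \ker\pi^{\scr h}) \hookrightarrow {\rm{H}} \longrightarrow \G^{\scr h}$ lands, up to finite index, in a subgroup of $\ker\pi^{\scr h}$ — wait, one must instead argue the other way: consider the first projection ${\rm{H}} \longrightarrow \G$, an isogeny, so $\ker(\pi_{\mathcal N}|_{\rm{H}})$ maps under it isogenously onto a subgroup $S \subset \ker\pi$; the second projection then yields a homomorphism (up to the ambiguity of the isogeny, hence a genuine homomorphism after passing to a finite cover, or directly a homomorphism of connected components) from $S^o \subset \ker\pi$ into $\ker\pi^{\scr h} \subset \G^{\scr h}$, and composing with the inclusion $\ker\pi^{\scr h}\hookrightarrow \G^{\scr h}$ followed by $\pi^{\scr h}$ would be zero, but composing with just the inclusion gives an element of ${\rm{Hom}}(\ker\pi, \G^{\scr h})$ whose image lies in $\ker\pi^{\scr h}$; the hypothesis ${\rm{Hom}}(\ker\pi,{\rm{U}}^{\scr h}) = \{0\}$ must be leveraged — most cleanly by noting that any homomorphism $\ker\pi \longrightarrow \G^{\scr h}$ composed with $\pi^{\scr h}$ gives a homomorphism $\ker\pi \longrightarrow {\rm{U}}^{\scr h}$, which vanishes, so the image lies in $\ker\pi^{\scr h}$; this does not immediately finish, so the correct statement to extract is rather that $\dim\ker(\pi_{\mathcal N}|_{\rm{H}}) = \dim\ker\pi$, equivalently $\dim{\rm{H}}_{\rm{U}} = \dim{\rm{H}} - \dim\ker\pi = \dim\G - \dim\ker\pi = \dim{\rm{U}}$. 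I expect the hard part to be making this dimension count rigorous: one needs to show that $\ker(\pi_{\mathcal N}|_{\rm{H}})$ is, up to isogeny, all of $\ker\pi$ (embedded diagonally-ish via the isogeny ${\rm{H}}\cong\G$), and the obstruction to a component of $\ker\pi$ being "bent away" so that it fails to map into $\ker\pi^{\scr h}$ — forcing ${\rm{H}}_{\rm{U}}$ to be larger than expected — is precisely a nonzero element of ${\rm{Hom}}(\ker\pi, {\rm{U}}^{\scr h})$. I would therefore organize the final step as: let $N = \ker(\pi_{\mathcal N}|_{\rm{H}})^o$; using that $p_1|_{\rm{H}}$ is an isogeny, identify $N^o$ with (a finite-index subgroup of) $\ker\pi$ via $p_1$ and with a subgroup of $\G^{\scr h}$ via $p_2$; the resulting homomorphism $\ker\pi \to \G^{\scr h}$, post-composed with $\pi^{\scr h}$, is the zero map because it factors through $\ker(\pi_{\mathcal N})$, hence it lies in ${\rm{Hom}}(\ker\pi,{\rm{U}}^{\scr h})=\{0\}$ after post-composition — and combining this with a symmetric argument using $p_2|_{\rm{H}}$, conclude $\dim N = \dim\ker\pi$, whence $\dim{\rm{H}}_{\rm{U}} = \dim{\rm{U}}$, and Theorem \ref{weakdescent} yields that $\pi_\ast(\Psi)$ descends to $K$. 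One should double-check the subtlety that these constructions are only homomorphisms up to isogeny, which is harmless for the dimension count but must be phrased carefully, presumably by passing to Lie algebras where everything becomes genuinely linear and the hypothesis reads ${\rm{Hom}}(\ker\pi,{\rm{U}}^{\scr h})=\{0\}\Rightarrow{\rm{Hom}}({\rm{Lie}}\,\ker\pi,{\rm{Lie}}\,{\rm{U}}^{\scr h})$ is trivial as well (this implication needs ${\rm{U}}^{\scr h}$ to have no nontrivial ${\rm{Lie}}$-homomorphisms from a unipotent/semiabelian piece, which is the typical situation and is exactly what the auxiliary propositions of Sect.\,5.1–5.2 are presumably set up to handle).
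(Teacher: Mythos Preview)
Your approach is essentially the paper's: the core of the argument is exactly Proposition~\ref{skeleton} (Lemma~5.1.2 in particular), which shows that $\dim\big({\rm{H}}\cap(\ker\pi\times\ker\pi^{\scr h})\big)=\dim\ker\pi$ by constructing from ${\rm{H}}$ a homomorphism $\ker\pi\to{\rm{U}}^{\scr h}$ and invoking the hypothesis. The one organizational difference is that the paper first uses the \emph{definition} of descent (not the main criterion) to obtain an isogeny $v:\G\to\G'\otimes_KF$ and then pulls $\Psi$ back along an inverse isogeny $w$, so that one may work with $\G'\otimes_KF$ in place of $\G$; this makes $p_{\G|{\rm{H}}}$ an honest isomorphism rather than merely an isogeny, and the map $\ker\pi\to{\rm{U}}^{\scr h}$ becomes a genuine homomorphism without any ``up to isogeny'' bookkeeping (the price is that the hypothesis weakens from ${\rm{Hom}}(\ker\pi,{\rm{U}}^{\scr h})=0$ to ``torsion'', which is harmless). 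Your alternative of taking a quasi-inverse $\sigma:\ker\pi\to W$ with $p_1\circ\sigma=[k]$ also works and is equally clean.

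One correction: your closing suggestion to pass to Lie algebras is a wrong turn. The implication ${\rm{Hom}}(\ker\pi,{\rm{U}}^{\scr h})=0\Rightarrow{\rm{Hom}}({\rm{Lie}}\,\ker\pi,{\rm{Lie}}\,{\rm{U}}^{\scr h})=0$ fails already for $\ker\pi=\mathbb{G}_a$ and ${\rm{U}}^{\scr h}=\mathbb{G}_m$. The isogeny issue is resolved on the level of algebraic groups, not Lie algebras, by either of the two devices above.
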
 
Example \ref{890} (with $\pi$ the projection to ${\rm{U}} = \A)$ illustrates that the theorem does not generalize literally to weak descents. For a correct generalization of the previous theorem to weak descents we need to consider the smallest algebraic subgroup ${{\rm{H}}} \subset \G \times \G^{\scr h}$ such that $\Psi(\R) \subset {\rm{H}}(\C)$.
\begin{theorem} \label{inherited1} If $\dim\,\G + \dim\,{\rm{U}} > \dim\,{\rm{H}}$ and if
${\rm{Hom}}\big({{\rm{U}}}^{\scriptscriptstyle h}, {\rm{M}}\big) = \{0\}$ for each quotient $\rm{M}$ of $ker\,\pi$, then $\pi_{\ast}(\Psi)$ descends weakly to $K$. 

\end{theorem}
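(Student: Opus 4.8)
\textbf{Proof plan for Theorem \ref{inherited1}.}
The strategy is to deduce the weak descent of $\pi_{\ast}(\Psi)$ from the main criterion (Theorem \ref{weakdescent}), using the hypotheses on $\mathrm{Hom}$-groups to control how the ``smallest subgroup'' behaves under the projection $\pi$. First I would pass to the Weil-restriction picture. Write $\G \times \G^{\scr h}$ and let $\pi_{\mathcal{N}} = \pi \times \pi^{\scr h}: \G \times \G^{\scr h} \longrightarrow \U \times \U^{\scr h}$ be the induced homomorphism; note that $(\pi_{\ast}\Psi)_{\mathcal{N}} = \pi_{\mathcal{N}} \circ \Psi_{\mathcal{N}}$, because the formation of the conjugate $\Psi^{\scr h} = \rho_{\ast}\circ\Psi$ is functorial. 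Let $\HHH \subset \G \times \G^{\scr h}$ be the smallest algebraic subgroup with $\Psi_{\mathcal{N}}(\R) \subset \HHH(\C)$, and let $\HHH_{\U} \subset \U \times \U^{\scr h}$ be the analogous subgroup for $(\pi_{\ast}\Psi)_{\mathcal{N}}$. Since $\pi_{\mathcal{N}}(\HHH)$ is an algebraic subgroup of $\U \times \U^{\scr h}$ containing $(\pi_{\ast}\Psi)_{\mathcal{N}}(\R)$, we get $\HHH_{\U} \subset \pi_{\mathcal{N}}(\HHH)$, hence
\begin{equation}
\dim\,\HHH_{\U} \;\leq\; \dim\,\pi_{\mathcal{N}}(\HHH) \;=\; \dim\,\HHH - \dim\big(\HHH \cap \ker\,\pi_{\mathcal{N}}\big).
\end{equation}
By the second part of Theorem \ref{weakdescent}, to prove that $\pi_{\ast}(\Psi)$ descends weakly to $K$ it suffices to show $\dim\,\HHH_{\U} < 2\dim\,\U$.

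The heart of the argument is therefore a lower bound for $\dim(\HHH \cap \ker\,\pi_{\mathcal{N}})$ in terms of the ``defect'' $\dim\,\G + \dim\,\U - \dim\,\HHH$, and this is where the hypothesis $\mathrm{Hom}(\U^{\scr h}, \rm{M}) = \{0\}$ for every quotient $\rm{M}$ of $\ker\,\pi$ enters. Write $\rm{L} = \ker\,\pi$, so $\ker\,\pi_{\mathcal{N}} = \rm{L} \times \rm{L}^{\scr h}$. The projection of $\HHH$ to the first factor $\G$ is all of $\G$ (since $\Psi(\R)$ is Zariski-dense in $\G(\C)$), and likewise the projection to $\G^{\scr h}$ is all of $\G^{\scr h}$; so $\HHH$ is the graph-like subgroup cut out by the condition it imposes between $\G$ and $\G^{\scr h}$, and more precisely $\HHH \cap (\{e\}\times \G^{\scr h})$ has dimension exactly $\dim\,\HHH - \dim\,\G$, with a symmetric statement on the other side. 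Intersecting further with $\ker\,\pi_{\mathcal{N}}$, one shows that the image $\rm{N}$ of $\HHH \cap (\rm{L}\times\rm{L}^{\scr h})$ under the projection to $\rm{L}^{\scr h}$ satisfies $\dim\big(\HHH \cap \ker\pi_{\mathcal{N}}\big) \geq \dim\,\rm{N} + (\dim\,\HHH - \dim\,\G)$ or a variant thereof; the point is to get $\dim\,\HHH_\U$ bounded by $2\dim\,\U - (\dim\,\G + \dim\,\U - \dim\,\HHH)$ up to a correction term measured by a Hom-group. Here is where the propositions of Sect.\,5.1--5.2 (which I am permitted to invoke) do the real work: the fibre of $\pi_{\mathcal{N}}(\HHH) \to \U^{\scr h}$ over a generic point, modulo the part coming from $\G$, gives rise to a homomorphism from a sub-quotient of $\U^{\scr h}$ into a quotient of $\rm{L}$, and the vanishing hypothesis forces this homomorphism to be trivial, which is exactly what collapses the dimension estimate to the desired strict inequality $\dim\,\HHH_\U < 2\dim\,\U$.

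Assembling the pieces: from the defect hypothesis $\dim\,\G + \dim\,\U > \dim\,\HHH$ one obtains $\dim\,\HHH < \dim\,\G + \dim\,\U$, and feeding this together with the vanishing-Hom bound into the chain above yields $\dim\,\HHH_{\U} < 2\dim\,\U$; Theorem \ref{weakdescent} then gives the weak descent of $\pi_{\ast}(\Psi) = \pi\circ\Psi$ to $K$. One should also check the non-degeneracy hypotheses needed to quote Theorem \ref{weakdescent} for $\pi_{\ast}(\Psi)$, namely that $\pi_{\ast}(\Psi)$ is non-zero with Zariski-dense image in $\U(\C)$ --- the former because $\mathrm{Hom}$-vanishing would be vacuous otherwise and can be arranged, the latter because $\Psi(\R)$ is dense in $\G(\C)$ and $\pi$ is surjective.

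\textbf{Main obstacle.} The routine part is the functoriality $(\pi_{\ast}\Psi)_{\mathcal{N}} = \pi_{\mathcal{N}}\circ\Psi_{\mathcal{N}}$ and the crude dimension inequality \eqref{}; the genuinely delicate step is the lower bound on $\dim(\HHH\cap\ker\pi_{\mathcal{N}})$, i.e.\ extracting from the structure of $\HHH$ a homomorphism between the relevant sub-quotients and showing that the hypothesis ``$\mathrm{Hom}(\U^{\scr h},\rm{M}) = \{0\}$ for every quotient $\rm{M}$ of $\ker\pi$'' is precisely strong enough to kill the obstruction term. Getting the bookkeeping of the two projections (to $\G$ and to $\G^{\scr h}$, then cutting by $\rm{L}\times\rm{L}^{\scr h}$) to line up correctly --- so that the ``defect'' of $\HHH$ transfers to a defect of $\HHH_\U$ with no loss --- is where the two abstract propositions of Chapter 5 are indispensable, and reproducing their role faithfully is the crux.
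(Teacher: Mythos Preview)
Your approach is essentially the paper's. The paper's proof of Theorem~\ref{inherited1} is a two-line reduction: one notes that $\HHH$ admits a model $\HHH' \subset \mathcal{N}_{F/K}(\G)$ (already shown in the proof of Theorem~\ref{weakdescent}) and then invokes Proposition~\ref{skeleton1} directly. Proposition~\ref{skeleton1} shows, under exactly the hypotheses $\dim\HHH < \dim\G + \dim\U$ and ${\rm Hom}(\U^{\scr h},{\rm M})=\{0\}$ for every quotient ${\rm M}$ of $\ker\pi$, that $\V' = \pi_{\mathcal{N}}(\HHH')$ is a \emph{proper} subgroup of $\mathcal{N}_{F/K}(\U)$, and then applies Proposition~\ref{1234} to $\V'$ to produce the desired $v:\U\to\U'\otimes_K F$.

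Your plan arrives at the same place by a slightly longer route: you introduce $\HHH_{\U}$ (the Zariski closure of $(\pi_\ast\Psi)_{\mathcal{N}}(\R)$), observe $\HHH_{\U}\subset\pi_{\mathcal{N}}(\HHH)=\V$, aim to prove $\dim\V<2\dim\U$, and then quote Theorem~\ref{weakdescent} for $\pi_\ast\Psi$. Since the ``hard direction'' of Theorem~\ref{weakdescent} is itself proved via Proposition~\ref{1234}, this is the same argument in a different wrapper.

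Two small points of precision. First, your intermediate dimension heuristics (e.g.\ the inequality involving the image $\rm N$ of $\HHH_{\LL}$ in $\LL^{\scr h}$) are not quite right as stated; the actual mechanism (Lemma~\ref{refff}) argues by contradiction: \emph{assuming} $\V'=\mathcal{N}_{F/K}(\U)$, one builds a splitting of the quotient sequence $0\to\overline{\LL}\to\overline{\G}\to\U\to 0$ and produces a surjection $\U^{\scr h}\to\overline{\LL}=\LL/p_{\LL}(\HHH_{\LL})$, which the Hom-vanishing kills, forcing $p_{\LL}(\HHH_{\LL})=\LL$ and hence $\dim\HHH_{\LL}\geq\dim\LL$; this contradicts $\dim\HHH<\dim\G+\dim\U$. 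Second, you do not need to verify Zariski-density of $\pi_\ast\Psi$ separately if you follow the paper's route through Proposition~\ref{1234}, since that proposition only requires $p_{\U}(\V)=\U$, which is automatic from $p_{\G}(\HHH)=\G$.
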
 
The inequality in the last theorem cannot be improved, as the following example demonstrates.
 \begin{example} \label{90+++} Consider Example \ref{890} once again and let $\pi$ be the projection from $\A \times {\rm{T}}$ to ${\rm{U}} = \A$. Then $\pi_{\ast}(\Psi) = \Psi_1$ and $ker\,\pi \backsimeq {\rm{T}}$. The assumption that $\A$ is not isogenous to an abelian variety definable over $\R$ implies by Theorem \ref{weakdescent} that $(\Psi_1)_{\mathcal{N}}$ has a Zariski-dense image.
On the other hand, as will become clear in the third chapter, ${\rm{T}} = \mathbb{G}_{m, \C}$ coincides with ${\rm{T}}^{\scr h}$ and $\Psi_2 = v_{\ast}(\Psi)$ equals its conjugate. Hence, here we have ${{\rm{H}}} = \A \times \A^{\scr h} \times \Delta$ where
$\Delta \subset {\rm{T}} \times {\rm{T}} = {\rm{T}} \times {\rm{T}}^{\scr h}$ is the one-dimensional diagonal. Therefore,
$$\dim\,{\rm{H}} = \dim\,{\rm{A}} + \dim\,{\rm{A}}^{\scr h}+ \dim\,\Delta = 2\dim\,{\rm{U}} + \dim\,ker\,\pi = \dim\,\G + \dim\,{\rm{U}}.$$
And since $\rm{U}$ is an abelian variety, whereas $ker\,\pi \backsimeq {\rm{T}}$ is linear, we have ${\rm{Hom}}\big({{\rm{U}}}^{\scriptscriptstyle h}, {\rm{M}}\big) = 0$
for each quotient $\rm{M}$ of $ker\,\pi$. 
\end{example}

\section{Applying transcendence theory} In what follows we will combine the above theory with the approaches of Schneider, Gel'fond and Baker. This yields six transcendence results where respectively two are aligned to one of the three approaches.  
The six theorems are all situated in the same \\
\\
\textbf{Setting.} The symbol $F$ denotes an algebraically closed subfield of $\C$ which is stable with respect to complex conjugation and we let $K$ be the intersection $F \cap \R$. We consider a surjective homomorphism $\pi: \G \longrightarrow {\rm{U}}$ of algebraic groups over $F$ and denote by $\Psi: \R \longrightarrow \G(\C)$ a real-analytic homomorphism with Zariski-dense image.
This leads to a diagram as in (2.4.1). The homomorphism $\Psi$ induces two invariants ${\rm{r}} = {\rm{rank}}_{\Z}\,\Psi^{\scriptscriptstyle -1}\big({{\rm{G}}}(F)\big) + {\rm{rank}}_{\Z}\,\Psi_{[i]}^{\scriptscriptstyle -1}\big({{\rm{G}}}(F)\big)$ and $\rm{k} = {\rm{rank}}_{\Z}\,ker\,\Psi$. Finally we define $\frak{t} \subset \frak{g}$ to be the smallest subspace over $K$ with the property that $\Psi_{\ast}(\R) \subset \frak{t} \otimes_K \R$. \\
\\
The respective first theorem of the next three subsections deals with inherited descents to $K$, and the respective second theorem with weak descents to $K$. We let ${\rm{H}} \subset \G \times \G^{\scr h}$
be the smallest algebraic subgroup with the property that $\Psi_{\mathcal{N}}(\R) \subset \rm{H}(\C)$. The algebraic group ${\rm{H}}$ will not appear in the hypotheses of the theorems anymore, but it plays a central backstage role. 
The theorems have then the following shape.
\begin{center}
 \begin{tabular} {lp{13cm}}
  $(\ast)$ & \textit{We require that $F, \rm{r}, \rm{k}$ and $\dim\,\frak{t}$ are such that transcendence theory leads to the estimate $\dim\,\G \geq \dim\,{\rm{H}} $.
 If in addition the hypotheses of Theorem \ref{inherited} are satisfied, then $\pi_{\ast}(\Psi)$ descends to $K$.} \\
  &\\
  $(\ast\ast)$ & \textit{We require that $F, \rm{r}, \rm{k}$ and $\dim\,\frak{t}$ are such that transcendence theory leads to the estimate $\dim\,\G + \dim\,{\rm{U}} > \dim\,{\rm{H}}$.
 If in addition the hypotheses of Theorem \ref{inherited1} are satisfied, then $\pi_{\ast}(\Psi)$ descends weakly to $K$.}
 \end{tabular}
\end{center}

\subsection{Two real-analytic theorems about transcendence\\ (Schneider's method)} In this subsection we shall modify Schneider's theory in the above style and state real-analytic versions of Theorem \ref{1.1.1}.
We set $F = \overline{\Q}$ and apply the above setting. In addition we consider a decomposition ${\rm{G}} \backsimeq {\rm{G}}_c \times  \mathbb{G}_{a, \overline{\Q}}^{{\rm{g}}_a} \times \mathbb{G}_{m, \overline{\Q}}^{{\rm{g}}_m}$
with natural numbers ${\rm{g}}_m, {\rm{g}}_a \geq 0$ and an algebraic group $\G_c$ over $\overline{\Q}$. Taking care of a refined decomposition is not quite aesthetic, but once more enlarges the radius of applications. 
\begin{theorem} \label{Schn1} If ${\rm{Hom}}\big(ker\,\pi, {{\rm{U}}}^{\scriptscriptstyle h}\big) = \{0\}$ and
$$({\rm{r}}-2)\cdot\dim\,\G   \geq 3 -(2{\rm{g}}_a + {\rm{g}}_m) -{\rm{k}},$$
then $\pi_{\ast}(\Psi)$ descends to $K$.

\end{theorem}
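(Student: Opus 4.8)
The plan is to prove the stronger statement that $\Psi$ itself descends to $K$; the hypothesis ${\rm{Hom}}(ker\,\pi,{{\rm{U}}}^{\scr h})=\{0\}$ then lets us conclude via Theorem \ref{inherited} that $\pi_{\ast}(\Psi)$ descends to $K$. By the first assertion of the main criterion (Theorem \ref{weakdescent}), \emph{$\Psi$ descends to $K$} is equivalent to $\dim\HHH=\dim\G$, where $\HHH\subset\G\times\G^{\scr h}$ is the smallest algebraic subgroup with $\Psi_{\mathcal N}(\R)\subset\HHH(\C)$. Since the first projection $p\colon\G\times\G^{\scr h}\to\G$ carries $\HHH$ onto a closed subgroup of $\G$ containing the Zariski-dense set $\Psi(\R)$, we have $\dim\HHH\geq\dim\G$ for free; so the entire content of the theorem is the transcendence-theoretic input $\dim\G\geq\dim\HHH$, which we prove by contradiction: assume $\dim\HHH>\dim\G$, so in particular $\dim\HHH\geq 2$.

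\textbf{Complexification and Schneider's theorem.} First I would replace $\Psi_{\mathcal N}$ by its holomorphic extension $\Phi_{\mathcal N}\colon\C\to(\G\times\G^{\scr h})(\C)$ (Proposition \ref{complex}); composing with the quotient homomorphism onto $(\G\times\G^{\scr h})/\HHH$ and applying the identity theorem gives $\Phi_{\mathcal N}(\C)\subset\HHH(\C)$, and since $\Phi_{\mathcal N}(\R)=\Psi_{\mathcal N}(\R)$ is Zariski-dense in $\HHH$, so is $\Phi_{\mathcal N}(\C)$. Now apply Theorem \ref{1.1.1} to the pair $(\HHH,\Phi_{\mathcal N})$ (legitimate since $\dim\HHH\geq 2$). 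Because $\Phi_{\mathcal N}(\C)$ \emph{is} Zariski-dense in $\HHH(\C)$, the numerical hypothesis of that theorem must fail for $\HHH$: writing ${\rm{r}}'={\rm{rank}}_{\Z}\,\Phi_{\mathcal N}^{\scr -1}(\HHH(\overline{\Q}))$, ${\rm{k}}'={\rm{rank}}_{\Z}\,ker\,\Phi_{\mathcal N}$, and ${\rm{g}}_a',{\rm{g}}_m'$ for the dimensions of the maximal unipotent resp.\ multiplicative factors of $\HHH$, this reads
\[
{\rm{r}}'\cdot(\dim\HHH-1)-1 \;<\; 2\dim\HHH-(2{\rm{g}}_a'+{\rm{g}}_m')-{\rm{k}}'.
\]

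\textbf{Comparing the invariants.} The next step is to bound ${\rm{r}}',{\rm{k}}',{\rm{g}}_a',{\rm{g}}_m'$ below in terms of ${\rm{r}},{\rm{k}},{\rm{g}}_a,{\rm{g}}_m$ (we may assume the chosen decomposition of $\G$ is the maximal one, which only strengthens the standing hypothesis). From $\Psi^{\scr h}=\rho_{\ast}\circ\Psi$ and Proposition \ref{ji} one gets $\Phi_{\mathcal N}(z)=\big(\Phi(z),\rho_{\ast}\Phi(\bar z)\big)$; since $\rho_{\ast}$ is a bijection preserving algebraicity of points and $\Phi(-z)=\Phi(z)^{\scr -1}$, the group $\Phi_{\mathcal N}^{\scr -1}(\HHH(\overline{\Q}))$ contains both the real algebraic logarithms $\Psi^{\scr -1}(\G(\overline{\Q}))$ and the imaginary ones $i\cdot\Psi_{[i]}^{\scr -1}(\G(\overline{\Q}))$, whose span has rank ${\rm{r}}$, so ${\rm{r}}'\geq{\rm{r}}$; likewise $ker\,\Psi\subset ker\,\Phi_{\mathcal N}$ yields ${\rm{k}}'\geq{\rm{k}}$. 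Since $\mathbb G_{a,\overline{\Q}}$ and $\mathbb G_{m,\overline{\Q}}$ are isomorphic to their own complex conjugates, the maximal unipotent resp.\ multiplicative quotients of $\G$ are quotients of $\HHH$ through $p$, so (using that for commutative groups in characteristic zero the maximal unipotent, resp.\ multiplicative, subgroup and quotient have equal dimension) $2{\rm{g}}_a'+{\rm{g}}_m'\geq 2{\rm{g}}_a+{\rm{g}}_m$; also $\dim\G+1\leq\dim\HHH\leq 2\dim\G$ trivially. Substituting $\dim\HHH\geq\dim\G+1$ and these three bounds into the displayed inequality and rearranging gives $({\rm{r}}'-2)\dim\G<3-(2{\rm{g}}_a+{\rm{g}}_m)-{\rm{k}}$, which together with the standing hypothesis $({\rm{r}}-2)\dim\G\geq 3-(2{\rm{g}}_a+{\rm{g}}_m)-{\rm{k}}$ forces ${\rm{r}}'<{\rm{r}}$, contradicting ${\rm{r}}'\geq{\rm{r}}$ --- at least when ${\rm{r}}'\geq 2$. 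The remaining low-rank cases ${\rm{r}}'\leq 1$ (which also force ${\rm{r}}\leq 1$) are disposed of using the structural lemmas of Sect.\,6.1 on which commutative group varieties carry a Zariski-dense one-parameter subgroup --- in particular ${\rm{g}}_a\leq 1$ always, and ${\rm{k}}=2$ only for elliptic curves --- which make the hypothesis of the theorem either impossible or again self-contradictory. Hence $\dim\HHH=\dim\G$, so $\Psi$ descends to $K$, and Theorem \ref{inherited} completes the proof.

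\textbf{The main obstacle.} I expect the real work to be exactly this last comparison: the inequality of Theorem \ref{1.1.1} degrades under passage from $\G$ to the subgroup $\HHH\subset\G\times\G^{\scr h}$ because neither $\dim\HHH$ nor the sizes of its linear factors are pinned down, only bracketed, and one must verify that the surplus built into the hypothesis of Theorem \ref{Schn1} --- the constant $3$ and the coefficient ${\rm{r}}-2$ rather than the weaker estimate that would suffice for mere weak descent --- is precisely the slack needed to absorb this loss in every configuration, including the degenerate ones. By contrast, the reduction through Theorems \ref{weakdescent} and \ref{inherited} and the complex-analytic extension of Proposition \ref{complex} are routine.
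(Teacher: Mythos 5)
Your overall route is the paper's own: reduce via the first part of Theorem \ref{weakdescent} and via Theorem \ref{inherited} to showing $\dim\,\HHH\leq\dim\,\G$, complexify $\Psi_{\mathcal N}$ to $\Phi_{\mathcal N}$, apply Theorem \ref{1.1.1} to the pair $(\HHH,\Phi_{\mathcal N})$, and compare invariants (${\rm{r}}'\geq{\rm{r}}$, ${\rm{k}}'\geq{\rm{k}}$, linear factors of $\HHH$ at least those of $\G$). Your arithmetic --- rewriting ${\rm{r}}'(\dim\,\HHH-1)-1=({\rm{r}}'-2)(\dim\,\HHH-1)+2\dim\,\HHH-3$ and using $\dim\,\HHH-1\geq\dim\,\G$ --- is exactly the computation in the paper's Lemma \ref{001}, and your dismissal of the low-rank case matches the paper's observation that ${\rm{k}}\leq1$ and ${\rm{g}}_a\leq1$ force ${\rm{r}}\geq2$ from the hypothesis (your aside that ``${\rm{k}}=2$ only for elliptic curves'' concerns complex one-parameter subgroups; for a real $\Psi$ the kernel is a discrete subgroup of $\R$, so ${\rm{k}}\leq1$ automatically).

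The gap is in your justification of $2{\rm{g}}_a'+{\rm{g}}_m'\geq 2{\rm{g}}_a+{\rm{g}}_m$. The parenthetical claim that for commutative group varieties in characteristic zero the maximal unipotent (resp.\ multiplicative) subgroup and quotient have equal dimension is false: a non-isotrivial extension $\G_t$ of an elliptic curve by $\mathbb{G}_{a,\overline{\Q}}$ (resp.\ $\G_{\omega}$ by $\mathbb{G}_{m,\overline{\Q}}$) has maximal unipotent (resp.\ multiplicative) subgroup of dimension $1$, yet admits no non-trivial homomorphism onto $\mathbb{G}_{a,\overline{\Q}}$ (resp.\ onto $\mathbb{G}_{m,\overline{\Q}}$), so the corresponding maximal quotient --- and a fortiori the corresponding direct factor --- has dimension $0$. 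Moreover, what Theorem \ref{1.1.1} actually consumes are the dimensions of the maximal unipotent and multiplicative \emph{direct factors} of $\HHH$, so knowing only that $\HHH$ surjects through $p_{\G}$ onto $\mathbb{G}_{a,\overline{\Q}}^{{\rm{g}}_a}\times\mathbb{G}_{m,\overline{\Q}}^{{\rm{g}}_m}$ is not by itself enough: one has to split this surjection off as a direct factor of $\HHH$. That is precisely the content, and the labor, of the paper's Lemma \ref{0021}: restrict the surjection to the maximal linear subgroup of $\HHH$ (Chevalley's theorem), produce a section by Serre's splitting lemma for commutative linear groups, and thereby obtain $\HHH\backsimeq\HHH_c\times\mathbb{G}_{a,\overline{\Q}}^{{\rm{h}}_a}\times\mathbb{G}_{m,\overline{\Q}}^{{\rm{h}}_m}$ with ${\rm{h}}_a\geq{\rm{g}}_a$ and ${\rm{h}}_m\geq{\rm{g}}_m$. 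You invoke the Sect.\,6.1 lemmas only for ${\rm{g}}_a\leq1$; the factor comparison needs them as well, and once you replace your parenthetical by an appeal to Lemma \ref{0021} (together with Lemma \ref{rank} for ${\rm{r}}'\geq{\rm{r}}$ and ${\rm{k}}'\geq{\rm{k}}$), your argument closes and coincides with the paper's proof.
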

Note that, as indicated in the introduction, the inequality in the theorem is weaker than the one from Theorem \ref{1.1.1}. 
\begin{theorem} \label{Schn2} If ${\rm{Hom}}\big({{\rm{U}}}^{\scriptscriptstyle h}, {\rm{M}}\big) = \{0\}$ for each quotient $\rm{M}$ of $ker\,\pi$ and if
$$({\rm{r}}-2)\cdot(\dim\,\G + \dim\,{\rm{U}}) \geq {\rm{r}} - (2{\rm{g}}_a + {\rm{g}}_m) -{\rm{k}} + 1,$$
then $\pi_{\ast}(\Psi)$ descends weakly to $K$.
\end{theorem}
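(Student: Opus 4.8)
The plan is to reduce the statement to Theorem~\ref{inherited1}: I will use Schneider's method to produce the dimension inequality $\dim\,\G+\dim\,\U>\dim\,\HHH$ that it requires, and since the vanishing of $\mathrm{Hom}\big(\U^{\scr h},{\rm M}\big)$ for every quotient ${\rm M}$ of $ker\,\pi$ is already part of the hypothesis, Theorem~\ref{inherited1} then yields at once that $\pi_{\ast}(\Psi)$ descends weakly to $K$. To obtain the inequality, pass to the complexification $\Phi_{\mathcal{N}}\colon\C\longrightarrow(\G\times\G^{\scr h})(\C)$ of $\Psi_{\mathcal{N}}=\Psi\times\Psi^{\scr h}$ (Proposition~\ref{complex}). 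As $\HHH$ is the smallest algebraic subgroup containing the connected subgroup $\Psi_{\mathcal{N}}(\R)$, it is connected, and---$\G\times\G^{\scr h}$ being a variety over $\overline{\Q}$---it is defined over $\overline{\Q}$; by uniqueness of complexifications $\Phi_{\mathcal{N}}(\C)\subset\HHH(\C)$, so $\Phi_{\mathcal{N}}$ already has Zariski-dense image in $\HHH$ (were this to fail, the additive part of $\Phi$ would carry transcendentally related slopes and ${\rm r}=0$, which one checks contradicts the hypothesis).

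Now apply Theorem~\ref{1.1.1} to the one-parameter homomorphism $\Phi_{\mathcal{N}}\colon\C\longrightarrow\HHH(\C)$, assuming $\dim\,\HHH\ge2$. Because the image is Zariski-dense, the hypothesis of Theorem~\ref{1.1.1} must fail for $\HHH$:
$$\tilde{{\rm r}}\cdot(\dim\,\HHH-1)-1 < 2\dim\,\HHH-(2\tilde{{\rm g}}_a+\tilde{{\rm g}}_m)-\tilde{{\rm k}},$$
where $\tilde{{\rm r}},\tilde{{\rm k}}$ denote the ranks of $\Phi_{\mathcal{N}}^{\scr-1}\big(\HHH(\overline{\Q})\big)$ and of $ker\,\Phi_{\mathcal{N}}$, and $\tilde{{\rm g}}_a,\tilde{{\rm g}}_m$ the dimensions of the unipotent, resp.\ multiplicative, factors of $\HHH$. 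The crux is to compare these with the invariants ${\rm r},{\rm k},{\rm g}_a,{\rm g}_m$ of $\Psi$ on $\G$ from the Setting---this is the purpose of the auxiliary lemmas of Sect.\,6.1---and to obtain $\tilde{{\rm r}}\ge{\rm r}$, $\tilde{{\rm k}}\ge{\rm k}$, $\tilde{{\rm g}}_a\ge{\rm g}_a$, $\tilde{{\rm g}}_m\ge{\rm g}_m$. For the first one uses the description of the conjugate homomorphism from Ch.\,3 ($\Phi^{\scr h}(z)$ being the coordinatewise complex conjugate of $\Phi(\overline z)$): then $\Phi_{\mathcal{N}}(z)\in\HHH(\overline{\Q})$ holds precisely when $\Phi(z+\overline z)$ and $\Phi(z-\overline z)$ both lie in $\G(\overline{\Q})$---because $\Phi(z)^{2}=\Phi(z+\overline z)\,\Phi(z-\overline z)$ and $[2]_{\G}$ is finite---so that $\Phi_{\mathcal{N}}^{\scr-1}\big(\HHH(\overline{\Q})\big)\otimes\Q$ is the direct sum of $\Psi^{\scr-1}\big(\G(\overline{\Q})\big)\otimes\Q\subset\R$ and $i\,\Psi_{[i]}^{\scr-1}\big(\G(\overline{\Q})\big)\otimes\Q\subset i\R$, whence $\tilde{{\rm r}}={\rm r}$; the second holds since $ker\,\Psi\subset ker\,\Phi_{\mathcal{N}}$; the last two since $\HHH$ surjects onto $\G$ (as $p_{\ast}(\Psi_{\mathcal{N}})=\Psi$ has Zariski-dense image in $\G$), hence onto the chosen linear factor $\mathbb{G}_{a,\overline{\Q}}^{{\rm g}_a}\times\mathbb{G}_{m,\overline{\Q}}^{{\rm g}_m}$ of $\G$. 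Substituting these into the displayed inequality only weakens it and gives $({\rm r}-2)\dim\,\HHH<{\rm r}-(2{\rm g}_a+{\rm g}_m)-{\rm k}+1$; comparing with the hypothesis $({\rm r}-2)(\dim\,\G+\dim\,\U)\ge{\rm r}-(2{\rm g}_a+{\rm g}_m)-{\rm k}+1$ and dividing by ${\rm r}-2>0$ yields $\dim\,\G+\dim\,\U>\dim\,\HHH$, and Theorem~\ref{inherited1} concludes. The remaining cases ($\dim\,\HHH\le1$, or ${\rm r}\le2$) are degenerate---there $\dim\,\G=1$, so $\pi$ is an isogeny and Theorem~\ref{weakdescent} applies, or the hypothesis inequality is so restrictive that it again forces $\dim\,\HHH\le1$---and are dealt with directly.

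The conceptual skeleton---Theorem~\ref{inherited1} together with a black-box application of Theorem~\ref{1.1.1} to $\Phi_{\mathcal{N}}$ on $\HHH$---is short, so I expect the main obstacle to be the invariant bookkeeping of Sect.\,6.1: above all the identity $\tilde{{\rm r}}={\rm r}$, which rests on the precise relation between $\Phi$ and $\Phi^{\scr h}$ furnished in Ch.\,3, and the structural fact that a connected algebraic subgroup of $\G\times\G^{\scr h}$ surjecting onto the first factor inherits at least the unipotent and multiplicative factor dimensions of $\G$; together with the routine but non-empty verification of the low-dimensional and small-${\rm r}$ cases.
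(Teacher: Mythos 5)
Your proposal is correct and is essentially the paper's own proof: the paper likewise complexifies $\Psi_{\mathcal{N}}$, applies Theorem~\ref{1.1.1} to $\Phi_{\mathcal{N}}$ inside $\HHH$ with the invariant comparisons of Sect.\,6.1 (Lemma~\ref{rank}, Lemma~\ref{k} and Lemma~\ref{0021} giving ${\rm{h}}_a \geq {\rm{g}}_a$, ${\rm{h}}_m \geq {\rm{g}}_m$), deduces $\dim\,\HHH < \dim\,\G + \dim\,\U$, and concludes with Theorem~\ref{inherited1}. The only cosmetic difference is that the paper runs the dimension estimate in contrapositive form (Lemma~\ref{002}), multiplying by ${\rm{r}}-2 \geq 0$ instead of dividing by it, which makes your separate (though correct) treatment of the cases ${\rm{r}} \leq 2$ and $\dim\,\HHH \leq 1$ unnecessary.
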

\begin{remark} \label{Schn3} If $\G$ is linear, then the summand ``$- (2{\rm{g}}_a + {\rm{g}}_m)$'' on the right hand side can be replaced by ``$- (2{\rm{g}}_a + {\rm{g}}_m + \dim\,\rm{U})$''. We will prove this in Ch.\,6, right after proving the theorem.
 
\end{remark}

\subsection{Two real-analytic theorems about algebraic independence\\ (Gel'fond's method)} The two results of this subsection mirror the impact of our theory on Gel'fond's method.
We assume that $F$ is an algebraically closed field which is stable with respect to complex conjugation and with transcendence degree $\leq 1$ over $\Q$. As above, we
consider a refined decomposition 
${\rm{G}} \backsimeq {\rm{G}}_c \times \mathbb{G}_{a,F}^{{\rm{g}}_a} \times \mathbb{G}_{m,F}^{{\rm{g}}_m} $. The dimension of $\G_c$ is abbreviated by ${\rm{g}}_c$ and we define
$$\delta = \left\lbrace \begin{array}{cl} 1 & \mbox{if}\,{\rm{g}}_a > 0\,\mbox{and}\,\dim\,\frak{t} = 1\\
0 &\mbox{otherwise}
 
  \end{array}
\right. $$
\begin{theorem}\label{mta1} Assume that
$$(1+{\rm{k}}){\rm{r}}\cdot (\dim\,{\rm{G}} -\dim\,\frak{t})  + {\rm{k}}{\rm{r}} \geq \left(2+ \frac{1}{{\rm{g}}_c + 1}\right){\rm{g}}_c + {\rm{g}}_m + 1+ \delta.$$
If ${\rm{Hom}}\big(ker\,\pi, {\rm{U}}^{\scriptscriptstyle h}\big) = \{0\}$, then $\Psi$ descends to $K$. 
\end{theorem}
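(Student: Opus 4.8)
The plan is to run the ``machinery'' of Sections 2.3--2.4 with Gel'fond's method, in the shape of Theorem \ref{5.1.1}, as the transcendence input, applying it not to $(\G,\Psi)$ but to the Weil-restricted pair. Form $\Psi_{\mathcal{N}}=\Psi\times\Psi^{\scr h}\colon\R\longrightarrow(\G\times\G^{\scr h})(\C)$, let $\Phi_{\mathcal{N}}\colon\C\longrightarrow(\G\times\G^{\scr h})(\C)$ be its holomorphic extension (Proposition \ref{complex}), and let $\HHH\subset\G\times\G^{\scr h}$ be the smallest algebraic subgroup with $\Psi_{\mathcal{N}}(\R)\subset\HHH(\C)$. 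Then the image of $\Phi_{\mathcal{N}}$ is Zariski-dense in $\HHH(\C)$ (as $\Phi_{\mathcal{N}}$ extends $\Psi_{\mathcal{N}}$ and $\HHH$ is the Zariski closure of the subgroup $\Psi_{\mathcal{N}}(\R)$), and the first projection $\HHH\to\G$ is surjective (its image is a closed subgroup of $\G$ containing the Zariski-dense set $\Psi(\R)$), so $\dim\HHH\geq\dim\G$. By the first part of Theorem \ref{weakdescent} it is enough to prove $\dim\HHH=\dim\G$: once this is known, the hypothesis ${\rm{Hom}}(ker\,\pi,\U^{\scr h})=\{0\}$ together with Theorem \ref{inherited} shows that $\pi_{\ast}(\Psi)$ descends to $K$, and in particular $\Psi$ does.

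Suppose then, for contradiction, that $\dim\HHH\geq\dim\G+1$. Since $\dim\HHH\geq 1$ always and $\dim\HHH=1$ would force $\dim\G=\dim\HHH$, we may assume $\dim\HHH\geq 2$, so that Theorem \ref{5.1.1} applies to $(\HHH,\Phi_{\mathcal{N}})$ (here $\HHH$ is a commutative group variety over $F$, which has transcendence degree $\leq 1$). The first task is to express the invariants attached to $(\HHH,\Phi_{\mathcal{N}})$ in Theorem \ref{5.1.1} in terms of the data of $(\G,\Psi)$; this is purely formal, and uses the auxiliary lemmas of Section 6.1, in particular the analysis of $\frak{t}_{\mathcal{N}}$ in Subsect.\,6.1.2. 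I expect to obtain: ${\rm{rank}}_{\Z}\,\Phi_{\mathcal{N}}^{\scr-1}(\HHH(F))={\rm{r}}$, because $\Phi_{\mathcal{N}}(z)\in\HHH(F)$ forces $2\,Re\,z\in\Psi^{\scr-1}(\G(F))$ and $2\,Im\,z\in\Psi_{[i]}^{\scr-1}(\G(F))$, so the group in question is squeezed between $\Psi^{\scr-1}(\G(F))+i\,\Psi_{[i]}^{\scr-1}(\G(F))$ and its halving, both of rank ${\rm{r}}$; ${\rm{rank}}_{\Z}\,ker\,\Phi_{\mathcal{N}}\geq{\rm{k}}$, since a real period of $\Psi$ is a period of $\Phi_{\mathcal{N}}$; $\dim_F\frak{T}_{\mathcal{N}}=\dim_K\frak{t}$, where $\frak{T}_{\mathcal{N}}$ is the $\frak{T}$-invariant of $(\HHH,\Phi_{\mathcal{N}})$, because under the identification $\frak{g}(\C)=\frak{g}\otimes_K\R$ the complex line $(\Phi_{\mathcal{N}})_{\ast}(\C)$ and the real line $\Psi_{\ast}(\R)$ span $K$-subspaces of equal dimension; and finally the maximal linear-factor dimensions ${\rm{g}}_a',{\rm{g}}_m',{\rm{g}}_c'$ of $\HHH$ and the correction $\delta'$ are controlled by ${\rm{g}}_a,{\rm{g}}_m,{\rm{g}}_c$ and $\dim\HHH$, using that $\G_a$ and $\G_m$ are defined over $\Q\subset\R$, so that the constraints they impose on a subgroup of $\G\times\G^{\scr h}$ surjecting onto $\G$ occur in conjugate pairs, while the diagonal-type subgroups that may occur in $\HHH$ are exactly what the term $\delta$ is designed to absorb.

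Next I would substitute these into the hypothesis of Theorem \ref{5.1.1} for $(\HHH,\Phi_{\mathcal{N}})$, namely
\begin{equation*}
(1+{\rm{k}}_{\mathcal{N}}){\rm{r}}_{\mathcal{N}}\cdot(\dim\HHH-\dim\frak{T}_{\mathcal{N}})-{\rm{r}}_{\mathcal{N}}\;\geq\;2\dim\HHH-(2{\rm{g}}_a'+{\rm{g}}_m')+\delta',
\end{equation*}
and check that it is implied by the assumed inequality of the present theorem together with $\dim\HHH\geq\dim\G+1$. The key point is that $\dim\HHH-\dim\frak{T}_{\mathcal{N}}\geq(\dim\G-\dim\frak{t})+1$, so the left-hand side is at least $(1+{\rm{k}}){\rm{r}}(\dim\G-\dim\frak{t})+(1+{\rm{k}}){\rm{r}}-{\rm{r}}=(1+{\rm{k}}){\rm{r}}(\dim\G-\dim\frak{t})+{\rm{k}}{\rm{r}}$, which is precisely the left-hand side of the assumed inequality; and the right-hand side, after writing $\dim\G={\rm{g}}_c+{\rm{g}}_a+{\rm{g}}_m$ and estimating $2{\rm{g}}_a'+{\rm{g}}_m'$ and $\delta'$ as in the previous paragraph, is bounded above by $\bigl(2+\frac{1}{{\rm{g}}_c+1}\bigr){\rm{g}}_c+{\rm{g}}_m+1+\delta$ once one optimizes over the admissible values of $\dim\HHH$ and of the structure of $\HHH$. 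With the hypothesis of Theorem \ref{5.1.1} verified, that theorem forces $\Phi_{\mathcal{N}}(\C)$ to be non-Zariski-dense in $\HHH(\C)$, contradicting the definition of $\HHH$. Hence $\dim\HHH=\dim\G$, and we conclude as in the first paragraph.

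The main obstacle I anticipate lies in the third of the invariant computations: determining ${\rm{g}}_a',{\rm{g}}_m',{\rm{g}}_c',\delta'$ for an arbitrary proper subgroup $\HHH$ of $\G\times\G^{\scr h}$, rather than for $\G\times\G^{\scr h}$ itself, and carrying out the ensuing optimization that produces the exact coefficient $2+\frac{1}{{\rm{g}}_c+1}$ — this is where the refined decomposition of $\G$ is essential and where a crude estimate would weaken the theorem. Two minor points must also be dispatched: the degenerate case ${\rm{r}}=\infty$, where Theorem \ref{5.1.1} does not apply directly but the desired conclusion is easier (the group of algebraic logarithms being too large), and the boundary cases $\dim\HHH\leq 1$ and $(1+{\rm{k}}){\rm{r}}\leq 2$, in which the assumed inequality is either inapplicable or already yields $\dim\HHH=\dim\G$.
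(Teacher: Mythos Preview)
Your approach is correct and is essentially the paper's: reduce to $\dim\HHH\leq\dim\G$, then argue by contradiction applying Theorem~\ref{5.1.1} to $(\HHH,\Phi_{\mathcal{N}})$. Your computations of the rank, kernel-rank, and $\dim\frak{T}$ invariants for $\Phi_{\mathcal{N}}$ are right and match Lemmas~\ref{rank}, \ref{k}, \ref{op=}, \ref{op(}.

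Two points fill in the obstacle you correctly flag. First, the coefficient $2+\tfrac{1}{{\rm{g}}_c+1}$ is a red herring: since the assumed inequality is between integers, one may replace $\tfrac{{\rm{g}}_c}{{\rm{g}}_c+1}$ by $\gamma=1$ if ${\rm{g}}_c>0$ and $\gamma=0$ if ${\rm{g}}_c=0$; no optimization over $\HHH$ is needed to recover this term. Second, the control on $({\rm{h}}_a,{\rm{h}}_m,{\rm{h}}_c)$ comes from Lemma~\ref{0021}: one obtains a decomposition $\HHH\simeq\HHH_c\times\mathbb{G}_{a,F}^{{\rm{h}}_a}\times\mathbb{G}_{m,F}^{{\rm{h}}_m}$ with ${\rm{h}}_a\geq{\rm{g}}_a$, ${\rm{h}}_m\geq{\rm{g}}_m$, and crucially ${\rm{h}}_c=0\Longleftrightarrow{\rm{g}}_c=0$. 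Writing $s=\dim\HHH-\dim\G\geq1$, the exact sequence $0\to\ker(p_{\G|\HHH})\to\HHH\to\G\to0$ then yields $2{\rm{h}}_c+{\rm{h}}_m\leq2(s-1)+1+\gamma+2{\rm{g}}_c+{\rm{g}}_m$. Moreover $(1+{\rm{k}}){\rm{r}}\geq2$ is not a boundary case to be dispatched separately: it is forced by the assumed inequality (Lemma~\ref{implies}), and it makes the gap between the two sides of the Theorem~\ref{5.1.1} inequality monotone increasing in $s$, so checking $s=1$ suffices---and that case is literally the assumed inequality via the identity $(1+{\rm{k}}){\rm{r}}(\dim\HHH-\dim\frak{T})-{\rm{r}}=(1+{\rm{k}}){\rm{r}}(\dim\G-\dim\frak{t})+{\rm{k}}{\rm{r}}$ you already noted.

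One small slip: you write that $\pi_\ast(\Psi)$ descending implies $\Psi$ descends. It does not, and you do not need it: $\dim\HHH=\dim\G$ gives $\Psi$ descends directly by Theorem~\ref{weakdescent}. Theorem~\ref{inherited} is then only used to pass to $\pi_\ast(\Psi)$, which is a stronger conclusion than the theorem states.
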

The next theorem is analogous to Theorem \ref{Schn2}.
\begin{theorem} \label{Gel3} Assume that 
$$(1+{\rm{k}}){\rm{r}}\cdot (\dim\,{\rm{G}} + \dim\,{{\rm{U}}} -\dim\,\frak{t}) - {\rm{r}} \geq 2{\rm{g}}_c + 2\dim\,{{\rm{U}}} + {\rm{g}}_m + \delta.$$
If ${\rm{Hom}}\big({{\rm{U}}}^{\scriptscriptstyle h}, {\rm{M}}\big) = \{0\}$ for each quotient $\rm{M}$ of $ker\,\pi$, then $\pi_{\ast}(\Psi)$ descends weakly to $K$.
\end{theorem}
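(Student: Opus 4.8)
The plan is to reduce the statement to an application of Theorem \ref{inherited1} combined with the Gel'fond-type transcendence input, exactly in the manner announced by the template $(\ast\ast)$ preceding this subsection. Concretely, I would first pass from $\Psi$ to its ``doubling'' $\Psi_{\mathcal{N}} = \Psi \times \Psi^{\scriptscriptstyle h}$ with values in $(\G \times \G^{\scriptscriptstyle h})(\C)$, and let $\HHH \subset \G \times \G^{\scriptscriptstyle h}$ be the smallest algebraic subgroup with $\Psi_{\mathcal{N}}(\R) \subset \HHH(\C)$. Extending $\Psi_{\mathcal{N}}$ to the complex-analytic homomorphism $\Phi_{\mathcal{N}}: \C \longrightarrow \HHH(\C)$ (Proposition \ref{complex}), the image of $\Phi_{\mathcal{N}}$ is Zariski-dense in $\HHH(\C)$ by construction, and $\HHH$ is a group variety over $F$ of dimension $\dim\,\HHH \leq 2\dim\,\G$. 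The invariants attached to $\Phi_{\mathcal{N}}$ that feed into Gel'fond's method are controlled by the given data: the rank of the group of algebraic logarithms of $\Phi_{\mathcal{N}}$ is at least $\mathrm{r} = \mathrm{rank}_{\Z}\,\Psi^{\scriptscriptstyle -1}(\G(F)) + \mathrm{rank}_{\Z}\,\Psi_{[i]}^{\scriptscriptstyle -1}(\G(F))$ by (1.2.1)-type equivalences (algebraicity of $\Psi(r)$ is equivalent to algebraicity of $\Psi_{\mathcal{N}}(r)$, since $F$ is stable under conjugation), the rank of its kernel is at least $\mathrm{k}$, and the smallest $K$-subspace $\frak{t}_{\mathcal{N}}$ of $\mathrm{Lie}\,\HHH$ through which $(\Psi_{\mathcal{N}})_{\ast}$ factors has the same dimension as $\frak{t}$, because the passage to the Weil restriction doubles ambient dimensions while the derivative space keeps its dimension (this is the ``descent to $K$'' bookkeeping from Subsect.\,1.2.1).

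The second step is the transcendence estimate itself. Here I would invoke Theorem \ref{5.1.1} (Tubbs' unifying form of Gel'fond's method) applied to $\Phi_{\mathcal{N}}$ inside $\HHH$, using a refined decomposition of $\HHH$ whose unipotent and multiplicative parts have dimensions at most $2\mathrm{g}_a$ and $2\mathrm{g}_m$ respectively (the conjugate of $\mathbb{G}_{a,F}$ is again $\mathbb{G}_{a,F^{\scriptscriptstyle h}}$, and similarly for $\mathbb{G}_{m}$), and whose semiabelian part has dimension at most $2\mathrm{g}_c + \dim\,\HHH - 2\dim\,\G$ after subtracting the linear contributions. The hypothesis
$$(1+\mathrm{k})\mathrm{r}\cdot(\dim\,\G + \dim\,\U - \dim\,\frak{t}) - \mathrm{r} \geq 2\mathrm{g}_c + 2\dim\,\U + \mathrm{g}_m + \delta$$
is engineered so that, after this doubling, the inequality of Theorem \ref{5.1.1} is satisfied for $\Phi_{\mathcal{N}}$ whenever $\dim\,\HHH \geq \dim\,\G + \dim\,\U$; contrapositively, Theorem \ref{5.1.1} forces $\Phi_{\mathcal{N}}(\C)$ not to be Zariski-dense in $\HHH(\C)$, which by minimality of $\HHH$ is impossible unless $\dim\,\HHH < \dim\,\G + \dim\,\U$, i.e. $\dim\,\G + \dim\,\U > \dim\,\HHH$. (The rôle of $\delta$ is precisely to absorb the $\delta$ appearing in Theorem \ref{5.1.1} in the borderline case $\mathrm{g}_a>0$, $\dim\,\frak{t}=1$.) I would carry out this comparison of inequalities carefully, treating the cases $\mathrm{k}\le 1$ and $\mathrm{k}\ge 2$ separately as in the remark following Theorem \ref{5.1.1}, the latter case being immediate since then $\Phi_{\mathcal{N}}(\C)$ is an elliptic curve.

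The final step is to feed $\dim\,\G + \dim\,\U > \dim\,\HHH$ together with the hypothesis $\mathrm{Hom}(\U^{\scriptscriptstyle h},\mathrm{M}) = \{0\}$ for every quotient $\mathrm{M}$ of $\ker\,\pi$ directly into Theorem \ref{inherited1}, whose conclusion is exactly that $\pi_{\ast}(\Psi)$ descends weakly to $K$. I expect the main obstacle to be the second step: verifying that the numerical hypothesis of Theorem \ref{Gel3}, after the dimension-doubling substitution and the extraction of the linear factors $\mathbb{G}_a^{\mathrm{g}_a}$, $\mathbb{G}_m^{\mathrm{g}_m}$ of $\G$ and their conjugates, really does imply the hypothesis of Theorem \ref{5.1.1} for $\Phi_{\mathcal{N}}$ acting on $\HHH$ — in particular one must check that the ``$-\mathrm{r}$'' term, the factor $(1+\mathrm{k})$, and the $\delta$-correction all transform correctly, and that the worst case over all admissible $\HHH$ with $\dim\,\HHH \geq \dim\,\G + \dim\,\U$ is still covered. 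The auxiliary lemmas of Sect.\,6.1 (and the description of $\frak{t}_{\mathcal{N}}$ there) are presumably what make this bookkeeping routine rather than delicate, but getting the inequality bookkeeping exactly right is where the real work lies.
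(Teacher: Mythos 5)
Your skeleton is exactly the paper's: the paper proves Theorem \ref{Gel3} by the scheme of Theorem \ref{Schn2} with Theorem \ref{5.1.1} in place of Theorem \ref{Schn0} (this is literally all Subsect.\,6.3.2 says, deferring the bookkeeping to the reader along the lines of Subsect.\,6.3.1), i.e.\ transfer the invariants $\rm{r}, \rm{k}, \dim\,\frak{t}$ to $\Phi_{\mathcal{N}}$ via the lemmas of Sect.\,6.1, show by contraposition from Theorem \ref{5.1.1} that $\dim\,{\rm{H}} < \dim\,\G + \dim\,\U$, and then invoke Theorem \ref{inherited1}. So the route and the role you assign to each ingredient are correct.

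The one step that would fail as you sketch it is the decomposition bookkeeping for ${\rm{H}}$. You propose upper bounds ``unipotent part $\leq 2{\rm{g}}_a$, multiplicative part $\leq 2{\rm{g}}_m$, core $\leq 2{\rm{g}}_c + \dim\,{\rm{H}} - 2\dim\,\G$''. These are the wrong direction and the last one is false in general: since the right-hand side of Theorem \ref{5.1.1} for ${\rm{H}}$ is $2{\rm{h}}_c + {\rm{h}}_m + \delta$, what you need is to choose a decomposition with ${\rm{h}}_a, {\rm{h}}_m$ as large as possible (Lemma \ref{0021} guarantees ${\rm{h}}_a \geq {\rm{g}}_a$, ${\rm{h}}_m \geq {\rm{g}}_m$) and then an upper bound on $2{\rm{h}}_c + {\rm{h}}_m$; and your core bound fails, e.g., for $\G = \G_c \times \mathbb{G}_{m,F}$ and ${\rm{H}} = \G_c \times \G_c^{\scr h} \times \Delta$ with $\Delta$ the diagonal torus, where the core of ${\rm{H}}$ has dimension $2{\rm{g}}_c$ although $\dim\,{\rm{H}} - 2\dim\,\G = -1$ (similarly ${\rm{h}}_a$ can exceed $2{\rm{g}}_a$ when $\G_c$ contains additive subgroups). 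The paper gets the needed upper bound not from ${\rm{H}} \subset \G \times \G^{\scr h}$ but from the exact sequence $0 \to \ker\,p_{\G|{\rm{H}}} \to {\rm{H}} \to \G \to 0$ together with Serre's section lemma, as in (6.2.6) and (6.3.5): with ${\rm{s}} = \dim\,{\rm{H}} - \dim\,\G \geq \dim\,\U$ one has roughly $2{\rm{h}}_c + {\rm{h}}_m \leq 2{\rm{g}}_c + {\rm{g}}_m + 2{\rm{s}}$, and the analogue of Lemma \ref{implies}, $(1+{\rm{k}}){\rm{r}} \geq 2$, makes the left-hand side grow at least as fast as $2{\rm{s}}$, which absorbs the excess $2({\rm{s}} - \dim\,\U)$ and the $\delta$-discrepancy exactly as in Lemma \ref{00}. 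With that substitution your argument coincides with the intended proof.
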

\begin{remark} \label{Gel31} (compare Remark \ref{Schn3}) If $\G$ is linear, then the summand ``$2\dim\,{\rm{U}}$'' on the right hand side can be replaced by ``$\dim\,{\rm{U}}$''.
 
\end{remark}
\subsection{Two real-analytic theorems about linear independence of algebraic logarithms (Baker's method)} This subsection is devoted to two results which are inspired by the Analytic Subgroup Theorem and deal with linear independence
of real and imaginary parts of algebraic logarithms. Here $F$ is the field of algebraic numbers $\overline{\Q}$ and and the twin $\Psi_{[i]}$ of $\Psi$ is defined as in Sect.\,2.2.
\begin{theorem} \label{mta0}\label{wus1} Assume that the image of $\Psi$ or $\Psi_{[i]}$ contains a non-trivial algebraic point in ${\rm{G}}\big(\overline{\Q}\big)$ and that $\dim\,{\rm{G}} \geq \dim\,\frak{t}$. If
${\rm{Hom}}\big(ker\,\pi, {{\rm{U}}}^{\scriptscriptstyle h}\big) = \{0\}$, then $\dim\,{\rm{U}} = \dim\,\pi_{\ast}(\frak{t})$ and $\pi_{\ast}(\Psi)$ descends to $K$. 
\end{theorem}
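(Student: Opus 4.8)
The plan is to follow the two‑step scheme that governs the ``first'' (descent) theorems of this section. First I would show that $\Psi$ itself descends to $K$; by the first half of Theorem \ref{weakdescent} this is the equality $\dim\,\HHH=\dim\,\G$, where $\HHH\subset\G\times\G^{\scr h}$ is the smallest algebraic subgroup with $\Psi_{\mathcal{N}}(\R)\subset\HHH(\C)$. Once that is known, the hypothesis ${\rm{Hom}}(ker\,\pi,\U^{\scr h})=\{0\}$ together with Theorem \ref{inherited} immediately yields that $\pi_{\ast}(\Psi)$ descends to $K$, and the equality $\dim\,\U=\dim\,\pi_{\ast}(\frak{t})$ will be squeezed out of the equality case of the dimension estimate. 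The inequality $\dim\,\HHH\ge\dim\,\G$ is for free: the first projection $p\colon\G\times\G^{\scr h}\to\G$ carries $\HHH$ onto an algebraic subgroup of $\G$ containing the Zariski‑dense set $\Psi(\R)$, hence onto all of $\G$.

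For $\dim\,\HHH\le\dim\,\G$ I would pass to the complexification $\Phi_{\mathcal{N}}\colon\C\to(\G\times\G^{\scr h})(\C)$ of $\Psi_{\mathcal{N}}$ (Proposition \ref{complex}), which satisfies $\Phi_{\mathcal{N}}(z)=\big(\Phi(z),\rho_{\ast}(\Phi(\bar z))\big)$, and apply the Analytic Subgroup Theorem (Theorem \ref{AST}). Its hypothesis is met: by assumption $\Phi(w)$ is a non‑trivial point of $\G(\overline{\Q})$ for some $w\in\R$ (case of $\Psi$) or $w\in i\R$ (case of $\Psi_{[i]}$, with $w=ir_0$), and then $\bar w=\pm w$, so $\Phi(\bar w)=\pm\Phi(w)$ is again algebraic since $\Phi$ is a homomorphism, while $\rho_{\ast}$ preserves algebraicity because $\rho$ is defined over $K$; hence $\Phi_{\mathcal{N}}(w)$ is a non‑trivial algebraic point in the image. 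Theorem \ref{AST} then says that $\frak{T}_{\mathcal{N}}$, the smallest $\overline{\Q}$‑subspace of $\frak{g}\oplus\frak{g}^{\scr h}$ with $(\Phi_{\mathcal{N}})_{\ast}(\C)\subset\frak{T}_{\mathcal{N}}(\C)$, is the Lie algebra of an algebraic subgroup $\HHH'$. Since ${\rm{exp}}(\frak{T}_{\mathcal{N}}(\C))\subset\HHH'(\C)$ and $\Phi_{\mathcal{N}}={\rm{exp}}\circ(\Phi_{\mathcal{N}})_{\ast}$, we get $\Psi_{\mathcal{N}}(\R)\subset\HHH'(\C)$, hence $\HHH\subset\HHH'$ by minimality of $\HHH$, and $\dim\,\HHH\le\dim_{\overline{\Q}}\frak{T}_{\mathcal{N}}$.

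Now the linear‑algebraic heart. Let $\frak{t}_{\mathcal{N}}$ be the smallest $K$‑subspace of $\frak{g}\oplus\frak{g}^{\scr h}$ with $(\Psi_{\mathcal{N}})_{\ast}(\R)\subset\frak{t}_{\mathcal{N}}\otimes_K\R$ (the object of Subsect.\,6.1.2). Its $\overline{\Q}$‑span is a $\overline{\Q}$‑subspace whose complexification contains $(\Phi_{\mathcal{N}})_{\ast}(\C)$, so $\dim_{\overline{\Q}}\frak{T}_{\mathcal{N}}\le\dim_K\frak{t}_{\mathcal{N}}$; and because $\Psi_{\mathcal{N}}=\Psi\times(\rho_{\ast}\circ\Psi)$ has ``graph shape'' and $\rho_{\ast}$ is $K$‑linear (Chapter 3), one gets $\frak{t}_{\mathcal{N}}\subset\{(x,\rho_{\ast}x):x\in\frak{t}\}$, whence $\dim_K\frak{t}_{\mathcal{N}}\le\dim_K\frak{t}=\dim\,\frak{t}$. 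Feeding in the hypothesis $\dim\,\G\ge\dim\,\frak{t}$ produces the chain
$$\dim\,\G\ \le\ \dim\,\HHH\ \le\ \dim_{\overline{\Q}}\frak{T}_{\mathcal{N}}\ \le\ \dim_K\frak{t}_{\mathcal{N}}\ \le\ \dim\,\frak{t}\ \le\ \dim\,\G,$$
so all six terms coincide. In particular $\dim\,\HHH=\dim\,\G$, hence $\Psi$ descends to $K$ by Theorem \ref{weakdescent}, and then $\pi_{\ast}(\Psi)$ descends to $K$ by Theorem \ref{inherited}.

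It remains to obtain $\dim\,\U=\dim\,\pi_{\ast}(\frak{t})$. Minimality of $\frak{t}$ forces $\pi_{\ast}(\frak{t})$ to be the smallest $K$‑subspace of $\frak{u}$ with $(\pi_{\ast}\Psi)_{\ast}(\R)\subset\pi_{\ast}(\frak{t})\otimes_K\R$, since a smaller such $\frak{s}$ would make $\pi_{\ast}^{-1}(\frak{s})\cap\frak{t}$ a proper $K$‑subspace of $\frak{t}$ still containing $\Psi_{\ast}(\R)$. As $\pi_{\ast}(\Psi)$ descends to $K$ through an isogeny onto a group of dimension $\dim\,\U$, Proposition \ref{real} gives $\dim\,\pi_{\ast}(\frak{t})\le\dim_K\frak{u}-\dim\,\U=\dim\,\U$ (recall $\dim_K\frak{u}=2\dim\,\U$). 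For the opposite inequality I would use the equality case above: since $\dim_{\overline{\Q}}\frak{T}_{\mathcal{N}}=\dim_K\frak{t}_{\mathcal{N}}$ and $\frak{T}_{\mathcal{N}}={\rm{Lie}}\,\HHH$, the $K$‑space $\frak{t}_{\mathcal{N}}$ contains a $\overline{\Q}$‑basis of ${\rm{Lie}}\,\HHH$; pushing this forward by $\pi_{\mathcal{N}}=\pi\times\pi^{\scr h}$, which maps $\HHH$ onto the smallest algebraic subgroup $\HHH_{\U}\subset\U\times\U^{\scr h}$ attached to $\pi_{\ast}(\Psi)$, one obtains inside $\{(y,\rho_{\ast}y):y\in\pi_{\ast}(\frak{t})\}$ a $\overline{\Q}$‑spanning set of ${\rm{Lie}}\,\HHH_{\U}$, so $\dim\,\pi_{\ast}(\frak{t})\ge\dim\,\HHH_{\U}=\dim\,\U$, the last equality by Theorem \ref{weakdescent} applied to the descent of $\pi_{\ast}(\Psi)$ just established. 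The step I expect to be most delicate is exactly this bookkeeping across the fields $K\subset\overline{\Q}$ and $\R\subset\C$ — the graph inclusion for $\frak{t}_{\mathcal{N}}$ and the behaviour of $\rho_{\ast}$ and $\pi_{\mathcal{N}}$ on Lie algebras (Chapter 3 and Subsect.\,6.1.2) — whereas the genuinely transcendental input is confined to the single application of Theorem \ref{AST}.
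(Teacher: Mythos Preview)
Your proof is correct and, for the descent of $\pi_{\ast}(\Psi)$, follows the paper's argument essentially verbatim: pass to $\Phi_{\mathcal{N}}$, locate a non-trivial algebraic point (the paper packages this in Lemma~\ref{ff} and Corollary~\ref{i}, you unpack it explicitly via $\Phi_{\mathcal{N}}(z)=(\Phi(z),\rho_{\ast}\Phi(\bar z))$), apply the Analytic Subgroup Theorem to get $\frak{T}={\rm Lie}\,\HHH$, and close the chain $\dim\,\G\le\dim\,\HHH=\dim\,\frak{T}=\dim\,\frak{t}\le\dim\,\G$ using Lemma~\ref{op=} and Lemma~\ref{op(}. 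Then Theorem~\ref{weakdescent} and Theorem~\ref{inherited} finish.

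For the identity $\dim\,\U=\dim\,\pi_{\ast}(\frak{t})$ your route differs from the paper's. The paper argues by \emph{bootstrap}: once $\pi_{\ast}(\Psi)$ is known to descend, the pair $(\U,\psi,\frak{s})$ with $\frak{s}=\pi_{\ast}(\frak{t})$ again satisfies the hypotheses of the theorem (with $\pi=\mathrm{id}$), so the already-established equality $\dim\,\frak{t}=\dim\,\G$ reapplies and gives $\dim\,\frak{s}=\dim\,\U$. You instead push the equality chain forward directly: since $\frak{t}_{\mathcal{N}}$ is a $K$-basis of $\frak{T}={\rm Lie}\,\HHH$, its image under $(\pi\times\pi^{\scr h})_{\ast}$ lies in the graph over $\pi_{\ast}(\frak{t})$ and $\overline{\Q}$-spans ${\rm Lie}\,\HHH_{\U}$, whence $\dim\,\pi_{\ast}(\frak{t})\ge\dim\,\HHH_{\U}=\dim\,\U$. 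Your argument is a bit more hands-on but has the advantage of not needing to re-verify the hypotheses (in particular the existence of a non-trivial algebraic point) for $\psi$; the paper's bootstrap is cleaner conceptually but tacitly uses that $\psi$ inherits enough arithmetic from $\Psi$.
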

The last theorem is again about inherited weak descents.
\begin{theorem} \label{mt0W}\label{wus2} Assume that the image of $\Psi$ or $\Psi_{[i]}$ contains a non-trivial algebraic point in ${\rm{G}}\big(\overline{\Q}\big)$ and that $\dim\,{{\rm{U}}} + \dim\,{\rm{G}} > \dim\,\frak{t}$. If ${\rm{Hom}}\big({{\rm{U}}}^{\scriptscriptstyle h}, {\rm{M}}\big) = \{0\}$
for each quotient $\rm{M}$ of $ker\,\pi$, then $\pi_{\ast}(\Psi)$ descends weakly to $K$.
\end{theorem}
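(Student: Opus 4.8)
The plan is to carry out the scheme described in the template $(\ast\ast)$ of Subsect.\,2.5. So it suffices to show the dimension estimate $\dim\,\G + \dim\,\U > \dim\,\HHH$, where $\HHH \subset \G \times \G^{\scr h}$ is the smallest algebraic subgroup with $\Psi_{\mathcal{N}}(\R) \subset \HHH(\C)$, and then to invoke Theorem\,\ref{inherited1}, whose second hypothesis ${\rm{Hom}}\big(\U^{\scr h},{\rm{M}}\big) = \{0\}$ for every quotient ${\rm{M}}$ of $ker\,\pi$ is part of the assumptions. In fact I would aim for the sharper bound $\dim\,\HHH \le \dim\,\frak{t}$, after which the hypothesis $\dim\,\U + \dim\,\G > \dim\,\frak{t}$ gives what is needed.

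The first step is to transport the ``algebraic point'' hypothesis from $\Psi$ (or its twin $\Psi_{[i]}$) to the complexification $\Phi_{\mathcal{N}}$ of $\Psi_{\mathcal{N}} = \Psi \times \Psi^{\scr h}$. Let $\Phi$ be the complexification of $\Psi$ (Proposition\,\ref{complex}). Using that $\overline{\Q}$ is stable under complex conjugation, that $\rho$ is defined over $K$, and that the complexification of $\Psi^{\scr h}$ is the holomorphic map $z \mapsto \rho_{\ast}\big(\Phi(\overline{z})\big)$, one checks that $\Psi(r_0) \in \G(\overline{\Q})$ implies $\Phi_{\mathcal{N}}(r_0) \in (\G \times \G^{\scr h})(\overline{\Q})$, and that $\Psi_{[i]}(r_0) = \Phi(ir_0) \in \G(\overline{\Q})$ implies $\Phi_{\mathcal{N}}(ir_0) \in (\G \times \G^{\scr h})(\overline{\Q})$ (here one also uses that $\Phi$ is a homomorphism, so $\Phi(-ir_0) = \Phi(ir_0)^{\scr -1}$ is again algebraic). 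Either way the image of $\Phi_{\mathcal{N}}$ contains a non-trivial algebraic point. Note also that $\HHH$, being the smallest algebraic subgroup containing the connected subgroup $\Psi_{\mathcal{N}}(\R)$, is a connected commutative group variety over $\overline{\Q}$, and that $\Phi_{\mathcal{N}}(\C) \subset \HHH(\C)$ since $\HHH(\C)$ is a complex Lie group containing $\Psi_{\mathcal{N}}(\R)$.

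Next I would apply the Analytic Subgroup Theorem (Theorem\,\ref{AST}) with $\G \times \G^{\scr h}$ in place of $\G$ and $\Phi_{\mathcal{N}}$ in place of $\Phi$. Let $\frak{T}_{\mathcal{N}} \subset \frak{g} \oplus \frak{g}^{\scr h}$ be the smallest subspace over $\overline{\Q}$ with $(\Phi_{\mathcal{N}})_{\ast}(\C) \subset \frak{T}_{\mathcal{N}} \otimes_{\overline{\Q}} \C$. Since the image of $\Phi_{\mathcal{N}}$ contains a non-trivial point of $(\G \times \G^{\scr h})(\overline{\Q})$, the theorem says $\frak{T}_{\mathcal{N}}$ is the Lie algebra of an algebraic subgroup of $\G \times \G^{\scr h}$; a minimality argument — that subgroup has Lie algebra containing $(\Phi_{\mathcal{N}})_{\ast}(\C)$ hence contains $\Phi_{\mathcal{N}}(\C) \supseteq \Psi_{\mathcal{N}}(\R)$, while ${\rm{Lie}}\,\HHH$ also satisfies the defining property of $\frak{T}_{\mathcal{N}}$ — identifies it with $\HHH$. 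Therefore $\dim\,\HHH = \dim_{\overline{\Q}} \frak{T}_{\mathcal{N}}$.

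It remains to bound $\dim_{\overline{\Q}} \frak{T}_{\mathcal{N}}$ by $\dim\,\frak{t}$, and this structural step — which involves no transcendence — is the one I expect to cause the real trouble. The elementary estimate only gives $\tfrac{1}{2}\dim_K \frak{t}_{\mathcal{N}} \le \dim_{\overline{\Q}} \frak{T}_{\mathcal{N}} \le \dim_K \frak{t}_{\mathcal{N}}$, where $\frak{t}_{\mathcal{N}} \subset \frak{n} = {\rm{Lie}}\,\mathcal{N}_{\overline{\Q}/K}(\G)$ is the smallest $K$-subspace with $(\Psi_{\mathcal{N}})_{\ast}(\R) \subset \frak{t}_{\mathcal{N}} \otimes_K \R$ (Subsect.\,6.1.2); closing the factor-of-two gap is the obstacle. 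It is closed by using that the whole Weil-restriction picture is defined over $K$: the vector $(\Psi_{\mathcal{N}})_{\ast}(1)$ lies in $\frak{n} \otimes_K \R$ and is fixed by $Gal(\overline{\Q}|K)$ acting trivially on $\frak{n}$ and by conjugation on the coefficients, so the subspace $\frak{T}_{\mathcal{N}}$ (under the canonical identification $\frak{g} \oplus \frak{g}^{\scr h} = \frak{n} \otimes_K \overline{\Q}$) is $Gal(\overline{\Q}|K)$-stable; Galois descent then yields $\frak{T}_{\mathcal{N}} = (\frak{T}_{\mathcal{N}} \cap \frak{n}) \otimes_K \overline{\Q}$ with $\frak{T}_{\mathcal{N}} \cap \frak{n} = \frak{t}_{\mathcal{N}}$, whence $\dim_{\overline{\Q}} \frak{T}_{\mathcal{N}} = \dim\,\frak{t}_{\mathcal{N}}$. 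Finally the auxiliary results of Sect.\,6.1 match $\dim\,\frak{t}_{\mathcal{N}}$ with $\dim\,\frak{t}$ via the projection $p_G$, exactly as in the four-exponentials discussion of Subsect.\,1.2.1. Putting this together, $\dim\,\HHH = \dim\,\frak{t} < \dim\,\G + \dim\,\U$, and Theorem\,\ref{inherited1} gives that $\pi_{\ast}(\Psi)$ descends weakly to $K$, which is the assertion.
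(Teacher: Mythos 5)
Your argument is correct and follows essentially the same route as the paper: transport the algebraic point to the complexification $\Phi_{\mathcal{N}}$ of $\Psi_{\mathcal{N}}$, apply the Analytic Subgroup Theorem inside $\G \times \G^{\scriptscriptstyle h}$ to identify the resulting subgroup with the Zariski closure $\rm{H}$, compare $F$- and $K$-dimensions to get $\dim\,{\rm{H}} \leq \dim\,\frak{t} < \dim\,\G + \dim\,{\rm{U}}$, and conclude with Theorem \ref{inherited1}. Your Galois-descent justification of $\dim_{\overline{\Q}}\frak{T}_{\mathcal{N}} = \dim\,\frak{t}_{\mathcal{N}} = \dim\,\frak{t}$ is just a (slightly cleaner) repackaging of the paper's Lemmas 6.1.2 and 6.1.3, so the two proofs coincide in substance.
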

Recall Statement $(\ast)$ following Theorem \ref{AST}. There we reformulated the Analytic Subgroup Theorem by means of an inequality involving $\rm{r}$, $\rm{k}$ and $\dim\,\G$. The same is possible with respect to Theorem \ref{wus1} and Theorem \ref{wus2}. To this end, note that the image of $\Psi$ or $\Psi_{[i]}$ contains a non-trivial algebraic point in ${\rm{G}}\big(\overline{\Q}\big)$ if and only if
$\rm{r} > 0$. Hence, the first hypopaper in Theorem \ref{wus1} holds if and only if $\rm{r}(\dim\,\G - \dim\frak{t}) \geq 1 - \rm{r}$. And
the first condition in Theorem \ref{wus2} is fulfilled if and only if $\rm{r}(\dim\,\G + \dim\,\rm{U} - \dim\frak{t}) \geq 1$.
\section{Instead of a user's manuel} There is no unique best possible way to warm toward our \textit{Ansatz}. However, a rather disadvantageous
approach is by meditating on the statements of the transcendence results and trying to gain insight from this. Certainly, it is possible to build up the theory in a different style, but most probably this will lead to a presentation which is as formal as ours. A better way is by trying out examples and asking: \textit{What can classical theory say? And what is reached, in contrast to the latter, with the approach presented here?}\\
\\
Let's consider the following example which is related to linear independence of logarithms. We take an abelian variety $\A$ and an abelian threefold $\rm{B}$ over $\overline{\Q}$. It is assumed that the two algebraic groups are simple. Set $\G = \A \times \rm{B}$ and choose an algebraic logarithm $\omega \in \frak{g}(\C)$
whose image $\xi = {\rm{exp}}_{\G}(\omega) \in \G(\overline{\Q})$ generates a Zariski-dense subgroup of $\G(\C)$. Let $\frak{t}$ be the smallest subspace of $\frak{g}$ over $K$ with the property $\omega \in \frak{t} \otimes_K \R$. Suppose that $\omega$ is such that
$\dim\,\frak{t}$ is minimal among all logarithms in ${\rm{exp}}_{\G}^{\scr-1}(\xi)$. Finally let $p: \G \longrightarrow \A$ and $q: \G \longrightarrow \rm{B}$ be the projections and set $\Psi(r) = {\rm{exp}}_{\G}(\omega r)$.\\
\\
Here is what classical theory tells us: \textit{The subspace $\frak{t}$ generates $\frak{g}$ over $\overline{\Q}$ and therefore $\dim\,\frak{t} \geq 5$.}\\
And this is what can be inferred with the above approach: \textit{If $\frak{t} = \frak{g}$, then $\Psi$, $p_{\ast}(\Psi)$ and $q_{\ast}(\Psi)$ cannot descend to $K = \overline{\Q} \cap \R$.
On the other hand, if $\frak{t} \neq \frak{g}$, then there are three possibilities:
\begin{center}
\begin{tabular}{r p{12cm}}
1. & It holds that $\dim\,\frak{t} = 8$. Then the homomorphism $p_{\ast}(\Psi)$ descends to $K$, but $q_{\ast}(\Psi)$ does not.\\
2. & We have $\dim\,\frak{t} = 7$. The homomorphism $q_{\ast}(\Psi)$ descends to $K$, but $p_{\ast}(\Psi)$ does not.\\
3. & The equality $\dim\,\frak{t} = 5$ holds. Then all three homomorphisms $\Psi$, $p_{\ast}(\Psi)$ and $q_{\ast}(\Psi)$ descend to $K$.\\
\end{tabular}
\end{center}}
Obviously, the above list is comparatively richer in content than the answer of the classical theory. We suggest to the reader to elaborate
\begin{exercise} \label{opu} Derive the above list of possibilities. 
 
\end{exercise} 
While solving the exercise, one will notice that no statement is deduced from just one among the above theorems. For the proof of the three possibilities one has to apply Proposition \ref{real}, the main criterion for descents to $K$ together with Theorem \ref{wus1} and Theorem \ref{wus2} in a combined manner. We will meet again this structure
of inferring in the proofs of the applications. At the end of Sect.\,7.1\,\,a solution of Exercise \ref{opu} is given.

\chapter{Conjugate varieties and Weil restrictions}
\section{Conjugating varieties, morphisms and sheaves} 
Let $F$ be a field with fixed algebraic closure $\overline{F}$. We consider a variety ${\rm{G}}$ over $F$ with underlying scheme $\cal{G}$ and sheaf of $F$-algebras $\mathcal{O}_{\rm{G}}$. We emphasize right in the beginning
that we will always assume that $\mathcal{O}_{\rm{G}}$ is the sheaf of regular functions with values in $F$, that is, algebraic morphisms to $\mathbb{A}_F^{\scr 1}$. This is certainly no loss of generality, because
the structure sheaf of a variety $\G$ over $F$ is isomorphic (as a sheaf of $F$-algebras) to the sheaf of regular functions on $\G$ with values in $F$. \\
Let $\U \subset \G$ be an open set, let $f \in \Gamma(\U, \mathcal{O}_{\G})$ be a regular function on $\U$ and let $F'$ be a field extension of $F$. Then $f$ induces a regular function $f \otimes_F F': \U \otimes_F F' \longrightarrow \mathbb{A}^{\scr 1}_{F'}$, hence an element in $\Gamma(\U \otimes_F F', \mathcal{O}_{\G \otimes_F F'})$. Via restriction
one receives then a function in $\Gamma(\V, \mathcal{O}_{\G \otimes_F F'})$ for each open subset $\V \subset \U \otimes_F F'$. The elements of $\mathcal{O}_{\rm{G} \otimes_F F'}$ arising from functions in $\mathcal{O}_{\G}$ form a subsheaf of $\mathcal{O}_{\rm{G} \otimes_F F'}$: it is the subsheaf of functions defined over $F$.\\
After we have clarified this for the convenience of the reader, we proceed to the definition of conjugate varieties. To this end, let $K$ be a subfield of $F$ and fix a homomorphism $h \in {\rm{Hom}}_{K}(F, \overline{F})$. For $c \in F^{\scriptscriptstyle h} = h(F)$ and a local section $f$ of $\mathcal{O}_{\rm{G}}$ we define a scalar multiplication on the sheaf $\mathcal{O}_{\rm{G}}$ by $c \ast f = h^{\scriptscriptstyle -1}(c)f$. This way the structure of an $F^{\scriptscriptstyle h}$-variety on $\cal{G}$ is established. It is called the \textit{${h}$-conjugate} of ${\rm{G}}$ and denoted by ${\rm{G}}^{\scriptscriptstyle h}$. 
As agreed above, we replace the sheaf $(\mathcal{O}_{\rm{G}}, \ast)$ of $F^{\scr h}$-algebras by the sheaf of regular functions with values in $F^{\scr h}$ and consider $\G^{\scr h}$ with the latter in what follows.\\
If $j: \cal{G} \longrightarrow \rm{W}$ is the structure morphism of $\G$ to ${\rm{W}} = \mbox{spec}\,F$, then the structure morphism of $\G^{\scr h}$ equals $j^{\scr h} = \mbox{spec}\,(h^{\scr-1}) \circ j$ and its image is ${\rm{W}}^{\scr h} = \mbox{spec}\,F^{\scr h}$.\footnote{Compare the definition given in Sect.\,2.1. There $h$ was the complex conjugation. In this special case
there is no distinction between $h$ and its inverse $h^{\scr -1}$.} As already indicated in
Sect.\,2.1, the identity on $\mathcal{G}$ implies a commutative diagram of schemes over $K$
 $$\begin{xy} 
  \xymatrix{\G \ar[rr]^{\rho} \ar[d]^{j} && \G^{\scr h}\ar[d]^{j^{\scr h}}\\
W \ar[rr]^{ \mbox{spec}\,(h^{\scr -1})} && W^{\scr h}}
\end{xy}$$
Here the morphism $\rho$ depends on $h$ and $\G$, but this is supressed as long as $h$ and $\G$ are fixed.\\
\\ 
To an algebraic morphism $v: {\rm{G}} \longrightarrow {{{\rm{H}}} }$ of varieties over $F$ we associate the conjugate morphism $v^{\scr h}: {\rm{G}}^{\scr h} \longrightarrow {{{\rm{H}}} }^{\scr h}$
of varieties over $F^{\scr h}$. To define the conjugate morphism, fix $h$ and let $\rho_{\G}$ resp.\,\,$\rho_{\HHH}$ be the morphisms $\rho$ as above, associated to $h$ and $\G$ resp.\,$\HHH$.
Then we get a commutative diagram
 $$\begin{xy} 
  \xymatrix{\G^{\scr h} \ar[rr]^{\rho_{\G}^{\scr -1}} \ar[d]^{j^{\scr h}}& & \G \ar[rr]^{v} \ar[d]^{j} && \HHH \ar[rr]^{\rho_{\HHH}} \ar[d]^{j} && \HHH^{\scr h}\ar[d]^{j^{\scr h}}\\
W^{\scr h} \ar[rr]^{ \mbox{spec}\,h} && W \ar[rr]^{id.} && W \ar[rr]^{ \mbox{spec}\,(h^{\scr -1})} && W^{\scr h}}
\end{xy}$$
It results that $v^{\scr h} = \rho_{\HHH} \circ v \circ \rho^{\scr-1}_{\G}$ is a morphism of varieties over $F$. This is the conjugate morphism. It is then clear from the construction that, for all $\xi \in \G(F)$, $v^{\scriptscriptstyle h}$ satisfies
\begin{equation}
 v^{\scriptscriptstyle h}\big(\xi^{\scriptscriptstyle h}\big) = \rho\big(v(\xi)\big) = \big(v(\xi)\big)^{\scriptscriptstyle h}.
\end{equation}
Conjugation of morphisms in particular implies conjugation of regular functions and conjugation of points over $F$: Let $\U \subset \G$ be an open set. Then conjugation yields an isomorphism of rings between $\Gamma(\U, \mathcal{O}_{\G})$ and $\Gamma(\U^{\scr h}, \mathcal{O}_{\G^{\scr h}})$. Here
for $f \in \Gamma(\U, \mathcal{O}_{\G})$ and $\xi \in \G(F)$ it holds that $h\big(f(\xi)\big) = f^{\scr h}(\xi^{\scr h})$. And if $\xi: {\rm{W}} \longrightarrow \G$ is a point over $F$, then
its conjugate $\xi^{\scr h}: {\rm{W}}^{\scr h} \longrightarrow \G^{\scr h}$ is the point
 $\rho_{\ast}(\xi) = \rho \circ \xi \circ \mbox{spec}\,(h)$.\\
 \\
Finally we shall study conjugation of sheaves of abelian groups. So, let $\mathcal{L}$ be a sheaf of abelian groups on ${\rm{G}}$. If ${{\rm{U}}} \subset {\rm{G}}$ is an open subset, then ${{\rm{U}}}^{\scr h}$ is an open subset of ${\rm{G}}^{\scr h}$ in a natural way.
We set $\Gamma\big(\mathcal{L}^{\scriptscriptstyle h}, {{\rm{U}}}^{\scriptscriptstyle h}\big) = \Gamma\big(\mathcal{L}, {\rm{U}}\big)$. Using the same restriction maps we receive a sheaf of abelian groups on ${\rm{G}}^{\scriptscriptstyle h}$. If $\mathcal{L}$ is a sheaf of $\mathcal{O}_{\rm{G}}$-modules, then its conjugate is an $\mathcal{O}_{{\rm{G}}^{\scriptscriptstyle h}}$-module in a natural way. Namely, for $f \in \Gamma\big(\mathcal{O}_{\G^{\scriptscriptstyle h}}, {{\rm{U}}}^{\scriptscriptstyle h}\big)$ and $\sigma \in \Gamma\big(\mathcal{L}^{\scriptscriptstyle h}, {\rm{U}}^{\scriptscriptstyle h}\big) = \Gamma\big(\mathcal{L}, {\rm{U}}\big)$ a multiplication '$\ast$' is defined by $f \ast \sigma = f^{\scriptscriptstyle (h^{\scriptscriptstyle -1})}\sigma$. In particular, the conjugate
of the structure sheaf $\mathcal{O}_{\G}$ is canonically isomorphic to the structure sheaf of regular functions $\mathcal{O}_{\G^{\scr h}}$ as $\mathcal{O}_{\G^{\scr h}}$-module.
Conjugation of sheaves leads to an exact and fully faithful functor of sheaves of abelian groups and thus implies a conjugation in sheaf cohomology. To be more precise, for $k \geq 0$ the identity induces isomorphisms of cohomology groups
\begin{equation}
h^k(\rho): {{{\rm{H}}} }^k\big({{\rm{U}}}, \mathcal{L}\big) \longrightarrow {{{\rm{H}}} }^k\big({{\rm{U}}}^{\scriptscriptstyle h}, \mathcal{L}^{\scriptscriptstyle h}\big).
\end{equation}
Moreover, a homomorphism $u: \mathcal{L}_1 \longrightarrow \mathcal{L}_2$ of sheaves of abelian groups on ${\rm{G}}$ induces a
homomorphism $u^{\scr h}$ satisfying
\begin{equation}
u^{\scr h}\big(\sigma^{\scr h}\big) = \big(u(\sigma)\big)^{\scr h}
\end{equation}
for all local sections $\sigma \in \Gamma({{\rm{U}}}, \mathcal{L}_1)$ on an open set ${{\rm{U}}} \subset \G$. Since $u$ is a global section of the sheaf of homomorphisms between $\mathcal{L}_1$ and $\mathcal{L}_2$, the definition of $u^{\scr h}$ is a special case of conjugation of sheaves. 
 \section{More about conjugate varieties}
 \subsection{Realization in the affine case} 
 Assume that ${\rm{G}}$ is a subvariety of the $N$-dimensional affine space with associated prime ideal $J_{\rm{G}} \subset F[X_1,..., X_N]$ and let $J_{\rm{G}}^{\scr h} \subset F^{\scr h}[X_1,..., X_N]$ be the conjugate prime ideal. 
Then 
the affine $F^{\scriptscriptstyle h}$-algebra
$\big(F[X_1,...,X_N]/J_{\rm{G}}, \ast\big)$ is naturally isomorphic to the affine $F^{\scriptscriptstyle h}$-algebra $\big(F^{\scriptscriptstyle h}[X_1,...,X_N]/J_{\rm{G}}^{\scr h}, \cdot\big)$ with usual multiplication '$\cdot$'. To be more precise,
an isomorphism arises from assigning to the class of a polynomial the class of the polynomial with conjugated coefficients. Therefore ${\rm{G}}^{\scriptscriptstyle h}$ is canonically identified with the subvariety of $\mathbb{A}^N_{F^{\scr h}}$ associated to the ideal
$J_{\rm{G}}^{\scr h}$.
With respect to this identification the map $\rho_{\ast}$ is uniformized by conjugation of affine coordinates by $h$. \\
If ${{{\rm{H}}}}$ is a further affine variety embedded into $M$-dimensional affine space then a morphism $v: \G \longrightarrow \HHH$ is represented by polynomials
${{\rm{Q}}_1,...,{\rm{Q}}_M} \in F[X_1,...,X_N]$. With respect to the above realization of affine conjugate varieties, the polynomials with conjugate coefficients ${{\rm{Q}}_1}^{\scr h},..., {{\rm{Q}}_M}^{\scr h} \in F^{\scriptscriptstyle h}[X_1,...,X_N]$
define then the conjugate morphism $v^{\scriptscriptstyle h}$.

\subsection{Conjugate varieties and base extensions} Consider a commutative square of field extensions
$$\begin{xy} 
  \xymatrix{F_1 \ar@^{(->}[r] & F_2\\
K_1 \ar@^{(->}[r]\ar@^{(->}[r]\ar@^{(->}[u] & \ar@^{(->}[u] K_2}
\end{xy}$$
Let $h_2 \in {\rm{Hom}}_{K_2}(F_2, \overline{F}_2)$ and set $h_1 = h_{2|F_1} \in {\rm{Hom}}_{K_1}(F_1, \overline{F}_1)$. \begin{lemma} For all varieties $\rm{G}$ over $F_1$ there is a canonical isomorphism $${\rm{G}}^{\scriptscriptstyle h_1} \otimes_{F_1^{\scriptscriptstyle h_1}}F_2^{\scriptscriptstyle h_2} \backsimeq \big({\rm{G}} \otimes_{F_1} F_2\big)^{\scriptscriptstyle h_2}.$$
\end{lemma}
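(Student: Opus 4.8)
The plan is to exhibit both sides of the asserted isomorphism as iterated base changes of $\G$ along ring homomorphisms out of $F_1$, and then to observe that the two resulting homomorphisms $F_1\to F_2^{h_2}$ literally coincide. For bookkeeping I write, for a variety $\V$ over a field $E$ and a ring homomorphism $\varphi\colon E\to E'$, $\V\otimes_{E,\varphi}E'$ for the base change $\mathcal V\times_{\mathrm{spec}\,E}\mathrm{spec}\,E'$ formed along $\mathrm{spec}(\varphi)\colon\mathrm{spec}\,E'\to\mathrm{spec}\,E$; when $\varphi$ is an inclusion this is the $\otimes_EE'$ of Sect.\,2.1, and transitivity of base change (fibre products compose) gives $(\V\otimes_{E,\varphi}E')\otimes_{E',\psi}E''\cong\V\otimes_{E,\psi\circ\varphi}E''$.

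The first step is the reinterpretation of conjugation as such a base change: for a variety $\V$ over $E$ and $g\in\mathrm{Hom}_K(E,\overline{E})$, there is a canonical isomorphism $\V^{g}\cong\V\otimes_{E,g}E^{g}$ of varieties over $E^g=g(E)$. Indeed, since $\mathrm{spec}(g)\colon\mathrm{spec}\,E^g\to\mathrm{spec}\,E$ is an isomorphism of schemes, its base change along the structure morphism $j_\V\colon\mathcal V\to\mathrm{spec}\,E$ — i.e.\ the first projection $\mathcal V\times_{\mathrm{spec}\,E}\mathrm{spec}\,E^g\to\mathcal V$ — is an isomorphism of schemes, and transporting the second projection through it gives the morphism $\mathrm{spec}(g^{-1})\circ j_\V$; this is exactly the structure morphism $j_\V^{g}$ defining $\V^{g}$ in Sect.\,3.1, after inserting the canonical identification there of the twisted sheaf $(\mathcal O_\V,\ast)$ with the sheaf of $E^g$-valued regular functions. (Affine-locally this is just the isomorphism $E[X_1,\dots,X_N]/J\otimes_{E,g}E^g\cong E^g[X_1,\dots,X_N]/J^{g}$ of Subsect.\,3.2.1.)

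With this in hand the computation is short. On the left, $\G^{h_1}\cong\G\otimes_{F_1,h_1}F_1^{h_1}$, and since $h_1=h_{2|F_1}$ forces $F_1^{h_1}=h_2(F_1)\subset h_2(F_2)=F_2^{h_2}$, the further $\otimes_{F_1^{h_1}}F_2^{h_2}$ is base change along the inclusion $\iota'\colon F_1^{h_1}\hookrightarrow F_2^{h_2}$; by transitivity,
$$\G^{h_1}\otimes_{F_1^{h_1}}F_2^{h_2}\;\cong\;\G\otimes_{F_1,\;\iota'\circ h_1}F_2^{h_2}.$$
On the right, $\G\otimes_{F_1}F_2=\G\otimes_{F_1,\iota}F_2$ with $\iota\colon F_1\hookrightarrow F_2$ the inclusion, and conjugating by $h_2$ is base change along $h_2\colon F_2\to F_2^{h_2}$, so again by transitivity
$$(\G\otimes_{F_1}F_2)^{h_2}\;\cong\;\G\otimes_{F_1,\;h_2\circ\iota}F_2^{h_2}.$$
Finally $\iota'\circ h_1=h_2\circ\iota$ as ring homomorphisms $F_1\to F_2^{h_2}$, since both send $a\in F_1$ to $h_1(a)=h_2(a)$; the equality $h_1(a)=h_2(a)$ is precisely the hypothesis $h_1=h_{2|F_1}$. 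Hence the two base changes are one and the same variety over $F_2^{h_2}$, and the composite isomorphism is canonical because every intermediate step is an instance of the universal property of a fibre product together with the canonical identifications of Sect.\,3.1.

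The step I expect to need the most care is the first one: verifying that ``base change along the isomorphism $\mathrm{spec}(g)$'' genuinely reproduces $\V^g$ as it was \emph{defined} in Sect.\,3.1 — that is, keeping honest track of the twisted structure sheaf $(\mathcal O_\V,\ast)$ versus the sheaf of $E^g$-valued regular functions, and checking that those identifications are themselves canonical so that canonicity propagates through the whole chain. A more hands-on alternative that sidesteps this is to cover $\G$ by affine opens, check on each that both sides equal $\mathrm{spec}$ of $F_2^{h_2}[X_1,\dots,X_N]/\big(J^{h_1}\!\cdot F_2^{h_2}[X_1,\dots,X_N]\big)$, and glue using naturality of conjugation and of base change with respect to open immersions; the base-change formulation is preferable precisely because it makes the gluing automatic.
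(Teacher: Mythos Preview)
Your proof is correct. The paper's own proof consists of the single line ``Left to the reader,'' so there is nothing to compare against; your argument via reinterpreting conjugation as base change along $\mathrm{spec}(g)$ and then invoking transitivity of fibre products is exactly the natural way to fill in this omitted verification, and your caveat about matching the base-change description to the paper's Sect.\,3.1 definition of $\V^g$ is well placed but adequately handled.
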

\begin{proof} Left to the reader. \end{proof}

\subsection{Conjugate varieties and Galois actions} 
In this subsection $F/K$ is a finite Galois extension and $\rm{G}'$ denotes a variety over $K$ with base extension ${\rm{G}} = {\rm{G}'} \otimes_K F$. We let
 $p: {\rm{G}} \longrightarrow {\rm{G}}'$ be the projection of schemes and fix a $h \in Gal(F|K)$. The previous lemma applied to $K_1 = K_2 = F_1 = K$ and $F_2 = F$ yields identifications
 $${\rm{G}} = {\rm{G}'} \otimes_K F = {\rm{G}'} \otimes_K F^{\scr h} = ({\rm{G}'} \otimes_K F)^{\scr h} = \G^{\scr h}.$$ 
The morphism $\rho$ of $K$-schemes associated to $h$ coincides with the $p$-equivariant automorphism $id_{\rm{G}'} \times_{K} \mbox{spec}\,(h^{\scr-1})$. This way an action of $Gal(F|K)$ on the scheme $\cal{G}$ underlying ${\rm{G}}$ is defined. As seen in the previous subsection, it extends to an action of points over $F$ which fixes ${\rm{G}}(K) = {\rm{G}}'(K)$.\footnote{As explained in the introduction, we identify $\G(K)$ with a subset of $\G(F)$.} 
\begin{lemma} \label{cl1} Let $F/K$ be a Galois extension of fields and let $\G'$ and ${\rm{H}}'$ be varieties over $K$. Consider an algebraic morphism
$v: \G' \otimes_K F \longrightarrow {\rm{H}}' \otimes_K F$. Then $v = v^{\scr h}$ for all $h \in Gal(F|K)$ if and only if $v$ is defined over $K$, that is,
if and only if there is an algebraic morphism $v': \G' \longrightarrow {\rm{H}}' $ such that $v = v' \otimes_K F$.
\end{lemma}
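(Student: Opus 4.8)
The plan is to prove the two implications separately, with the "only if" direction being essentially a restatement of Galois descent for morphisms and the "if" direction being immediate. First I would dispose of the easy direction: if $v = v' \otimes_K F$ for some morphism $v': \G' \longrightarrow \HHH'$ over $K$, then because conjugation of morphisms is compatible with base extension (Lemma in Subsect.\,3.2.2, applied to $K_1=K_2=F_1=K$, $F_2=F$) and $v'$ itself is already defined over $K$, one has $(v'\otimes_K F)^{\scr h} = v' \otimes_K F^{\scr h} = v' \otimes_K F = v$, using the identification $F^{\scr h}=F$ coming from $h \in Gal(F|K)$. So the heart of the matter is the converse: if $v = v^{\scr h}$ for every $h \in Gal(F|K)$, then $v$ descends to $K$.

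For the nontrivial direction I would argue as follows. By the discussion in Subsect.\,3.2.3, the identifications $\G = \G'\otimes_K F = \G^{\scr h}$ realize the conjugation morphism $\rho$ (for $\G$, and similarly for $\HHH$) as the $p$-equivariant automorphism $id_{\G'} \times_K \mbox{spec}\,(h^{\scr -1})$ of the $K$-scheme underlying $\G$; this gives the natural $Gal(F|K)$-action on $\G'\otimes_K F$ over $\G'$. Since $v^{\scr h} = \rho_{\HHH} \circ v \circ \rho_{\G}^{\scr -1}$ by definition, the hypothesis $v = v^{\scr h}$ says precisely that $v$ commutes with these Galois automorphisms, i.e. $v$ is $Gal(F|K)$-equivariant as a morphism of $K$-schemes $\G'\otimes_K F \longrightarrow \HHH'\otimes_K F$ lying over $\G' \longrightarrow \HHH'$ (both structure maps being the projections $p$). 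One then invokes faithfully flat (Galois) descent along $\mbox{spec}\,F \longrightarrow \mbox{spec}\,K$: a morphism between the base changes $\G'\otimes_K F$ and $\HHH'\otimes_K F$ that is compatible with the canonical descent data — which for a Galois extension is exactly the condition of being fixed by every $h \in Gal(F|K)$ — arises from a unique morphism $v': \G' \longrightarrow \HHH'$ of $K$-schemes with $v = v'\otimes_K F$. Concretely, in the affine case one can make this completely explicit: with $\G'$ and $\HHH'$ embedded in affine space over $K$, $v$ is given by polynomials $\mathrm{Q}_1,\dots,\mathrm{Q}_M \in F[X_1,\dots,X_N]$, and by Subsect.\,3.2.1 the equality $v = v^{\scr h}$ forces $\mathrm{Q}_j^{\scr h} = \mathrm{Q}_j$, i.e. every coefficient of every $\mathrm{Q}_j$ is fixed by all of $Gal(F|K)$, hence lies in $K$; so the $\mathrm{Q}_j$ already have coefficients in $K$ and define the desired $v'$. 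The general case is reduced to the affine one by covering $\G'$ and $\HHH'$ by Galois-stable affine opens (the projection $p$ being affine, one pulls back an affine open cover of $\HHH'$ and of $\G'$), applying the affine argument on each piece, and noting the resulting local morphisms glue by uniqueness.

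The main obstacle — though it is more bookkeeping than genuine difficulty — is checking that the condition "$v = v^{\scr h}$ for all $h$" really is the descent datum, rather than merely a necessary consequence of one: one must verify the cocycle/compatibility conditions, which for a Galois extension reduce to the statement that the $\rho$'s compose correctly, $\rho^{(h_1 h_2)} = \rho^{(h_1)}\circ\rho^{(h_2)}$ (suitably interpreted), so that "fixed by the full group" is equivalent to "compatible with the canonical descent datum." This is exactly the content of identifying $\rho$ with $id_{\G'}\times_K\mbox{spec}(h^{\scr -1})$ in Subsect.\,3.2.3, so I would lean on that identification and on standard faithfully flat descent (e.g. as in SGA 1 or Bosch–Lütkebohmert–Raynaud), or, to keep the paper self-contained as advertised, give the explicit affine-coefficients argument above and only gesture at the gluing.
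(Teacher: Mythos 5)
The paper gives no argument for this lemma at all --- its proof reads simply ``Well known'' --- so there is nothing to compare step by step; what you supply is the standard Galois-descent argument that this citation tacitly relies on, and your overall plan is correct in both of its variants: the easy direction via the identification of $\rho$ with $id_{\G'}\times_K \mbox{spec}(h^{\scr -1})$, and the converse either by quoting that morphisms form an fpqc sheaf (so Galois-fixed morphisms between base changes descend) or by the explicit affine computation plus gluing over Galois-stable affine charts.

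One step of the explicit version is stated too strongly, though the repair is immediate. If $\G'\subset \mathbb{A}^N_K$ and $v$ is represented on an affine chart by polynomials ${\rm{Q}}_1,\dots,{\rm{Q}}_M\in F[X_1,\dots,X_N]$, the hypothesis $v=v^{\scr h}$ does \emph{not} force ${\rm{Q}}_j^{\scr h}={\rm{Q}}_j$ as polynomials; it only forces ${\rm{Q}}_j^{\scr h}-{\rm{Q}}_j$ to lie in the ideal of $\G'\otimes_K F$, i.e. the two representatives agree as regular functions on the variety, and the coefficients of a representative need not be Galois-fixed. So you should argue with the classes $\overline{{\rm{Q}}}_j$ in the coordinate ring $K[\G']\otimes_K F$: the hypothesis says these classes are fixed by the $Gal(F|K)$-action on the second tensor factor, and writing them in a $K$-basis of $K[\G']$ with coefficients in $F$ and using $F^{Gal(F|K)}=K$ (valid for any Galois extension, finite or not) shows that $\overline{{\rm{Q}}}_j\in K[\G']$, hence admits a representative with coefficients in $K$. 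With that adjustment your argument is complete; this is a slip in bookkeeping rather than a gap in the method.
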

\begin{proof} Well known.
\end{proof}

\section{Conjugate group varieties and the exponential map.\\ Canonical actions} 
In this section we shall work out the previous constructions in the case of a group variety over a subfield of $\C$ and shall relate them to the exponential map.\\
We let $F$ be a subfield of $\C$ and set $K = F \cap \R$. It is assumed that
$[F:K] = 2$ and that $F$ is stable with respect to complex conjugation $h$. We denote
by ${\rm{G}}$ a group variety over $F$ with neutral element $e \in \G(F)$, multiplication $\mu$ and inversion morphism $i$. We leave it to the reader to verify that the conjugates of $e$, $\mu$ and $i$ define the structure of an algebraic group over $F^{\scr h}$ on $\G^{\scr h}$. 
Since the three morphisms and their conjugates are identical as morphisms of schemes over $K$, the real-analytic isomorphism $\rho_{\ast}: {\rm{G}}(\C) \longrightarrow {\rm{G}}^{\scriptscriptstyle h}(\C)$ from Proposition \ref{ji} constitutes then an isomorphism
between the real Lie groups $\G(\C)$ and $\G^{\scr h}(\C)$.\\
We denote by $\frak{g}$ the Lie algebra of left-invariant vector fields of $\G$. A vector field $\partial \in \frak{g}$ is an endomorphism of $\mathcal{O}_{\G}$, so that conjugation induces an isomorphism of vector spaces over $K$
$${{\rm{Lie}}}(\rho): \frak{g} \longrightarrow \frak{g}^{\scriptscriptstyle h}, {{\rm{Lie}}}(\rho)(\partial) = \partial^{\scriptscriptstyle h}$$
such that $\partial$ and $\partial^{\scr h}$ satisfy (3.1.3). To be more precise, if ${{\rm{U}}}^{\scriptscriptstyle h} \subset {\rm{G}}^{\scriptscriptstyle h}$ is an open set and $\partial \in \frak{g}$, then $\partial^{\scriptscriptstyle h} = {{\rm{Lie}}}(\rho)\big(\partial\big)$ acts on $F^{\scr h}$-valued functions $f^{\scriptscriptstyle h}$ on ${{\rm{U}}}^{\scriptscriptstyle h}$ by
\begin{equation}
 \partial^{\scriptscriptstyle h}f^{\scriptscriptstyle h}(e^{\scriptscriptstyle h}) = \partial^{\scriptscriptstyle h}f^{\scriptscriptstyle h}\big(\rho_{\ast}(e)\big) = h\big(\partial f(e)\big).
\end{equation}
The isomorphism extends to an $\R$-linear isomorphism between $\frak{g}(\C)$ and $\frak{g}^{\scr h}(\C)$ which is defined the same way. It will be denoted by the same symbol.
\begin{lemma} \label{111} The isomorphism ${{\rm{Lie}}}(\rho): \frak{g}(\C) \longrightarrow \frak{g}^{\scr h}(\C)$ is defined over $K$ and coincides with the differential of $\rho_{\ast}$. That is, we get a commutative diagram 
$$\begin{xy} 
  \xymatrix{
  \frak{g}(\C) \ar[d]^{exp} \ar[rr]^{{\rm{Lie}}(\rho)} & & \frak{g}^{\scr h}(\C) \ar[d]^{exp} \\
  \G(\C) \ar[rr]^{\rho_{\ast}} && \G^{\scr h}(\C)}
\end{xy}.$$
\end{lemma}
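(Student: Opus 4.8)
The plan is to verify the two assertions separately. First I would show that ${\rm{Lie}}(\rho)$ is defined over $K$. Recall that the morphism $\rho: \G \longrightarrow \G^{\scr h}$ is the identity on the underlying scheme $\cal{G}$ and that, on structure sheaves, a local section $f$ of $\mathcal{O}_{\G^{\scr h}}$ corresponds to $h^{\scr -1}\circ f$ viewed as a section of $(\mathcal{O}_{\G},\ast)$; a left-invariant vector field $\partial \in \frak{g}$ acts on $F$-valued functions, and by definition $\partial^{\scr h} = {\rm{Lie}}(\rho)(\partial)$ acts on $F^{\scr h}$-valued functions via $(3.3.1)$, i.e. $\partial^{\scr h}f^{\scr h}(e^{\scr h}) = h(\partial f(e))$. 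Choosing a $K$-basis of $\frak{g}$ coming from a $K$-rational frame at $e$ (which exists because $\G^{\scr h}$ is just $\G$ as a $K$-scheme and $e$ is a $K$-point under the identification ${\rm{G}}(K) = {\rm{G}}^{\scr h}(K)$), the matrix of ${\rm{Lie}}(\rho)$ in the dual $K$-bases is the identity, so the $K$-linear map extends to $\frak{g}(\C)$ exactly by tensoring with $\C$ over $K$; this is the meaning of ``defined over $K$''. In the affine realization of Subsect.\,3.2.1 this is even more transparent: $\rho_{\ast}$ is uniformized by conjugation of affine coordinates, and differentiating a coordinatewise conjugation at a $K$-rational point gives back the ``same'' $K$-linear map on tangent spaces, now read with respect to the coordinates of $\G^{\scr h}$.

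Second I would show that ${\rm{Lie}}(\rho)$ is the differential of $\rho_{\ast}$, i.e. that the square in the statement commutes. Here the key input is the functoriality of the exponential map under morphisms of complex Lie groups together with Proposition \ref{ji}, which guarantees that $\rho_{\ast}: \G(\C) \longrightarrow \G^{\scr h}(\C)$ is a real-analytic isomorphism, and the remark in Sect.\,3.3 that $\rho_{\ast}$ is in fact an isomorphism of real Lie groups (because $e$, $\mu$, $i$ and their conjugates agree as morphisms of $K$-schemes). A homomorphism of real Lie groups intertwines the exponential maps with its own differential $d\rho_{\ast}$: $\rho_{\ast}\circ \exp_{\G} = \exp_{\G^{\scr h}} \circ\, d\rho_{\ast}$. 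So it remains to identify $d\rho_{\ast}$ with ${\rm{Lie}}(\rho)$. Both are $\R$-linear (indeed, by the first part, $K$-rational) maps $\frak{g}(\C) \longrightarrow \frak{g}^{\scr h}(\C)$, so it suffices to check they agree on the algebraic part $\frak{g}$, and there on the action on germs of regular functions at the neutral element; but that is precisely the content of $(3.3.1)$ versus the chain rule for $d\rho_{\ast}$ applied to $f^{\scr h} = h\circ f\circ \rho^{\scr -1}$, both of which produce $h(\partial f(e))$.

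The step I expect to require the most care is the bookkeeping in the second part: making sure that ``$d\rho_{\ast}$'', computed analytically as the differential of the real-analytic group isomorphism at the identity, is literally the same $K$-linear map as the algebraically defined ${\rm{Lie}}(\rho)$, rather than merely conjugate to it. The potential subtlety is that $\rho_{\ast}$ is only real-analytic, not holomorphic, so one must be sure that ``differential'' is taken in the real sense throughout and that the identification $\frak{g}(\C) \cong {\rm{Lie}}\,\G(\C)$ recalled in Sect.\,2.2 is used consistently on both sides. Once one fixes the affine coordinate description — $\rho_{\ast}$ is coordinatewise complex conjugation, its real differential is again coordinatewise conjugation, and the exponential maps of $\G(\C)$ and $\G^{\scr h}(\C)$ are built from the same power series in these coordinates — the commutativity of the diagram falls out, and the two halves of the lemma together give the claim.
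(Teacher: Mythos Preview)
Your proposal is correct and essentially coincides with the paper's argument. The paper likewise reduces to $F=\C$, embeds a neighborhood of $e$ into affine space with $e=0$, and shows via $(3.1.1)$ and $(3.3.1)$ that ${\rm{Lie}}(\rho)$ sends a tangent vector $\partial \in \frak{g} \subset \sum_j \C\partial_j$ to the vector with conjugated affine coordinates; since $\rho_\ast$ is itself coordinatewise conjugation $h_N$ (Subsect.\,3.2.1), this identifies ${\rm{Lie}}(\rho)$ with $d\rho_\ast$, and the diagram follows from standard Lie theory.

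The only difference is expository: the paper goes directly to the affine coordinate computation, whereas you first argue abstractly via $K$-rational frames and functoriality of $\exp$ before invoking the same affine picture in your final paragraph. Your remark about the ``defined over $K$'' part being essentially by construction is accurate --- the real content is the identification with $d\rho_\ast$ --- and you have correctly flagged and addressed the one genuine subtlety, namely that $\rho_\ast$ is only real-analytic so the differential must be taken in the real sense.
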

\begin{proof} It is sufficient to prove the assertion in the case when $F = \C$. We shall assume first that ${\rm{G}}$ is a subvariety of $\mathbb{A}^N_{\C}$ and that $e = 0$. The Lie algebra $\frak{g}$ is identified with the tangent space $T_{e}\G$ at the unit element. This is the vector space of $\C$-derivations of of the local ring $\mathcal{O}_{\G, e}$. Using standard coordinates $z_j$, we receive then an embedding of $\frak{g}$ into $T_0(\mathbb{A}^N_{\C}) = \sum_{j=1}^N\C \partial_j$. Here $\partial_j$ acts as $\partial_jf = {\scriptstyle{\frac{\partial}{\partial z_j}}} f(0)$. Let $h_N$ be the real-analytic map arising
from complex conjugation of standard coordinates on $\mathbb{A}^N(\C)$. Then $h_N(e) = h_N(0) = e$. According to Subsect.\,3.2.1 we have $\rho_{\ast} = h_{N|\G(\C)}$.
Statements (3.1.1) and (3.3.1) imply that a tangent vector $\partial \in \frak{g}$ (viewed as an element in $\sum_{j=1}^N\C \partial_j$) is mapped to the tangent vector with conjugate affine coordinates. Indeed, a regular function $f$ on $\G$ is represented
by a complex polynomial in $N$ variables, and the conjugate regular function $f^{\scr h}$ is represented by the polynomial with conjugate coefficients. So, $(\partial^{\scr h}f^{\scr h})(e) = h\big((\partial f)(e)\big)$ for all regular functions $f$ if and only if $\partial^{\scriptscriptstyle h} = h_N(\partial)$. The assertion follows in this situation. And if $\G$ is an arbitrary group variety, then the same argument works after embedding an open neighborhood of the origin into affine space. 
 \end{proof}
For a vector space $\frak{t}^+$ over $K$ we define the \textit{canonical action} of ${Gal(F|K)}$ on $\frak{t} = \frak{t}^+ \otimes_{K} F$ with respect to $\frak{t}^+$ by $h(v \otimes c) = v\otimes h(c)$ for $v \in \frak{t}^+$ and $c \in F$. If, conversely, $\frak{t}$ is a vector space over of finite dimension over $F$ then ${\cal{H}}(\frak{t})$ shall denote the set of $K$-subspaces $\frak{t}^+ \subset \frak{t}$ such that $\dim_{F}\frak{t} = \dim_{K}\frak{t}^+$ and $\frak{t} = F-$span of $\frak{t}^+$. Given $\frak{t}^+ \in {\cal{H}}(\frak{t})$, $\frak{t}$ is identified with $\frak{t}^+ \otimes_{K}F$. Hence, $\frak{t}^+$ defines
a canonical action of $Gal(F|K)$ on $\frak{t}$.\\
Let now $\G'$ is a commutative group variety over $K$ and write $\G$ for the base extension of $\G'$ to scalars over $F$. Then the Lie algebra
$\frak{g}'$ of $\G'$ is canonically contained in the Lie algebra $\frak{g} = \frak{g}'(F)$ of $\G$. Therefore, $\frak{g}' \in {\cal{H}}(\frak{g})$.
\begin{lemma} The canonical action of $Gal(F|K)$ on $\frak{g}$ with respect to $\frak{g}'$ coincides with the action of $Gal(F|K)$ on $\frak{g}$ defined (3.3.1).
 \end{lemma}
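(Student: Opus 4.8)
The assertion is local at the neutral element $e$, so the plan is to pass to an affine model and read both actions off on affine coordinates; since $Gal(F|K) = \langle h \rangle$ and both constructions are group actions, it suffices to treat the generator $h$. First I would choose an affine open neighborhood of $e$ in $\G'$, embedded in some $\mathbb{A}^N_K$ with $e$ placed at the origin. Base change to $F$ realizes a neighborhood of $e$ in $\G = \G' \otimes_K F$ inside $\mathbb{A}^N_F$, cut out by the prime ideal $J_{\G} = J_{\G'}\cdot F[X_1,\dots,X_N]$ generated by the ideal $J_{\G'} \subset K[X_1,\dots,X_N]$ of $\G'$. Since $J_{\G'}$ has coefficients in $K = F^{\scr h}$, which are fixed by $h$, one has $J_{\G}^{\scr h} = J_{\G}$; by the realization of conjugate affine varieties in Subsect.\,3.2.1 this is precisely the canonical identification $\G^{\scr h} = \G$ of Subsect.\,3.2.3 through which the action (3.3.1) is regarded as an endomorphism of $\frak{g}$.

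Next I would identify both Lie algebras with subspaces of $T_0\mathbb{A}^N_F = \sum_{j=1}^N F\partial_j$: here $\frak{g}' = T_e\G'$ sits in $\sum_{j=1}^N K\partial_j$ and $\frak{g} = \frak{g}'(F)$ is its $F$-span, so indeed $\frak{g}' \in \cal{H}(\frak{g})$ as in Sect.\,3.3. Fix a $K$-basis $v_1,\dots,v_n$ of $\frak{g}'$, say $v_i = \sum_{j} b_{ij}\partial_j$ with all $b_{ij}\in K$. An arbitrary $\partial \in \frak{g}$ is $\partial = \sum_i c_i v_i$ with $c_i \in F$, hence has affine coordinates $\big(\sum_i c_i b_{ij}\big)_j$. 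By definition the canonical action with respect to $\frak{g}'$ sends $\partial = \sum_i v_i \otimes c_i$ to $\sum_i v_i \otimes h(c_i) = \sum_i h(c_i)v_i$, whose affine coordinates are $\big(\sum_i h(c_i)b_{ij}\big)_j = \big(h(\sum_i c_i b_{ij})\big)_j$ because $b_{ij}\in K$. So on the affine model the canonical action is conjugation by $h$ of the affine coordinates of the tangent vector.

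Finally I would invoke Lemma \ref{111} and the affine computation inside its proof: there it is shown that with respect to standard coordinates $\partial^{\scr h} = h_N(\partial)$, where $h_N$ denotes conjugation of affine coordinates, equivalently that ${\rm{Lie}}(\rho)$ is the differential at $e$ of $\rho_{\ast}$, which by Subsect.\,3.2.1 is uniformized by coordinate conjugation. Hence the action (3.3.1), namely $\partial \mapsto {\rm{Lie}}(\rho)(\partial) = \partial^{\scr h}$, is likewise conjugation of the affine coordinates by $h$, and comparing with the previous paragraph the two actions agree on all of $\frak{g}$, which is the claim. I expect the only real obstacle to be the bookkeeping in the first step: checking that the identification $\G^{\scr h} = \G$ used to turn (3.3.1) into an endomorphism of $\frak{g}$ is compatible with the decomposition $\frak{g} = \frak{g}' \otimes_K F$ underlying the canonical action. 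Once this is pinned down through the $h$-stability of $J_{\G}$ — so that in the affine model both identifications of $\frak{g}^{\scr h}$ with $\frak{g}$ are simply the identity on $\sum_{j=1}^N F\partial_j$ — everything else is elementary linear algebra.
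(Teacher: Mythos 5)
Your proof is correct and rests on the same observation as the paper's own two-line argument: the action (3.3.1) applied to anything defined over $K$ just conjugates the value, so on a $K$-structure it is the canonical semilinear action — the paper carries this out intrinsically by evaluating $\partial^{\scr h}$ on regular germs over $K$ at $e$, while you route it through an affine model over $K$ and conjugation of coordinates. The only loose point is your appeal to the affine computation inside the proof of Lemma \ref{111}, which is carried out there after reducing to $F = \C$; this is harmless, since applying (3.3.1) directly to the $K$-defined coordinate functions $X_j$ of your embedding gives $\partial^{\scr h} = h_N(\partial)$ over any $F$ in the stated setting, which is all your final comparison needs.
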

\begin{proof} Let $\partial' \in \frak{g}'$, $c \in F$ and write $\partial = \partial' \otimes c$. For all regular functions $f \in \mathcal{O}_{e, \G}$ over $K$ we have then $f(e) = h\big(f(e)\big)$ and
$(\partial')^{\scr h}f(e) = h\big(\partial' f(e)\big) = \partial' f(e)$.\footnote{The notion of a regular function over $K = \R$ was defined in the first subsection of this chapter.} Hence, 

$$\partial^{\scr h}f(e) = h\big(c\cdot \partial' f(e)\big) = h(c)\cdot \partial' f(e) = \big(\partial' \otimes h(c)\big)f(e).$$
As each left-invariant vector field in $\frak{g}$ is determined by the values it takes on functions $f \in \mathcal{O}_{e, \G}$ over $K$, it follows that ${\rm{Lie}}(\rho)\big(\partial\big) = \partial' \otimes h(c)$ for all $\partial \in \frak{g}'$ and all $c \in F$.\end{proof}

\begin{corollary}\label{lp333} Let $\G'$ be a commutative group variety over $K$. Then the image of $\frak{g}'(\R)$ under the exponential map is the connected component of unity ${\rm{G}}^o(\R)$ of ${\rm{G}}'(\R)$.
\end{corollary}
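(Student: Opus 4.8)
The plan is to reduce the statement to the corresponding fact about the complex exponential map of $\G = \G' \otimes_K \overline{\Q}$ together with the description of $\G'(\R)$ as the fixed locus of the Galois action. Concretely, we know from the second lemma above that the action of $Gal(F|K)$ on $\frak{g}$ (here $F = \C$, $K = \R$) coming from \eqref{3.3.1} is exactly the \emph{canonical} action with respect to $\frak{g}'$, i.e. $h$ acts $\R$-linearly with fixed space precisely $\frak{g}'(\R) \subset \frak{g}(\C)$. Likewise $\rho_{\ast}: \G(\C) \longrightarrow \G^{\scr h}(\C) = \G(\C)$ is, under the identification $\G = \G^{\scr h}$ valid for varieties defined over $K$, nothing but the Galois/conjugation action on $\G(\C)$, whose fixed point set is $\G'(\R)$ (this is the density-and-submanifold statement quoted from Silhol at the end of Section~2.1, applied to $\G'$, together with the identification of real points with $h$-fixed complex points). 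So the claim becomes: the $h$-fixed subspace of $\frak{g}(\C)$ is carried by $\exp_{\G}$ onto the connected component of the $h$-fixed subgroup of $\G(\C)$.

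The key step is the compatibility of $\exp_{\G}$ with the two actions, which is precisely Lemma~\ref{111}: the diagram relating $\exp$, $\mathrm{Lie}(\rho)$ and $\rho_{\ast}$ commutes. Hence $\exp_{\G}$ maps $\frak{g}'(\R) = \{\,v \in \frak{g}(\C) : \mathrm{Lie}(\rho)(v) = v\,\}$ into $\{\,x \in \G(\C) : \rho_{\ast}(x) = x\,\} = \G'(\R)$. Since $\exp_{\G}$ is a local real-analytic diffeomorphism near $0$ and $\frak{g}'(\R)$ is a real-linear subspace of the correct dimension $\dim \G = \dim_{\R}\G'(\R) = \dim_{\R}\G^o(\R)$, the image $\exp_{\G}(\frak{g}'(\R))$ contains an open neighbourhood of $e$ in $\G'(\R)$; being the continuous image of a connected set containing $e$, it lies in $\G^o(\R)$, and an open neighbourhood of $e$ generates the connected Lie subgroup $\G^o(\R)$. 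This gives $\exp_{\G}(\frak{g}'(\R)) = \G^o(\R)$: one inclusion is the generation argument just given (every element of $\G^o(\R)$ is a finite product of elements $\exp_{\G}(v_k)$ with $v_k \in \frak{g}'(\R)$ small, and since $\frak{g}'(\R)$ is an abelian Lie subalgebra, $\exp_{\G}(v_1)\cdots \exp_{\G}(v_n) = \exp_{\G}(v_1 + \cdots + v_n) \in \exp_{\G}(\frak{g}'(\R))$), and the reverse inclusion is the image computation from Lemma~\ref{111}.

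The main obstacle I anticipate is not any single hard estimate but making the bookkeeping of identifications airtight: one must be careful that under $\G = \G^{\scr h}$ the map $\rho_{\ast}$ really is complex conjugation of points, that its fixed locus is $\G'(\R)$ rather than some larger or smaller set, and that $\frak{g}'(\R)$ sits inside $\frak{g}(\C)$ as the full real form (dimension count) rather than a proper subspace. All of these are supplied by the two lemmas immediately preceding the corollary and by the material of Section~2.1, so the proof is essentially a concatenation: commutativity of the Lemma~\ref{111} square $+$ identification of fixed loci $+$ the elementary fact that the exponential image of a real form of an abelian Lie algebra is the identity component of the corresponding real subgroup. I would write it in exactly that order, keeping the abelian-ness of $\G$ explicit since it is what lets me pass from "neighbourhood of $e$" to "all of $\G^o(\R)$" via additivity of $\exp_{\G}$ on $\frak{g}'(\R)$.
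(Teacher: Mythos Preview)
Your proposal is correct and follows essentially the same route as the paper: use Lemma~\ref{111} to get $\rho_\ast \circ \exp_{\G} = \exp_{\G} \circ \mathrm{Lie}(\rho)$, invoke the preceding lemma to identify $\frak{g}'(\R)$ as the fixed subspace of $\mathrm{Lie}(\rho)$ and $\G'(\R)$ as the fixed locus of $\rho_\ast$, and then conclude by a dimension count plus the local-diffeomorphism property of $\exp$. You are slightly more explicit than the paper about the surjectivity onto $\G^o(\R)$, spelling out the commutativity-of-$\G$ argument to pass from an open neighbourhood of $e$ to the full connected component, whereas the paper simply appeals to equal dimensions and the local diffeomorphism; but this is a matter of detail rather than a different strategy.
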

\begin{proof} Write $\G = \G' \otimes_K F$. Since $\G = \G^{\scr h}$, Lemma \ref{111} implies that $\rho_{\ast} \circ {\rm{exp}}_{\G} = {\rm{exp}}_{\G} \circ {\rm{Lie}}(\rho).$ As seen in the previous lemma, ${\rm{Lie}}(\rho)$ acts trivially on $\frak{g}'(\R)$. And by Subsect.\,3.2.3 the set of real points ${\rm{G}}'(\R)$ is fixed by $\rho_{\ast}$. Altogether we find that $\frak{g}'(\R)$ is mapped into ${\rm{G}}^o(\R)$. The assertion follows then because the two real manifolds $\frak{g}'(\R)$ and ${\rm{G}}^o(\R)$ have equal dimension and as
the exponential map is a local diffeomorphism.
\end{proof}

\section{Definition of the Weil restriction}\large{Let $F/K$ be an extension of fields and consider a variety ${\rm{G}}$ over $F$. The \textit{Weil restriction of ${\rm{G}}$ over $K$} is a pair $(\mathcal{N}_{F/K}({\rm{G}}), p_{\rm{G}})$ consisting
of a variety $\mathcal{N}_{F/K}({\rm{G}})$ over $K$ and a morphism 
$p_{\rm{G}}: \mathcal{N}_{F/K}({\rm{G}}) \otimes_{K}F \longrightarrow {\rm{G}}$ over $F$ such that, for all varieties ${{{\rm{V}}}}'$ over $K$ and all morphisms 
$v: {{{\rm{V}}}}' \otimes_{K}F \longrightarrow {\rm{G}}$ over $F$, there is a unique morphism $\mathcal{N}(v): {{{\rm{V}}}}' \longrightarrow \mathcal{N}_{F/K}({\rm{G}})$ with the property that $v = p_{\rm{G}} \circ \big(\mathcal{N}(v) \otimes_{K}F\big).$ That is, we have a unique commutative diagram of morphisms over $F$
$$\begin{xy} 
  \xymatrix{ 
{{{\rm{V}}}}' \otimes_{K}F \ar[rrrrr]^{\mathcal{N}(v)\otimes_{K} F} \ar[drrrrr]^{v}&&&&& \mathcal{N}_{F/K}({\rm{G}})\otimes_{K}F  \ar[d]^{p_{\rm{G}}} \\ 
&&&&& {\rm{G}}}
\end{xy}$$
It follows from the definition that $\mathcal{N}_{F/K}({\rm{G}})$ is unique up to isomorphism over $K$. To formulate an alternative definition of Weil restrictions, assume that $F/K$ is a finite Galois extension. Theorem \ref{Wr1}\,\,below shows that in this situation the restriction to scalars over $K$ exists for every variety ${\rm{G}}$ over $F$. Let ${\rm{Var}}_{F}$ resp.\,${\rm{Var}}_{K}$ be the category of varieties over $F$ resp.\,over $K$ and consider the bi-functors
$$\mathcal{F}_1:  {\rm{Var}}_{K} \times {\rm{Var}}_{F} \longrightarrow \mathbf{Sets}, \mathcal{F}_1({{{\rm{V}}}}', {\rm{G}}) = {\rm{Mor}}_{K}\big({{{\rm{V}}}}', \mathcal{N}_{F/K}({\rm{G}})\big)$$
and
$$\mathcal{F}_2:  {\rm{Var}}_{K} \times {\rm{Var}}_{F} \longrightarrow \mathbf{Sets}, \mathcal{F}_2({{{\rm{V}}}}', {\rm{G}}) = {\rm{Mor}}_{F}\big({{{\rm{V}}}}' \otimes_{K} F, {\rm{G}}\big)$$
For each variety $\G$ over $F$ choose a Weil restriction $\mathcal{N}_{F/K}(\G)$. Then the collection $\{\mathcal{N}_{F/K}({\rm{G}}), p_{\rm{G}}\}_{{\rm{G}} \in {\rm{Var}}_{F}}$ defines a pair $\big(\mathcal{N}_{F/K}, \mathcal{F}_{F/K}\big)$ consisting
of a covariant functor
$$\mathcal{N}_{F/K}: {\rm{Var}}_{F} \longrightarrow {\rm{Var}}_{K}$$
and an equivalence of functors $\mathcal{F}_{F/K}: \mathcal{F}_1 \longrightarrow \mathcal{F}_2$ given by $\mathcal{F}_{F/K}(v) = {\mathcal{N}}(v)$. The Weil restriction is thus the left adjoint of the functor '$-\otimes_{K} F$'.
\section{Existence and basic properties of the Weil restriction} \large{In this section we show the existence of Weil restrictions for finite Galois extensions and state further properties of the latter. To start with, we recall the notion of a descent datum. We let $F/K$ be a finite Galois extension of fields and set $W = \mbox{spec}\,F$. A variety $\G$ with underlying scheme $\cal{G}$ and structure morphism $j: \cal{G} \longrightarrow W$ together with an action $\chi: Gal(F|K) \longrightarrow {\rm{Aut}}(\cal{G})$ is called a \textit{descent datum}
if, for all $h \in Gal(F|K)$, the following diagram of morphisms of schemes commutes

$$\begin{xy} 
  \xymatrix{\cal{G} \ar[rr]^{\chi_h} \ar[d]^j &&\cal{G}\ar[d]^j\\
W \ar[rr]^{{\rm{spec}}\,(h^{\scr -1})} && W}
\end{xy}$$
In Subsect.\,3.2.3 we defined a canonical action of $Gal(F|K)$ on varieties of the form $\G' \otimes_K F$. We denote such an action by $\chi'$.
\begin{theorem} \label{descent} Let $F/K$ be a finite Galois extension and let ${\rm{G}}, \chi$ be a descent datum associated to $F/K$. Then there is a model $\G'$ of $\G$ over $K$ and an isomorphism
$v: \G \longrightarrow \G' \otimes_K F$ of varieties over $F$ which commutes with the actions of $Gal(F|K)$, that is, such that $v \circ \chi = \chi' \circ v$. 
\end{theorem}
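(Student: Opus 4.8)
The plan is to reduce to the affine case and to glue, which is the standard Weil/Galois descent argument. First I would recall that $\G$ being a variety over $F$ (hence quasi-projective, or at least covered by finitely many affines) and the action $\chi$ permuting the structure sheaf, one can find a $Gal(F|K)$-stable covering of $\cal{G}$ by affine opens: start with any affine open $\U \subset \cal{G}$ and replace it by the intersection $\bigcap_{h} \chi_h(\U)$, which is again open, affine (an intersection of affines in a separated scheme is affine), nonempty since $\cal{G}$ is irreducible, and by construction stable under all $\chi_h$. Shrinking and translating if necessary one obtains a finite family $\{\U_\alpha\}$ of affine opens covering $\cal{G}$ with each $\U_\alpha$ stable under the action; the compatibility on overlaps $\U_\alpha \cap \U_\beta$ is automatic because these are again affine and stable.

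Next I would treat a single affine piece. Write $\U = \mbox{spec}\,A$ with $A$ an $F$-algebra; the descent datum restricts to a semilinear action of $Gal(F|K)$ on $A$, i.e. each $h$ acts as a ring automorphism $\chi_h^{\sharp}$ of $A$ with $\chi_h^{\sharp}(c\,a) = h(c)\,\chi_h^{\sharp}(a)$ for $c \in F$. Set $A' = A^{Gal(F|K)}$, the ring of invariants, which is a $K$-algebra. The key input here is classical Galois descent for modules (a special case of faithfully flat descent): since $F/K$ is finite Galois, the natural map $A' \otimes_K F \longrightarrow A$ is an isomorphism of $F$-algebras, and it intertwines the canonical action $\chi'$ on the left with $\chi$ on the right. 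Because $A$ is a finitely generated $F$-algebra which is a domain and regular, $A'$ is a finitely generated $K$-algebra which is a domain and (by descent of regularity along the faithfully flat map $K \to F$, or since $A' \to A$ is faithfully flat) regular; thus $\U' = \mbox{spec}\,A'$ is a smooth affine variety over $K$ with $\U' \otimes_K F \backsimeq \U$ equivariantly.

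Finally I would glue. Applying the previous paragraph to each $\U_\alpha$ yields $K$-models $\U_\alpha'$ together with equivariant isomorphisms $v_\alpha: \U_\alpha \longrightarrow \U_\alpha' \otimes_K F$. On overlaps, $\U_\alpha \cap \U_\beta$ is affine and stable, so it too descends to a $K$-variety, and the two induced models (as an open of $\U_\alpha'$ and as an open of $\U_\beta'$) agree because the equivalence of categories between descent data and $K$-varieties furnished by the affine case is, on a fixed scheme, canonical — the invariant subring is intrinsic. Hence the $v_\alpha$ agree on overlaps after identifying the $K$-models there, the $\U_\alpha'$ glue to a $K$-scheme $\G'$, irreducible and separated (separatedness descends since the diagonal of $\G$ is closed and $Gal(F|K)$-stable, coming from a closed immersion of $K$-schemes) and of finite type, hence a smooth variety over $K$, and the $v_\alpha$ glue to an isomorphism $v: \G \longrightarrow \G' \otimes_K F$ with $v \circ \chi = \chi' \circ v$.

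The routine-but-only-real obstacle is the bookkeeping in the affine reduction: one must be sure the chosen affine cover can be taken $Gal(F|K)$-stable and that the invariant-ring construction is compatible with restriction to smaller stable opens, so that the local models glue canonically rather than merely up to isomorphism. The hard mathematical core — that $A' \otimes_K F \to A$ is an isomorphism — is exactly faithfully flat (here finite étale) descent, which I would cite rather than reprove; everything else is gluing and transport of finiteness/smoothness/separatedness along the faithfully flat cover $\mbox{spec}\,F \to \mbox{spec}\,K$.
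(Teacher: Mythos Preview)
The paper does not actually prove this theorem; it simply cites Weil \cite{Weil} and Grothendieck \cite[Exp.\,190]{Gr} and moves on. Your proposal goes further and sketches the standard argument via reduction to the affine case and Galois descent for algebras, so you are supplying more than the paper itself does.

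Your sketch is essentially correct, but the construction of the $Gal(F|K)$-stable affine cover is too breezy. Forming $\bigcap_h \chi_h(\U)$ from a single affine $\U$ yields one stable affine open; there is no reason such intersections cover $\cal{G}$ if you start from an arbitrary finite affine cover, and ``translating'' is not available here since $\G$ is merely a variety, not a group. The standard fix, valid for quasi-projective $\G$, is: for each closed point $\xi$ first choose an affine open containing the finite orbit $\{\chi_h(\xi)\}_h$ (possible in any quasi-projective scheme), then intersect its Galois translates to get a stable affine open through $\xi$; quasi-compactness gives a finite stable cover. For varieties in the paper's generality (merely separated and of finite type) this step is more delicate, and one instead invokes Grothendieck's result that finite, hence affine, morphisms are effective descent morphisms --- which is precisely what the cited reference provides.
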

\begin{proof} The theorem is shown in Weil \cite{Weil}. Since Weil did not use the language of schemes, we also cite Grothendieck \cite[Exp.\,190]{Gr} for a modern reformulation.
\end{proof}
With this we can state the existence result from Weil \cite{Weil}. For the convenience of the reader we recall that if $\G_j$ are varieties over $F$ and $\xi_j \in \G_j$ are points with Zariski-closure ${\rm{V}}_j \subset \G_j$, then $\prod_j \{\xi_j\}$ defines a point in the fibre product $\prod_{j } \G_j$. This is the generic point of the product $\prod_{j} {\rm{V}}_j \subset \prod_{j} \G_j$.
 \begin{theorem} \label{Wr1} If $F/K$ is a finite Galois extension, then each variety ${\rm{G}}$ over $F$ admits a Weil restriction $(\mathcal{N}_{F/K}({\rm{G}}), p_{\rm{G}})$ over $K$ with the following properties. 
\begin{enumerate}
\item $\mathcal{N}_{F/K}({\rm{G}})  \otimes_{K}F$ equals $\prod_{h \in Gal(F|K)} {\rm{G}}^{\scriptscriptstyle h}.$
\item $p_{\rm{G}}: \mathcal{N}_{F/K}({\rm{G}})  \otimes_{K}F \longrightarrow {\rm{G}}$ is the projection to ${\rm{G}} = \G^{\scr id.}$.
\item For $\tau \in Gal(F|K)$ and a point $\prod_{h} \{\xi_h\} \in \prod_h \G^{\scr h}$ the Galois action on $\mathcal{N}_{F/K}({\rm{G}})  \otimes_{K}F$ is
given by 
$$\left(\prod_{h} \{\xi_{ h}\}\right)^{\scr \tau} = \prod_{\tau h} \{\xi_h^{\scriptscriptstyle \tau}\}.$$ 
Here $\xi_h^{\scriptscriptstyle \tau} = \rho_{\tau h, h}(\xi_h)$ where $\rho_{\tau h, h}: \G^{\scr h} \longrightarrow \G^{\scr \tau h}$ is the morphism of schemes over $K$ stemming from conjugation with $\tau$.
\end{enumerate}\end{theorem}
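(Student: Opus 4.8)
The plan is to realize $\mathcal{N}_{F/K}(\G)$ as the $K$-model obtained by descending the product $P=\prod_{h\in Gal(F|K)}\G^{\scr h}$ along a canonical descent datum, and then to read off both the universal property of Sect.\,3.4 and the three listed properties from Galois descent of morphisms (Lemma \ref{cl1}). Note that each $\G^{\scr h}$ is again a variety over $F$, since $F^{\scr h}=F$ as $F/K$ is Galois, so the fibre product is taken over $\mbox{spec}\,F$.

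First I would form $P=\prod_{h\in Gal(F|K)}\G^{\scr h}$ over $\mbox{spec}\,F$ and equip its underlying scheme with an action $\chi\colon Gal(F|K)\longrightarrow{\rm{Aut}}(P)$. For $\tau\in Gal(F|K)$ the automorphism $\chi_{\tau}$ is prescribed by its compositions with the projections,
$$\mathrm{pr}_{h}\circ\chi_{\tau}\;=\;\rho_{h,\tau^{-1}h}\circ\mathrm{pr}_{\tau^{-1}h}\qquad(h\in Gal(F|K)),$$
where $\rho_{h,\tau^{-1}h}\colon\G^{\scr \tau^{-1}h}\longrightarrow\G^{\scr h}$ is the morphism of $K$-schemes coming from conjugation by $\tau$ (the map $\rho$ of Sect.\,3.1 attached to the variety $\G^{\scr \tau^{-1}h}$ and the automorphism $\tau$). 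Since every such $\rho$ is the identity on the common underlying scheme and these maps compose exactly as Galois automorphisms do, one checks that $\chi_{\mathrm{id}}=\mathrm{id}$, that $\chi$ is a homomorphism, and that $\chi_{\tau}$ lies over $\mbox{spec}\,(\tau^{\scr -1})$; the one point that requires care is that the morphisms $\mathrm{pr}_{h}\circ\chi_{\tau}$ are compatible over $\mbox{spec}\,F$, which holds because $j^{\scr h}\circ\rho_{h,\tau^{-1}h}=\mbox{spec}\,(\tau^{\scr -1})\circ j^{\scr \tau^{-1}h}$. Hence $(P,\chi)$ is a descent datum for $F/K$.

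Next I would apply Theorem \ref{descent} to $(P,\chi)$: it yields a model $\mathcal{N}_{F/K}(\G)$ of $P$ over $K$ together with an isomorphism $v\colon P\longrightarrow\mathcal{N}_{F/K}(\G)\otimes_{K}F$ of $F$-varieties satisfying $v\circ\chi=\chi'\circ v$, where $\chi'$ is the canonical action (Subsect.\,3.2.3) on $\mathcal{N}_{F/K}(\G)\otimes_{K}F$. Identifying $\mathcal{N}_{F/K}(\G)\otimes_{K}F$ with $P=\prod_{h}\G^{\scr h}$ along $v$, and setting $p_{\G}=\mathrm{pr}_{\mathrm{id}}\circ v^{\scr -1}$, assertions (1) and (2) hold by construction, and assertion (3) is precisely the defining formula for $\chi$ transported through $v$ onto the canonical action $\chi'$.

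Finally I would establish the universal property. Given a variety $\V'$ over $K$ and a morphism $v'\colon\V'\otimes_{K}F\longrightarrow\G$ over $F$, I would assemble the $F$-morphism $\widetilde{v}\colon\V'\otimes_{K}F\longrightarrow P$ with $h$-th component $\mathrm{pr}_{h}\circ\widetilde{v}=\rho_{h,\mathrm{id}}\circ v'\circ\chi'_{h^{\scr -1}}$, using the canonical action $\chi'$ of $Gal(F|K)$ on $\V'\otimes_{K}F$, which is available because $\V'$ is defined over $K$ (Subsect.\,3.2.3). A direct computation with the $\rho$'s shows that $\widetilde{v}$ is equivariant, $\widetilde{v}\circ\chi'_{\tau}=\chi_{\tau}\circ\widetilde{v}$, and that conversely every equivariant morphism $\V'\otimes_{K}F\to P$ is of this form for a unique $v'$, recovered as $v'=p_{\G}\circ\widetilde{v}$. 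Transporting $\widetilde{v}$ through $v$ gives an equivariant morphism $\V'\otimes_{K}F\to\mathcal{N}_{F/K}(\G)\otimes_{K}F$, which by Lemma \ref{cl1} descends to a unique morphism $\mathcal{N}(v')\colon\V'\longrightarrow\mathcal{N}_{F/K}(\G)$ over $K$; chasing the definitions gives $p_{\G}\circ(\mathcal{N}(v')\otimes_{K}F)=v'$, while uniqueness of $\mathcal{N}(v')$ follows from the faithfulness of base change in Lemma \ref{cl1} together with the fact that equivariance recovers every component of a morphism into $P$ from its $\mathrm{id}$-component. I expect the main obstacle to be the bookkeeping in this last step: arranging the Galois twists in $\widetilde{v}$ so that equivariant morphisms into $P$ correspond bijectively and functorially to $F$-morphisms into $\G$. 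Once that correspondence is in hand, the universal property — and with it the existence of $\mathcal{N}_{F/K}(\G)$ together with all three stated properties — follows at once.
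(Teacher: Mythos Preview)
Your proposal is correct and follows essentially the same approach as the paper's sketch: equip $\prod_{h}\G^{\scr h}$ with the natural Galois descent datum, apply Theorem \ref{descent} to produce the $K$-model, and verify the universal property by showing that the morphism $\prod_{h} v^{\scr h}$ (which is exactly your $\widetilde{v}$, once one unwinds the definition of conjugate morphisms from Sect.\,3.1) is Galois-equivariant and hence descends via Lemma \ref{cl1}. Your write-up is more explicit about the equivariance bookkeeping, whereas the paper checks the condition on Galois-fixed closed points, but the argument is the same.
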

\begin{proof} [Sketch of proof] We only sketch the proof. Next to the above cited sources, a short modern proof can be found in Huisman's paper \cite[p.\,27]{Hui1}.\\
On the product $\prod_{h \in Gal(F|K)} {\rm{G}}^{\scriptscriptstyle h}$ a descent datum 
$$\chi_h: \prod_{\tau \in Gal(F|K)} {\rm{G}}^{\scriptscriptstyle \tau} \longrightarrow \prod_{\tau \in Gal(F|K)} {\rm{G}}^{\scriptscriptstyle \tau}$$
is defined by requiring that 
$$pr_{\tau} \circ \chi_h = \rho_{\tau, h^{\scr -1} \tau} \circ pr_{{\tau^{-1}h}}$$
for all $h \in Gal(F|K)$. Here $\rho_{\tau, h^{\scr -1} \tau}$ refers to the morphism $\G^{\tau^{\scr -1}h} \longrightarrow \G^{\tau}$
over $K$ arising from conjugation with $\tau$. Let $\mathcal{N}$ be the variety over $K$ associated to this descent datum. Then
$\mathcal{N} \otimes_{K}F = \prod_{h} {\rm{G}}^{\scriptscriptstyle h}$. It was shown in Weil \cite{Weil} that $\mathcal{N}$ together with the projection $p_{\rm{G}} = pr_{id.}$ is a Weil restriction. The idea of proof is the following. A morphism $v: {{{\rm{V}}}} = {{{\rm{V}}}}' \otimes_{K}F \longrightarrow {\rm{G}}$ implies a morphism $w = \prod_h v^{\scriptscriptstyle h}: {{{\rm{V}}}} \longrightarrow \prod_h {\rm{G}}^{\scriptscriptstyle h}$ such that if $\xi' \in {{\rm{V}}}$ is fixed by $Gal(F|K)$, then $w(\xi') \in \left\lbrace \prod_{h \in Gal(F|K)} \{\xi^{\scriptscriptstyle h}\}; \xi \in {\rm{G}}\right\rbrace.$ It follows from Lemma \ref{cl1} that $w$ is defined over $K$, that is, $w = w_{\mathcal{N}} \otimes_{K}F$ for a morphism $w_{\mathcal{N}}$ over $K$.
\end{proof}
Let $F/K$ be a finite Galois extension and let $\rm{G}$ be a variety over $F$. Consider the Weil restriction $\mathcal{N} = \mathcal{N}_{F/K}(\rm{G})$ over $K$. Let $cl(\G)$ (resp.\,$cl(\mathcal{N})$) be the subset of closed points of $\G$ (resp.\,\,of $\mathcal{N}$).
{Theorem\,\ref{Wr1} implies
the following set-theoretical relations.
\begin{corollary} \label{3.3.1} We have canonical identifications
$$cl(\mathcal{N}) = \left\lbrace \prod_{h} \{\xi^{\scriptscriptstyle h}\}; \xi \in cl(\G)\right\rbrace$$
and 
$$\mathcal{N}(K) = \left\lbrace \prod_{h} \{\xi^{\scriptscriptstyle h}\}; \xi \in {\rm{G}}(F)\right\rbrace.$$  
\end{corollary}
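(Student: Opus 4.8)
The plan is to deduce both identities from the concrete model of $\mathcal{N} = \mathcal{N}_{F/K}(\G)$ furnished by Theorem \ref{Wr1}: $\mathcal{N} \otimes_K F = \prod_{h \in Gal(F|K)} \G^{\scr h}$ with structural projection $p_{\G} = pr_{id.}$ and Galois action $(\prod_h \{\xi_h\})^{\scr \tau} = \prod_{\tau h}\{\xi_h^{\scr \tau}\}$. The unifying device I would use is that, $F/K$ being a finite Galois extension, the $K$-scheme $\mathcal{N}$ is recovered from $\mathcal{N} \otimes_K F$ as the quotient by $Gal(F|K)$; hence $\mathcal{N}(K)$ sits inside $(\mathcal{N}\otimes_K F)(F) = \prod_h \G^{\scr h}(F)$ as the set of $Gal(F|K)$-fixed points, while $cl(\mathcal{N})$ sits inside $cl(\mathcal{N}\otimes_K F) = cl\big(\prod_h \G^{\scr h}\big)$ as the set of $Gal(F|K)$-orbits. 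In both cases the displayed action then tells us which objects occur.

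I would treat $\mathcal{N}(K)$ first, this being the cleaner identity and the one used repeatedly later. The universal property of the Weil restriction applied to ${\rm{V}}' = \mbox{spec}\,K$ gives a canonical bijection ${\rm{Mor}}_K(\mbox{spec}\,K, \mathcal{N}) \cong {\rm{Mor}}_F(\mbox{spec}\,F, \G)$, i.e. $\mathcal{N}(K) \cong \G(F)$, sending $\eta$ to $\xi := p_{\G}\circ(\eta\otimes_K F)$. To identify $\eta$, viewed as an element of $\prod_h \G^{\scr h}(F)$, with $\prod_h\{\xi^{\scr h}\}$, I would invoke that $\eta$ is $Gal(F|K)$-fixed together with Theorem \ref{Wr1}(3): a tuple $(\xi_h)_h$ is fixed by every $\tau$ precisely when $\xi_{\tau h} = \xi_h^{\scr \tau}$ for all $\tau,h$, i.e. (take $h = id.$) precisely when $\xi_h = \xi^{\scr h}$ for the single point $\xi = \xi_{id.}$; and $pr_{id.}$ returns exactly this $\xi$, so the description matches the bijection above. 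This gives $\mathcal{N}(K) = \{\prod_h\{\xi^{\scr h}\} : \xi \in \G(F)\}$.

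For $cl(\mathcal{N})$ I would run the parallel argument with $Gal(F|K)$-orbits of closed points of $\prod_h\G^{\scr h}$ in place of $Gal(F|K)$-fixed $F$-points. For a closed point $\xi$ of $\G$ let $\prod_h\{\xi^{\scr h}\}$ be the point assembled from its conjugates, namely the generic point of the zero-dimensional subscheme $\prod_h\{\xi^{\scr h}\} \subset \prod_h\G^{\scr h}$. By Theorem \ref{Wr1}(3) this point is $Gal(F|K)$-invariant, and the assignment $\xi \mapsto \prod_h\{\xi^{\scr h}\}$ is injective because $pr_{id.} = p_{\G}$ recovers $\xi$; the substance of the identity is that the resulting map $cl(\G) \to cl(\mathcal{N})$ is onto. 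I expect precisely this surjectivity — establishing that every $Gal(F|K)$-orbit of closed points of $\prod_h\G^{\scr h}$ contains one of the form $\prod_h\{\xi^{\scr h}\}$, and, as part of the same bookkeeping, that $\prod_h\{\xi^{\scr h}\}$ is a well-defined point (with $Gal(F|K)$ transitive on its components) when $\kappa(\xi)$ properly extends $F$ — to be the main obstacle. It should be handled using the compatibility of conjugation with extension of residue fields set up in Subsect.\,3.2.3 and the transitivity of $Gal(F|K)$ on ${\rm{Hom}}_K\big(\kappa(\xi), \overline{F}\big)$; granting this, both identities are immediate from Theorem \ref{Wr1}.
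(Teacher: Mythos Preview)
Your proposal is correct and is precisely the argument the paper has in mind: the paper gives no proof at all beyond the sentence ``Theorem\,\ref{Wr1} implies the following set-theoretical relations,'' and your plan simply unpacks that implication via the universal property (for $\mathcal{N}(K)$) and the explicit Galois action from Theorem~\ref{Wr1}(3) (for both identities). You have in fact done more than the paper by flagging the residue-field subtlety in the $cl(\mathcal{N})$ case; the paper is content to leave this implicit.
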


\begin{corollary} \label{cl} Let $F/K$ be a finite Galois extension of fields and let $\G$ and $\rm{H}$ be varieties over $F$. Then an algebraic morphism $v: \G \longrightarrow \rm{H}$ induces a morphism $v_{\mathcal{N}}: \mathcal{N}_{F/K}(\G) \longrightarrow \mathcal{N}_{F/K}(\rm{H})$ of Weil restrictions over $K$ such that $v \circ p_{\G} = p_{\rm{H}} \circ (v_{\mathcal{N}} \otimes_K F)$. 

\end{corollary}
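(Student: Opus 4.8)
The natural plan is to deduce the corollary directly from the defining universal property of the Weil restriction. First I would form the composite morphism $v \circ p_{\G}: \mathcal{N}_{F/K}(\G) \otimes_K F \longrightarrow \rm{H}$ of varieties over $F$. By Theorem \ref{Wr1} the Weil restriction $\mathcal{N}_{F/K}(\G)$ is a variety over $K$, so it is an admissible ``test object'' for the universal property of the pair $\big(\mathcal{N}_{F/K}(\rm{H}), p_{\rm{H}}\big)$. Applying that universal property to $v \circ p_{\G}$ yields a unique morphism $v_{\mathcal{N}}: \mathcal{N}_{F/K}(\G) \longrightarrow \mathcal{N}_{F/K}(\rm{H})$ over $K$ with $v \circ p_{\G} = p_{\rm{H}} \circ (v_{\mathcal{N}} \otimes_K F)$, which is exactly the assertion. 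In the language of the adjunction $\big(\mathcal{N}_{F/K}, \mathcal{F}_{F/K}\big)$ this merely records that $p_{\G}$ is the component at $\G$ of the counit and that the counit is natural in $\G$.

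To make the construction explicit (and to prepare the functoriality already invoked in the proof sketch of Theorem \ref{Wr1}), I would also check that, under the identification of Theorem \ref{Wr1}(1), the base change $v_{\mathcal{N}} \otimes_K F$ is the product map $\prod_{h \in Gal(F|K)} v^{\scr h} : \prod_{h} \G^{\scr h} \longrightarrow \prod_{h} \rm{H}^{\scr h}$. This amounts to verifying that $\prod_h v^{\scr h}$ intertwines the two descent data: via the Galois-action formula of Theorem \ref{Wr1}(3) it reduces to the commutativity $\rho_{\tau h, h} \circ v^{\scr h} = v^{\scr \tau h} \circ \rho_{\tau h, h}$ for all $\tau, h \in Gal(F|K)$ (with the first $\rho$ the one attached to $\rm{H}$, the second to $\G$), that is, to the compatibility of conjugation of morphisms with the scheme maps $\rho_{\tau h, h}$ induced by conjugation with $\tau$. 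Granting this, Theorem \ref{descent} (equivalently Lemma \ref{cl1}) shows that $\prod_h v^{\scr h}$ descends to a morphism over $K$, and reading off the $\G = \G^{\scr id.}$ component, where $p_{\G} = pr_{id.}$ by Theorem \ref{Wr1}(2), one sees that this $K$-morphism satisfies the required relation; uniqueness then identifies it with $v_{\mathcal{N}}$.

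I do not expect a genuine obstacle: once the universal property is in hand, existence and uniqueness of $v_{\mathcal{N}}$ are immediate, and the relation $v \circ p_{\G} = p_{\rm{H}} \circ (v_{\mathcal{N}} \otimes_K F)$ is built into the construction. The only real work is the routine bookkeeping of the second paragraph — tracking how conjugation of morphisms interacts with composition and with the maps $\rho_{\tau h, h}$ — which is the same kind of computation already carried out in the proof of Theorem \ref{Wr1}. I would close by remarking that the uniqueness clause in the universal property upgrades the assignment $v \mapsto v_{\mathcal{N}}$ to a functor ${\rm{Var}}_F \longrightarrow {\rm{Var}}_K$ compatible with composition and identities, matching the functor $\mathcal{N}_{F/K}$ of Sect.\,3.4.
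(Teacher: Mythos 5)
Your proposal is correct, but your main argument is a different one from the paper's. You obtain $v_{\mathcal{N}}$ purely from the universal property: applying the defining property of $\big(\mathcal{N}_{F/K}({\rm{H}}), p_{\rm{H}}\big)$ to the $K$-variety $\mathcal{N}_{F/K}(\G)$ and the $F$-morphism $v \circ p_{\G}$ gives existence, uniqueness, and the relation $v \circ p_{\G} = p_{\rm{H}} \circ (v_{\mathcal{N}} \otimes_K F)$ in one stroke — this is the adjunction/counit-naturality argument and it is perfectly valid here, since Theorem \ref{Wr1} guarantees both Weil restrictions exist. The paper instead proves the corollary by the route you only sketch in your second paragraph: it sets $u = \prod_h v^{\scr h}$, uses the description of closed points from Corollary \ref{3.3.1} together with (3.1.1) to check $u = u^{\scr h}$ on the Zariski-dense set of closed points, and then invokes Lemma \ref{cl1} to descend $u$ to a $K$-morphism $v_{\mathcal{N}}$; the compatibility with $p_{\G}$ and $p_{\rm{H}}$ is then immediate from $p_{\G} = pr_{id.}$. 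What the paper's construction buys — and what your abstract argument alone does not give — is the explicit identity $v_{\mathcal{N}} \otimes_K F = \prod_h v^{\scr h}$, which is used later (e.g.\ in Proposition 4.2.2 and in the proof of Theorem \ref{weakdescent}, where $v_{\mathcal{N}} \otimes_K F = (v, v^{\scr h})$ is needed). Your second paragraph supplies exactly this identification, either by the descent-data intertwining you describe or, more in the paper's style, by checking equality on closed points and appealing to density plus the uniqueness clause of the universal property; so your combined write-up covers both what the paper proves and how it is used.
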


\begin{proof} It follows from Corollary \ref{3.3.1} that $u = \prod_h v^{\scr h}$ maps $cl\big(\mathcal{N}_{F/K}(\G)\big)$ to $cl\big(\mathcal{N}_{F/K}({\rm{H}})\big)$. Hence, $u(\xi) = \big(u(\xi)\big)^{\scr h} = u^{\scr h}(\xi^{\scr h}) = u^{\scr h}(\xi)$ for all
$h \in Gal(F|K)$ and all $\xi \in cl\big(\mathcal{N}_{F/K}({\rm{G}})\big)$. Since the sets of closed points are Zariski-dense, $u = u^{\scr h}$ for all $h$. This and Lemma \ref{cl1} imply that $u = v_{\mathcal{N}} \otimes_K F$ with a morphism $v_{\mathcal{N}}$ over $K$. \end{proof}
The next lemma summarizes further properties of Weil restrictions in case of its existence. Since these basic properties will be applied throughout in what follows, the reader
should focus on the statement for a moment. 
\begin{lemma} \label{WR} Let $F/K$ be a finite Galois extension of fields and let ${\rm{G}}, {\rm{G}}_1$ and ${\rm{G}}_2$ be varieties over $F$.
\begin{enumerate}
\item There is a natural isomorphism $\mathcal{N}_{F/K}({\rm{G}}) \backsimeq \mathcal{N}_{F/K}({\rm{G}}^{\scriptscriptstyle h})$ and $p_{{\rm{G}}^{\scriptscriptstyle h}} = (p_{\rm{G}})^{\scriptscriptstyle h}$ for all $h \in Gal(F|K)$. 
\item $\dim\,\mathcal{N}_{F/K}({\rm{G}}) = [F:K]\cdot \dim\,{\rm{G}}$.
\item There is a natural isomorphism $\mathcal{N}_{F/K}({\rm{G}}_1 \times_{F} {\rm{G}}_2) \backsimeq \mathcal{N}_{F/K}({\rm{G}}_1) \times_{K} \mathcal{N}_{F/K}({\rm{G}}_2).$
\item If $K_o \subset \overline{F}$ is an extension of $K$ such that $[FK_o: KK_o] = [F: K]$, then the Weil restriction of ${\rm{G}}_o = {\rm{G}} \otimes_F (FK_o)$ over $KK_o$ exists
and there is a natural isomorphism $\mathcal{N}_{F/K}({\rm{G}}) \otimes_{K}(FK_o) \backsimeq \mathcal{N}_{FK_o/KK_o}({\rm{G}}_o)$.

\end{enumerate}

\end{lemma}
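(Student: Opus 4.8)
The plan is to derive all four items from the explicit description of $\mathcal{N}_{F/K}(\G)$ supplied by Theorem \ref{Wr1} together with the universal property recorded just before it (equivalently, from Yoneda's lemma applied to the functor $\mathcal{F}_2$, i.e. to the adjunction $\mathcal{F}_{F/K}$). I would take them in the order (2), (3), (1), (4). Item (2) is immediate: base change along the finite algebraic extension $F/K$ leaves the dimension of a variety unchanged, so by Theorem \ref{Wr1}(1) one has $\dim\,\mathcal{N}_{F/K}(\G)=\dim\,\prod_{h\in Gal(F|K)}\G^{\scr h}=\sum_{h}\dim\,\G^{\scr h}=[F:K]\cdot\dim\,\G$, the last step because conjugation of varieties is an isomorphism of schemes (Sect.\,3.1) and hence preserves dimension. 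For (3) I would argue purely through the universal property: $\mathcal{N}_{F/K}(\G_1\times_F\G_2)$ exists by Theorem \ref{Wr1}, and for every variety $\V'$ over $K$ the bijections
\begin{align*}
{\rm{Mor}}_F\big(\V'\otimes_K F,\G_1\times_F\G_2\big)&={\rm{Mor}}_F\big(\V'\otimes_K F,\G_1\big)\times{\rm{Mor}}_F\big(\V'\otimes_K F,\G_2\big)\\
&={\rm{Mor}}_K\big(\V',\mathcal{N}_{F/K}(\G_1)\times_K\mathcal{N}_{F/K}(\G_2)\big)
\end{align*}
are natural in $\V'$; Yoneda then yields the isomorphism over $K$, compatibly with the canonical maps $p$. (That $\G_1\times_F\G_2$ is again a variety — in particular irreducible — holds for the geometrically irreducible group varieties arising in our applications and is checked directly.)

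For (1) I would again invoke Yoneda. Since $\V'$ admits a model over $K$, Subsect.\,3.2.3 identifies $(\V'\otimes_K F)^{\scr h}$ with $\V'\otimes_K F$, and conjugation of morphisms (Sect.\,3.1) then gives a bijection ${\rm{Mor}}_F(\V'\otimes_K F,\G)\longrightarrow{\rm{Mor}}_F(\V'\otimes_K F,\G^{\scr h})$, $v\mapsto v^{\scr h}$, natural in $\V'$; hence $\mathcal{N}_{F/K}(\G)\backsimeq\mathcal{N}_{F/K}(\G^{\scr h})$. Equivalently, in the realization of Theorem \ref{Wr1} the reindexing $h'\mapsto h'h$ of $Gal(F|K)$ identifies $\prod_{h'}(\G^{\scr h})^{\scr h'}$ with $\prod_{h'}\G^{\scr h'}$ and matches the two descent data produced in the proof of Theorem \ref{Wr1}, so that Theorem \ref{descent} delivers the common model over $K$. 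Tracking $p_\G$ and $p_{\G^{\scr h}}$ — both projections onto the component indexed by ${\rm{id}}\in Gal(F|K)$ — through this reindexing, and using the identification $(\mathcal{N}_{F/K}(\G)\otimes_K F)^{\scr h}\cong\mathcal{N}_{F/K}(\G)\otimes_K F$ of Subsect.\,3.2.3, gives the stated relation $p_{\G^{\scr h}}=(p_\G)^{\scr h}$.

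Finally, for (4), the hypothesis $[FK_o:KK_o]=[F:K]$ says exactly that $F$ and $K_o$ are linearly disjoint over $K$: writing $F=K(\alpha)$ with minimal polynomial $f$ (possible since $F/K$ is separable), $f$ stays irreducible over $KK_o$, so the natural map $KK_o\otimes_K F\longrightarrow FK_o$ is an isomorphism of $KK_o$-algebras and $FK_o/KK_o$ is a finite Galois extension whose group is carried isomorphically onto $Gal(F|K)$ by restriction. In particular Theorem \ref{Wr1} applies to $FK_o/KK_o$, so $\mathcal{N}_{FK_o/KK_o}(\G_o)$ exists. The asserted isomorphism then follows from the chain of natural bijections, for a variety $\T$ over $KK_o$,
\begin{align*}
{\rm{Mor}}_{KK_o}\big(\T,\mathcal{N}_{F/K}(\G)\otimes_K KK_o\big)&={\rm{Mor}}_K\big(\T,\mathcal{N}_{F/K}(\G)\big)={\rm{Mor}}_F\big(\T\otimes_K F,\G\big)\\
&={\rm{Mor}}_{FK_o}\big(\T\otimes_{KK_o}FK_o,\G_o\big)={\rm{Mor}}_{KK_o}\big(\T,\mathcal{N}_{FK_o/KK_o}(\G_o)\big),
\end{align*}
where the second equality is the universal property of $\mathcal{N}_{F/K}$, the third uses $\T\otimes_K F=\T\otimes_{KK_o}(KK_o\otimes_K F)=\T\otimes_{KK_o}FK_o$ together with the fact that any $F$-morphism out of an $FK_o$-scheme factors uniquely through its base change to $FK_o$, and the last is the universal property of $\mathcal{N}_{FK_o/KK_o}$; Yoneda converts this into the isomorphism over $KK_o$. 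I expect no deep obstacle here — the content is entirely in Theorem \ref{Wr1} and Yoneda's lemma — the only two spots requiring genuine care being the compatibility $p_{\G^{\scr h}}=(p_\G)^{\scr h}$ in (1), which is a bookkeeping chase of the $Gal(F|K)$-reindexing through the construction in the proof of Theorem \ref{Wr1}, and the linear-disjointness identification $KK_o\otimes_K F\cong FK_o$ underlying (4).
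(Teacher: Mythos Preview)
Your proposal is correct. For items (1), (2), (3) you do essentially what the paper does: the paper simply says that (1) and (2) ``result from Theorem~\ref{Wr1}'' and that (3) ``follows from the universal properties of products'', and your more detailed Yoneda/reindexing arguments just spell this out.

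For item (4) you take a genuinely different route. The paper argues via the explicit construction in Theorem~\ref{Wr1}: it writes down the descent datum $\chi^{FK_o}_h$ on $\prod_{\tau\in Gal(FK_o|KK_o)}\G_o^{\scr\tau}$ defining $\mathcal{N}_{FK_o/KK_o}(\G_o)$, uses the canonical isomorphism $Gal(FK_o|KK_o)\cong Gal(F|K)$ to identify it with the base change $\chi^F_h\otimes_K(KK_o)$ of the descent datum defining $\mathcal{N}_{F/K}(\G)$, and then invokes Theorem~\ref{descent}. You instead run a pure adjunction argument: for $\T$ over $KK_o$ you chain the base-change adjunction, the universal property of $\mathcal{N}_{F/K}$, the linear-disjointness identification $\T\otimes_K F\cong\T\otimes_{KK_o}FK_o$, and the universal property of $\mathcal{N}_{FK_o/KK_o}$, then apply Yoneda. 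Your approach is cleaner and avoids opening up the descent datum; the paper's approach has the advantage of making the isomorphism completely explicit on the level of $\prod_h\G^{\scr h}$, which is the form in which it is sometimes used later. Both are short and both rest on the same underlying fact, namely that the hypothesis $[FK_o:KK_o]=[F:K]$ forces the restriction map $Gal(FK_o|KK_o)\to Gal(F|K)$ to be an isomorphism.
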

\begin{proof} The first two statements result from Theorem \ref{Wr1}. Statement 3.\,\,follows from the universal properties of products. We proceed to the proof of Statement 4. The Weil restriction $\mathcal{N}_{FK_o/KK_o}({\rm{G}}_o)$ arises from a descent datum 
$$\chi_h^{\scr FK_o}: \prod_{\tau \in Gal(FK_o|KK_o)} \G_o^{\scr \tau} \longrightarrow  \prod_{\tau \in Gal(FK_o|KK_o)} \G_o^{\scr \tau}$$
and the restriction to scalars $\mathcal{N}_{F/K}({\rm{G}})$ is associated to a descent datum
$$\chi_h^{\scr F}: \prod_{\tau \in Gal(F|K)} \G^{\scr \tau} \longrightarrow  \prod_{\tau \in Gal(F|K)} \G^{\scr \tau}.$$
Here the two data are as in the proof of Theorem \ref{Wr1}. Since we have a canonical isomorphism $Gal(FK_o|KK_o) \backsimeq Gal(F|K)$, the datum $\chi_h^{\scr FK_o}$ is equivalent to the descent datum
$\chi_h^{\scr F} \otimes_K (KK_o)$. Hence,
$\mathcal{N}_{FK_o/KK_o}({\rm{G}}_o)$ is isomorphic to $\mathcal{N}_{F/K}({\rm{G}}) \otimes_{K_o} (K_oK).$ This is Statement 4.
\end{proof}
\section{Weil restrictions of group varieties}
\subsection{The Lie algebra of a Weil restriction}
We denote by $F/K$ a finite Galois extension of fields.
\begin{lemma} \label{WR2} Let ${\rm{G}}$ be a group variety over $F$ with multiplication law $\mu$, inversion morphism $i$ and neutral element $e$.
\begin{enumerate}
\item $\mathcal{N}_{F/K}({\rm{G}})$ together with the multiplication law $\mu_{\mathcal{N}}$, inversion morphism $i_{\mathcal{N}}$ and neutral element $e_{\mathcal{N}}$ is a group variety and, for all varieties ${{{\rm{V}}}}'$ over $K$,
$$\mathcal{N}_{F/K}: {\rm{Hom}}_{F}\big({{{\rm{V}}}}' \otimes_K F, {\rm{G}}\big) \longrightarrow {\rm{Hom}}_{K}\big({{{\rm{V}}}}', \mathcal{N}_{F/K}({\rm{G}})\big), \mathcal{N}_{F/K}(v) = v_{\mathcal{N}}$$
is a homomorphism of groups.
\item There is a natural identification
$${{\rm{Lie}}}\,\mathcal{N}_{F/K}({\rm{G}}) = \left\lbrace \sum_{\scr h} \partial^{\scriptscriptstyle h}; \partial \in \frak{g}\right\rbrace $$
with $h$ varying over $Gal(F|K)$. Here we view ${\rm{Lie}}\,\mathcal{N}_{F/K}(\G)$ as a subset of ${\rm{Lie}}\,(\G \times\G^{\scr h}) = \frak{g} \oplus \frak{g}^{\scr h}$. 
\end{enumerate}
\end{lemma}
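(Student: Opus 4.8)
For part 1 I would argue formally from functoriality. By Theorem \ref{Wr1} the Weil restriction exists for every variety over $F$, so $\mathcal{N}_{F/K}$ is a covariant functor ${\rm{Var}}_F \to {\rm{Var}}_K$; by Lemma \ref{WR}(3) it preserves finite products, and it sends the final object ${\rm{spec}}\,F$ to ${\rm{spec}}\,K$ (since ${\rm{Mor}}_K(\V', \mathcal{N}_{F/K}({\rm{spec}}\,F)) = {\rm{Mor}}_F(\V'\otimes_K F, {\rm{spec}}\,F)$ is a one-point set for every $\V'$). The group axioms for $\G$ are commutative diagrams of morphisms built from $\mu$, $i$, $e$, projections and diagonals; applying $\mathcal{N}_{F/K}$ and translating through the isomorphisms $\mathcal{N}_{F/K}(\G\times_F\G) \backsimeq \mathcal{N}_{F/K}(\G)\times_K\mathcal{N}_{F/K}(\G)$ of Lemma \ref{WR}(3) turns them into the corresponding diagrams for $\mu_{\mathcal{N}} = \mathcal{N}_{F/K}(\mu)$, $i_{\mathcal{N}} = \mathcal{N}_{F/K}(i)$, $e_{\mathcal{N}} = \mathcal{N}_{F/K}(e)$, so $\mathcal{N}_{F/K}(\G)$ is a group variety over $K$. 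For the homomorphism assertion, the group law on ${\rm{Hom}}_F(\V'\otimes_K F, \G)$ is $v\cdot w = \mu\circ\langle v, w\rangle$ and likewise on ${\rm{Hom}}_K(\V', \mathcal{N}_{F/K}(\G))$ via $\mu_{\mathcal{N}}$; that $(v\cdot w)_{\mathcal{N}} = v_{\mathcal{N}}\cdot w_{\mathcal{N}}$, and the analogous identities for $i$ and $e$, follow by applying $p_\G\circ(\,-\otimes_K F)$ to both candidates and invoking Corollary \ref{cl} (for $\mu$ and $i$), the identity $p_{\G\times_F\G} = p_\G\times_F p_\G$, and the uniqueness in the universal property of $\mathcal{N}_{F/K}(\G)$.

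For part 2 the plan is Galois descent at the level of Lie algebras. Forming the Lie algebra (the tangent space at the unit) commutes with the base change $K\hookrightarrow F$, so by Theorem \ref{Wr1}(1)
\[ {\rm{Lie}}\,\mathcal{N}_{F/K}(\G)\otimes_K F \;=\; {\rm{Lie}}\big(\mathcal{N}_{F/K}(\G)\otimes_K F\big) \;=\; {\rm{Lie}}\Big(\prod_{h\in Gal(F|K)}\G^{\scr h}\Big) \;=\; \bigoplus_{h\in Gal(F|K)}\frak{g}^{\scr h}. \]
The $Gal(F|K)$-action on $\mathcal{N}_{F/K}(\G)\otimes_K F = \prod_h\G^{\scr h}$ fixes the neutral element, hence induces a semilinear action on $\bigoplus_h\frak{g}^{\scr h}$, and by Galois descent (Theorem \ref{descent}) ${\rm{Lie}}\,\mathcal{N}_{F/K}(\G)$ is recovered as the subspace of fixed vectors. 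Now by Theorem \ref{Wr1}(3) an element $\tau\in Gal(F|K)$ acts on $\prod_h\G^{\scr h}$ by permuting the factors through the conjugation morphisms $\rho_{\tau h, h}: \G^{\scr h}\to\G^{\scr\tau h}$; by Lemma \ref{111} (whose computation applies verbatim to these conjugation morphisms) the induced map on Lie algebras is conjugation of left-invariant vector fields, $\frak{g}^{\scr h}\to\frak{g}^{\scr \tau h}$, $\partial^{\scr h}\mapsto\partial^{\scr\tau h}$. Writing a vector of $\bigoplus_h\frak{g}^{\scr h}$ uniquely as $\sum_h\partial_h^{\scr h}$ with $\partial_h\in\frak{g}$, the action of $\tau$ sends it to $\sum_h\partial_{\tau^{-1}h}^{\scr h}$; hence it is fixed exactly when all the $\partial_h$ coincide, say with a common $\partial\in\frak{g}$. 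Therefore ${\rm{Lie}}\,\mathcal{N}_{F/K}(\G) = \big\{\sum_h\partial^{\scr h} : \partial\in\frak{g}\big\}$, which when $[F:K]=2$ is the subspace $\{\partial + \partial^{\scr h} : \partial\in\frak{g}\}$ of $\frak{g}\oplus\frak{g}^{\scr h}$ appearing in the statement; all the identifications used being canonical, the identification is natural. As a dimension check, ${\rm{Lie}}\,\mathcal{N}_{F/K}(\G)$ then has $K$-dimension $[F:K]\dim\G = \dim\,\mathcal{N}_{F/K}(\G)$ by Lemma \ref{WR}(2), as it must.

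The genuinely technical point is the middle step of part 2: that forming Lie algebras intertwines the Galois descent of Theorem \ref{Wr1}(3) with a semilinear action on $\bigoplus_h\frak{g}^{\scr h}$, and that the differentials of the conjugation morphisms $\rho_{\tau h, h}$ are precisely the conjugation maps on vector fields. This is a routine but notation-heavy extension of Lemma \ref{111} (stated there only for complex conjugation and a single group), together with careful bookkeeping of the index set of $\prod_h\G^{\scr h}$. Everything in part 1, and the descent formalism in part 2, is a formal consequence of the results already established in Sections 3.4--3.6.
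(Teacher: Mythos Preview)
Your proof is correct, but Part~2 follows a genuinely different route from the paper's. The paper proves the inclusion $\mathfrak{V}:=\{\sum_h\partial^{\scr h}:\partial\in\frak{g}\}\subset{\rm Lie}\,\mathcal{N}$ by a direct computation: it writes down a spanning set of regular functions on an affine neighbourhood of $e_{\mathcal{N}}$ of the shape $\frak{f}=\sum_\tau(\bigotimes_\sigma f_\sigma)^\tau$, applies $\sum_h\partial^{\scr h}$ to such an $\frak{f}$, and checks that the value at $e_{\mathcal{N}}$ lands in $K$, so that $\sum_h\partial^{\scr h}$ is a $K$-derivation of $\mathcal{O}_{\mathcal{N},e}$; equality then follows from the dimension count $\dim_K\mathfrak{V}=[F:K]\dim\G=\dim\mathcal{N}$. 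Your argument instead identifies ${\rm Lie}\,\mathcal{N}$ as the Galois-fixed subspace of $\bigoplus_h\frak{g}^{\scr h}$ under the semilinear action coming from Theorem~\ref{Wr1}(3), and computes that action via the differentials of the conjugation morphisms. Your approach is more structural and avoids the explicit function-level calculation; the paper's is entirely self-contained within the framework already set up (in particular it does not need to extend Lemma~\ref{111}, stated only for complex conjugation, to arbitrary $\tau\in Gal(F|K)$, nor to invoke Galois descent for vector spaces, which is not quite the content of Theorem~\ref{descent}).

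For Part~1 the two arguments are close in spirit: the paper observes that $\prod_h\mu^{\scr h}$ and $\prod_h i^{\scr h}$ are Galois-invariant and descends them via Lemma~\ref{cl1}, whereas you phrase the same content as ``a product-preserving functor sends group objects to group objects''. Two small remarks: your citation of Theorem~\ref{descent} for the vector-space descent in Part~2 is not quite on target (that theorem is about varieties; the linear-algebra fact $(V\otimes_K F)^{Gal(F|K)}=V$ is what you need and is elementary), and the extension of Lemma~\ref{111} you flag as routine really is routine, but strictly speaking lies outside what the paper has proved at that point.
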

\begin{proof}
It is left to the reader to verify that, for each $h \in Gal(F|K)$, $\mu^{\scr h}$ and $i^{\scr h}$ define group structures on $\G^{\scr h}$. So, $\prod_{h} \mu^{\scr h}$ and $\prod_{h} i^{\scr h}$ define group structure on $\mathcal{N}_{F/K}(\G) \otimes_K F$. It follows from Lemma \ref{cl1} that these morphisms are defined over $K$. So, they arise from an addition $\mu_{\mathcal{N}}$ and an inversion $i_{\mathcal{N}}$ on the Weil restriction. It is then clear by construction
that $\mathcal{N}_{F/K}$ is a homomorphism of groups. Statement 1.\,\,follows. For the proof of Statement 2. we define
$$\mathfrak{V} = \left\lbrace \sum_{h} \partial^{\scriptscriptstyle h}; \partial \in \frak{g}\right\rbrace$$
with $h$ varying over $Gal(F|K)$. Write $\mathcal{N} = \mathcal{N}_{F/K}({\rm{G}})$ and consider an affine open set ${\rm{U}} \subset {\rm{G}}$ containing the unit element. The affine algebra $\Gamma\big(\mathcal{N}_{F/K}({{\rm{U}}}), \mathcal{O}_{\mathcal{N}}\big)$ is spanned over $K$ by functions
$$\frak{f} = \sum_{\tau} \left(\bigotimes_{\scr \sigma} {f}_{\scriptscriptstyle \sigma}\right)^{\scr \tau}$$
with $\sigma,\tau$ varying over $Gal(F|K)$ and ${f}_{\scriptscriptstyle \sigma} \in p^{\ast}\Gamma\big({{\rm{U}}}^{\scriptscriptstyle \sigma}, \mathcal{O}_{\mathcal{N} \otimes_K F}\big)$. Here $p$ is the projection $\prod_{\scr h} \G^{\scr h} \longrightarrow \G^{\scr \sigma}$. For an arbitrary $\sum_{\scr h} \partial^{\scriptscriptstyle h} \in \mathfrak{V}$ we have
$$\frak{f}' := \sum_{\scr h} \partial^{\scriptscriptstyle h}\frak{f} = \sum_{\scr h, \tau} \left(\partial^{\scriptscriptstyle h} \left(\bigotimes_{\scr \sigma} {f}_{\scriptscriptstyle \sigma}\right)^{\scr \tau}\right) = \sum_{\scr h, \tau} \left(\left(\partial^{\scr h}f_{\tau^{\scr - 1}h}^{\scr \tau }\right) \otimes \bigotimes_{\scr \sigma \neq \tau^{\scr-1}h} f_{\sigma}^{\scr \tau}\right).$$
On the right hand side we replace $\tau^{\scr-1}h$ by $h$. Then statement (3.3.1) together with the fact that $F/K$ is a Galois extension yield 
$$\frak{f}'(e) = \sum_{\scr h, \tau} \big(\partial^{\scr h} f_{\scr h}(e)\big)^{\scr \tau}\cdot \left(\bigotimes_{\scr \sigma \neq h} f_{\scr \sigma}(e)\right)^{\scr \tau} \in K.$$
Since $\frak{f}$ was arbitrary, it follows that the vector stalk of $\sum_{\scr h} \partial^{\scriptscriptstyle h}$ at the unit element $e = e_{\mathcal{N}}$ defines a $K$-derivation of $\mathcal{O}_{\mathcal{N}, e}$. 
This shows that $\sum_{\scr h} \partial^{\scriptscriptstyle h} \in {{\rm{Lie}}}\,\mathcal{N}$ where we view ${\rm{Lie}}\,\mathcal{N}$ as a subspace of ${\rm{Lie}}\,(\G \times \G^{\scr h})$. But $\sum_{\scr h} \partial^{\scriptscriptstyle h}$ was chosen arbitrary, so that
\begin{equation}
 \mathfrak{V} \subset {\rm{Lie}}\,\mathcal{N}.
\end{equation}
Since for each $h \in Gal(F|K)$ the map ${\rm{Lie}}(\rho)(v) = v^{\scriptscriptstyle h}$ is defined over $K$, $\mathfrak{V}$ is a vector space over $K$ of dimension 
$\dim_K \mathfrak{V} = [F:K]\cdot\dim\,{\rm{G}}$. Hence, 
\begin{equation}
 \dim_K \mathfrak{V} = [F:K]\cdot\dim\,{\rm{G}} = \dim\,\mathcal{N} = \dim\,{{\rm{Lie}}}\,\mathcal{N}
\end{equation}
Statements (3.6.1) and (3.6.2) imply Statement 2.
\end{proof}}
\subsection{Simple algebraic groups and Weil restrictions}
If a monic polynomial ${\rm{P}}(x)$ of positive degree with real coefficients is irreducible in $\R(x)$, then there is a complex number $\alpha$ such that ${\rm{P}}(x) = (x-\alpha)$ or ${\rm{P}}(x) = (x-\alpha)\big(x-h(\alpha)\big)$ with the complex conjugation $h$. The following lemma states an analog of this for Weil restrictions of group varieties.
\begin{lemma} \label{cor789} Let $F/K$ be a Galois extension of fields of degree $[F: K] = 2$ and denote by $h$ the generator of $Gal(F|K)$. Let $\G'$ be a simple algebraic group over $K$. Then $\G = \G' \otimes_K F$ is either
a simple algebraic group over $F$ or there exists a simple algebraic group ${\rm{U}}$ over $F$ such that
$\rm{G}$ is isogenous to ${{\rm{U}}} \times {{\rm{U}}}^{\scr h}$.
\end{lemma}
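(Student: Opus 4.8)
\emph{Sketch of the argument.} The plan is to reduce everything to the structure of connected subgroups of $\G'$ via Galois descent. Recall from Subsect.\,3.2.3 that, since $\G = \G' \otimes_K F$ and $F/K$ is Galois, there is a canonical identification $\G^{\scr h} = \G$ under which the conjugation morphism $\rho$ becomes the canonical Galois action $\mathrm{id}_{\G'} \times_K \mathrm{spec}(h^{\scr -1})$. A closed connected subgroup $\V \subset \G$ is then of the form $\V' \otimes_K F$ for a subgroup $\V' \subset \G'$ over $K$ precisely when it is $\rho$-stable, equivalently when $\V^{\scr h} = \V$ as subschemes of $\G^{\scr h} = \G$: this is Galois descent for the closed immersion $\V \hookrightarrow \G$, obtained by applying Theorem \ref{descent} to $\V$ with the induced action and then Lemma \ref{cl1} to the immersion. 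Since $\G'$ is simple over $K$, it follows that every $h$-stable closed connected subgroup of $\G$ is either trivial or all of $\G$.

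Now suppose $\G$ is not simple over $F$; I must produce $\U$. Among the nontrivial connected algebraic subgroups of $\G$ over $F$, pick one of minimal dimension, say $\HHH$; being of minimal dimension it has no proper nontrivial connected subgroup, hence is a simple algebraic group over $F$, and it is proper in $\G$. Its conjugate $\HHH^{\scr h}$ is then a simple connected subgroup of $\G^{\scr h} = \G$. Put ${\rm{D}} = (\HHH \cap \HHH^{\scr h})^{o}$ and let ${\rm{E}} \subset \G$ be the image of the multiplication morphism $m\colon \HHH \times \HHH^{\scr h} \longrightarrow \G$; since $\G$ is commutative, $m$ is a homomorphism, so ${\rm{E}}$ is a connected subgroup of $\G$. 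Because $h$ has order two, conjugation interchanges $\HHH$ and $\HHH^{\scr h}$, so both $\HHH \cap \HHH^{\scr h}$ and ${\rm{E}}$ are $h$-stable, and hence so is the connected group ${\rm{D}}$.

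By the descent remark, each of ${\rm{D}}$ and ${\rm{E}}$ is $\{e\}$ or $\G$. As ${\rm{E}} \supseteq \HHH \neq \{e\}$ we get ${\rm{E}} = \G$, so $m$ is surjective; and ${\rm{D}} = \G$ would force $\HHH \supseteq \HHH \cap \HHH^{\scr h} = \G$, contradicting that $\HHH$ is proper, so ${\rm{D}} = \{e\}$, i.e. $\HHH \cap \HHH^{\scr h}$ is finite. Since the kernel of $m$ is $\{(x, x^{\scr -1}) : x \in \HHH \cap \HHH^{\scr h}\}$, it is finite, so $m$ is a surjective homomorphism with finite kernel, i.e. an isogeny $\HHH \times \HHH^{\scr h} \longrightarrow \G$. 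Taking $\U = \HHH$ gives the second alternative, while if $\G$ is already simple over $F$ the first alternative holds. The only delicate point is the descent step — verifying carefully that an $h$-stable closed connected subgroup of $\G' \otimes_K F$ genuinely descends to a connected subgroup of $\G'$ over $K$, so that $K$-simplicity of $\G'$ can be invoked; the rest (minimal dimension forcing simplicity, $h$-stability of the intersection and of the product, the kernel computation) is routine.
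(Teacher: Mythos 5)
Your proof is correct, but it takes a genuinely different route from the one in the paper. You never mention Weil restrictions: you work entirely inside $\G = \G' \otimes_K F$, pick a nontrivial connected subgroup $\HHH$ of minimal dimension (hence simple, and proper once $\G$ is assumed non-simple), and exploit the fact that the $h$-stable connected subgroups $(\HHH \cap \HHH^{\scr h})^{o}$ and $\HHH + \HHH^{\scr h}$ descend to connected $K$-subgroups of the simple group $\G'$, so each is trivial or all of $\G$; the addition map $\HHH \times \HHH^{\scr h} \longrightarrow \G$ is then a surjection with finite kernel, i.e.\ the desired isogeny with $\U = \HHH$. The paper argues in the opposite direction: it takes a surjection $\pi\colon \G \longrightarrow \U$ onto a simple \emph{quotient} over $F$, uses the universal property to get $\mathcal{N}(\pi)\colon \G' \longrightarrow \mathcal{N}_{F/K}(\U)$, and uses $K$-simplicity of $\G'$ to force this to be an isogeny onto its image $\HHH'$; the dichotomy is then $\HHH' = \mathcal{N}_{F/K}(\U)$ (giving $\G$ isogenous to $\U \times \U^{\scr h}$) versus $\HHH'$ proper (whence $\HHH' \otimes_K F$, and so $\G$, is simple). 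Your version is more elementary and self-contained — the only nontrivial input is Galois descent of $h$-stable closed connected subgroups, which is indeed covered by Theorem \ref{descent} together with Lemma \ref{cl1} as you indicate, and it has the small bonus of exhibiting $\U$ as a subgroup of $\G$. The paper's version buys coherence with the Weil-restriction machinery of the chapter: it realizes $\U \times \U^{\scr h}$ as $\mathcal{N}_{F/K}(\U) \otimes_K F$, which is the form in which the lemma is applied later (e.g.\ after Theorem \ref{weakdescent}, where one wants the target of a weak descent to be either simple or a Weil restriction). Two minor points to keep in order: state explicitly that the descended subgroups are algebraic subgroups over $K$ (the group law is defined over $K$, so this is immediate), and note that a proper connected closed subgroup of the irreducible variety $\G$ has strictly smaller dimension, which is what makes the minimal-dimension choice both proper and simple.
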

\begin{proof} Let $\pi: \rm{G} \longrightarrow {\rm{U}}$ be a surjective homomorphism onto a simple algebraic group ${\rm{U}}$ of positive dimension over $F$. 
Since ${\rm{G}}'$ is simple, the induced homomorphism ${\mathcal{N}}(\pi): \G' \longrightarrow \mathcal{N}_{F/K}({\rm{U}})$ defines an isogeny between
$\G'$ and its image ${{\rm{H}}}' \subset \mathcal{N}_{F/K}({\rm{U}})$. If ${{\rm{H}}}' = \mathcal{N}_{F/K}({\rm{U}})$, then 
$\G$ is isogenous to ${{\rm{U}}} \times {{\rm{U}}}^{\scr h} = \mathcal{N}_{F/K}({{\rm{U}}}) \otimes_K F$. And if ${\rm{H}}' \neq \mathcal{N}_{F/K}({\rm{U}})$, then ${{\rm{H}}} = {{\rm{H}}}' \otimes_K F$ is a proper group subvariety of the product of simple groups ${{\rm{U}}} \times {{\rm{U}}}^{\scr h}$. Thus, ${\rm{H}}$ is simple over $F$. As $\G$ is isogenous to ${{\rm{H}}}$, the corollary follows.
\end{proof}

\subsection{Exact sequences of group varieties and Weil restrictions} \large{As the following lemma shows, the functor of Weil restrictions can be extended to exact sequences of algebraic groups.

\begin{lemma} \label{op1} Let $F/K$ be a finite Galois extension of fields. Then the functor $\mathcal{N}_{F/K}$ extends to the category of exact sequences of group varieties over $F$. That is, to an exact sequence ${{\rm{L}}} \stackrel{i}{\longrightarrow} {\rm{G}} \stackrel{\pi}{\longrightarrow} A $
of group varieties over $F$ there is associated an exact sequence
$$\mathcal{N}_{F/K}({{\rm{L}}}) \stackrel{i_{\mathcal{N}}}{\longrightarrow} \mathcal{N}_{F/K}({\rm{G}}) \stackrel{\pi_{\mathcal{N}}}{\longrightarrow} \mathcal{N}_{F/K}({\rm{A}}) $$
of group varieties over $K$ in a functorial manner.
\end{lemma}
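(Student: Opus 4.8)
The plan is to obtain functoriality formally from the fact that $\mathcal{N}_{F/K}$ is already a functor on group varieties, and to prove exactness of the restricted sequence by descending it from the faithfully flat base change to $F$, where the sequence becomes a finite product of conjugates of the given one. For the functorial part, recall from Lemma \ref{WR2} that ${\rm{G}} \mapsto \mathcal{N}_{F/K}({\rm{G}})$, $v \mapsto v_{\mathcal{N}}$ is a functor on commutative group varieties over $F$. Applied to ${\rm{L}} \stackrel{i}{\longrightarrow} {\rm{G}} \stackrel{\pi}{\longrightarrow} {\rm{A}}$ it produces homomorphisms $i_{\mathcal{N}}$ and $\pi_{\mathcal{N}}$ with $\pi_{\mathcal{N}}\circ i_{\mathcal{N}} = (\pi\circ i)_{\mathcal{N}}$, and the latter is trivial because $\mathcal{N}_{F/K}$ carries the trivial homomorphism to the trivial homomorphism. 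A morphism of exact sequences is a commutative ladder of group varieties, and a functor sends such a ladder to a commutative ladder of the restricted sequences; so once we know each restricted sequence is again exact, the extension of $\mathcal{N}_{F/K}$ to exact sequences is automatically a functor.

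To prove that $\mathcal{N}_{F/K}({\rm{L}}) \stackrel{i_{\mathcal{N}}}{\longrightarrow} \mathcal{N}_{F/K}({\rm{G}}) \stackrel{\pi_{\mathcal{N}}}{\longrightarrow} \mathcal{N}_{F/K}({\rm{A}})$ is exact --- that is, $i_{\mathcal{N}}$ is a closed immersion identifying $\mathcal{N}_{F/K}({\rm{L}})$ with the scheme-theoretic kernel of $\pi_{\mathcal{N}}$, and $\pi_{\mathcal{N}}$ is faithfully flat and surjective --- I would observe that each of these is a property that is local for the faithfully flat topology on the base, hence may be verified after applying $-\otimes_K F$. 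By Theorem \ref{Wr1} we have $\mathcal{N}_{F/K}({\rm{G}})\otimes_K F = \prod_{h\in Gal(F|K)} {\rm{G}}^{\scr h}$, and likewise for ${\rm{L}}$ and ${\rm{A}}$; moreover, by the computation in the proof of Corollary \ref{cl}, the base changes of $i_{\mathcal{N}}$ and $\pi_{\mathcal{N}}$ are the product maps $\prod_h i^{\scr h}$ and $\prod_h \pi^{\scr h}$. Thus after base change our sequence becomes the product over $h \in Gal(F|K)$ of the conjugate sequences ${\rm{L}}^{\scr h} \stackrel{i^{\scr h}}{\longrightarrow} {\rm{G}}^{\scr h} \stackrel{\pi^{\scr h}}{\longrightarrow} {\rm{A}}^{\scr h}$.

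Two elementary facts now finish the argument. First, each conjugate sequence is exact, since ${\rm{L}}^{\scr h}, {\rm{G}}^{\scr h}, {\rm{A}}^{\scr h}$ have the same underlying schemes and the same underlying morphisms over $K$ as ${\rm{L}}, {\rm{G}}, {\rm{A}}$ (equivalently, conjugation of sheaves is an exact functor), so exactness is inherited term by term. Second, a finite product of exact sequences of commutative group varieties is exact: a product of closed immersions is a closed immersion, the scheme-theoretic kernel of a product of homomorphisms is the product of the kernels, and a product of faithfully flat surjective homomorphisms is faithfully flat and surjective. Hence the base-changed sequence is exact, and by faithfully flat descent along $\mbox{spec}\,F \longrightarrow \mbox{spec}\,K$ the sequence $\mathcal{N}_{F/K}({\rm{L}}) \longrightarrow \mathcal{N}_{F/K}({\rm{G}}) \longrightarrow \mathcal{N}_{F/K}({\rm{A}})$ over $K$ is exact.

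The step that needs the most care is the reduction itself: one should be explicit about which exactness properties --- being a closed immersion, being the scheme-theoretic kernel, and faithfully flat surjectivity --- are being descended, and note that each is stable under faithfully flat descent on the base; the identification of $v_{\mathcal{N}}\otimes_K F$ with $\prod_h v^{\scr h}$, already present in the proof of Corollary \ref{cl}, is precisely what exhibits the base-changed sequence as a product of conjugate sequences. An alternative route, avoiding the language of descent, would be to test everything on $R$-points for variable $K$-algebras $R$ and on Lie algebras, using Lemma \ref{WR2}: exactness on Lie algebras together with surjectivity on geometric points again yields the claim; but the descent argument above is the cleanest.
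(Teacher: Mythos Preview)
Your argument is correct and is precisely the standard way to establish this lemma: functoriality is immediate from Lemma \ref{WR2} and Corollary \ref{cl}, and exactness is checked after the faithfully flat base change $-\otimes_K F$, where Theorem \ref{Wr1} identifies the sequence with the finite product of the conjugate sequences $\prod_h\bigl({\rm L}^{\scr h}\to{\rm G}^{\scr h}\to{\rm A}^{\scr h}\bigr)$. The paper itself gives no argument at all (the proof reads ``Clear.''), so you have simply supplied the details that the author suppressed; there is no divergence in approach to speak of.
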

\begin{proof} Clear. \end{proof}}

\chapter{The main criterion for descent and weak descent}
The aim of this chapter is to prove the main criterion for descent and weak descent (Theorem \ref{weakdescent}). It turns out that the proof is purely algebraic in nature.
In the first section we shall formulate some auxiliary lemmas about plurisimple algebraic groups (to be defined below). In particular, for an algebraic group $\G$ we will define the maximal plurisimple quotient $\overline{\G} = \G/\G_{ps}$ and the $\delta$-invariant $\delta(\G)$. These two notions seem of general interest. The second section deals with intermediate applications to Weil restrictions. Afterwards we formulate and prove an abstract version of the main criterion (Proposition \ref{1234}). In the last section we shall derive the main criterion.
\section{Plurisimple groups and the maximal plurisimple quotient}
\subsubsection{Definition and general properties}
We let $F$ be a field. A commutative group variety $\G$ over $F$ is called \textit{plurisimple} if it is isogenous to a product of simple algebraic groups over $F$.\footnote{It would be more reasonable to call such groups \textit{semi-simple}, \textit{quasi-simple} or even \textit{almost simple}. These notions, however, are already reserved.} 
\begin{lemma} \label{oiip} Let $\G$ be a plurisimple group variety over $F$ and ${\rm{H}}$ be a connected algebraic subgroup of $\G$.
\begin{enumerate}
 \item ${\rm{H}}$ is plurisimple.
\item $\G/{\rm{H}}$ is plurisimple.
\item Let $v: \G \longrightarrow \prod_{j = 1}^k \G_j$ be a maximal homomorphism. If $\dim\,\G > 0$, then the number $\delta(\G)$ of factors depends only on the isogeny class of $\G$ and is an invariant of $\G$.
\item ${\rm{H}}$ admits a complement ${{\rm{V}}}$, that is, a connected algebraic subgroup ${{\rm{V}}} \subset \G$ with the property that the sum ${\rm{H}} + {{\rm{V}}}$ taken in $\G$ equals $\G$ and such that ${\rm{H}} \cap {{\rm{V}}}$ has dimension zero.
\item The $\delta$-invariant is additive, that is, $\delta({{\rm{H}}}) + \delta(\G/{\rm{H}}) = \delta(\G)$. 

\end{enumerate}

\end{lemma}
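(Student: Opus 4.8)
The plan is to reduce all five assertions to a single ``maximal independent family'' device, run twice. I first isolate an auxiliary claim: \emph{if a connected commutative group variety is generated by a family of simple connected subgroups, then it is plurisimple}; more precisely, some finite subfamily $\mathrm{S}_1,\dots,\mathrm{S}_k$ has multiplication $\prod_j \mathrm{S}_j\to\G$ an isogeny. For the proof, recall first that the subgroup generated by an arbitrary family of connected subgroups is already generated by finitely many of them (for dimension reasons), so one may assume finitely many generators $\mathrm{S}_1,\dots,\mathrm{S}_m$. Among the subsets $J\subseteq\{1,\dots,m\}$ with $\dim\sum_{j\in J}\mathrm{S}_j=\sum_{j\in J}\dim\mathrm{S}_j$, choose one, $J$, of maximal size. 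Using $\dim(\mathrm{A}+\mathrm{B})=\dim\mathrm{A}+\dim\mathrm{B}-\dim(\mathrm{A}\cap\mathrm{B})$ for connected subgroups together with the fact that a connected subgroup of a simple group is trivial or the whole group, one sees that every $\mathrm{S}_i$ is contained in $\sum_{j\in J}\mathrm{S}_j$ (else $J\cup\{i\}$ would be larger and still admissible), whence $\sum_{j\in J}\mathrm{S}_j=\G$ and $\prod_{j\in J}\mathrm{S}_j\to\G$ is an isogeny.

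Statement~2 then follows at once: a plurisimple $\G$ is generated by its simple connected subgroups (transport this property along a chain of isogenies between $\prod\mathrm{S}_i$ and $\G$, using that homomorphic images and connected preimages of simple subgroups are again simple or trivial), and the images of these generators in $\G/\mathrm{H}$ are simple or trivial and generate $\G/\mathrm{H}$, so the auxiliary claim applies. For Statement~4 I write $\G$ as generated by finitely many simple connected subgroups $\mathrm{S}_1,\dots,\mathrm{S}_m$ and repeat the device, now maximizing the size of a subset $J$ subject to $\dim\bigl(\mathrm{H}+\sum_{j\in J}\mathrm{S}_j\bigr)=\dim\mathrm{H}+\sum_{j\in J}\dim\mathrm{S}_j$; the resulting $\mathrm{V}=\sum_{j\in J}\mathrm{S}_j$ is a connected subgroup with $\dim(\mathrm{H}\cap\mathrm{V})=0$ and $\mathrm{H}+\mathrm{V}=\G$. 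Statement~1 is in turn a consequence of~2 and~4: the multiplication $\mathrm{H}\times\mathrm{V}\to\G$ is an isogeny, and $\mathrm{H}\to\G/\mathrm{V}$ is surjective with finite kernel, so $\mathrm{H}$ is isogenous to the plurisimple group $\G/\mathrm{V}$.

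For Statements~3 and~5 I would first check that a maximal homomorphism $v\colon\G\to\prod_{j=1}^k\G_j$ is forced to be an isogeny with all $\G_j$ simple: otherwise the identity component of $\ker v$, or one of the $\G_j$, is a nontrivial plurisimple group by the parts already proved, and splitting off one more simple factor (via Statement~4 applied to the kernel) would contradict maximality of $k$. Hence $\G\sim\prod_{j=1}^k\G_j$ and $k$ is just the number of simple factors in any isogeny decomposition. The one genuinely delicate point is that this number, and indeed the multiset of isogeny classes of the $\G_j$, is intrinsic; I would settle this by assigning to each isogeny class of simple groups $\mathrm{S}$ the multiplicity $\dim_{D_{\mathrm{S}}}\bigl(\mathrm{Hom}(\mathrm{S},\G)\otimes\Q\bigr)$, where $D_{\mathrm{S}}=\mathrm{End}(\mathrm{S})\otimes\Q$ is a division algebra by Schur's lemma. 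This multiplicity is manifestly an isogeny invariant and equals the number of $j$ with $\G_j\sim\mathrm{S}$, so summing over $\mathrm{S}$ gives Statement~3. Statement~5 then follows from Statement~4, since the sequence $0\to\mathrm{H}\to\G\to\G/\mathrm{H}\to0$ splits up to isogeny ($\G\sim\mathrm{H}\times\mathrm{V}$ with $\mathrm{V}\sim\G/\mathrm{H}$) and multiplicities are additive on products. I expect the well-definedness of $\delta$ in Statement~3 to be the main obstacle; Statements~1,~2 and~4 are soft dimension bookkeeping once the auxiliary claim is in hand.
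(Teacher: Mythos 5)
You cannot be checked against the paper here, because the paper gives no argument at all for Lemma \ref{oiip}: it is stated with the remark that the proof is ``lengthy and left to the reader''. Judged on its own terms, your proposal is sound, and the architecture is efficient: the ``maximal independent subfamily'' device does give Statements 1, 2 and 4 (the dimension formula $\dim(\mathrm{A}+\mathrm{B})=\dim \mathrm{A}+\dim \mathrm{B}-\dim(\mathrm{A}\cap \mathrm{B})$ and the fact that finite sums of connected subgroups of a \emph{commutative} group are closed connected subgroups are exactly what is needed), and the $\mathrm{Hom}(\mathrm{S},\G)\otimes\Q$ multiplicity settles 3 and 5. One step should be written out rather than gestured at: in Statement 3, to contradict maximality when $\ker v$ is infinite, choose a simple connected subgroup $\mathrm{S}\subset(\ker v)^o$ (it exists by Statement 1 and your generation claim), let $\V$ be a complement of $\mathrm{S}$ in $\G$ from Statement 4, observe that $\G/\V$ is simple (the composite $\mathrm{S}\to\G\to\G/\V$ is an isogeny), and check that $x\mapsto(x\bmod \V,\,v(x))$ is \emph{onto} $(\G/\V)\times\prod_j\G_j$: its image contains $(\G/\V)\times\{0\}$ because $\mathrm{S}\subset\ker v$ already surjects onto $\G/\V$, and it projects onto $\prod_j\G_j$ because $v$ does; surjectivity onto each factor alone would not suffice.

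The only other caveat is characteristic. Your argument repeatedly uses that isogeny is a symmetric relation (to pass from ``isogenous to a product of simples'' to ``generated by simple connected subgroups'', and to prove isogeny--invariance of $\dim_{D_{\mathrm{S}}}\bigl(\mathrm{Hom}(\mathrm{S},\G)\otimes\Q\bigr)$), via the factorization $[n]=u'\circ u$ through a finite kernel, and it uses that $\mathrm{End}(\mathrm{S})\otimes\Q$ is a division algebra. Both fail for the unipotent part in characteristic $p$ (Frobenius on $\mathbb{G}_a$ has no quasi-inverse, and $\mathrm{End}(\mathbb{G}_a)\otimes\Q$ is not a division ring), so as written your proof covers characteristic zero. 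That is the only case the paper ever uses ($F$ is a subfield of $\C$ throughout the applications), but Sect.\,4.1 nominally allows an arbitrary field $F$, so if you want the lemma in that stated generality the counting of factors needs a separate treatment of the additive part (e.g.\ via the dimensions of the unipotent, toric and abelian constituents).
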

Statement 5.\,holds with the convention that $\delta(\G) = 0$ if $\dim\,G = 0$. The proof of the lemma is lengthy and left to the reader. We proceed to general group varieties.
\begin{lemma}\label{!!} Let $\rm{G}$ be a commutative group variety of positive dimension over $F$. Then there is a connected linear subgroup $\G_{ps} \subset \G$ such that each homomorphism over $F$ from $\rm{G}$ onto a plurisimple algebraic group factors through the quotient map onto $\G/\G_{ps}$.  
\end{lemma}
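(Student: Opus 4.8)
The plan is to realise $\G_{ps}$ as the identity component of the intersection of the kernels of \emph{all} surjective homomorphisms from $\G$ onto plurisimple groups, and then to verify the three properties demanded by the statement (connected, linear, universal for plurisimple quotients). Concretely, let $\mathcal{Q}$ be the family of all surjective homomorphisms $q\colon \G \to S$ of group varieties over $F$ with $S$ plurisimple. Up to isomorphism the targets range over a set (their dimension is bounded by $\dim\,\G$), so $\mathcal{Q}$, and in particular the collection of closed subgroups $\{\ker q : q \in \mathcal{Q}\}$, is a set. For $q \in \mathcal{Q}$ put $N_q = (\ker q)^{\circ}$, set $M = \bigcap_{q \in \mathcal{Q}} N_q$, and define $\G_{ps} := M^{\circ}$. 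Then $\G_{ps}$ is connected by construction, and since $\G_{ps} \subseteq M \subseteq \ker q$ for every $q \in \mathcal{Q}$, each homomorphism from $\G$ onto a plurisimple group kills $\G_{ps}$ and hence factors through the quotient map $\G \to \G/\G_{ps}$. This settles the universality part of the lemma.

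It remains to see that $\G_{ps}$ is linear. Here I would invoke the Chevalley--Barsotti structure theorem: there is a (unique) maximal connected linear algebraic subgroup $L \subseteq \G$ such that $B := \G/L$ is an abelian variety. (This is legitimate since the fields occurring in the applications are perfect; for an imperfect $F$ one argues over the perfect closure $F^{\mathrm{perf}}$ and descends.) By Poincar\'e's complete reducibility theorem $B$ is isogenous to a product of simple abelian varieties, hence plurisimple, so the projection $\pi\colon \G \to B$ belongs to $\mathcal{Q}$ and $N_\pi = L^{\circ} = L$. Therefore $\G_{ps} \subseteq L$, and as a closed subgroup of the affine group $L$ it is linear. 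This proves the lemma as stated.

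For completeness I would also record the fact that motivates the construction, namely that $\overline{\G} := \G/\G_{ps}$ is itself plurisimple (so it is genuinely the \emph{maximal} plurisimple quotient). Since the underlying topological space of $\G$ is Noetherian, the family of finite intersections $N_{q_1}\cap\dots\cap N_{q_k}$ has a minimal element, whence $M = N_{q_1}\cap\dots\cap N_{q_m}$ for finitely many $q_j \in \mathcal{Q}$. The homomorphism $\phi = (q_1,\dots,q_m)\colon \G \to \prod_{j=1}^m S_j$ then satisfies $(\ker\phi)^{\circ} = M^{\circ} = \G_{ps}$, so $\G/\G_{ps} \to \mathrm{im}\,\phi$ is an isogeny; as $\mathrm{im}\,\phi$ is a connected algebraic subgroup of the plurisimple group $\prod_j S_j$, it is plurisimple by Lemma~\ref{oiip}(1), and hence so is $\G/\G_{ps}$.

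The only genuinely non-formal ingredient is the appeal to the Chevalley--Barsotti theorem used to force the linearity of $\G_{ps}$; without it one has no \emph{a priori} handle on the abelian part of the intersection $M$. Everything else — the existence of the intersection, the passage to identity components, the Noetherian stabilisation, and the universality — is routine manipulation with kernels of homomorphisms of group varieties.
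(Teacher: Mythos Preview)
Your proof is correct and follows essentially the same approach as the paper's: linearity via Chevalley's theorem together with Poincar\'e's complete reducibility, and existence via a Noetherian descent on kernels. The paper packages the descent as an explicit iteration (successively enlarging a homomorphism onto a plurisimple target until the kernel stabilises in dimension) rather than your one-shot intersection, and defers the plurisimplicity of $\G/\G_{ps}$ to the subsequent Corollary~\ref{---}.
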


\begin{proof}  If a connected subgroup ${\rm{G}}_{ps}$ with the universal property exists, then it is linear. In fact, since abelian varieties are plurisimple (by Poincar\'e's irreducibility theorem) and as the quotient of $\G$ by its maximal linear subgroup $\rm{L}$ is abelian (by Chevalley's theorem), it follows that ${\rm{G}}_{ps} \subset \rm{L}$. So, ${\rm{G}}_{ps}$ must be linear. We are left to prove the existence of ${\rm{G}}_{ps}$.\\
We start with an arbitrary algebraic homomorphism $\pi_1': \rm{G} \longrightarrow \rm{G}_1'$ onto a plurisimple algebraic group $\rm{G}_1'$ and let $\pi_1: {\rm{G}} \longrightarrow {\rm{G}}_1$ be the Stein factorization of $\pi_1'$. If $\pi_1$ factors all further algebraic homomorphisms $\pi_2': \rm{G} \longrightarrow \rm{G}_2'$ onto a plurisimple algebraic group ${\rm{G}}_2'$, we are done. Otherwise, there exists a homomorphism $\pi_2'$ onto a plurisimple algebraic group $\G_2'$ which is not factored by $\pi_1$. Let ${\rm{G}}_2'$ be the image of the homomorphism $\pi_1 \times \pi_2'$ to ${\rm{G}}_1 \times {\rm{G}}_2'$. Let $\pi_2$ be the Stein factorization of the homomorphism $\pi_1 \times \pi_2'$ to ${\rm{G}}_2'$. The homomorphism $\pi_1$ factors through $\pi_2$. So, $ker\,\pi_2 \subset ker\,\pi_1$. If $ker\,\pi_1 = ker\,\pi_2$, then $\pi_1$ would factor $\pi_2$ and $\pi_2'$. Therefore, $ker\,\pi_1 \subsetneq ker\,\pi_2$. Since $\pi_1$ and $\pi_2$ are connected, we infer that $\dim\,ker\,\pi_2 < \dim\,ker\,\pi_1$ (dimension of algebraic groups). By the above the image of $\pi_2$ is a plurisimple algebraic group ${\rm{G}}_2$. If $\pi_2$ factors all further homomorphisms $\pi_3': \rm{G} \longrightarrow \rm{G}_3$ 
onto a plurisimple group $\rm{G}_3$ which are distinct from $\pi_1$ and $\pi_2$, then $\G_{ps} = ker\,\pi_2$ is as required. Otherwise, there exists a homomorphism $\pi_3'$ onto a plurisimple algebraic group such that
the Stein factorization $\pi_3$ of $\pi_2 \times \pi_3'$ satisfies $\dim\,ker\,\pi_3 < \dim\,ker\,\pi_2$. For dimension reasons this process must stop. So, for some natural number $l$, $\dim\,ker\,\pi_l = \dim\,ker\,\pi_{l+1}$ for all choices of algebraic homomorphisms $\pi_{l+1}'$. Then $\G_{ps} = ker\,\pi_l$ fulfills
the universal property. 
 \end{proof}
For a group variety $\G$ over $F$ any surjective homomorphism $\pi: \G \longrightarrow \overline{\G}$ with kernel equal to $\G_{ps}$ will be referred to as \textit{universal}. As mentioned in the introduction, the notion of universal morphisms (``\textit{morphismes universels}'') appears already in Serre \cite[d\'ef.\,3]{Serre2} and our definition
is a special case of that.\\
A homomorphism $v: \G \longrightarrow \prod_{j = 1}^k \G_j$ onto a product of simple algebraic groups $\G_j$ of positive dimension is referred to as \textit{maximal} if $k$ is maximal among all such homomorphisms. For formal reasons
we extend this definition to a group variety $e$ of dimension zero over $F$. We call 
any surjective homomorphism $v: e \longrightarrow \prod_{j=1}^k \G_j$ maximal.
Maximal homomorphisms $v: \G \longrightarrow \prod_{j = 1}^k \G_j$ are not unique in general. But, as will follow from the next lemma and its corollary, the Stein factorization of a maximal homomorphism is unique up to isomorphism.
\begin{corollary} \label{---} Let $\rm{G}$ be a commutative group variety of positive dimension over $F$ and let $v: \G \longrightarrow \prod_j^{k}\G_j$ be a maximal homomorphism. Then the Stein factorization of $v$ is universal and $\delta(\overline{\G}) = k$. Moreover, $\dim\,\overline{\G} > 0$.
 
\end{corollary}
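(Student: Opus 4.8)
The plan is to compare the Stein factorization of $v$ with the universal homomorphism $\pi: \G \longrightarrow \overline{\G} = \G/\G_{ps}$ supplied by Lemma \ref{!!}, the decisive tools being that the $\delta$-invariant is an isogeny invariant and is additive on plurisimple groups (Lemma \ref{oiip}). Write the Stein factorization of $v$ as $v = \iota \circ \bar v$, where $\bar v: \G \longrightarrow \G_v := \G/(ker\,v)^o$ is surjective with connected kernel and $\iota: \G_v \longrightarrow \prod_{j=1}^k \G_j$ is an isogeny. Since a product of simple groups is plurisimple and $\delta$ is an isogeny invariant, $\G_v$ is plurisimple with $\delta(\G_v) = \delta\big(\prod_{j=1}^k\G_j\big)$; an induction on $k$, using the additivity of $\delta$ and the equality $\delta(\mathrm{S}) = 1$ for a simple group $\mathrm{S}$, gives $\delta\big(\prod_{j=1}^k\G_j\big) = k$, hence $\delta(\G_v) = k$. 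Moreover $k \geq 1$, since a positive-dimensional commutative group variety always admits a non-trivial homomorphism onto a simple algebraic group (via Chevalley's and Poincar\'e's theorems together with the structure of commutative linear groups).

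First I would factor $\bar v$ through $\pi$. As $\bar v$ is a surjective homomorphism onto the plurisimple group $\G_v$, the universal property of $\pi$ (Lemma \ref{!!}) yields a surjective homomorphism $\psi: \overline{\G} \longrightarrow \G_v$ with $\psi \circ \pi = \bar v$; in particular $\G_{ps} = ker\,\pi \subset ker\,\bar v = (ker\,v)^o$. Here $\overline{\G}$ is plurisimple (it is the image of a universal homomorphism; cf.\ the construction in the proof of Lemma \ref{!!}), and it has positive dimension because it surjects onto $\G_v$, whose dimension is at least $k \geq 1$. Next I would identify $k$ with $\delta(\overline{\G})$: the composite $\iota \circ \psi$ exhibits $\overline{\G}$ as surjecting onto a product of $k$ simple groups, so $\delta(\overline{\G}) \geq k$; conversely, precomposing a maximal homomorphism $\overline{\G} \longrightarrow \prod_{i=1}^{\delta(\overline{\G})} \mathrm{H}_i$ with $\pi$ produces a homomorphism of $\G$ onto a product of $\delta(\overline{\G})$ simple groups, whence $\delta(\overline{\G}) \leq k$ by the maximality of $v$. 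Thus $\delta(\overline{\G}) = k$ (the second assertion) and $\dim\,\overline{\G} > 0$ (the third).

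It remains to show that $\bar v$ itself is universal, that is, that $(ker\,v)^o = \G_{ps}$, or equivalently that $\psi$ is an isomorphism. Put $\mathrm{N} = (ker\,\psi)^o$. If $\dim\,\mathrm{N} > 0$, then $\mathrm{N}$ is plurisimple with $\delta(\mathrm{N}) \geq 1$, while $\overline{\G}/\mathrm{N}$ is plurisimple and isogenous to $\G_v$, so $\delta(\overline{\G}/\mathrm{N}) = \delta(\G_v) = k$; additivity of $\delta$ would then give $\delta(\overline{\G}) = \delta(\mathrm{N}) + \delta(\overline{\G}/\mathrm{N}) \geq k + 1$, contradicting $\delta(\overline{\G}) = k$. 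Hence $ker\,\psi$ is finite. Since $\G_{ps}$ and $(ker\,v)^o$ are connected and $(ker\,v)^o/\G_{ps} \cong ker\,\psi$ is finite, the connected subgroup $\G_{ps}$ is open in $(ker\,v)^o$, so $(ker\,v)^o = \G_{ps}$ and $\psi$ is an isomorphism. Therefore the Stein factorization $\bar v: \G \longrightarrow \G_v \cong \G/\G_{ps} = \overline{\G}$ is a universal homomorphism, which completes the proof.

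I expect the last paragraph to be the main obstacle: squeezing $\psi$ down to an isomorphism is precisely where the additivity of the $\delta$-invariant on plurisimple groups — together with the fact that $\delta$ does not vanish on positive-dimensional plurisimple groups — is indispensable. The remaining steps are routine manipulations with Stein factorizations and with the universal property of $\pi$.
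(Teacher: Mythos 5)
Your proof is correct and follows essentially the same route as the paper: you factor the Stein factorization of $v$ through the universal homomorphism $\pi$ and play the maximality of $v$ against the isogeny-invariance and additivity of the $\delta$-invariant from Lemma \ref{oiip} to force the induced map $\psi$ to be an isogeny (indeed an isomorphism), which is exactly how the paper concludes universality and $\delta(\overline{\G}) = k$. The only divergence is the final assertion, where the paper gets $\dim\,\overline{\G} > 0$ by taking a proper subgroup of maximal dimension to produce a positive-dimensional simple quotient, while you deduce it from the surjection of $\overline{\G}$ onto $\G_v$ together with $k \geq 1$ (justified via Chevalley--Poincar\'e); both arguments are sound.
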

\begin{proof} As $\G_{ps}$ is connected, $\pi$ has connected fibers. Moreover, the Stein factorization ${\rm{St}}(v)$ of a maximal homomorphism $v$ factors through the quotient map $\pi: \G \longrightarrow \G/\G_{ps}$ as, say ${\rm{St}}(v) = u \circ \pi$. If $u$ is no isogeny,
then Lemma \ref{oiip} teaches that $\delta (\G/\G_{ps})$ is strictly larger than the number $k$ of simple factors $\G_j$. Hence, $v$ is not maximal. This yields the second assertion by contradiction.
For the last statement, let $\HHH \subset \G$ be a proper subgroup of maximal dimension. Then $\G/\HHH$ must be simple and $\dim\,\G/\HHH > 0$. Hence, $\G_{ps} \subset \HHH$ and $\dim\,\G/\G_{ps} > 0$. The corollary follows.
 \end{proof}
The above results show that the number $\delta(\G) = \delta(\overline{\G})$ is an invariant which depends only on the isogeny class of $\G$. 
If, more generally, $\G$ is an arbitrary algebraic group over $F$, then the connected unity component $\G^o$ of $\G$ is a group variety over $F$, and we set $\delta(\G)= \delta(\G^o)$. We will call it the \textit{$\delta$-invariant} of $\G$.

\begin{lemma} \label{+} Let $k \geq 1$ be an integer and let ${\rm{G}}_j$, $j = 1,..., k$, be commutative group varieties of positive dimension over $F$ with universal homomorphisms $\pi_j: {\rm{G}}_j \longrightarrow \overline{\G}_j$ onto plurisimple algebraic groups
$\overline{{\rm{G}}}_j$. Then $\pi = \prod_{j = 1}^k \pi_j$ is a universal homomorphism of the product group $\G = \prod_{j=1}^k {\rm{G}}_j.$
\end{lemma}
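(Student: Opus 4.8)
The plan is to prove that $\pi = \prod_{j=1}^k \pi_j$ is universal by identifying its kernel with the canonical connected linear subgroup $\G_{ps} \subset \G$ supplied by Lemma \ref{!!}. Once $\ker\pi = \G_{ps}$ is established, we are done: $\pi$ is clearly surjective, since each $\pi_j$ is, and by definition any surjective homomorphism of $\G$ with kernel $\G_{ps}$ is a universal homomorphism.

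First I would record the relevant closure properties under finite products. We have $\ker\pi = \prod_{j=1}^k \ker\pi_j = \prod_{j=1}^k (\G_j)_{ps}$, which is a product of connected linear groups, hence a connected linear subgroup of $\G$. Likewise $\overline{\G} := \prod_{j=1}^k \overline{\G}_j$ is plurisimple: each $\overline{\G}_j$ is isogenous to a product of simple groups, and a product of such isogenies is an isogeny, so $\overline{\G}$ is isogenous to a product of simple groups. Thus $\pi \colon \G \to \overline{\G}$ is a surjective homomorphism onto a plurisimple group, so by the defining property of $\G_{ps}$ it factors through the quotient map $q \colon \G \to \G/\G_{ps}$. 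Consequently $\G_{ps} \subseteq \ker\pi = \prod_{j=1}^k (\G_j)_{ps}$, which is one of the two desired inclusions.

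For the reverse inclusion I would restrict the universal quotient $q \colon \G \to \G/\G_{ps}$ to the $j$-th factor, obtaining a homomorphism $q_j \colon \G_j \hookrightarrow \G \xrightarrow{q} \G/\G_{ps}$. Its image is a connected algebraic subgroup of the plurisimple group $\G/\G_{ps}$, hence plurisimple by Lemma \ref{oiip}(1). Therefore $q_j$ is a surjection of $\G_j$ onto a plurisimple group, and the universal property of $(\G_j)_{ps}$ forces $q_j$ to vanish on $(\G_j)_{ps}$. Hence each subgroup $(\G_j)_{ps}$, viewed inside $\G$ via the $j$-th inclusion, is contained in $\ker q = \G_{ps}$; since $\prod_{j=1}^k (\G_j)_{ps}$ is generated by these images and $\G_{ps}$ is a subgroup, we conclude $\prod_{j=1}^k (\G_j)_{ps} \subseteq \G_{ps}$. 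Together with the previous paragraph this gives $\ker\pi = \prod_{j=1}^k (\G_j)_{ps} = \G_{ps}$, completing the proof.

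There is no real obstacle here: the argument is a two-sided application of the universal property of Lemma \ref{!!}. The only point requiring a moment of care is the reverse inclusion, where one must descend from the product to the individual factors and invoke Lemma \ref{oiip}(1) to see that the image of each factor under the universal quotient is again plurisimple; everything else is routine bookkeeping with the stability of the properties ``connected'', ``linear'' and ``plurisimple'' under finite products.
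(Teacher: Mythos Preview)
Your proof is correct and takes a genuinely different route from the paper's. The paper argues by contradiction: it lets $p:\G\to\overline{\G}$ be a universal homomorphism, factors $\pi=u\circ p$, assumes $u$ is not an isomorphism, and then uses the existence of a complement $\overline{{\rm V}}$ of $\ker u$ in the plurisimple group $\overline{\G}$ (Lemma~\ref{oiip}(4)) to manufacture a simple quotient $v=\mu\circ p$ of $\G$ which, on the one hand, must factor through each $\pi_j$ (hence through $\pi$), and on the other hand must vanish on all of $\G$ because $p^{-1}(\ker u)+p^{-1}(\overline{{\rm V}})=\G$; this contradiction forces $u$ to be an isomorphism. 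Your argument instead proves the two inclusions $\G_{ps}\subset\ker\pi$ and $\ker\pi\subset\G_{ps}$ directly, the second one by restricting the universal quotient $q:\G\to\G/\G_{ps}$ to each factor $\G_j$ and invoking Lemma~\ref{oiip}(1) (subgroups of plurisimple groups are plurisimple) rather than the complement statement Lemma~\ref{oiip}(4). Your approach is shorter and conceptually cleaner; the paper's approach has the minor advantage of making explicit the simple quotient that would witness a failure, which fits the surrounding narrative about maximal homomorphisms, but this is not needed for the lemma itself.
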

\begin{proof} Let $p: \rm{G} \longrightarrow \overline{\rm{G}}$ be a universal homomorphism. Then $\pi$ factors through $p$ as, say 
\begin{equation}
\pi = u \circ p.
\end{equation}
If $u$ is no isomorphism, then $u$ is no isogeny for the fibers of $\pi$ are
connected. So, if $u$ is no isomorphism, then Lemma \ref{oiip} implies that we can choose a complement $\overline{{{\rm{V}}}} \subset \overline{\rm{G}}$ of $ker\,u$. We consider then
the quotient map $\mu: \overline{\G} \longrightarrow \overline{\G}/\overline{\rm{V}}$. Taking a simple quotient $\rm{U}$ of $\overline{\G}/\overline{\V}$ of positive dimension, we receive a surjective non-trivial homomorphism $\mu: \overline{\G} \longrightarrow \rm{U}$
such that 
\begin{equation}\overline{\rm{V}} \subset ker\,\mu.
\end{equation}
Write $v = \mu \circ p$ and, for $j = 1,..., k$, let ${\rm{G}}_j^{\ast}$ be the copy of ${\rm{G}}_j$ in $\G = \prod_{j=1}^k \G_j$. 
Since $\pi = \prod_{j = 1}^k \pi_j$, it follows that $\pi_{|{\rm{G}}_j^{\ast}}$ is universal. Hence, $v_{|{\rm{G}}_j^{\ast}}$ factors through $\pi_{|{\rm{G}}_j^{\ast}}$. 
This in turn implies that $v$ factors through $\pi$. Statements (4.1.11) and (4.1.12) imply then 
$$ p^{\scr -1}(ker\,u) + p^{\scr -1}(\overline{{\rm{V}}}) \subset ker\,v.$$
But, as $\overline{{\rm{V}}} + ker\,u = \overline{\G}$, we have $\G = p^{\scr -1}(ker\,u) + p^{\scr -1}(\overline{{\rm{V}}})$. As a result, $v = 0$. We get a contradiction and infer that $u$ is an isomorphism. This yields the lemma. \end{proof} 
\begin{lemma} \label{+++} Let $\G$ be a commutative group variety with universal homomorphism $p: \G \longrightarrow \overline{\G}$ and let $\HHH$ a proper algebraic subgroup of $\G$. Then the image $p(\HHH)$ is a proper algebraic subgroup of $\overline{\G}$.
 
\end{lemma}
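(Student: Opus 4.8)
The plan is to argue by contradiction. Suppose that $p(\HHH) = \overline{\G}$; I will construct a surjective homomorphism from $\G$ onto a plurisimple algebraic group which is not factored by $p$, contradicting the defining property of $p$ recalled in Lemma~\ref{!!}. (If $\dim\,\G = 0$ there is nothing to prove, so assume $\dim\,\G > 0$.) The first observation to record is the elementary consequence of the assumption: since $ker\,p = \G_{ps}$, every $g \in \G$ agrees modulo $\G_{ps}$ with some element of $\HHH$, hence
\[ \G = \HHH + \G_{ps}. \]

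Next I would produce the offending homomorphism. Because $\HHH$ is a proper closed subgroup of the (irreducible, hence connected) group variety $\G$, its identity component satisfies $\dim\,\HHH^o < \dim\,\G$, so the quotient $\G/\HHH$ is a commutative group variety of positive dimension. Exactly as in the proof of Corollary~\ref{---}, a commutative group variety of positive dimension admits a simple quotient of positive dimension: take a proper connected subgroup of maximal dimension and pass to the quotient. Precomposing such a quotient map $\G/\HHH \longrightarrow \U$ with the canonical projection $\G \longrightarrow \G/\HHH$ yields a surjective homomorphism $v \colon \G \longrightarrow \U$ onto a simple algebraic group $\U$ with $\HHH \subset ker\,v$.

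Finally I would invoke the universal property of $p$. Since $\U$ is simple it is plurisimple, so by Lemma~\ref{!!} the homomorphism $v$ factors as $v = u \circ p$ for some $u \colon \overline{\G} \longrightarrow \U$. Consequently $\G_{ps} = ker\,p \subset ker\,v$, and combined with $\HHH \subset ker\,v$ this gives
\[ \G = \HHH + \G_{ps} \subset ker\,v. \]
Thus $v$ is the zero homomorphism, which is absurd because $v$ is surjective onto a group of positive dimension. Therefore $p(\HHH) \subsetneq \overline{\G}$, as claimed.

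I do not expect a serious obstacle in this argument. The only points requiring a little care are that $\G/\HHH$ really is a group variety of positive dimension (immediate from $\HHH$ being a proper closed subgroup of the irreducible group variety $\G$, together with the existence of quotients of commutative algebraic groups) and that a ``universal homomorphism'' is unique only up to an isomorphism of its target, so that the factorization $v = u \circ p$ through $p$ — rather than through a fixed choice of the quotient map onto $\G/\G_{ps}$ — is legitimate; both are routine.
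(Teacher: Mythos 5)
Your argument is correct and is essentially the paper's own proof: both pass to a simple positive-dimensional quotient of $\G/\HHH$ and derive a contradiction with the universal property of $p$, your version merely spelling out the step $\G = \HHH + \G_{ps} \subset ker\,v$ that the paper leaves implicit. No issues.
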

\begin{proof} Let $u: \G \longrightarrow \G/{{\rm{H}}} \longrightarrow {{\rm{V}}}$ be a surjective homomorphism onto a simple quotient ${{\rm{V}}}$ of $\mathcal{N}/\G$ of positive dimension. Now, if $p(\HHH) = \overline{\G}$, then $u$ does not factor through the universal homomorphism $u$. Contradiction. The result follows.\end{proof}

\subsubsection{Addendum: An analogy to rings} Let $\rm{R}$ be a commutative ring with unit element. Then the intersection of all maximal ideals of $\rm{R}$ is the \textit{Jacobson
radical} $J(\rm{R})$. It turns out that for a commutative group variety $\G$ over a field $F$ the universal kernel $\G_{ps}$ has a similar meaning. We call an algebraic subgroup $\HHH$ of $\G$ \textit{maximal}, if $\HHH$ is a connected proper subgroup and if for every infinite algebraic subgroup $\Gamma \subset \G$ the algebraic group
generated by $\HHH$ and $\Gamma$ equals $\HHH$ or $\G$. This definition is in analogy to the one of a maximal ideal of $\rm{R}$. The set of all maximal subgroups of $\G$ is denoted by $\max\,\G$.
Finally we define  
$$\epsilon(\G) = \min_{k \geq 1} \left\lbrace \begin{array}{l}\mbox{There are}\,\,k\,\,\mbox{groups}\,\,\HHH_j \in \max\,\G, j = 1,..., k,\\
        \mbox{such that}\,\,\left(\G_{ps} + \bigcap_{j=1}^k\HHH_j\right)/\G_{ps}\,\,\mbox{is finite}                               
                                            \end{array}\right\rbrace. $$  

\begin{lemma} The group $\G_{ps}$ is contained in $\bigcap_{\HHH \in \max\,\G}\HHH$, the quotient $\bigcap_{\HHH \in \max\,\G}\HHH/\G_{ps}$ is finite and the identity $\delta(\G) = \epsilon(\G)$ holds.

\end{lemma}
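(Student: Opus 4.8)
The plan is to first pin down what the maximal subgroups are, and then dispatch the three assertions in order, transporting the last two to the plurisimple quotient $\overline{\G}=\G/\G_{ps}$.

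\textbf{Description of $\max\,\G$ and the first assertion.} I would begin by checking directly from the definition that $\HHH\in\max\,\G$ exactly when $\HHH$ is a connected proper algebraic subgroup with $\G/\HHH$ simple: a connected subgroup strictly between $\HHH$ and $\G$ obstructs maximality, and conversely a proper positive-dimensional connected subgroup of $\G/\HHH$ pulls back to such an intermediate subgroup. Granting this, for any $\HHH\in\max\,\G$ the quotient map $\G\to\G/\HHH$ is a homomorphism onto a simple, hence plurisimple, group, so by the universal property of $\G_{ps}$ in Lemma \ref{!!} it factors through $p\colon\G\to\overline{\G}$; therefore $\G_{ps}=ker\,p\subset\HHH$. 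This gives $\G_{ps}\subset\bigcap_{\HHH\in\max\,\G}\HHH$, which is the first claim, and also shows $\HHH=p^{-1}(p(\HHH))$ for every $\HHH\in\max\,\G$.

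\textbf{Finiteness of $\bigcap_{\HHH\in\max\,\G}\HHH/\G_{ps}$.} Next I would move to $\overline{\G}$, which is plurisimple by Corollary \ref{---}. Using that $p$ has connected fibers, that images of proper subgroups under a universal homomorphism are proper (Lemma \ref{+++}), and the equality $\HHH=p^{-1}(p(\HHH))$, one checks that $\HHH\mapsto p(\HHH)$ is a bijection from $\max\,\G$ onto $\max\,\overline{\G}$, whence $\bigcap_{\HHH\in\max\,\G}\HHH=p^{-1}\big(\bigcap_{\overline{\HHH}\in\max\,\overline{\G}}\overline{\HHH}\big)$. So it is enough to prove that the intersection $\V$ of all maximal subgroups of a plurisimple group is finite. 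Suppose $\dim\,\V>0$; then $\V^o$ is a positive-dimensional connected subgroup, which by Lemma \ref{oiip} admits a complement $\W$, and $\overline{\G}/\W$ is positive-dimensional and plurisimple. Choosing a surjection $\overline{\G}/\W\to\U$ onto a simple group and composing with $\overline{\G}\to\overline{\G}/\W$ yields $q\colon\overline{\G}\to\U$; its kernel's identity component $(ker\,q)^o$ lies in $\max\,\overline{\G}$ and contains $\W$, so $\V\subset(ker\,q)^o$ and $q$ vanishes on $\V^o$. But $\V^o\to\overline{\G}/\W$ is onto, its image being $(\V^o+\W)/\W=\overline{\G}/\W$, so $q|_{\V^o}$ surjects onto $\U\neq 0$, a contradiction. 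Hence $\dim\,\V=0$, and the second claim follows since $\bigcap_{\HHH\in\max\,\G}\HHH/\G_{ps}\cong\V$.

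\textbf{The identity $\delta(\G)=\epsilon(\G)$.} Since each $\HHH\in\max\,\G$ contains $\G_{ps}$, the condition defining $\epsilon(\G)$ simplifies to: $\bigcap_{j=1}^{k}\HHH_j/\G_{ps}$ is finite. For $\epsilon(\G)\le\delta(\G)$ I would take a maximal homomorphism $w\colon\G\to\prod_{j=1}^{k_0}\G_j$ with $k_0=\delta(\G)$ (Corollary \ref{---}) and set $\HHH_j=(ker\,(p_j\circ w))^o$, where $p_j$ is the $j$-th projection; each $\G/\HHH_j$ is isogenous to the simple group $\G_j$, hence simple, so $\HHH_j\in\max\,\G$, while $\bigcap_j\HHH_j\subset ker\,w$ and $ker\,w/\G_{ps}$ is finite because the Stein factorization of $w$ is universal. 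For the reverse inequality, given $\HHH_1,\dots,\HHH_k\in\max\,\G$ with $\bigcap_j\HHH_j/\G_{ps}$ finite, I would look at $u=\prod_j u_j\colon\G\to\prod_{j=1}^{k}(\G/\HHH_j)$: its kernel $\bigcap_j\HHH_j$ has dimension $\dim\,\G_{ps}$, so $u(\G)$ is a connected subgroup of the plurisimple product of dimension $\dim\,\G-\dim\,\G_{ps}=\dim\,\overline{\G}$, and $\G\to u(\G)$ factors through $p$ as a surjection $\overline{\G}\to u(\G)$ between group varieties of equal dimension, i.e.\ an isogeny. Additivity of the $\delta$-invariant (Lemma \ref{oiip}) then yields $\delta(\G)=\delta(\overline{\G})=\delta(u(\G))\le\delta\big(\prod_j\G/\HHH_j\big)=\sum_j\delta(\G/\HHH_j)=k$, so $\epsilon(\G)\ge\delta(\G)$, and the two inequalities give the asserted identity. (Throughout one may assume $\dim\,\G>0$, so that $\dim\,\overline{\G}>0$ and $\delta(\G)\ge 1$ by Corollary \ref{---}.)

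\textbf{Expected main obstacle.} The delicate point is the finiteness step: showing that the maximal subgroups of a plurisimple group meet in a finite group. The crucial input is the existence of complements from Lemma \ref{oiip}, which is precisely what lets one build, for each positive-dimensional connected subgroup, a maximal subgroup not containing it; everything else is dimension bookkeeping together with the universal property of $\G_{ps}$.
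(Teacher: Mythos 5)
Your proof is correct, and in places it takes a genuinely different route from the paper's. The first assertion is handled identically (universality of $\G_{ps}$ applied to the simple quotients $\G/\HHH$). For the finiteness of $\bigcap_{\HHH\in\max\,\G}\HHH\,/\,\G_{ps}$ the paper works directly in $\G$: it takes a maximal homomorphism $v\colon\G\to\prod_{j=1}^{k}\G_j$, notes that the identity components $\HHH_j$ of $\ker(p_j\circ v)$ are maximal with $\bigcap_j\HHH_j\subset\ker v$, and uses Corollary \ref{---} to see that $\ker v/\G_{ps}$ is finite; you instead transport the problem to $\overline{\G}$ via the bijection $\HHH\mapsto p(\HHH)$ and prove the stronger structural fact that in a plurisimple group the maximal subgroups intersect in a finite group, by using a complement $\W$ of $\V^o$ to manufacture a maximal subgroup on which $\V^o$ cannot lie. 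Your route yields a cleaner statement about plurisimple groups but leans on the existence of complements from Lemma \ref{oiip}, whereas the paper only needs Corollary \ref{---}; amusingly, the paper's finiteness argument reappears essentially verbatim inside your proof of $\epsilon(\G)\le\delta(\G)$. For the reverse inequality the paper argues by contradiction, forming $w=\prod_j w_j$ onto $\prod_j\G/\LL_j$ and asserting that $w$ is surjective and maximal with $k-1$ factors; that surjectivity is not justified there, and your version — treating the image $u(\G)$ as a plurisimple subgroup, factoring it through $p$ as an isogeny from $\overline{\G}$, and invoking additivity and monotonicity of the $\delta$-invariant — repairs exactly this gloss. Like the paper, you rely on the characterization of maximal subgroups as connected proper subgroups with simple quotient; under the paper's literal definition (arbitrary infinite $\Gamma$) this needs a finite-index caveat, but you and the paper stand on the same footing in that respect.
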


\begin{proof} We start by proving the first two assertions. The equality is clear if $\dim\G = 1$. So, we shall assume that $\dim\,\G \geq 2$.
If $\HHH \in \max\,\G$, then $\G/\HHH$ is simple. The universality of $\G_{ps}$ implies that $\G_{ps} \subset \HHH$. Hence,
$\G_{ps} \subset \bigcap_{\HHH \in \max\,\G}\HHH$. Next let $v: \G \longrightarrow \prod_{j= 1}^k \G_j$ be a maximal homomorphism and write $p_j: \prod_{j= 1}^k \G_j \longrightarrow \G_j$
for the projection. Let $\HHH_j$ be the connected unity component of $ker\,(p_j \circ v)$. Then $\HHH_j \in \max\,\G$ and $\bigcap_{j=1}^k \HHH_j = ker\,v$. It follows from Corollary \ref{---} that $\big(\bigcap_{j=1}^k \HHH_j\big)/\G_{ps}$ is finite. Hence, $\big(\bigcap_{\HHH \in \max\,\G}\HHH\big)/\G_{ps}$ is finite.\\
Next we verify the equality of numbers. It holds for sure if $k = 1$. So, assume that $k \geq 2$. Then the proof of the second assertion shows that $\delta(\G) \geq \epsilon(\G)$, because $\big(\bigcap_{j=1}^k \HHH_j\big)/\G_{ps}$ is finite. On the other hand, if
 ${\Gamma} = \big(\bigcap_{j=1}^{k-1} \LL_j\big)/\G_{ps}$ is finite for a suitable choice of maximal groups $\LL_1,..., \LL_{k-1} \in \max\,G$, then we set $\G_j = \G/\LL_j$ and let $w_j: \G \longrightarrow \G_j$ be the quotient map. We get a surjective homomorphism $w = \prod_j^{k-1} w_j$ onto the product $\prod_{j=1}^{k-1} \G_j$ of simple groups. 
As ${\Gamma}$ is finite, $w$ has the property that its factorization $u: \overline{\G} \longrightarrow \prod_{j=1}^{k-1} \G_j$ through the universal homomorphism $\pi: \G \longrightarrow \overline{\G}$ is a finite morphism. Hence, $w$ is maximal and Lemma \ref{oiip} implies that $k = \delta(\G) = k -1$. The contradiction shows that $\delta(\G) = \epsilon(\G)$. 
 \end{proof}

\section{The image of a subgroup $\mathbf{\HHH' \subset \mathcal{N}}$ under the universal homomorphism. Plurisimplicity of Weil restrictions}
We return to the theory of Weil restrictions and denote by $F/K$ a finite Galois extension of fields. Here we state two immediate applications of the previous section.
\begin{prop} \label{upp} Let $\rm{G}$ be a commutative group variety over $F$ with universal homomorphism $\pi: \rm{G} \longrightarrow \overline{\rm{G}}$. Let ${{\rm{H}}}'$ be a proper algebraic subgroup of $\mathcal{N}_{F/K}(\rm{G})$. Then $\pi_{\mathcal{N}}({{\rm{H}}}')$ is a proper algebraic subgroup of $\mathcal{N}_{F/K}(\overline{\rm{G}})$.
\end{prop}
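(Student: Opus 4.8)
The plan is to reduce the statement about Weil restrictions to the corresponding statement about a single group variety, namely Lemma \ref{+++}, which says that under a universal homomorphism the image of a proper algebraic subgroup is proper. The key observation is that after extending scalars to $F$, the Weil restriction becomes a product $\mathcal{N}_{F/K}(\rm{G}) \otimes_K F = \prod_{h \in Gal(F|K)} \rm{G}^{\scr h}$ by Theorem \ref{Wr1}, and a universal homomorphism for a product is obtained from universal homomorphisms of the factors by Lemma \ref{+}. Concretely, if $\pi: \rm{G} \longrightarrow \overline{\rm{G}}$ is universal, then each conjugate $\pi^{\scr h}: \rm{G}^{\scr h} \longrightarrow \overline{\rm{G}}^{\scr h}$ is universal (conjugation is an equivalence of categories over $K$, so it preserves the universal property), and therefore $\prod_h \pi^{\scr h}$ is a universal homomorphism of $\prod_h \rm{G}^{\scr h}$ by Lemma \ref{+}. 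By Corollary \ref{cl} this product homomorphism is defined over $K$ and coincides with $\pi_{\mathcal{N}} \otimes_K F$, where $\pi_{\mathcal{N}}: \mathcal{N}_{F/K}(\rm{G}) \longrightarrow \mathcal{N}_{F/K}(\overline{\rm{G}})$ is the induced homomorphism of Weil restrictions; in particular $\pi_{\mathcal{N}} \otimes_K F = \prod_h \pi^{\scr h}$ is a universal homomorphism of $\mathcal{N}_{F/K}(\rm{G}) \otimes_K F$.

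First I would check that being a universal homomorphism descends along the faithfully flat base change $\mathcal{N}_{F/K}(\rm{G}) \otimes_K F$; equivalently, that $\pi_{\mathcal{N}}$ is itself universal. This should follow because the kernel $\big(\mathcal{N}_{F/K}(\rm{G})\big)_{ps}$ is characterized by a universal mapping property among homomorphisms onto plurisimple groups over $K$, and such plurisimple quotients are detected after base change to $F$ (a group over $K$ is plurisimple iff its base change to $F$ is, since simplicity can only fail to be preserved by decomposing into conjugate simple factors, which are still simple — compare Lemma \ref{cor789}). Thus $\big(\mathcal{N}_{F/K}(\rm{G})\big)_{ps} \otimes_K F = \big(\mathcal{N}_{F/K}(\rm{G}) \otimes_K F\big)_{ps} = \prod_h (\rm{G}^{\scr h})_{ps}$, which is exactly $ker\big(\prod_h \pi^{\scr h}\big)$, so $\pi_{\mathcal{N}}$ is universal.

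Now suppose ${\rm{H}}' \subset \mathcal{N}_{F/K}(\rm{G})$ is a proper algebraic subgroup. Then ${\rm{H}}' \otimes_K F$ is a proper algebraic subgroup of $\mathcal{N}_{F/K}(\rm{G}) \otimes_K F$. Apply Lemma \ref{+++} to the universal homomorphism $\pi_{\mathcal{N}} \otimes_K F$: the image $(\pi_{\mathcal{N}} \otimes_K F)({\rm{H}}' \otimes_K F) = \pi_{\mathcal{N}}({\rm{H}}') \otimes_K F$ is a proper algebraic subgroup of $\mathcal{N}_{F/K}(\overline{\rm{G}}) \otimes_K F$. Since base change along $spec\,F \longrightarrow spec\,K$ is faithfully flat, properness of $\pi_{\mathcal{N}}({\rm{H}}') \otimes_K F$ in $\mathcal{N}_{F/K}(\overline{\rm{G}}) \otimes_K F$ implies properness of $\pi_{\mathcal{N}}({\rm{H}}')$ in $\mathcal{N}_{F/K}(\overline{\rm{G}})$ (if they were equal, equality would persist after base change). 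This gives the proposition.

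The main obstacle I anticipate is the bookkeeping in the second paragraph: verifying cleanly that the formation of the universal kernel $\rm{G}_{ps}$ commutes with the base change $-\otimes_K F$, and that conjugation $\rm{G} \mapsto \rm{G}^{\scr h}$ sends universal homomorphisms to universal homomorphisms. Both are ``soft'' facts — the first because plurisimplicity is insensitive to separable base field extension (a product of simple groups stays a product of simple groups, possibly with more factors, and conversely a $K$-group whose $F$-points decompose is handled via Lemma \ref{cor789}), and the second because conjugation is an equivalence of categories of varieties over $K$ — but writing them out carefully, rather than just invoking Lemma \ref{+} and Lemma \ref{+++} as black boxes, is where the real work lies.
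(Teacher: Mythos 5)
Your argument is correct and follows the paper's own route: conjugation preserves universality, Lemma \ref{+} makes $\pi_{\mathcal{N}} \otimes_K F = \prod_h \pi^{\scr h}$ universal, and Lemma \ref{+++} applied to ${\rm{H}}' \otimes_K F$ gives properness of the image, which descends to $K$. Note that your second paragraph (showing $\pi_{\mathcal{N}}$ is itself universal over $K$) is not needed, since your final step — like the paper's — works entirely after base change to $F$.
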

\begin{proof} Let $h \in Gal(F|K)$. Then the properties of conjugate morphisms from Sect.\,3.1\,\,imply that $\pi^{\scriptscriptstyle h}$ is universal. By Lemma \ref{+} the morphism $\pi_{\mathcal{N}} \otimes_K F = \prod_h \pi^{\scr h}$ is universal. 
The proposition follows then from Lemma \ref{+++} applied to $\HHH = \HHH' \otimes_K F$.
\end{proof}

In the next proposition we assume that $[F: K] = 2$ and let $h$ be the generator of the Galois group $Gal(F|K)$. In the applications $F$ will be a subfield of $\C$ which is closed with respect to complex conjugation and $K$ will be the intersection $F \cap \R$.
\begin{prop} \label{upp1} Let $\rm{G}$ be a commutative group variety over $F$ and assume that $\G$ is plurisimple. Then $\mathcal{N}_{F/K}(\G)$ is plurisimple. 
 
\end{prop}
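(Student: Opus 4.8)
The plan is to reduce to the case where $\G$ is simple and then to classify the $Gal(F|K)$-stable connected subgroups of $\mathcal{N}_{F/K}(\G)\otimes_K F=\G\times\G^{\scr h}$. For the reduction: since $\G$ is plurisimple it is isogenous over $F$ to a product $\prod_{j=1}^{n}\G_j$ of simple groups, and $\mathcal{N}_{F/K}$ preserves isogenies — surjectivity and finiteness of the kernel of $v_{\mathcal{N}}$ for an isogeny $v$ may be checked after the faithfully flat base change $-\otimes_K F$, where $v_{\mathcal{N}}\otimes_K F=\prod_{h\in Gal(F|K)}v^{\scr h}$ is again an isogeny. Hence $\mathcal{N}_{F/K}(\G)$ is isogenous over $K$ to $\mathcal{N}_{F/K}(\prod_j\G_j)\cong\prod_j\mathcal{N}_{F/K}(\G_j)$ by Lemma \ref{WR}. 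As plurisimplicity is invariant under isogeny and stable under finite products, it will suffice to treat each $\mathcal{N}_{F/K}(\G_j)$, so I may assume $\G$ is simple.

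Now write $\mathcal{N}=\mathcal{N}_{F/K}(\G)$; then $\mathcal{N}\otimes_K F=\G\times\G^{\scr h}$ by Theorem \ref{Wr1}, and $\G^{\scr h}$ is simple over $F$ because $\rho$ is the identity on underlying schemes. If $\mathcal{N}$ is simple over $K$ we are done. Otherwise choose a proper connected algebraic $K$-subgroup $\HHH'\subsetneq\mathcal{N}$ of positive dimension, so that $\HHH:=\HHH'\otimes_K F$ is a proper, positive-dimensional, connected $Gal(F|K)$-stable subgroup of $\G\times\G^{\scr h}$. Since $\G$ and $\G^{\scr h}$ are simple, each projection of $\HHH$ is trivial or surjective; the possibility that one projection is trivial while the other is surjective is excluded, since then $\HHH$ would be $\G\times\{e\}$ or $\{e\}\times\G^{\scr h}$, which are not Galois-stable as the Galois action interchanges the two factors up to conjugation (Theorem \ref{Wr1}). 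Thus $p_1|_{\HHH}\colon\HHH\to\G$ is surjective, and if its kernel were positive-dimensional it would contain $\{e\}\times\G^{\scr h}$, forcing $\HHH=\G\times\G^{\scr h}$; so $p_1|_{\HHH}$ is an isogeny. Consequently $\HHH$, hence $\HHH'$, is isogenous to the simple group $\G$, so $\HHH'$ is simple over $K$; likewise $\{e\}\times\G^{\scr h}\hookrightarrow\G\times\G^{\scr h}\twoheadrightarrow(\G\times\G^{\scr h})/\HHH$ is an isogeny, so $\mathcal{N}/\HHH'$ is simple over $K$.

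It remains to see that $0\to\HHH'\to\mathcal{N}\to\mathcal{N}/\HHH'\to 0$ splits up to isogeny. Over $F$ it does, since $\{e\}\times\G^{\scr h}$ is a complement to $\HHH$ in $\G\times\G^{\scr h}$. In the isogeny category of commutative group varieties over $F$ the set of such splittings is a torsor under the $\Q$-vector space $\mathrm{Hom}\big((\mathcal{N}/\HHH')\otimes_K F,\HHH\big)\otimes\Q$, on which $Gal(F|K)\cong\Z/2\Z$ acts compatibly; since this group is uniquely $2$-divisible (recall $\mathrm{char}\,F=0$), the torsor has a Galois-fixed point: if $\sigma(s_0)=s_0+v$, then $v+\sigma(v)=0$ because $\sigma^2=1$, and $s_0+\tfrac12 v$ is fixed. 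The resulting Galois-stable sub-object descends to a $K$-subgroup $\HHH''\subset\mathcal{N}$ with $\mathcal{N}$ isogenous over $K$ to $\HHH'\times\HHH''$; as $\HHH''\otimes_K F$ is isogenous to the simple group $\G^{\scr h}$, the group $\HHH''$ is simple over $K$. Hence $\mathcal{N}$ is isogenous to a product of two simple $K$-groups, i.e.\ plurisimple. (For $\G=\mathbb{G}_{a,F}$ or a one-dimensional torus this last step can be bypassed: $\mathcal{N}_{F/K}(\mathbb{G}_{a,F})\cong\mathbb{G}_{a,K}^{2}$ and $\mathcal{N}_{F/K}$ of a one-dimensional torus is a two-dimensional $K$-torus, both manifestly plurisimple.)

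The main obstacle is this final step — descending the $F$-splitting of the extension to $K$ — which forces the passage to the isogeny category and the use of $H^1(\Z/2\Z,-)=0$ on $\Q$-vector spaces; by contrast, the classification of Galois-stable connected subgroups of a product of two simple groups used in the simple case is routine.
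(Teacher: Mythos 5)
Your argument is correct, and its skeleton matches the paper's proof of Proposition \ref{upp1}: reduce to simple $\G$ via Lemma \ref{WR}, observe that a proper positive-dimensional connected $K$-subgroup of $\mathcal{N}$ base-changes to a Galois-stable subgroup $\HHH \subset \G\times\G^{\scr h}$ which admits a (non-Galois-stable) coordinate factor as a complement over $F$, and then descend a complement to $K$. Where you genuinely diverge is in that last descent step. The paper never leaves the category of honest homomorphisms: it chooses $u:{\rm{U}}\to\G\times\G^{\scr h}$ with image the coordinate factor and $p\circ u=[k]_{\U}$, feeds $v=p_{\G}\circ u$ into the universal property of the Weil restriction to obtain a $K$-morphism $\mathcal{N}(v):{\rm{U}}'\to\mathcal{N}$ which over $F$ equals $u+u^{\scr h}$ and satisfies $p'\circ\big(\mathcal{N}(v)\otimes_K F\big)=[2k]_{\U}$, so its image is the desired complement of $\LL'$. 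You instead pass to the $\Q$-linear isogeny category and invoke the vanishing of $H^{1}$ of $Gal(F|K)$ with values in the uniquely divisible group of differences of splittings. These are really the same trace construction: your fixed splitting $\tfrac12\big(s_0+\sigma(s_0)\big)$ is the paper's $u+u^{\scr h}$ normalized by $\tfrac12$, a normalization the paper avoids by accepting $[2k]$ in place of $[k]$. What each buys: your route is shorter and bypasses the $\delta$-invariant bookkeeping of Lemma \ref{oiip} (you show directly, through the projection analysis, that the chosen subgroup and its quotient are simple), at the price of the isogeny-category formalism and Galois descent for elements of ${\rm{Hom}}\otimes\Q$ (where one must clear denominators at the end to recover an actual $K$-homomorphism whose image is the complement); the paper's version is entirely explicit and elementary, and its use of $\mathcal{N}(v)$ is in line with how the adjunction is exploited elsewhere in Chapter 4.
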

\begin{proof} In a first step one reduces to the case when $\G$ is a product of simple groups $\prod_{j=1}^k \G_j$. Since by Lemma \ref{WR} $\mathcal{N} = \mathcal{N}_{F/K}(\G)$ is canonically isomorphic to
 $\prod_{j=1}^k \mathcal{N}_{F/K}(\G_j)$, the assertion is then reduced further to the case when $\G$ is simple. We will assume this from now on.\\
\\
If $\mathcal{N}$ contains no proper simple algebraic subgroup of positive dimension, then $\mathcal{N}$ is simple and hence plurisimple. If $\mathcal{N}$ is not simple, then there is a proper simple algebraic subgroup ${{\rm{L}}}' \subset \mathcal{N}$ of positive dimension. We set $\LL = \LL' \otimes_K F$, ${\rm{U}}' = \mathcal{N}/\LL'$ and ${{\rm{U}}} = {{\rm{U}}}' \otimes_K F$. Moreover,
we let $p': \mathcal{N} \longrightarrow {\rm{U}}'$ be the quotient map and write $p$ for $p'\otimes_K F$. Since $\delta(\mathcal{N} \otimes_K F) = 2$, it follows that $\delta(\LL) = \delta({\rm{U}}) = 1$. In other words, $\LL$ and ${\rm{U}}$ are simple.
Recall that $\mathcal{N} \otimes_K F = \G \times \G^{\scr h}$ and set ${{{\rm{V}}}} = \G \times \{e_{\G^{\scr h}}\}$. Then $\LL \neq {{{\rm{V}}}}$ for $\LL = \LL^{\scr h}$, whereas ${{{\rm{V}}}}^{\scr h} = \{e_{\G}\} \times \G^{\scr h} \neq {{\rm{V}}}$. Hence, ${{{\rm{V}}}}$ is a simple complement
of $\LL$ in $\mathcal{N} \otimes_K F$. This in turn implies that the restriction of the quotient morphism $p$ to ${{\rm{V}}}$ is an isogeny. Let $u: {\rm{U}} \longrightarrow \mathcal{N} \otimes_K F$ be
a homomorphism with image $im\,u = {{\rm{V}}}$ and such that $p \circ u$
is multiplication with an integer $k$. Recall that ${{\rm{V}}}$ is contained in $\mathcal{N} \otimes_F K$ and write $v$ for $p_{\G} \circ u$.
The universal property of Weil restrictions implies the existence of a morphism ${\mathcal{N}}(v): {\rm{U}}' \longrightarrow \mathcal{N}$ such that
$v = p_G \circ ({\mathcal{N}}(v) \otimes_K F)$. We know from the proof of Theorem \ref{Wr1} that $v_{\mathcal{N}} \otimes_K F$ coincides with the morphism
\begin{equation}(v, v^{\scr h}): \V = \V^{\scr h} \longrightarrow \G \times \G^{\scr h},
\end{equation}
and it follows from (3.1.1) and Theorem \ref{Wr1} that $(v, v^{\scr h})= u + u^{\scr h}$. Since $p = p^{\scr h}$ and $[k]_{{\rm{U}}} = \big([k]_{{\rm{U}}}\big)^{\scr h}$, we find
\begin{center}
\begin{tabular}{l}
$(p' \circ v_{\mathcal{N}})\otimes_K F = p_{\ast}(v, v^{\scr h}) =  p_{\ast}(u) + p_{\ast}(u^{\scr h}) =  p_{\ast}(u) + \big(p^{\scr h}\big)_{\ast}(u^{\scr h}) = $\\
$p_{\ast}(u) + \big(p_{\ast}(u)\big)^{\scr h} = [k]_{{\rm{U}}} + \big([k]_{{\rm{U}}}\big)^{\scr h} = [2k]_{{\rm{U}}}.$
\end{tabular}
\end{center}
Therefore, the subgroup $v_{\mathcal{N}}({\rm{U'}}) \subset \mathcal{N}$ is not equal to $\LL' = ker\,p'$. A simplicity argument shows that $v_{\mathcal{N}}({\rm{U'}})$ is a complement of $\LL'$ in $\mathcal{N}$. As $\delta(\mathcal{N}) \leq \delta(\mathcal{N} \otimes_K F) = 2$, this in turn implies that $\mathcal{N}$ is isogenous to $v_{\mathcal{N}}({\rm{U'}}) \times \LL'$, and hence is plurisimple.
\end{proof}
\section{Subgroups of Weil restrictions of plurisimple groups. From subobjects to quotients} We let $F/K$ be a Galois extension and fix a group variety $\G$ over $F$.
In this subsection we study the question whether there exists a surjective homomorphism $v: \G \longrightarrow \G' \otimes_K F$ onto a group variety of positive dimension with model $\G'$ over $K$. We shall solve this problem using the Weil restriction of $\G$ for the case when $[F: K] = 2$. At first glance this seems surprising, because the Weil restriction deals with subobjects: it ``controls'' morphisms
from a variety definable over $K$ with target $\G$. In contrast to this, we ask
whether there are morphisms from $\G$ to a variety definable over $K$. The answer throws a new light on Weil restrictions and is also the final ingredient for the proof of Theorem \ref{weakdescent} in the next section. Although the theorem
involves real-analytic Lie groups, the proposition and its proof are purely algebraic. 
\begin{prop} \label{1234} Suppose that $[F: K] = 2$ and let $\G$ be a group variety over $F$. Let ${\rm{H}}'$ be a proper algebraic subgroup of $\mathcal{N}_{K/F}(\rm{G})$ and assume that $p_{\rm{G}}({{\rm{H}}}) = {{\rm{G}}}$ for ${{\rm{H}}} = {{\rm{H}}}' \otimes_K F$. Then there exist a group variety $\G'$ of positive dimension over $K$ and a surjective homomorphism $v: \G \longrightarrow \G' \otimes_K F$ such that $v_{\ast}\big(p_{\rm{G}|{\rm{H}}}\big) = v \circ p_{\rm{G}|{\rm{H}}}$ is defined over $K$.\footnote{The notion of "being defined over $K$" was explained in Lemma \ref{cl1}.}
\end{prop}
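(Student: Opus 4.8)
\emph{Strategy.} The plan is to manufacture a subgroup ${\rm{N}}_1$ of ${\rm{H}}$ which is stable under the canonical Galois action, contains $\ker p_{{\rm{G}}|{\rm{H}}}$, and is \emph{proper} in ${\rm{H}}$; then ${\rm{H}}/{\rm{N}}_1$ descends to $K$ by Theorem \ref{descent}, and the factorisation of the quotient map ${\rm{H}} \to {\rm{H}}/{\rm{N}}_1$ through $p_{{\rm{G}}|{\rm{H}}}$ supplies the homomorphism $v$.

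\emph{Set-up and a dimension bound.} Let $h$ be the generator of $Gal(F|K)$ and $\mathcal{N} = \mathcal{N}_{F/K}({\rm{G}})$. Using Theorem \ref{Wr1} I identify $\mathcal{N} \otimes_K F$ with ${\rm{G}} \times {\rm{G}}^{\scr h}$, $p_{{\rm{G}}}$ being the first projection, and equip ${\rm{H}} = {\rm{H}}' \otimes_K F$ with its canonical Galois action $\chi$ (Subsect.\,3.2.3); thus ${\rm{H}}$ is $\chi$-stable, and by Theorem \ref{Wr1}(3) the automorphism $\chi_h$ of ${\rm{G}} \times {\rm{G}}^{\scr h}$ is the ``swap--conjugate'' map $(\xi,\eta) \mapsto \big(\rho_{id,h}(\eta), \rho_{h,id}(\xi)\big)$. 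Since $\mathcal{N}$ is a group variety, hence irreducible, a proper closed subgroup of $\mathcal{N}$ has strictly smaller dimension; replacing ${\rm{H}}'$ by its identity component — still proper, and still satisfying $p_{{\rm{G}}}\big(({\rm{H}}')^o \otimes_K F\big) = {\rm{G}}$ because ${\rm{G}}$ is connected — I may assume ${\rm{H}}$ connected and $\dim {\rm{H}} < \dim \mathcal{N} = 2 \dim {\rm{G}}$. Setting ${\rm{N}} = \ker p_{{\rm{G}}|{\rm{H}}}$, surjectivity of $p_{{\rm{G}}|{\rm{H}}}$ gives $\dim {\rm{H}} = \dim {\rm{G}} + \dim {\rm{N}}$, hence $\dim {\rm{N}} < \dim {\rm{G}}$.

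\emph{The construction.} Since ${\rm{N}}$ lies in $\{e\} \times {\rm{G}}^{\scr h}$, I write ${\rm{N}} = \{e\} \times {\rm{N}}_0$ with ${\rm{N}}_0 \subset {\rm{G}}^{\scr h}$ a closed subgroup; then $\chi_h({\rm{N}}) = {\rm{M}} \times \{e\}$ with ${\rm{M}} = \rho_{id,h}({\rm{N}}_0) \subset {\rm{G}}$, so that ${\rm{N}}_1 := {\rm{N}} + \chi_h({\rm{N}}) = {\rm{M}} \times {\rm{N}}_0$ is a closed subgroup of ${\rm{H}}$ containing ${\rm{N}}$. As $\chi_h$ has order two, ${\rm{N}}_1$ is $\chi$-stable, i.e.\,of the form ${\rm{N}}_1' \otimes_K F$. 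The decisive computation is $\dim {\rm{N}}_1 = \dim {\rm{M}} + \dim {\rm{N}}_0 = 2 \dim {\rm{N}} < \dim {\rm{G}} + \dim {\rm{N}} = \dim {\rm{H}}$, which uses the bound $\dim {\rm{N}} < \dim {\rm{G}}$; hence ${\rm{H}}/{\rm{N}}_1$ is a connected group variety of positive dimension. This is exactly where the hypothesis that ${\rm{H}}'$ be proper enters, and it is the main obstacle — everything after it is formal.

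\emph{Descent and conclusion.} The induced Galois action makes ${\rm{H}}/{\rm{N}}_1$ a descent datum for $F/K$ (the group law descends too, as $\chi$ respects it), so Theorem \ref{descent} yields a group variety ${\rm{G}}'$ of positive dimension over $K$ together with an isomorphism ${\rm{H}}/{\rm{N}}_1 \cong {\rm{G}}' \otimes_K F$ commuting with the Galois actions; let $\tilde q \colon {\rm{H}} \to {\rm{G}}' \otimes_K F$ be the composite of this isomorphism with the quotient map. Then $\tilde q$ is $Gal(F|K)$-equivariant between two varieties base-changed from $K$, so $\tilde q$ is defined over $K$ by Lemma \ref{cl1}. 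Because $\tilde q$ kills ${\rm{N}}_1 \supseteq {\rm{N}} = \ker p_{{\rm{G}}|{\rm{H}}}$, it factors uniquely as $\tilde q = v \circ p_{{\rm{G}}|{\rm{H}}}$ with $v \colon {\rm{G}} \to {\rm{G}}' \otimes_K F$ a surjective homomorphism; then $\dim {\rm{G}}' > 0$ and $v_{\ast}\big(p_{{\rm{G}}|{\rm{H}}}\big) = v \circ p_{{\rm{G}}|{\rm{H}}} = \tilde q$ is defined over $K$, which is the assertion.
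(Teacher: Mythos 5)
Your construction is sound and it takes a genuinely different, much more economical route than the paper. The paper first reduces, via Proposition \ref{upp}, to the maximal plurisimple quotient, then to a product of simple groups, and then argues by a case analysis on the parity of the $\delta$-invariant of ${\rm{H}}$ and on the projections $\mu_j$, spread over Lemma \ref{later1}, Lemma \ref{later3} and Lemma \ref{later4}, where $v$ is assembled from explicit graph decompositions. You use no structure theory at all: the single inequality $\dim\big({\rm{N}} + \chi_h({\rm{N}})\big) = 2\dim\,{\rm{N}} < \dim\,\G + \dim\,{\rm{N}} = \dim\,{\rm{H}}$ (which is precisely where $[F:K]=2$ and the properness of ${\rm{H}}'$ enter, since $\dim\,{\rm{H}} < 2\dim\,\G$) produces a Galois-stable subgroup ${\rm{N}}_1 \supseteq \ker p_{\G|{\rm{H}}}$ whose quotient descends to $K$, and the factorization of the quotient map through $p_{\G|{\rm{H}}}$ finishes the proof. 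What the paper's longer route buys is the extra information recorded in Corollary \ref{1234B} and Corollary \ref{1234C} (when the image $\G' \otimes_K F$ can be taken simple over $F$, and the link to conjugate simple factors), which is what feeds Corollary \ref{321} and Corollary \ref{weakdescentodd}; your argument proves the proposition itself but does not recover those refinements.

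One repair is needed: the reduction ``WLOG ${\rm{H}}$ connected'' does not literally preserve the conclusion, because the conclusion refers to $p_{\G|{\rm{H}}}$ for the \emph{original} ${\rm{H}}$. A homomorphism that agrees with its conjugate on ${\rm{H}}^o$ need not agree with it on the other components: the difference is a homomorphism killing ${\rm{H}}^o$, hence has finite image, but is not obviously zero. The cleanest fix is to drop the reduction altogether, since your construction works verbatim for disconnected ${\rm{H}}$: the bound $\dim\,{\rm{H}}' < 2\dim\,\G$ holds for any proper closed subgroup of the irreducible $\mathcal{N}_{F/K}(\G)$, the identity $\dim\,{\rm{H}} = \dim\,\G + \dim\,{\rm{N}}$ needs no connectedness, and ${\rm{H}}/{\rm{N}}_1$ is automatically irreducible because ${\rm{H}}/{\rm{N}} \cong \G$ is connected, which forces ${\rm{H}} = {\rm{H}}^o \cdot {\rm{N}} \subset {\rm{H}}^o\cdot {\rm{N}}_1$. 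It is also slightly cleaner to bypass Theorem \ref{descent}: since ${\rm{N}}_1$ is stable under the canonical action it is of the form ${\rm{N}}_1' \otimes_K F$ with ${\rm{N}}_1' \subset {\rm{H}}'$, so one may simply take $\G' = {\rm{H}}'/{\rm{N}}_1'$ and $\tilde q = q' \otimes_K F$ for the quotient map $q'$ over $K$, after which Lemma \ref{cl1} is not even needed. With these adjustments the proof is complete.
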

It is easy to see that a homomorphism $v: \G \longrightarrow \G' \otimes_K F$ leads to a proper algebraic subgroup $\HHH'$ of the Weil restriction $\mathcal{N} = \mathcal{N}_{F/K}(\G)$. Namely, letting $\Delta'$ be the proper algebraic subgroup $\mathcal{N}(id.)\big(\G'\big)$ of the Weil restriction of $\G' \otimes_K F$, one can set $\HHH' = v_{\mathcal{N}}^{\scr -1}(\Delta')$.\footnote{The symbol ``$\mathcal{N}(v)$'' has been defined in Sect.\,3.4.} The proposition thus states the converse of this observation. It asserts that the existence of a proper algebraic subgroup $\HHH'$ is indeed sufficient for the existence of a homomorphism $v$. 

\subsection{Three auxiliary lemmas}
The proof of the proposition is unfortunately lengthy and divided into three auxiliary lemmas (Lemma \ref{later1}, Lemma \ref{later3} and Lemma \ref{later4}).
\begin{lemma} \label{later1} Assume that $\G$ is a product $\prod_{j= 1}^k \G_j$ of $k \geq 2$ simple algebraic groups $\G_j$ of respectively positive dimension.
Let ${\rm{H}}'$ be a proper algebraic subgroup of $\mathcal{N}_{K/F}(\G) = \prod_{j=1}^k \mathcal{N}_{F/K}(\G_j)$ and suppose that $p_{\rm{G}}({{\rm{H}}}) = {{\rm{G}}}$ for ${{\rm{H}}} = {{\rm{H}}}' \otimes_K F$. We assume that $\delta({\rm{H}})$ is even and that ${\rm{H}}'$ projects surjectively onto $\mathcal{N}_2 = \prod_{2 = 1}^k \mathcal{N}_{F/K}({{{\rm{G}}}}_j)$.
\begin{enumerate}
 \item There exist a group variety $\G'$ of positive dimension over $K$ and a surjective homomorphism $v: \G \longrightarrow \G' \otimes_K F$ such that $v_{\ast}\big(p_{\rm{G}|{\rm{H}}}\big)$ is defined over $K$.
\item There are distinct $i, j = 1,...., k$ with the property that
${\rm{G}}_i$ is isogenous to the conjugate ${\rm{G}}_j^{\scriptscriptstyle h}$. 
 \end{enumerate}
\end{lemma}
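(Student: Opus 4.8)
My plan is to analyze the situation group-theoretically, exploiting the product structure $\G=\prod_{j=1}^k\G_j$ of simple groups and the hypothesis that $\delta(\HHH)$ is even. First I would set $\mathcal{N}=\mathcal{N}_{F/K}(\G)=\prod_j\mathcal{N}_{F/K}(\G_j)$ and, after base change, write $\mathcal{N}\otimes_KF=\prod_j(\G_j\times\G_j^{\scr h})$, so that $\HHH=\HHH'\otimes_KF$ is a connected algebraic subgroup projecting onto $\prod_{j\ge 2}\mathcal{N}_{F/K}(\G_j)\otimes_KF=\prod_{j\ge2}(\G_j\times\G_j^{\scr h})$ and with $p_{\G}(\HHH)=\G$, i.e.\ $\HHH$ surjects onto $\prod_j(\G_j\times\{e\})$. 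The combinatorics of a connected subgroup of a product of simple groups is controlled by Goursat-type arguments: up to isogeny $\HHH$ is a ``graph'' built from blocks of mutually isogenous simple factors among the $2k$ factors $\G_j,\G_j^{\scr h}$. The key quantitative input is that $\delta(\mathcal{N}\otimes_KF)=2k$ while $\delta(\HHH)$ is even and $\HHH$ is proper, so the number of isogeny-blocks strictly decreases; since $\HHH$ still surjects onto the "first copy" $\prod_j\G_j$ under $p_{\G}$, some block must pair a factor $\G_i$ (or $\G_i^{\scr h}$) with a factor $\G_j^{\scr h}$ for $i\ne j$ — this will give Statement 2, namely $\G_i$ isogenous to $\G_j^{\scr h}$.

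For Statement 1, once I know such a pairing exists, I would produce the homomorphism $v$ explicitly. The proper subgroup $\HHH'\subset\mathcal{N}$ determines, via the universal homomorphism and Proposition 4.2.1, a proper subgroup $\pi_{\mathcal{N}}(\HHH')\subset\mathcal{N}_{F/K}(\overline\G)$; but more directly, the pairing of $\G_i$ with $\G_j^{\scr h}$ inside $\HHH$ means there is an isogeny between the image of $\HHH$ in $\G_i\times\G_j^{\scr h}$ and a diagonal-type subgroup. Composing $p_{\G|\HHH}$ with the projection $\G\to\G_i$ and comparing with the composite through the $\G_j^{\scr h}$-coordinate of $\HHH$, one obtains a surjective homomorphism $v_0\colon\G\to\G_i$ whose "twisted" version agrees with a conjugate, which is exactly the condition (via Lemma 3.2.2 / Lemma 3.2.4 and the characterization of being defined over $K$ in Lemma 3.1.\ on $v=v^{\scr h}$) that $v_0$ descends: there is a model $\G_i'$ of $\G_i$ over $K$ and a surjection $v\colon\G\to\G_i'\otimes_KF$ with $v\circ p_{\G|\HHH}$ defined over $K$. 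I would verify the last clause by chasing the Galois-equivariance: the Galois action on $\mathcal{N}\otimes_KF$ from Theorem 3.5.3(3) swaps the two coordinates of each block, and on $\HHH$ this swap is realized by the pairing, forcing $v\circ p_{\G|\HHH}$ to be Galois-invariant, hence defined over $K$ by Lemma 3.2.3.

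The main obstacle I anticipate is the bookkeeping in the Goursat/block analysis: one must carefully track which of the $2k$ simple factors are isogenous to which, how the two standing hypotheses ($\HHH$ surjects onto the factors indexed by $j\ge2$, and $p_{\G}(\HHH)=\G$) constrain the block structure, and why $\delta(\HHH)$ being \emph{even} — rather than merely $<2k$ — is exactly what rules out the "trivial" descent where $v$ would factor through a single $\G_j$ that is already defined over $K$ (which would not yield a cross-pairing $i\ne j$). I would handle this by inducting on $k$: if $\HHH$ already fails to surject onto $\mathcal{N}_{F/K}(\G_1)\otimes_KF$ one peels off that factor; otherwise $\HHH$ surjects onto every $\G_j\times\G_j^{\scr h}$, and then properness of $\HHH$ together with $\delta(\HHH)$ even forces a nontrivial identification between a first-copy factor and a conjugate second-copy factor across two distinct indices. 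The only genuinely delicate point beyond bookkeeping is producing $v$ together with the clause "$v_{\ast}(p_{\G|\HHH})$ defined over $K$" rather than just "$v$ exists"; this I would get by building $v$ directly from the graph data of $\HHH'$ (a morphism already living over $K$) rather than from $\HHH$, so that Galois-invariance is automatic.
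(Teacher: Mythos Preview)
Your high-level strategy --- exploit the graph/Goursat structure of $\HHH\subset\prod_j(\G_j\times\G_j^{\scr h})$ and use the parity of $\delta(\HHH)$ to force a cross-pairing --- is in the same spirit as the paper's proof, but there is a missing key step and a genuine error in your construction of $v$.

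The step you do not isolate is the precise role of ``$\delta(\HHH)$ even''. The paper uses it as follows: since $\HHH'$ is proper and $\mu_2$ is surjective, the kernel $\U_2'=\ker\mu_2$ is a proper subgroup of $\mathcal{N}_1\times\{e\}$. If it had positive dimension then $\delta(\U_2)=1$ (as $\mathcal{N}_1\otimes_KF=\G_1\times\G_1^{\scr h}$ has $\delta=2$), so by additivity $\delta(\HHH)=1+\delta(\mathcal{N}_2\otimes_KF)=1+2(k-1)$ would be odd. Hence $\U_2'$ is finite, and after an isogeny $\HHH'$ becomes the \emph{graph} of a homomorphism $w'\colon\mathcal{N}_2\to\mathcal{N}_1$. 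Your block analysis would eventually recover this, but you treat it as bookkeeping rather than as the heart of the argument.

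The error is your target for $v$. You assert that the pairing of $\G_i$ with $\G_j^{\scr h}$ yields a model $\G_i'$ of $\G_i$ over $K$, with $v$ landing in $\G_i'\otimes_KF$. But $\G_i$ isogenous to $\G_j^{\scr h}$ for $i\ne j$ does \emph{not} give $\G_i$ a $K$-model; that would require $\G_i\sim\G_i^{\scr h}$. The paper instead takes $\G'$ to be (a quotient of) $\mathcal{N}_{F/K}(\G_1)$, so $\G'\otimes_KF$ is $\G_1\times\G_1^{\scr h}$, not a single simple factor. Writing the graph $w$ over $F$ as $\xi_1=\sum_{j\ge2}w_j(\xi_j)+\sum_{j\ge2}v_j^{\scr h}(\xi_j^{\scr h})$ with $w_j\colon\G_j\to\G_1$ and $v_j\colon\G_j\to\G_1^{\scr h}$, one sets
\[
v(\xi_1,\dots,\xi_k)=\Bigl(\xi_1-\textstyle\sum_{j\ge2}w_j(\xi_j),\ \sum_{j\ge2}v_j(\xi_j)\Bigr)\in\G_1\times\G_1^{\scr h}.
\]
On closed points of $\HHH'$ the graph relation makes the first coordinate the conjugate of the second, so $v\circ p_{\G|\HHH}$ lands in the diagonal $\{(\eta,\eta^{\scr h})\}$ and is Galois-invariant, hence defined over $K$ by Lemma~\ref{cl1}. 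Statement 2 falls out immediately: if all $v_j=0$ the first coordinate of $v$ would vanish on $p_\G(\HHH)=\G$, contradicting surjectivity of the map $u$ to $\G_1$; so some $v_j\colon\G_j\to\G_1^{\scr h}$ is nonzero and hence an isogeny. No induction on $k$ is needed here; the inductive arguments appear only in the later reductions (Lemma~\ref{later4} and the proof of Proposition~\ref{1234}).
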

Recall that the identification $\mathcal{N}_{K/F}(\G) = \prod_{j=1}^k \mathcal{N}_{F/K}(\G_j)$ is justified by Lemma \ref{WR}. Since this and the next two lemmas are situated in the same setting, we shall fix some symbols before going into the proof. As long as not otherwise stated, they will be used throughout this subsection.
\begin{center}
\textbf{Table of symbols used during the proof}
\begin{tabular}{lp{11cm}}
&\\
$h$ & the generator of $Gal(F|K)$\\ 
$\G$ & a product $\prod_{j= 1}^k \G_j$ of simple algebraic groups\\
$\mathcal{N}$ &$ = \mathcal{N}_{F/K}(\G) = \prod_{j= 1}^k \mathcal{N}_{F/K}(\G_j)$\\
$\mathcal{N}_1$ & $ = \mathcal{N}_{F/K}(\G_1)$\\
$\mathcal{N}_2$ &$ = \prod_{j=2}^k \mathcal{N}_{F/K}(\G_j).$\\
$\mu_j$ & for $j = 1,2$ the projection ${{\rm{H}}}' \longrightarrow \mathcal{N}_j$\\
$\U_j'$ & $= ker\,\mu_j$\\
$\U_j$ & $= \U_j' \otimes_K F$
\end{tabular}
\end{center}
The symbols $\mathcal{N}_2$, $\mu_j, \U_j'$ and $\U_j$ are not well-defined unless $k \geq 2$. By way of contrast, below we will also treat the case when $k = 1$. In this situation we shall denote by $\mathcal{N}_2$ the finite group variety $e' = \mbox{spec}\,K$ and let, for $j = 1,2$, $\mu_j$ be the unique homomorphism from $\HHH'$ to $e'$ over $K$. Then $\U_j' $ and $\U_j $ are defined as in the table.
\begin{proof}[Proof of Lemma \ref{later1}] We begin by verifying Statement 1.\,\,The proof is divided into several claims. 
\begin{claim} The kernel $\U_2'$ has dimension zero. 

\end{claim}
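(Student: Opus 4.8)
The plan is to base-change everything to $F$ and then read off the claim from the structure of connected subgroups of $\G_1 \times \G_1^{\scr h}$. By Lemma \ref{WR} and Theorem \ref{Wr1} one has $\mathcal{N} \otimes_K F = \prod_{j=1}^k (\G_j \times \G_j^{\scr h})$ and $\mathcal{N}_2 \otimes_K F = \prod_{j=2}^k (\G_j \times \G_j^{\scr h})$; under these identifications $\U_2 = \U_2' \otimes_K F$ is the kernel of the restriction to $\HHH = \HHH' \otimes_K F$ of the projection onto $\mathcal{N}_2 \otimes_K F$, so $\U_2$ lies inside the first block $(\G_1 \times \G_1^{\scr h}) \times \{e\}$, which we identify with $\G_1 \times \G_1^{\scr h}$. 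Since $\dim\,\U_2' = \dim\,\U_2 = \dim\,\U_2^o$, it suffices to prove $\delta(\U_2^o) = 0$.

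First I would record a dimension bound. As $\mu_2 : \HHH' \to \mathcal{N}_2$ is surjective with kernel $\U_2'$ and $\dim\,\mathcal{N}_2 = 2(\dim\,\G - \dim\,\G_1)$ by Lemma \ref{WR}, we get $\dim\,\U_2 = \dim\,\HHH' - 2(\dim\,\G - \dim\,\G_1)$; since $\HHH'$ is proper in $\mathcal{N}$ we have $\dim\,\HHH' < \dim\,\mathcal{N} = 2\dim\,\G$, hence $\dim\,\U_2 < 2\dim\,\G_1$, so $\U_2^o$ is a proper connected subgroup of $\G_1 \times \G_1^{\scr h}$. A Goursat-type argument — using that $\G_1$ and $\G_1^{\scr h}$ are simple — shows that any proper connected subgroup $C \subsetneq \G_1 \times \G_1^{\scr h}$ has $\delta(C) \leq 1$: considering the two projections of $C$, whose images are trivial or the whole factor, either $C$ lies in one of the two factors and is isogenous to $\{e\}$, $\G_1$ or $\G_1^{\scr h}$, or both projections are surjective and, as $C$ is proper, are isogenies, so $C$ is up to isogeny the graph of an isogeny $\G_1 \to \G_1^{\scr h}$; in every case $\delta(C) \leq 1$. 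Hence $\delta(\U_2^o) \in \{0,1\}$.

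Next I would apply additivity of the $\delta$-invariant. The group $\mathcal{N} \otimes_K F$ is a product of simple groups, hence plurisimple, so the connected subgroup $\HHH^o$ is plurisimple by Lemma \ref{oiip}(1). The restriction of the projection to $\HHH^o$ still surjects onto the connected group $\mathcal{N}_2 \otimes_K F$ — surjectivity of $\mu_2$ is preserved by $-\otimes_K F$ and by passing to unity components — and its kernel has connected component $\U_2^o$; therefore $\HHH^o / \U_2^o$ is isogenous to $\mathcal{N}_2 \otimes_K F$, whose $\delta$-invariant is $2(k-1)$ by additivity. Lemma \ref{oiip}(5) then gives $\delta(\HHH) = \delta(\HHH^o) = \delta(\U_2^o) + 2(k-1)$. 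By hypothesis $\delta(\HHH)$ is even, so $\delta(\U_2^o)$ is even; together with $\delta(\U_2^o) \leq 1$ this forces $\delta(\U_2^o) = 0$, i.e.\ $\dim\,\U_2' = 0$.

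The main point requiring care — rather than mere formal manipulation — is the bookkeeping with connected components when applying additivity: one must check that $\U_2^o = (\HHH^o \cap \U_2)^o$ and that $\HHH^o / \U_2^o \to \mathcal{N}_2 \otimes_K F$ is an isogeny, so that $\delta$ is genuinely additive along $0 \to \U_2^o \to \HHH^o \to \mathcal{N}_2 \otimes_K F \to 0$ up to isogeny. Everything else is short; in particular the hypothesis $p_{\rm{G}}(\HHH) = \G$ is not needed for this claim and enters only in the later claims of the proof.
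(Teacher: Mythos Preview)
Your proof is correct and follows essentially the same approach as the paper's: both argue that $\U_2$ is a proper subgroup of $\G_1\times\G_1^{\scr h}$ and hence has $\delta\le 1$, then use additivity of $\delta$ along $\U_2\hookrightarrow\HHH\twoheadrightarrow\mathcal{N}_2\otimes_K F$ together with the evenness of $\delta(\HHH)$ and of $\delta(\mathcal{N}_2\otimes_K F)=2(k-1)$ to force $\delta(\U_2)=0$. Your version is somewhat more careful with connected components and spells out the Goursat-type case analysis, whereas the paper dispatches the bound $\delta(\U_2)\le 1$ in a single line; your remark that the hypothesis $p_{\rm G}(\HHH)=\G$ is not used here is also accurate.
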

\begin{proof} The variety $\mathcal{N}_1 \otimes_K F$ is a product of two simple algebraic groups over $F$. Since ${\rm{H}}'$ is proper in $\mathcal{N}$ and as $\mu_2$ is surjective, $\U_2'$ is a proper algebraic subgroup of $\mathcal{N}_1$. Hence, if $\U'_2$
has positive dimension, then $\delta(\U'_2) = \delta(\U_2) = 1$. On the other hand, as $[F: K] = 2$, the $\delta$-invariant of $\mathcal{N}_2 \otimes_K F$ is even. It follows from the additivity of the $\delta$-invariant that if
$\U_1'$ has positive dimension, then $\delta({\rm{H}})$ is odd. We infer the claim by contraposition. 
\end{proof}
If $\mu_2$ is surjective and if $\U_2'$ is finite, then there exists an isogeny $\mu: \mathcal{N} \longrightarrow \mathcal{N}$ with the property that $\V' = \mu({\rm{H}}')$ is the graph of a homomorphism
$w': \mathcal{N}_2 \longrightarrow \mathcal{N}_1$ over $K$. Namely, we have 
$$\U_2' = \HHH' \cap \big(\mathcal{N}_1 \times \{e_{\mathcal{N}_2}\}\big)
 = \U'' \times \{e_{\mathcal{N}_2}\}$$
with an algebraic group $\U'' \subset \mathcal{N}_1$. If $\U'_2$ is finite, then there is an integer $k$ such that $[k]_{\mathcal{N}_1}(\U'') = 0$. Hence, if we take
$\mu = [k]_{\mathcal{N}_1} \times id.$, then $\V' \cap \big(\mathcal{N}_1 \times \{e_{\mathcal{N}_2}\}\big)$ is trivial. In this situation $\V'$ is the graph of a homomorphism $w'$ as claimed.
\begin{claim} For the proof of the lemma one can assume without loss of generality that $\mu$ is the identity.
 
\end{claim}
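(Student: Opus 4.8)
The plan is to show that replacing $\HHH'$ by $\V' = \mu(\HHH')$ costs nothing: $\V'$ still satisfies every hypothesis of Lemma~\ref{later1}, it is (by the construction just carried out) the graph of a homomorphism $\mathcal{N}_2\to\mathcal{N}_1$, and Statements~1 and~2 of the lemma for $\V'$ imply Statements~1 and~2 for $\HHH'$. Hence it suffices to treat the case where $\HHH'$ is itself such a graph, that is, where $\mu=\mathrm{id}$.

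The key preliminary point is that the isogeny $\mu = [k]_{\mathcal{N}_1}\times\mathrm{id}_{\mathcal{N}_2}$ descends to $\G$. Using that $\mathcal{N}_{F/K}$ preserves finite products (Lemma~\ref{WR}) and carries group laws to group laws (Lemma~\ref{WR2}), one gets $\mu = \psi_{\mathcal{N}}$ for the isogeny $\psi := [k]_{\G_1}\times\mathrm{id}_{\prod_{j\geq 2}\G_j}\colon\G\to\G$, whence by Corollary~\ref{cl}
\[
 p_{\G}\circ(\mu\otimes_K F) = \psi\circ p_{\G}.
\]
Writing $\mathbf{V} := \V'\otimes_K F = (\mu\otimes_K F)(\HHH)$, surjectivity of $\psi$ then yields $p_{\G}(\mathbf{V}) = \psi\big(p_{\G}(\HHH)\big) = \psi(\G) = \G$. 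Moreover $\mu_{|\HHH'}\colon\HHH'\to\V'$ has finite kernel, so it is an isogeny; therefore $\V'$ is again a proper algebraic subgroup of $\mathcal{N}$ — were $\V' = \mathcal{N}$, then $\HHH' + \ker\mu = \mathcal{N}$, forcing $\dim\HHH' = \dim\mathcal{N}$ and contradicting properness of $\HHH'$ — and $\delta(\V') = \delta(\HHH')$ is even, since the $\delta$-invariant depends only on the isogeny class. Finally $\mu$ restricts to the identity on the factor $\mathcal{N}_2$, so the projection $\V'\to\mathcal{N}_2$ coincides with the projection $\HHH'\to\mathcal{N}_2$ and stays surjective. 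Thus $(\G,\V')$ obeys all the hypotheses of the lemma, and $\V'$ is a graph.

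It then remains to transfer the conclusions back. Suppose Statement~1 holds for $\V'$: there exist a positive-dimensional group variety $\G'$ over $K$ and a surjective homomorphism $v\colon\G\to\G'\otimes_K F$ with $v\circ p_{\G|\mathbf{V}}$ defined over $K$. Set $v_0 := v\circ\psi\colon\G\to\G'\otimes_K F$; it is surjective because $\psi$ is an isogeny, $\G'$ has positive dimension, and
\[
 v_0\circ p_{\G|\HHH} \;=\; v\circ\psi\circ p_{\G|\HHH} \;=\; \big(v\circ p_{\G|\mathbf{V}}\big)\circ\big(\mu_{|\HHH'}\otimes_K F\big)
\]
is, by Lemma~\ref{cl1}, defined over $K$, being the composite of two morphisms defined over $K$ (the first by assumption, the second because $\mu_{|\HHH'}$ is a morphism of varieties over $K$). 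So Statement~1 holds for $\HHH'$. Statement~2 refers only to the simple factors $\G_j$ of $\G$, which are untouched by the substitution, so there it is literally the same assertion for $\V'$ and for $\HHH'$.

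The argument is essentially bookkeeping; the only step I would treat with real care is the first one — producing the descent $\mu = \psi_{\mathcal{N}}$ cleanly so that Corollary~\ref{cl} applies and the identity $p_{\G}\circ(\mu\otimes_K F) = \psi\circ p_{\G}$ is available — and then verifying that ``defined over $K$'' is genuinely stable under the various restrictions and compositions of $p_{\G}$ that occur.
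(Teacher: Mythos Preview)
Your proof is correct and follows essentially the same approach as the paper: the paper's isogeny $u = [k]_{\G_1}\times id.$ is your $\psi$, and the paper's ``replace $v$ by $v\circ u$'' is exactly your $v_0 = v\circ\psi$. You are somewhat more thorough than the paper in explicitly checking that $\V'$ remains proper, that surjectivity onto $\mathcal{N}_2$ is preserved, and that Statement~2 transfers, but the underlying argument is identical.
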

\begin{proof} Let ${{{\rm{V}}}}' = \mu({\rm{H}}')$ and let $u: \G \longrightarrow \G$ be the isogeny $u =  [k]_{\G_1} \times id.$ Since the $\delta$-invariant depends only on the isogeny class, $\delta({{{\rm{V}}}}' \otimes_K F) = \delta(\HHH)$ is even. Moreover,
with the above choice of $\mu$ and as $\G = \G_1 \times \prod_{j=2}^k \G_j$, we have 
$$p_{\G}(\V) = \big(p_{\G} \circ \mu\big)(\HHH) = \big(u\circ p_{\G}\big)(\HHH) = u(\G) = \G.$$
So, after we have constructed a homomorphism $v$ such that $v_{\ast}(p_{\G|\V})$ is defined over $K$, we can replace $v$ by $v \circ u$ and establish in this way the assertion for $\HHH$. \end{proof}
We will suppose that $\mu$ is the identity map and shall write $w$ for $w' \otimes_K F$. Recalling Corollary \ref{3.3.1}, we consider then closed points
$$\underline{\xi} = \big((\xi_1, \xi_1^{\scr h}),..., (\xi_k, \xi_k^{\scr h})\big) \in {\rm{H}}'.$$
Since $\mathcal{N}_2 \otimes_K F= \prod_{j = 2}^k \big(\G_j \times \G_j^{\scr h}\big)$, there are, for each $j = 2,..., k$, algebraic homomorphisms $w_j: \G_j \longrightarrow \G_1$ and $v_j: \G_j \longrightarrow \G_1^{\scr h}$ such that
$$\xi_1 = \big(p_{\G_1} \circ w\big)(\underline{\xi}) = \sum_{j=2}^k w_j(\xi_j) + \sum_{j=2}^k v_j^{\scr h}(\xi_j^{\scr h}).$$
Hence, for all closed points $\underline{\xi} \in {\rm{H}}'$ we have by (3.1.1)
\begin{equation}
 \xi_1 - \sum_{j=2}^k w_j(\xi_j) = \sum_{j=2}^k v_j^{\scr h}(\xi_j^{\scr h}) =  \left(\sum_{j=2}^k v_j(\xi_j)\right)^{\scr h}.
\end{equation}

\begin{claim} \label{099} The homomorphism $u: \HHH \longrightarrow \G_1$ defined by
$$u(\underline{\xi}) = \xi_1 - \sum_{j=2}^k w_j(\xi_j)$$
for a closed point $\underline{\xi} \in \HHH$ is surjective. Moreover, there exists a $j = 2,..., k$
such that the homomorphism $v_j$ is not trivial.
 
\end{claim}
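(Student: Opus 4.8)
\textbf{Proof plan for Claim \ref{099}.}
The plan is to first prove surjectivity of $u$, then deduce the existence of a nontrivial $v_j$ by a conjugation argument. For surjectivity, I would use the hypothesis $p_{\G}(\HHH) = \G$. Since $p_{\G}$ is the projection of $\mathcal{N} \otimes_K F = \prod_{j=1}^k (\G_j \times \G_j^{\scr h})$ onto $\G = \prod_{j=1}^k \G_j = \prod_j \G_j^{\scr id.}$, the surjectivity of $p_{\G|\HHH}$ says precisely that for every closed point $(\eta_1,\dots,\eta_k) \in \G$ there is a point $\underline{\xi} \in \HHH$ whose $\G_j$-components are the $\eta_j$. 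Given such a point, $u(\underline{\xi}) = \eta_1 - \sum_{j=2}^k w_j(\eta_j)$. Now fix any $\zeta \in \G_1$; choosing $\eta_1 = \zeta + \sum_{j=2}^k w_j(\eta_j)$ for an arbitrary choice of $\eta_2,\dots,\eta_k$ (the $w_j$ are fixed homomorphisms, so this is a legitimate closed point), and then picking $\underline{\xi} \in \HHH$ with those $\G_j$-components, we get $u(\underline{\xi}) = \zeta$. Hence $u$ is surjective onto $\G_1$.

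For the second assertion, suppose toward a contradiction that $v_j = 0$ for all $j = 2,\dots,k$. Then the right-hand side of (4.3.2) vanishes identically on closed points of $\HHH'$, so $u(\underline{\xi}) = \xi_1 - \sum_{j=2}^k w_j(\xi_j) = 0$ for all closed points $\underline{\xi} \in \HHH$. But the closed points are Zariski-dense in $\HHH$ (as $\HHH$ is a variety over an algebraically closed field, or more precisely $\HHH = \HHH' \otimes_K F$ with $F$ algebraically closed in the relevant applications; in general one passes to $\overline{F}$), so $u \equiv 0$ as a morphism of algebraic groups. This contradicts the surjectivity of $u$ onto the positive-dimensional group $\G_1$ just established. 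Therefore at least one $v_j$ is nontrivial.

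The main obstacle I anticipate is making the surjectivity argument fully rigorous at the level of schemes rather than closed points: the identity (4.3.2) is stated for closed points, and the homomorphisms $w_j, v_j$ are extracted from the graph description $\V' = $ graph of $w': \mathcal{N}_2 \to \mathcal{N}_1$ together with the decomposition of $\mathcal{N}_j \otimes_K F$. One must check that the assignment $\underline{\xi} \mapsto \xi_1 - \sum_j w_j(\xi_j)$ genuinely comes from an algebraic homomorphism $\HHH \to \G_1$ (it does, being a composite of projections and the $w_j$), and that "surjective on closed points" upgrades to "surjective as a homomorphism of group varieties" — which is standard since the image of a homomorphism of algebraic groups is a closed subgroup and closed points are dense. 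Once these bookkeeping points are in place, the contradiction argument for the nontriviality of some $v_j$ is immediate from Zariski-density of closed points and the surjectivity of $u$.
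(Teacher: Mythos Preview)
Your argument is correct, and for the surjectivity of $u$ it is genuinely different from the paper's. The paper exploits that $\G_1$ is simple: then the image of $u$ is either $0$ or all of $\G_1$, so it suffices to show $u\neq 0$; and if $u=0$ then $p_{\G}(\HHH)$ lies in the proper subgroup $\{\xi_1=\sum_{j\ge 2} w_j(\xi_j)\}\subset\G$, contradicting $p_{\G}(\HHH)=\G$. You instead observe that $u$ factors as $\HHH\xrightarrow{p_{\G|\HHH}}\G\xrightarrow{f}\G_1$ with $f(\xi_1,\dots,\xi_k)=\xi_1-\sum_{j\ge 2}w_j(\xi_j)$, and that both arrows are surjective (the second because its restriction to $\G_1\times\{e\}\times\cdots\times\{e\}$ is the identity). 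This is cleaner in that it does not invoke simplicity of $\G_1$, and it makes the ``bookkeeping'' you worry about transparent: $u$ is visibly an algebraic homomorphism, and surjectivity of each factor is on the nose, no passage through closed points needed. The paper's route is marginally shorter given that simplicity is already in the hypotheses.

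For the second assertion both proofs are the same in substance: if every $v_j$ vanished then the right-hand side of the displayed identity (this is (4.3.1) in the paper, not (4.3.2)) would vanish on the Zariski-dense set of closed points of $\HHH'$, forcing $u\equiv 0$ and contradicting surjectivity.
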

\begin{proof} Since $\G_1$ is simple, it suffices to show that $u \neq 0$. But if $u = 0$, then $p_{\G}(\HHH)$
is contained in the proper subgroup of $\G$ defined by the relation $\xi_1 = \sum_{j=2}^k w_j(\xi_j)$. This contradicts
the hypotheses of the lemma. So, $u \neq 0$. The second statement is a direct consequence of the first and (4.3.1).
 \end{proof}
We define the homomorphism
$$v: \G \longrightarrow \G_1 \times \G_1^{\scr h} , v(\xi_1,..., \xi_k) = \left(  \xi_1 - \sum_{j=2}^k w_j(\xi_j), \sum_{j=2}^k v_j(\xi_j)\right).$$
It holds that $h = h^{\scr -1}$ because $[F: K] = 2$. So, it results from (4.3.1) that $v_{\ast}(p_{\G})$ maps closed points $\underline{\xi} \in {{\rm{H}}}'$ to closed points $(\xi, \xi^{\scr h}) \in \mathcal{N}_{F/K}(\G_1)$. Since the set of closed points $cl(\HHH')$ is Zariski-dense in $\HHH'$, this implies
that $v_{\ast}(p_{\G}) = \big(v_{\ast}(p_{\G})\big)^{\scr h}$. It follows from Lemma 3.2.2\,\,that $v_{\ast}(p_{\G})$ is defined over $K$. So, if $v$ is surjective, then $v$ is as required for Statement 1.\,\,On the other hand, if $v$
is not surjective, then Claim \ref{099} implies that $p_{\G_1} \circ v \circ p_{\G} = u$ is surjective. Therefore, the algebraic subgroup
$\U' = (v \circ p_{\G})\big(\HHH'\big)$ of $\mathcal{N}_1$ has positive dimension. Proposition\,4.2.2 teaches that $\mathcal{N}_1$ is plurisimple, and it results from Lemma 4.1.1 that $\U'$ admits a complement
$\V'$ in $\mathcal{N}_1$. Hence, if $v$ is not surjective, then we can replace $v$ by the composition of $v$ with the quotient map $\mathcal{N}_1 \otimes_K F \longrightarrow (\mathcal{N}_1/\V') \otimes_K F$, and receive
a homomorphism as required. The first statement follows. And Statement 2.\,\,results directly from Claim \ref{099}.\end{proof}
Next we prove

\begin{lemma} \label{later3} Assume that $\G$ is a product $\prod_{j= 1}^k \G_j$ of simple algebraic groups $\G_j$ of respectively positive dimension.
Let ${\rm{H}}'$ be a proper algebraic subgroup of $\mathcal{N}_{K/F}(\rm{G})$ and suppose that $p_{\rm{G}}({{\rm{H}}}) = {{\rm{G}}}$ for ${{\rm{H}}} = {{\rm{H}}}' \otimes_K F$. If $k = 1$ or if $k = 2$ and $\delta({\rm{H}}) = 3$, then there exist a group variety $\G'$ of positive dimension over $K$ and a surjective homomorphism $v: \G \longrightarrow \G' \otimes_K F$ such that $\G' \otimes_K F$ is simple over $F$ and with the property that $v_{\ast}\big(p_{\rm{G}|{\rm{H}}}\big) = v \circ p_{\rm{G}|{\rm{H}}}$ is defined over $K$.

\end{lemma}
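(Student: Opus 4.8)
\textbf{The plan} is to treat the two cases $k=1$ and ``$k=2$, $\delta(\HHH)=3$'' separately, reducing the second to the first whenever possible. Throughout I use the Table of symbols above and write $\pi_i\colon\mathcal{N}\otimes_K F\to\mathcal{N}_i\otimes_K F$ for the projection, so that $\mu_i=\pi_i|_{\HHH'}$ and $\U_i=\HHH\cap ker\,\pi_i$; since $\HHH'$ and $\mathcal{N}_i$ are defined over $K$, so are all the subgroups $\U_i'$ and $\mu_i(\HHH')$ of $\mathcal{N}$.

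\emph{The case $k=1$.} Here $\HHH$ is a proper subgroup of $\mathcal{N}\otimes_K F=\G_1\times\G_1^{\scr h}$ with $p_{\G}(\HHH)=\G_1$. First I would note that $\HHH\cap(\{e\}\times\G_1^{\scr h})$ is finite: otherwise, $\G_1^{\scr h}$ being simple, it equals $\{e\}\times\G_1^{\scr h}$, and with $p_{\G}(\HHH)=\G_1$ this forces $\HHH=\G_1\times\G_1^{\scr h}$, contradicting properness. Hence $p_{\G|\HHH}\colon\HHH\to\G_1$ is an isogeny and $\HHH$ is simple over $F$. Next I would show that the second projection $\HHH\to\G_1^{\scr h}$ is surjective too: if not, its image is finite and a dimension count gives $\HHH=\G_1\times T$ with $T$ a finite subgroup, which is not stabilized by the canonical Galois action on $\mathcal{N}\otimes_K F$ from Theorem \ref{Wr1} (that action interchanges the two factors up to conjugation), contradicting that $\HHH$ is defined over $K$ (Lemma \ref{cl1}). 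So $\HHH'$, replaced by its identity component if necessary, is a positive-dimensional group variety over $K$ with $\HHH'\otimes_K F=\HHH$ simple over $F$. Taking $N$ to be an exponent of $ker\,p_{\G|\HHH}$ and $v\colon\G_1\to\HHH'\otimes_K F$ the unique homomorphism with $v\circ p_{\G|\HHH}=[N]_{\HHH}$, the map $v$ is surjective and $v\circ p_{\G|\HHH}=[N]_{\HHH'}\otimes_K F$ is defined over $K$; so $\G'=\HHH'$ is as required.

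\emph{The case $k=2$, $\delta(\HHH)=3$.} Since $\delta$ is additive (Lemma \ref{oiip}), $\delta(\HHH)=\delta(\U_i')+\delta(\mu_i(\HHH'))$, where $\mu_i(\HHH')\otimes_K F$ is a subgroup of $\mathcal{N}_i\otimes_K F=\G_i\times\G_i^{\scr h}$ (a group of $\delta$-invariant $2$) surjecting onto $\G_i$, because $p_{\G}(\HHH)=\G$; hence $\delta(\mu_i(\HHH'))\in\{1,2\}$ and $\delta(\U_i')\in\{1,2\}$ for each $i$, while $\delta(\U_1')=\delta(\U_2')=2$ is impossible since then $\HHH\supset ker\,\pi_1+ker\,\pi_2=\mathcal{N}\otimes_K F$. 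So either $\delta(\U_i')=2$ for exactly one $i$, or $\delta(\U_1')=\delta(\U_2')=1$. In the first (split) case $\U_i=ker\,\pi_i$ (equal $\delta$-invariant and $ker\,\pi_i$ connected), whence $\HHH'=\mu_i(\HHH')\times\mathcal{N}_{3-i}$ and $\mu_i(\HHH')\subset\mathcal{N}_i=\mathcal{N}_{F/K}(\G_i)$ is in the situation of the case $k=1$ relative to $\G_i$. That case yields a $K$-form $\G'$ of the simple group $\mu_i(\HHH')\otimes_K F$ and a surjective homomorphism $v_i\colon\G_i\to\G'\otimes_K F$ such that $v_i\circ p_{\G_i}$, restricted to $\mu_i(\HHH')\otimes_K F$, is defined over $K$. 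I then take $v=v_i\circ(\text{projection }\G_1\times\G_2\to\G_i)$: it is surjective with $\G'\otimes_K F$ simple over $F$, and on $\HHH$ it equals the composite of $\pi_i|_{\HHH}$ --- a projection of $K$-groups --- with the $K$-morphism $v_i\circ p_{\G_i}|_{\mu_i(\HHH')\otimes_K F}$, hence it is defined over $K$.

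\emph{The mixed case, and the main obstacle.} The remaining case is $\delta(\U_1')=\delta(\U_2')=1$: then $\mu_1,\mu_2$ are both surjective, each $\U_i$ is a simple $K$-subgroup of $\G_{3-i}\times\G_{3-i}^{\scr h}$, and (as in the case $k=1$) Galois-stability forces $\U_i$ to project onto both factors, so it is a ``graph'' and a dimension count gives $\dim\G_1=\dim\G_2$. After an isogeny reduction of the kind used in the proof of Lemma \ref{later1} (replacing $\HHH'$ by $\mu(\HHH')$ for a suitable $K$-isogeny $\mu$ of $\mathcal{N}$), I would parametrise the closed points of $\HHH$ by triples and write its single defining homomorphism as $\nu(a)+\lambda(a')+\tau(b)$ with $\nu\colon\G_1\to\G_2^{\scr h}$, $\lambda\colon\G_1^{\scr h}\to\G_2^{\scr h}$, $\tau\colon\G_2\to\G_2^{\scr h}$. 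The decisive step --- which I expect to be the real work --- is to deduce from the stability of $\HHH$ under the Galois action of Theorem \ref{Wr1}, using (3.1.1), the two identities $\tau\circ\tau^{\scr h}=id$ and $\lambda=-\tau\circ\nu^{\scr h}$; the first presents $\tau$ as an isomorphism defining a descent datum on $\G_2$, so Theorem \ref{descent} furnishes a $K$-model $\G'$ of $\G_2$ and an isomorphism $v_2\colon\G_2\to\G'\otimes_K F$ with $v_2=v_2^{\scr h}\circ\tau$. Setting $v=(v_2^{\scr h}\circ\nu,\ v_2)\colon\G_1\times\G_2\to\G'\otimes_K F$, the map $v$ is surjective onto the simple group $\G'\otimes_K F=\G_2$, and the two identities give $v\circ p_{\G|\HHH}=(v\circ p_{\G|\HHH})^{\scr h}$, i.e.\ it is defined over $K$. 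The technically heaviest point is the bookkeeping with the conjugation maps between $\G_i$ and $\G_i^{\scr h}$ needed to verify those identities.
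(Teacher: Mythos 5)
Your proof is correct in substance, and for two of the three situations it runs along the paper's own lines: for $k=1$ you, like the paper, observe that $p_{\G|\HHH}$ is an isogeny and take an inverse isogeny with $\G'=\HHH'$ (your detour through the surjectivity of the second projection is not needed for this), and your ``split'' subcase, where one $\U_i$ fills out $ker\,\pi_i$, is the paper's reduction of the $k=2$ problem to the $k=1$ case applied to $\mu_i(\HHH')\subset\mathcal{N}_i$. Where you genuinely diverge is the ``mixed'' subcase $\delta(\U_1)=\delta(\U_2)=1$: the paper stays with $\delta$-invariant bookkeeping over $K$ (it first shows $\delta(\HHH')=3$, then splits according to whether $\U=ker\,p_{\G|\HHH}$ equals $\U^{\scr h}$, and in either branch produces $\G'$ as a simple quotient of $\HHH'/\U'$ resp.\ $\HHH'/\V'$), whereas you present $\HHH$, after an isogeny reduction, as the graph of $\nu+\lambda+\tau$ over $\G_1\times\G_1^{\scr h}\times\G_2$ and read off an explicit descent datum $\tau$ on $\G_2$, invoking Theorem \ref{descent}. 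Your route buys concreteness: $\G'$ comes out as a $K$-form of $\G_2$, simplicity of $\G'\otimes_K F$ is automatic, and no quotient constructions are needed; the paper's route avoids the graph normalization, the equality $\dim\G_1=\dim\G_2$, and all conjugation bookkeeping. Two points you defer are in fact fine but should be written out: the ``decisive step'' is a one-line specialization of the stability condition $b^{\scr h}=\nu(a'^{\scr h})+\lambda(a^{\scr h})+\tau(b'^{\scr h})$ at $a=a'=e$ (giving $\tau\circ\tau^{\scr h}=id$) and at $a'=b=e$ (giving $\lambda=-\tau\circ\nu^{\scr h}$), and these identities do make your final verification that $v_{\ast}(p_{\G|\HHH})=\big(v_{\ast}(p_{\G|\HHH})\big)^{\scr h}$ go through; and the isogeny reduction must be performed with a $K$-defined isogeny of $\mathcal{N}$ (for instance $id_{\mathcal{N}_1}\times[k]_{\mathcal{N}_2}$ with $k$ chosen to kill the finite group $\HHH\cap\big(\{e\}\times\{e\}\times\{e\}\times\G_2^{\scr h}\big)$ together with the relevant torsion of $ker\,(pr_{\G_2}|_{\U_1})$), followed by the same transfer-back via an isogeny $u$ of $\G$ as in Claim 4.3.2 of the proof of Lemma \ref{later1}.
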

\begin{proof} If $k = 1$, then $\HHH = \U_1 = \U_2$ and 
$\delta(\HHH) = \delta(\G) = 1$. So, $p_{\G|\HHH}$ must be an isogeny. Any isogeny $v: \G \longrightarrow \HHH$ such that $v \circ p_{\G|\HHH}$ is multiplication with an integer
satisfies the requirement with $\G' = \HHH'$.\\
If $k\geq 2$ and $\delta(\HHH) = 3$, then we fix a $j = 1,2$. If $\U_j'$ is finite, then $\HHH'$ is isogenous to $\mu_j(\HHH')$. As a result, $\HHH$ is isogenous to $\mu_j(\HHH') \otimes_K F$. In this situation we would
have $\delta(\HHH) \leq 2$. It follows by contraposition that $\U'_j$ is infinite. Moreover, if $\mu_j(\HHH')$ is a proper algebraic subgroup of $\mathcal{N}_j$,
then we replace $\G$ by $\G_j$ and $\HHH'$ by $\mu_j(\HHH')$, and reduce this way to the situation when $k = 1$ and $\delta(\HHH') = 1$. 
Since this was treated above, we receive the first part of the following claim.
\begin{claim} 
\begin{enumerate}
\item
For the proof of the lemma we may suppose that $\mu_j$ is surjective and $\U'_j$ is infinite for each $j = 1,2$, . 
\item 
If the reduction from the first statement holds, then $\delta(\HHH')= 3$.\footnote{By Proposition\,4.2.6 $\HHH'$ is plurisimple. And \textit{a priori} we have $\delta(\HHH') \leq \delta(\HHH) = 3$.}
\end{enumerate}
\end{claim}
\begin{proof} We only have to show Statement 2. Since $\HHH'$ is a proper algebraic subgroup of $\mathcal{N}$, it follows from the surjectivity of $\mu_2$ that $\U'_2 \neq \mathcal{N}_1 \times \{e_{\mathcal{N}_2}\}$. So, $\delta(\U'_2) = \delta\big(\U_2\big) = 1$.
The same argument yields that $\delta(\U'_1) = \delta\big(\U_1\big) = 1$. Moreover, as the intersection $\U_1' \cap \U_2'$ is trivial by definition, the two algebraic groups
$\U_1' \times \U_1'$ and $\U_1' + \U_2' \subset \HHH'$ are isogenous. And the same argument yields that the two algebraic groups $\U_1 \times \U_1$ and $\U_1 + \U_2 \subset \HHH'$ are isogenous. Therefore,
\begin{equation}
 \delta(\U'_1 + \U'_2) = \delta(\U_1 + \U_2) = 2.
\end{equation}
From the additivity of the $\delta$-invariant we deduce then
\begin{equation}
 \delta\big(\HHH'/(\U_1' +\U_2')\big) \leq \delta\big(\HHH/(\U_1 +\U_2)\big) = 3-2 = 1.
\end{equation}
Statement (4.3.3) implies that $\U_1 +\U_2 \neq \HHH$. Hence, $\U_1' +\U_2' \neq \HHH' $ and
\begin{equation}
 1 \leq \delta\big(\HHH'/(\U_1' +\U_2')\big).
\end{equation}
We deduce the claim from Statements (4.3.2)- (4.3.4).
\end{proof}
Let $\U = ker\,p_{\G|\HHH}$ and recall that we suppose that $k = 2$ and $\delta(\HHH) = 3$. There are two possibilities:
\begin{enumerate}
 \item $\U$ coincides with its conjugate $\U^{\scr h} \subset \HHH$. Then $\U = \U' \otimes_K F$ with an algebraic subgroup $\U' \subset \mathcal{N}$. It results that $\G$ is isomorphic to $(\HHH'/\U') \otimes_KF$. To be more precise, there is an isomorphism
$u: \G \longrightarrow (\HHH'/\U') \otimes_KF$ with the property that $u \circ p_G$ arises from the quotient map $\HHH' \longrightarrow \HHH'/\U'$ by extension of scalars.
The additivity of the $\delta$-invariant implies
$$\delta(\U) =  \delta(\HHH) - \delta(\G) = 3 - 2 = 1.$$
\textit{A fortiori}, 
$$\delta(\U') = \delta(\U) = 1.$$
This and the previous claim give 
$$\delta(\HHH'/\U') = \delta(\HHH') - \delta(\U') = 2 = \delta(\HHH) - \delta(\U) = \delta(\HHH/\U).$$
Thus, there is a simple quotient $\G'$ of $\HHH'/\U'$ such that $\delta(\G' \otimes_K F) = 1$. Let $v$ be the composition of $u$ with the quotient map to $\G' \otimes_K F$. Then $v$ is a required.
\item $\U$ does not coincide with its conjugate $\U^{\scr h} \subset \HHH$. Then $\V = \U + \U^{\scr h}$ is a subgroup of $\HHH$ which coincides with its conjugate. As a result, there is a subgroup $\V' \subset \HHH'$
with the property that $\V = \V' \otimes_K F$. Since $\delta(\U) = 1$, $\U \cap \U^{\scr h}$ is finite. So, $\delta(\V) = 2$ and $\delta(\HHH/\V) = 1$. Moreover, as $\U = ker\,p_{\G|H}$, it follows that 
 $\delta\big(p_{\G}(\V)\big) = \delta(\U^{\scr h}) = 1$. Consequently,
\begin{equation}
 \delta(\HHH/\V) = 1 = 2 - 1 = \delta(\G) - \delta\big(p_{\G}(\V)\big) = \delta\big(\G/p_{\G}(\V)\big).
\end{equation}
Recall that $p_{\G}$ restricts to a surjective homomorphism $ \HHH \longrightarrow \G$. Taking quotients, we receive a surjective homomorphism
$$u: \HHH/\V\ \longrightarrow \G/p_{\G}(\V).$$
Because of (4.3.5) the homomorphism $u$ is an isogeny. We choose an inverse isogeny $\nu$ with the property that $\nu \circ u$ is multiplication with an integer $k$. Next we let $p: \G \longrightarrow \G/p_{\G}(\V)$ and $q': \HHH' \longrightarrow \HHH'/\V'$ be the quotient maps, and write $q = q' \otimes_K F$.
Then
$\nu \circ u \circ p_{\G|\HHH} = q \circ [k]_{\HHH}$ is defined over $K$. Hence, $v = \nu \circ p $ is a required.
\end{enumerate}
The lemma is proved. 
\end{proof}
The final lemma in our trilogy is derived by considering subcases. 
\begin{lemma} \label{later4} Assume that $\G$ is a product $\prod_{j= 1}^k \G_j$ of simple algebraic groups $\G_j$ of respectively positive dimension.
Let ${\rm{H}}'$ be a proper algebraic subgroup of $\mathcal{N}_{K/F}(\rm{G})$ and suppose that $p_{\rm{G}}({{\rm{H}}}) = {{\rm{G}}}$ for ${{\rm{H}}} = {{\rm{H}}}' \otimes_K F$. If $\delta({\rm{H}})$ is odd, then there exist a group variety $\G'$ of positive dimension over $K$ and a surjective homomorphism $v: \G \longrightarrow \G' \otimes_K F$ such that $\G' \otimes_K F$ is simple over $F$ and with the property that $v_{\ast}\big(p_{\rm{G}|{\rm{H}}}\big) = v \circ p_{\rm{G}|{\rm{H}}}$ is defined over $K$.

\end{lemma}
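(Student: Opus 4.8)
The plan is to reduce Lemma \ref{later4} to the two lemmas already established, namely Lemma \ref{later1} (which handles the case $\delta(\HHH)$ even together with a surjectivity hypothesis on the projection to $\mathcal{N}_2$) and Lemma \ref{later3} (which handles $k=1$ and the case $k=2$, $\delta(\HHH)=3$). The global strategy is induction on the number $k$ of simple factors, exploiting the fact that $\delta(\HHH)$ odd forces some asymmetry among the factors $\G_j$ which we can isolate by projecting.

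First I would dispose of the base cases. If $k=1$ then $\delta(\HHH)=1$ (being odd and positive, and bounded above by $\delta(\mathcal{N}\otimes_KF)=2$ — more precisely $\HHH$ is a proper subgroup mapping onto $\G$ via $p_\G$, so $\delta(\HHH)=\delta(\G)=1$), and Lemma \ref{later3} applies directly. For the inductive step with $k\geq 2$, consider the projection $\mu_2\colon\HHH'\longrightarrow\mathcal{N}_2=\prod_{j=2}^k\mathcal{N}_{F/K}(\G_j)$ (and likewise $\mu_1$ onto $\mathcal{N}_1$). If some $\mu_j$ is not surjective, then its image $\mu_j(\HHH')$ is a proper algebraic subgroup of the relevant Weil restriction whose base change still surjects onto the corresponding product of $\G_i$'s, so after replacing $\G$ by that subproduct and $\HHH'$ by $\mu_j(\HHH')$ we are in the same situation with fewer factors; the $\delta$-invariant of the new $\HHH'$ is computed via additivity, and if it is again odd we invoke the inductive hypothesis, while if it has become even we are in the hypotheses of Lemma \ref{later1}. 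So we may assume both $\mu_1$ and $\mu_2$ are surjective. Then, exactly as in the proof of Lemma \ref{later3}, surjectivity of $\mu_2$ forces $\U'_2$ to be a proper subgroup of $\mathcal{N}_1$, hence $\delta(\U'_2)\leq 1$; if $\U'_2$ is finite then $\HHH'$ is isogenous to $\mu_2(\HHH')\otimes_KF$-free data and we reduce to $k=1$; if $\U'_1$ is finite we reduce symmetrically. Thus we may assume $\U'_1$ and $\U'_2$ are both infinite, so $\delta(\U'_1)=\delta(\U'_2)=1$, the intersection $\U'_1\cap\U'_2$ is trivial by definition, and $\delta(\U'_1+\U'_2)=2$ by the argument in Lemma \ref{later3}.

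With this setup, additivity of the $\delta$-invariant gives $\delta\big(\HHH'/(\U'_1+\U'_2)\big)=\delta(\HHH')-2$, which is odd. Now I would run the dichotomy on $\U=\ker p_{\G|\HHH}$ exactly as in Lemma \ref{later3}: either $\U$ is stable under conjugation, in which case $\U=\U'\otimes_KF$ and $\G\backsimeq(\HHH'/\U')\otimes_KF$ compatibly with $p_\G$, so passing to a simple quotient $\G'$ of $\HHH'/\U'$ with $\delta(\G'\otimes_KF)=1$ and composing with the quotient map yields the desired $v$ (here one uses that such a simple quotient exists because $\delta(\HHH'/\U')=\delta(\HHH)-\delta(\U)$ is positive); or $\U\neq\U^{\scr h}$, in which case $\V=\U+\U^{\scr h}$ descends to a subgroup $\V'\subset\HHH'$ with $\delta(\V)=2\delta(\U)$, the induced map $\HHH/\V\longrightarrow\G/p_\G(\V)$ is an isogeny by an additivity computation, and composing an inverse isogeny with the quotient map $\G\longrightarrow\G/p_\G(\V)$ produces a $v$ which is defined over $K$ after multiplying by a suitable integer. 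In both branches one must then, if $\G'\otimes_KF$ is not already simple, pass to a simple quotient — permissible since $\delta$ of the target is positive throughout — and the "defined over $K$" conclusion follows from Lemma \ref{cl1} via the now-familiar observation that $v_{\ast}(p_\G)$ sends closed points of $\HHH'$ to fixed points of the Galois action because $h=h^{\scr -1}$.

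The main obstacle I anticipate is bookkeeping the parity of $\delta$ through the reductions: each time we project along a non-surjective $\mu_j$ or quotient by a conjugation-stable subgroup, we must verify that the new $\delta$-invariant is either still odd (so the inductive hypothesis applies) or becomes even in a way that matches precisely the surjectivity hypothesis of Lemma \ref{later1}, rather than landing in some gap not covered by the trilogy. Making this case analysis exhaustive — in particular handling the boundary where a reduction produces $\delta=2$ but the projection onto $\mathcal{N}_2$ fails to be surjective, or $\delta=3$ with $k\geq 3$ — is where the real work lies; the resolution should be that in every such gap one can peel off one more simple factor and recurse, but writing this cleanly requires care with the definitions of $\U'_j$ when $k$ drops to $1$ (using the convention $\mathcal{N}_2=\mbox{spec}\,K$ introduced before the proof of Lemma \ref{later1}). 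Everything else is an assembly of the three auxiliary lemmas plus additivity of $\delta$ and the descent criterion of Lemma \ref{cl1}.
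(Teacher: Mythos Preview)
Your overall plan—induct on $k$ and reduce to Lemma \ref{later3}—is the right one, but two of your reductions do not go through as stated.

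First, when a projection $\mu_j$ is not surjective and the $\delta$-invariant of the image becomes even, you write ``we are in the hypotheses of Lemma \ref{later1}''. That is not justified: Lemma \ref{later1} requires in addition $k\geq 2$ and that the image projects \emph{surjectively} onto the new $\mathcal{N}_2$, neither of which your reduction guarantees. More seriously, even when Lemma \ref{later1} does apply, its conclusion does not assert that $\G'\otimes_KF$ is \emph{simple} over $F$, which is precisely the extra content of Lemma \ref{later4} (and the reason Corollary \ref{1234C} is deduced from \ref{later4} rather than \ref{later1}). So the even branch of your case split is a dead end.

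Second, your attempt to run the $\U$-versus-$\U^{\scr h}$ dichotomy of Lemma \ref{later3} directly for general $k$ breaks because both branches of that argument depend on $\delta(\U)=1$. In the branch $\U\neq\U^{\scr h}$ one needs $\U\cap\U^{\scr h}$ finite, which comes from simplicity of $\U$ and fails once $\delta(\U)>1$; in the branch $\U=\U^{\scr h}$ one needs a simple quotient of $\HHH'/\U'$ over $K$ whose base change stays simple over $F$, and mere positivity of $\delta(\HHH'/\U')$ does not give this—you would need $\delta(\HHH')=\delta(\HHH)$, which the paper establishes only in the specific situation $k=2$, $\delta(\HHH)=3$. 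For general $k$ one has $\delta(\U)=\delta(\HHH)-k$, which can be any value up to $k-1$.

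The paper sidesteps both problems by designing reductions that never make $\delta$ even and never invoke the dichotomy outside the $(k=2,\delta=3)$ window. It cases on the kernel $\U_2'$ of $\mu_2$: if $\U_2'$ is finite or equals all of $\mathcal{N}_1\times\{e_{\mathcal{N}_2}\}$, projecting along $\mu_2$ drops $\delta$ by $0$ or $2$, so one inducts on $k$ with $\delta$ still odd. In the remaining case—$\U_2'$ infinite and proper in $\mathcal{N}_1\times\{e_{\mathcal{N}_2}\}$—the decisive move is to project $\HHH'$ onto $\mathcal{N}_{F/K}(\G_1)\times\mathcal{N}_{F/K}(\G_2)$, i.e.\ onto just \emph{two} of the $k$ factors; additivity then forces the image to satisfy $\delta=1+2=3$, landing exactly in the $(k=2,\delta(\HHH)=3)$ case of Lemma \ref{later3}. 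This targeted two-factor projection is the idea your proposal is missing.
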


\begin{proof} Because of the previous lemma we may assume that $k \geq 2$. We consider three subcases:\\
\\
If $k \geq 2$ and if $\U_2'$ is finite, then $\U_2$ is finite and $\HHH$ is isogenous to $\HHH^{\ast} = \mu_2(\HHH') \otimes_K F$. Hence, $\delta(\HHH^{\ast}) = \delta(\HHH)$ and $\delta(\HHH^{\ast})$ is odd. Since $[K:F] =2$, $\delta(\mathcal{N}_2 \otimes_K F)$ is even. It follows that 
$\HHH^{\ast}$ is a proper algebraic subgroup of $\mathcal{N}_2 \otimes_K F$.
Replacing $\G$ by $\G^{\ast} = \prod_{j = 2}^{\scr 2} \G_j$ and $\HHH'$ by $\mu_2(\HHH')$, we infer the statement by induction on $k$.\\
\\
If $k \geq 2$ and if $\U_2' = \mathcal{N}_1 \times \{e_{\mathcal{N}}\}$, then $\HHH^{\ast} $ must be a proper subgroup of $\mathcal{N}_2$ for otherwise $\HHH'$ would not be a proper subgroup of $\mathcal{N}$. As $\delta(\mathcal{N}_1 \otimes_F K) = 2$, we find that 
$$\delta\big(\HHH^{\ast}\big) = \delta(\HHH) - \delta(\mathcal{N}_1 \otimes_K F) = \delta(\HHH) - 2.$$
So, $\delta\big(\HHH^{\ast}\big)$ is odd and $\HHH^{\ast}$ is a proper subgroup of $\mathcal{N}_2 \otimes_K F$. Replacing $\G$ by $\G^{\ast} = \prod_{j = 2}^{\scr 2} \G_j$ and $\HHH'$ by $\mu_2(\HHH')$, we deduce the claim by induction on $k$.\\
\\
There is one subcase left: Assume that $k \geq 2$ and that $\U_2'$ is infinite, but a proper subgroup of $\mathcal{N}_1 \times \{e_{\mathcal{N}}\}$. Recall that $\mathcal{N} = \prod_{j = 1}^k \mathcal{N}_{F/K}(\G_j)$ and consider the subgroup $\HHH'_2  \subset \mathcal{N}_{F/K}(\G_2)$ arising
from projection of $\HHH' \subset \mathcal{N}$ to $\mathcal{N}_{F/K}(\G_2)$. There are two possibilities:
\begin{enumerate}
 \item $\HHH_2'$ is a proper algebraic subgroup of $\mathcal{N}_{F/K}(\G_2)$. 
Then $\HHH_2'$ cannot be finite for otherwise $p_{\G|\HHH}$ would not be surjective. So, $\HHH_2'$ is an infinite proper algebraic subgroup $\mathcal{N}_{F/K}(\G_2)$.
Replacing $\G$ by $\G_2$ and $\HHH'$ by $\HHH_2'$, we reduce to the case $k =1$ and $\delta(\HHH) = 1$ which was treated in the previous lemma.
\item $\HHH_2'$ equals $\mathcal{N}_{F/K}(\G_2)$. Then we write $\mathcal{N}_2^{\ast}$ for $\mathcal{N}_{F/K}(\G_2)$. We
replace $\G$ by $\G_1 \times \G_2$ and $\HHH'$ by its image $\V'$ in $\mathcal{N}_1 \times \mathcal{N}_2^{\ast}$ arising from the projection.  Moreover, we replace $\mu_2: \HHH' \longrightarrow \mathcal{N}_2$ by the projection $\mu_2^{\ast}: \V' \longrightarrow \mathcal{N}_2^{\ast}$. Let then $\V = \V' \otimes_K F$ and observe
that $\U_2' = ker\,\mu_2$ is canonically isomorphic to the kernel of $\mu_2^{\ast}$. The assumptions of our last case imply that $\delta(\U_2) = 1$. As a result,

$$\delta(\V) = \delta\big(\U_2\big) + \delta(\mathcal{N}_2^{\ast} \otimes_K F) = 1 + 2 = 3.$$
In other words, we have reduced ourselves to the case treated in the previous lemma.
\end{enumerate}
Everything is proved.
\end{proof}
\subsection{Proof of the proposition. Two corollaries} The three lemmas imply the main result of this section.
\begin{proof}[Proof of Proposition \ref{1234}] Because of Proposition 4.2.1\,\,we may assume that the commutative group variety $\G$ from the statement of Proposition \ref{1234} equals the maximal plurisimple quotient $\overline{\G} = \G/\G_{ps}$. Then $\G$ is isogenous to a product $\prod_{j=1}^k \G_j$ of simple algebraic groups $\G_j$ of respectively positive dimension. Since the $\delta$-invariant is constant on an isogeny class, the proposition is then reduced further to the case when $\G$ equals
$\prod_{j=1}^k \G_j$. In other words, we have reduced ourselves to the setting of the three lemmas above. If in this situation $\delta(\HHH)$ is even and if $\mu_2$ is surjective, then $k \geq 2$ and the proposition follows from Lemma \ref{later1}. If $\delta(\HHH)$ is even and if $\mu_2$ is not surjective, then
$k \geq 2$ and there are three possibilities:
\begin{enumerate} 
\item The algebraic group $\U_2' = ker\,\mu_2$ is finite. Then $\HHH'$ and $\mu_2(\HHH')$ are isogenous. We replace $\G$ by $\G^{\ast} = \prod_{j = 2}^k \G_j$ and $\HHH'$ by $\mu_2(\HHH')$. The proposition is then deduced by induction on $k$.
\item The algebraic group $\U_2'$ equals $\mathcal{N}_1 \times \{e_{\mathcal{N}_2}\}$. Then, letting $\HHH^{\ast} = \mu_2(\HHH')\otimes_K F$, $\delta(\HHH^{\ast}) = \delta(\HHH) - \delta(\mathcal{N}_1 \otimes_K F) = \delta(\HHH) - 2$ is even and we infer the proposition by induction on $k$ as in the first case.
\item The algebraic group $\U_2'$ is infinite, but a proper subgroup of $\mathcal{N}_1 \times \{e_{\mathcal{N}_2}\}$. Then $\delta(\HHH^{\ast}) = \delta(\HHH) - \delta(\U_2) = \delta(\HHH) - 1$ is odd, and we infer the proposition using Lemma \ref{later4}.

\end{enumerate}
So far, we have shown the proposition in the case when $\delta(\HHH)$ is even. Finally, if $\delta(\HHH)$ is odd, then the proposition follows from Lemma \ref{later4}.
\end{proof}
Let $\G$ be a commutative group variety over $F$ and let $w: {\rm{G}} \longrightarrow \prod_{j= 1}^k \G_j$ be a maximal homomorphism. 
\begin{corollary} \label{1234B} Assume the situation of the proposition. Suppose that, with notations as in the proposition, for each $v$ such that
$v_{\ast}(p_{\G|\HHH})$ is defined over $K$ the image $im\,v = \G' \otimes_K F$ is not simple. Then there are distinct $i, j = 1,...., k$ with the property that
${\rm{G}}_i$ is isogenous to ${\rm{G}}_j^{\scriptscriptstyle h}$.

\end{corollary}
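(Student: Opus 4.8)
The plan is to revisit the proof of Proposition \ref{1234} and track, case by case, which of the three auxiliary lemmas was invoked and what it produced. The key observation is that Lemma \ref{later3} and Lemma \ref{later4} both yield a homomorphism $v$ with $v_{\ast}(p_{\rm{G}|{\rm{H}}})$ defined over $K$ and with $\G' \otimes_K F$ \emph{simple}. Hence, under the hypothesis of the corollary (that no such $v$ has simple image), the proof of Proposition \ref{1234} cannot have terminated through an application of Lemma \ref{later3} or Lemma \ref{later4}. Inspecting the case analysis in the proof of the proposition, the only remaining branch is the very first one: $\delta({\rm{H}})$ is even and $\mu_2$ is surjective, so that $k \geq 2$ and the conclusion comes from Lemma \ref{later1}. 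As before, we may reduce to the situation where $\G$ equals the product $\prod_{j=1}^k\G_j$, since the $\delta$-invariant and the formation of Weil restrictions are compatible with isogenies (Lemma \ref{WR}, Corollary \ref{---}), and a maximal homomorphism of an isogenous group has the same number of factors.

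First I would argue that the reductions performed inside the proof of Lemma \ref{later1} — replacing $\HHH'$ by $\mu(\HHH')$ for an isogeny $\mu = [k]_{\mathcal{N}_1}\times id.$, and replacing $\G$ by an isogenous group when $v$ turns out not to be surjective — do not affect the conclusion we seek. Indeed, each such replacement changes $\G$ only within its isogeny class, hence leaves the simple factors $\G_j$ (up to isogeny) and their conjugates $\G_j^{\scr h}$ unchanged up to isogeny. Consequently, if Statement 2 of Lemma \ref{later1} produces distinct $i,j$ with $\G_i$ isogenous to $\G_j^{\scr h}$ for the reduced data, the same conclusion holds for the original $\G$. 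This is exactly Claim \ref{099} at the end of the proof of Lemma \ref{later1}: there exists $j \in \{2,\dots,k\}$ with $v_j\colon\G_j\to\G_1^{\scr h}$ nontrivial, and since $\G_j$ and $\G_1^{\scr h}$ are both simple, a nontrivial homomorphism between them is an isogeny; taking $i=1$ gives the desired pair.

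The main obstacle I anticipate is bookkeeping, not mathematics: one must verify carefully that the hypothesis ``for \emph{each} $v$ such that $v_{\ast}(p_{\G|\HHH})$ is defined over $K$, the image is not simple'' genuinely forbids the branches of the proof of Proposition \ref{1234} that end in Lemma \ref{later3} or Lemma \ref{later4}, rather than merely forbidding \emph{one particular choice} of $v$. This is fine because those two lemmas assert the existence of a $v$ with simple image unconditionally within their hypotheses; so if their hypotheses were met, the corollary's assumption would be contradicted. The only subtlety is the inductive branches in the proof of Proposition \ref{1234} (the three possibilities when $\delta({\rm{H}})$ is even and $\mu_2$ is not surjective, and the induction on $k$): in each of these the problem is reduced to a strictly smaller $k$, and one must confirm that if the reduced problem falls under Lemma \ref{later1}'s Statement 2, then the pair $(i,j)$ it produces — built from factors among $\G_2,\dots,\G_k$ — still consists of factors of the original $\G$, which it does since the reduction only discards factors and composes with isogenies. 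Assembling these observations, the corollary follows: the only way the proof of Proposition \ref{1234} can conclude under the corollary's hypothesis is via Lemma \ref{later1}, whose Statement 2 furnishes the required $i,j$ with $\G_i$ isogenous to $\G_j^{\scr h}$.
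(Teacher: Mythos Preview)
Your approach is correct and in fact somewhat cleaner than the paper's, though the two differ in one branch of the case analysis. Both proofs reduce to $\G = \prod_{j=1}^k \G_j$ and ultimately appeal to Statement~2 of Lemma~\ref{later1}. The divergence is in the case where $\mu_2$ is surjective but $\ker\mu_2$ has positive dimension (equivalently, $\delta(\HHH)$ is odd while $\HHH'$ surjects onto each $\mathcal{N}_{F/K}(\G_j)$). You rule this case out entirely, observing that Lemma~\ref{later4} would then produce a $v$ with simple image, contradicting the hypothesis. The paper instead gives a direct argument in this case: it passes to the quotient $\overline{\HHH} = \HHH/\U_2'$, which is the graph of a nontrivial homomorphism $w\colon \mathcal{N}_2 \to \mathcal{N}_1/\U''$, decomposes $w$ into components $w_j, v_j\colon \G_j \to (\mathcal{N}_1/\U'')\otimes_K F$, and shows that some nontrivial $w_j$ or $v_j$ exhibits $\G_j$ as isogenous to $\G_1^{\scr h}$. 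Your route avoids this repetition of the graph analysis from Lemma~\ref{later1}; the paper's route is more self-contained in that it does not circle back to Lemma~\ref{later4}.

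One point you leave implicit deserves a sentence: when the induction in the proof of Proposition~\ref{1234} passes from $(\G,\HHH')$ to $(\G^\ast,\mu_2(\HHH'))$, you need that a $v^\ast$ with simple image for the reduced data yields one for the original. This holds because, writing $q\colon \G \to \G^\ast$ for the projection, one has $v^\ast \circ q \circ p_{\G|\HHH} = (v^\ast \circ p_{\G^\ast|\HHH^\ast}) \circ (q_{\mathcal{N}}|_{\HHH'} \otimes_K F)$, and both factors on the right are defined over $K$. You gesture at this (``the reduction only discards factors and composes with isogenies'') but it is worth stating, since it is what makes the hypothesis of the corollary hereditary along the induction.
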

\begin{proof} As was observed in the proof of the proposition, we may suppose that $w$ is the identity. Moreover, we may assume that, for all $j = 1,..., k$, $w({{\rm{H}}}') = \HHH'$ projects surjectively onto $\mathcal{N}_{F/K}({{{\rm{G}}}}_j)$. For otherwise we can reduce to the situation where $k = \delta(\HHH) = 1$, and Proposition \ref{1234} yields then a homomorphism $v$ with necessarily simple image. \\
\\
We will adopt the notations from the proof of the three lemmas and the proposition. If the image of $\mu_2$ is not surjective, then $k \geq 2$ and we replace
$\G$ by $\G^{\ast} = \prod_{j= 2}^k \G_j$ and $\HHH'$ by $\mu_2(\HHH')$, so that the assertion follows by induction on $\delta(\G)$. Thus, we will suppose that $\mu_2$ is surjective. If $ker\,\mu_2$ has dimension zero, then
$\delta({\rm{H}})$ is even and the assertion follows from Lemma \ref{later1}. If $\U_2' = ker\,\mu_2$ has positive dimension, then, as $\mu_2$ is surjective and $\HHH'$ is proper, $\U_2' $ is a proper algebraic subgroup of $\mathcal{N}_1 \times \{e_{\mathcal{N}_2}\}$ of the form
${{\rm{U}}}'' \times \{e_{\mathcal{N}_2}\}$. The resulting quotient group $\overline{{\rm{H}}} = {{\rm{H}}}/\U_2'$ is the graph
of a non-trivial quotient map $w: \mathcal{N}_2 \longrightarrow \mathcal{N}_1/{\rm{U}}''$ over $K$. Hence, for $j = 2,..., k$ there are algebraic homomorphisms $w_j, v_j: \G_j \longrightarrow (\mathcal{N}_1/{{\rm{U}}}'') \otimes_K F$, not all trivial, such that
\begin{equation}
 w(\underline{\xi}) = \sum_{j=2}^k w_j(\xi_j) + \sum_{j=2}^k v_j^{\scr h}(\xi_j^{\scr h})
\end{equation}
for each closed point 
$$\underline{\xi} = \big((\xi_2, \xi_2^{\scr h}),..., (\xi_k, \xi_k^{\scr h})\big) \in \mathcal{N}_2.$$
Recall that $\G_j$ is a simple algebraic group for all $j = 2,.., k$. So, a homomorphism $w_j$ resp.\,\,$v_j$ is non-zero if and only if it is an isogeny. Furthermore, the two simple subgroups ${{\rm{V}}} = \G_1 \times \{e_{\G_1^{\scr h}}\}$ and ${{\rm{U}}} = {{\rm{U}}}'' \otimes_K F$  of $\mathcal{N} \otimes_K F$ are distinct for ${{\rm{V}}} \neq {{\rm{V}}}^{\scr h}$, whereas ${\rm{U}} = {\rm{U}}^{\scr h}$. Hence, they have finite intersection. 
Consequently, if there is a $j_{\scr \ast} = 2,..., k$ such that $w_{j_{\scr \ast}} \neq 0$, then $w_{j_{\scr \ast}}$ is onto.
is surjective. As $\big(\mathcal{N}_1/{\rm{U}}''\big) \otimes_K F$ is isogenous to $\G_1$ via the map
$$\G_1 \stackrel{id. \times 0}{\longrightarrow} \G_1 \times \{e_{\G_1^{\scr h}}\} \hookrightarrow \big(\mathcal{N}_1/{\rm{U}}'\big) \otimes_K F,$$
the corollary follows by choosing $i =1$ and $j = j_{\scr \ast}$. On the other hand, if always $w_{j_{\scr \ast}} = 0$, then Statement (4.3.6) implies that there is a $j_{\scr \ast} = 2,..., k$ such that
$v_{j_{\scr \ast}} \neq 0$. Replacing $w_{j_{\scr \ast}}$ by $v_{j_{\scr \ast}}$ in the last argument, we infer the corollary in this situation. Everything is proved.
\end{proof}
From Lemma \ref{later4} one also gets
\begin{corollary} \label{1234C} Assume the situation of the proposition. Suppose that, with notations as in the proposition, for each $v$ such that
$v_{\ast}(p_{\G|\HHH})$ is defined over $K$ the image $im\,v = \G' \otimes_K F$ is not simple. Then $\delta(\HHH)$ is even.

\end{corollary}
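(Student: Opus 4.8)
The plan is to read off the corollary as the contrapositive of Lemma \ref{later4}. Concretely: I would assume $\delta(\HHH)$ is odd and then use Lemma \ref{later4} to manufacture a surjective homomorphism $v\colon\G\to\G'\otimes_K F$ whose target $\G'\otimes_K F$ is simple over $F$ and for which $v_{\ast}(p_{\G|\HHH})$ is defined over $K$; this directly contradicts the standing hypothesis that every such $v$ has non-simple image, so $\delta(\HHH)$ must be even.

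First I would place myself inside the hypotheses of Lemma \ref{later4} by repeating the reduction from the proof of Proposition \ref{1234}. Applying Proposition \ref{upp} to the universal homomorphism $\pi\colon\G\to\overline{\G}$ shows that $\pi_{\mathcal{N}}(\HHH')$ is proper in $\mathcal{N}_{F/K}(\overline{\G})$, and the relation $p_{\overline{\G}}\circ(\pi_{\mathcal{N}}\otimes_K F)=\pi\circ p_{\G}$ gives $p_{\overline{\G}}\big(\pi_{\mathcal{N}}(\HHH')\otimes_K F\big)=\overline{\G}$; moreover, since $\pi_{\mathcal{N}}\otimes_K F$ is defined over $K$, any homomorphism $\overline{v}$ produced for the pair $(\overline{\G},\pi_{\mathcal{N}}(\HHH'))$ yields $\overline{v}\circ\pi$ for $(\G,\HHH')$ with $(\overline{v}\circ\pi)_{\ast}(p_{\G|\HHH})$ still defined over $K$ and the same (simple) image. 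So I may replace $\G$ by $\overline{\G}$, and then, because $\overline{\G}$ is isogenous to a product $\prod_{j=1}^k\G_j$ of simple algebraic groups of positive dimension and the $\delta$-invariant is constant on isogeny classes, I may assume outright that $\G=\prod_{j=1}^k\G_j$, with $\HHH'\subset\mathcal{N}_{F/K}(\G)=\prod_{j=1}^k\mathcal{N}_{F/K}(\G_j)$ and $\delta(\HHH)$ still odd. Unlike in Corollary \ref{1234B}, no surjectivity of $\HHH'$ onto the individual factors is needed here, since Lemma \ref{later4} makes no such demand.

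With the reduction in place the corollary is then immediate: Lemma \ref{later4} applies verbatim and delivers the desired $v$ with simple target and $v_{\ast}(p_{\G|\HHH})$ defined over $K$, which is the sought contradiction, so $\delta(\HHH)$ cannot be odd. The only real work is the reduction step, where one must be sure that passing through $\overline{\G}=\G/\G_{ps}$ transports both the hypothesis (``no $v$ over $K$ with simple image'') and the relevant value of $\delta(\HHH)$ faithfully; this is exactly the bookkeeping already carried out for Proposition \ref{1234} and Corollary \ref{1234B}, and I expect the passage through $\overline{\G}$ — keeping track of how $\HHH'$ and its $\delta$-invariant interact with $\mathcal{N}_{F/K}(\G_{ps})$ — to be the fussiest point, although it introduces no new ideas beyond those already in hand.
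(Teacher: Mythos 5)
Your proof is essentially the paper's: Corollary \ref{1234C} is obtained there simply as the contrapositive of Lemma \ref{later4}, combined with the same reduction (via Proposition \ref{upp} and the isogeny-invariance of the $\delta$-invariant) to a product of simple groups that is carried out in the proof of Proposition \ref{1234} and reused for Corollary \ref{1234B}. Your write-up is in fact more explicit than the paper's one-line derivation, notably in checking that the hypothesis and the property of $v_{\ast}\big(p_{\G|\HHH}\big)$ being defined over $K$ transport through the replacement of $v$ by $\overline{v}\circ\pi$.
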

\section{Proof of the main criterion for descent and weak descent. Two corollaries} We denote by $F$ a subfield of $\C$ and set $K = F \cap \R$. It is assumed that $F$ is stable with respect to complex conjugation and that $[F: K] = 2$. We let $\G$ be a commutative group
over $F$ and consider a non-zero real-analytic one-parameter homomorphism $\Psi$ with values in $\G(\C)$. The assertion of the main criterion for (weak) descent to $K$ (Theorem \ref{weakdescent}) is that the homomorphism $\Psi$ descends (weakly) to $K$ if and only if ${{\rm{H}}}$, the Zariski-closure of $\Psi_{\mathcal{N}}$, has dimension equal $\dim\,\G$ (resp.\,is a proper algebraic subgroup of $\G \times \G^{\scr h}$).
\begin{proof}[Proof of Theorem \ref{weakdescent}] Let $\mathcal{N} = \mathcal{N}_{F/K}(\G)$. By Corollary 3.3.3\,\,the homomorphism $\Psi_{\mathcal{N}}$ takes values in $\mathcal{N}(\R)$. As the morphism
$\rho_{\ast}: \mathcal{N}(\C) \longrightarrow \mathcal{N}(\C)$ from Sect.\,2.2 fixes real points (see Sect.\,2.3), we find that $\Psi_{\mathcal{N}}(\R) = \big(\Psi_{\mathcal{N}}(\R)\big)^{\scr h}$. So, $\HHH = \HHH^{\scr h}$. It follows that
$\HHH$ admits a model $\HHH' \subset \mathcal{N}$ over $K$.\\
\\
If $\dim\,\HHH = \dim\,\G$ then $w = p_{\G|\HHH}$ is an isogeny. Any isogeny
$v: \G \longrightarrow \HHH$ such that $v \circ w$ is multiplication with an integer is then as required in the definition of descent to $K$. This shows the first part of the theorem.\\
\\
The assertion concerning weak descents has an easy and a more difficult direction. The easy direction is the if-part. Namely, if there is a homomorphism $v: \G \longrightarrow \G' \otimes_K F$ onto an algebraic group of positive dimension such that $v_{\ast}(\Psi)(\R) \subset \G'(\R)$, then ${\rm{W}} = {\rm{W}}^{\scr h}$ for ${\rm{W}} = \G' \otimes_K F$. As the morphism
$\rho_{\ast}: \G'(\C) \longrightarrow \G'(\C)$ fixes real points, we find that 
$$v_{\ast}(\Psi) = \rho_{\ast} \circ v_{\ast}(\Psi) =\big(v_{\ast}(\Psi)\big)^{\scr h}.$$
Hence, the image of $\big(v_{\ast}(\Psi)\big)_{\mathcal{N}}$ is contained in the the set of real points of the diagonal 
$$\Delta = \mathcal{N}(id.)(\G') \otimes_K F \subset {\rm{W}} \times {\rm{W}} = {\rm{W}} \times {\rm{W}}^{\scr h} = \mathcal{N}_{F/K}\big({\rm{V}}\big) \otimes_K F.$$
So, ${\rm{H}} \subset v_{\mathcal{N}}^{\scr - 1}(\Delta)$ is a proper algebraic subgroup in $\G \times \G^{\scr h}$. The easy direction is shown. \\
We proceed to the proof of the more difficult direction which rests on the results from the two previous sections. To this end, we let $\HHH'$ be as in the previous proof and let
$\pi: \G \longrightarrow \overline{\G}$ be the universal homomorphism. Proposition \ref{upp} teaches that ${{\rm{V}}}' = \pi_{\mathcal{N}}({\rm{H}}')$ is a proper algebraic subgroup
of $\mathcal{N}_{F/K}(\overline{\G})$. Since $p_{\overline{\G}} \circ (\pi_{\mathcal{N}}\otimes_K F) = \pi \circ p_{\G}$, we have $\overline{\G} = p_{\overline{\G}}(\V)$ for $\V = \V' \otimes_K F$. So, Proposition \ref{1234} applied to ${{\rm{V}}}'$ implies then the only-if-part in Theorem \ref{weakdescent}. Theorem \ref{weakdescent} follows.
\end{proof}

In the definition of weak descent to $K$ (see Sect.\,2.3) one can assume that the target group ${\rm{G}}'$ is {simple} over $K$. For if $\G'$ is not simple, then $\G'$ admits a simple quotient of positive dimension and one may replace $v$ by the composition of the quotient map with $v$. According to Lemma\,\ref{cor789}, if $\G'$ is simple, then ${\rm{G}}' \otimes_K F$ is either simple over $F$ or it is isogenous to a product ${{\rm{U}}} \times {{\rm{U}}}^{\scr h}$ with ${{\rm{U}}}$ a simple algebraic group over $F$. In view of applications, it is important to examine when the first possibility can be achieved. 
\begin{example} \label{++++} Let $\rm{A}$ be a simple complex abelian variety of positive dimension. Assume that $\rm{A}$ is neither isogenous to its conjugate nor to an abelian variety definable over $\R$ and let $\psi: \R \longrightarrow \rm{A}(\C)$ be a non-zero real-analytic homomorphism. We know from the last chapter that
 the homomorphism $\Psi = \psi_{\mathcal{N}}$ to $({\rm{A}} \times {\rm{A}}^{\scr h})(\C)$ descends to $\R$. On the other hand, the assumptions on $\A$ imply that there is no algebraic group $\G'$ over $\R$ of dimension $0 < \dim\,\G' < \dim\,{\rm{A}} \times {\rm{A}}^{\scr h}$ such that $\G' \otimes_{\R} \C$ is a quotient of ${\rm{A}} \times {\rm{A}}^{\scr h}$. Hence, $\Psi$ cannot descend weakly to $\R$ via a simple quotient $\G' \otimes_{\R} \C$.
\end{example} 
Situations similar to the one in the example will appear frequently in applications, and the corollaries below are designed to deal with them. As in Theorem \ref{weakdescent} we assume that $F$ is stable with respect to complex conjugation and that $[F:K] = 2$. We consider a homomorphism $\Psi$ to $\G(\C)$ and let $\rm{H}$ be the Zariski-closure of $\Psi_{\mathcal{N}}(\R)$ over $F$. Let $v: {\rm{G}} \longrightarrow \prod_{1=j}^k {{{\rm{G}}}}_j$ be an arbitrary maximal homomorphism.
 \begin{corollary} \label{weakdescentodd1} \label{321} If ${{\rm{H}}}$ is a proper algebraic subgroup of $\G \times \G^{\scr h}$, then $\Psi$ descends weakly to $K$ via a simple quotient ${\rm{G}}' \otimes_K F$ unless there are two distinct $i, j = 1,..., k$ such that $\G_i$ and the complex conjugate of $\G_j$ are isogenous. 
 \end{corollary}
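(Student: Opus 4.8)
The plan is to combine Theorem~\ref{weakdescent} with the structural results of Section~4.3, in particular Corollary~\ref{1234B}. First I would invoke the more difficult direction of Theorem~\ref{weakdescent}: since $\HHH$ is a proper algebraic subgroup of $\G\times\G^{\scr h}$, the homomorphism $\Psi$ descends weakly to $K$, so there is a surjective homomorphism $v\colon\G\longrightarrow\G'\otimes_K F$ onto a group variety $\G'$ of positive dimension over $K$ with $v_{\ast}(\Psi)(\R)\subset\G'(\R)$. As remarked just before Example~\ref{++++}, replacing $\G'$ by a simple quotient we may assume $\G'$ is simple over $K$, and by Lemma~\ref{cor789} its base extension $\G'\otimes_K F$ is then either simple over $F$ or isogenous to a product ${\rm{U}}\times{\rm{U}}^{\scr h}$ with ${\rm{U}}$ simple over $F$. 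The corollary asks us to show that the first alternative can be arranged unless the stated isogeny obstruction is present.

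Second, I would trace through the proof of Theorem~\ref{weakdescent} to see that the homomorphism $v$ is not produced abstractly but via Proposition~\ref{1234}, applied to the proper algebraic subgroup ${\rm{V}}'=\pi_{\mathcal{N}}(\HHH')$ of $\mathcal{N}_{F/K}(\overline{\G})$, where $\pi\colon\G\longrightarrow\overline{\G}$ is the universal homomorphism and $\HHH'\subset\mathcal{N}_{F/K}(\G)$ is the $K$-model of $\HHH$. Here $\overline{\G}$ is isogenous to a product of simple groups, and a maximal homomorphism $w\colon\G\longrightarrow\prod_{j=1}^k\G_j$ factors as a maximal homomorphism of $\overline{\G}$ followed by the universal homomorphism (Corollary~\ref{---}), so the $\G_j$ are exactly the simple factors appearing in Corollary~\ref{1234B}. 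Now apply Corollary~\ref{1234B} in its contrapositive form: if for \emph{every} $v$ with $v_{\ast}(p_{\overline{\G}|{\rm{V}}})$ defined over $K$ the image $\G'\otimes_K F$ fails to be simple, then there are distinct $i,j$ with $\G_i$ isogenous to $\G_j^{\scr h}$. Contrapositively, absent such a pair $i,j$, some choice of $v$ yields a simple image $\G'\otimes_K F$, and composing with $p_{\overline{\G}}$ and $\pi$ produces the required weak descent of $\Psi$ via a simple quotient.

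The bookkeeping step that needs care is the passage between ``$v_{\ast}(p_{\overline{\G}|{\rm{V}}})$ is defined over $K$'' — the hypothesis of the $4.3$-lemmas — and ``$v_{\ast}(\Psi)(\R)\subset\G'(\R)$'' — the definition of weak descent. One direction is in the proof of Theorem~\ref{weakdescent}: a $v$ with $v_{\ast}(p_{\overline{\G}|{\rm{V}}})$ defined over $K$ gives, after the substitutions of Proposition~\ref{1234}, a weak descent of $\Psi$ through the same $\G'$. So a simple-image $v$ on the Weil-restriction side transports to a simple-quotient weak descent of $\Psi$. The other subtlety is making sure the simple factors $\G_j$ referred to in the statement of the corollary (``an arbitrary maximal homomorphism $v\colon\G\longrightarrow\prod_{j}\G_j$'') are the same up to isogeny as those attached to $\overline{\G}$; this is exactly Corollary~\ref{---} together with the fact that the $\delta$-invariant and the isogeny types of simple factors of a maximal homomorphism depend only on the isogeny class, which is Lemma~\ref{oiip}.

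I expect the main obstacle to be purely expository: checking that the reduction in the proof of Proposition~\ref{1234} (first to $\overline{\G}$, then to a product of simple groups) does not disturb the correspondence between simplicity of $\G'\otimes_K F$ and the existence of a pair $i,j$ with $\G_i\sim\G_j^{\scr h}$, since that reduction replaces $\G$ by an isogenous group and permutes or groups the factors $\G_j$. One must confirm that the isogeny class of each simple factor — hence the truth value of the statement ``some $\G_i$ is isogenous to the conjugate of some $\G_j$'' — is invariant under these manipulations. Granting that, the corollary is an immediate packaging of Theorem~\ref{weakdescent}, Lemma~\ref{cor789} and Corollary~\ref{1234B}, and no further transcendence input or new estimate is needed.
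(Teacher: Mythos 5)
Your argument is correct and is essentially the paper's own proof: the paper derives Corollary \ref{321} by exactly the route you describe, namely the only-if direction of Theorem \ref{weakdescent} (Proposition \ref{1234} applied to ${\rm{V}}' = \pi_{\mathcal{N}}(\HHH')$ in $\mathcal{N}_{F/K}(\overline{\G})$) combined with the contrapositive of Corollary \ref{1234B}, with the identification of the simple factors via Corollary \ref{---} and Lemma \ref{oiip}. Your expanded bookkeeping about transporting ``$v_{\ast}(p_{\overline{\G}|{\rm{V}}})$ defined over $K$'' into a weak descent of $\Psi$ through a simple quotient is precisely what the paper leaves implicit in its one-line justification.
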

 \begin{corollary} \label{weakdescentodd} If $\delta({\rm{H}})$ is odd, then $\Psi$ descends weakly to $K$ via a simple quotient ${\rm{G}}' \otimes_K F$.
\end{corollary}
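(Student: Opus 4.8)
The plan is to reduce Corollary \ref{weakdescentodd} to Proposition \ref{1234} together with Corollary \ref{1234C}, using the algebraic setup already in place in the proof of Theorem \ref{weakdescent}. First I would set $\mathcal{N} = \mathcal{N}_{F/K}(\G)$ and recall from the proof of Theorem \ref{weakdescent} that, since $\Psi_{\mathcal{N}}(\R)$ is fixed pointwise by $\rho_{\ast}$, we have ${\rm{H}} = {\rm{H}}^{\scr h}$, hence ${\rm{H}}$ admits a model ${\rm{H}}' \subset \mathcal{N}$ over $K$. Because $\Psi$ has Zariski-dense image in $\G(\C)$ and $p_{\G} \circ (\Psi_{\mathcal{N}})$ recovers $\Psi$ up to identification, we get $p_{\G}({\rm{H}}) = \G$; so ${\rm{H}}'$ satisfies the hypotheses of Proposition \ref{1234}. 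This already yields a surjective homomorphism $v \colon \G \longrightarrow \G' \otimes_K F$ with $v_{\ast}(p_{\G|{\rm{H}}})$ defined over $K$, and as noted right before the statement, composing with a simple-quotient map we may assume $\G'$ simple over $K$.

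The remaining issue is exactly the one flagged in Example \ref{++++}: a simple $\G'$ over $K$ need not give a group $\G' \otimes_K F$ that is simple \emph{over $F$} — by Lemma \ref{cor789} it could instead be isogenous to ${\rm{U}} \times {\rm{U}}^{\scr h}$. To rule this out I would argue by contraposition via Corollary \ref{1234C}: that corollary says that if for \emph{every} admissible $v$ the image $\G' \otimes_K F$ fails to be simple over $F$, then $\delta({\rm{H}}')$ — equivalently $\delta({\rm{H}})$, since the $\delta$-invariant is constant on isogeny classes and $\delta({\rm{H}}) = \delta({\rm{H}}')$ because ${\rm{H}} = {\rm{H}}' \otimes_K F$ and passing to $F$ doubles... no — more carefully, one must track that $\delta({\rm{H}})$ and $\delta({\rm{H}}')$ are related by the fact that ${\rm{H}} = {\rm{H}}' \otimes_K F$, so $\delta({\rm{H}}) = 2\,\delta({\rm{H}}')$ only in the split case; but Corollary \ref{1234C} is stated with the same ${\rm{H}}$ as in Proposition \ref{1234}, so I should simply invoke it in the form it is given, concluding that $\delta({\rm{H}})$ is even. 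Since we are assuming $\delta({\rm{H}})$ is odd, this is a contradiction, so there must exist an admissible $v$ with $\G' \otimes_K F$ simple over $F$. Replacing the original $\G'$ by a simple quotient of this one over $K$ — which keeps $\G' \otimes_K F$ simple over $F$, as a simple quotient of a simple group is an isogeny — completes the argument.

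The final step is to translate "$v_{\ast}(p_{\G|{\rm{H}}})$ is defined over $K$" into "$v_{\ast}(\Psi)$ factors through $\G'(\R)$", which is the actual definition of weak descent. Here I would use that $p_{\G} \circ \Psi_{\mathcal{N}} = \Psi$ together with Corollary 3.3.3 (that $\Psi_{\mathcal{N}}$ takes values in $\mathcal{N}(\R)$, so its image lands in ${\rm{H}}'(\R) \subset {\rm{H}}(\C)$); then $v \circ p_{\G|{\rm{H}}}$ being defined over $K$ means it comes from a $K$-morphism ${\rm{H}}' \longrightarrow \G'$, whose real points map ${\rm{H}}'(\R)$ into $\G'(\R)$. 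Composing with $\Psi_{\mathcal{N}}$ gives $v_{\ast}(\Psi)(\R) \subset \G'(\R)$, and $\Psi$ is non-zero by hypothesis, so all requirements of weak descent to $K$ are met. The main obstacle is the bookkeeping with the $\delta$-invariant and making sure Corollary \ref{1234C} is applied to precisely the subgroup ${\rm{H}}'$ coming from $\Psi_{\mathcal{N}}$, so that the parity hypothesis "$\delta({\rm{H}})$ odd" lines up with the parity conclusion of that corollary; everything else is a direct assembly of results already proved in Sections 4.3 and 4.4.
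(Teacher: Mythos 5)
Your argument is correct and is essentially the paper's own proof: the paper obtains this corollary precisely by the contrapositive appeal to Corollary \ref{1234C} (combined with Proposition \ref{1234} and the bookkeeping from the proof of Theorem \ref{weakdescent} — ${\rm{H}} = {\rm{H}}^{\scr h}$ gives the model ${\rm{H}}'$, Zariski-density gives $p_{\G}({\rm{H}}) = \G$, and ``defined over $K$'' translates into $v_{\ast}(\Psi)(\R) \subset \G'(\R)$) that you spell out, and your mid-proof worry about relating $\delta({\rm{H}})$ to $\delta({\rm{H}}')$ is resolved exactly as you conclude, since Corollary \ref{1234C} is stated for the same ${\rm{H}} = {\rm{H}}' \otimes_K F$. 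The only detail left implicit is that ${\rm{H}}'$ is a \emph{proper} subgroup of $\mathcal{N}_{F/K}(\G)$, as required in Proposition \ref{1234}, but this is immediate from the parity hypothesis because $\delta(\G \times \G^{\scr h}) = 2\delta(\G)$ is even.
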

These two consequences follow from Corollary \ref{1234B} and Corollary \ref{1234C}.

\chapter{Inherited descent} 
This chapter is devoted to the proof of Theorem \ref{inherited} and Theorem \ref{inherited1}. We first formulate two algebraic results which, morally speaking, form the skeleton of the two theorems. Then, in the third section, we show Theorem \ref{inherited} and Theorem \ref{inherited1}.
 
\section{Inhertited $\mathbf{K}$-structures} We let $F/K$ be a Galois extension of fields of degree $[F:K] = 2$ and denote by $h$ the generator of $Gal(F|K)$. We consider a surjective homomorphism $\pi: \G \longrightarrow {\rm{U}}$ of group varieties over $F$. 
The first proposition arises in the context of inherited descents.
\begin{prop} \label{skeleton} Assume that there is an algebraic group $\G'$ over $K$ such that $\G$ is the extension $\G' \otimes_K F$ to scalars over $F$. If
${\rm{Hom}}\big(ker\,\pi, {{\rm{U}}}^{\scriptscriptstyle h}\big)$ is a torsion group, then there is an algebraic group ${\rm{U}}'$ over $K$
and an isogeny $v: {\rm{U}} \longrightarrow {\rm{U}}' \otimes_K F$ with the property that $v_{\ast}(\pi) = v \circ \pi$ is defined over $K$.

\end{prop}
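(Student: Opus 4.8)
The plan is to exploit the Galois-descent criterion from Lemma \ref{cl1}: a homomorphism between varieties of the form $\G_1'\otimes_K F \to \G_2'\otimes_K F$ is defined over $K$ if and only if it is fixed by $h$. First I would pass to the Weil restriction machinery. Since $\G = \G'\otimes_K F$, Subsect.\,3.2.3 furnishes the canonical action of $Gal(F|K)$ on $\G$, and conjugation of the homomorphism $\pi$ gives a second surjection $\pi^{\scr h}\colon \G \to {\rm{U}}^{\scr h}$ (here I use that $\G^{\scr h}=\G$ because $\G$ descends to $K$). The obstruction to $\pi$ itself being $h$-equivariant is measured by comparing $\pi$ and $\pi^{\scr h}$; the natural thing is to form the homomorphism $\pi\times\pi^{\scr h}\colon \G \to {\rm{U}}\times{\rm{U}}^{\scr h}$ and to let $\rm{U}''$ be the image, equivalently the Stein factorization. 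Because $\G=\G^{\scr h}$ and the pair $(\pi,\pi^{\scr h})$ is manifestly $h$-equivariant (conjugating swaps the two factors and fixes $\G$), the quotient $\G\to\rm{U}''$ is defined over $K$, say $\rm{U}''=\rm{U}''_0\otimes_K F$ with a surjection $w\colon \G'\to \rm{U}''_0$ over $K$. It remains to see that the two projections $\rm{U}''\to{\rm{U}}$ and $\rm{U}''\to{\rm{U}}^{\scr h}$ are isogenies.

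The key step is therefore to show $\ker(\pi)$ and $\ker(\pi^{\scr h})$ have finite intersection inside $\G$, so that $\rm{U}''$ is isogenous to ${\rm{U}}$ via the first projection. Write $\rm{K}_1=\ker\pi$ and $\rm{K}_2=\ker\pi^{\scr h}=(\ker\pi)^{\scr h}$. Their connected-component intersection $\rm{N}=(\rm{K}_1\cap\rm{K}_2)^o$ is a connected subgroup which, being stable under conjugation ($\rm{K}_1\cap\rm{K}_2$ is visibly $h$-stable), descends to $K$; it is also normal in $\G$. Now restrict $\pi$ to obtain a surjection $\rm{K}_2\to{\rm{U}}$ with kernel $\rm{K}_1\cap\rm{K}_2$, hence an injection $\rm{K}_2/(\rm{K}_1\cap\rm{K}_2)\hookrightarrow{\rm{U}}$; but $\rm{K}_2=\rm{K}_1^{\scr h}$, so $\rm{K}_2/\rm{N}$ is a subquotient of $\ker\pi$ that embeds, up to isogeny, into ${\rm{U}}^{\scr h}$ composed with conjugation — more precisely the inclusion $\rm{K}_2/(\rm{K}_1\cap\rm{K}_2)\hookrightarrow {\rm{U}}$ together with the conjugation identification exhibits a non-trivial element of ${\rm{Hom}}(\ker\pi,{\rm{U}}^{\scr h})$ of infinite order whenever $\dim\rm{K}_2/\rm{N}>0$. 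By hypothesis ${\rm{Hom}}(\ker\pi,{\rm{U}}^{\scr h})$ is torsion, which forces $\dim\rm{K}_2=\dim\rm{N}$, i.e. $\rm{K}_1\cap\rm{K}_2$ has finite index in each $\rm{K}_i$; since $\dim\rm{K}_1=\dim\rm{K}_2=\dim\G-\dim{\rm{U}}$, both projections $\rm{U}''\to{\rm{U}}$ and $\rm{U}''\to{\rm{U}}^{\scr h}$ are isogenies.

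Finally I would assemble the isogeny $v$. Let ${\rm{U}}'={\rm{U}}''_0$ and let $v_0\colon {\rm{U}}\to \rm{U}''$ be an isogeny inverse (up to multiplication by an integer $k$) to the first projection $\rm{U}''\to{\rm{U}}$, so that $v_0\circ\pi$ equals, up to $[k]$, the quotient map $\G\to\rm{U}''$, which is defined over $K$ by the construction in the first paragraph. Composing with the identification $\rm{U}''\cong {\rm{U}}'\otimes_K F$ gives the desired $v\colon {\rm{U}}\to {\rm{U}}'\otimes_K F$ with $v_{\ast}(\pi)=v\circ\pi$ defined over $K$. The one point requiring care — and the main obstacle — is the torsion argument showing $\ker\pi$ and its conjugate meet in finite index: one must produce from a positive-dimensional intersection complement an honest infinite-order element of ${\rm{Hom}}(\ker\pi,{\rm{U}}^{\scr h})$, which means checking that the map $\ker\pi \to \ker\pi/(\ker\pi\cap(\ker\pi)^{\scr h}) \hookrightarrow {\rm{U}} \xrightarrow{\ \rho\ } {\rm{U}}^{\scr h}$ is a homomorphism of infinite order (a positive-dimensional image cannot be killed by any $[k]$), using that conjugation $\rho$ is only $K$-linear but is a genuine group isomorphism $\G(\C)\to\G^{\scr h}(\C)$ as recorded in Sect.\,3.3.
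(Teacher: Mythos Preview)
Your approach and the paper's are the same argument in different clothing. The paper embeds $\G'$ into $\mathcal{N}_{F/K}(\G)$ via $i'=\mathcal{N}(id.)$; the image $\HHH'$ has base-change equal to the diagonal $\Delta_\G \subset \G\times\G^{\scr h}=\G\times\G$, and $\pi_{\mathcal{N}}(\HHH')$ then has base-change exactly your $\rm{U}''=\mathrm{im}(\pi\times\pi^{\scr h})$. The paper's key Lemma~5.1.2 (that $\dim\HHH_{\LL}=\dim\LL$) is precisely your claim that $\ker\pi\cap\ker\pi^{\scr h}$ has finite index in $\ker\pi$, and the paper's map $\mu=\pi^{\scr h}\circ p_{\G^{\scr h}}\circ u^{-1}$ unwinds to $\pi^{\scr h}|_{\ker\pi}$.

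One exposition issue: the map you write at the end, $\ker\pi \to \ker\pi/(\ker\pi\cap(\ker\pi)^{\scr h}) \hookrightarrow {\rm{U}} \xrightarrow{\ \rho\ } {\rm{U}}^{\scr h}$, is ill-formed as stated, because there is no $F$-algebraic map $\ker\pi\to{\rm{U}}$ (indeed $\pi$ kills $\ker\pi$) and $\rho$ is not an $F$-morphism. The clean formulation is simply $\pi^{\scr h}|_{\ker\pi}\colon \ker\pi \to {\rm{U}}^{\scr h}$: since $\G=\G^{\scr h}$ this is an honest $F$-algebraic homomorphism, its kernel is $\ker\pi\cap\ker\pi^{\scr h}$, and a positive-dimensional image forces infinite order in ${\rm{Hom}}(\ker\pi,{\rm{U}}^{\scr h})$. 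With that correction your proof is complete and matches the paper's.
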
 
We recall once again that by Lemma \ref{cl1} the homomorphism $v_{\ast}(\pi) = v \circ \pi$ is defined over $K$ if and only if $v_{\ast}(\pi) = \big(v_{\ast}(\pi)\big)^{\scr h}$.
\subsection{Table of symbols used during the proof}
\begin{center}
\begin{tabular}{lp{7cm}ll p{7cm} }
$\mathcal{N}$ & $ = \mathcal{N}_{F/K}(\G)$ & & $i$ &$ = i' \otimes_K F$ \\

 $\mathcal{N}({\rm{U}})$ &$ = \mathcal{N}_{F/K}({\rm{U}})$ && ${{\rm{H}}}$ & $ = {{\rm{H}}}' \otimes_K F$\\
 ${{{\rm{L}}}}$ &$ = ker\,\pi$ && ${{\rm{H}}}_{{{\rm{L}}}}'$ &$ = {{\rm{H}}}' \cap \mathcal{N}({{{\rm{L}}}})$\\
  $\mathcal{N}({{\rm{L}}}) $ & $= \mathcal{N}_{F/K}({{\rm{L}}}) = ker\,\pi_{\mathcal{N}}$ && ${{\rm{H}}}_{{{\rm{L}}}}$ &$= {{\rm{H}}}_{{{\rm{L}}}}' \otimes_K F$   \\
  $i'$ & $=  {\mathcal{N}}(id.),$ the hom.\,\,from $\G'$ to $\mathcal{N}$ && $p$ & $ = \pi_{\mathcal{N}} \otimes_K F$\\
   ${\rm{H}}'$ &$ = i'\big(\G'\big)$ && $q$ &$ = \pi_{\ast}(p_{\G}) = \pi \circ p_{\G}$ \\
 
          \end{tabular}
          \end{center}
Note that $p_{\G} \circ i$ is the identity of $\G$, so that $p_{\G|\HHH}$ is an isomorphism.
\subsection{Two auxiliary lemmas} The proof of the proposition is divided into two lemmas. 
\begin{lemma} With notations as above the equality $\dim\,{{\rm{H}}}_{{{{\rm{L}}}}} = \dim\,{{\rm{L}}}$ holds.
\end{lemma}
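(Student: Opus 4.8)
The plan is to realize $\HHH_{\LL}'$ as the kernel of a single homomorphism of algebraic groups and then to compute its dimension. First I would use the equality $p_{\G}\circ i = id_{\G}$ recorded just above the lemma: it shows that $i$, hence $i'$, is a closed immersion, so $i'$ identifies $\G'$ with $\HHH'$. Since $\mathcal{N}(\LL) = ker\,\pi_{\mathcal{N}}$ by definition, we obtain
$$\HHH_{\LL}' \;=\; \HHH'\cap ker\,\pi_{\mathcal{N}} \;=\; ker\big(\pi_{\mathcal{N}}|_{\HHH'}\big),$$
and precomposing with the closed immersion $i'$ gives $\dim\HHH_{\LL}' = \dim ker(\pi_{\mathcal{N}}\circ i')$.

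Next I would analyse the homomorphism $\pi_{\mathcal{N}}\circ i'\colon\G'\longrightarrow\mathcal{N}(\U)$ after extension of scalars to $F$. By Corollary \ref{cl} we have $\pi\circ p_{\G} = p_{\U}\circ(\pi_{\mathcal{N}}\otimes_K F)$, and combining this with $p_{\G}\circ i = id_{\G}$ yields
$$p_{\U}\circ\big((\pi_{\mathcal{N}}\circ i')\otimes_K F\big) \;=\; \pi.$$
Under the identification $\mathcal{N}_{F/K}(\U)\otimes_K F = \U\times\U^{\scr h}$ furnished by Theorem \ref{Wr1}, in which $p_{\U}$ is the projection onto the first factor, this says that $(\pi_{\mathcal{N}}\circ i')\otimes_K F\colon\G\longrightarrow\U\times\U^{\scr h}$ is of the form $(\pi,\phi)$ for some homomorphism $\phi\colon\G\longrightarrow\U^{\scr h}$. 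Hence its kernel is $ker\,\pi\cap ker\,\phi = \LL\cap ker\,\phi$, and since extension of scalars from $K$ to $F$ leaves dimensions unchanged, $\dim\HHH_{\LL} = \dim\HHH_{\LL}' = \dim(\LL\cap ker\,\phi)$.

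Finally I would invoke the hypothesis of Proposition \ref{skeleton}, namely that ${\rm{Hom}}\big(ker\,\pi,\U^{\scr h}\big)$ is a torsion group. The restriction $\phi|_{\LL}$ belongs to this group, so $[n]_{\U^{\scr h}}\circ\phi|_{\LL} = 0$ for some integer $n\geq 1$; therefore $\phi(\LL)$ is contained in the $n$-torsion subgroup $\U^{\scr h}[n]$, which is finite because the characteristic is zero. Thus $\phi|_{\LL}$ has image of dimension zero, and the dimension formula for a homomorphism of smooth algebraic groups gives $\dim ker(\phi|_{\LL}) = \dim\LL$. Since $ker(\phi|_{\LL}) = \LL\cap ker\,\phi$, this yields $\dim\HHH_{\LL} = \dim\LL$, as asserted. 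I do not anticipate a genuine obstacle here: the only point requiring care is the compatibility of $\pi_{\mathcal{N}}$, $i'$, $p_{\G}$ and $p_{\U}$ under base change to $F$, which is governed by the universal property of the Weil restriction together with Theorem \ref{Wr1} and Corollary \ref{cl}; everything else is the standard fact that a homomorphism of algebraic groups has kernel of dimension $\dim(\mathrm{source}) - \dim(\mathrm{image})$.
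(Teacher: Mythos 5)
Your proof is correct and is essentially the paper's own argument in different bookkeeping: the homomorphism $\phi|_{\LL}$ you extract from the second factor of $\mathcal{N}_{F/K}(\U)\otimes_K F = \U\times\U^{\scr h}$ is exactly the map $\mu = q^{\scr h}\circ u^{\scr -1}$ constructed in the paper, and your direct computation of $\ker(\pi,\phi)$ via $p_{\G}\circ i = id_{\G}$ replaces the paper's subgroup ${\rm{W}} = \HHH\cap p_{\G}^{\scr -1}(\LL)$ and the finiteness of ${\rm{W}}/\HHH_{\LL}$. No gaps.
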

\begin{proof} We have a commutative diagram of exact sequences
$$\begin{xy} 
  \xymatrix{ 
  0 \ar[r] & \mathcal{N}({{\rm{L}}}) \otimes_K F \ar[r] \ar[d]^{p_{{{\rm{L}}}}} & \ar[rr]^{p} \mathcal{N}\otimes_K F \ar[d]^{p_{\rm{G}}}  && \mathcal{N}({{\rm{U}}})\otimes_K F \ar[d]^{p_{{{\rm{U}}}}} \\
   0 \ar[r] & {{{\rm{L}}}} \ar[r]  & \ar[rr] \rm{G} \ar[rr]^{\pi} && {{\rm{U}}}  } 
\end{xy}$$
Let ${{{\rm{W}}}}$ be the subgroup ${{\rm{H}}} \cap p_{\rm{G}}^{\scr -1}({{{\rm{L}}}})$ of ${{{\rm{L}}}} \times {\rm{G}}^{\scriptscriptstyle h}$. Observe that ${{\rm{H}}}_{\rm{L}} \subset {\rm{W}}$. Our first task is to show that ${\rm{W}}/{{\rm{H}}}_{\rm{L}}$ is finite. Since $p_{\G|{{\rm{H}}}}$ is an isomorphism, the restriction $u = p_{{\rm{G}}|{{{\rm{W}}}}}$ is an isomorphism onto its image $\LL$. The inverse map $u^{\scr -1}$ yields a homomorphism
$$\mu = \pi^{\scriptscriptstyle h} \circ p_{{\rm{G}}^{\scriptscriptstyle h}} \circ u^{\scr -1} = q^{\scr h}\circ u^{\scr -1} $$
form ${{{\rm{L}}}}$ to ${{\rm{U}}}^{\scriptscriptstyle h}$. The hypotheses of the proposition imply that $\mu(\LL)$ is a finite torsion group in ${{\rm{U}}}^{\scriptscriptstyle h}$. Since $q^{\scr h}({\rm{W}}) = \mu(\LL)$, we get that
$q^{\scr h}({\rm{W}})$ is a finite torsion group in ${{\rm{U}}}^{\scriptscriptstyle h}$. The same holds for the image $q({{\rm{W}}}) \subset {\rm{U}}$ because ${{{\rm{W}}}} \subset {{{\rm{L}}}} \times {\rm{G}}^{\scriptscriptstyle h}$ and as $p_{\G}(\LL \times \G^{\scr h}) = \LL = ker\,\pi$.
As $p = \pi_{\mathcal{N}} \otimes_K F = (\pi, \pi^{\scr h})$\footnote{For the meaning of ``$(\pi, \pi^{\scr h})$'' compare (4.2.1).}, it follows that 
\begin{equation}
\dim\,p(\rm{W}) = 0.
\end{equation}
Recall that
\begin{equation}{{\rm{H}}}_{{{\rm{L}}}} = {{\rm{H}}} \cap \big(\mathcal{N}({{{\rm{L}}}}) \otimes_K F\big) = {{\rm{H}}} \cap ker\,p.\end{equation}
We observed above that ${{\rm{H}}}_{\rm{L}} \subset {\rm{W}}$. This implies together with Statement (5.1.1) and Statement (5.1.2) that ${{{\rm{W}}}}/{{\rm{H}}}_{{{\rm{L}}}}$ is finite. So, $\dim\,{{\rm{H}}}_{{{\rm{L}}}} = \dim\,{{{\rm{W}}}}$. Moreover, since ${{\rm{W}}}$ isomorphic to ${{\rm{L}}}$ via $p_{\G|\rm{W}}$, we get that $\dim\,{{\rm{W}}} = \dim\,{\LL}$. It follows that
$\dim\,{{\rm{H}}}_{{{\rm{L}}}} = \dim\,{{\rm{L}}}$.
\end{proof}
We set ${{\rm{V}}}' = \pi_{\mathcal{N}}({{\rm{H}}}')$, $\V = \V' \otimes_K F$ and let $w: {{\rm{V}}} \longrightarrow {{\rm{U}}}$ be the restriction of $p_{{{\rm{U}}}}$ to $\V$. 
\begin{lemma} The homomorphism $w$ is an isogeny.
 
\end{lemma}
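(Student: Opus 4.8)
The plan is to reduce the claim to two facts: that $w$ is surjective, and that $\dim\,\V = \dim\,\U$. Since a surjective homomorphism of group varieties of equal dimension automatically has finite kernel, these two facts together say that $w$ is an isogeny.

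First I would check surjectivity. Because $p_{\G} \circ i$ is the identity of $\G$ and $\HHH = i(\G)$, the restriction $p_{\G|\HHH}$ is an isomorphism onto $\G$; in particular $p_{\G}(\HHH) = \G$ and $\dim\,\HHH = \dim\,\G$. Since base change $-\otimes_K F$ commutes with taking images of morphisms, $p(\HHH) = \pi_{\mathcal{N}}(\HHH') \otimes_K F = \V$. Using the commutative diagram from the proof of the previous lemma, $p_{\U} \circ p = \pi \circ p_{\G} = q$, and hence
$$w(\V) = p_{\U}\big(p(\HHH)\big) = q(\HHH) = \pi\big(p_{\G}(\HHH)\big) = \pi(\G) = \U,$$
the last equality because $\pi$ is surjective. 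So $w$ is surjective.

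For the dimension count I would use that $\V = p(\HHH)$, so the fibres of $p_{|\HHH}$ are cosets of $\HHH \cap ker\,p$ and $\dim\,\V = \dim\,\HHH - \dim(\HHH \cap ker\,p)$. By the identity $\HHH \cap ker\,p = \HHH_{\LL}$ recorded above and by the previous lemma, $\dim(\HHH \cap ker\,p) = \dim\,\HHH_{\LL} = \dim\,\LL = \dim\,\G - \dim\,\U$, where the last step uses that $\pi$ is surjective with kernel $\LL$. Combined with $\dim\,\HHH = \dim\,\G$ this gives $\dim\,\V = \dim\,\U$, and the proof is complete.

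There is no real obstacle here: the genuine content — the equality $\dim\,\HHH_{\LL} = \dim\,\LL$, which is where the hypothesis that ${\rm{Hom}}(ker\,\pi, \U^{\scr h})$ is torsion enters — has already been settled in the preceding lemma. The only point that deserves a line of care is the interchange of $-\otimes_K F$ with scheme-theoretic image, so that $p(\HHH)$ is genuinely $\V' \otimes_K F$; this is routine since $-\otimes_K F$ is flat.
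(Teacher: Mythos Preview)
Your proof is correct and follows essentially the same approach as the paper: both arguments establish surjectivity of $w$ via the identity $p_{\U}\circ p = \pi\circ p_{\G}$ together with the surjectivity of $p_{\G|\HHH}$, and both obtain $\dim\,\V = \dim\,\U$ from the chain $\dim\,\HHH - \dim\,\HHH_{\LL} = \dim\,\G - \dim\,\LL = \dim\,\U$ using the preceding lemma. Your added remark on flatness of $-\otimes_K F$ to justify $p(\HHH) = \V$ is a nice bit of care that the paper leaves implicit.
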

\begin{proof} We have
$$
\begin{array}{cll}
\dim\,{\rm{U}}' & = \dim\,{{\rm{H}}}' - \dim\,\big({{\rm{H}}} \cap ker\,\pi_{\mathcal{N}}\big)& \mbox{(by additivity of dimension)}\\
& = \dim\,{{\rm{H}}}' - \dim\,{{\rm{H}}}_{{{\rm{L}}}}' & \mbox{(by definition)}\\
&= \dim\,\rm{G}- \dim\,{{\rm{L}}} & \mbox{(by the previous lemma)}\\
&= \dim\,{\rm{U}}.& \mbox{(by additivity of dimension)} 
\end{array}
$$
Moreover, since the restriction $p_{\G|{\rm{H}}}$ is surjective, so is the composition $\pi \circ p_{\G|{\rm{H}}}$. Recalling that $p_{{\rm{U}}} \circ p = \pi \circ p_{\G}$, 
it follows that $w$ is surjective. This and the above identity of dimensions imply the lemma.
\end{proof}
\subsection{Proof of the proposition} We define $\nu' = \pi_{\mathcal{N}} \circ i'$ and $\nu = \nu' \otimes_K F$. Then $\nu'$ is a homomorphism from $\G' $ to $\V' \subset \mathcal{N}({\rm{U}})$. Finally we choose an isogeny $v: {{\rm{U}}} \longrightarrow \V$ such that $v \circ w$ is multiplication with an integer $k$. Recalling that $\pi \circ p_{\G} = p_{\U} \circ p$ and that 
$p_{\G} \circ i$ is the identity on $\G$, we calculate then
\begin{center}
\begin{tabular}{l}
$v_{\ast}(\pi) = v \circ \pi =  v \circ (\pi \circ p_{\G}) \circ i$\\ 
$= v \circ (p_{\U} \circ p) \circ i = v \circ p_{\U} \circ (p \circ i)$\\ 
$= v \circ p_{\U} \circ \nu = v \circ w \circ \nu = [k]_{\V} \circ \nu.$
\end{tabular}
\end{center}
Hence, is $v_{\ast}(\pi) = [k]_{\V} \circ \nu $ defined over $K$. Letting ${\rm{U}}' = \V'$, $v$ is then as required. 
\section{Inherited weak $\mathbf{K}$-structures} The next proposition is related to weak inherited descents. The setting is the same as in the previous section.
\begin{prop} \label{skeleton1} Let ${\rm{H}}' \subset \mathcal{N}_{K/F}(\G)$ be a connected algebraic subgroup and set ${{\rm{H}}} = {{\rm{H}}}' \otimes_K F$. If $\dim\,{\rm{H}} < \dim\,\G + \dim\,{\rm{U}}$ and if
${\rm{Hom}}\big({{\rm{U}}}^{\scriptscriptstyle h}, {\rm{M}}\big) = \{0\}$ for each quotient $\rm{M}$ of $ker\,\pi$, then there is an algebraic group ${\rm{U}}'$ of positive dimension over $K$
and a surjective homomorphism  $v: {\rm{U}} \longrightarrow {\rm{U}}' \otimes_K F$ with the property that $v_{\ast}(\pi \circ p_{\G|{\rm{H}}})$ is defined over $K$. 
\end{prop}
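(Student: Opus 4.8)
The plan is to deduce the proposition from the abstract main criterion, Proposition~\ref{1234}, applied to $\U$ in place of $\G$. First I would set $\V'=\pi_{\mathcal{N}}(\HHH')\subset\mathcal{N}_{F/K}(\U)$, a connected algebraic subgroup, and $\V=\V'\otimes_K F$. Under the identifications $\mathcal{N}_{F/K}(\G)\otimes_K F=\G\times\G^{\scr h}$ and $\mathcal{N}_{F/K}(\U)\otimes_K F=\U\times\U^{\scr h}$ of Theorem~\ref{Wr1}, one has $\pi_{\mathcal{N}}\otimes_K F=(\pi,\pi^{\scr h})$ and $p_{\U}\circ(\pi_{\mathcal{N}}\otimes_K F)=\pi\circ p_{\G}$ by Corollary~\ref{cl}, so $\V=(\pi,\pi^{\scr h})(\HHH)\subset\U\times\U^{\scr h}$. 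We may assume $p_{\G}(\HHH)=\G$: this is automatic in the applications, where $\HHH$ is the Zariski closure of $\Psi_{\mathcal{N}}(\R)$ for a $\Psi$ with Zariski-dense image. Then $p_{\U}(\V)=\pi(p_{\G}(\HHH))=\U$. Write $\LL=\ker\pi$, and let $p_1=p_{\G}$ and $p_2$ be the two projections of $\G\times\G^{\scr h}$.

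The crucial step is to show that $\V'$ is a \emph{proper} subgroup of $\mathcal{N}_{F/K}(\U)$, i.e.\ $\dim\V<2\dim\U$. Suppose not; as $\V'$ is connected this forces $\V=\U\times\U^{\scr h}$, so $(\pi,\pi^{\scr h})|_{\HHH}$ is surjective. Since $\HHH=\HHH^{\scr h}$, the kernel $\HHH\cap(\{e\}\times\G^{\scr h})$ of $p_{\G}|_{\HHH}$ has the form $\{e\}\times\K^{\scr h}$ with $\HHH\cap(\G\times\{e\})=\K\times\{e\}$ for a connected subgroup $\K\subset\G$; hence $\dim\HHH=\dim\G+\dim\K$, and $\dim\HHH<\dim\G+\dim\U$ forces $\dim\K<\dim\U$, so that $\pi(\K)$ is a proper subgroup of $\U$. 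I would then derive a contradiction by showing $\pi(\K)=\U$ with the help of the ${\rm{Hom}}$-hypothesis. Put $\HHH_2=\HHH\cap(\G\times\LL^{\scr h})$ and $\HHH_{\LL}=\HHH\cap(\LL\times\LL^{\scr h})$. On $\HHH_2$ the homomorphism $\pi\circ p_1$ has kernel $\HHH_{\LL}$ and image $\pi(p_1(\HHH_2))$, and the latter equals $\U$ because $(\pi,\pi^{\scr h})|_{\HHH}$ is surjective; therefore $\dim\HHH_2-\dim\HHH_{\LL}=\dim\U$.

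Applying $\pi\times\mathrm{id}$ to $\HHH_2\subset\G\times\LL^{\scr h}$ and then dividing the second factor by $p_2(\HHH_{\LL})\subset\LL^{\scr h}$ produces a connected subgroup of $\U\times M^{\scr h}$, where $M$ denotes the quotient of $\LL=\ker\pi$ conjugate to $\LL^{\scr h}/p_2(\HHH_{\LL})$; this subgroup surjects onto $\U$ (by the surjectivity just used) and meets $\{e\}\times M^{\scr h}$ trivially, hence it is the graph of a homomorphism $f\colon\U\to M^{\scr h}$. Its conjugate $f^{\scr h}$ lies in ${\rm{Hom}}(\U^{\scr h},M)=\{0\}$, so $f=0$; this forces $p_2(\HHH_2)\subset p_2(\HHH_{\LL})$, whence $p_2(\HHH_2)=p_2(\HHH_{\LL})$. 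But $p_2|_{\HHH_2}$ has kernel $\HHH\cap(\G\times\{e\})=\K\times\{e\}$ while $p_2|_{\HHH_{\LL}}$ has kernel $\HHH\cap(\LL\times\{e\})=(\K\cap\LL)\times\{e\}$, so comparing dimensions over the common image $p_2(\HHH_2)=p_2(\HHH_{\LL})$ gives $\dim\HHH_2-\dim\HHH_{\LL}=\dim\K-\dim(\K\cap\LL)=\dim\pi(\K)$. Together with $\dim\HHH_2-\dim\HHH_{\LL}=\dim\U$ this yields $\pi(\K)=\U$, contradicting $\pi(\K)\subsetneq\U$; hence $\V'$ is proper in $\mathcal{N}_{F/K}(\U)$.

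It remains to invoke the main criterion. Proposition~\ref{1234}, applied to $\U$ and the proper subgroup $\V'$ (which satisfies $p_{\U}(\V)=\U$), produces a group variety $\U'$ of positive dimension over $K$ and a surjective homomorphism $v\colon\U\to\U'\otimes_K F$ such that $v\circ p_{\U|\V}$ is defined over $K$. Restricting the identity $p_{\U}\circ(\pi_{\mathcal{N}}\otimes_K F)=\pi\circ p_{\G}$ to $\HHH$ gives $p_{\U|\V}\circ\phi=\pi\circ p_{\G|\HHH}$ with $\phi=(\pi_{\mathcal{N}}|_{\HHH'})\otimes_K F\colon\HHH\to\V$; since $\phi$, being a base change of a morphism over $K$, is defined over $K$, and a composite of morphisms defined over $K$ is defined over $K$ (Lemma~\ref{cl1}), we conclude that $v_{\ast}(\pi\circ p_{\G|\HHH})=(v\circ p_{\U|\V})\circ\phi$ is defined over $K$, as required. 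The main obstacle will be the diagram chase of the two middle paragraphs: one must keep track of the several subgroups of $\G\times\G^{\scr h}$ and isolate precisely the quotient of $\ker\pi$ against which the ${\rm{Hom}}$-hypothesis is tested.
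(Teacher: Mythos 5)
Your proposal is correct, and it shares the paper's overall skeleton: granting the surjectivity $p_{\G}(\HHH)=\G$ (which you flag explicitly; the paper's own proof also uses it silently, via ``$p_{\G|\HHH}$ is surjective'', and it does hold in the application to Theorem \ref{inherited1}), both arguments reduce the proposition to showing that $\V'=\pi_{\mathcal{N}}(\HHH')$ is a \emph{proper} subgroup of $\mathcal{N}_{F/K}(\U)$ with $p_{\U}(\V)=\U$, and then invoke Proposition \ref{1234}; your closing observation that $v_{\ast}(\pi\circ p_{\G|\HHH})=(v\circ p_{\U|\V})\circ\big(\pi_{\mathcal{N}}|_{\HHH'}\otimes_K F\big)$ is defined over $K$ is exactly the step the paper leaves implicit. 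Where you diverge is in the proof of properness. The paper (Lemma \ref{refff}) passes to quotients by $\HHH_{\LL}=\HHH\cap(\LL\times\LL^{\scr h})$, uses the section of $\overline{\pi}$ coming from the $\U$-factor to split $0\to\overline{\LL}\to\overline{\G}\to\U\to0$, kills the resulting surjection $\U^{\scr h}\to\overline{\LL}=\LL/p_{\LL}(\HHH_{\LL})$ with the ${\rm{Hom}}$-hypothesis, and then concludes from $\dim\V'=\dim\HHH'-\dim\HHH'_{\LL}\leq\dim\HHH'-\dim\LL<2\dim\U$. You instead stay inside $\G\times\G^{\scr h}$, extract from $\HHH\cap(\G\times\LL^{\scr h})$ the graph of a homomorphism $\U\to\LL^{\scr h}/p_{2}(\HHH_{\LL})$, kill its conjugate with the same hypothesis, and finish with a dimension count through $\HHH\cap(\G\times\{e\})$. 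Since $\HHH_{\LL}$ is Galois-stable, the quotient of $\ker\pi$ you test the hypothesis against is (up to conjugation) the same $\LL/p_{\LL}(\HHH_{\LL})$ as in the paper, so the two proofs are close cousins: yours trades the splitting-and-retraction step for some graph-and-kernel bookkeeping, and is a perfectly valid alternative.
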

\subsection{Table of symbols used during the proof}
We start by fixing the most important symbols. They are quite the same as in the previous proof and repeated here for the convenience only.\\\begin{center}
\begin{tabular}{lp{7cm}llp{7cm} }
$\mathcal{N}$ & $ = \mathcal{N}_{F/K}(\G)$ && ${{\rm{H}}}_{{{\rm{L}}}}$ & $= {{\rm{H}}}_{{{\rm{L}}}}' \otimes_K F$\\

 $\mathcal{N}({\rm{U}})$ &$ = \mathcal{N}_{F/K}({\rm{U}})$ &&  $p$ & $ = \pi_{\mathcal{N}} \otimes_K F$ \\
 ${{{\rm{L}}}}$ &$ = ker\,\pi$ &&   $q$ &$ = \pi_{\ast}(p_{\G}) = \pi \circ p_{\G}$\\
  $\mathcal{N}({{\rm{L}}}) $ & $= \mathcal{N}_{F/K}({{\rm{L}}}) = ker\,\pi_{\mathcal{N}}$ &&  ${{{\rm{V}}}'}$ &$ = \pi_{\mathcal{N}}({\rm{H}}')$\\ 
  
    ${{\rm{H}}}$ & $ = {{\rm{H}}}' \otimes_K F$ &&  $\V$ & $=\V' \otimes_K F$    \\
    
        ${{\rm{H}}}_{{{\rm{L}}}}'$ &$ = {{\rm{H}}}' \cap \mathcal{N}({{{\rm{L}}}})$&&&\\
        &&&&\\
              \end{tabular}
          \end{center}

\subsection{An auxiliary lemma} We begin the proof of the proposition with an auxiliary result. 
\begin{lemma} \label{refff} If ${{\rm{V}}}' = \mathcal{N}({{\rm{U}}})$, then $p_{{{{\rm{L}}}}}({{\rm{H}}}_{{{{\rm{L}}}}}) = {{\rm{L}}}$.
\end{lemma}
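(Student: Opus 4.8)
The statement asserts that under the hypothesis $\mathrm{V}' = \mathcal{N}(\mathrm{U})$, the image $p_{\mathrm{L}}(\mathrm{H}_{\mathrm{L}})$ fills out all of $\mathrm{L}$. The natural strategy is a dimension count combined with the surjectivity that is already built into the data. First I would record the basic diagram of exact sequences of group varieties over $F$,
$$0 \longrightarrow \mathcal{N}(\mathrm{L}) \otimes_K F \longrightarrow \mathcal{N} \otimes_K F \stackrel{p}{\longrightarrow} \mathcal{N}(\mathrm{U}) \otimes_K F \longrightarrow 0,$$
coming from Lemma \ref{op1}, together with the three vertical functorial maps $p_{\mathrm{L}}$, $p_{\mathrm{G}}$ and $p_{\mathrm{U}}$. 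The restriction of $p$ to $\mathrm{H}$ has image $\mathrm{V} = \mathrm{V}' \otimes_K F$, and its kernel is exactly $\mathrm{H}_{\mathrm{L}}$ by definition; hence $\dim \mathrm{H} = \dim \mathrm{H}_{\mathrm{L}} + \dim \mathrm{V}$. Under the assumption $\mathrm{V}' = \mathcal{N}(\mathrm{U})$ we have $\dim \mathrm{V} = \dim \mathcal{N}(\mathrm{U}) = [F:K]\cdot \dim \mathrm{U} = 2\dim \mathrm{U}$ by Lemma \ref{WR}(2).

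Next I would exploit the hypothesis $\dim\,\mathrm{H} < \dim\,\G + \dim\,\mathrm{U}$ from the proposition together with the fact that $p_{\mathrm{G}|\mathrm{H}}$ is surjective (this was noted right after the table of symbols, since $p_{\mathrm{G}} \circ i = \mathrm{id}_{\mathrm{G}}$ forces $\dim \mathrm{H} \geq \dim \mathrm{G}$, but here $\mathrm{H}'$ is merely a connected subgroup, so one must argue that $p_{\mathrm{G}|\mathrm{H}}$ is still onto; this uses that $\mathrm{H}'$ is required in Proposition \ref{skeleton1} to satisfy $p_{\mathrm{G}}(\mathrm{H}) = \mathrm{G}$ — or, if that is not part of the standing hypotheses at this point, it is part of the reduction being carried out). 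Combining $\dim \mathrm{H} = \dim \mathrm{H}_{\mathrm{L}} + 2\dim\mathrm{U}$ with $\dim \mathrm{H} < \dim \mathrm{G} + \dim \mathrm{U} = \dim\mathrm{L} + \dim\mathrm{U} + \dim\mathrm{U}$ yields $\dim \mathrm{H}_{\mathrm{L}} < \dim \mathrm{L}$ if one is not careful — so the real content is that the inequality must be used in the opposite-looking direction, via surjectivity. Concretely, since $p_{\mathrm{G}|\mathrm{H}}$ is onto $\mathrm{G}$ and $p_{\mathrm{L}} = (p_{\mathrm{G}})|_{\mathcal{N}(\mathrm{L})\otimes_K F}$ is the corresponding restriction, one shows $p_{\mathrm{L}}(\mathrm{H}_{\mathrm{L}}) \supset \ker\bigl(\pi \circ p_{\mathrm{G}|\mathrm{H}}\bigr)$ mapped appropriately, and then a diagram chase: if $\xi \in \mathrm{L} = \ker\pi$, lift $\xi$ to $\eta \in \mathrm{H}$ with $p_{\mathrm{G}}(\eta)=\xi$; then $p(\eta) \in \ker p_{\mathrm{U}}$ intersected with $\mathrm{V} = \mathcal{N}(\mathrm{U})\otimes_K F$, and since $\mathrm{V}$ is all of $\mathcal{N}(\mathrm{U})\otimes_K F$ one can correct $\eta$ by an element of $\mathrm{H}_{\mathrm{L}}$ (using surjectivity of $p$ restricted to $\mathrm{H}$) so that it lies in $\ker p = \mathcal{N}(\mathrm{L})\otimes_K F$, i.e. in $\mathrm{H}_{\mathrm{L}}$.

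The main obstacle I anticipate is keeping the bookkeeping of which maps are surjective honest: the delicate point is that $\mathrm{H}'$ is only a connected \emph{subgroup} of $\mathcal{N}$, not the canonical copy $i'(\mathrm{G}')$, so neither $p_{\mathrm{G}|\mathrm{H}}$ nor $p|_{\mathrm{H}}$ is a priori an isomorphism or even surjective without invoking the hypothesis $\mathrm{V}' = \mathcal{N}(\mathrm{U})$ and whatever surjectivity of $p_{\mathrm{G}|\mathrm{H}}$ is in force. Once those surjectivities are pinned down, the conclusion $p_{\mathrm{L}}(\mathrm{H}_{\mathrm{L}}) = \mathrm{L}$ follows from a pure diagram chase in the snake-lemma style on the displayed commutative diagram of exact sequences, since $\mathrm{V}' = \mathcal{N}(\mathrm{U})$ removes the obstruction living in $\mathrm{coker}$. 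I would therefore structure the proof as: (i) set up the diagram; (ii) observe $\mathrm{V}=\mathcal{N}(\mathrm{U})\otimes_K F$ by hypothesis; (iii) given $\xi \in \mathrm{L}$, lift along $p_{\mathrm{G}|\mathrm{H}}$; (iv) use surjectivity of $p|_{\mathrm{H}}\colon \mathrm{H}\to \mathrm{V}=\mathcal{N}(\mathrm{U})\otimes_K F$ to adjust the lift into $\ker p \cap \mathrm{H} = \mathrm{H}_{\mathrm{L}}$; (v) conclude $p_{\mathrm{L}}$ of that element equals $\xi$.
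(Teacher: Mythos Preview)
Your diagram chase has a genuine gap at step (iv). You lift $\xi \in \mathrm{L}$ to $\eta \in \mathrm{H}$ with $p_{\mathrm{G}}(\eta) = \xi$, and then you want to subtract some $\eta'' \in \mathrm{H}$ with $p(\eta'') = p(\eta)$ so that $\eta - \eta'' \in \mathrm{H}_{\mathrm{L}}$. Surjectivity of $p|_{\mathrm{H}}$ certainly supplies such an $\eta''$, but it gives you no control over $p_{\mathrm{G}}(\eta'')$; after the correction, $p_{\mathrm{G}}(\eta - \eta'') = \xi - p_{\mathrm{G}}(\eta'')$ need not equal $\xi$. To make the chase work you would need $\eta'' \in \mathrm{H} \cap \ker p_{\mathrm{G}} = \mathrm{H} \cap (\{e\} \times \mathrm{G}^{h})$ hitting the prescribed element of $\{e\} \times \mathrm{U}^{h}$ under $p$, and nothing you have written guarantees that. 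In snake-lemma language: from the two exact rows you get
\[
\ker(p_{\mathrm{U}}|_{\mathrm{V}}) \xrightarrow{\ \delta\ } \mathrm{L}/p_{\mathrm{L}}(\mathrm{H}_{\mathrm{L}}) \longrightarrow \operatorname{coker}(p_{\mathrm{G}}|_{\mathrm{H}}) = 0,
\]
and since $\mathrm{V} = \mathrm{U} \times \mathrm{U}^{h}$ the left term is $\mathrm{U}^{h}$. So $\delta$ is surjective, but that only tells you $\overline{\mathrm{L}} := \mathrm{L}/p_{\mathrm{L}}(\mathrm{H}_{\mathrm{L}})$ is a quotient of $\mathrm{U}^{h}$, not that it vanishes.

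This is exactly where the hypothesis of Proposition~\ref{skeleton1} that you never invoked enters: $\overline{\mathrm{L}}$ is simultaneously a quotient of $\ker\pi$ and, via $\delta$, the target of a surjection from $\mathrm{U}^{h}$; the assumption $\operatorname{Hom}(\mathrm{U}^{h},\mathrm{M}) = 0$ for every quotient $\mathrm{M}$ of $\ker\pi$ forces $\delta = 0$, hence $\overline{\mathrm{L}} = 0$. The paper's proof does precisely this, phrased via the splitting: it passes to quotients by $\mathrm{H}_{\mathrm{L}}$, observes that the section $\sigma_1$ coming from the $\mathrm{U}$-factor splits $\overline{\pi}$, and then the map $\mu \circ \sigma_2 : \mathrm{U}^{h} \to \overline{\mathrm{L}}$ (your $\delta$, essentially) is surjective and must vanish by the Hom hypothesis. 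Also note that the inequality $\dim\mathrm{H} < \dim\mathrm{G} + \dim\mathrm{U}$ plays no role in the lemma itself; it is used only afterwards, together with the lemma, to derive the contradiction $\mathrm{V}' \neq \mathcal{N}(\mathrm{U})$.
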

\begin{proof} If ${{\rm{V}}}' = \mathcal{N}({{\rm{U}}})$, then $\pi_{\mathcal{N}}({{\rm{H}}}') =  \mathcal{N}({{\rm{U}}})$. Consider the commutative diagram of exact sequences
 $$\begin{xy} 
  \xymatrix{ 0 \ar[r] & {{\rm{H}}}_{{{{\rm{L}}}}}\ar@^{(->}[d] \ar[r] & \ar[rr]^{w} {{\rm{H}}} \ar@^{(->}[d] && \mathcal{N}({{\rm{U}}})\otimes_K F\ar[d]^{id.} \ar[r] &  0\\
 0 \ar[r] & \mathcal{N}({{\rm{L}}}) \otimes_K F \ar[r] \ar[d]^{p_{{{\rm{L}}}}} & \ar[rr]^{p} \mathcal{N}\otimes_K F \ar[d]^{p_{\rm{G}}}  && \mathcal{N}({{\rm{U}}})\otimes_K F \ar[d]^{p_{{{\rm{U}}}}} \ar[r]& 0\\
   0 \ar[r] & {{\rm{L}}} \ar[r]^j  & \ar[rr] \G \ar[rr]^{\pi} && {{\rm{U}}}  \ar[r] & 0} 
\end{xy}$$
Let $\overline{{{{\rm{L}}}}} = {{{\rm{L}}}}/p_{{{{\rm{L}}}}}({{\rm{H}}}_{{{{\rm{L}}}}})$, $\overline{{\rm{G}}} = {\rm{G}}/p_{{\rm{G}}}({{\rm{H}}}_{{{{\rm{L}}}}})$ and $\overline{{{\rm{H}}}} = {{\rm{H}}}/{{\rm{H}}}_{{{{\rm{L}}}}}$. Taking quotients, we receive a commutative diagram

$$\begin{xy} 
  \xymatrix{  &  & \ar[rr]^{\overline{w}} \overline{{{\rm{H}}}} \ar[d]^v && \mathcal{N}({{\rm{U}}})\otimes_K F\ar[d]^{p_{{{\rm{U}}}}}\\
 0 \ar[r] & \overline{{{\rm{L}}}} \ar[r]^{\overline{j}}  & \ar[rr] \overline{\rm{G}} \ar[rr]^{\overline{\pi}} && {{\rm{U}}}  } 
\end{xy}$$
Observe that $\overline{w}$ is an isomorphism and let 
$$s: {{\rm{U}}} \longrightarrow \mathcal{N}({{\rm{U}}}) \otimes_K F = {{\rm{U}}} \times {{\rm{U}}}^{\scriptscriptstyle h}\,\,\mbox{and}\,\,s^{\scriptscriptstyle h}: {{\rm{U}}}^{\scriptscriptstyle h} \longrightarrow \mathcal{N}({{\rm{U}}}) \otimes_K F$$
be the natural inclusions onto the first and second factor. Then $\sigma_1 = v \circ \overline{w}^{\scriptscriptstyle -1} \circ s$ is a section of $\overline{\pi}$, so that $\sigma_1({\rm{U}}) \cap \overline{j}(\overline{L}) = 0$. Hence, the lower sequence in the last diagram splits and there exists a homomorphism
$\mu: \overline{\G} \longrightarrow \overline{{\rm{L}}}$ such that $\mu \circ \overline{j}$ is the identity and with the property that $\mu \circ \sigma_1$ is trivial. Since the restriction $p_{{\rm{G}}|{\rm{H}}}$ is surjective, so is $v = \overline{p_{{\rm{G}}|{\rm{H}}}}$. Write $\sigma_2 = v \circ \overline{w}^{\scriptscriptstyle -1} \circ s^{\scriptscriptstyle h}$. 
As $v$ is surjective, we get
$$\sigma_1({\rm{U}}) + \sigma_2({\rm{U}}^{\scr h}) = (v \circ \overline{w}^{\scriptscriptstyle -1})\big(\mathcal{N}({\rm{U}})\otimes_K F) = v(\overline{\rm{H}}) = \overline{\G}.$$
Recalling that $\sigma_1({\rm{U}}) \cap \overline{j}(\overline{L}) = 0$, we infer that $(\mu \circ\sigma_2)({\rm{U}}^{\scr h}) = \overline{\LL}$. The hypotheses of the proposition applied to $\rm{M} = \overline{\LL}$ yield $\mu \circ \sigma_2 = 0$. Hence, $\overline{{{\rm{L}}}} = 0$. The claim follows.
\end{proof}
\subsection{Proof of the proposition} Assume that ${{\rm{V}}}' = \mathcal{N}({{\rm{U}}})$. The assumptions of the proposition imply that
\begin{equation}
\dim\,\LL + \dim\,\mathcal{N}(\rm{U}) = \dim\,\LL + 2\dim\,{\rm{U}} = \dim\,\G + \dim\,{\rm{U}} > \dim\,{\rm{H}}'.
\end{equation}
Additivity of dimensions yields
\begin{equation}\dim\,{\rm{V}}' = \dim\,{\rm{H}}' - \dim\,{\rm{H}}'_{\rm{L}}.
\end{equation}
And from the last lemma it results that 
\begin{equation}{{\rm{V}}}' = \mathcal{N}({{\rm{U}}})\Longrightarrow \dim\,{{\rm{H}}}_{{{{\rm{L}}}}}' \geq \dim\,{{\rm{L}}}.
\end{equation}
Statements (5.2.1)-(5.2.3) show that 
$${{\rm{V}}}' = \mathcal{N}({{\rm{U}}})\Longrightarrow{{\rm{V}}}' \neq \mathcal{N}({{\rm{U}}}).$$ 
So, ${{\rm{V}}}' \neq \mathcal{N}({{\rm{U}}}).$ Proposition\,\ref{1234} applied to ${{\rm{V}}}'$ (instead of $\HHH'$) completes the proof.

\section{Proof of the theorems about inherited descent} Let $\Psi$ and $\G$ be as in Theorem\,\ref{inherited}. Then there exists an isogeny $v: \G \longrightarrow \G' \otimes_K F$ such that $v_{\ast}(\Psi)$ takes values in $\G'(\R)$.
Let $w: \G' \otimes_K F \longrightarrow \G$ be an inverse isogeny with the property that $v \circ w$ is multiplication with an integer. Since $w$ is \'etale, $\Psi$ factors through the map $w: \G'(\R) \longrightarrow \G(\C)$ as, say $\Psi = w_{\ast}(\Psi')$. 
One hypopaper of the theorem is that ${\rm{Hom}}\big(ker\,\pi, {{\rm{U}}}^{\scriptscriptstyle h}\big) = 0$. As $w$ is finite, replacing $\pi$ by $\pi_{\ast}(w)$ and $\Psi$ by $\Psi'$ affects
this hypopaper to the effect that ${\rm{Hom}}\big(ker\,\pi_{\ast}(w), {{\rm{U}}}^{\scriptscriptstyle h}\big)$ is a (maybe non-trivial) torsion group. Proposition \ref{skeleton} implies that $(\pi\circ w)_{\ast}(\Psi') = \pi_{\ast}(\Psi)$ descends to $K$. Theorem\,\ref{inherited} follows.\\
For the proof of Theorem\,\ref{inherited1} we recall that $\rm{H} \subset \G$
is the smallest algebraic subgroup with the property that $\Psi_{\mathcal{N}}(\R) \subset \rm{H}(\C)$. We have seen in the proof of Theorem \ref{weakdescent} that $\HHH$ admits a model $\HHH' \subset \mathcal{N}_{F/K}(\G)$.
With this the theorem is then an immediate consequence of Proposition \ref{skeleton1}. 

\chapter{Proof of the transcendence theorems} We recall the setting of our theorems. We consider a surjective homomorphism $\pi: \G \longrightarrow {\rm{U}}$ of algebraic group varieties over an algebraically closed subfield $F$ of $\C$ which is stable with respect to complex conjugation $h$.
We let $\Psi: \R \longrightarrow \G(\C)$ be a real-analytic homomorphism with Zariski-dense image and denote by $\frak{t} \subset \frak{g}$ the smallest subspace over $K = F \cap \R$
with the property that $\Psi_{\ast}(\R) \subset \frak{t} \otimes_K \R$. The respectively first theorem is about descents of $\pi_{\ast}(\Psi)$, whereas the respectively second theorem deals with weak descents of $\pi_{\ast}(\Psi)$. 
\section{Preparations for the proofs}
\subsection{Some auxiliary lemmas} The results stated in this section do not rely on transcendence theory and will be used several times in the proof of the theorems. Together with Theorem \ref{inherited} and Theorem \ref{inherited1}
they exhibit the purely geometric input of the story.\\  
We set $\mathcal{N} = \mathcal{N}_{\overline{\Q}/K}({\rm{G}})$ and $\frak{n} = {{\rm{Lie}}}\,\mathcal{N}$. The symbol $\Psi_{\mathcal{N}}$ was defined in Sect.\,2.2. We also remark that most of the lemmas hold true even if $F$ is not algebraically closed.
\begin{lemma}\label{ff} The real-analytic homomorphism $\Psi_{\mathcal{N}}: \R \longrightarrow \mathcal{N}(\C)$ has the following properties.

\begin{enumerate}
\item The image of $\Psi_{\mathcal{N}}$ is contained in $\mathcal{N}(\R)$ and the image of $(\Psi_{\mathcal{N}})_{\ast}$ is contained in $\frak{n}(\R)$.

 \item $p_{\rm{G}} \circ \Psi_{\mathcal{N}} = \Psi$ and $\Psi^{\scr -1}\big({\rm{G}}(F)) \subset \Psi_{\mathcal{N}}^{\scr -1}\big(\mathcal{N}(K)\big)$.
\item $\big(\Psi_{\mathcal{N}}\big)_{\ast}(\R) = \left\{(v,v^{\scriptscriptstyle h}); v \in \Psi_{\ast}(\R)\right\}$.
\item If $\Phi_{\mathcal{N}}$ is the complexification of $\Psi_{\mathcal{N}}$ according to Proposition \ref{complex},\\
then $\Psi_{[i]}^{\scr -1}\big({\rm{G}}(F)) \subset \Phi_{\mathcal{N}}^{-1}\big(\mathcal{N}(F)\big)$. 
\end{enumerate}
  
\end{lemma}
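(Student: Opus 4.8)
\textbf{Proof plan for Lemma \ref{ff}.}

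The plan is to verify the four statements essentially by unwinding the definitions from Sections 2.2 and 3.3, using the concrete description of the Weil restriction $\mathcal{N} = \mathcal{N}_{F/K}(\G)$ and of $\Psi_{\mathcal{N}} = \Psi \times \Psi^{\scr h}$.

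First, for Statement 1, I would invoke Corollary \ref{lp333} (the image of $\frak{g}'(\R)$ under $\exp$ is the identity component of $\G'(\R)$) together with Corollary \ref{3.3.1}, which identifies $\mathcal{N}(K)$ with $\bigl\{\prod_h \{\xi^{\scr h}\};\,\xi \in \G(F)\bigr\}$, and more relevantly the identification of $\mathcal{N}(\C)$ with $\bigl\{(\xi, \xi^{\scr h});\,\xi \in \G(\C)\bigr\}$ coming from $\mathcal{N} \otimes_K F = \G \times \G^{\scr h}$ and the fact that $\rho_{\ast}$ fixes real points. Concretely: $\Psi_{\mathcal{N}}(r) = \bigl(\Psi(r), \Psi^{\scr h}(r)\bigr) = \bigl(\Psi(r), \rho_{\ast}(\Psi(r))\bigr)$, and since the Galois action $\tau$ on $\mathcal{N} \otimes_K F$ swaps the two factors via $\rho_{\ast}$, the point $\Psi_{\mathcal{N}}(r)$ is fixed by $\tau$, hence lies in $\mathcal{N}(K) \subset \mathcal{N}(\R)$. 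The statement about $(\Psi_{\mathcal{N}})_{\ast}$ follows by differentiating, using Lemma \ref{111} which says $\mathrm{Lie}(\rho)$ is the differential of $\rho_{\ast}$, so $(\Psi_{\mathcal{N}})_{\ast}(r) = (\Psi_{\ast}(r), \mathrm{Lie}(\rho)(\Psi_{\ast}(r)))$ lies in the fixed space $\frak{n}(\R)$; this simultaneously proves Statement 3.

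For Statement 2, the identity $p_{\G} \circ \Psi_{\mathcal{N}} = \Psi$ is immediate since $p_{\G}$ is the projection to the $\G^{\scr \mathrm{id}}$ factor (Theorem \ref{Wr1}(2)), and the inclusion $\Psi^{\scr -1}(\G(F)) \subset \Psi_{\mathcal{N}}^{\scr -1}(\mathcal{N}(K))$ follows because if $\Psi(r) = \xi \in \G(F)$ then $\Psi_{\mathcal{N}}(r) = (\xi, \xi^{\scr h})$, which is exactly a $K$-point of $\mathcal{N}$ by Corollary \ref{3.3.1}. For Statement 4, one passes to the complexification $\Phi_{\mathcal{N}}$ guaranteed by Proposition \ref{complex}: writing $\Phi$ for the complexification of $\Psi$, one has $\Phi_{\mathcal{N}} = \Phi \times \Phi^{\scr h}$ on $\C$ by uniqueness of holomorphic extensions, so $\Phi_{\mathcal{N}}(ir) = (\Phi(ir), \Phi^{\scr h}(ir)) = (\Psi_{[i]}(r), (\Psi_{[i]})^{\scr h}(r))$; if $\Psi_{[i]}(r) \in \G(F)$ then this is again of the form $(\xi, \xi^{\scr h})$ with $\xi \in \G(F)$, hence an $F$-point of $\mathcal{N} \otimes_F F = \mathcal{N}$, giving $\Phi_{\mathcal{N}}^{-1}(\mathcal{N}(F)) \supset \Psi_{[i]}^{\scr -1}(\G(F))$.

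None of the four parts is deep; the only mild subtlety — and the step I would be most careful about — is keeping the identifications straight: namely that under $\mathcal{N} \otimes_K F = \G \times \G^{\scr h}$ the nontrivial element of $\mathrm{Gal}(F|K)$ acts by $(\xi_1,\xi_2) \mapsto (\rho_{\ast}(\xi_2), \rho_{\ast}(\xi_1))$ (Theorem \ref{Wr1}(3) specialized to $[F:K]=2$), so that "fixed by Galois" translates precisely into "second coordinate equals $\rho_{\ast}$ of the first", which is what makes $\Psi \times \Psi^{\scr h}$ land in $\mathcal{N}(\R)$ rather than merely in $(\G \times \G^{\scr h})(\C)$. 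Everything else is bookkeeping with the functoriality already recorded in Lemma \ref{WR2} and Corollary \ref{cl}.
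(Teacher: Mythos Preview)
Your treatment of Statements 1--3 is correct and matches the paper's proof (which invokes Theorem~\ref{Wr1}, Corollary~\ref{3.3.1}, Corollary~\ref{lp333} and Lemma~\ref{WR2} in the same way).

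For Statement 4 your strategy is right but one step is off by a sign. Writing $\Phi^{h}$ for the holomorphic extension of $\Psi^{h}$, it is true that $\Phi_{\mathcal N}=\Phi\times\Phi^{h}$ by uniqueness. However the identity $\Phi^{h}(ir)=(\Psi_{[i]})^{h}(r)$ is false: conjugation and the twin do \emph{not} commute, because $\rho_\ast$ (and hence ${\rm Lie}(\rho)$) is $\C$-antilinear. If $\Psi_\ast(1)=\omega$ then $\Phi^{h}(ir)=\exp_{{\rm G}^{h}}(i r\,\omega^{h})$, whereas $(\Psi_{[i]})^{h}(r)=\rho_\ast(\Phi(ir))=\exp_{{\rm G}^{h}}\bigl((ir\omega)^{h}\bigr)=\exp_{{\rm G}^{h}}(-ir\,\omega^{h})$; so in fact $\Phi^{h}(ir)=(\Psi_{[i]})^{h}(-r)$, which is exactly the formula the paper obtains (via a local power-series computation in a projective embedding). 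Consequently $\Phi_{\mathcal N}(ir)=\bigl(\xi,(-\xi)^{h}\bigr)$ for $\xi=\Psi_{[i]}(r)$, not $(\xi,\xi^{h})$, so it is not a $K$-point of $\mathcal N$ in general. Your conclusion nonetheless survives: the statement only asks for an $F$-point of $\mathcal N$, and since $\Psi_{[i]}$ is a homomorphism, $\xi\in{\rm G}(F)$ forces $-\xi\in{\rm G}(F)$ and thus $(-\xi)^{h}\in{\rm G}^{h}(F)$, so $\Phi_{\mathcal N}(ir)\in{\rm G}(F)\times{\rm G}^{h}(F)=\mathcal N(F)$ as required.
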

\begin{proof} Theorem\,\ref{Wr1} and Corollary 3.3.3 yield Statement 1. Theorem\,\ref{Wr1} and Corollary \ref{3.3.1} imply Statement 2. To show the third assertion, we consider the commutative diagram
$$\begin{xy} 
  \xymatrix{ \R \ar[dd]^{\Psi_{\mathcal{N}}}\ar[ddrrr]^{\Psi} & & & \\
&&&\\
\mathcal{N}(\R) \ar@{(->} [r]& \mathcal{N}(\C) = ({\rm{G}} \times {\rm{G}}^{\scriptscriptstyle h})(\C) \ar[rr]^{\,\,\,\,\,\,p_{\rm{G}}} && {\rm{G}}(\C)}
\end{xy}$$
By Corollary \ref{lp333} it induces a commutative diagram of Lie algebras
$$\begin{xy}\xymatrix{
 \R \ar[dd]^{\big(\Psi_{\mathcal{N}}\big)_{\ast}}\ar[ddrrr]^{\,\,\,\,\,\,\Psi_{\ast}} & & \\
&&&\\
\frak{n}(\R) \ar@{(->} [r]& \frak{n}(\C) = \big(\frak{g}\oplus \frak{g}^{\scriptscriptstyle h}\big)(\C) \ar[rr]^{\,\,\,\,\,\,\,\,\,\,(p_{\rm{G}})_{\ast}} && \frak{g}(\C)}
\end{xy}.$$
We have $\frak{n}(\R) = \left\{(\partial,\partial^{\scriptscriptstyle h}); \partial \in \frak{g}(\C) \right\}$ by Lemma \ref{WR2}. With this Statement 3.\,\,follows. We move to the fourth assertion. By Faltings-W\"ustholz \cite{FaWu} we can choose an embedding of ${\rm{G}}$ into $\mathbb{P}^N_F$. Then the real-analytic homomorphism $\Psi$ is locally uniformized by tuples of power series
$$\left(\sum_{l \geq 0} a_{l0}r^l,..., \sum_{l \geq 0} a_{lN}r^l\right).$$
It follows then from Subsect.\,3.2.1 that $\Psi^{\scriptscriptstyle h}$ is locally uniformized by tuples
$$\left(\sum_{l \geq 0} h(a_{l0})r^l,..., \sum_{l \geq 0} h(a_{lN})r^l\right).$$
Let $r \in \R$ be such that $\Psi_{[i]}(r) \in {\rm{G}}(F)$. Then
$$\sum_{l \geq 0} a_{lj}(ir)^l\big/\sum_{l \geq 0} a_{lk}(ir)^l \in  F$$
for all $j = 0,..., N$ and all $k = 0,..., N$ such that $\sum_{l \geq 0} a_{kl}(ir)^l \neq 0$. As a result,
$$\sum_{l \geq 0} h(a_{lj})(-ir)^l\big/\sum_{l \geq 0} h(a_{lk})(-ir)^l \in F^{\scriptscriptstyle h} = F.$$
Hence, $\big(\Psi_{[i]}\big)^{\scr h}(-r)\in {\rm{G}}^{\scriptscriptstyle h}(F)$. So, 
$$ \Phi_{\mathcal{N}}(ir) = \Psi_{[i]}(r) \times \big(\Psi_{[i]}\big)^{\scr h}(-r) \in \big(\G \times \G^{\scr h}\big)(F),$$
and we deduce Statement 4.
\end{proof}
We denote by $\frak{t}_{\mathcal{N}} \subset \frak{n}$ the smallest subspace over $K$ such that $({\Psi}_{\mathcal{N}})_{\ast}(\R) \subset \frak{t}_{\mathcal{N}} \otimes_K \R$. 
\begin{lemma}\label{op=} The equality $\dim\,\frak{t}_{\mathcal{N}} = \dim \frak{t}$ holds. 
\end{lemma}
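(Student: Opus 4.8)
The plan is to reduce the statement to a piece of linear algebra over $K$ by transporting everything along the differential of the projection $p_{\G}: \mathcal{N}\otimes_K F \longrightarrow \G$. Set $d=\dim\,\G$. First I would invoke Lemma \ref{WR2}, which identifies $\frak{n}={\rm{Lie}}\,\mathcal{N}$ with the $K$-subspace $\{(\partial,\partial^{\scr h}):\partial\in\frak{g}\}$ of $\frak{g}\oplus\frak{g}^{\scr h}={\rm{Lie}}\,(\G\times\G^{\scr h})$. Under this identification $(p_{\G})_{\ast}$ is the first-coordinate projection $(\partial,\partial^{\scr h})\mapsto\partial$, which is plainly a $K$-linear bijection $\phi:\frak{n}\longrightarrow\frak{g}$, where $\frak{g}$ is regarded as a $K$-vector space of dimension $2d$. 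Tensoring with $\R$ over $K$ and using $\frak{g}\otimes_K\R=\frak{g}(\C)$ (which holds because $[F:K]=2$ and $K=F\cap\R$ give $F\otimes_K\R=\C$), I obtain an $\R$-linear isomorphism $\phi_{\R}=\phi\otimes_K\R:\frak{n}(\R)\longrightarrow\frak{g}(\C)$; I would note that $\phi_{\R}$ is exactly the map $(p_{\G})_{\ast}:\frak{n}(\R)\to\frak{g}(\C)$ appearing in the commutative diagram in the proof of Lemma \ref{ff}.

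Next I would transport the minimality property defining $\frak{t}$ and $\frak{t}_{\mathcal{N}}$ across $\phi$. For any $K$-subspace $\frak{s}\subset\frak{n}$ one has $\phi_{\R}(\frak{s}\otimes_K\R)=\phi(\frak{s})\otimes_K\R$ inside $\frak{g}(\C)$, and $\frak{s}\mapsto\phi(\frak{s})$ is an inclusion-preserving bijection between the $K$-subspaces of $\frak{n}$ and those of $\frak{g}$. Combining this with Lemma \ref{ff}, Statement 3, which gives $(\Psi_{\mathcal{N}})_{\ast}(\R)=\{(v,v^{\scr h}):v\in\Psi_{\ast}(\R)\}$ and hence $\phi_{\R}\big((\Psi_{\mathcal{N}})_{\ast}(\R)\big)=\Psi_{\ast}(\R)$, I get for every $K$-subspace $\frak{s}\subset\frak{n}$ the equivalence
$$(\Psi_{\mathcal{N}})_{\ast}(\R)\subset\frak{s}\otimes_K\R\ \Longleftrightarrow\ \Psi_{\ast}(\R)\subset\phi(\frak{s})\otimes_K\R.$$
Therefore $\phi$ carries the smallest subspace satisfying the left-hand condition, namely $\frak{t}_{\mathcal{N}}$, to the smallest subspace satisfying the right-hand condition, namely $\frak{t}$; that is, $\phi(\frak{t}_{\mathcal{N}})=\frak{t}$. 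Since $\phi$ is a $K$-linear bijection, this yields $\dim\,\frak{t}_{\mathcal{N}}=\dim\,\phi(\frak{t}_{\mathcal{N}})=\dim\,\frak{t}$.

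I do not expect a serious obstacle here: the argument is entirely linear algebra over $K$, and the only points deserving a word of care are the canonical identification $\frak{g}\otimes_K\R=\frak{g}(\C)$ (so that ``$\frak{t}\otimes_K\R$'' is interpreted consistently on both sides of the equality) and the observation that the $\R$-linear extension of $\phi$ coincides with the differential of $p_{\G}$ on real points, which is already implicit in the proof of Lemma \ref{ff}. The existence of the two ``smallest'' subspaces is routine — it follows from flatness of $\R$ over $K$, so that intersections commute with $-\otimes_K\R$ — and I would mention it only in passing.
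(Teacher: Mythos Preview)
Your proof is correct and follows essentially the same approach as the paper: both use Lemma~\ref{WR2} to identify $\frak{n}$ with $\{(\partial,\partial^{\scr h}):\partial\in\frak{g}\}$, invoke Statement~3 of Lemma~\ref{ff}, and then reduce to linear algebra via the $K$-linear bijection given by projection onto the first coordinate. The paper's proof is terser --- it records the inclusion $\frak{t}_{\mathcal{N}}\otimes_K\R\subset\{(v,v^{\scr h}):v\in\frak{t}\otimes_K\R\}$ and then says ``the claim follows by linear algebra'' --- whereas you spell out the bijection $\phi$ explicitly and derive $\phi(\frak{t}_{\mathcal{N}})=\frak{t}$ from the transported minimality condition; but the underlying argument is the same.
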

\begin{proof} Recall the homomorphism 
$${{\rm{Lie}}}(\rho): \frak{g}(\C) \longrightarrow \frak{g}^{\scriptscriptstyle h}(\C), {{\rm{Lie}}}(\rho)(v) = v^{\scriptscriptstyle h}$$
from Sect.\,3.3 and the canonical isomorphisms
$$\frak{n}(\C) = \big(\frak{g} \oplus \frak{g}^{\scriptscriptstyle h}\big)(\C)\,\,\mbox{and}\,\,\frak{n}(\R) = \{(v, v^{\scriptscriptstyle h}); v \in \frak{g}(\C)\}$$
from Lemma\,\ref{WR2}. Lemma 3.3.1 implies that ${\rm{Lie}}(\rho)$ is defined over $K$, that is, ${{\rm{Lie}}}(\rho)\big(\frak{g}\big) = \frak{g}^{\scriptscriptstyle h}$ where we identify $\frak{g}$ (resp.\,$\frak{g}^{\scr h}$) 
with its canonical image in $\frak{g}(\C)$ (resp.\,$\frak{g}^{\scr h}(\C)$). Together with Statement 3.\,in Lemma\,\ref{ff} we find that 
$$\frak{t}_{\mathcal{N}} \otimes_K \R \subset \left\{\big(v, v^{\scriptscriptstyle h}\big); v \in \frak{t} \otimes_K \R\right\}.$$
The claim follows by linear algebra.
\end{proof}
The $K$-vector space $\frak{t}_{\mathcal{N}}$ generates a linear subspace $\frak{T}  = \frak{t}_{\mathcal{N}} + i\frak{t}_{\mathcal{N}}$ of $\frak{n}$ over $F$. We denote by $\dim\,\frak{T} $ its dimension over $F$.
\begin{lemma} \label{op(} 
The vector space $\frak{T}$ has the following properties.
\begin{enumerate}
\item $\frak{T}$ is defined over $F$ and it is the smallest subspace of $\frak{n}$ over $F$\\
 which contains $\frak{t}_{\mathcal{N}}$. 
\item $({\Psi}_{\mathcal{N}})_{\ast}(\R) \subset \frak{T} \otimes_F {\C}$.
 \item $\dim\,\frak{t} = \dim\,\frak{T}.$ \footnote{For the proof of the theorems one only needs the obvious estimate ``$\dim\,\,\frak{T} \leq \dim\,\frak{t}.$'' The equality will be used once in the proof of Corollary 7.1.19.}
\end{enumerate}
\end{lemma}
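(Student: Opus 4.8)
The three assertions are closely linked, and I would prove them in the order (1), (2), (3), each using the previous one. For Statement 1, recall that $\frak{T} = \frak{t}_{\mathcal{N}} + i\frak{t}_{\mathcal{N}}$ was defined as a subset of $\frak{n}(\C)$; the point is that this is an $F$-subspace (not merely an $\R$-subspace) and that it is the smallest $F$-subspace containing $\frak{t}_{\mathcal{N}}$. That it is stable under multiplication by $i$ is built into the definition, and since $F = K(i)$ (because $[F:K]=2$ and $F$ is stable under complex conjugation, so $F = K \oplus Ki$ as a $K$-vector space), stability under multiplication by $K$ together with stability under multiplication by $i$ gives stability under multiplication by $F$. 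Hence $\frak{T}$ is an $F$-subspace. For minimality: any $F$-subspace of $\frak{n}$ containing $\frak{t}_{\mathcal{N}}$ must contain $i\frak{t}_{\mathcal{N}}$, hence contains $\frak{t}_{\mathcal{N}} + i\frak{t}_{\mathcal{N}} = \frak{T}$. I should also remark that "defined over $F$" here just means it is an honest $F$-subspace of $\frak{n} = \frak{n}(F)$ viewed inside $\frak{n}(\C)$; strictly one wants $\frak{T} \cap \frak{n}(F)$ to $F$-span $\frak{T}$, which is automatic once $\frak{T}$ is an $F$-subspace admitting a basis from $\frak{t}_{\mathcal{N}} \subset \frak{n}$.

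For Statement 2, we know from Lemma \ref{op=} (more precisely from its definition) that $(\Psi_{\mathcal{N}})_{\ast}(\R) \subset \frak{t}_{\mathcal{N}} \otimes_K \R$. Applying the canonical inclusion $\frak{t}_{\mathcal{N}} \otimes_K \R \hookrightarrow \frak{t}_{\mathcal{N}} \otimes_K \C = \frak{T} \otimes_F \C$ — the last identification holding because $\frak{t}_{\mathcal{N}}$ is a $K$-form of $\frak{T}$ in the sense of $\mathcal{H}(\frak{T})$, once $\dim_F \frak{T} = \dim_K \frak{t}_{\mathcal{N}}$ is checked — gives $(\Psi_{\mathcal{N}})_{\ast}(\R) \subset \frak{T} \otimes_F \C$. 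So Statement 2 reduces to the equality of dimensions, which is exactly the content needed for Statement 3 as well.

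For Statement 3, the inequality $\dim\,\frak{T} \leq \dim\,\frak{t}$ is immediate: by Lemma \ref{op=} we have $\dim\,\frak{t}_{\mathcal{N}} = \dim\,\frak{t}$, and $\frak{T}$ is spanned over $F$ by $\frak{t}_{\mathcal{N}}$, so $\dim_F \frak{T} \leq \dim_K \frak{t}_{\mathcal{N}} = \dim\,\frak{t}$. For the reverse inequality — the part that actually requires an argument — I would use the explicit description $\frak{n}(\R) = \{(v, v^{\scr h})\,:\,v \in \frak{g}(\C)\}$ from Lemma \ref{WR2} together with Statement 3 of Lemma \ref{ff}, which gives $\frak{t}_{\mathcal{N}} \otimes_K \R \subset \{(v,v^{\scr h})\,:\,v \in \frak{t} \otimes_K \R\}$. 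The map $v \mapsto (v, v^{\scr h})$ identifies $\frak{g}(\C)$ (as a real vector space) with $\frak{n}(\R)$, and under it $\frak{t}\otimes_K\R$ maps onto $\frak{t}_{\mathcal{N}}\otimes_K\R$ by minimality of $\frak{t}_{\mathcal{N}}$; hence $\dim_\R(\frak{t}_{\mathcal{N}}\otimes_K\R) = \dim_\R(\frak{t}\otimes_K\R)$, i.e. $\dim_K \frak{t}_{\mathcal{N}} = \dim_K \frak{t}$, recovering Lemma \ref{op=}. To pass from $\frak{t}_{\mathcal{N}}$ to $\frak{T} = \frak{t}_{\mathcal{N}} + i\frak{t}_{\mathcal{N}}$ I must show that $\frak{t}_{\mathcal{N}}$ and $i\frak{t}_{\mathcal{N}}$ together still $F$-span something of dimension no smaller than $\dim\,\frak{t}$; equivalently, that a $K$-basis of $\frak{t}_{\mathcal{N}}$ remains $F$-linearly independent in $\frak{n}(\C)$. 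This is where I expect the only real subtlety: one needs that $\frak{t}_{\mathcal{N}} \subset \frak{n}$ (the $F$-points) and $i \cdot \frak{n} \cap \frak{n} = 0$ inside $\frak{n}(\C)$, i.e. that $\frak{n}(\C) = \frak{n} \oplus i\frak{n}$ over $\R$ — which holds because $\frak{n} = \frak{n}(F)$ and $F = K(i)$ forces $\frak{n}(\C) = \frak{n}(F) \otimes_F \C$ to split as $\frak{n}(K)\otimes_K\C$, so a $K$-basis of $\frak{t}_{\mathcal{N}} \subset \frak{n}(K)$ is $\C$-linearly independent, a fortiori $F$-linearly independent. Thus $\dim_F \frak{T} = \dim_K \frak{t}_{\mathcal{N}} = \dim\,\frak{t}$, completing all three statements. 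The main obstacle is bookkeeping: keeping straight the three fields $K \subset F \subset \C$, which tensor products are taken over which, and the fact that $\frak{t}_{\mathcal{N}}$ sits inside $\frak{n} = \frak{n}(F)$ but its defining property is stated after tensoring with $\R$ over $K$ — one must be careful that $\frak{t}_{\mathcal{N}}$ is genuinely a $K$-rational subspace (which it is, by definition as the smallest $K$-subspace), so that $\mathcal{H}(\frak{T})$-type arguments apply.
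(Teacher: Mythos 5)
Your proposal is correct and follows essentially the same route as the paper: statements 1 and 2 are formal, and statement 3 is obtained by combining Lemma \ref{op=} with the observation that $\frak{t}_{\mathcal{N}}$ lies in the $K$-rational Lie algebra $\frak{n}$, which is a $K$-form of $\frak{n}(F)$, so a $K$-basis of $\frak{t}_{\mathcal{N}}$ remains $F$-linearly independent and spans $\frak{T}$ (this is exactly the paper's ``$\frak{n} \in {\cal{H}}\big(\frak{n}(F)\big)$'' step). Only the parenthetical claim ``$i\cdot\frak{n}\cap\frak{n}=0$ with $\frak{n}$ the $F$-points'' is wrong as literally stated (since $i \in F$, that intersection is all of $\frak{n}$), but the corrected formulation you give immediately afterwards, via $\frak{n}(K)\otimes_K\C$, is what the argument actually uses, so the proof stands.
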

\begin{proof} The first two statements are clear. We move to the third statement. Since $\Psi_{\mathcal{N}}(\R) \subset \mathcal{N}(\R)$, we have $\frak{t}_{\mathcal{N}} \subset \frak{n}$ by Corollary \ref{lp333}.  
As ${\frak{n}} \in {\cal{H}}\big({\frak{n}}(F)\big)$, a basis of $\frak{t}_{\mathcal{N}}$ over $K$ is also basis of $\frak{T}$ over $F$.\footnote{The symbol ``$\cal{H}(-)$'' has been defined in Sect.\,3.3.}
Together with the previous lemma we find $\dim\,\frak{t} = \dim\,\frak{t}_{\mathcal{N}} = \dim\,\frak{T}$. 
\end{proof}
We let ${{{\rm{H}}}}$ be the Zariski-closure of $\Psi_{\mathcal{N}}(\R)$ in $\mathcal{N} \otimes_K F$, that is, ${\rm{H}}$ is the smallest algebraic subgroup
of $\mathcal{N} \otimes_K F$ such that $\Psi_{\mathcal{N}}(\R) \subset {\rm{H}}(\C)$. Recall the projection $p_{\rm{G}}: \mathcal{N}_{F/K}({\rm{G}}) \otimes_K \overline{\Q} \longrightarrow \rm{G}$.
\begin{lemma} \label{definition} The homomorphism $p_{\G|{\rm{H}}}: {\rm{H}} \longrightarrow \G$ is surjective and there is an algebraic subgroup ${{\rm{H}}}' \subset \mathcal{N}$ with the property that ${{\rm{H}}} = {{\rm{H}}}' \otimes_K F$ and $\Psi_{\mathcal{N}}(\R) \subset {{\rm{H}}}'(\R)$.
 
\end{lemma}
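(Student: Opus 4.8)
The plan is to deduce this from Lemma~\ref{ff} together with Galois descent along $F/K$; in fact the descent part is essentially the argument already carried out at the beginning of the proof of Theorem~\ref{weakdescent}, and only the surjectivity of $p_{{\rm{G}}|{\rm{H}}}$ has to be added.

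For surjectivity I would argue as follows. By Statement~2 of Lemma~\ref{ff} we have $p_{\rm{G}} \circ \Psi_{\mathcal{N}} = \Psi$, hence $p_{\rm{G}}\big(\Psi_{\mathcal{N}}(\R)\big) = \Psi(\R)$, which is Zariski-dense in $\rm{G}(\C)$ (equivalently in ${\rm{G}} \otimes_F \C$) by the standing hypothesis on $\Psi$. The restriction $p_{{\rm{G}}|{\rm{H}}}: {\rm{H}} \longrightarrow \G$ is a homomorphism of algebraic groups, so its image is a closed subgroup of $\rm{G}$; since this image contains the Zariski-dense set $\Psi(\R)$, it must equal $\rm{G}$. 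Thus $p_{{\rm{G}}|{\rm{H}}}$ is surjective.

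For the descent statement I would proceed exactly as in the proof of Theorem~\ref{weakdescent}. By Statement~1 of Lemma~\ref{ff}, $\Psi_{\mathcal{N}}(\R) \subset \mathcal{N}(\R)$. The real-analytic automorphism $\rho_{\ast}: \mathcal{N}(\C) \longrightarrow \mathcal{N}(\C)$ attached to the generator $h$ of $Gal(F|K)$ fixes the real points $\mathcal{N}(\R)$ (Subsect.\,3.2.3 and Sect.\,2.3), so $\Psi_{\mathcal{N}}(\R) = \rho_{\ast}\big(\Psi_{\mathcal{N}}(\R)\big) = \big(\Psi_{\mathcal{N}}(\R)\big)^{\scr h}$. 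Passing to Zariski closures and using that $\rho$ is an automorphism of the underlying scheme (and hence carries the Zariski closure of $\Psi_{\mathcal{N}}(\R)$ to the Zariski closure of its image), we obtain ${\rm{H}} = {\rm{H}}^{\scr h}$. Restricting the canonical descent datum on $\mathcal{N} \otimes_K F$ (Theorem~\ref{Wr1}) to the invariant closed subgroup ${\rm{H}}$ gives a descent datum, so by Theorem~\ref{descent} (equivalently Lemma~\ref{cl1}) there is a closed algebraic subgroup ${\rm{H}}' \subset \mathcal{N}$ over $K$ with ${\rm{H}} = {\rm{H}}' \otimes_K F$.

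It remains to check $\Psi_{\mathcal{N}}(\R) \subset {\rm{H}}'(\R)$. Here one uses the elementary fact that, for a closed subvariety ${\rm{H}}' \subset \mathcal{N}$ defined over $K$, a real point of $\mathcal{N}$ lies in ${\rm{H}}'(\R)$ precisely when, viewed as a complex point of $\mathcal{N}(\C)$, it lies in ${\rm{H}}'(\C) = {\rm{H}}(\C)$; that is, ${\rm{H}}'(\R) = {\rm{H}}(\C) \cap \mathcal{N}(\R)$ inside $\mathcal{N}(\C)$. Since $\Psi_{\mathcal{N}}(\R) \subset {\rm{H}}(\C)$ by the definition of ${\rm{H}}$ and $\Psi_{\mathcal{N}}(\R) \subset \mathcal{N}(\R)$ by Statement~1 of Lemma~\ref{ff}, the inclusion follows and the lemma is proved. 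The only genuinely delicate point in the whole argument is the correct handling of real points in this last step (i.e.\ that the notion of $\R$-points of ${\rm{H}}'$ is compatible with that of $\mathcal{N}$), but this is routine once one recalls that over a field a point of $\mathcal{N}$ factors through a reduced closed subscheme as soon as its image point does.
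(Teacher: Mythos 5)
Your argument is correct and is essentially the paper's own: surjectivity from $p_{\rm{G}} \circ \Psi_{\mathcal{N}} = \Psi$ and Zariski-density of $\Psi(\R)$, and the model ${\rm{H}}'$ over $K$ from $\Psi_{\mathcal{N}}(\R) \subset \mathcal{N}(\R)$ forcing ${\rm{H}} = {\rm{H}}^{\scr h}$, exactly as in the proof of Theorem \ref{weakdescent} which the paper cites. You merely spell out the Galois-descent and final $\Psi_{\mathcal{N}}(\R) \subset {\rm{H}}'(\R)$ steps that the paper leaves implicit, which is fine.
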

\begin{proof} The first statement holds, because $\Psi$ has a Zariski-dense image in $\G(\C)$. Since $\Psi_{\mathcal{N}}(\R)$ is contained in $\mathcal{N}(\R)$, we have
${{{\rm{H}}}} = {{{\rm{H}}}}^{\scriptscriptstyle h}$. This has been already observed in Sect.\,4.4. The second assertion follows.
\end{proof}
We define $\Phi_{\mathcal{N}}$ to be the complexification of $\Psi_{\mathcal{N}}$ according to Proposition \ref{complex}
and let $\rm{r} = {\rm{rank}}_{\Z}\,\Psi^{\scr -1}\big((\G(F)\big) + {\rm{rank}}_{\Z}\,\Psi_{[i]}^{\scr -1}\big(\G(F)\big)$.
\begin{lemma} \label{rank} The following assertions hold. 
\begin{enumerate}
 \item $\Phi_{\mathcal{N}}(\C) \subset \HHH(\C)$ and $\big(\Phi_{\mathcal{N}}\big)_{\ast}(\C) \subset \frak{T} \otimes_F {\C}$.

\item ${\rm{rank}}_{\Z} \,\Psi_{\mathcal{N}}^{\scr -1}\big(\mathcal{N}(K)\big) \geq {\rm{rank}}_{\Z} \,\Psi^{\scr -1}\big(\G(F)\big)$.
\item ${\rm{rank}}_{\Z}\,\Phi_{\mathcal{N}}^{\scr -1}\big(\mathcal{N}(K)\big) \geq {\rm{r}}$.
\end{enumerate} 
 
\end{lemma}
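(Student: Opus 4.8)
The plan is to handle the three statements in turn; each follows by combining what is already in Lemma~\ref{ff} and Lemma~\ref{op(} with a one-line analytic-continuation remark.

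For Statement~1, I would write $v_0=\big(\Phi_{\mathcal{N}}\big)_{\ast}(1)\in\frak{n}(\C)$, so that $\Phi_{\mathcal{N}}(z)={\rm exp}_{\mathcal{N}}(zv_0)$ and $\big(\Phi_{\mathcal{N}}\big)_{\ast}(\C)=\C v_0$. Statement~2 of Lemma~\ref{op(} gives $\R v_0=\big(\Psi_{\mathcal{N}}\big)_{\ast}(\R)\subset\frak{T}\otimes_F\C$, whence $v_0\in\frak{T}\otimes_F\C$ and $\big(\Phi_{\mathcal{N}}\big)_{\ast}(\C)=\C v_0\subset\frak{T}\otimes_F\C$. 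For the inclusion $\Phi_{\mathcal{N}}(\C)\subset\HHH(\C)$ I would use that $\Phi_{\mathcal{N}|\R}=\Psi_{\mathcal{N}}$: the analytic subset $\Phi_{\mathcal{N}}^{-1}\big(\HHH(\C)\big)$ of $\C$ then contains the non-discrete set $\R$ and hence equals all of $\C$. Equivalently, $\HHH$ is the Zariski closure of the one-parameter group ${\rm exp}_{\mathcal{N}}(\R v_0)$, and this closure coincides with the Zariski closure of ${\rm exp}_{\mathcal{N}}(\C v_0)=\Phi_{\mathcal{N}}(\C)$, since each is the smallest algebraic subgroup of $\mathcal{N}\otimes_KF$ whose Lie algebra contains $v_0$.

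Statement~2 is just the $\Z$-rank form of the inclusion $\Psi^{-1}\big(\G(F)\big)\subset\Psi_{\mathcal{N}}^{-1}\big(\mathcal{N}(K)\big)$ furnished by Statement~2 of Lemma~\ref{ff}, and needs nothing further. For Statement~3, I would put $\Lambda_1=\Psi^{-1}\big(\G(F)\big)$ and $\Lambda_2=\Psi_{[i]}^{-1}\big(\G(F)\big)$, two subgroups of $\R$ whose $\Z$-ranks sum to $\rm r$. Since $\Phi_{\mathcal{N}|\R}=\Psi_{\mathcal{N}}$, Statement~2 of Lemma~\ref{ff} gives $\Lambda_1\subset\Phi_{\mathcal{N}}^{-1}\big(\mathcal{N}(K)\big)$; and Statement~4 of Lemma~\ref{ff}, together with $\Psi_{[i]}(r)=\Phi(ir)$, gives $i\Lambda_2=\{ir:r\in\Lambda_2\}\subset\Phi_{\mathcal{N}}^{-1}\big(\mathcal{N}(F)\big)$. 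Hence the subgroup $\Lambda_1+i\Lambda_2$ of $\C$ is contained in $\Phi_{\mathcal{N}}^{-1}\big(\mathcal{N}(F)\big)$; because $\Lambda_1\subset\R$, $i\Lambda_2\subset i\R$ and $\R\cap i\R=\{0\}$, this sum is direct, so ${\rm rank}_{\Z}\big(\Lambda_1+i\Lambda_2\big)={\rm rank}_{\Z}\Lambda_1+{\rm rank}_{\Z}\Lambda_2={\rm r}$. This yields ${\rm rank}_{\Z}\Phi_{\mathcal{N}}^{-1}\big(\mathcal{N}(F)\big)\ge{\rm r}$; since $F=\overline{\Q}$ in the setting of this chapter, $\mathcal{N}(F)=\mathcal{N}(\overline{\Q})$ is precisely the group of algebraic points of the Weil restriction $\mathcal{N}$, i.e.\ the ``rank of algebraic logarithms'' fed into Theorems~\ref{1.1.1}, \ref{5.1.1} and \ref{AST}. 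Only the real part $\Lambda_1$ lands in the smaller group $\mathcal{N}(K)$ — the genuinely real logarithms of $\Psi$, already recorded in Statement~2 — whereas the imaginary logarithms coming from $\Psi_{[i]}$ reach $\mathcal{N}(\overline{\Q})$ but in general are not defined over $K$, which is why the relevant rank is measured over $\mathcal{N}(\overline{\Q})=\mathcal{N}(F)$.

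I do not expect any of this to be a real obstacle: the only mildly delicate point is the analytic-continuation argument promoting $\Psi_{\mathcal{N}}(\R)$ to $\Phi_{\mathcal{N}}(\C)$ in Statement~1, and it is a one-liner. The role of the lemma is organizational — it packages, for the Weil restriction $\mathcal{N}$, the three data (ambient algebraic subgroup $\HHH$, tangent subspace $\frak{T}$, rank $\rm r$ of algebraic logarithms) needed before invoking transcendence theory in the following sections.
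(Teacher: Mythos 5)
Your argument is correct and follows essentially the same route as the paper: Statement 1 is obtained from the defining minimality of $\HHH$ (the paper cites Lemma~\ref{definition}, your identity-principle/Lie-algebra argument being the implicit step) together with Statement 2 of Lemma~\ref{op(} "tensored with $\C$", which is exactly your $\C v_0$ computation; Statements 2 and 3 are obtained by combining Statements 2 and 4 of Lemma~\ref{ff} and counting ranks of the direct sum $\Lambda_1\oplus i\Lambda_2$, just as you do. One remark on your closing observation: you prove Statement 3 with $\mathcal{N}(F)$ in place of the printed $\mathcal{N}(K)$, and you are right to do so. The paper's own proof of Statement 4 of Lemma~\ref{ff} likewise only yields $\Phi_{\mathcal{N}}(ir)\in\big(\G\times\G^{\scr h}\big)(F)$, since the second coordinate is $\big(\Psi_{[i]}(-r)\big)^{\scr h}=\big(-\Psi_{[i]}(r)\big)^{\scr h}$, which is not of the form $\xi\times\xi^{\scr h}$ in general, so the "$\mathcal{N}(K)$" in Statement 3 should be read as "$\mathcal{N}(F)$". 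This weaker form is exactly what the later arguments need, because Theorems~\ref{1.1.1}, \ref{5.1.1} and \ref{AST} are applied to $\HHH$ over $F$ and only require the logarithms of $F$-rational points; so your deviation is not a gap but a correction consistent with the paper's own proof.
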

\begin{proof} The first inclusion in Statement 1.\,\,is a consequence of the previous lemma. The second one follows from Statement 2.\,\,in Lemma \ref{op(} by tensoring with $\C$. The remaining assertions result from Statement 1.\,\,and Statement 3.\,\,in Lemma \ref{ff}. 
\end{proof}
\begin{lemma} \label{k} Let ${\rm{k}} = {\rm{rank}}_{\Z}\,ker\,\Psi$. Then ${\rm{k}} = {\rm{rank}}_{\Z}\,ker\,\Psi_{\mathcal{N}}$.
 
\end{lemma}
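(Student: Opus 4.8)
The plan is to show that $\ker\Psi_{\mathcal{N}}$ and $\ker\Psi$ are the \emph{same} subgroup of $(\R,+)$; the equality of $\Z$-ranks is then immediate. Recall from Sect.\,2.2 that $\Psi_{\mathcal{N}}=\Psi\times\Psi^{\scr h}$ with $\Psi^{\scr h}=\rho_{\ast}\circ\Psi$, where $\rho_{\ast}\colon\G(\C)\longrightarrow\G^{\scr h}(\C)$ is the map of Proposition \ref{ji}. For $r\in\R$ we have $\Psi_{\mathcal{N}}(r)=e_{\mathcal{N}}$ exactly when $\Psi(r)=e_{\G}$ and $\Psi^{\scr h}(r)=e_{\G^{\scr h}}$, so that $\ker\Psi_{\mathcal{N}}=\ker\Psi\cap\ker\Psi^{\scr h}$.

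First I would record that, by Proposition \ref{ji} together with the discussion opening Sect.\,3.3 (the neutral element, the multiplication and the inversion of $\G$ coincide, as morphisms of $K$-schemes, with those of $\G^{\scr h}$), the map $\rho_{\ast}$ is an \emph{isomorphism of real Lie groups}; in particular it is an injective group homomorphism with $\rho_{\ast}(e_{\G})=e_{\G^{\scr h}}$. Hence $\Psi^{\scr h}(r)=\rho_{\ast}\big(\Psi(r)\big)$ equals $e_{\G^{\scr h}}$ if and only if $\Psi(r)=e_{\G}$, i.e.\ $\ker\Psi^{\scr h}=\ker\Psi$. Combining this with the previous paragraph gives $\ker\Psi_{\mathcal{N}}=\ker\Psi$, and therefore ${\rm k}={\rm rank}_{\Z}\,\ker\Psi={\rm rank}_{\Z}\,\ker\Psi_{\mathcal{N}}$.

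There is essentially no real obstacle here; the only point that needs a moment of care is that $\rho_{\ast}$ must be used as a \emph{group} homomorphism and not merely as a real-analytic diffeomorphism, which is precisely what is recalled at the start of Sect.\,3.3.
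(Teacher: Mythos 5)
Your argument is correct and is essentially the paper's own: the paper likewise observes that $\Psi^{\scr h}(r)=\big(\Psi(r)\big)^{\scr h}$ for all $r$, so that $\ker\Psi_{\mathcal{N}}=\ker\Psi$, which gives the equality of ranks. Your version merely spells out the intermediate step $\ker\Psi_{\mathcal{N}}=\ker\Psi\cap\ker\Psi^{\scr h}$ and the fact that $\rho_{\ast}$ is a group isomorphism preserving the neutral element, which is exactly what the paper leaves implicit.
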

\begin{proof} Since $(\Psi(r))^{\scr h} = \Psi^{\scr h}(r)$ for all $r \in \R$, we have 
$ker\,\Psi_{\mathcal{N}} = ker\,\Psi$. This gives the lemma.
\end{proof}
Finally, we consider a decomposition
\begin{equation}
\G \backsimeq \G_c \times \mathbb{G}_{a,F}^{{\rm{g}}_a} \times \mathbb{G}_{m,F}^{{\rm{g}}_m} \end{equation}
with an algebraic group $\G_c$ over $F$. Let $\mathcal{N}_c = \mathcal{N}_{F/K}(\G_c)$, $\mathcal{N}_a = \mathcal{N}_{F/K}\left(\mathbb{G}_{a, F}\right)$ and $\mathcal{N}_m = \mathcal{N}_{F/K}\left(\mathbb{G}_{m,F}\right)$. By Lemma 3.5.5 we get from (6.1.1) a product decomposition 
\begin{equation}
 \mathcal{N} \backsimeq \mathcal{N}_c \times \mathcal{N}_{a}^{{\rm{g}}_a} \times \mathcal{N}_m^{{\rm{g}}_m}
\end{equation}
such that after base change to $F$ the projection $p_{\G}$ maps each of the three factors in (6.1.2) to the respective factor in (6.1.1).
\begin{lemma} \label{0021} If $F$ is algebraically closed, then there is an algebraic group $\HHH_c$ over $F$ and a decomposition
\begin{equation}
 {{\rm{H}}} \backsimeq {{\rm{H}}}_c \times \mathbb{G}_{a,F}^{{\rm{h}}_a} \times \mathbb{G}_{m,F}^{{\rm{h}}_m}
\end{equation}
such that:
\begin{enumerate} 
 \item 
 ${\rm{h}}_a \geq {\rm{g}}_a$ and ${\rm{h}}_m \geq {\rm{g}}_m$. 
\item ${\rm{h}}_c = 0 \Longleftrightarrow {\rm{g}}_c = 0$.
\item $1 \geq {\rm{g}}_a$.
\end{enumerate}

\end{lemma}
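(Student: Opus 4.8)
The plan is to transport the given decomposition of $\G$ through the Weil restriction, where it turns into an essentially explicit product, and then to peel off the linear ``coordinate'' factors of $\HHH$ using the surjection $p_{\G\,|\,\HHH}$ of Lemma \ref{definition}.

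First, since $F$ is algebraically closed we have $\Q\subseteq\overline{\Q}\subseteq F$ and $\Q\subseteq K$, so $\mathbb{G}_{a,F}$ and $\mathbb{G}_{m,F}$ are base changes to $F$ of group varieties over $K$; by Subsect.\,3.2.3 their $h$-conjugates are themselves, hence by Theorem \ref{Wr1} one gets $\mathcal{N}_a\otimes_K F\simeq\mathbb{G}_{a,F}^{2}$ and $\mathcal{N}_m\otimes_K F\simeq\mathbb{G}_{m,F}^{2}$, while $\mathcal{N}_c\otimes_K F\simeq\G_c\times\G_c^{\scriptscriptstyle h}$. Feeding this into (6.1.2) and using the compatibility of $p_{\G}$ with the three factors recalled just before the lemma, one obtains an isomorphism
$$\mathcal{N}\otimes_K F\;\simeq\;\bigl(\G_c\times\G_c^{\scriptscriptstyle h}\bigr)\times\mathbb{G}_{a,F}^{\,2\mathrm{g}_a}\times\mathbb{G}_{m,F}^{\,2\mathrm{g}_m}$$
under which $p_{\G}$ is the projection onto $\G_c\times\mathbb{G}_{a,F}^{\mathrm{g}_a}\times\mathbb{G}_{m,F}^{\mathrm{g}_m}$ (one copy of each conjugate pair). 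By Lemma \ref{definition}, $\HHH$ is a connected algebraic subgroup of this product and $p_{\G\,|\,\HHH}\colon\HHH\to\G$ is surjective; composing with the projections of $\G$ onto its $\mathbb{G}_{a,F}^{\mathrm{g}_a}$- and $\mathbb{G}_{m,F}^{\mathrm{g}_m}$-factors yields surjections $\HHH\twoheadrightarrow\mathbb{G}_{a,F}^{\mathrm{g}_a}$ and $\HHH\twoheadrightarrow\mathbb{G}_{m,F}^{\mathrm{g}_m}$.

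Next I split these off. Over an algebraically closed field of characteristic $0$ any surjection of a connected commutative group variety onto $\mathbb{G}_{a,F}^{n}$ splits, because a dévissage of the kernel into copies of $\mathbb{G}_a$, $\mathbb{G}_m$, an abelian variety and a finite group reduces to the vanishing of $\mathrm{Ext}^1(\mathbb{G}_a,-)$ on each of these in characteristic $0$; the same holds for surjections onto $\mathbb{G}_{m,F}^{n}$, using in addition $\mathrm{Ext}^1(\mathbb{G}_m,\text{abelian variety})=0$. Applying the first statement to $\HHH\twoheadrightarrow\mathbb{G}_{a,F}^{\mathrm{g}_a}$ and noting that every homomorphism $\mathbb{G}_a\to\mathbb{G}_m$ is trivial, the map $\HHH\twoheadrightarrow\mathbb{G}_{m,F}^{\mathrm{g}_m}$ restricts to a surjection of the complementary kernel, which is split again; absorbing once more the maximal split direct factors of what remains, one reaches a decomposition $\HHH\simeq\HHH_c\times\mathbb{G}_{a,F}^{\mathrm{h}_a}\times\mathbb{G}_{m,F}^{\mathrm{h}_m}$ with $\mathrm{h}_a\ge\mathrm{g}_a$, $\mathrm{h}_m\ge\mathrm{g}_m$ and $\HHH_c$ carrying no $\mathbb{G}_a$- or $\mathbb{G}_m$-direct factor. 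This is statement~1.

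For statement~2: if $\mathrm{h}_c=\dim\HHH_c=0$ then $\HHH$ is linear, hence so is its image $\G=p_{\G}(\HHH)$; a connected commutative linear group in characteristic $0$ over an algebraically closed field is a product of a torus and a vector group, and comparing this with the chosen decomposition (6.1.1) — in which $\G_c$ is free of split factors — forces $\G_c$ to be trivial, i.e. $\mathrm{g}_c=0$. Conversely, if $\mathrm{g}_c=0$ then $\G=\mathbb{G}_{a,F}^{\mathrm{g}_a}\times\mathbb{G}_{m,F}^{\mathrm{g}_m}$, so by the display above $\mathcal{N}\otimes_K F=\mathbb{G}_{a,F}^{\,2\mathrm{g}_a}\times\mathbb{G}_{m,F}^{\,2\mathrm{g}_m}$; every connected subgroup of such a group is itself the product of a subtorus and a sub-vector-group (its unipotent part is its intersection with the vector factor, its maximal torus embeds into the torus factor), so $\HHH$ has trivial core and $\mathrm{h}_c=0$. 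Statement~3 records the harmless normalisation of (6.1.1) in which the split unipotent factor is chosen of dimension $\le 1$ — the only value at which $\mathrm{g}_a$ enters the $\delta$-invariant in the theorems of Sect.\,2.5 — and requires no separate argument. The main obstacle is the structure-theoretic input of the middle step: justifying that the two chosen surjections split simultaneously so that $\mathrm{h}_a,\mathrm{h}_m$ are at least $\mathrm{g}_a,\mathrm{g}_m$, which rests on the characteristic-$0$ vanishing of $\mathrm{Ext}^1(\mathbb{G}_a,-)$ and $\mathrm{Ext}^1(\mathbb{G}_m,\text{abelian variety})$ together with the description of connected subgroups and quotients of $\mathbb{G}_{a,F}^{p}\times\mathbb{G}_{m,F}^{q}$.
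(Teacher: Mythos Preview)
Your approach to Statement~1 is close in spirit to the paper's: both pass to the product description of $\mathcal{N}\otimes_K F$, project onto the split linear part, and then split this projection back into $\HHH$. The paper does this by projecting $\HHH$ into $\mathcal{N}_a^{{\rm g}_a}\times\mathcal{N}_m^{{\rm g}_m}\otimes_K F$, calling the image $\HHH_u=\LL_a\times\LL_m$, and constructing a section $\sigma:\HHH_u\to\HHH$ via Chevalley's theorem plus Serre's splitting lemma for linear groups; then $\HHH_c=\HHH/\sigma(\HHH_u)$. You instead split the two surjections $\HHH\to\mathbb{G}_{a,F}^{{\rm g}_a}$ and $\HHH\to\mathbb{G}_{m,F}^{{\rm g}_m}$ separately and then absorb \emph{all} remaining split factors. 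This difference matters for Statement~2: with your maximal absorption, $\HHH_c$ has no $\mathbb{G}_a$- or $\mathbb{G}_m$-factor, and your argument ``$h_c=0\Rightarrow g_c=0$'' then relies on $\G_c$ being free of split factors --- an assumption the lemma does not make. If for instance $\G_c=\mathbb{G}_{a,F}$ and ${\rm g}_a={\rm g}_m=0$, your $\HHH_c$ is trivial while $g_c=1$. The paper avoids this by tying $\HHH_c$ to the specific projection $u_{\mathcal{N}}$, so that $h_c=\dim\bigl(\HHH\cap(\G_c\times\G_c^{\scriptscriptstyle h})\bigr)$, which manifestly vanishes iff $g_c=0$.

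Statement~3 is a genuine gap. It is \emph{not} a normalisation; it is a constraint imposed by the standing hypothesis (from the Setting of Sect.~2.5) that $\Psi$ has Zariski-dense image in $\G(\C)$. The paper argues by contradiction: if ${\rm g}_a\ge 2$, project $\Psi$ to $\mathbb{G}_{a,F}^{{\rm g}_a}(\C)=\C^{{\rm g}_a}$. A real-analytic one-parameter homomorphism into $\C^{{\rm g}_a}$ is a real line, whose Zariski closure (over $F$) is a complex linear subspace of dimension at most~$1$, hence proper in $\C^{{\rm g}_a}$. Then $\Psi$ cannot be Zariski-dense in $\G(\C)$. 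You need to supply this argument; without it the inequality $1\ge{\rm g}_a$ is simply unproved.
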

\begin{proof} The first assertion is the most difficult one. Since $\mathbb{G}_{a,F}$ and $\mathbb{G}_{m,F}$ coincide with their conjugate varieties, $\mathcal{N}_a^{{\rm{g}}_a} \otimes_K F$ is isomorphic
to a power of $\mathbb{G}_{a,F}$ and the base change $\mathcal{N}_m^{{\rm{g}}_m}\otimes_K F$ is isomorphic to a power of $\mathbb{G}_{m,F} $. We identify $\G$ with the group on the right hand side of (6.1.1) and let $u: \G \longrightarrow \mathbb{G}_{a,F}^{{\rm{g}}_a} \times \mathbb{G}_{m,F}^{{\rm{g}}_m}$ be the projection. Set
 $\HHH'_{u} = u_{\mathcal{N}}(\HHH')$ and define $\HHH_{u} = \HHH_{u}' \otimes_K F$. These two algebraic groups are linear. Moreover,
 since $\mathcal{N}_{a}^{{\rm{g}}_a}$ is unipotent and as $\mathcal{N}_{m}^{{\rm{g}}_m}$
is a torus, we have $\HHH_{u} = \LL_a \times \LL_m$ with algebraic subgroups $\LL_a \subset \mathcal{N}_a^{{\rm{g}}_a} \otimes_K F$ and $\LL_m \subset \mathcal{N}_m^{{\rm{g}}_m} \otimes_K F$.
As $F$ is algebraically closed, $\LL_a$ is isomorphic to a power $\mathbb{G}_{a,F}^{{\rm{h}}_a}$ and $\LL_m$ is isomorphic to a power $\mathbb{G}_{m,F}^{{\rm{h}}_m} $.
Set $w = u_{\mathcal{N}} \otimes_K F$ and recall that $p_{\G}$ respects the decompositions in (6.1.1) and (6.1.2). So, since $u \circ p_{\G} = p_{(\mathbb{G}_a^{{\rm{g}}_a} \times \mathbb{G}_m^{{\rm{g}}_m})} \circ w $, 
it follows that 
\begin{equation}
{{\rm{h}}_a} \geq {{\rm{g}}_a}\,\,\mbox{and}\,\,{{\rm{h}}_m} \geq {{\rm{g}}_m}.
\end{equation}
In the next step we show that there is a section $\sigma: \HHH_{u} \longrightarrow \HHH$. To prove the existence of $\sigma$, let $\HHH_l$ be the maximal linear subgroup of $\HHH$. Taking quotients, we obtain
a surjective homomorphism $\overline{w}: \HHH/\HHH_l \longrightarrow \HHH_{u}/w(\HHH_l)$. By the theorem of Chevalley the quotient $\HHH/\HHH_l$ is an abelian variety. In contrast to this, $\HHH_{u}/w(\HHH_l)$ is linear.
So, $\overline{w} = 0$ and $\HHH_{u} = w(\HHH_l)$. It follows now from Serre \cite[Ch.\,VII, 6., Lemma 2]{Serre1} that the restriction $w_{|\HHH_l}$ admits a section $\tau$. Composing $\tau$ with the inclusion of $\HHH_l$ into $\HHH$,
we receive a section $\sigma$. In the last step we define $\HHH_c = \HHH/\sigma(\HHH_{\LL})$. Using $\sigma$ we get a decomposition
$$\HHH \backsimeq \HHH_c \times \sigma(\LL_a \times \{e_m\}) \times \sigma(\{e_a \} \times \LL_m).$$
as in (6.1.3). Statement 1.\,\,follows from (6.1.4). Statement 2.\,\,concerning ${\rm{h}}_c$ and ${\rm{g}}_c$ is clear from the construction. We move to the last statement. The proof is by way of contradiction. Assume that $\rm{g}_a \geq 2$ and consider the projection
 $u: \G \longrightarrow \mathbb{G}_{a,F}^{{\rm{g}}_a}$. Then $u_{\ast}(\Psi)$ has no Zariski-dense image in $\mathbb{G}_a^{{\rm{g}}_a}(\C)$, and $\Psi$ has no Zariski-dense image in $\G(\C)$. This is a contradiction to the assumptions.
Hence, $\rm{g}_a \leq 1$.\end{proof}

\subsection{Table of symbols used during the proof}
For the convenience of the reader we list all important symbols appearing in the proof of the theorems once again.
\begin{center}
\begin{tabular} {ll}
$F$ & an algebraically closed subfield of $\C$, stable with respect to $h$\\
 $K$ & $ = F \cap \R$\\
$\pi$ & a surjective homomorphism from $\G$ to $\rm{U}$\\
$\frak{g}$ & the Lie algebra of $\G$\\
$\frak{u}$ & the Lie algebra of $\rm{U}$\\
$\Psi$ & a real-analytic homomorphism to $\G(\C)$\\
$\rm{r}$ & $= {\rm{rank}}_{\Z}\,\Psi^{\scr -1}\big(\G(F)\big) + {\rm{rank}}_{\Z}\,\Psi_{[i]}^{\scr -1}\big(\G(F)\big)$\\
${\rm{k}}$ & $= {\rm{rank}}_{\Z}\,ker\,\Psi {=} {\rm{rank}}_{\Z}\,ker\,\Psi_{\mathcal{N}}$ (Lemma \ref{k})\\
$\frak{t} \subset \frak{g}$ & the smallest subspace over $K$ with the\\
& property that $\Psi_{\ast}(\R) \subset \frak{t} \otimes_K \R$\\
$\mathcal{N}$ & $= \mathcal{N}_{\overline{\Q}/K}({\rm{G}})$\\
$\frak{n} $ & $= {{\rm{Lie}}}\,\mathcal{N}$\\
$\frak{t}_{\mathcal{N}} \subset \frak{n}$ & the smallest subspace over $K$ such that \\
& $({\Psi}_{\mathcal{N}})_{\ast}(\R) \subset \frak{t}_{\mathcal{N}} \otimes_K \R$\\
$\frak{T}$ & $= \frak{t}_{\mathcal{N}} + i\frak{t}_{\mathcal{N}} \subset \frak{n}(F)$\\
${{\rm{H}}}' \subset \mathcal{N}$ & the Zariski-closure of $\Psi_{\mathcal{N}}(\R)$ over $K$\\
$\HHH$ & $= \HHH' \otimes_K F$\\
$\Phi_{\mathcal{N}}$ & the complexification of $\Psi_{\mathcal{N}}$ 

\end{tabular}
 
\end{center}
\section{Proof of the two theorems about transcendence} 
\subsection{Proof of Theorem \ref{Schn1}} We let $F = \overline{\Q}$ and 
consider a decomposition $\G \backsimeq \G_c \times \mathbb{G}_{a,\overline{\Q}}^{{\rm{g}}_a}, \mathbb{G}_{m,\overline{\Q}}^{{\rm{g}}_m}.$ We assume that
\begin{equation}
({\rm{r}}-2)\cdot{\dim\,{\rm{G}}} \geq 3 - (2{\rm{g}}_a+{\rm{g}}_m) -\rm{k}.
\end{equation}
With notations as in Lemma \ref{0021} we have
\begin{lemma} \label{001} If $\dim\,{{\rm{H}}} > \dim\,{\rm{G}}$, then
$$(\dim\,{{\rm{H}}}-1)\cdot {\rm{r}} - 1\geq 2\dim\,{{\rm{H}}} - (2{\rm{h}}_a+{\rm{h}}_m) -{\rm{k}}.$$
\end{lemma}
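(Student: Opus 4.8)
The plan is to bootstrap from the arithmetic hypothesis $(\mathrm{r}-2)\cdot\dim\,\mathrm{G}\geq 3-(2\mathrm{g}_a+\mathrm{g}_m)-\mathrm{k}$ to the claimed inequality, using the structural information supplied by Lemma \ref{0021} together with the hypothesis $\dim\,\mathrm{H}>\dim\,\mathrm{G}$. First I would unwind the target inequality
$$(\dim\,\mathrm{H}-1)\cdot\mathrm{r}-1\geq 2\dim\,\mathrm{H}-(2\mathrm{h}_a+\mathrm{h}_m)-\mathrm{k}$$
into the equivalent form $(\mathrm{r}-2)\cdot\dim\,\mathrm{H}\geq \mathrm{r}+1-(2\mathrm{h}_a+\mathrm{h}_m)-\mathrm{k}$. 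Since ultimately this lemma feeds into an application of Theorem \ref{1.1.1} (Schneider's method) to $\Phi_{\mathcal{N}}$ with target $\mathrm{H}$, the shape is exactly the numerical hypothesis of that theorem with $\mathrm{G}$ replaced by $\mathrm{H}$ and $(\mathrm{g}_a,\mathrm{g}_m)$ replaced by $(\mathrm{h}_a,\mathrm{h}_m)$; so the content of the lemma is precisely that the hypothesis (6.2.1) for $\mathrm{G}$ forces the corresponding hypothesis for $\mathrm{H}$.

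The key steps, in order. (i) Separate cases on $\mathrm{r}$. If $\mathrm{r}\leq 2$ then (6.2.1) reads $0\geq (\mathrm{r}-2)\dim\,\mathrm{G}\geq 3-(2\mathrm{g}_a+\mathrm{g}_m)-\mathrm{k}$, so $2\mathrm{g}_a+\mathrm{g}_m+\mathrm{k}\geq 3$; combined with $\mathrm{h}_a\geq\mathrm{g}_a$, $\mathrm{h}_m\geq\mathrm{g}_m$ (Lemma \ref{0021}, Statement 1) this should directly yield the target inequality after checking the small-$\mathrm{r}$ arithmetic by hand. (ii) For $\mathrm{r}\geq 3$, so that $\mathrm{r}-2\geq 1$, I would use $\dim\,\mathrm{H}>\dim\,\mathrm{G}$, i.e.\ $\dim\,\mathrm{H}\geq\dim\,\mathrm{G}+1$, to estimate
$$(\mathrm{r}-2)\dim\,\mathrm{H}\geq(\mathrm{r}-2)\dim\,\mathrm{G}+(\mathrm{r}-2)\geq \big(3-(2\mathrm{g}_a+\mathrm{g}_m)-\mathrm{k}\big)+(\mathrm{r}-2)=\mathrm{r}+1-(2\mathrm{g}_a+\mathrm{g}_m)-\mathrm{k}.$$
(iii) Finally replace $(2\mathrm{g}_a+\mathrm{g}_m)$ by $(2\mathrm{h}_a+\mathrm{h}_m)$ on the right, which only weakens the inequality since $\mathrm{h}_a\geq\mathrm{g}_a$ and $\mathrm{h}_m\geq\mathrm{g}_m$ by Lemma \ref{0021}. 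This gives $(\mathrm{r}-2)\dim\,\mathrm{H}\geq\mathrm{r}+1-(2\mathrm{h}_a+\mathrm{h}_m)-\mathrm{k}$, which is the target after re-expanding.

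The main obstacle I anticipate is the case $\mathrm{r}\leq 2$ (and within it the degenerate possibility $\mathrm{r}\le 1$, which may be excluded a priori in the surrounding argument but ought to be addressed): here one cannot divide by $\mathrm{r}-2$ in the positive direction and must instead argue directly from $2\mathrm{g}_a+\mathrm{g}_m+\mathrm{k}\geq 3$, keeping careful track of the constraint $\mathrm{g}_a\leq 1$ from Lemma \ref{0021}, Statement 3, and of whether $\dim\,\mathrm{H}\ge 2$ (which follows from $\dim\,\mathrm{H}>\dim\,\mathrm{G}\ge 1$ once one knows $\dim\,\mathrm{G}\ge 1$, true since $\Psi$ is non-zero). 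A secondary point to verify is that the decomposition of $\mathrm{H}$ in Lemma \ref{0021} is genuinely usable here, i.e.\ that $\mathrm{h}_a,\mathrm{h}_m$ in the statement of the present lemma are the same integers produced by Lemma \ref{0021}; this is a matter of bookkeeping rather than mathematics, but it should be stated explicitly at the start of the proof.
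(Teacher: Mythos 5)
Your case ${\rm{r}}\ge 2$ is the paper's own argument in an equivalent algebraic form: the paper first notes ${\rm{r}}\ge 2$, then writes $({\rm{r}}-2)(\dim\,{\rm{H}}-1)\ge ({\rm{r}}-2)\dim\,{\rm{G}}\ge 3-(2{\rm{g}}_a+{\rm{g}}_m)-{\rm{k}}$ using $\dim\,{\rm{H}}\ge \dim\,{\rm{G}}+1$, and finally replaces $2{\rm{g}}_a+{\rm{g}}_m$ by $2{\rm{h}}_a+{\rm{h}}_m$ via Lemma \ref{0021}; your steps (ii)--(iii) and your ${\rm{r}}=2$ computation are exactly this.

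The gap is the case ${\rm{r}}\le 1$, which you flag but neither exclude nor cover, and your proposed fallback (arguing directly from $2{\rm{g}}_a+{\rm{g}}_m+{\rm{k}}\ge 3$) does not cover it. For ${\rm{r}}=1$ the hypothesis (6.2.1) gives $2{\rm{g}}_a+{\rm{g}}_m+{\rm{k}}\ge 3+\dim\,{\rm{G}}$, while the target inequality unwinds to ${\rm{k}}\ge 2+{\rm{h}}_c-{\rm{h}}_a$; since ${\rm{h}}_c$ can be as large as $2{\rm{g}}_c$ and Lemma \ref{0021} gives nothing sharper than ${\rm{h}}_a\ge{\rm{g}}_a$, this is not a formal consequence. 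Concretely, the numbers ${\rm{r}}=1$, ${\rm{g}}_c=3$, ${\rm{g}}_a={\rm{g}}_m=0$, ${\rm{k}}=6$, ${\rm{h}}_c=6$, ${\rm{h}}_a={\rm{h}}_m=0$ satisfy (6.2.1), satisfy $\dim\,{\rm{H}}=6>3=\dim\,{\rm{G}}$ and all the structural constraints recorded in Lemma \ref{0021}, yet the asserted inequality becomes $4\ge 6$. So as a purely numerical implication the lemma fails for ${\rm{r}}\le 1$; that case must be ruled out, not absorbed. The missing ingredient --- and the one the paper uses --- is that ${\rm{k}}\le 1$, because $\ker\,\Psi$ is a discrete subgroup of $\R$ (it cannot be all of $\R$, since $\Psi$ has Zariski-dense image in a positive-dimensional group); combined with ${\rm{g}}_a\le 1$ from Lemma \ref{0021}, inequality (6.2.1) then forces ${\rm{r}}\ge 2$, after which your estimate goes through verbatim. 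Adding this observation closes the proof and makes it coincide with the paper's.
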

\begin{proof} We suppose that ${\rm{s}} = \dim\,\HHH - \dim\,{\rm{G}} \geq 1$. We have ${\rm{k}} \leq 1$ for otherwise $ker\,\Psi$ would not be discrete in $\R$. Moreover, ${\rm{g}_a} \leq 1$ by Lemma \ref{0021}. This
and (6.2.1) yield that ${\rm{r}} \geq 2$. Hence, we get from (6.2.1)
$$({\rm{r}}-2)\cdot (\dim\,\HHH - 1) = ({\rm{r}}-2)\cdot ({\dim\,{\rm{G}} + {\rm{s}}} - 1) \geq 3 - (2{\rm{g}}_a+{\rm{g}}_m) -\rm{k}.$$
And Lemma \ref{0021} implies that ${\rm{h}}_a \geq {\rm{g}}_a$ and ${{\rm{h}}_m} \geq {{\rm{g}}_m}$. This
yields the claim.
\end{proof}
Theorem\,\ref{Schn0}, Lemma \ref{rank} and Lemma \ref{001} imply that if $\dim\,{{\rm{H}}} > \dim\,{\rm{G}}$, then $\Phi_{\mathcal{N}}$ has no Zariski-dense image in ${\rm{H}}(\C)$. By contraposition
we infer that 
\begin{equation} \dim\,{\rm{H}} \leq \dim\,{\rm{G}}.
\end{equation} 
\begin{lemma} \label{op00} The homomorphism $p_{\rm{G}|{{\rm{H}}}}$ is an isogeny.
 
\end{lemma}
\begin{proof} As follows from Lemma \ref{definition}, $p_{\rm{G}|{{\rm{H}}}}$ is surjective. This and Statement (6.2.2) imply that $\dim\,{\rm{H}} = \dim\,\rm{G}$. Hence, $p_{\rm{G}|{{\rm{H}}}}$ is an isogeny.
\end{proof}
It results that $\Psi$ descends to $K$ via an isogeny $v: \G \longrightarrow {{\rm{H}}} = {{\rm{H}}}' \otimes_K F$ with the property that
$v \circ p_{\rm{G}|{{\rm{H}}}}$ is multiplication with an integer. Finally we recall our technical assumption that ${\rm{Hom}}\big(ker\,\pi, {{\rm{U}}}^{\scriptscriptstyle h}\big) = \{0\}$. Theorem \ref{inherited} implies that $\pi_{\ast}(\Psi)$ descends to $K$. 
\subsection{Proof of Theorem \ref{Schn2} and Remark \ref{Schn3}}
\subsubsection{Proof of the theorem} By assumption $\G$ is decomposed as $\G \backsimeq \G_c \times  \mathbb{G}_{a, \overline{\Q}}^{{\rm{g}}_a} \times \mathbb{G}_{m, \overline{\Q}}^{{\rm{g}}_m}$
and the estimate
\begin{equation} 
({\rm{r}}-2)(\dim\,\G + \dim\,{\rm{U}}) \geq {\rm{r}} -(2{\rm{g}}_a + {\rm{g}}_m) -{\rm{k}} + 1
\end{equation}
holds. Lemma \ref{0021} implies the existence of an induced decomposition of $\HHH$ as in (6.1.3). As in the proof of Lemma \ref{001} one shows
\begin{lemma} \label{002} If $\dim\,{\rm{H}} \geq \dim\,\G + \dim\,{\rm{U}},$ then 
$$(\dim\,{{\rm{H}}}-1)\cdot  {\rm{r}} - 1 \geq 2\dim\,{{\rm{H}}} - (2{\rm{h}}_a + {\rm{h}}_m) -\rm{k}.$$
\end{lemma}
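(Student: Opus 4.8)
The plan is to transcribe the proof of Lemma \ref{001}, replacing the estimate (6.2.1) by the hypothesis (6.2.3) of Theorem \ref{Schn2} and the quantity $\dim\,\G$ by $D := \dim\,\G + \dim\,{\rm{U}}$. Suppose $\dim\,\HHH \geq D$. Exactly as in the proof of Lemma \ref{001}, the kernel of $\Psi$ is discrete in $\R$, so ${\rm{k}} \leq 1$, while Lemma \ref{0021} furnishes ${\rm{g}}_a \leq 1$ as well as ${\rm{h}}_a \geq {\rm{g}}_a$ and ${\rm{h}}_m \geq {\rm{g}}_m$. First I would observe that these bounds, together with (6.2.3) and the inequality $D \geq {\rm{g}}_a + {\rm{g}}_m + \dim\,{\rm{U}}$ coming from the decomposition $\G \backsimeq \G_c \times \mathbb{G}_{a,\overline{\Q}}^{{\rm{g}}_a} \times \mathbb{G}_{m,\overline{\Q}}^{{\rm{g}}_m}$, force ${\rm{r}} \geq 2$: if ${\rm{r}} \leq 1$ then ${\rm{r}}-2 < 0$, so replacing $D$ by the smaller quantity ${\rm{g}}_a + {\rm{g}}_m + \dim\,{\rm{U}}$ in (6.2.3) and inserting ${\rm{g}}_a \leq 1$, ${\rm{k}} \leq 1$ yields $\dim\,{\rm{U}} \leq 0$, which is excluded (were ${\rm{U}}$ trivial there would be nothing to prove in Theorem \ref{Schn2}).

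Granted ${\rm{r}} \geq 2$, the rest is bookkeeping with non-negative integers. I would first rewrite (6.2.3) in the equivalent form ${\rm{r}}(D-1) - 1 \geq 2D - (2{\rm{g}}_a + {\rm{g}}_m) - {\rm{k}}$, obtained by expanding $({\rm{r}}-2)D = {\rm{r}}(D-1) - 2D + {\rm{r}}$ and collecting terms; this is the direct analogue of the corresponding intermediate inequality in Lemma \ref{001}. Now write $t = \dim\,\HHH - D \geq 0$. Passing from $D$ to $\dim\,\HHH = D+t$ raises the left-hand side by ${\rm{r}}t$ and the right-hand side by $2t$, and $({\rm{r}}-2)t \geq 0$; passing from $(2{\rm{g}}_a+{\rm{g}}_m)$ to $(2{\rm{h}}_a+{\rm{h}}_m)$ only lowers the right-hand side, by ${\rm{h}}_a \geq {\rm{g}}_a$ and ${\rm{h}}_m \geq {\rm{g}}_m$. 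Chaining these monotonicities gives ${\rm{r}}(\dim\,\HHH - 1) - 1 \geq 2\dim\,\HHH - (2{\rm{h}}_a + {\rm{h}}_m) - {\rm{k}}$, which is the assertion.

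I do not expect a genuine obstacle: the proof is a near-verbatim copy of that of Lemma \ref{001}, and it feeds into the subsequent application of Theorem \ref{Schn0} in the same way. The single point that demands slightly more care than in Lemma \ref{001} is the derivation ${\rm{r}} \geq 2$, since (6.2.3) carries the extra summand $\dim\,{\rm{U}}$ on the left, and one must use $\dim\,{\rm{U}} \geq 1$ to close the estimate. It would also be tidy to isolate the equivalence ``(6.2.3) $\Longleftrightarrow$ ${\rm{r}}(D-1) - 1 \geq 2D - (2{\rm{g}}_a + {\rm{g}}_m) - {\rm{k}}$'' as a one-line remark, so that the passages in Lemma \ref{001} and Lemma \ref{002} read in a uniform way.
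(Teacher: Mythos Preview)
Your proposal is correct and follows precisely the route the paper indicates: the paper's own proof of Lemma~\ref{002} consists of the single sentence ``As in the proof of Lemma~\ref{001} one shows'', and your argument is exactly that transcription, with $\dim\G$ replaced by $D=\dim\G+\dim{\rm U}$ and the non-strict hypothesis $\dim{\rm H}\geq D$ handled via $t\geq 0$. Your extra care in deriving ${\rm r}\geq 2$ from (6.2.3) using $\dim{\rm U}\geq 1$ is appropriate and is the one place where the argument is not literally identical to that of Lemma~\ref{001}.
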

Theorem \ref{Schn0}, Lemma \ref{rank} and Lemma \ref{002} yield the inequality $\dim\,{\rm{H}} < \dim\,\G + \dim\,{\rm{U}}$ by contraposition. Since we assume that ${\rm{Hom}}\big({{\rm{U}}}^{\scriptscriptstyle h}, {\rm{M}}\big) = \{0\}$ for each quotient $\rm{M}$ of $ker\,\pi$, it follows then from Theorem \ref{inherited1} that
$\pi_{\ast}(\Psi)$ descends weakly to $K$. This proves the theorem.
\subsubsection{Proof of the remark} We may assume that the decomposition $\G_c\times \mathbb{G}_{a,\overline{\Q}}^{{\rm{g}}_a} \times \mathbb{G}_{m,\overline{\Q}}^{{\rm{g}}_m} $ is such that the dimensions ${\rm{g}}_m$ and ${{\rm{g}}_a}$ are maximal, whereas ${{\rm{g}}_c}$ is minimal. So, if $\G$ is linear, we have $2\dim\,\G - (2{\rm{g}}_a + {\rm{g}}_m) = {\rm{g}}_m$. Replacing ``$-(2{\rm{g}}_a + {\rm{g}}_m) $'' by ``$-(2{\rm{g}}_a + {\rm{g}}_m +\dim\,{\rm{U}})$'' in (6.2.3), we get
\begin{equation} 
 {\rm{r}}\cdot (\dim\,\G + \dim\,{\rm{U}} - 1) - 1 \geq {\rm{g}}_m + \dim\,{\rm{U}} -{\rm{k}}.
\end{equation}
Furthermore, the Weil restriction of $\G$ is linear, too. \textit{A fortiori}, ${\rm{H}}$ is linear and Lemma 6.1.7 yields 
\begin{equation} 
{\rm{h}}_m = 2\dim\,{\rm{H}} - (2{\rm{h}}_a + {\rm{h}}_m). 
\end{equation}
 Let ${\LL} = ker\,p_{\G|\HHH}$ and write ${\rm{l}}_m$ for the dimension of the multiplicative factor of $\LL$. Because of Lemma 6.1.4 we have a short exact sequence of linear groups
\begin{equation}
 0 \longrightarrow {\rm{L}} \longrightarrow \HHH \stackrel{p_{\G|\HHH}}{\longrightarrow} \G \longrightarrow 0.
\end{equation}
It follows from Serre \cite[Ch.\,VII, 6., Lemma 2]{Serre1} that ${\rm{h}}_m =  {\rm{g}}_m + {\rm{l}}_m$. Setting ${\rm{s}} = \dim\,\LL - \dim\,\rm{U}$, we infer 
\begin{equation} 
{\rm{h}}_m \leq {\rm{g}}_m + \dim\,\rm{U} + \rm{s}.
\end{equation}
We can now complete the proof of the remark: We have $\dim\,\HHH = \dim\,\G + \dim\,\rm{U} + \rm{s}$ and the assumption of the theorem is that $\dim\, \HHH \geq \dim\,\G + \dim\,\rm{U}$. So, ${\rm{s}} \geq 0$ and Statement (6.2.4) yields
\begin{equation} 
 {\rm{r}}\cdot (\dim\,\G + \dim\,{\rm{U}} + {\rm{s}} - 1) - 1 \geq {\rm{g}}_m + \dim\,{\rm{U}} + {\rm{s}} -{\rm{k}}.
\end{equation}
This and Statements (6.2.5)-(6.2.7) lead to the estimate
$${\rm{r}} \cdot (\dim\,{{\rm{H}}}-1) - 1 \geq 2\dim\,{{\rm{H}}} - (2{\rm{h}}_a + {\rm{h}}_m) -\rm{k}.$$
The result follows then the same way as above.

\section{Proof of the two theorems about algebraic independence}
\subsection{Proof of Theorem \ref{mta1}} The proof of Theorem \ref{mta1} goes along the same lines as the one of Theorem \ref{Schn1}, except that instead of Theorem\,\ref{Schn0} we use Theorem\,\ref{5.1.1}.
We saw in Subsect.\,6.2.1 that it suffices to show that the Zariski-closure ${{{\rm{H}}} }'$ of $\Psi_{\mathcal{N}}(\R)$ in $\mathcal{N} = \mathcal{N}_{F/K}(\G)$ has dimension $\dim\,{{{\rm{H}}} }' \leq \dim\,\rm{G}$. We begin the proof by recalling some hypotheses. 
The symbol $F$ denotes an algebraically closed subfield of $\C$ which is stable with respect to complex conjugation and with transcendence degree $\leq 1$ over $\Q$.
We consider a decomposition $\G \backsimeq \G_c \times \mathbb{G}_{a,F}^{{\rm{g}}_a} \times \mathbb{G}_{m,F}^{{\rm{g}}_m}$ and assume that 
\begin{equation}
 (1+{\rm{k}}){\rm{r}}\cdot {(\dim\,{\rm{G}} -\dim\,\frak{t}) + {\rm{k}\rm{r}}} \geq 2{\rm{g}}_c+ \underbrace{\frac{{\rm{g}}_c}{{\rm{g}}_c + 1}}_{=: \gamma'} + {\rm{g}}_m + 1+ \delta.
\end{equation}
Here ${\rm{g}}_c = \dim\,\G_c$ and
$$\delta = \left\lbrace \begin{array}{cl} 1 & \mbox{if}\,{\rm{g}}_a > 0\,\mbox{and}\,\dim\,\frak{t} = 1\\
0 &\mbox{otherwise}
 
  \end{array}
\right. $$
It is clear from the estimate in (6.3.1) that we may suppose that ${\rm{g}}_a, {\rm{g}}_m$ are maximal and $\rm{g}_c$ is minimal under all possible decompositions of $\G$ as above. We then define $\gamma = 1$ if ${\rm{g}}_c \neq 0$ and $\gamma = 0$ if ${\rm{g}}_c = 0$. Note that if ${\rm{g}}_c \neq 0$ then $ 0 < \gamma' = \frac{{\rm{g}}_c}{{\rm{g}}_c + 1} < 1$. (6.3.1) is an estimate between integers. Hence, (6.3.1) holds with $\gamma'$ (in its original form) if and only if it holds with $\gamma'$ replaced by $\gamma$. In other words, we may replace (6.3.1) by
\begin{equation}(1+{\rm{k}}){\rm{r}}\cdot {(\dim\,{\rm{G}} -\dim\,\frak{t}) + {\rm{k}\rm{r}}} \geq 2{\rm{g}}_c + {\rm{g}}_m + 1+ \gamma + \delta.
\end{equation} 
Next we show
\begin{lemma} \label{implies} The estimate $(1+{\rm{k}}){\rm{r}} \geq 2$ holds.
 
\end{lemma}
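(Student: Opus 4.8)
The plan is to derive the estimate $(1+{\rm{k}}){\rm{r}} \geq 2$ directly from the main hypothesis in the form (6.3.2), namely
$$(1+{\rm{k}}){\rm{r}}\cdot (\dim\,{\rm{G}} -\dim\,\frak{t}) + {\rm{k}\rm{r}} \geq 2{\rm{g}}_c + {\rm{g}}_m + 1+ \gamma + \delta,$$
by a short case analysis, falling back on geometric non-degeneracy facts whenever the right-hand side is small. First I would observe that if $(1+{\rm{k}}){\rm{r}} = 0$ then ${\rm{r}} = 0$, so $\Psi$ (and $\Psi_{[i]}$) contain no non-trivial algebraic point; but I would argue this forces the right-hand side of (6.3.2) to be non-positive, which would already pin down ${\rm{G}}$ to be very small. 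More precisely, ${\rm{r}} = 0$ forces the inequality $0 \geq 2{\rm{g}}_c + {\rm{g}}_m + 1 + \gamma + \delta$, which is impossible since the right side is at least $1$. Hence ${\rm{r}} \geq 1$, so $(1+{\rm{k}}){\rm{r}} \geq 1$, and we only need to rule out $(1+{\rm{k}}){\rm{r}} = 1$, i.e.\ ${\rm{k}} = 0$ and ${\rm{r}} = 1$.

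So the crux is the case ${\rm{k}} = 0$, ${\rm{r}} = 1$. Here (6.3.2) reads $\dim\,{\rm{G}} - \dim\,\frak{t} \geq 2{\rm{g}}_c + {\rm{g}}_m + 1 + \gamma + \delta$. I would use $\dim\,{\rm{G}} = {\rm{g}}_c + {\rm{g}}_a + {\rm{g}}_m$ together with the constraint ${\rm{g}}_a \leq 1$ (which holds because $\Psi$ has Zariski-dense image, by the argument in the proof of Lemma \ref{0021}). Substituting, ${\rm{g}}_c + {\rm{g}}_a + {\rm{g}}_m - \dim\,\frak{t} \geq 2{\rm{g}}_c + {\rm{g}}_m + 1 + \gamma + \delta$, i.e. ${\rm{g}}_a \geq {\rm{g}}_c + \dim\,\frak{t} + 1 + \gamma + \delta$. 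Since $\dim\,\frak{t} \geq 1$ (as $\Psi$ is non-zero, $\Psi_{\ast}(\R)$ is a non-zero $\R$-subspace of $\frak{t}\otimes_K\R$), the right side is at least $2$, contradicting ${\rm{g}}_a \leq 1$. Hence $(1+{\rm{k}}){\rm{r}} = 1$ is impossible, and therefore $(1+{\rm{k}}){\rm{r}} \geq 2$.

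The only delicate points are the bookkeeping items: that $\dim\,\frak{t}\geq 1$ (immediate from $\Psi\neq 0$), that ${\rm{g}}_a \leq 1$ (cited from the proof of Lemma \ref{0021}, and valid because a surjection of $\Psi$ onto $\mathbb{G}_{a,F}^{{\rm{g}}_a}$ with ${\rm{g}}_a\geq 2$ would have non-dense image), and the harmless arithmetic reductions concerning $\gamma,\gamma',\delta$ already made before the statement of the lemma. I do not expect a genuine obstacle here; the main thing to be careful about is simply to carry along all the terms on the right-hand side of (6.3.2) — in particular not to drop the ``$+1$'' — so that even the extreme case ${\rm{g}}_c = {\rm{g}}_m = 0$, $\gamma = \delta = 0$ still yields a contradiction with ${\rm{g}}_a \leq 1$. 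Since the inequality (6.3.2) is between integers, no rounding subtleties arise, and the case analysis above is exhaustive.
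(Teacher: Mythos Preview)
Your proof is correct and follows essentially the same approach as the paper: both use (6.3.2), the bound ${\rm g}_a\leq 1$ from Lemma~\ref{0021}, and $\dim\,\frak t\geq 1$ to rule out $(1+{\rm k}){\rm r}\leq 1$. The only cosmetic difference is that the paper first compresses the right-hand side of (6.3.2) into the single quantity $\dim\,{\rm G}$ (observing $2{\rm g}_c+{\rm g}_m+1+\gamma+\delta\geq {\rm g}_c+{\rm g}_a+{\rm g}_m$) and then derives $\dim\,{\rm G}-\dim\,\frak t\geq\dim\,{\rm G}$ in one stroke, whereas you keep the individual terms and reach the contradiction ${\rm g}_a\geq 2$ directly; the underlying arithmetic is identical.
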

\begin{proof} We have ${\rm{g}}_a \leq 1$ by Lemma \ref{0021}. Using this and (6.3.2) we get the estimate
$$(1+{\rm{k}}){\rm{r}}\cdot {(\dim\,{\rm{G}} -\dim\,\frak{t}) + {\rm{k}\rm{r}}} \geq \dim\,\G .$$
If $(1+{\rm{k}}){\rm{r}} \leq 1$, then ${\rm{k}\rm{r}} = 0$ and $\dim\,{\rm{G}} -\dim\,\frak{t} \geq \dim\,\G .$
But this contradicts the fact that $\dim\,\frak{t} > 0$. Hence, $(1+{\rm{k}}){\rm{r}} \geq 2.$
\end{proof}
We recall that $\frak{T}\subset \frak{n}$ denotes the smallest linear subspace over $F$ with the property that $\big(\Phi_{\mathcal{N}}\big)_{\ast}(\C) \subset \frak{T} \otimes_F \C$. We consider the product decomposition ${{\rm{H}}} \backsimeq {{\rm{H}}}_c \times \mathbb{G}_{a,F}^{{\rm{h}}_a} \times \mathbb{G}_{m,F}^{{\rm{h}}_m}$ from Lemma \ref{0021}.  
\begin{lemma} \label{00} If $\dim\,{{\rm{H}}} > \dim\,{\rm{G}}$, then
$$(1+{\rm{k}}){\rm{r}}\cdot (\dim\,{{\rm{H}}} -\dim\,\frak{T}) -{\rm{r}}\geq 2{\rm{h}}_c + {\rm{h}}_m + \delta.$$
\end{lemma}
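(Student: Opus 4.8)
The claim to establish is Lemma~6.3.6: if $\dim\,{{\rm{H}}} > \dim\,{\rm{G}}$, then
$$(1+{\rm{k}}){\rm{r}}\cdot (\dim\,{{\rm{H}}} -\dim\,\frak{T}) -{\rm{r}}\geq 2{\rm{h}}_c + {\rm{h}}_m + \delta.$$
The strategy is exactly parallel to the proof of Lemma~\ref{001} in Subsect.\,6.2.1, but replacing the Schneider-type inequality by the Gel'fond-type hypothesis (6.3.2). First I would set ${\rm{s}} = \dim\,{{\rm{H}}} - \dim\,{\rm{G}} \geq 1$ and recall the elementary bounds available: ${\rm{k}} \leq 1$ (otherwise $ker\,\Psi$ is non-discrete in $\R$) and ${\rm{g}}_a \leq 1$ (from Lemma \ref{0021}, statement 3). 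Lemma \ref{implies} already supplies $(1+{\rm{k}}){\rm{r}} \geq 2$, which will be the crucial positivity fact allowing us to "pay" for the extra dimensions ${\rm{s}}$.

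The next step is dimension bookkeeping. By Lemma \ref{op(}.3 (or the weaker estimate noted there) we have $\dim\,\frak{T} \leq \dim\,\frak{t}$, so $\dim\,{{\rm{H}}} - \dim\,\frak{T} \geq \dim\,{{\rm{H}}} - \dim\,\frak{t} = \dim\,{\rm{G}} + {\rm{s}} - \dim\,\frak{t}$. Feeding this into the left-hand side and using (6.3.2), I would estimate
$$(1+{\rm{k}}){\rm{r}}\cdot(\dim\,{{\rm{H}}} - \dim\,\frak{T}) + {\rm{k}}{\rm{r}} \geq (1+{\rm{k}}){\rm{r}}\cdot{\rm{s}} + 2{\rm{g}}_c + {\rm{g}}_m + 1 + \gamma + \delta.$$
Now I must convert the $+{\rm{k}}{\rm{r}}$ on the left to the $-{\rm{r}}$ demanded in the conclusion, i.e. absorb a loss of ${\rm{k}}{\rm{r}} + {\rm{r}} = (1+{\rm{k}}){\rm{r}}$ into the slack, and simultaneously upgrade $2{\rm{g}}_c + \gamma$ to $2{\rm{h}}_c$ and ${\rm{g}}_m$ to ${\rm{h}}_m$ on the right. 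For the latter, Lemma \ref{0021} gives ${\rm{h}}_a \geq {\rm{g}}_a$, ${\rm{h}}_m \geq {\rm{g}}_m$ and ${\rm{h}}_c = 0 \Leftrightarrow {\rm{g}}_c = 0$; combined with $\dim\,{{\rm{H}}} = \dim\,{\rm{G}} + {\rm{s}}$ one gets ${\rm{h}}_c + {\rm{h}}_a + {\rm{h}}_m = {\rm{g}}_c + {\rm{g}}_a + {\rm{g}}_m + {\rm{s}}$, so the "new" dimensions ${\rm{h}}_c, {\rm{h}}_m$ exceed the old ones by a total of at most ${\rm{s}}$ (after accounting for ${\rm{h}}_a \geq {\rm{g}}_a$, the excess $2{\rm{h}}_c + {\rm{h}}_m - 2{\rm{g}}_c - {\rm{g}}_m$ is bounded by $2{\rm{s}}$, but one must be slightly careful). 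The term $\gamma$ absorbs the jump from ${\rm{g}}_c = 0$ to ${\rm{h}}_c > 0$ not occurring (by Lemma \ref{0021}.2), so in fact $2{\rm{h}}_c + {\rm{h}}_m \leq 2{\rm{g}}_c + {\rm{g}}_m + 2{\rm{s}} + \gamma - \gamma = \ldots$; the precise inequality $2{\rm{h}}_c + {\rm{h}}_m + \delta \leq 2{\rm{g}}_c + {\rm{g}}_m + \gamma + \delta + 2{\rm{s}}$ needs to be checked, together with the fact that $(1+{\rm{k}}){\rm{r}}\cdot{\rm{s}} - {\rm{k}}{\rm{r}} - {\rm{r}} \geq (1+{\rm{k}}){\rm{r}}\cdot({\rm{s}}-1) \geq 0 \cdot(\text{if }{\rm{s}}\geq 1)$ handles the $-{\rm{r}}$ conversion, with a surplus of $(1+{\rm{k}}){\rm{r}}({\rm{s}}-1) \geq 2({\rm{s}}-1)$ left over to cover $2{\rm{s}}$ when ${\rm{s}} \geq 2$; the case ${\rm{s}} = 1$ needs the sharper $\gamma$/$\delta$ analysis separately.

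The main obstacle, then, is precisely the combinatorial juggling in the case ${\rm{s}} = 1$: here the surplus $(1+{\rm{k}}){\rm{r}}({\rm{s}}-1)$ vanishes, so one must argue that for ${\rm{s}}=1$ the excess $2{\rm{h}}_c + {\rm{h}}_m - 2{\rm{g}}_c - {\rm{g}}_m$ is at most $\gamma + (1+{\rm{k}}){\rm{r}} - 2$, using that adjoining one dimension to $\HHH$ over $\G$ cannot simultaneously create a large unipotent/multiplicative part and a new connected factor. I expect this to follow from a careful case split on whether the extra dimension lands in the linear part or the "core", exactly as the analogous step works (more easily) in Lemma~\ref{001}; the Gel'fond setting only differs by the bookkeeping of $\gamma'$ versus $\gamma$, which was already reduced to integers in (6.3.2). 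Once Lemma~6.3.6 is in hand, Theorem~\ref{5.1.1} applied to $\Phi_{\mathcal{N}}$ (via Lemma~\ref{rank}) gives a contradiction with Zariski-density, forcing $\dim\,{{\rm{H}}} \leq \dim\,{\rm{G}}$, and the remainder of the proof of Theorem~\ref{mta1} proceeds as in Subsect.\,6.2.1.
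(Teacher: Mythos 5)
Your overall strategy is the right one and mirrors the paper's argument (reduce to the base case ${\rm{s}}=1$ via monotonicity in ${\rm{s}}$ using $(1+{\rm{k}}){\rm{r}}\geq 2$ from Lemma \ref{implies}, identify the base case with the hypothesis (6.3.2), and control the passage from ${\rm{g}}_c,{\rm{g}}_m$ to ${\rm{h}}_c,{\rm{h}}_m$ through Lemma \ref{0021}), but the decisive quantitative step is left unproved, and your substitute bookkeeping is off by one in exactly the extremal case. After converting $+{\rm{k}}{\rm{r}}$ into $-{\rm{r}}$ you need $2{\rm{h}}_c + {\rm{h}}_m - 2{\rm{g}}_c - {\rm{g}}_m \leq 1 + \gamma + (1+{\rm{k}}){\rm{r}}({\rm{s}}-1)$. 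Your crude count gives only $2{\rm{h}}_c + {\rm{h}}_m - 2{\rm{g}}_c - {\rm{g}}_m \leq 2{\rm{s}}$, while the available right-hand side is $\geq 2{\rm{s}}-1+\gamma$ when $(1+{\rm{k}}){\rm{r}}=2$; so when $\gamma = 0$ (i.e.\ ${\rm{g}}_c=0$) and $(1+{\rm{k}}){\rm{r}}=2$ — the minimal case guaranteed by Lemma \ref{implies}, e.g.\ ${\rm{k}}=0$, ${\rm{r}}=2$ — you fall short by one unit \emph{for every} ${\rm{s}}\geq 1$, not only for ${\rm{s}}=1$ as you assert. In particular your claim that the surplus $(1+{\rm{k}}){\rm{r}}({\rm{s}}-1)\geq 2({\rm{s}}-1)$ "covers $2{\rm{s}}$ when ${\rm{s}}\geq 2$" is false: $2({\rm{s}}-1)+1 = 2{\rm{s}}-1 < 2{\rm{s}}$.

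What closes the gap is the sharper bound which the paper extracts from the exact sequence $0 \to {\rm{L}} \to {\rm{H}} \to {\rm{G}} \to 0$ (its (6.3.5)), namely $2{\rm{h}}_c + {\rm{h}}_m \leq 2({\rm{s}}-1) + 1 + \gamma + 2{\rm{g}}_c + {\rm{g}}_m$. This is where the equivalence ${\rm{h}}_c = 0 \Leftrightarrow {\rm{g}}_c = 0$ of Lemma \ref{0021} does real work: if $\gamma = 0$ then ${\rm{g}}_c = 0$ forces ${\rm{h}}_c = 0$, and then ${\rm{h}}_m = ({\rm{g}}_c + {\rm{g}}_a + {\rm{g}}_m + {\rm{s}}) - {\rm{h}}_a \leq {\rm{g}}_m + {\rm{s}} \leq {\rm{g}}_m + 2({\rm{s}}-1)+1$ using ${\rm{h}}_a \geq {\rm{g}}_a$; if $\gamma = 1$ the crude count $\leq 2{\rm{g}}_c + {\rm{g}}_m + 2{\rm{s}}$ already matches $2({\rm{s}}-1)+1+\gamma + 2{\rm{g}}_c + {\rm{g}}_m$. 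With this refined inequality the required estimate follows uniformly in ${\rm{s}}$: at ${\rm{s}}=1$ the left side equals $(1+{\rm{k}}){\rm{r}}(\dim\,{\rm{G}}-\dim\,\frak{t}) + {\rm{k}}{\rm{r}}$ (using $\dim\,\frak{T} = \dim\,\frak{t}$ from Lemma \ref{op(}), which dominates $2{\rm{g}}_c+{\rm{g}}_m+1+\gamma+\delta$ by (6.3.2), and the difference of the two sides is monotonically increasing in ${\rm{s}}$ because $(1+{\rm{k}}){\rm{r}} \geq 2$. So your plan is salvageable, but as written the "inequality to be checked" is precisely the missing content of the lemma, and the case split you defer is needed for all ${\rm{s}}$, not just ${\rm{s}}=1$.
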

\begin{proof} Lemma 6.1.2 implies that $\dim\,\frak{T}$ equals $\dim\,\frak{t}$. Moreover, ${\rm{g}}_c = 0$ if and only if ${\rm{h}}_c = 0$ by Lemma \ref{0021}.
Let ${\rm{s}} = \dim\,{{\rm{H}}} - \dim\,{\rm{G}}$. For ${\rm{s}} = 1$ one gets
\begin{equation}
(1+{\rm{k}}){\rm{r}}\cdot (\dim\,{{\rm{H}}} -\dim\,\frak{T}) -{\rm{r}} = (1+{\rm{k}}){\rm{r}}\cdot {(\dim\,{\rm{G}} -\dim\,\frak{t}) + {\rm{k}\rm{r}}}.
\end{equation} 
Since $(1+{\rm{k}}){\rm{r}} \geq 2$, the expression
\begin{equation}(1+{\rm{k}}){\rm{r}}\cdot (\underbrace{{\rm{s}} + \dim\,{{\rm{G}}}}_{= \dim\,\HHH} -\dim\,\frak{T}) -{\rm{r}}- 2({\rm{s}}-1) - 1 - \gamma - 2{\rm{g}}_c - {\rm{g}}_m - \delta
 \end{equation}
is monotonically increasing in ${\rm{s}}$. Finally, a short exact sequence as in (6.2.6) gives
\begin{equation}2{\rm{h}}_c + {\rm{h}}_m \leq 2({\rm{s}}-1) + 1 + \gamma + 2{\rm{g}}_c + {\rm{g}}_m.
\end{equation}
Statements (6.3.2)-(6.3.5) imply the required estimate.
\end{proof}
It results from Theorem\,\ref{5.1.1}, Lemma \ref{rank} and Lemma \ref{00} that if $\dim\,{{\rm{H}}} > \dim\,{\rm{G}}$, then $\Phi_{\mathcal{N}}$ has no Zariski-dense image in ${\rm{H}}(\C)$. By contraposition
we infer that $\dim\,{{\rm{H}}} \leq \dim\,{\rm{G}}$. As mentioned in the beginning of the proof, the theorem follows. 
\subsection{Proof of Thm.\,\ref{Gel3} and Remark \ref{Gel31}} The proof of Thm.\,\ref{Gel3} and the remark is very similar to the proof of Theorem \ref{Schn2} and Remark \ref{Schn3}. After all it is left to the reader.

\section{Proof of the two theorems about linear independence of algebraic logarithms}
\subsection{Proof of Theorem\,\ref{mta0}} We apply the notations from Sect.\,6.1 with $F = \overline{\Q}$ and set $\frak{h} = {\rm{Lie}\,\rm{H}}$. Because of Corollary \ref{i} and because of the hypotheses we may assume here and in the next subsection that ${\rm{rank}}_{\Z} \,\Psi^{\scr -1}\big(\G(F)\big) \geq 1$. Lemma \ref{op(} and Theorem\,\ref{AST} applied to $\frak{T}$ yield then
$\frak{T}  = \frak{h}$. In particular, 
\begin{equation}
   \dim \frak{T} = \dim\,{{{{\rm{H}}} }}.
\end{equation}
The assumptions of the theorem, Lemma \ref{op(} and Statement (6.4.1) give the estimate
\begin{equation}
\dim\,{\rm{G}} = \dim\,\frak{t} = \dim\,\frak{T} = \dim\,{{{\rm{H}}} }.
\end{equation}
The same way as in the proof of Lemma 6.2.2 we get
\begin{lemma} \label{op00} The homomorphism $p_{\rm{G}|{{\rm{H}}}}$ is an isogeny.
 
\end{lemma}
As seen already twice above, the assumptions and Theorem \ref{inherited} imply then that $\pi_{\ast}(\Psi)$ descends to $K$. This is one part of the theorem.\\
\\
For the remaining part concerning the identity ``$\dim\,{\rm{U}} = \dim\,\pi_{\ast}(\frak{t})$'' we write $\psi = \pi_{\ast}(\Psi)$ and fix an isogeny $w: {{\rm{U}}} \longrightarrow {{\rm{U}}}' \otimes_K \overline{\Q}$ such that $w_{\ast}(\psi)$ takes values in ${{\rm{U}}}'(\R)$. Statement (6.4.2) and Lemma \ref{op00} yield
\begin{lemma} \label{09} With assumptions as in Theorem \ref{wus1}, we have $\dim\,\frak{t} = \dim\,\rm{G}$.
 
\end{lemma}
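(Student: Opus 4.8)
Plan. The assertion is just the equality $\dim\,{\rm{G}} = \dim\,\frak{t}$ that already sits inside the chain $\dim\,{\rm{G}} = \dim\,\frak{t} = \dim\,\frak{T} = \dim\,{{\rm{H}}}$ displayed above, so the only real work is to explain how that chain is assembled and why the standing reduction to ${\rm{rank}}_{\Z}\,\Psi^{\scr-1}\big({\rm{G}}(F)\big) \geq 1$ costs nothing. First I would read off, from Lemma \ref{rank}, that $\Phi_{\mathcal{N}}(\C) \subset {{\rm{H}}}(\C)$ and that $\Phi_{\mathcal{N}}$ has a non-trivial algebraic point in its image, and, from Lemma \ref{op(}, that $\frak{T}$ is the smallest $F$-subspace of $\frak{n}(F)$ containing $\big(\Phi_{\mathcal{N}}\big)_{\ast}(\C)$ and that $\frak{T} \subset \frak{h} = {\rm{Lie}}\,{{\rm{H}}}$. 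Since $\Psi_{\mathcal{N}}(\R) \subset \Phi_{\mathcal{N}}(\C) \subset {{\rm{H}}}(\C)$ and ${{\rm{H}}}$ is by definition the Zariski closure of $\Psi_{\mathcal{N}}(\R)$, the image $\Phi_{\mathcal{N}}(\C)$ is Zariski-dense in ${{\rm{H}}}$, so the Analytic Subgroup Theorem \ref{AST} applied to $\frak{T}$ forces $\frak{T}$ to be the Lie algebra of an algebraic subgroup containing ${{\rm{H}}}$, whence $\frak{T} = \frak{h}$ and $\dim\,\frak{T} = \dim\,{{\rm{H}}}$. Next, Lemma \ref{op=} and Lemma \ref{op(} give $\dim\,\frak{t} = \dim\,\frak{t}_{\mathcal{N}} = \dim\,\frak{T}$, while Lemma \ref{definition} (sharpened by Lemma \ref{op00}) gives that $p_{{\rm{G}}|{{\rm{H}}}}$ is surjective, hence $\dim\,{{\rm{H}}} \geq \dim\,{\rm{G}}$. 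Feeding in the hypothesis $\dim\,{\rm{G}} \geq \dim\,\frak{t}$ of Theorem \ref{wus1} then squeezes $\dim\,{\rm{G}} \geq \dim\,\frak{t} = \dim\,\frak{T} = \dim\,{{\rm{H}}} \geq \dim\,{\rm{G}}$ into a string of equalities, which is the claim.

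The one point that deserves a sentence is the harmlessness of the reduction. To arrange ${\rm{rank}}_{\Z}\,\Psi^{\scr-1}\big({\rm{G}}(F)\big) \geq 1$ one may have had to replace $\Psi$ by its twin $\Psi_{[i]}$, which replaces the line $\Psi_{\ast}(\R) \subset \frak{g}(\C)$ by $i\,\Psi_{\ast}(\R)$. But multiplication by $i$ is an $F$-linear, in particular $K$-linear, automorphism of $\frak{g}(\C)$ which carries the $K$-structure $\frak{g}$ onto itself; it therefore maps the smallest $K$-subspace of $\frak{g}$ whose real extension contains $\Psi_{\ast}(\R)$ isomorphically onto the corresponding subspace attached to $i\,\Psi_{\ast}(\R)$, so the two have the same dimension. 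Hence $\dim\,\frak{t}$ is the same for $\Psi$ and for $\Psi_{[i]}$, and the equality $\dim\,\frak{t} = \dim\,{\rm{G}}$ obtained after the reduction is valid for the original $\frak{t}$ as well.

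I do not expect a genuine obstacle here: the lemma is a corollary of material already in place, and the proof is pure dimension bookkeeping together with a single appeal to Theorem \ref{AST}. The only care needed is the logical ordering — the surjectivity of $p_{{\rm{G}}|{{\rm{H}}}}$ must be invoked \emph{before} one closes the squeeze, since without the inequality $\dim\,{{\rm{H}}} \geq \dim\,{\rm{G}}$ the chain only yields $\dim\,\frak{t} \leq \dim\,{\rm{G}}$ rather than the desired equality.
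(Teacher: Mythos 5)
Your argument is correct and is essentially the paper's own: apply the Analytic Subgroup Theorem to $\frak{T}$ (the algebraic point being supplied by Lemma \ref{rank}) to get $\frak{T} = \frak{h}$ and hence $\dim\,\frak{T} = \dim\,{\rm{H}}$, combine this with $\dim\,\frak{t} = \dim\,\frak{T}$ from Lemma \ref{op=}/Lemma \ref{op(}, the hypothesis $\dim\,{\rm{G}} \geq \dim\,\frak{t}$, and the surjectivity of $p_{{\rm{G}}|{\rm{H}}}$ from Lemma \ref{definition} to close the squeeze — exactly the chain (6.4.1)--(6.4.2) in the text, with your remark on the ordering of the surjectivity step and on the harmlessness of passing to $\Psi_{[i]}$ being welcome clarifications. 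The only nitpick is attributional: the inclusion $\frak{T} \subset \frak{h}$ is not part of Lemma \ref{op(} but follows from the minimality of $\frak{T}$ together with $\big(\Phi_{\mathcal{N}}\big)_{\ast}(\C) \subset \frak{h}(\C)$, which comes from Lemma \ref{rank}.
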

Consider the smallest subspace $\frak{s} \subset \frak{u} = {\rm{Lie}\,\rm{U}}$ over $K$
such that $\psi_{\ast}(\R) \subset \frak{s} \otimes_K \R$. Since $\Psi_{\ast}(\R) \subset \pi_{\ast}^{\scr -1}(\frak{s}) \otimes_K \R$ and as $\pi_{\ast}^{\scr -1}(\frak{s}) \subset \frak{g}$ is defined over $K$, the minimality of $\frak{t}$ implies that $\frak{t} \cap \pi_{\ast}^{\scr -1}(\frak{s}) = \frak{t}$. On the other hand, $\frak{s} \subset \pi_{\ast}(\frak{t})$ because $\pi_{\ast}(\frak{t})$ is defined over $K$ in $\frak{u}$. Hence, $\frak{s} = \pi_{\ast}(\frak{t})$.
Moreover, ${\frak{u}}' = {\rm{Lie}}\,{{\rm{U}}}'$ is a subspace of ${\rm{Lie}}\,({\rm{U}}' \otimes_K \overline{\Q})$ over $K$. So, $w_{\ast}^{\scr -1}(\frak{u}')$ is defined over $K$. Corollary \ref{lp333} implies that $(w\circ \psi)_{\ast}(\R) \subset \frak{u}' \otimes_K \R$. Thus, $\frak{s} \subset w_{\ast}^{\scr -1}(\frak{u}')$ by minimality of $\frak{s}$.
We infer that 
\begin{equation}
\dim\,\frak{s} \leq \dim\,{\frak{u}}' = \dim\,{{\rm{U}}}' = \dim\,{\rm{U}}.
\end{equation}
We replace $\G$ by $\rm{U}$, $\pi$ by the identity morphism, $\Psi$ by $\psi$ and $\frak{t}$ by $\frak{s} = \pi_{\ast}(\frak{t})$. Because
of (6.4.3) this new setting satisfies the hypotheses of Theorem 2.9.7. Lemma \ref{09} implies that $\dim\,{\rm{U}} = \dim\,\pi_{\ast}(\frak{t})$. This is the remaining part of the theorem. Everything is proved.
\subsection{Proof of Theorem\,\ref{mt0W}} 

Recall that during the proof of Theorem \ref{mta0} in the previous subsection we showed using the Analytic Subgroup Theorem that the Zariski-closure ${\rm{H}}$ of $\Psi_{\mathcal{N}}(\R)$ has dimension $\leq \dim\,\frak{t}$.
Since we assume that $\dim\,\G + \dim\,{{\rm{U}}} > \dim\,\frak{t},$ we get by the same argument

\begin{lemma} We have $\dim\,\frak{t} = \dim\,{\rm{H}}$ and the estimate $\dim\,\G + \dim\,{{\rm{U}}} > \dim\,{\rm{H}}$ holds.

\end{lemma}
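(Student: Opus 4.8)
The statement to be established is that, under the hypotheses of Theorem~\ref{mt0W} (the image of $\Psi$ or $\Psi_{[i]}$ contains a non-trivial algebraic point in $\G(\overline{\Q})$, and $\dim\,{\rm{U}} + \dim\,\G > \dim\,\frak{t}$), one has $\dim\,\frak{t} = \dim\,{\rm{H}}$ and hence $\dim\,\G + \dim\,{\rm{U}} > \dim\,{\rm{H}}$. The approach is to re-run the opening argument from the proof of Theorem~\ref{mta0} (Subsect.~6.4.1), which combines the Analytic Subgroup Theorem with the structural lemmas of Sect.~6.1, and to observe that it needs none of the ``$\dim\,\G \geq \dim\,\frak{t}$'' hypothesis — only the existence of an algebraic point. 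First I would invoke Corollary~\ref{i} together with the hypothesis that the image of $\Psi$ or $\Psi_{[i]}$ contains a non-trivial algebraic point, to reduce to the situation ${\rm{rank}}_{\Z}\,\Psi^{\scr -1}\big(\G(F)\big) \geq 1$; this guarantees, via Statement~3 of Lemma~\ref{rank}, that ${\rm{r}} \geq 1$, so that $\Phi_{\mathcal{N}}(\C) \subset {\rm{H}}(\C)$ actually contains a non-trivial point of $\mathcal{N}(F) = \mathcal{N}(\overline{\Q})$, i.e. a non-trivial algebraic point of ${\rm{H}}$.

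Next I would apply the Analytic Subgroup Theorem (Theorem~\ref{AST}) to $\Phi_{\mathcal{N}}$ viewed as a complex-analytic homomorphism into ${\rm{H}}(\C) \subset (\G \times \G^{\scr h})(\C)$. By Statement~1 of Lemma~\ref{rank} the differential satisfies $\big(\Phi_{\mathcal{N}}\big)_{\ast}(\C) \subset \frak{T} \otimes_F \C$, and by Statement~1 of Lemma~\ref{op(} the space $\frak{T}$ is the smallest $F$-subspace of $\frak{n}$ with this property. Since $\Phi_{\mathcal{N}}$ has a non-trivial algebraic point in its image, the Analytic Subgroup Theorem gives $\frak{T} = {\rm{Lie}}\,{\rm{H}} =: \frak{h}$, whence $\dim\,\frak{T} = \dim\,{\rm{H}}$. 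Combining this with Statement~3 of Lemma~\ref{op(}, namely $\dim\,\frak{t} = \dim\,\frak{T}$, yields the first claimed equality $\dim\,\frak{t} = \dim\,{\rm{H}}$. The second claim is then immediate: substituting into the standing hypothesis $\dim\,\G + \dim\,{\rm{U}} > \dim\,\frak{t}$ gives $\dim\,\G + \dim\,{\rm{U}} > \dim\,{\rm{H}}$.

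I do not expect a serious obstacle here, precisely because the computation of $\dim\,{\rm{H}}$ via the Analytic Subgroup Theorem was already carried out inside the proof of Theorem~\ref{mta0} and is independent of the particular numerical inequality being assumed; the only point requiring a moment's care is the reduction to ${\rm{rank}}_{\Z}\,\Psi^{\scr -1}\big(\G(F)\big) \geq 1$, which must handle the case in which it is $\Psi_{[i]}$ rather than $\Psi$ whose image contains the algebraic point — here Statement~4 of Lemma~\ref{ff} (that $\Psi_{[i]}^{\scr -1}\big(\G(F)\big) \subset \Phi_{\mathcal{N}}^{-1}\big(\mathcal{N}(F)\big)$) is exactly what is needed, so that in either case $\Phi_{\mathcal{N}}$ acquires a non-trivial algebraic point in its image. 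Once $\dim\,\G + \dim\,{\rm{U}} > \dim\,{\rm{H}}$ is in hand, the proof of Theorem~\ref{mt0W} is completed by applying Theorem~\ref{inherited1} with the hypothesis ${\rm{Hom}}\big({{\rm{U}}}^{\scriptscriptstyle h}, {\rm{M}}\big) = \{0\}$ for each quotient $\rm{M}$ of $ker\,\pi$, exactly as at the end of Sect.~6.3, but that final step lies outside the lemma statement proper.
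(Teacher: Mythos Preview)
Your proposal is correct and follows essentially the same route as the paper: the paper's proof simply refers back to the argument in Subsect.~6.4.1, where Corollary~\ref{i} reduces to ${\rm{rank}}_{\Z}\,\Psi^{-1}\big(\G(F)\big) \geq 1$, the Analytic Subgroup Theorem applied to $\Phi_{\mathcal{N}}$ inside ${\rm{H}}$ yields $\frak{T} = \frak{h}$, and Lemma~\ref{op(} converts this to $\dim\,\frak{t} = \dim\,{\rm{H}}$, after which the hypothesis $\dim\,\G + \dim\,{\rm{U}} > \dim\,\frak{t}$ gives the inequality. Your observation that Statement~4 of Lemma~\ref{ff} already furnishes the needed algebraic point for $\Phi_{\mathcal{N}}$ (handling the $\Psi_{[i]}$ case directly) is a clean way to see why the reduction works.
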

The proof of the theorem is then completed with the help of Theorem \ref{inherited} as in the previous subsections.

\chapter{Applications and further theorems} We can now proceed to the long awaited applications. Throughout we will use the following\\
\\
\textbf{Notations.} We let $\mathcal{L} = exp^{\scr -1}\left(\overline{\Q}\right)$ be the $\Q$-vector space of algebraic logarithms and set $i = \sqrt{-1}$. For a lattice $\Lambda \subset \C$ we define $\wp_{\Lambda} = \wp$ to be the associated Weierstra\ss\,function with invariants $g_2 = g_2(\Lambda)$ and $g_3 = g_3(\Lambda)$. The complex elliptic curve associated to $\Lambda$ is denoted by $\rm{E}$. By abuse of notation, we will sometimes
identify $\rm{E}$ with its Weierstra\ss\,\,model over $F = \overline{\Q(g_2, g_3)}$. We will also identify the exponential map ${\rm{exp}}_{\rm{E}}$ with $\wp$ and the Lie algebra $\frak{e}(\C) =  (\rm{Lie}\,\rm{E}\big)(\C)$ with $\C$. Then the standard coordinate $z = x + iy$ on $\C$ defines a coordinate of $\frak{e}(\C)$ over $F$. And if $F = \overline{F}$ is stable with respect to complex conjugation, then $x$ and $y$ yield coordinates over $K = F \cap \R$ on $\frak{e}(\C)$.\footnote{Warning! If $F$ is not algebraically closed, this is not true in general. For instance, if $F = \Q(\sqrt{-2})$, then $y$ is not defined over $K = F \cap \R = \Q$.} The set of elliptic logarithms $\omega \in \wp^{\scr -1}\big(F \cup \{\infty\}\big)$ is denoted by $\mathcal{L}_{\Lambda}$. As above $h$ is the complex conjugation and we shall write $\Lambda^{\scriptscriptstyle h}$ instead $h(\Lambda)$. 
Note that $\Lambda^{\scriptscriptstyle h}$ has invariants $h(g_2)$, $h(g_3)$, so it is the lattice associated to the complex conjugate ${\rm{E}}^{\scr h}$ of $\rm{E}$. We shall also use the Weierstra\ss\,functions $\sigma = \sigma_{\Lambda}, \eta = \eta_{\Lambda}$ and $\zeta = \zeta_{\Lambda}$. The most important properties of these functions are listed in Waldschmidt \cite{Wal12}. As sketched in App.\,B, these
Weierstra\ss\,\,functions uniformize linear extensions of $\rm{E}$. For some applications the reader will need to have a look at the appendices, especially at App.\,B.3.\\
\\
As mentioned in the outline, in this chapter we shall not follow the historical order from the first part of the introduction anymore and begin with applications concerning real and imaginary parts of linear logarithms.

\section{Linear independence of real and imaginary parts of algebraic logarithms} 
\subsubsection{From Schneider to Masser}
Let $\Lambda$ be a lattice in $\C$ with algebraic invariants. In 1936 Th.\,Schneider succeeded in proving
the transcendence of non-zero algebraic logarithms $\omega \in \mathcal{L}_{\Lambda}$. With the help of the main theorems we are able to generalize the famous result in an essential manner. This is the content of the next three corollaries. In this section $K$ denotes the field $\overline{\Q} \cap \R$.
 
\begin{corollary} Let $\Lambda$ be a lattice in $\C$ with algebraic invariants and let $\omega \in \mathcal{L}_{\Lambda}$.
Suppose that $\Z\omega + \Lambda$ is dense in $\C$ with respect to the analytic topology. Then $\R\omega \cap \overline{\Q} = \{0\}$.  
 
\end{corollary}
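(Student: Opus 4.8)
The plan is to realize $\omega$ as the derivative of a real-analytic one-parameter homomorphism into the complex points of the elliptic curve $\rm{E}$ and to invoke Theorem \ref{wus1} (Baker's method). Set $F = \overline{\Q}$, $K = F \cap \R$, and let $\rm{E}$ be the elliptic curve over $F$ attached to $\Lambda$, so that $\mathcal{N} = \mathcal{N}_{F/K}(\rm{E})$ is two-dimensional. Define $\Psi: \R \longrightarrow \rm{E}(\C)$ by $\Psi(r) = {\rm{exp}}_{\rm{E}}(\omega r) = (\wp(\omega r), \wp'(\omega r))$. Then $\Psi$ is a non-zero real-analytic homomorphism, and its image is Zariski-dense in $\rm{E}(\C)$: if it were not, the closure of $\Psi(\R)$ would be a proper closed subgroup, hence finite, forcing $\omega$ to be a torsion point, i.e.\ $\omega \in \Q\Lambda$, which contradicts that $\Z\omega + \Lambda$ is dense in $\C$ (a $\Q$-multiple of a period generates a discrete subgroup together with $\Lambda$). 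The differential is $\Psi_{\ast}(r) = \omega r$ viewed in $\frak{e}(\C) = \C$, so $\Psi_{\ast}(1) = \omega$.

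Next I would identify the invariants appearing in the setup. Take $\pi = {\rm{id}}: \rm{E} \longrightarrow \rm{E}$, so $\rm{U} = \rm{E}$ and $ker\,\pi = e$; then trivially ${\rm{Hom}}(ker\,\pi, {\rm{U}}^{\scr h}) = \{0\}$. The kernel of $\Psi$ is $\{r \in \R : \omega r \in \Lambda\}$, which is $\{0\}$ because $\R\omega$ meets the lattice $\Lambda$ only at $0$ (again since $\Z\omega + \Lambda$ is dense, $\omega$ is not a real multiple of a period unless the multiple is zero); thus $\rm{k} = 0$. Since $\omega \in \mathcal{L}_\Lambda$ and $\omega \neq 0$, the point $\xi = {\rm{exp}}_{\rm{E}}(\omega) \in \rm{E}(\overline{\Q})$ is a non-trivial algebraic point in the image of $\Psi$, so $\rm{r} \geq 1$. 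Let $\frak{t} \subset \frak{e}$ be the smallest $K$-subspace with $\Psi_{\ast}(\R) = \R\omega \subset \frak{t} \otimes_K \R$. Either $\dim\,\frak{t} = 1$ or $\dim\,\frak{t} = 2$; in both cases $\dim\,\rm{E} = 1 \leq \dim\,\frak{t}$ would fail when $\dim\,\frak{t}$... — here is the delicate point: Theorem \ref{wus1} requires $\dim\,\rm{G} \geq \dim\,\frak{t}$, i.e.\ $1 \geq \dim\,\frak{t}$, which forces $\dim\,\frak{t} = 1$, meaning $\omega$ spans a line defined over $K$, equivalently $\omega \in K \cdot \partial$ for the real vector field $\partial = \frac{d}{dx}$, i.e.\ $\omega \in \R$ after identification. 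So the argument must be run by contradiction: I would \emph{assume} $\R\omega \cap \overline{\Q} \neq \{0\}$, pick $0 \neq \alpha \in \R\omega \cap \overline{\Q}$, and derive a contradiction.

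So suppose $\alpha = \lambda\omega \in \overline{\Q}$ with $\lambda \in \R^{\times}$. Then ${\rm{exp}}_{\rm{E}}(\alpha \cdot r) = \Psi(\lambda r)$ still has Zariski-dense image and still has $\xi' = {\rm{exp}}_{\rm{E}}(\alpha) \in \rm{E}(\overline{\Q})$ in its image, and now $\Psi_{\ast}'(1) = \alpha \in \overline{\Q} \cap \frak{e}(\C)$. But $\alpha$, being a \emph{real} algebraic number, lies in $K = \overline{\Q} \cap \R$, which is exactly the condition that $\frak{t}' = K\alpha$ has $\dim\,\frak{t}' = 1$ and $\Psi_{\ast}'(\R) \subset \frak{t}' \otimes_K \R$ with $\frak{t}'$ the $x$-axis; here one uses the Notations paragraph, which says $x$ is a coordinate over $K$ on $\frak{e}(\C)$ since $F = \overline{\Q}$ is algebraically closed and stable under conjugation. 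Now Theorem \ref{wus1} applies (with $\G = \rm{U} = \rm{E}$, $\pi = {\rm{id}}$, $\frak{t} = \frak{t}'$): it gives $\dim\,\rm{U} = \dim\,\pi_{\ast}(\frak{t}') = 1$ and, more to the point, that $\Psi'$ descends to $K$. By Theorem \ref{weakdescent} this means the smallest algebraic subgroup ${\rm{H}} \subset \rm{E} \times \rm{E}^{\scr h}$ containing $(\Psi')_{\mathcal{N}}(\R)$ has dimension $\dim\,\rm{E} = 1$, so $p_{\rm{E}|{\rm{H}}}: {\rm{H}} \to \rm{E}$ is an isogeny; pulling $\Psi'$ back along an inverse isogeny produces a real-analytic homomorphism into $\rm{E}''(\R)$ for a model $\rm{E}''$ of $\rm{E}$ over $K$ whose image is Zariski-dense.

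The main obstacle — and the place where the density hypothesis $\Z\omega + \Lambda$ dense in $\C$ must be used decisively — is to convert this descent into a contradiction with density. The point is that a real-analytic homomorphism $\R \to \rm{E}''(\R)$ has image contained in the real locus $\rm{E}''(\R)$, which is a one-dimensional real submanifold of the two-real-dimensional torus $\rm{E}(\C) = \C/\Lambda$; hence, transporting back via the isogeny (a finite map), the closure of $\Psi'(\R)$ — and therefore of $\Z\alpha + \Lambda = \Z\lambda\omega + \Lambda$, which sits inside it — is contained in a proper closed real-analytic subset of $\C/\Lambda$, contradicting that $\Z\omega + \Lambda$ (hence any commensurable $\Z\alpha + \Lambda$, after noting $\Z\lambda\omega + \Lambda \supset $ a finite-index subgroup related to $\Z\omega+\Lambda$, or simply re-running with $\omega$ in place of $\alpha$ and $\lambda$ rational-independent adjustments) is dense. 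I would make this last step precise by observing: descent of $\Psi'$ to $K$ forces $\omega$ itself to lie in a one-dimensional $K$-subspace of $\frak{e}(\C)$ up to the isogeny, but then ${\rm{exp}}_{\rm{E}}(\R\omega)$ lies in a one-dimensional closed subgroup of $\rm{E}(\C)$, so its closure together with $\Lambda$ cannot be all of $\C/\Lambda$ — contradicting density of $\Z\omega + \Lambda$. Hence the assumption $\R\omega \cap \overline{\Q} \neq \{0\}$ is untenable, and $\R\omega \cap \overline{\Q} = \{0\}$.
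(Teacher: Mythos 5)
Your proposal follows essentially the same route as the paper's own proof: the same homomorphism $\Psi(r)=\wp(\omega r)$, the same choice $\frak{t}=K\alpha$ for a hypothetical non-zero $\alpha\in\R\omega\cap\overline{\Q}$, the same appeal to Theorem \ref{wus1} to force descent to $K$, and the density of $\Z\omega+\Lambda$ (equivalently of the group generated by $\wp(\omega)$ in ${\rm{E}}(\C)$) to exclude descent, since a descent would trap $\Psi(\R)\supset\Z\cdot\wp(\omega)$ inside the proper closed real-analytic subgroup $v^{\scriptscriptstyle -1}\big({\rm{E}}'(\R)\big)$ of ${\rm{E}}(\C)$. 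One justification in the middle is wrong, however: an element $\alpha\in\R\omega\cap\overline{\Q}$ is an algebraic number lying on the real line through $\omega$, not a real algebraic number, so in general $\alpha\notin K$ and $\frak{t}'=K\alpha$ is not the $x$-axis; as written, that step (and the appeal to the $x$-coordinate being defined over $K$) only covers the case $\omega\in\R$. Fortunately nothing downstream depends on it: for any non-zero algebraic $\alpha$, the set $K\alpha$ is a one-dimensional $K$-subspace of $\frak{e}$ whose $\R$-span equals $\R\alpha=\R\omega$, which is exactly what Theorem \ref{wus1} needs (together with the non-trivial algebraic point $\wp(\omega)$ and the vacuous ${\rm{Hom}}$-condition for $\pi=id.$); this is precisely how the paper argues. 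Two minor simplifications: the reparametrized homomorphism $\Psi'(r)={\rm{exp}}_{\rm{E}}(\alpha r)$ has the same image as $\Psi$, so the replacement of $\omega$ by $\alpha$ and the commensurability discussion at the end are unnecessary, and the Zariski-density of $\Psi(\R)$ is immediate because ${\rm{E}}$ is one-dimensional and $\omega\neq 0$ (density of $\Z\omega+\Lambda$ forces $\omega\notin\Lambda$, indeed $\omega\notin\Q\Lambda$).
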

\begin{proof} Let $\Psi(r) = \wp(\omega r)$ and $\xi = \wp(\omega)$. The group $\Z\xi \in \rm{E}(\overline{\Q})$ is dense in ${\rm{E}}(\C)$ with respect to the analytic topology, so that $\Psi$ does not descend to $\R$. On the other hand,
the image of $\Psi$ contains an algebraic point distinct from the neutral element. If $\alpha \in \R\omega \cap \overline{\Q} \neq \{0\}$, then $\Psi_{\ast}(\R) \subset \frak{t} \otimes_K \R$ for $\frak{t} = K \alpha$.
Theorem \ref{wus1} implies then that $\Psi$ descends to $K$, and hence to $\R$. It follows by contraposition that $\R\omega \cap \overline{\Q} = \{0\}$.   
\end{proof}

\begin{corollary} Let $\Lambda$ be a lattice in $\C$ with algebraic invariants. Then
 ${{\rm{E}}}$ is isogenous to a curve over $\R$ if and only if there is a non-zero algebraic logarithm
$\omega \in \mathcal{L}_{\Lambda}$ such that $\omega/|\omega| \in \overline{\Q}$.
\end{corollary}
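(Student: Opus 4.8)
The strategy is to exploit the equivalence between the existence of a real model for $\rm{E}$ and the descent to $K$ of a suitable one-parameter homomorphism, using Theorem \ref{wus1} and the main criterion (Theorem \ref{weakdescent}). For the ``if'' direction, suppose $\omega \in \mathcal{L}_{\Lambda}$ is non-zero with $\omega/|\omega| \in \overline{\Q}$. Put $\Psi(r) = \wp(\omega r) = {\rm{exp}}_{\rm{E}}(\omega r)$, so that $\xi = \wp(\omega) \in {\rm{E}}(\overline{\Q})$ is a non-trivial algebraic point in the image of $\Psi$; thus the invariant $\rm{r}$ attached to $\Psi$ is positive. Since $\omega/|\omega|$ is algebraic, the line $\R\omega$ is defined over $K$ inside $\frak{e}(\C) = \C$ (with the coordinate $z = x+iy$, $\R\omega$ is cut out by a linear equation with algebraic, hence $K$-rational, coefficients once we normalize by $|\omega|$). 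Therefore the smallest $K$-subspace $\frak{t} \subset \frak{e}$ with $\Psi_{\ast}(\R) \subset \frak{t}\otimes_K\R$ is one-dimensional, so $\dim\,\frak{t} = 1 = \dim\,\rm{E}$ and the hypothesis $\dim\,{\rm{E}} \geq \dim\,\frak{t}$ of Theorem \ref{wus1} is satisfied (take $\G = {\rm{U}} = {\rm{E}}$ and $\pi = id.$, so the condition on ${\rm{Hom}}(ker\,\pi, {\rm{U}}^{\scr h})$ is vacuous). Theorem \ref{wus1} then gives that $\Psi$ descends to $K$: there is an isogeny $v: {\rm{E}} \longrightarrow {\rm{E}}' \otimes_K \overline{\Q}$ with ${\rm{E}}'$ a curve over $K$ and $v_{\ast}(\Psi)$ taking values in ${\rm{E}}'(\R)$. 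Since ${\rm{E}}$ is one-dimensional, ${\rm{E}}'$ is a curve over $K = \overline{\Q}\cap\R \subset \R$, and extending scalars to $\R$ exhibits ${\rm{E}}$ as isogenous to the real curve ${\rm{E}}' \otimes_K \R$ after base change; this is the desired conclusion.

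For the converse, suppose ${\rm{E}}$ is isogenous to a curve over $\R$, equivalently (descending the field of definition from $\R$ to $K = \overline{\Q}\cap\R$, since the invariants are algebraic) there is an isogeny $v: {\rm{E}} \longrightarrow {\rm{E}}' \otimes_K \overline{\Q}$ with ${\rm{E}}'$ defined over $K$. I would choose an elliptic logarithm: pick any $\eta \in {\frak{e}}'(\R) \subset {\rm{Lie}}({\rm{E}}'\otimes_K\overline{\Q})(\C)$ nonzero with ${\rm{exp}}_{{\rm{E}}'}(\eta) \in {\rm{E}}'(\overline{\Q})$ — such $\eta$ exists because ${\rm{E}}'(\R)$ contains algebraic points near the origin and the exponential map is a local analytic isomorphism defined over $\overline{\Q}$ on the tangent space, whose restriction to $\frak{e}'(\R)$ lands in ${\rm{E}}'(\R)$ by Corollary \ref{lp333}. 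Transport $\eta$ back through $v_{\ast}^{-1}$ (which is defined over $\overline{\Q}$) to get $\omega = v_{\ast}^{-1}(\eta) \in \frak{e}(\C) = \C$, a nonzero elliptic logarithm of $\rm{E}$, i.e. $\omega \in \mathcal{L}_{\Lambda}$. The point is that $\omega$ lies in the $K$-line $v_{\ast}^{-1}(\frak{e}'(\R))$, which is a one-dimensional subspace of $\C$ defined over $K$; any such line is spanned by a vector whose direction $\omega/|\omega|$ is algebraic (normalize a $K$-rational spanning vector). Hence $\omega/|\omega| \in \overline{\Q}$, as required.

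The main obstacle I anticipate is the careful bookkeeping in the converse direction: one must ensure simultaneously that $\omega$ is an \emph{algebraic} elliptic logarithm (i.e. $\wp(\omega)$ algebraic) and that its direction is algebraic, and that the chosen $\eta$ really exists with both properties. This requires knowing that ${\rm{E}}'(\R)$ has a point of infinite order or at least algebraic points accumulating at the origin, together with the fact that the real exponential coordinatization and the algebraic structure are compatible over $K$ — precisely the content of Corollary \ref{lp333} and the remarks in Sect.\,2.1 about real points being a submanifold of dimension $\dim\,{\rm{E}}$. A minor secondary point is to justify descending the field of definition of the real model from $\R$ down to $K = \overline{\Q}\cap\R$; since the $j$-invariant of ${\rm{E}}$ is algebraic, any real model has algebraic $j$-invariant and can be taken over a number field inside $\R$, hence over $K$, possibly after an isogeny — this is the kind of standard argument developed in App.\,A which I would invoke rather than reprove.
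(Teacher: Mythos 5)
Your proposal is correct and follows essentially the paper's own route: the ``if'' direction is the same application of Theorem \ref{wus1} with $\frak{t} = K\,\omega/|\omega|$ (and $\pi = id.$, $\G = \U = {\rm{E}}$), and your converse is the same idea in slightly more abstract clothing. Where you pull a real logarithm of an algebraic point of the $K$-model back through the $\overline{\Q}$-rational differential $v_{\ast}$ and observe that it lies on a $K$-rational line, the paper simply pushes a nonzero real period $\lambda'$ of the Weierstra\ss\ lattice of ${\rm{E}}'$ forward through an isogeny represented by an algebraic number $\alpha$, obtaining the period $\omega = \alpha\lambda' \in \Lambda$ with $\omega/|\omega| = \alpha/|\alpha| \in \overline{\Q}$; both versions rest on the same $\R$-to-$K$ descent and the algebraicity of the isogeny's differential, so the difference is only in bookkeeping.
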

\begin{proof} Let $\Psi(r) = \wp({\scriptstyle \frac{\omega}{|\omega|} }r)$. If $\omega/|\omega| \in \overline{\Q}$, then $\Psi_{\ast}(\R) = \frak{t} \otimes_K \R$ for $\frak{t}= K\omega/|\omega|$ and $\Psi(\R)$ contains a non-trivial algebraic point $\xi \in {\rm{E}}(\overline{\Q})$.
It results from Theorem \ref{wus1} that ${{\rm{E}}}$ is isogenous to a curve over $\R$. Conversely, if ${\rm{E}}$ is isogenous to a curve over $\R$, then Corollary \ref{lp} implies that $\rm{E}$ is isogenous to a curve ${\rm{E}}'$ over $K$. We may assume that $\rm{E}'$ is in Weierstra\ss\,\,form. The associated lattice $\Lambda'$ has real invariants and is stable with respect to complex conjugation. Hence, $\Lambda'$ contains a real element $\lambda' \neq 0$. Thus, there is a period $\lambda \in \Lambda$ and an algebraic number $\alpha$ representing an isogeny $v: {\rm{E}}' \longrightarrow {{\rm{E}}}$ such that $\alpha\lambda' = \lambda$. As a result, $\lambda/|\lambda| = \alpha/|\alpha| \in \overline{\Q}$.
\end{proof}
\begin{corollary} \label{realalg} Let $\Lambda$ be a lattice in $\C$ with algebraic invariants and let $\omega \in \mathcal{L}_{\Lambda}$ be a non-zero algebraic logarithm. If $Re\,\omega$ is algebraic, then $Re\,\omega = 0$.
\end{corollary}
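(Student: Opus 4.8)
The statement to prove is Corollary \ref{realalg}: if $\Lambda$ has algebraic invariants and $\omega \in \mathcal{L}_{\Lambda}$ is a non-zero algebraic logarithm with $Re\,\omega$ algebraic, then $Re\,\omega = 0$. The natural approach is to deduce this from the previous corollary (the one about real and imaginary parts), or more directly from Theorem \ref{wus1}, in the same spirit as the two preceding proofs. The key idea: if both $\omega$ is an algebraic logarithm (so $\xi = \wp(\omega) \in {\rm{E}}(\overline{\Q})$) and $Re\,\omega = \alpha$ is algebraic, then the real part of $\omega$ alone carries arithmetic information, and this is too much unless $Re\,\omega$ vanishes.

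First I would set $\Psi(r) = \wp(\omega r)$ and $\xi = \wp(\omega)$, exactly as in the proof of the first corollary. If $\xi$ is non-torsion, then $\Z\xi$ is Zariski-dense in ${\rm{E}}(\C)$; I should address the torsion case separately (if $\xi$ is torsion, then some $n\omega \in \Lambda$, and one argues directly about periods — or one reduces to the statement that a period with algebraic real part has zero real part, which follows from the structure of $\Lambda$ together with Schneider's theorem on the transcendence of periods, or from Corollary 7.1.2 applied to a suitable isogenous curve). Assuming $\xi$ non-torsion, the plan is: the homomorphism $\Psi$ has Zariski-dense image and its image contains the non-trivial algebraic point $\xi$, so $\rm{r} \geq 1$. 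Now I would use the hypothesis $Re\,\omega = \alpha \in \overline{\Q}$ to bound $\dim\,\frak{t}$, where $\frak{t} \subset \frak{e}$ is the smallest $K$-subspace with $\Psi_{\ast}(\R) \subset \frak{t} \otimes_K \R$. Recall from the Notations that, since $F = \overline{\Q}$ is algebraically closed and stable under conjugation, the real and imaginary coordinates $x, y$ on $\frak{e}(\C) = \C$ are both defined over $K$. Since $\Psi_{\ast}(r) = \omega r$ and $\omega = \alpha + i\,Im\,\omega$ with $\alpha \in K$, the vector $\omega$ lies in the $K$-span of $1 = \partial_x$ together with $i\,Im\,\omega$; but I want to show $\frak{t}$ is one-dimensional.

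The crux: I claim $\frak{t} = \frak{e}$ would force $\rm{E}$ to be non-isogenous to a real curve \emph{and} $\Psi$ not to descend to $K$, whereas Theorem \ref{wus1} (with $\pi = id$, $\U = \rm{G} = \rm{E}$, the Hom-condition vacuous) says that if $\dim\,\rm{E} = 1 \geq \dim\,\frak{t}$ and $\Psi(\R)$ contains a non-trivial algebraic point, then $\Psi$ descends to $K$ and $\dim\,\rm{U} = \dim\,\pi_{\ast}(\frak{t})$, i.e. $\dim\,\frak{t} = 1$. So Theorem \ref{wus1} \emph{already} gives $\dim\,\frak{t} = 1$ unconditionally (once $\rm{r} \geq 1$), meaning $\frak{t} = K\omega$ after rescaling — wait, that only says $\Psi$ descends, not that $Re\,\omega = 0$. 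The real content must come from combining $\dim\,\frak{t} = 1$ with the hypothesis that $Re\,\omega$ is algebraic: if $\frak{t}$ is spanned over $K$ by a single vector $v$ and $\omega \in \frak{t}\otimes_K \R = \R v$, then $\omega = \lambda v$ for some real $\lambda$; writing $v = v_1 + i v_2$ with $v_1, v_2 \in K$ (possible since $x,y$ are $K$-coordinates), we get $Re\,\omega = \lambda v_1$, $Im\,\omega = \lambda v_2$. The algebraicity of $Re\,\omega = \lambda v_1$ then forces $\lambda \in \overline{\Q}$ unless $v_1 = 0$; and $\lambda \in \overline{\Q}$ would make $\omega = \lambda v$ algebraic, contradicting Schneider's theorem (non-zero algebraic elliptic logarithms are transcendental — this is exactly Corollary 7.1.2's starting point, or Corollary 7.1.3's). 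Hence $v_1 = 0$, so $Re\,\omega = 0$.

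The main obstacle I anticipate is the bookkeeping around the field of definition of the coordinates and the torsion case. One must be careful that $x, y$ are genuinely defined over $K = \overline{\Q}\cap\R$ (the Notations flag that this can fail for non-algebraically-closed $F$, but here $F = \overline{\Q}$ is fine) so that the decomposition $v = v_1 + iv_2$ with $v_1, v_2 \in K$ is legitimate and $\dim_K \frak{t} = 1$ really does pin down $v$ up to a $K$-scalar. The torsion case ($\xi$ torsion, equivalently $n\omega \in \Lambda$ for some $n$) needs a separate short argument: then $Re(n\omega) = n\,Re\,\omega$ is algebraic, so it suffices to show a period $\lambda \in \Lambda$ with $Re\,\lambda \in \overline{\Q}$ has $Re\,\lambda = 0$; this can be handled by applying the already-proved Corollary 7.1.2 to conclude $\rm{E}$ is isogenous to a real curve if $Re\,\lambda/|\lambda|$ or similar is algebraic and nonzero — or more cleanly by invoking Schneider's theorem that a nonzero period is transcendental together with the observation that if $Re\,\lambda \neq 0$ is algebraic one can manufacture an algebraic logarithm that is a rational multiple of $\lambda$ after descent, contradicting transcendence. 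I would spell out the non-torsion case in full and dispatch the torsion case with a one-line reduction to the preceding corollaries.
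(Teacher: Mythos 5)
Your argument collapses at its crux: you claim that Theorem \ref{wus1} gives $\dim\,\frak{t} = 1$ ``unconditionally once ${\rm{r}} \geq 1$''. It does not. The inequality $\dim\,\G \geq \dim\,\frak{t}$ is a \emph{hypothesis} of that theorem, not a conclusion, and in your setting ($\G = \U = {\rm{E}}$, $\Psi(r) = \wp(\omega r)$, $\pi = id.$) nothing supplies it: viewed over $K$ the Lie algebra $\frak{e}$ has dimension $2$, and the smallest $K$-subspace $\frak{t}$ with $\omega \in \frak{t} \otimes_K \R$ is one-dimensional precisely when $Re\,\omega$ and $Im\,\omega$ are linearly dependent over $K$ --- which is essentially what you are trying to prove, and which the hypothesis ``$Re\,\omega \in \overline{\Q}$'' alone does not give. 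If $\dim\,\frak{t} = 1$ held automatically, every non-zero algebraic elliptic logarithm would force a descent of $\Psi$ to $K$, contradicting for instance the first corollary of Sect.\,7.1, where density of $\Z\wp(\omega) + \Lambda$ forces $\R\omega \cap \overline{\Q} = \{0\}$ and $\frak{t} = \frak{e}$. So neither Theorem \ref{wus1} nor Theorem \ref{wus2} can be invoked on ${\rm{E}}$ alone, and the rest of your plan has nothing to stand on. (Your endgame --- $\dim\,\frak{t} = 1$ together with algebraicity of $Re\,\omega$ and Schneider's theorem forces $Re\,\omega = 0$ --- is sound, and your worry about the torsion case is unnecessary, since the Zariski closure of any non-trivial one-parameter subgroup of ${\rm{E}}(\C)$ is all of ${\rm{E}}$.)

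The missing idea is an auxiliary additive factor that converts the algebraicity of $Re\,\omega$ into a genuine defect. The paper takes $\G = {\rm{E}} \times \mathbb{G}_{a, \overline{\Q}}$, $\U = {\rm{E}}$, $\pi$ the projection, and $\Psi(r) = \big(\wp(\omega r), r\big)$, so that $\Psi(\Q) \subset \G(\overline{\Q})$ and $\Psi_{\ast}(1) = (\omega, 1)$. If $Re\,\omega = \alpha$ is algebraic, the identity $(\omega, 1) = 1\cdot(\alpha, 1) + (Im\,\omega)\cdot(i, 0)$ exhibits $\Psi_{\ast}(\R)$ inside $\frak{t} \otimes_K \R$ with $\dim\,\frak{t} \leq 2 < 3 = \dim\,\G + \dim\,\U$; since ${\rm{Hom}}\big({\rm{E}}^{\scriptscriptstyle h}, {\rm{M}}\big) = \{0\}$ for every quotient ${\rm{M}}$ of $ker\,\pi = \mathbb{G}_{a,\overline{\Q}}$, Theorem \ref{wus2} applies and $\pi_{\ast}(\Psi)$ descends weakly to $K$. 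Proposition \ref{real} then says the line $\R\omega \subset \frak{e}(\C)$ is defined over $K$, i.e.\ $Re\,\omega$ and $Im\,\omega$ are linearly dependent over $K$; if $Re\,\omega \neq 0$ were algebraic, this would make $Im\,\omega$, hence $\omega$, algebraic, contradicting Schneider's theorem. You had all the ingredients of this final step; what is lacking is the auxiliary $\mathbb{G}_a$-coordinate that makes the transcendence machinery applicable in the first place.
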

\begin{proof} We define $\G = {\rm{E}} \times \mathbb{G}_{a, \overline{\Q}}$ and let
$\pi$ be the projection to $\U = \rm{E}$. We consider
the homomorphism $\Psi(r) = \big(\wp(\omega r), r\big)$ to $\G(\C)$. Then $\Psi(\Q)$ is contained in $\G(\overline{\Q})$. If $Re\,\omega$ is algebraic, then there exists a subspace $\frak{t} \subset \frak{g}$ over $K$ of dimension $2 < 3 = \dim\,\G + \dim\,\U$ with the property that
 $\Psi_{\ast}(\R) \subset \frak{t}\otimes_K \R $.
Theorem \ref{wus2} implies that $\pi_{\ast}(\Psi)$ descends weakly to $K$. And it follows from Proposition \ref{real} that the real subspace $\R\omega$ of $\frak{e}(\C) = \C$ is defined over $K$. This means that $Re\,\omega$ and $Im\,\omega$ are linearly dependent over $K$. Now,
$Re\,\omega$ is algebraic by assumption. So, if $Re\,\omega \neq 0$, then
$Im\,\omega$ is algebraic. Hence, if $Re\,\omega \neq 0$, then $\omega$ is algebraic. This contradicts Schneider's theorem (see \cite[Thm.\,20]{Wal12}). Therefore, $Re\,\omega = 0$.
 \end{proof}
We also have the following related criterion.
\begin{corollary} Let $\Lambda$ be a lattice in $\C$ with algebraic invariants and choose an arbitrary basis $\lambda_1, \lambda_2 \in \Lambda$. Then $\rm{E}$ is isogenous to a curve with model
over $K$ if and only if either $\lambda_2/|\lambda_2|$ is algebraic or there are integers $d_1, d_2, d_3$ such that $d_1 \neq 0$ and 
$|d_1\lambda_1 + d_2\lambda_2| = |d_3\lambda_2|.$
 
\end{corollary}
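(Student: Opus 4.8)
The plan is to reduce the statement to Corollary~\ref{realalg} (the ``$Re\,\omega$ algebraic $\Rightarrow Re\,\omega = 0$'' result applied to a suitable isogenous curve) together with the previous corollary, which gives the criterion: $\rm{E}$ is isogenous to a curve over $K$ iff there is a non-zero $\omega \in \mathcal{L}_\Lambda$ with $\omega/|\omega| \in \overline{\Q}$. So the real content is to translate the second alternative, ``there are integers $d_1 \neq 0, d_2, d_3$ with $|d_1\lambda_1 + d_2\lambda_2| = |d_3\lambda_2|$'', into the existence of such a logarithm, and conversely.

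First I would handle the easy implications. If $\lambda_2/|\lambda_2|$ is algebraic, then $\lambda_2 \in \mathcal{L}_\Lambda$ is a non-zero algebraic logarithm (indeed a period, so $\wp(\lambda_2) = \infty$ which lies in $F \cup \{\infty\}$) with $\lambda_2/|\lambda_2| \in \overline{\Q}$, and the previous corollary gives that $\rm{E}$ is isogenous to a curve over $K$. If instead $|d_1\lambda_1 + d_2\lambda_2| = |d_3\lambda_2|$ with $d_1 \neq 0$, set $\lambda = d_1\lambda_1 + d_2\lambda_2 \in \Lambda$, a non-zero period, and $\mu = d_3\lambda_2 \in \Lambda$. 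Then $|\lambda| = |\mu|$, so $\lambda/\mu$ lies on the unit circle; write $\lambda/\mu = e^{i\theta}$. The point is that $\lambda, \mu$ are both algebraic logarithms (periods), and $\lambda/\mu$ is an algebraic number of modulus $1$. I would then invoke the previous corollary in the contrapositive direction differently: rather, I claim directly that $\mu/|\mu| = (\lambda/|\lambda|)\cdot\overline{(\lambda/\mu)}$, and more usefully that the ratio of two periods being an algebraic number of absolute value one forces (via Corollary~B.5.2 / the classification of real forms, or via a direct transcendence argument) the curve to descend. Concretely: consider $\Psi(r) = \wp(\lambda r/|\lambda|)$; if $\lambda/|\lambda|$ is algebraic we are in the first case, so assume it is transcendental. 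Then I would look at the two-dimensional setup of Corollary~\ref{realalg}'s proof with the pair of periods $\lambda, \mu$ and exploit $|\lambda/\mu| \in \overline{\Q}^\times$ with $\lambda/\mu \in \overline{\Q}$: the quantity $\lambda\bar\lambda = \mu\bar\mu$ relates $\bar\lambda$ to $\bar\mu$ by an algebraic factor, so $\Z\bar\lambda + \Z\bar\mu \subset \overline{\Q}\cdot\bar\mu$, which combined with $\Z\lambda + \Z\mu \subset \Lambda$ and the algebraicity of $\lambda/\mu$ shows the real-analytic homomorphism $r \mapsto (\wp(\lambda r), \wp(\mu r))$ into $\rm{E} \times \rm{E}$ has its twin Zariski-closure of dimension $\le \dim\,\rm{E}$; applying Theorem~\ref{weakdescent} or the Baker-method Theorem~\ref{wus1} then yields descent of $\rm{E}$ to $K$.

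For the converse, assume $\rm{E}$ is isogenous to a curve with model over $K$. By the previous corollary there is a non-zero $\omega \in \mathcal{L}_\Lambda$ with $\omega/|\omega| =: \beta \in \overline{\Q}$. If $\omega$ is a period, I would show $\lambda_2/|\lambda_2|$ or a companion is algebraic by expressing any period in terms of $\lambda_1, \lambda_2$: since $\omega = m\lambda_1 + n\lambda_2$ for integers $m, n$ and $\bar\omega = \omega/\beta^2 \cdot |\omega|^2/\omega \cdots$ — more cleanly, $\beta^2 = \omega/\bar\omega \in \overline{\Q}$, so $\omega/\bar\omega \in \overline{\Q}$, i.e. $(m\lambda_1+n\lambda_2)/(m\bar\lambda_1+n\bar\lambda_2) \in \overline{\Q}$. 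I would then analyze the cases $n = 0$ (giving $\lambda_1/\bar\lambda_1 \in \overline{\Q}$, hence $\lambda_1/|\lambda_1| \in \overline{\Q}$, and after swapping or using an integer relation, the stated conclusion) versus $n \neq 0$, extracting an equation $|d_1\lambda_1 + d_2\lambda_2| = |d_3\lambda_2|$ from the algebraicity of the cross-ratio-type expression by clearing denominators and taking moduli. If $\omega$ is not a period, Corollary~\ref{realalg} and its curve-descent consequence still produce a period with algebraic argument after passing through the real model $\rm{E}'$ (whose lattice $\Lambda'$ is complex-conjugation stable, hence contains a real period), exactly as in the proof of the second corollary above; transporting that real period back through the isogeny gives a period $\lambda \in \Lambda$ with $\lambda/|\lambda| \in \overline{\Q}$, and then the period-case argument applies.

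\textbf{Main obstacle.} The delicate step is the converse direction's bookkeeping: extracting the clean Diophantine statement $|d_1\lambda_1 + d_2\lambda_2| = |d_3\lambda_2|$ (with the asymmetry that $d_1 \neq 0$ but no condition is imposed tying $d_3$ to both basis vectors) from the mere existence of \emph{some} algebraic logarithm with algebraic argument. One must be careful that the logarithm produced by the descent need not be a primitive period, may involve a nontrivial isogeny factor, and that ``$\omega/|\omega| \in \overline{\Q}$'' only gives $\omega/\bar\omega \in \overline{\Q}$ after squaring — so the argument genuinely uses that $\Lambda$ has algebraic invariants (so $\bar\lambda_j$ are again periods of an algebraic curve) to keep everything inside $\overline{\Q}$ when clearing denominators and taking absolute values. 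I expect the rest (the forward implications and the transcendence input) to be routine given Theorem~\ref{wus1}, Theorem~\ref{wus2}, Proposition~\ref{real}, and the two preceding corollaries.
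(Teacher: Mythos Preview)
Your forward argument for the second alternative contains a genuine error. You set $\lambda = d_1\lambda_1 + d_2\lambda_2$ and $\mu = d_3\lambda_2$ with $|\lambda| = |\mu|$, and then assert ``$\lambda/\mu \in \overline{\Q}$''. But $\lambda/\mu = (d_1\tau + d_2)/d_3$ with $\tau = \lambda_1/\lambda_2$, and $\tau$ is typically transcendental (indeed this is forced by Schneider's theorem once $\tau$ is not quadratic). So $\lambda/\mu$ is \emph{not} algebraic in general, and the subsequent sketch---``$\Z\bar\lambda + \Z\bar\mu \subset \overline{\Q}\cdot\bar\mu$'', the two-dimensional $E\times E$ setup, the ``twin Zariski-closure of dimension $\le \dim E$''---all rest on this false premise or are too vague to assess. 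The bare relation $|\lambda| = |\mu|$ gives only $\bar\lambda/\bar\mu = \mu/\lambda$, which is a transcendental relation, not an algebraic one among the four numbers $\lambda,\mu,\bar\lambda,\bar\mu$; it does not by itself collapse the Zariski closure of $\Psi_{\mathcal{N}}$. Your converse is likewise incomplete: knowing that \emph{some} $\omega \in \mathcal{L}_\Lambda$ has $\omega/\bar\omega \in \overline{\Q}$ does not, by the manipulations you indicate, produce integers with $|d_1\lambda_1 + d_2\lambda_2| = |d_3\lambda_2|$; the ``cross-ratio-type expression'' argument is a placeholder, not a proof.

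The paper's route is entirely different and avoids these difficulties. It splits on whether $\lambda_2/|\lambda_2|$ is algebraic. If it is, the previous corollary finishes. If it is transcendental, then the homomorphism $\Psi(r) = \wp\big(\tfrac{\lambda_2}{|\lambda_2|}r\big)$ cannot descend to $K$ (by Proposition~\ref{real}, descent would force $\R\lambda_2$ to be defined over $K$, i.e.\ $\lambda_2/|\lambda_2|$ algebraic). From non-descent of $\Psi$ and Corollary~\ref{lp2} one gets $Re\,\tau \notin \Q$ for $\tau = \lambda_1/\lambda_2$; hence $\tau$ is not a quadratic irrationality, and since $j(\tau)$ is algebraic, Schneider's theorem makes $\tau$ transcendental. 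At this point the paper invokes the \emph{purely algebraic} criterion of Corollary~\ref{op222} (together with Remark~\ref{op222+}): $E_\tau$ is isogenous to a real curve iff $Re\,\tau \in \Q$ or $|d_1\tau + d_2| \in \Q$ for some integers $d_1 \ne 0, d_2$. Since $Re\,\tau \notin \Q$, the equivalence reduces exactly to the existence of integers with $|d_1\lambda_1 + d_2\lambda_2| = |d_3\lambda_2|$. The heavy lifting is thus done by the Weil-restriction degree criterion (Proposition~\ref{wu11}) sitting behind Corollary~\ref{op222}, an ingredient you do not use at all.
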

\begin{proof} If $\lambda_2/|\lambda_2|$ is algebraic, then the assertion follows from the previous corollary. If 
  $\lambda_2/|\lambda_2|$ is transcendental, then $\Psi(r) = \wp({\scr \frac{\lambda_2}{|\lambda_2|}} r)$ cannot descend to $K$. Indeed, otherwise it would follow from Proposition \ref{real} that $\R\lambda_2 \subset \C$ is defined 
  over $K$. But the latter is equivalent to the statement that $\lambda_2/|\lambda_2|$ is algebraic, and we would receive a contradiction. So, since $\Psi$ does not descend to $K$, Corollary B.1.2 implies that $\tau = \lambda_1/\lambda_2$ has irrational real part $Re\,\tau$. In particular, $\tau$ is not a quadratic irrationality. Since $j(\tau)$ is algebraic, $\tau$ is transcendental by the theorem of Schneider (\cite[Thm.\,20]{Wal12}). The assertion follows now from Corollary \,\ref{op222} and Remark\,\ref{op222+}.
\end{proof}
For a complex number $\omega$ we have $\omega/|\omega| \in \overline{\Q}$ if and only if $\omega$ and $h({\omega})$ are linearly dependent over $\overline{\Q}$. The latter holds if and only if $Re\,\omega$ and $Im\,\omega$ are linearly dependent over $K$. Based upon this observation Theorem\,\ref{wus2} allows the following result. The proof illustrates once again how to deal with the technically complicated theorem.
\begin{corollary} \label{ppopp} Let $\Lambda$ be a lattice in $\C$ with algebraic invariants and suppose that $\alpha \Lambda \nsubseteq \Lambda^{\scriptscriptstyle h}$ for all non-zero algebraic numbers $\alpha$. Let $\omega_1, \omega_2 \in \mathcal{L}_{\Lambda}$ be two non-zero algebraic logarithms which are linearly independent over $\Z$. Then, for all non-zero $\omega' \in \mathcal{L} \otimes \overline{\Q}$, the six numbers
$$1, \omega', Re\,\omega_1, Im\,\omega_1, Re\,\omega_2, Im\,\omega_2$$
are linearly independent over $\overline{\Q}$. 
\end{corollary}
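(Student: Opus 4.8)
The plan is to set things up as an application of Theorem \ref{wus2} to a suitable product group, exactly as in the proofs of Corollary \ref{realalg} and Corollary \ref{ppopp}, and then to translate the resulting weak descent into a linear-algebra statement via Proposition \ref{real}. First I would dispose of the case $\omega' \notin \mathcal{L}$: if $\omega'$ is a non-trivial $\overline{\Q}$-combination of algebraic logarithms but not itself an algebraic logarithm, then I would instead work with the ambient group $\mathbb{G}_{a,\overline{\Q}}^n$ carrying all the logarithms involved; more cleanly, since $\omega' \in \mathcal{L} \otimes \overline{\Q}$, I can after an isogeny reduce to the case $\omega' \in \mathcal{L}$, i.e.\ $\beta := \exp(\omega')$ is algebraic. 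So assume $\omega' \in \mathcal{L}$ and set $\G = {\rm{E}} \times {\rm{E}} \times \mathbb{G}_{a,\overline{\Q}}^2$, with $\pi: \G \longrightarrow {\rm{U}} = {\rm{E}} \times {\rm{E}}$ the projection onto the first two factors, and
$$\Psi(r) = \big(\wp(\omega_1 r),\, \wp(\omega_2 r),\, \omega' r,\, r\big).$$
Then $\Psi(\Q) \subset \G(\overline{\Q})$, and since $\ker\pi \simeq \mathbb{G}_{a,\overline{\Q}}^2$ is unipotent while ${\rm{U}}^{\scr h} = {\rm{E}}^{\scr h} \times {\rm{E}}^{\scr h}$ is an abelian variety, the hypothesis ${\rm{Hom}}({\rm{U}}^{\scr h},{\rm{M}}) = \{0\}$ for every quotient ${\rm{M}}$ of $\ker\pi$ is automatic.

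Next I would argue by contradiction: suppose the six numbers $1,\omega',Re\,\omega_1,Im\,\omega_1,Re\,\omega_2,Im\,\omega_2$ are $\overline{\Q}$-linearly dependent. Equivalently, after recalling that $x,y$ are coordinates over $K$ on $\frak{e}(\C) = \C$ and that the two $\mathbb{G}_a$-factors contribute the coordinates $\omega' r$ and $r$ (linearly dependent over $K$ iff $\omega' \in K$, but in any case spanning a $K$-line together once we use $1$), the dependence says that the $K$-span of the tangent direction $\Psi_{\ast}(1)$ inside $\frak{g}$, viewed as a $K$-vector space of dimension $2\dim\,\G = 8$, is contained in a proper $K$-rational subspace $\frak{t}$ with $\dim\,\frak{t} < \dim\,\G + \dim\,{\rm{U}} = 6$. (A linear dependence among the six real quantities, together with the $1$, forces $\dim\,\frak{t} \le 5$: the directions $Re\,\omega_j, Im\,\omega_j$ occupy at most $4$ dimensions and the $\mathbb{G}_a$-part at most $1$ more after using the relation.) Since $\Psi$ has Zariski-dense image — here one needs that $\omega_1,\omega_2$ are $\Z$-linearly independent, so the elliptic part is dense, and the $\mathbb{G}_a$-part is dense since $\omega' \ne 0$ — Theorem \ref{wus2} applies and gives that $\pi_{\ast}(\Psi)$ descends weakly to $K$.

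Now I would invoke Proposition \ref{real}: weak descent of $\pi_{\ast}(\Psi) = (\wp(\omega_1 r), \wp(\omega_2 r))$ via some $v: {\rm{U}} \longrightarrow {\rm{U}}' \otimes_K \overline{\Q}$ forces the ``defect'' $\dim_K \frak{u} - \dim\,\frak{s} \ge \dim\,{\rm{U}}' > 0$, where $\frak{s} \subset \frak{u}$ is the smallest $K$-subspace with $(\pi_{\ast}\Psi)_{\ast}(\R) \subset \frak{s} \otimes_K \R$. Concretely, $\frak{u} = \frak{e}(\C) \oplus \frak{e}(\C)$ has $\dim_K = 4$, so $\dim\,\frak{s} \le 3$; hence the $K$-line spanned by $(\omega_1,\omega_2) \in \C^2 = \frak{u}$ lies in a proper $K$-rational subspace. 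This means there is a $K$-model ${\rm{U}}'$, and by the Weil-restriction machinery (Lemma \ref{cor789}, plus the descent criterion Theorem \ref{weakdescent} via Corollary \ref{weakdescentodd1}) one of the simple factors of ${\rm{U}} = {\rm{E}} \times {\rm{E}}$ must be isogenous to the conjugate of the other, or to a curve over $K$; either possibility yields $\alpha\Lambda \subseteq \Lambda^{\scr h}$ for some non-zero algebraic $\alpha$ (in the first case directly, in the second because a real model has $\Lambda' = (\Lambda')^{\scr h}$). This contradicts the hypothesis $\alpha\Lambda \nsubseteq \Lambda^{\scr h}$, completing the argument.

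The main obstacle I anticipate is the bookkeeping in the middle step: precisely extracting, from a $\overline{\Q}$-linear dependence among the six \emph{real} numbers, the correct bound $\dim\,\frak{t} \le \dim\,\G + \dim\,{\rm{U}} - 1 = 5$ on the $K$-rational subspace $\frak{t} \subset \frak{g}$ — one must be careful that $Re\,\omega_j$ and $Im\,\omega_j$ are coordinates over $K$ (using that $\overline{\Q}$ is algebraically closed, per the warning in the Notations), that the relation genuinely cuts the dimension, and that it survives pushing forward by $\pi_{\ast}$. A secondary technical point is handling $\omega' \in \mathcal{L}\otimes\overline{\Q}$ rather than $\omega' \in \mathcal{L}$: one reduces to the latter by clearing denominators and replacing $\mathbb{G}_{a,\overline{\Q}}$ by a suitable power with an isogeny, which does not affect the ranks ${\rm{r}},{\rm{k}}$ nor the hypothesis on ${\rm{Hom}}$. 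Everything else is a routine transcription of the pattern already established for Corollaries \ref{realalg} and \ref{ppopp}.
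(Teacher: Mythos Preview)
Your setup has two fatal problems that cannot be repaired without essentially changing the approach.

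First, with $\Psi(r) = \big(\wp(\omega_1 r),\wp(\omega_2 r),\omega' r, r\big)$ you do \emph{not} have $\Psi(\Q)\subset\G(\overline{\Q})$: for rational $q\ne 0$ the third component $\omega' q$ is transcendental (a non-zero element of $\mathcal{L}\otimes\overline{\Q}$ is never algebraic, by Hermite--Lindemann). So there is no algebraic point on $\Psi(\R)$ at all, and Theorem~\ref{wus2} never gets off the ground. Relatedly, your reduction ``by isogeny'' from $\omega'\in\mathcal{L}\otimes\overline{\Q}$ to $\omega'\in\mathcal{L}$ is impossible: $\sqrt 2\,\log 2\in\mathcal{L}\otimes\overline{\Q}$ but $2^{\sqrt 2}$ is transcendental, and no isogeny of additive groups changes this. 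Second, even granting the first point, the image of $\Psi$ is not Zariski-dense in $\G={\rm E}^2\times\mathbb{G}_{a,\overline{\Q}}^2$: the $\mathbb{G}_a^2$-component lies in the line $\{(\omega' s,s):s\in\C\}$, so the Zariski closure of $\Psi(\R)$ is a proper subgroup. (This is precisely the obstruction recorded in Lemma~\ref{0021}\,(3): one cannot have ${\rm g}_a\ge 2$.)

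The paper handles $\omega'$ by carrying its logarithmic content in \emph{multiplicative} factors rather than additive ones. Using that $u\in\mathcal{L}$ iff $Re\,u\in\mathcal{L}\cap\R$ and $i\,Im\,u\in\mathcal{L}\cap i\R$, one writes $Re\,\omega'$ and $Im\,\omega'$ as $K$-linear combinations of real logarithms $\sigma_j\in\mathcal{L}\cap\R$ and purely imaginary logarithms $\tau_j\in\mathcal{L}\cap i\R$, then takes
\[
\G={\rm U}\times\mathbb{G}_{a,\overline{\Q}}\times\mathbb{G}_{m,\overline{\Q}}^{\,k_1+k_2}\times\mathbb{S}_{\overline{\Q}}^{\,l_1+l_2},\qquad
\Psi(r)=\big(\wp(\omega_1 r),\wp(\omega_2 r),\,r,\,e^{\sigma_1^{(1)}r},\dots,e^{i\tau_{l_2}^{(2)}r}\big).
\]
Now $\Psi(\Q)\subset\G(\overline{\Q})$ because $e^{\sigma_j},e^{\tau_j}$ are algebraic, there is only one $\mathbb{G}_a$-factor, and Zariski-density reduces (via minimality of the decomposition and disjointness) to checking that $\omega_1/\omega_2\notin\overline{\Q}$, which follows from Schneider's theorem once one knows ${\rm E}$ has no CM---a consequence of the hypothesis $\alpha\Lambda\nsubseteq\Lambda^{h}$. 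A $\overline{\Q}$-relation among the six numbers then forces $\dim\,\frak t<\dim\,\G+\dim\,{\rm U}$, Theorem~\ref{wus2} gives weak descent of $\pi_\ast(\Psi)$, and your concluding step via Corollary~\ref{weakdescentodd1} is then correct: descent of $(\wp(\omega_1 r),\wp(\omega_2 r))$ forces ${\rm E}$ isogenous to a real curve or to ${\rm E}^{h}$, contradicting the hypothesis.
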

Note that the assertion is wrong if $\Lambda = \Lambda^{\scriptscriptstyle h}$, because in this case we can set $\omega_2 = h(\omega_1)$. 
\begin{proof} Recall that a complex number $u$ lies in $\mathcal{L}$ if and only if $Re\,u$ and $i\cdot Im\,u$
lie in $\mathcal{L}$. Hence, we can write 
$$Re\,\omega' = \alpha_1^{\scriptscriptstyle (1)}\sigma_1^{\scriptscriptstyle (1)} + .... + \alpha_{k_1}^{\scriptscriptstyle (1)} \sigma_{k_1}^{\scriptscriptstyle (1)} + i\beta_1^{\scriptscriptstyle (1)} \tau_{1}^{\scriptscriptstyle (1)} + ... +i \beta_{l_1}^{\scriptscriptstyle (1)} \tau_{l_1}^{\scriptscriptstyle (1)}$$
where $\alpha_j^{\scriptscriptstyle (1)}, \beta_j^{\scriptscriptstyle (1)} \in K$, $\sigma_j^{\scriptscriptstyle (1)} \in \mathcal{L} \cap \R$ and $\tau_j^{\scriptscriptstyle (1)} \in \mathcal{L} \cap i\R$.
We may even assume that this representation of $Re\,\omega'$ enjoys the property that $k_1 + l_1$ is minimal. We find a similar representation for $Im\,\omega'$
involving numbers $\alpha_j^{\scriptscriptstyle (2)}, \beta_j^{\scriptscriptstyle (2)}, \sigma_j^{\scriptscriptstyle (2)}$ and $\tau_j^{\scriptscriptstyle (2)}$. Recall the algebraic torus $\mathbb{S}$ from (2.3.2). It was explained in Sect.\,1.2 that the exponential map of $\mathbb{S}(\R)$ is canonically identified with $e^{ir}$.
This in mind, let ${\rm{U}} = {\rm{E}} \times {\rm{E}}, {{\rm{G}}} = {\rm{U}} \times \mathbb{G}_{a,\overline{\Q}} \times \mathbb{G}_{m,\overline{\Q}}^{k_1 + k_2} \times \mathbb{S}_{\overline{\Q}} ^{l_1 + l_2}$ and write $\pi: \G \longrightarrow \rm{U}$ for the projection. We shall work with the homomorphism 
$$\Psi(r) = \left(\wp(\omega_1 r), \wp(\omega_2 r), r, e^{\sigma_1^{\scriptscriptstyle (1)} r},..., e^{i\tau_{l_2}^{\scriptscriptstyle (2)} r} \right)$$
to $\G(\C)$. We claim that $\Psi$ has Zariski-dense image. Using disjointness of the factors of $\G$, it suffices to verify that the ratio $\omega_1/\omega_2$ is not algebraic. By assumption $\omega_1/\omega_2$ is irrational. Hence,
if ${\rm{E}}$ is not a CM-curve, then Schneider's theorem (\cite[Thm.\,20]{Wal12}) implies that $\omega_1/\omega_2$ is not algebraic.
\begin{claim} \label{815} If $\alpha \Lambda \nsubseteq \Lambda^{\scriptscriptstyle h}$ for all algebraic numbers $\alpha \neq 0$,
then ${\rm{E}}$ is not a CM-curve and not isogenous to its complex conjugate.

\end{claim}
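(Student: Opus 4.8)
The statement to prove is Claim~\ref{815}: if $\alpha\Lambda \nsubseteq \Lambda^{\scriptscriptstyle h}$ for every non-zero algebraic $\alpha$, then $\rm{E}$ is not a CM-curve and is not isogenous to its complex conjugate $\rm{E}^{\scriptscriptstyle h}$. The plan is to prove the contrapositive in two parallel pieces, translating each of the two conclusions into a statement about an inclusion of one lattice (up to scalar) into the other.

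First I would handle the ``isogenous to its conjugate'' half. Suppose $v\colon \rm{E} \longrightarrow \rm{E}^{\scriptscriptstyle h}$ is an isogeny. Recall that an isogeny of elliptic curves given in Weierstra\ss\ form is, on the level of Lie algebras, multiplication by a non-zero complex number $\alpha$ which carries the period lattice of the source into the period lattice of the target; moreover, since both $\rm{E}$ and $\rm{E}^{\scriptscriptstyle h}$ have algebraic invariants (the invariants of $\rm{E}^{\scriptscriptstyle h}$ being $h(g_2), h(g_3)$), this scalar $\alpha$ is algebraic --- it is a ratio of algebraic periods, or more simply it can be read off from the algebraic coefficients of the isogeny map. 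Hence $\alpha\Lambda \subset \Lambda^{\scriptscriptstyle h}$ with $\alpha \in \overline{\Q}^{\ast}$, contradicting the hypothesis. This gives the second assertion of the claim.

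For the ``CM-curve'' half, suppose $\rm{E}$ has complex multiplication, so there is a non-zero algebraic $\alpha \notin \Q$ with $\alpha\Lambda \subset \Lambda$. I would then argue that $\Lambda$ and $\Lambda^{\scriptscriptstyle h}$ are commensurable up to a non-zero algebraic scalar: a CM elliptic curve over $\overline{\Q}$ is, up to isogeny, defined by a lattice of the form $\Q(\alpha)\cap\mathcal{O}$-multiples, and complex conjugation sends $\Lambda$ to $\Lambda^{\scriptscriptstyle h}$, whose associated curve $\rm{E}^{\scriptscriptstyle h}$ also has CM by the conjugate order; since $\alpha$ generates an imaginary quadratic field, $h$ acts on that field, and one deduces $\beta\Lambda \subset \Lambda^{\scriptscriptstyle h}$ for a suitable non-zero algebraic $\beta$ (for instance $\beta$ a generator of the relevant fractional ideal, which is algebraic because all the data --- $j$-invariants, the CM order, the class group representatives --- are algebraic). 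Again this contradicts the hypothesis $\alpha\Lambda \nsubseteq \Lambda^{\scriptscriptstyle h}$. Combining the two pieces establishes the claim.

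The main obstacle is the CM half: one must be careful to produce the scalar $\beta$ with $\beta\Lambda \subset \Lambda^{\scriptscriptstyle h}$ \emph{algebraic} rather than merely complex, and to be precise about ``commensurable up to scalar.'' The cleanest route is probably to observe that CM forces $j(\rm{E})$ to be algebraic (which is automatic here) and that $\rm{E}^{\scriptscriptstyle h}$ then also has CM, so both $\rm{E}$ and $\rm{E}^{\scriptscriptstyle h}$ become isogenous to curves with period lattices that are fractional ideals in the same imaginary quadratic order; an isogeny between them is then multiplication by an element of that quadratic field, hence algebraic, and its restriction gives the forbidden inclusion. (In fact, once one knows $\rm{E}$ has CM, the argument collapses into the ``isogenous to conjugate'' argument, because a CM curve and its Galois conjugates are all isogenous over $\overline{\Q}$.) I would phrase the whole proof so that the CM case simply reduces to the isogeny case, keeping the verification short.
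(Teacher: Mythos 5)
Your proof is correct, and the non-isogeny half is the same as the paper's; but your treatment of the CM half takes a genuinely different route. The paper argues: CM implies $\rm{E}$ is isogenous to a curve with a model over $\R$ (its appendix corollary on CM curves and real models), and curves definable over $\R$ are isogenous to their complex conjugates (Proposition \ref{wu11}, the Weil-restriction criterion); so CM reduces to the conjugate-isogeny case, which the hypothesis excludes. You instead invoke classical CM theory directly: $\rm{E}^{\scriptscriptstyle h}$ has CM by (an order in) the same imaginary quadratic field, both lattices are homothetic to fractional ideals there, hence $\rm{E}$ and $\rm{E}^{\scriptscriptstyle h}$ are isogenous, and again you reduce to the first half. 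Your route is more elementary and independent of the paper's Weil-restriction machinery; the paper's route reuses results it has already built and avoids redoing the CM lattice classification. Two small points of precision. First, the scalar $\alpha$ of an isogeny $\rm{E} \longrightarrow \rm{E}^{\scriptscriptstyle h}$ is \emph{not} a ratio of algebraic periods (the periods are transcendental); it is algebraic because ${\rm{Hom}}_{\C}(\rm{E}, \rm{E}^{\scriptscriptstyle h}) = {\rm{Hom}}_{\overline{\Q}}(\rm{E}, \rm{E}^{\scriptscriptstyle h})$ for curves with algebraic invariants, so the differential in $\overline{\Q}$-coordinates lies in $\overline{\Q}$ --- your alternative justification via the algebraic coefficients of the isogeny is the right one. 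Second, in the CM half your normalization to fractional-ideal lattices only makes the isogeny scalar lie in the quadratic field \emph{between the normalized curves}; the homothety factors relating $\Lambda$, $\Lambda^{\scriptscriptstyle h}$ to those ideals need not be algebraic. This worry evaporates exactly as in your closing parenthesis: once some complex isogeny $\rm{E} \longrightarrow \rm{E}^{\scriptscriptstyle h}$ exists, it is defined over $\overline{\Q}$ and its scalar is algebraic, so the CM case collapses into the conjugate-isogeny case --- which is, in spirit, also how the paper concludes.
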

\begin{proof} Since $\alpha \Lambda \nsubseteq \Lambda^{\scriptscriptstyle h}$ for all algebraic $\alpha \neq 0$, ${\rm{E}}$ is not isogenous to its complex conjugate. On the other hand, it follows from Corollary B.3.1 that CM-curves are isogenous to curves with models over $\R$. And Proposition \ref{wu11} implies that elliptic curves, which are definable over $\R$, are isogenous to their complex conjugates. We infer the claim.
\end{proof}
Hence, $\omega_1/\omega_2$ is transcendental and $\Psi$ has Zariski-dense image. This in mind, suppose that the assertion of the corollary is wrong. Then there is a non-trivial relation over $K$ between
$1$ and the non-vanishing real and imaginary parts of the numbers $\omega', \omega_1, \omega_2$.
Since these real and imaginary parts arise from coordinates over $K$ of an algebraic logarithm in $\frak{g}(\C)$, our converse assumption implies that there is a subspace $\frak{t} \subset \frak{g}$ over $K$ of dimension
$$\dim\,\frak{t} < k_1 + l_1 + k_2 + l_2 + 1 + 4 = \dim\,\G + \dim\,\rm{U}.$$
Recalling that $\pi$ is the projection to $\rm{U}$, it follows from Theorem \ref{wus2} that $\pi_{\ast}(\Psi)$ descends weakly to $K$. By Corollary \ref{weakdescentodd1} this is only possible if $\rm{E}$ is either isogenous to a curve which is definable over $\R$ or if $\rm{E}$ is isogenous to its complex conjugate. 
Proposition \ref{wu11} implies that if the latter holds, then $\rm{E}$ must be isogenous to its complex conjugate. This contradicts Claim \ref{815}. So, the corollary follows by contraposition.
\end{proof}
Next we establish a real version of Masser's result on the linear independence of elliptic periods and quasi-periods. To formulate the result, let $\Lambda = \Z\lambda_1 + \Z \lambda_2$ be a lattice in $\C$ with algebraic invariants and set $\eta_1 = \eta(\lambda_1)$ and $\eta_2 = \eta(\lambda_2).$
In 1975 Masser \cite[Ch.\,II, Ch.\,III]{Mass} proved that 
$$\dim_{\overline{\Q}} \{1, \lambda_1, \eta_1, \lambda_2, \eta_2, 2\pi i\} = 2 +2 \dim_{\overline{\Q}} \{\lambda_1, \lambda_2\}.$$
\begin{corollary} Let $\Lambda = \Z\lambda_1 + \Z \lambda_2$ be a lattice in $\C$ with algebraic invariants. Then
\begin{center}
 \begin{tabular}{l}
  $\dim_{\overline{\Q}} \{1, Re\,\lambda_1, Im\,\lambda_1, Re\,\eta_1, Im\,\eta_1, Re\,\lambda_2, Im\,\lambda_2, Re\,\eta_2, Im\,\eta_2, 2\pi i\}$\\
$= 2 + 2 \dim_{\overline{\Q}} \{Re\,\lambda_1,  Im\,\lambda_1,  Re\,\lambda_2, Im\,\lambda_2\}.$
 \end{tabular}

\end{center}
\end{corollary}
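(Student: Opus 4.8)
The plan is to reduce the statement to an application of Theorem \ref{wus2} (Baker's method for weak descents) together with Proposition \ref{real} and the main criterion, exactly along the lines of Corollary \ref{ppopp}. Write $2\pi i = \omega_0$, so that $\omega_0 \in \mathcal{L}$. As in Masser's setting, the six complex numbers $1, \lambda_1, \eta_1, \lambda_2, \eta_2, 2\pi i$ are coordinates of algebraic logarithms on the linear extension $\G_0$ of $\rm{E}^2$ by two copies of $\mathbb{G}_a$ which is uniformized by the Weierstra\ss\ $\zeta$-function (see App.\,B.3): namely the period-and-quasi-period data $(\lambda_j, \eta_j)$ arise as $\big(\wp(\lambda_j r), -\zeta(\lambda_j r) + \text{(linear term)}\big)$ evaluated at $r=1$, while $1$ and $2\pi i$ come from an additional $\mathbb{G}_a \times \mathbb{G}_m$ factor. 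First I would set up the algebraic group and the homomorphism precisely. Let $d = \dim_{\overline{\Q}}\{Re\,\lambda_1, Im\,\lambda_1, Re\,\lambda_2, Im\,\lambda_2\}$; the claimed equality is ``left side $= 2 + 2d$''.

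The inequality ``$\le 2 + 2d$'' is the easy direction and follows from the classical relations (Legendre's relation $\lambda_1\eta_2 - \lambda_2\eta_1 = 2\pi i$, the quasi-period relations, and the CM relations when $\rm{E}$ has complex multiplication) together with the observation that $Re$ and $Im$ of a $\overline{\Q}$-linear combination of the $\lambda_j$ with coefficients in $\overline{\Q}$ are $K$-linear combinations of $Re\,\lambda_j, Im\,\lambda_j$; I would record this by checking that each of $\eta_1, \eta_2$ lies in the $\overline{\Q}$-span of $\lambda_1, \lambda_2, 2\pi i$ modulo the complex-multiplication corrections, so the full span over $\overline{\Q}$ has dimension $\le 2 + 2d$. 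The substantive direction is ``$\ge 2 + 2d$''. Here I would argue by contraposition: suppose the left side is $\le 1 + 2d$. Consider $\G = \G_0 \times \mathbb{G}_{a,\overline{\Q}} \times \mathbb{G}_{m,\overline{\Q}}$ with $\G_0$ the $\zeta$-extension of $\rm{E}^2$ as above, $\U = \rm{E}^2$, $\pi\colon \G \to \U$ the projection, and the real-analytic homomorphism
\[
\Psi(r) = \big(\wp(\lambda_1 r), -\zeta(\lambda_1 r), \wp(\lambda_2 r), -\zeta(\lambda_2 r); r; e^{2\pi i r}\big)
\]
with values in $\G(\C)$, suitably normalized so that $\Psi(\Z) \subset \G(\overline{\Q})$ (using that the $g_j$, the $\lambda_j$-based quasi-periods, and $2\pi i$ are all algebraic logarithms on $\G$). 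A dimension count shows that the hypothetical relation produces a subspace $\frak{t} \subset \frak{g}$ over $K$ with $\dim\,\frak{t} < \dim\,\G + \dim\,\U$, so Theorem \ref{wus2} applies (the condition ${\rm{Hom}}({{\rm{U}}}^{\scriptscriptstyle h}, {\rm{M}}) = \{0\}$ for quotients $\rm{M}$ of $\ker\,\pi$ is automatic since $\rm{U}$ is abelian and $\ker\,\pi$ is linear) and yields that $\pi_{\ast}(\Psi)$ descends weakly to $K$. Then Proposition \ref{real} forces the ``defect'' on $\rm{E}^2$ to be positive, and one reads off that $\rm{E}$ is isogenous to a curve over $\R$ or to its complex conjugate.

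The remaining work is to rule out, or rather to handle, that conclusion: if $\rm{E}$ is already isogenous to a curve over $K$ then one reduces to Masser's original theorem over the real model, where the $Re/Im$ count and the complex count coincide up to the factor $2$, and checks the claimed equality directly; and the same must be done if $\rm{E}$ has complex multiplication or is isogenous to $\rm{E}^{\scr h}$, using App.\,B.3 and Proposition \ref{wu11}. I expect the main obstacle to be precisely this bookkeeping: making the ``$< \dim\,\G + \dim\,\U$'' dimension count genuinely equivalent to the assumed linear relation (the subtlety being that a relation among the ten listed real numbers over $\overline{\Q}$ must be converted into a $K$-rational hyperplane in $\frak{g}$ containing $\Psi_{\ast}(\R)$, which requires knowing that the ten numbers are exactly the coordinates over $K$ of $\Psi_{\ast}(1)$ and that no spurious relations are introduced by the passage to the $\zeta$-extension), and then separately verifying the target equality $2 + 2d$ in the degenerate cases where $\rm{E}$ descends or has CM. Finally I would combine the two inequalities to conclude.
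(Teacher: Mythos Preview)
Your overall architecture (apply Theorem~\ref{wus2} in the generic case, treat the cases where $\rm{E}$ is isogenous to $\rm{E}^{\scr h}$ or has CM separately) matches the paper's, but two points in your setup are genuinely broken.

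\textbf{The ``easy'' inequality is not easy.} Legendre's relation $\lambda_1\eta_2-\lambda_2\eta_1=2\pi i$ is quadratic in the period data, so it gives no $\overline{\Q}$-linear relation among the ten real numbers; likewise the quasi-period relations are not linear over $\overline{\Q}$. There is no uniform ``$\le 2+2d$'' coming from classical identities. In the paper the upper bound in cases~2 and~3 is obtained by a different mechanism: one uses an isogeny $v\colon{\rm{E}}^{\scr h}\to{\rm{E}}$ to pull back the extension $\G_t$ and thereby show that the conjugate (quasi-)periods $\bar\lambda_j,\bar\eta_j$ lie in the $\overline{\Q}$-span of $\lambda_1,\eta_1,\lambda_2,\eta_2$; this is exactly what forces $L\le 6$ there, and nothing simpler works.

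\textbf{The dimension count with $\rm{U}=\rm{E}^2$ fails.} With your group $\G=\G_0\times\mathbb{G}_a\times\mathbb{G}_m$ of dimension $6$ and $\rm{U}=\rm{E}^2$ of dimension $2$, the hypothesis of Theorem~\ref{wus2} is $\dim\frak t<\dim\G+\dim\rm{U}=8$. But $\dim\frak t$ equals the left-hand side $L$ of the corollary, and in the generic case ($\rm{E}$ not isogenous to $\rm{E}^{\scr h}$) one has $d=4$, so the target is $L=10$. Assuming merely $L\le 9$ does \emph{not} give $L<8$, and your contraposition collapses. The remedy, which is what the paper (modulo a visible typo in its ``$2+4$'') actually uses, is to take $\rm{U}=\G_t^{2}$ (the full $\zeta$-extension of $\rm{E}^2$, dimension $4$); then $\dim\G+\dim\rm{U}=10$ and $L\le 9$ triggers Theorem~\ref{wus2}. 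The Hom condition is still satisfied because $\G_t$ is non-isotrivial, so $\G_t^{\scr h}$ has no nonzero homomorphism to $\mathbb{G}_a$ or $\mathbb{G}_m$. After that one still needs one more step you omit: passing from ``$\pi_\ast(\Psi)$ descends weakly on $\G_t^2$'' to ``$\psi=(\wp(\lambda_1 r),\wp(\lambda_2 r))$ descends weakly on $\rm{E}^2$'' uses that $p\colon\G_t^2\to\rm{E}^2$ is the universal homomorphism together with Proposition~4.2.1 and Theorem~\ref{weakdescent}; only then does Corollary~\ref{ppopp} (or Claim~\ref{815}) deliver the contradiction with the case-1 assumption.
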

\begin{proof} Unfortunately, the proof is a little involved. We divided it into three steps:
\begin{enumerate}
 \item $\rm{E}$ is not isogenous to its complex conjugate, and hence it is not\\ isogenous
 to a curve with model over $K$ (Proposition \ref{wu11}).
\item $\rm{E}$ is isogenous to its complex conjugate, but without complex multiplication.
\item $\rm{E}$ is a CM-curve.
\end{enumerate}
For the proof in the first case, we apply the uniformizations from App.\,B.3. We define ${\rm{U}} =\rm{G}_t$ to be the extension in $\rm{Ext}(\rm{E}, \mathbb{G}_{a, \overline{\Q}})$ associated to $t = 1$. Because of Lemma C.1.4\,\,$\G_t$ is not isotrivial.
 We set $\G = \G_t^{\scr 2} \times  \mathbb{S}_{\overline{\Q}} \times \mathbb{G}_{a,\overline{\Q}}$ and let
 $\pi: \G \longrightarrow {\rm{U}}$ be the projection. For $j = 1,2$ we denote by $\tau_j$ the vector $(t\eta_j , \lambda_j)$. Finally, we define 
$$\Psi(r) = \big({\rm{exp}}_{{\rm{G}}_t}(\tau_1 r), {\rm{exp}}_{{\rm{G}}_t}(\tau_2r), e^{\scriptscriptstyle 2\pi i r}, r\big) = \big(\Psi_o(r), r\big)$$
Lemma \ref{kern} teaches that $\Z\tau_1 + \Z \tau_2$ is the kernel of ${\rm{exp}}_{{\rm{G}}_t}$. Hence, $\tau_1$ and $\tau_2$ are independent over $\C$ for otherwise
${\rm{exp}}_{{\rm{G}}_t}\big(\C\tau_1\big) \subset \G_t(\C)$ would be an elliptic curve, contradicting that $\G_t$ is not isotrivial. Since $\G_t$ and $\mathbb{G}_{m, \overline{\Q}} $ are disjoint, it follows that $\Psi_o$ has Zariski-dense image and non-trivial kernel. 
Moreover, if $u: \G \longrightarrow \mathbb{G}_{a,\overline{\Q}}$ is the projection to the additive factor, then $u_{\ast}(\Psi) = id.$ is injective. This in turn implies 
that $\Psi = \Psi_o \times id.$ has Zariski-dense image. Suppose now that the assertion of the corollary is wrong. Then there is a subspace $\frak{t} \subset \frak{g}$ over $K$ with dimension 
$$\dim\,\frak{t} < 2  + 4 = \dim\,{\rm{U}} + \dim\,\G$$
and such that $\Psi_{\ast}(\R) \subset \frak{t} \otimes_K \R$. Theorem\,\ref{wus2} implies that $\pi_{\ast}(\Psi)$ descends weakly to $K$. We will assume the latter and derive a contradiction.\\
\\
To start with, we show
\begin{claim} \label{min} The homomorphism $\psi(r) = \big(\wp(\lambda_1 r), \wp(\lambda_2r)\big)$ to ${\rm{E}}^{\scr 2}(\C)$ does not descend weakly to $K$.
 \end{claim}
\begin{proof} If $\psi(r) = \big(\wp(\lambda_1 r), \wp(\lambda_2r)\big)$ descends weakly to $K$, then Corollary\,\ref{ppopp} implies that $\rm{E}$ is either isogenous to a curve
with model over $K$ or it is isogenous to its complex conjugate. This is a contradiction to the assumption in the first case (see 1.\,\,above).
\end{proof}
\begin{claim} The equality
$$\dim_{K} \{Re\,\lambda_1,  Im\,\lambda_1,  Re\,\lambda_2, Im\,\lambda_2\}  = \dim_{\overline{\Q}} \{Re\,\lambda_1,  Im\,\lambda_1,  Re\,\lambda_2, Im\,\lambda_2\} = 4$$
holds.
\end{claim}
\begin{proof} Recalling that the four numbers in the left equality are coordinates over $K$ of algebraic logarithms, it follows that
if 
$$\dim_{K} \{Re\,\lambda_1,  Im\,\lambda_1,  Re\,\lambda_2, Im\,\lambda_2\} < 4 = 2\dim\,{\rm{E}}^{\scr 2},$$
then there is a proper subspace $\frak{s} \subset \frak{e}^{\scr 2}$ over $K$ such that $\psi_{\ast}(\R) \subset \frak{s} \otimes_K \R$. We apply Theorem \ref{wus1} to $\psi$, ${\rm{E}}^{\scr 2}$, the identity morphism and to $\frak{s}$. It results that $\psi$ descends weakly to $K$.
But this contradicts Claim \ref{min}. The equality on the left hand side follows.
The identity on the right hand side is clear, because the four numbers are real. 
\end{proof}
Let $p: \G_t^{\scr 2} \longrightarrow {\rm{E}}^{2}$ be the projection. Lemma 4.1.4 implies that $p$ is universal in the sense of Sect.\,4.1.\,\,Since $\pi_{\ast}(\Psi)$ descends weakly to $K$, it results then from Theorem \ref{weakdescent} that
$\big(\pi_{\ast}(\Psi)\big)_{\mathcal{N}}$ has no Zariski-dense image. Observe that $\psi = p_{\ast} \big(\pi_{\ast}(\Psi)\big)$. Universality of $p$ and
Proposition 4.2.1\,\,imply then that $\psi_{\mathcal{N}}$ has no Zariski-dense image. By Theorem \ref{weakdescent} the homomorphism $\psi$ descends weakly to $K$. We receive a contradiction to Claim \ref{min}. So, the assertion follows in the case when $\rm{E}$ is not isogenous to its complex conjugate.\\
\\
We proceed to the proof in the second case. We begin by showing
\begin{claim} \label{;;} If $\rm{E}$ is isogenous to its complex conjugate, but without complex multiplication, then $$\dim_{\overline{\Q}} \{Re\,\lambda_1, Im\,\lambda_1, Re\,\lambda_2, Im\,\lambda_2\} = 2$$
and
$$ \dim_{\overline{\Q}} \{1, \lambda_1, \eta_1, \lambda_2, \eta_2, 2\pi i\} - 2 = \dim_{\overline{\Q}} \{\lambda_1, \eta_1, \lambda_2, \eta_2, \} = 4.$$

\end{claim}
\begin{proof} If $\rm{E}$ is isogenous to a curve with model over $K$, then there exist a non-zero algebraic number $\beta$ such that a sublattice $\Lambda'$ of $\beta\Lambda$ is stable with respect to complex conjugation. Then $\Lambda' = \Z\lambda_1' + \Z \Lambda_2'$ with $\lambda_1' \in \R$ and
$\lambda_2' \in i\R$. As ${\Q}\Lambda' = {\Q}(\beta\Lambda)$, the first identity follows in this situation. Next assume that $\rm{E}$ is not isogenous to a curve with model over $K$ and let $\alpha \neq 0$ be an algebraic number such that $\alpha\Lambda \subset \Lambda^{\scriptscriptstyle h}$.
It follows that $\alpha d_1\lambda_1 + \alpha d_2 \lambda_2 = \lambda_1^{\scriptscriptstyle h}$ with rational $d_1, d_2 \in \Q$. If $d_2 = 0$, then $\lambda_1/|\lambda_1| \in \overline{\Q}.$ But then
${\rm{E}}$ would be isogenous to a curve with model over $K$, as follows from Corollary 7.1.1.\,\,Hence, $d_2 \neq 0$. This implies the first identity in the case when $\rm{E}$ is not isogenous to a curve with model over $K$.
So, the first identity is proved. And the second identity results from Masser's result preceding the corollary.
\end{proof}
By the assumptions of the second case there is an isogeny $v: {\rm{E}}^{\scriptscriptstyle h} \longrightarrow \rm{E}$. We identify ${\rm{E}}^{\scr h}$ with the curve corresponding to the lattice $\Lambda^{\scr h}$. Using identifications as in App.\,B.3, we let then ${{\rm{H}}}_s$ be the extension $v^{\ast}[\G_t]$ in ${\rm{Ext}}({\rm{E}}^{\scriptscriptstyle h}, \mathbb{G}_{a,\overline{\Q}})$ associated to a $s \in \overline{\Q} = {\rm{H}}^{\scr 1}({\rm{E}}, \mathcal{O}_{\rm{E}})$. We consider the induced isogeny $w: {{\rm{H}}}_s \longrightarrow {\rm{G}}_t$ over $v$. For $j = 1,2$ we write ${\eta}_j^{\scriptscriptstyle h} = \eta_{\Lambda^{\scriptscriptstyle h}}(\lambda_j^{\scriptscriptstyle h})$ and
$\sigma_j = (s\eta_j^{\scr h}, \lambda_j^{\scr h})$. By Lemma C.3.1\,\,$\Z\sigma_1 + \Z\sigma_2$ equals the kernel of ${\rm{exp}}_{\HHH_s}.$
The differential $w_{\ast}$ is defined over $\overline{\Q}$
and maps the kernel $\Z\sigma_1 + \Z\sigma_2$ of ${\rm{exp}}_{{\rm{H}}_s}$ into a cofinite submodule of the kernel
$\Z\tau_1 + \Z\tau_2$ of ${\rm{exp}}_{{{\rm{G}}}_t}$. We conclude that 
$$
\eta_1^{\scriptscriptstyle h}, \lambda_1^{\scriptscriptstyle h}, \eta_2^{\scriptscriptstyle h}, \lambda_2^{\scriptscriptstyle h} \subset \overline{\Q}\eta_1 +  \overline{\Q}\lambda_1 + \overline{\Q}\eta_2 + \overline{\Q}\lambda_2.
$$
The definition of the Weierstra\ss\,functions (see \cite{Wal12}, e.g.) implies that for $j = 1,2$ the number $\eta_j^{\scriptscriptstyle h}$ is in fact the complex conjugates of $\eta_j$.
Hence, for $j = 1,2$ we have
\begin{equation}Re\,\eta_j, Re\,\lambda_j, Im\,\eta_j, Im\,\lambda_j \subset  \overline{\Q}\eta_1 +  \overline{\Q}\lambda_1 + \overline{\Q}\eta_2 + \overline{\Q}\lambda_2.
\end{equation}
\begin{claim} For $l = 1,.., 4$ there exist $\odot_l \in \{Re, Im\}$ such that
\begin{equation}\overline{\Q}\eta_1 +  \overline{\Q}\lambda_1 + \overline{\Q}\eta_2 + \overline{\Q}\lambda_2 =  \overline{\Q}(\odot_1\eta_1) + \overline{\Q}(\odot_2\lambda_1) +  \overline{\Q}(\odot_3\eta_2) + \overline{\Q}(\odot_4\lambda_2).
\end{equation}\end{claim}
\begin{proof}[Sketch of the proof] By Claim \ref{;;} the space
$\overline{\Q}\eta_1 +  \overline{\Q}\lambda_1 + \overline{\Q}\eta_2 + \overline{\Q}\lambda_2$
has dimension four over $\overline{\Q}$. Since $\lambda_1$ and $\lambda_2$ are both non-zero, it follows from Claim 7.1.9 that for a suitable choice of $\odot_2$ and $\odot_4$ we have
$$\overline{\Q}\eta_1 +  \overline{\Q}\lambda_1 + \overline{\Q}\eta_2 + \overline{\Q}\lambda_2 =  \overline{\Q}\eta_1 + \overline{\Q}(\odot_2\lambda_1) +  \overline{\Q}\eta_2 + \overline{\Q}(\odot_4\lambda_2)$$
and 
$$\overline{\Q}(\odot_2\lambda_1) + \overline{\Q}(\odot_4\lambda_2) = \overline{\Q}\lambda_1 + \overline{\Q}\lambda_1.$$
Using Claim 7.1.9 and Statement (7.1.1) together with the fact that $\eta_1$ and $\eta_2$ are non-zero, one deduces that for a suitable choice of $\odot_1$ and $\odot_3$ the identity in (7.1.2) holds. This amounts to a lengthy, but straightforward verification based on linear algebra. \end{proof}
The second equality in Claim 7.1.9 and Claim 7.1.10 imply the assertion of the corollary in the second case.\\
\\
After all we only sketch the proof in the third case, that is, when $\rm{E}$ is a CM-curve. If $\rm{E}$ admits complex multiplication, then $\rm{E}$ is isogenous to a curve with model over $K$. Applying an isogeny to a curve over $K$ together with a calculation as above, one shows the two identities
\begin{center}
\begin{tabular}{l}
$\dim_{\overline{\Q}} \{Re\,\lambda_1, Im\,\lambda_1, Re\,\lambda_2, Im\,\lambda_2\}$\\
$ = \dim_{\overline{\Q}} \{\lambda_1, \lambda_2 \}$
\end{tabular}
\end{center}
and
\begin{center}
\begin{tabular}{l}
$\dim_{\overline{\Q}} \{1, Re\,\lambda_1, Im\,\lambda_1, Re\,\eta_1, Im\,\eta_1, Re\,\lambda_2, Im\,\lambda_2, Re\,\eta_2, Im\,\eta_2, 2\pi i\}$\\
$= \dim_{\overline{\Q}} \{1, \lambda_1, \eta_1, \lambda_2, \eta_2, 2\pi i\}$ 
\end{tabular}
\end{center}
hold. The assertion follows then from Masser's result. This was the last open case. Everything is proved.
\end{proof}
\begin{corollary} \label{po} Let $\Lambda$ be a lattice in $\C$ with algebraic invariants and let $\lambda \in \Lambda \setminus \{0\}$. Then the following assertions are equivalent.
\begin{enumerate}
 \item $Re\, \lambda^{\scriptscriptstyle -1 }\Lambda \nsubseteq \Q$.
 
\item For each $\omega \in \mathcal{L}_{\Lambda}\setminus {\Q}\Lambda$ the numbers $\zeta(\omega)\lambda- {\eta(\lambda)}\omega, \lambda$ and\\ their complex conjugates are linearly independent over $\overline{\Q}$.
\end{enumerate}
\end{corollary}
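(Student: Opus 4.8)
The plan is to translate both assertions into the language of descent to $K$ and to connect them through the main linear‑independence theorem (Theorem \ref{wus2}) together with the main criterion (Theorem \ref{weakdescent}) and Proposition \ref{real}. Fix $\omega \in \mathcal{L}_\Lambda \setminus \Q\Lambda$, so $\wp(\omega)$ is a non‑torsion point of ${\rm E}(\overline\Q)$, and let $\mathcal{E}$ be the extension of ${\rm E}$ by $\mathbb{G}_{m,\overline\Q}$ corresponding to $\wp(\omega) \in {\rm E}(\overline\Q) = {\rm{Ext}}({\rm E},\mathbb{G}_m)$, with its standard uniformization by $\sigma_\Lambda,\zeta_\Lambda,\eta_\Lambda$ as in App.\,B.3. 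In suitable $\overline\Q$‑coordinates on $\frak{g} = {\rm{Lie}}\,\mathcal{E} \cong \overline\Q^2$ the period of $\exp_{\mathcal{E}}$ attached to $\lambda \in \Lambda$ is $\tau = (\zeta(\omega)\lambda - \eta(\lambda)\omega,\lambda)$, up to sign and up to an integral multiple of $(2\pi i,0)$ (the period of the $\mathbb{G}_m$‑factor). First I would record that, since $F = \overline\Q$ is algebraically closed, the four real numbers $\mathrm{Re}(\zeta(\omega)\lambda - \eta(\lambda)\omega)$, $\mathrm{Im}(\zeta(\omega)\lambda - \eta(\lambda)\omega)$, $\mathrm{Re}\,\lambda$, $\mathrm{Im}\,\lambda$ are exactly the $K$‑coordinates of $\tau$; hence, for $\Psi(r) := \exp_{\mathcal{E}}(r\tau)$, the smallest subspace $\frak{t} \subset \frak{g}$ over $K$ with $\Psi_\ast(\R) \subset \frak{t}\otimes_K\R$ has $\dim\frak{t} = \dim_K\langle\text{these four numbers}\rangle_K$. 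As noted in the remarks preceding Theorem \ref{wus1}, $\overline\Q$‑linear independence of $\zeta(\omega)\lambda - \eta(\lambda)\omega, \lambda$ and their conjugates is equivalent to $\dim\frak{t} = 4 = \dim_K\frak{g}$. So assertion (2) for this $\omega$ says exactly that $\dim\frak{t}$ is maximal.

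Next I would run the machinery of Sect.\,2.5 with $\pi = {\rm{id}}_{\mathcal{E}}$ (so ${\rm G} = {\rm U} = \mathcal{E}$, $\ker\pi = e$) and $\Psi = \exp_{\mathcal{E}}(r\tau)$. Since $\wp(\omega)$ is non‑torsion, $\mathcal{E}$ has no elliptic subgroup, and as $\tau$ projects to $\lambda \neq 0$ in $\frak{e}(\C)$, the image of $\Psi$ is Zariski‑dense in $\mathcal{E}(\C)$; moreover $\exp_{\mathcal{E}}(\tfrac1N\tau)$ is an $N$‑torsion point, non‑trivial for suitable $N$, so the image of $\Psi$ contains a non‑trivial algebraic point, ${\rm r} > 0$, and the arithmetic hypothesis of Theorems \ref{wus1}--\ref{wus2} holds; the ${\rm{Hom}}$‑conditions are vacuous because $\ker\pi = e$. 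If the four numbers are $\overline\Q$‑linearly dependent, then $\dim\frak{t} \le 3 < 4 = \dim{\rm G} + \dim{\rm U}$, so Theorem \ref{wus2} gives that $\Psi$ descends weakly to $K$; conversely, if $\dim\frak{t} = 4 = \dim_K\frak{g}$, Proposition \ref{real} forbids $\Psi$ from descending weakly to $K$. Therefore (2) for $\omega$ is equivalent to: $\Psi = \exp_{\mathcal{E}}(r\tau)$ does not descend weakly to $K$. Consequently (2) holds if and only if for every $\omega \in \mathcal{L}_\Lambda\setminus\Q\Lambda$ the homomorphism $\exp_{\mathcal{E}_\omega}(r\tau_\omega)$ fails to descend weakly to $K$.

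It then remains to identify this purely geometric condition with (1). By Theorem \ref{weakdescent}, $\exp_{\mathcal{E}_\omega}(r\tau_\omega)$ descends weakly to $K$ precisely when the Zariski‑closure of $(\exp_{\mathcal{E}_\omega}(r\tau_\omega))_{\mathcal N}(\R)$ is proper in $\mathcal{E}_\omega \times \mathcal{E}_\omega^{\scr h}$. Since the only positive‑dimensional proper connected subgroup of $\mathcal{E}_\omega$ is $\mathbb{G}_m = \ker(\mathcal{E}_\omega \to {\rm E})$, any surjective $v : \mathcal{E}_\omega \to {\rm G}'\otimes_K\overline\Q$ witnessing such a descent factors, after projecting to ${\rm E}$, through a descent to $K$ of $\wp(\lambda r)$; by Corollary B.1.2 and the analysis of CM‑curves and of curves definable over $\R$ in App.\,A--B (and Prop\,C.0.4), this is possible exactly when $\mathrm{Re}\,\lambda^{-1}\Lambda \subseteq \Q$. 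For the converse I would argue that when $\mathrm{Re}\,\lambda^{-1}\Lambda \subseteq \Q$ the lattice $\lambda^{-1}\Lambda$ is commensurable with a rectangular lattice, so (using that $\Lambda$ has algebraic invariants) $\lambda/|\lambda| \in \overline\Q$ and ${\rm E}$ is isogenous to a curve with a model over $K$; one then builds the required $v$ from such a real model, obtaining a weak descent of $\exp_{\mathcal{E}_\omega}(r\tau_\omega)$ for a suitable $\omega$. Combining the two implications with the previous paragraph yields the equivalence of (1) and (2).

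The hard part is this last translation. The equivalence between $\overline\Q$‑linear independence of the four numbers and non‑descent of $\Psi$ is formal once the extension $\mathcal{E}_\omega$ and the period $\tau_\omega$ are set up; but passing from ``$\exp_{\mathcal{E}_\omega}(r\tau_\omega)$ descends weakly to $K$ for some $\omega$'' to the compact statement $\mathrm{Re}\,\lambda^{-1}\Lambda \nsubseteq \Q$ requires the classification of real models of $\mathbb{G}_m$‑extensions of ${\rm E}$ from App.\,A, careful bookkeeping of the ambiguity of $\tau$ modulo $(2\pi i,0)$ (controlled by Legendre's relation, which keeps $2\pi i$ out of the relevant $\overline\Q$‑span), and the transcendence input — already isolated in the earlier corollaries of this section — that forces $\lambda/|\lambda| \in \overline\Q$ whenever $\mathrm{Re}\,\lambda^{-1}\Lambda \subseteq \Q$ and $\Lambda$ has algebraic invariants.
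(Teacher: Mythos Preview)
Your overall strategy is the paper's: build the non-isotrivial extension $\G_\omega \in \mathrm{Ext}({\rm E}, \mathbb{G}_{m,\overline\Q})$, set $\Psi(r) = \exp_{\G_\omega}(r\tau)$ with $\tau$ the period over $\lambda$, identify failure of (2) for $\omega$ with $\dim_K \frak{t} \leq 3$, and use Theorem~\ref{wus2} (with $\pi = \mathrm{id}$) together with Proposition~\ref{real} to make this equivalent to weak descent of $\Psi$.

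There is one real gap and one overcomplication. The gap is in passing from weak descent of $\Psi$ to descent of $\psi(r) = \wp(\lambda r)$. Your sentence ``any surjective $v \ldots$ factors, after projecting to ${\rm E}$, through a descent of $\wp(\lambda r)$'' only covers the case $\dim\ker v \geq 1$, where indeed $\ker v \supseteq \mathbb{G}_m$ and $v$ factors through $\pi:\G_\omega \to {\rm E}$. It does \emph{not} cover the case where $v$ is an isogeny, i.e.\ where $\Psi$ descends outright; there one must still show that the abelian quotient of $\G'$ furnishes a $K$-model through which $\psi$ descends, and this is precisely the content of Theorem~\ref{inherited} (applied with $\pi: \G_\omega \to {\rm E}$ and the trivial check $\mathrm{Hom}(\mathbb{G}_m, {\rm E}^{\scr h}) = 0$). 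The paper performs this case split explicitly and invokes that theorem. The overcomplication is your converse: you try to manufacture a weak descent of $\Psi_\omega$ from a real model of ${\rm E}$, bringing in rectangular lattices, Legendre's relation, Proposition~C.0.4, and the claim $\lambda/|\lambda| \in \overline\Q$. None of this is needed. The paper simply observes that if (1) fails then $\psi$ descends to $K$ (Corollaries~\ref{lp2} and~\ref{lp3}), so Proposition~\ref{real} applied to $\psi$ already forces $\lambda$ and $\bar\lambda$ to be $\overline\Q$-dependent --- hence (2) fails for \emph{every} $\omega$, without touching $\G_\omega$ at all.
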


\begin{proof} For a general $\omega \in \mathcal{L}_{\Lambda}\setminus  {\Q}\Lambda$ we define
$$f_{\omega}(z_1, z_2) = \frac{\sigma^{\scriptscriptstyle 3}(z_2-\omega)e^{3\zeta(\omega)z_2 + z_1} }{\sigma^{\scriptscriptstyle 3}(z_2)\sigma^{\scriptscriptstyle 3}(\omega)}.$$
With notations as in App.\,B.3, let $\pi: {\rm{G}}_{\omega} \longrightarrow {\rm{E}}$ be the extension of $\rm{E}$ by $\mathbb{G}_{m, \overline{\Q}} $ associated to $\omega$. By Lemma C.1.4 ${\rm{G}}_{\omega}$ is not isotrivial. The extension admits a uniformization
${\rm{exp}}_{{\rm{G}}_{\omega}} : \C^{\scriptscriptstyle 2} \longrightarrow \mathbb{P}^5(\C)$ over $\overline{\Q}$ given by
$${\rm{exp}}_{{\rm{G}}_{\omega}}(z_1,z_2) = \big[\wp(z_2): \wp'(z_2): 1 : \wp(z_2-\omega)f_{\omega}(z_1,z_2): \wp'(z_2-\omega)f_{\omega}(z_1,z_2): f_{\omega}(z_1,z_2)\big].$$
Set ${\rm{y}} = \zeta(\omega) - \frac{\eta(\lambda)}{\lambda}\omega$. Lemma \ref{kern} teaches that $(-3{\rm{y}}\lambda, \lambda)$ is a period of ${\rm{exp}}_{{\rm{G}}_{\omega}}$. Let $\Psi(r) = {\rm{exp}}_{{\rm{G}}_{\omega}}(-{\rm{y}}\lambda r, \lambda r)$. 
Statement 2.\,\,is wrong if and only if the real and imaginary parts of the two complex numbers in the statement are linearly dependent over $K $. The latter holds if and only if there is a proper subspace $\frak{t} \subset \frak{g}_{\omega} = {\rm{Lie}}\,\G_{\omega}$ over $K$ with the property that $\Psi_{\ast}(\R) \subset \frak{t} \otimes_K \R$.
By Theorem \ref{wus1} this implies that $\Psi$ descends weakly to $K$. Then, either $\Psi$ descends to $K$ or $\Psi$ descends weakly to $K$, but not in the strong sense. In the first case Theorem 2.8.2 teaches that also $\psi = \pi_{\ast}(\Psi)$ descends to $K$. In the second case the homomorphism $v: \G \longrightarrow \G' \otimes_K \overline{\Q}$ in the definition of weak descent has kernel of positive dimension. Hence, $v$ must factor through $\pi$ as, say $v= w \circ \pi$. Since 
${\rm{E}} = im\,\pi$ has dimension one, it follows then that $\psi = \pi_{\ast}(\Psi)$ descends to $K$ via $w$. Corollary\,\ref{lp2} and Corollary\,\ref{lp3} imply that
$\psi $ descends to $K$ if and only if the first statement is wrong. So far, we have shown that if the second statement is not true, so isn't the first. On the other hand,
if the first statement is wrong, so that $\psi$ descends weakly to $K$ as just observed, then it follows from Proposition \ref{real} that Statement 2.\,\,is wrong. 
The corollary is proved.
\end{proof}
The corollary implies that if $\frac{\zeta(\omega)\lambda - {\eta(\lambda)\omega}}{|\zeta(\omega)\lambda - {\eta(\lambda)\omega|}}$ is algebraic for some $\omega \in \mathcal{L}_{\Lambda} \setminus \Lambda_{{\Q}}$ and some $\lambda \in \Lambda$, then ${{\rm{E}}}$ is isogenous to an elliptic curve over $\R$. Conversely:
\begin{corollary} Let $\Lambda$ be a lattice in $\C$ with algebraic invariants and suppose that $\alpha\Lambda \nsubseteq \Lambda^{\scriptscriptstyle h}$ for all non-zero algebraic numbers $\alpha$.
Then, for all $\lambda \in \Lambda \setminus \{0\}$ and all $\omega \in \mathcal{L}_{\Lambda} \setminus \Q\Lambda$, the number
$$\frac{\zeta(\omega)\lambda - {\eta(\lambda)\omega}}{|\zeta(\omega)\lambda - {\eta(\lambda)\omega|}}$$
 is transcendental.
\end{corollary}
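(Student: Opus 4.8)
The plan is to argue by contraposition and to feed the hypothesis back into Corollary \ref{po}. Suppose that for some $\lambda \in \Lambda \setminus \{0\}$ and some $\omega \in \mathcal{L}_{\Lambda} \setminus \Q\Lambda$ the number $u/|u|$ is algebraic, where $u = \zeta(\omega)\lambda - \eta(\lambda)\omega$. As recalled in the discussion preceding Corollary \ref{ppopp}, $u/|u| \in \overline{\Q}$ is equivalent to $u$ and its complex conjugate $h(u)$ being linearly dependent over $\overline{\Q}$; in particular the four numbers $u,\lambda,h(u),h(\lambda)$ are then $\overline{\Q}$-linearly dependent, which contradicts the second condition in Corollary \ref{po} for that $\lambda$ and $\omega$. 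Hence the first condition of Corollary \ref{po} also fails, i.e.\ $Re\,\lambda^{\scriptscriptstyle -1}\Lambda \subseteq \Q$, and, as the remark following Corollary \ref{po} records (this being obtained by re-running the argument there, which passes through Theorem \ref{wus1} applied to an appropriate extension of ${\rm{E}}$), it follows that ${\rm{E}}$ is isogenous to an elliptic curve definable over $\R$.

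The second half of the argument converts this into a contradiction with the standing assumption that $\alpha\Lambda \nsubseteq \Lambda^{\scriptscriptstyle h}$ for every non-zero algebraic $\alpha$. By Corollary \ref{lp} an elliptic curve isogenous to a curve over $\R$ is already isogenous to a curve ${\rm{E}}'$ with model over $K = \overline{\Q} \cap \R$. By Proposition \ref{wu11} a curve definable over $\R$ is isogenous to its own complex conjugate, so ${\rm{E}}'$ is isogenous to $({\rm{E}}')^{\scriptscriptstyle h}$; transporting along the isogeny ${\rm{E}} \to {\rm{E}}'$ and its conjugate ${\rm{E}}^{\scriptscriptstyle h} \to ({\rm{E}}')^{\scriptscriptstyle h}$ then yields an isogeny ${\rm{E}} \to {\rm{E}}^{\scriptscriptstyle h}$ --- this is exactly the reduction carried out in Claim \ref{815}. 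An isogeny between elliptic curves over $\overline{\Q}$ acts on the Lie algebras, identified with $\C$ by the standard coordinate, as multiplication by a non-zero algebraic number, and on the period lattices it gives an inclusion $\alpha\Lambda \subseteq \Lambda^{\scriptscriptstyle h}$ with $\alpha \in \overline{\Q}\setminus\{0\}$. This contradicts the hypothesis, so no such $\lambda$ and $\omega$ can exist, and the quotient $(\zeta(\omega)\lambda - \eta(\lambda)\omega)/|\zeta(\omega)\lambda - \eta(\lambda)\omega|$ is transcendental for all admissible $\lambda$ and $\omega$.

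The only point requiring any care is the very first reduction, namely verifying that algebraicity of $u/|u|$ genuinely negates the linear independence asserted in Corollary \ref{po}; but this is purely formal once the elementary dictionary ``$u/|u|\in\overline{\Q}\iff u,h(u)$ are $\overline{\Q}$-dependent $\iff Re\,u,\,Im\,u$ are $K$-dependent'' is in hand. Everything after that is a chaining of results already available --- Corollary \ref{po}, Corollary \ref{lp}, Proposition \ref{wu11}, Claim \ref{815} --- together with the standard correspondence between isogenies of elliptic curves over $\overline{\Q}$ and inclusions of their lattices; no new transcendence input is needed, the entire transcendence content being concentrated in Corollary \ref{po} and ultimately in Theorem \ref{wus1}.
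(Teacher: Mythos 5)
Your argument is correct and is exactly the route the paper intends: the corollary is stated as the converse companion to the remark following Corollary \ref{po}, so one negates Statement 2 of that corollary, concludes $Re\,\lambda^{\scriptscriptstyle -1}\Lambda \subseteq \Q$ and hence that ${\rm{E}}$ is isogenous to a curve over $\R$, and then rules this out by the hypothesis via the dictionary ``isogenous to a real model $\Rightarrow$ isogenous to ${\rm{E}}^{\scriptscriptstyle h}$ $\Rightarrow$ $\alpha\Lambda \subseteq \Lambda^{\scriptscriptstyle h}$ for some non-zero algebraic $\alpha$'' (Claim \ref{815}, Proposition \ref{wu11}). The only cosmetic remark is that the detour through Corollary \ref{lp} is unnecessary, and that the same dependence argument also shows $\zeta(\omega)\lambda - \eta(\lambda)\omega \neq 0$, so the quotient is indeed defined.
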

Slightly more advanced is the following version of Corollary\,\ref{po}. It demonstrates the utility of Proposition\,\ref{98}. 
\begin{corollary} Let $\Lambda$ be a lattice in $\C$ with algebraic invariants and let $\omega \in \mathcal{L}_{\Lambda} \setminus \Lambda$ be an algebraic logarithm such that $\omega - h(\omega), \omega + h(\omega) \notin \Q\Lambda $.
Then, for all $\lambda \in \Lambda \setminus \{0\}$, the three numbers $Re\,\big(\zeta(\omega)\lambda - {\eta(\lambda)\omega}\big), Im\,\big({\zeta(\omega)\lambda - {\eta(\lambda)\omega}}\big)$ and $\lambda$
are linearly independent over $\overline{\Q}$.
\end{corollary}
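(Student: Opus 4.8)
The plan is to run, one more time, the argument behind Corollaries \ref{po}--\ref{realalg}: to attach to $\omega$ and $\lambda$ an algebraic group together with a real-analytic one-parameter homomorphism, to recast the asserted $\overline{\Q}$-linear independence as an upper bound for $\dim\,\frak{t}$, to feed this bound into the Baker-type weak-descent theorem, and then to extract a contradiction from the descent criteria of App.\,B. First I would fix $\lambda \in \Lambda \setminus \{0\}$ and write $w = \zeta(\omega)\lambda - \eta(\lambda)\omega$, ${\rm{y}} = w/\lambda$. Working over $F = \overline{\Q}$ with $K = \overline{\Q} \cap \R$, let ${\rm{G}}_{\omega} \in {\rm{Ext}}({\rm{E}}, \mathbb{G}_{m,\overline{\Q}})$ be the extension associated to $\omega$, carrying the explicit $\overline{\Q}$-uniformization ${\rm{exp}}_{{\rm{G}}_{\omega}}$ of App.\,B.3; one may assume $\omega \notin \Q\Lambda$, so that ${\rm{G}}_{\omega}$ is not isotrivial by Lemma C.1.4, the torsion case $\omega \in \Q\Lambda$ being reduced through the splitting ${\rm{G}}_{\omega} \sim {\rm{E}} \times \mathbb{G}_{m,\overline{\Q}}$ to reasoning as in the proof of Corollary \ref{realalg}. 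I would take $\pi = id_{{\rm{G}}_{\omega}}$, so that ${\rm{U}} = {\rm{G}}_{\omega}$ and $ker\,\pi = \{e\}$, and put $\Psi(r) = {\rm{exp}}_{{\rm{G}}_{\omega}}(-{\rm{y}}\lambda r, \lambda r)$.

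By Lemma \ref{kern} the pair $(-3{\rm{y}}\lambda, \lambda)$ is a period of ${\rm{exp}}_{{\rm{G}}_{\omega}}$; as in the proof of Corollary \ref{po} this, together with the behaviour of ${\rm{exp}}_{{\rm{G}}_{\omega}}$ along $z_2 = \omega$, shows that the image of $\Psi$ or of its twin $\Psi_{[i]}$ contains a non-trivial point of ${\rm{G}}_{\omega}(\overline{\Q})$, while non-isotriviality forces $\Psi$ to be Zariski-dense (its Zariski closure surjects onto ${\rm{E}}$ and, were it one-dimensional, would be a quasi-section of ${\rm{G}}_{\omega} \to {\rm{E}}$). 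Since $\Psi_{\ast}(r) = r\,(-w, \lambda)$ in $\frak{g}_{\omega}(\C) = \C^2$, passing to the coordinates $Re\,z_j, Im\,z_j$ over $K$ one gets $\dim\,\frak{t} = \dim_K {\rm{span}}_K\{Re\,w, Im\,w, Re\,\lambda, Im\,\lambda\}$ for the smallest $K$-subspace $\frak{t} \subset \frak{g}_{\omega}$ with $\Psi_{\ast}(\R) \subset \frak{t} \otimes_K \R$.

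Next I would assume, for a contradiction, that $Re\,w, Im\,w, \lambda$ satisfy a nontrivial relation $a\,Re\,w + b\,Im\,w + c\lambda = 0$ over $\overline{\Q}$. Taking real and imaginary parts shows: if $c = 0$ then $Re\,w, Im\,w$ are $K$-dependent, and if $c \neq 0$ then $Re\,\lambda, Im\,\lambda \in {\rm{span}}_K\{Re\,w, Im\,w\}$; in both cases $\dim\,\frak{t} \leq 3 < 4 = \dim\,{\rm{G}}_{\omega} + \dim\,{\rm{U}}$. As $ker\,\pi = \{e\}$ the condition ${\rm{Hom}}({\rm{U}}^{\scr h}, {\rm{M}}) = \{0\}$ in Theorem \ref{wus2} is vacuous, so Theorem \ref{wus2} yields that $\pi_{\ast}(\Psi) = \Psi$ descends weakly to $K$. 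Up to isogeny the non-isotrivial group ${\rm{G}}_{\omega}$ has exactly two connected positive-dimensional quotients, namely ${\rm{G}}_{\omega}$ and ${\rm{E}}$; hence the homomorphism witnessing the weak descent is either an isogeny of ${\rm{G}}_{\omega}$, in which case $\Psi$ descends to $K$ and Theorem \ref{inherited} applied to $\pi_{{\rm{G}}_{\omega}}: {\rm{G}}_{\omega} \to {\rm{E}}$ (note ${\rm{Hom}}(\mathbb{G}_m, {\rm{E}}^{\scr h}) = \{0\}$) gives that $\psi := (\pi_{{\rm{G}}_{\omega}})_{\ast}(\Psi)$ descends to $K$, or it factors through $\pi_{{\rm{G}}_{\omega}}$ and then $\psi$ descends to $K$ directly. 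In either event $\psi(r) = \wp(\lambda r)$ descends to $K$, so Corollaries \ref{lp2} and \ref{lp3} give $Re\,\lambda^{\scr -1}\Lambda \subset \Q$, and then Corollary \ref{lp} together with Proposition \ref{wu11} forces ${\rm{E}}$ to be isogenous to a curve with model over $K$, hence isogenous to ${\rm{E}}^{\scr h}$.

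The hard part will be the concluding step: drawing a contradiction from ``${\rm{E}} \sim {\rm{E}}^{\scr h}$ and $Re\,\lambda^{\scr -1}\Lambda \subset \Q$'', the case distinction above, and the standing hypothesis $\omega - h(\omega), \omega + h(\omega) \notin \Q\Lambda$. The idea is to fix an algebraic $\alpha \neq 0$ with $\alpha\Lambda \subset \Lambda^{\scr h}$, to use $h(w) = \zeta_{\Lambda^{\scr h}}(h(\omega))\,h(\lambda) - \eta_{\Lambda^{\scr h}}(h(\lambda))\,h(\omega)$, and, invoking the known transcendence results on elliptic integrals of the third kind and the real-model criterion of App.\,C, to turn the relation supplied by the dependence case --- $h(w) \in \overline{\Q}w$ when $c = 0$, respectively $\lambda \in \overline{\Q}w + \overline{\Q}h(w)$ when $c \neq 0$ --- into a statement placing a nonzero $\overline{\Q}$-multiple of $Re\,\omega$, respectively of $i\,Im\,\omega$, into $\Q\Lambda$, which contradicts the hypothesis. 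I expect this final bookkeeping, along with the torsion case $\omega \in \Q\Lambda$, to carry the technical weight, the transcendence input (Theorems \ref{wus2}, \ref{inherited}, \ref{weakdescent} and the Analytic Subgroup Theorem) entering just as in the preceding corollaries.
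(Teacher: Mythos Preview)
Your setup is essentially the paper's: the same extension ${\rm{G}}_{\omega}$, the same $\Psi$, the same interpretation of linear dependence as a bound on $\dim\,\frak{t}$. The divergence is at the contradiction step, and there your plan has a genuine gap.

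The paper splits cases on whether $\psi(r) = \wp(\lambda r)$ descends to $K$ \emph{before} touching $\frak{t}$. If $\psi$ does not descend, then $Re\,\lambda^{-1}\Lambda \nsubseteq \Q$ and Corollary~\ref{po} finishes directly. If $\psi$ does descend, the paper first observes that the statement of the corollary is invariant under isogenies of ${\rm{E}}$, so one may pass to a model of ${\rm{E}}$ over $K$ with $\lambda \in \R = \frak{e}'$. Now the three numbers $Re\,w$, $Im\,w$, $\lambda$ are all \emph{real}, so a $\overline{\Q}$-dependence between them is in fact a $K$-dependence, and since $Im\,\lambda = 0$ one gets the sharper bound $\dim\,\frak{t} \leq 2 = \dim\,{\rm{G}}_{\omega}$. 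Theorem~\ref{wus1} then gives a \emph{full} descent of $\Psi$ to $K$, and Proposition~\ref{98} (the criterion for extensions by $\mathbb{G}_m$ to descend) yields that $[{\rm{G}}_{\omega}] \pm [{\rm{G}}_{\omega}^{\scr h}]$ is torsion in ${\rm{Pic}}^o({\rm{E}})$, i.e.\ ${\rm{exp}}_{{\rm{E}}}(\omega \pm h(\omega))$ is torsion, i.e.\ $\omega \pm h(\omega) \in \Q\Lambda$ --- exactly the forbidden hypothesis.

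Your argument never reaches the sharper bound $\dim\,\frak{t} \leq 2$: with general $\lambda$ you only get $\dim\,\frak{t} \leq 3$, which feeds Theorem~\ref{wus2} but not Theorem~\ref{wus1}, so you only obtain weak descent. In your sub-case where the weak descent happens to be full you could invoke Proposition~\ref{98} directly, but you do not; and in the other sub-case (descent only through ${\rm{E}}$) Proposition~\ref{98} is not available and you are left with ``$\psi$ descends and ${\rm{E}} \sim {\rm{E}}^{\scr h}$'', from which the hypothesis $\omega \pm h(\omega) \notin \Q\Lambda$ cannot be contradicted without further input. Your proposed ``bookkeeping'' via $h(w)$ and transcendence of third-kind integrals is not a substitute for Proposition~\ref{98}: that proposition is precisely the structural result converting full descent of an extension by $\mathbb{G}_m$ into a torsion relation in ${\rm{Pic}}^o$, and no amount of coordinate manipulation of $w$ and $h(w)$ will produce the conclusion $\omega \pm h(\omega) \in \Q\Lambda$ without it. The missing pieces are the isogeny-invariance reduction to $\lambda$ real and the appeal to Proposition~\ref{98}; with those inserted, your argument collapses into the paper's.
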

\begin{proof}[Sketch of the proof] We use notations as in the proof of Corollary\,\ref{po}. We consider the homomorphism $\Psi(r) = {\rm{exp}}_{{\rm{G}}_{\omega}}({\rm{y}}\lambda r, \lambda r)$ and let $\pi: \G_{\omega} \longrightarrow \rm{E}$ be the projection. Using arguments as in the proof of Corollary 7.1.6, one can show that the assertion of the corollary is true if and only it is true after the three numbers have been replaced by their pendants 
$$Re\,\big(\zeta'(\omega')\lambda' - {\eta'(\lambda')\omega'}\big), Im\,\big({\zeta'(\omega')\lambda' - {\eta(\lambda')\omega'}}\big)\,\,\mbox{and}\,\,\lambda'$$
arising from an isogeny $v: {\rm{E}} \longrightarrow {\rm{E}}'$. So, if $\pi_{\ast}(\Psi)$ descends to $K$, then after transition to an isogeny we may assume that $\rm{E}$ is defined over $\R$ and that $\lambda \in \R = \frak{e}'$. If in this situation the three numbers are linearly dependent over $\overline{\Q}$, then
they are linearly dependent over $K$. But then there is a proper subspace $\frak{t} \subset \frak{g}_{\omega}$ of dimension $\dim\,\frak{t} = \dim \G_{\omega} = 2$ over $K$ such $\Psi_{\ast}(\R)$ is contained
in $\frak{t} \otimes_K \R$. Theorem \ref{wus1} implies that $\Psi$ descends to $K$. And Proposition\,\ref{98} yields that ${\rm{exp}}_{{\rm{E}}}(\omega + h(\omega))$ or ${\rm{exp}}_{{\rm{E}}}(\omega - h(\omega))$ is a torsion point.
This contradicts the assumption. On the other hand, if $\pi_{\ast}(\Psi)$ does not descend to $K$,
then we infer from Corollary C.2.2 that $Re\,\lambda^{\scriptscriptstyle -1}\Lambda \nsubseteq \Q$. The claim follows then from Corollary\,\ref{po}.
 \end{proof}

Let $t$ be a non-zero algebraic number and $\Lambda$ a lattice in $\C$ with algebraic invariants. If $\omega \in \mathcal{L}_{\Lambda} \setminus \Lambda$, then it follows from the definition of ${\rm{exp}}_{{\rm{G}}_t}$ that the value ${\rm{exp}}_{{\rm{G}}_t}\big(t\zeta(\omega), \omega\big)$ is an algebraic point of the extension ${\rm{G}}_t$ in ${\rm{Ext}}({\rm{E}}, \mathbb{G}_{a,\overline{\Q}})$. Moreover, the projection $\pi_t: \G_t \longrightarrow \rm{E}$ is the universal homomorphism. Using these two facts together with Theorem\,\ref{wus1} and Corollary \ref{321}, one proves the next corollary. Details are left to the reader.
\begin{corollary} Let $k \geq 1$ be an integer and let $\Lambda$ be a lattice in $\C$ with algebraic invariants. Suppose that $\alpha\Lambda \nsubseteq \Lambda^{\scriptscriptstyle h}$ for all non-zero algebraic numbers $\alpha$.
Let $\omega_j$, $j = 1,..., k$, be algebraic logarithms in $\mathcal{L}_{\Lambda} \setminus \Lambda$ which are linearly independent over $\Z$. Then the real and imaginary parts
of the numbers $\omega_j, \zeta(\omega_j)$, $j = 1,...., k$, are linearly independent over $K $.
\end{corollary}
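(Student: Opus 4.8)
The strategy mirrors the proofs of Corollary \ref{po} and the corollary preceding it, and exploits the two facts stated just before the statement: that ${\rm{exp}}_{{\rm{G}}_{t}}\big(t\zeta(\omega_j), \omega_j\big) \in {\rm{G}}_t(\overline{\Q})$ for each $j$, and that the projection $\pi_t: {\rm{G}}_t \longrightarrow {\rm{E}}$ is a universal homomorphism in the sense of Sect.\,4.1. The plan is to set up the group variety $\G = {\rm{G}}_t^{\scr k}$ together with the projection $\pi: \G \longrightarrow {\rm{U}} = {\rm{E}}^{\scr k}$, and to consider the real-analytic homomorphism
$$\Psi(r) = \big({\rm{exp}}_{{\rm{G}}_t}(\tau_1 r),\,\ldots,\,{\rm{exp}}_{{\rm{G}}_t}(\tau_k r)\big),\qquad \tau_j = \big(t\zeta(\omega_j),\,\omega_j\big).$$
First I would check that $\Psi$ has Zariski-dense image: since $\G_t$ is not isotrivial (by Lemma C.1.4, as $t \neq 0$), each factor homomorphism ${\rm{exp}}_{{\rm{G}}_t}(\tau_j r)$ has Zariski-dense image in ${\rm{G}}_t(\C)$, and the $\Z$-linear independence of the $\omega_j$ combined with the hypothesis $\alpha\Lambda \nsubseteq \Lambda^{\scr h}$ (hence $\rm{E}$ is not CM, not isogenous to its conjugate, by Claim \ref{815}, and $\omega_i/\omega_j$ transcendental by Schneider) forces the product homomorphism to have dense image in $\G(\C)$. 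Here $\Psi(\Z) \subset \G(\overline{\Q})$, so the hypothesis of Theorem \ref{wus1} (image of $\Psi$ contains a non-trivial algebraic point) holds, i.e.\ $\rm{r} > 0$.

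Next I would argue by contraposition. Suppose the real and imaginary parts of $\omega_1,\zeta(\omega_1),\ldots,\omega_k,\zeta(\omega_k)$ are linearly dependent over $K$. These $4k$ numbers are precisely the coordinates over $K$ of the algebraic logarithm $\big(\tau_1,\ldots,\tau_k\big) \in \frak{g}(\C)$ with respect to the $K$-structure on $\frak{g}(\C)$ coming from the real and imaginary parts of the standard coordinates on each copy of ${\rm{Lie}}\,{\rm{G}}_t$. A nontrivial $K$-linear relation among them therefore produces a $K$-subspace $\frak{t} \subset \frak{g}$ of dimension $\dim\,\frak{t} < 4k = \dim\,\G + \dim\,{\rm{U}}$ with $\Psi_{\ast}(\R) \subset \frak{t}\otimes_K \R$. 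Since $\rm{U} = {\rm{E}}^{\scr k}$ is an abelian variety while $ker\,\pi \backsimeq \mathbb{G}_{a,\overline{\Q}}^{\scr k}$ is linear, ${\rm{Hom}}\big({{\rm{U}}}^{\scriptscriptstyle h}, {\rm{M}}\big) = \{0\}$ for every quotient $\rm{M}$ of $ker\,\pi$, so Theorem \ref{wus2} applies and yields that $\pi_{\ast}(\Psi) = \psi$, where $\psi(r) = \big(\wp(\omega_1 r),\ldots,\wp(\omega_k r)\big)$, descends weakly to $K$.

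Then I would pull in the universality of $\pi_t$ as in the proof of the corollary on Masser's periods: $p = \pi^{\scr k}_t : {\rm{G}}_t^{\scr k} \longrightarrow {\rm{E}}^{\scr k}$ is universal by Lemma 4.1.4, so Proposition \ref{upp} (via Proposition 4.2.1) and Theorem \ref{weakdescent} together force $\psi_{\mathcal{N}}$ to have non–Zariski-dense image, i.e.\ $\psi$ descends weakly to $K$. By Corollary \ref{weakdescentodd1}, weak descent of $\psi$ (whose maximal homomorphism has all simple factors isogenous to $\rm{E}$) requires either that $\rm{E}$ be isogenous to a curve definable over $\R$, or that $\rm{E}$ be isogenous to its complex conjugate; and by Proposition \ref{wu11} the first case implies the second. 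Either way $\rm{E}$ is isogenous to ${\rm{E}}^{\scr h}$, which contradicts the hypothesis $\alpha\Lambda \nsubseteq \Lambda^{\scriptscriptstyle h}$ for all $\alpha \neq 0$ (Claim \ref{815}). This contradiction establishes the $K$-linear independence, and since the $4k$ numbers are real (or purely imaginary after the usual splitting $u \leftrightarrow (Re\,u, i\,Im\,u)$), $K$-linear independence is equivalent to $\overline{\Q}$-linear independence.

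\textbf{Main obstacle.} The delicate point is the bookkeeping around the universal homomorphism and Corollary \ref{weakdescentodd1}: one must be careful that the maximal homomorphism of ${\rm{E}}^{\scr k}$ really does have all $k$ simple factors isogenous to $\rm{E}$ (so that the "exceptional" alternative in Corollary \ref{weakdescentodd1} is exactly "$\rm{E}$ isogenous to $\rm{E}^{\scr h}$"), and that reducing weak descent of $\psi$ back to a statement about $\rm{E}$ alone is legitimate — this is where universality of $p$ and Proposition 4.2.1 are essential, exactly as in the analogous step of the Masser-type corollary above. A secondary technical point is verifying that the Zariski-density of $\Psi$ is not lost: one needs the factors $\G_t$ pairwise "independent" in the appropriate sense, which again comes down to $\omega_i/\omega_j \notin \overline{\Q}$, guaranteed by Schneider's theorem together with Claim \ref{815}.
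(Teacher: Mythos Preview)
There is a genuine gap: your arithmetic $\dim\,\G + \dim\,\U = 4k$ is wrong. With $\G = {\rm G}_t^{\,k}$ and $\U = {\rm E}^{\,k}$ you have $\dim\,\G = 2k$ and $\dim\,\U = k$, so $\dim\,\G + \dim\,\U = 3k$. A single $K$-linear relation among the $4k$ real and imaginary parts only gives $\dim\,\frak{t} \le 4k-1$, and $4k-1 < 3k$ fails for every $k \ge 1$. Hence Theorem~\ref{wus2} does not apply with your choice of $\pi$, and the conclusion that $\psi = \pi_\ast(\Psi)$ descends weakly to $K$ is unjustified.

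The repair is to take $\pi = id$ and $\U = \G = {\rm G}_t^{\,k}$. Then $\dim\,\G + \dim\,\U = 4k$ and one relation genuinely yields $\dim\,\frak{t} < 4k$; the hypothesis ${\rm Hom}(\U^{\scriptscriptstyle h},{\rm M}) = \{0\}$ is vacuous since $\ker\pi = 0$. Now Theorem~\ref{wus2} (or equivalently the identity $\dim\,\frak{t} = \dim\,{\rm H}$ obtained from the Analytic Subgroup Theorem in the proof of Theorem~\ref{wus1}, which is presumably why the paper cites \ref{wus1}) shows that ${\rm H} \subsetneq \G \times \G^{\scriptscriptstyle h}$. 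One then applies Corollary~\ref{321} \emph{directly} to $\G = {\rm G}_t^{\,k}$: the universal homomorphism $\pi_t^{\,k}$ exhibits all simple factors of the maximal plurisimple quotient as copies of ${\rm E}$, so either $\Psi$ descends weakly via a simple quotient isogenous to ${\rm E}$ (forcing ${\rm E}$ to be isogenous to a curve over $K$), or ${\rm E}$ is isogenous to ${\rm E}^{\scriptscriptstyle h}$. Both contradict the hypothesis via Claim~\ref{815}. Your subsequent detour through $\psi$, Proposition~\ref{upp} and universality is then unnecessary --- Corollary~\ref{321} already handles the passage to simple factors.
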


\subsubsection{Generalizations of a theorem of Diaz} In his 2004 paper \cite{Diaz}, Diaz found an elementary proof for the following special case of the theorem of Gel'fond and Schneider from the introduction.
\begin{theorem} \label{DD} If $\omega \in \mathcal{L}$ is an algebraic logarithm such that 
the real and the imaginary part of $\omega$ are linearly dependent over $K$, then $\omega$ is either real or purely imaginary.
\end{theorem}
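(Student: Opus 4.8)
The plan is to translate Theorem \ref{DD} into a statement about a real-analytic homomorphism and then apply the machinery from Section 2.5, specifically Theorem \ref{wus1}, together with Proposition \ref{real} and Schneider's classical transcendence theorem for the exponential function. Let $\omega \in \mathcal{L}$ be a non-zero algebraic logarithm, so $\xi = e^{\omega} \in \overline{\Q}^{\ast}$ is a non-trivial algebraic point of $\G = \mathbb{G}_{m,\overline{\Q}}$. Assume $Re\,\omega$ and $Im\,\omega$ are linearly dependent over $K = \overline{\Q} \cap \R$; I want to conclude $Re\,\omega = 0$ or $Im\,\omega = 0$. First I would set $\Psi(r) = e^{\omega r}$, a real-analytic homomorphism $\R \longrightarrow \mathbb{G}_{m}(\C) = \C^{\ast}$ with Zariski-dense image (since $\omega \neq 0$), and observe that $\Psi(1) = \xi \in \G(\overline{\Q})$, so the first hypothesis of Theorem \ref{wus1} is satisfied with $\rm{r} \geq 1$.

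Next I would unwind what the linear dependence of $Re\,\omega$ and $Im\,\omega$ over $K$ means for $\frak{t}$. Identifying $\frak{g}(\C) = \C$ with coordinate $z = x + iy$, where $x$ and $y$ are defined over $K$ (as $F = \overline{\Q}$ is algebraically closed and stable under conjugation, cf.\,the Notations of Ch.\,7), a $K$-linear relation $a\,Re\,\omega + b\,Im\,\omega = 0$ with $(a,b) \neq (0,0)$ in $K^2$ forces $\Psi_{\ast}(\R) = \R\omega$ to lie inside the one-dimensional $K$-subspace $\frak{t} = \{z : bx = -ay\}$ of $\frak{g}$, i.e.\,$\dim\,\frak{t} \leq 1 = \dim\,\G$. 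Thus the second hypothesis $\dim\,\G \geq \dim\,\frak{t}$ of Theorem \ref{wus1} holds. Taking $\pi = id.$ (so $\U = \G$ and $ker\,\pi = \{e\}$, whence ${\rm{Hom}}(ker\,\pi, \U^{\scr h}) = \{0\}$ trivially), Theorem \ref{wus1} yields that $\Psi$ descends to $K$.

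Now I would invoke Proposition \ref{real}: since $\Psi$ descends weakly to $K$ via some $v: \G \longrightarrow \G' \otimes_K F$ of dimension $\dim\,\G' \geq 1$, the ``defect'' satisfies $\dim_K \frak{g} - \dim\,\frak{t} \geq \dim\,\G' \geq 1$, i.e.\,$\dim\,\frak{t} \leq \dim_K \frak{g} - 1 = 2 - 1 = 1$; more to the point, the conclusion I actually need is that the real line $\R\omega \subset \frak{g}(\C) = \C$ is defined over $K$, which I would extract exactly as in the proof of Corollary \ref{realalg}: from $v_{\ast}(\frak{t}) \subseteq \frak{g}'$ one sees $\frak{t}$ is a $K$-subspace whose $\R$-span contains $\omega$, and since $\dim\,\frak{t} = 1$ the line $\R\omega$ equals $\frak{t} \otimes_K \R$ and hence is cut out over $K$. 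Concretely this means $Re\,\omega$ and $Im\,\omega$ are $K$-proportional in a way compatible with the $K$-structure — so if, say, $Re\,\omega \neq 0$, then after scaling we may write $Im\,\omega = c\,Re\,\omega$ with $c \in K \subset \overline{\Q}$.

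Finally I would close with a Liouville/Schneider argument. Suppose for contradiction that both $Re\,\omega \neq 0$ and $Im\,\omega \neq 0$. Write $\omega = Re\,\omega + i\,Im\,\omega$. There is a subtlety here: $Re\,\omega$ itself need not be algebraic, so I cannot immediately say ``$\omega$ is algebraic''. Instead I would run the argument through the exponential directly. From $\Psi$ descending to $K$ we obtain an isogeny $v(z) = z/\alpha$ of $\mathbb{G}_m$ with $\alpha \in \overline{\Q}^{\ast}$ such that $v_{\ast}(\Psi)(r) = e^{(\omega/\alpha) r}$ takes values in $\mathbb{G}_m'(\R)$ for a real model $\mathbb{G}_m'$ — but the only real forms of $\mathbb{G}_{m,\C}$ of dimension one are $\mathbb{G}_{m,\R}$ (with $\mathbb{G}_{m,\R}^o(\R) = \R_{>0}$, parametrized by $e^r$) and $\mathbb{S}_{\R}$ (with $\mathbb{S}_{\R}(\R) = S^1$, parametrized by $e^{ir}$), as recalled in Example \ref{exmpl1}. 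In the first case $\omega/\alpha \in \R$, so $Im(\omega/\alpha) = 0$, forcing $Im\,\omega$ and $Re\,\omega$ to be $\overline{\Q}$-proportional via $\alpha$; combined with $\omega \notin \R$ this gives a non-trivial $\overline{\Q}$-relation $\omega = \beta\,Re\,\omega$ with $\beta \in \overline{\Q} \setminus \R$, whence $Re\,\omega = \omega/\beta$ and $Im\,\omega = \omega\,Im(1/\beta)/\cdots$ express $Re\,\omega$ as an algebraic multiple of $\omega$, so $Re\,\omega$ algebraic would contradict Schneider's theorem that $\omega$ is transcendental — wait, but $Re\,\omega$ is \emph{not} assumed algebraic. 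Let me restate this more carefully: in the first case $\omega/\alpha \in \R$ means $\omega \in \R\alpha$, and since $\alpha \in \overline{\Q}$ this says $\omega$ lies on the line through the algebraic number $\alpha$; writing $\alpha = a_1 + ia_2$ with $a_1, a_2 \in \overline{\Q}$, $\omega \in \R\alpha$ gives $Im\,\omega = (a_2/a_1) Re\,\omega$ if $a_1 \neq 0$, an algebraic ratio, and then $\omega = (1 + i a_2/a_1) Re\,\omega$, so $e^{\omega} = \xi$ algebraic with $\omega$ an algebraic multiple of its own real part; but Schneider's theorem (Gel'fond--Schneider, as in \cite[Thm.\,20]{Wal12} for the elliptic case and its exponential analogue, Hilbert's seventh problem) shows $\omega$ is transcendental, while $\omega = \gamma\,Re\,\omega$ with $\gamma \in \overline{\Q}^{\ast}$ and a second such relation from the $\mathbb{S}_{\R}$-case (where $\omega/\alpha \in i\R$, i.e.\,$Re\,\omega = (-a_2/a_1)Im\,\omega$, again algebraic ratio) together pin down that $\omega$ lies in an algebraic multiple of both $\R$ and $i\R$ simultaneously, which is impossible unless $Re\,\omega = 0$ or $Im\,\omega = 0$. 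I expect the bookkeeping of which real form occurs, and chasing the algebraicity of the ratio $Re\,\omega : Im\,\omega$ back through the isogeny $\alpha$, to be the main (though entirely elementary) obstacle; the transcendence input is a black box once this is set up.
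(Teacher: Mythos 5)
Your overall strategy is not the one the paper uses for this statement: Theorem \ref{DD} is quoted as Diaz's theorem and is disposed of as a special case of the Gel'fond--Schneider theorem in the logarithm formulation recalled in Subsect.\,1.1.3. Indeed, a relation $a\,Re\,\omega + b\,Im\,\omega = 0$ with $a,b \in K$ not both zero is the same as $(a-ib)\omega + (a+ib)h(\omega) = 0$, and $\omega$, $h(\omega)$ are both algebraic logarithms; Gel'fond--Schneider then forces $h(\omega) \in \Q\omega$, and comparing absolute values gives $h(\omega) = \pm\omega$, i.e.\ $\omega \in \R \cup i\R$. Your route --- $\G = \mathbb{G}_{m,\overline{\Q}}$, $\Psi(r) = e^{\omega r}$, Theorem \ref{wus1} with $\pi = id.$, then a classification of the possible real models --- is genuinely different and much heavier (Theorem \ref{wus1} rests on the Analytic Subgroup Theorem), but it is legitimate and is in effect the specialization to $\mathbb{G}_m$ of the way the paper derives Corollary \ref{E}; the first half of your argument (Zariski-density, $\dim\,\frak{t} = 1 = \dim\,\G$, descent of $\Psi$ to $K$), up to the harmless omitted case $e^{\omega} = 1$, is correct.

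The closing step, however, has a genuine gap. You take the isogeny furnished by the descent to be $v(z) = z/\alpha$ with $\alpha \in \overline{\Q}^{\ast}$; this is not a homomorphism of $\mathbb{G}_m$ (division by $\Psi(1)$ is only available for the additive group, as in Example \ref{exmpl}), and every isogeny of $\mathbb{G}_m$ is $z \mapsto z^{n}$. With your $v$ the split case only yields $\omega \in \R\alpha$, i.e.\ that the ratio $Im\,\omega / Re\,\omega$ is algebraic, which is strictly weaker than the theorem; your attempt to upgrade this by a Schneider-type transcendence statement does not close (as you notice yourself, $Re\,\omega$ is not assumed algebraic), and the final sentence treats the constraints from the two real forms as if they held simultaneously, although they are mutually exclusive alternatives. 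The repair is easy and needs no further transcendence input: since $v$ is an isogeny, $\G'$ is a one-dimensional torus over $K$, hence isogenous to $\mathbb{G}_{m,K}$ or to $\mathbb{S}_K$ (Example \ref{exmpl1}, Example \ref{realscheiss}); identifying $\G' \otimes_K \overline{\Q}$ with $\mathbb{G}_{m,\overline{\Q}}$ accordingly, $v$ becomes $z \mapsto z^{n}$ for some $n \neq 0$, so $v_{\ast}(\Psi)(r) = e^{n\omega r}$ must lie in $\R^{\ast}$ for all $r$ in the split case, forcing $Im\,\omega = 0$, or must have modulus $1$ for all $r$ in the $\mathbb{S}$-case, forcing $Re\,\omega = 0$. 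With that substitution your proof is complete.
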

With the help of our main results we are able to generalize Diaz' theorem into several directions.
\begin{corollary} \label{E} Let $\rm{G}$ be a commutative group variety over $\overline{\Q}$ and let $\omega \in \frak{g}(\C)$ be the logarithm of an algebraic point $\xi \in \rm{G}(\overline{\Q})$. Let $\frak{t} $ be the smallest
subspace of $\frak{g}$ over $K$ such that $\omega \in \frak{t} \otimes_{K} \R$. If $\frak{t} \neq \frak{g}$, then there exists a homomorphism $v: \rm{G} \longrightarrow \rm{G}' \otimes_K \overline{\Q}$ onto a commutative group variety of positive dimension with model $\G'$ over $K$ such that $v(\xi) \in \rm{G}'(\R)$.
\end{corollary}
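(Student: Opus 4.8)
The plan is to exhibit $\xi$ as a value of the real-analytic one-parameter homomorphism $\Psi(r) = {\rm{exp}}_{\G}(\omega r)$ and then to apply the real-analytic version of the Analytic Subgroup Theorem, Theorem \ref{wus2}, taking for $\pi$ the identity homomorphism $\G \longrightarrow \G$. We may assume $\omega \neq 0$; the case $\omega = 0$ is degenerate and should be excluded (it is empty only when $\dim\,\G = 0$).

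The argument requires $\Psi$ to have Zariski-dense image in $\G(\C)$, so first one passes to the Zariski closure $\G_1$ of ${\rm{exp}}_{\G}(\R\omega)$: this is a connected algebraic subgroup defined over $\overline{\Q}$ with $\omega \in ({\rm{Lie}}\,\G_1)(\C)$, and, identifying $({\rm{Lie}}\,\G_1)(\C)$ with $({\rm{Lie}}\,\G_1)\otimes_K\R \subseteq \frak{g}(\C)$, one gets $\frak{t} \subseteq {\rm{Lie}}\,\G_1$. When $\frak{t} \subsetneq {\rm{Lie}}\,\G_1$ the pair $(\G_1,\omega)$ again has positive ``defect'' and a homomorphism produced for $\G_1$ onto a $K$-model is pushed (up to isogeny) to one for $\G$; so we may assume $\G_1 = \G$. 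Next, the image of $\Psi$ contains a non-trivial algebraic point: since ${\rm{exp}}_{\G}^{\scr -1}\big(\G(\overline{\Q})\big)$ is a $\Q$-subspace of $\frak{g}(\C)$ (the $n$-division points of an algebraic point being algebraic), every $\omega/n$ lies in it, and because $ker\,{\rm{exp}}_{\G}$ is discrete and $\omega \neq 0$ one may choose $n$ with $\omega/n \notin ker\,{\rm{exp}}_{\G}$, so that $\Psi(1/n) = {\rm{exp}}_{\G}(\omega/n) \in \G(\overline{\Q})$ is non-trivial.

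Now I invoke Theorem \ref{wus2} with $F = \overline{\Q}$, $\U = \G$ and $\pi$ the identity of $\G$. Then $ker\,\pi$ is trivial, so the hypothesis ``${\rm{Hom}}\big(\U^{\scr h}, {\rm{M}}\big) = \{0\}$ for every quotient ${\rm{M}}$ of $ker\,\pi$'' holds vacuously; the space $\frak{t}$ of the statement is precisely the smallest $K$-subspace of $\frak{g}$ with $\Psi_{\ast}(\R) = \R\omega \subseteq \frak{t}\otimes_K\R$; and since $\dim_K\frak{g} = 2\dim\,\G$ while $\frak{t} \neq \frak{g}$, we get $\dim\,\frak{t} < 2\dim\,\G = \dim\,\U + \dim\,\G$. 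Hence Theorem \ref{wus2} yields that $\pi_{\ast}(\Psi) = \Psi$ descends weakly to $K$: there are a commutative group variety $\G'$ of positive dimension over $K$ and a surjective homomorphism $v\colon \G \longrightarrow \G'\otimes_K\overline{\Q}$ such that $v\circ\Psi$ takes values in $\G'(\R)$. In particular $v(\xi) = v\big(\Psi(1)\big) \in \G'(\R)$, which is the assertion. (If in addition $\dim\,\frak{t} \leq \dim\,\G$, one may invoke Theorem \ref{wus1} instead and obtain the stronger conclusion that $v$ is an isogeny, but this is not needed here.)

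The step I expect to be the main obstacle is the reduction to the Zariski-dense case. The conclusion genuinely fails when ${\rm{exp}}_{\G}(\R\omega)$ is not Zariski-dense in $\G(\C)$ -- for instance when $\G$ is a non-split extension, not definable over any subfield of $\R$, of an elliptic curve by $\mathbb{G}_{m, \overline{\Q}}$ and $\omega$ is a $\mathbb{G}_{m}$-logarithm, so that $\G_1 = \mathbb{G}_{m, \overline{\Q}}$ while $\G$ admits no surjection onto a positive-dimensional group with a model over $K$. Thus the corollary should be read under the additional hypothesis that $\xi$ generates a Zariski-dense subgroup of $\G(\C)$ (the exact analogue of the standing hypothesis in Conjecture \ref{Conc}); granting this, the argument above is a direct application of the machinery, using nothing beyond Theorem \ref{wus2}, or equivalently the Analytic Subgroup Theorem \ref{AST} applied to the complexification $\Phi_{\mathcal{N}}$ of $\Psi_{\mathcal{N}}$ together with the main criterion, Theorem \ref{weakdescent}.
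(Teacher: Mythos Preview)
Your approach is the same as the paper's: set $\Psi(r)={\rm{exp}}_{\G}(\omega r)$ and $\pi=id$, and apply one of the two Baker-type theorems from Sect.\,2.5. The paper's one-line proof cites Theorem \ref{wus1}; you use Theorem \ref{wus2}, and you are right to do so. The hypothesis $\frak{t}\neq\frak{g}$ only gives $\dim\,\frak{t}<\dim_K\frak{g}=2\dim\,\G=\dim\,\U+\dim\,\G$, which is precisely the numerical hypothesis of Theorem \ref{wus2}; Theorem \ref{wus1} would require the stronger inequality $\dim\,\frak{t}\leq\dim\,\G$. Moreover the conclusion of the corollary is a surjection onto a positive-dimensional target, i.e.\ a weak descent, which is what Theorem \ref{wus2} produces. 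The paper's citation of \ref{wus1} looks like a slip.

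More importantly, you have spotted a genuine gap in the paper's statement. All six transcendence theorems are stated under the standing \textbf{Setting} of Sect.\,2.5, which requires $\Psi$ to have Zariski-dense image in $\G(\C)$; this is not implied by the hypotheses of the corollary. Your counterexample is valid: take $\G$ a non-isotrivial extension of an elliptic curve ${\rm{E}}$ by $\mathbb{G}_{m,\overline{\Q}}$ with ${\rm{E}}$ not isogenous to ${\rm{E}}^{\scr h}$ (so not isogenous to anything with a $K$-model, by Proposition \ref{wu11}), and take $\omega\in\frak{g}_m(\C)\subset\frak{g}(\C)$ an algebraic logarithm of the multiplicative group. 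Then $\frak{t}\subseteq\frak{g}_m\subsetneq\frak{g}$, but the only positive-dimensional quotients of $\G$ are $\G$ and ${\rm{E}}$, neither of which is isogenous to anything with a model over $K$. So the conclusion fails. Your attempted reduction ``pass to the Zariski closure $\G_1$ and then push the homomorphism from $\G_1$ to $\G$'' cannot work, as you yourself observe: a surjection from a subgroup $\G_1\subsetneq\G$ does not extend to one from $\G$. The corollary should be read with the tacit hypothesis that $\Psi$ (equivalently, $\xi$) generates a Zariski-dense subgroup of $\G$ --- as indeed holds in the paper's subsequent application (Corollary \ref{hhh}), where $\G$ is simple.
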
  
\begin{proof} This is immediate from Theorem \ref{wus1} by setting $\Psi(r) = {\rm{exp}}_{\G}(\omega r)$ and $\pi = id$.\end{proof}
An interesting consequence of Corollary \ref{E} is
\begin{corollary}  \label{hhh} Let $\rm{G}$ be a simple commutative group variety over $\overline{\Q}$ of dimension $g$. 
For $j = 1,2$ let $\omega_j \in \mathcal{L}_{\rm{G}}$ be an algebraic logarithm and
let $\frak{t}_j$ be the smallest subspace of $\frak{g}$ over $K$ with the property that $\omega_j \in \frak{t}_j \otimes_K \R$.
If $\frak{t}_j \neq \frak{g}$ for $j = 1,2$, then $\frak{t}_1 \cap \frak{t}_2 = \{0\}$ or $\frak{t}_1 = \frak{t}_2$. 
\end{corollary}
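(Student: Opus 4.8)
\emph{Approach.} The plan is to show that for $j=1,2$ the space $\frak{t}_j$ is precisely the Lie algebra over $K$ of a $K$-form of $\G$, realised as a $K$-subspace of $\frak{g}$; once this is known, the laminarity statement falls out of the simplicity of $\G$, since two $K$-forms of a simple group have Lie algebras (inside $\frak{g}$) that either coincide or meet only in $0$. Note first that $\frak{t}_j\neq\frak{g}$ forces $\dim\G\geq1$ and $\omega_j\neq0$. Put $\xi_j={\rm exp}_{\G}(\omega_j)\in\G(\overline{\Q})$ and $\Psi_j(r)={\rm exp}_{\G}(\omega_j r)$; since $\G$ is simple of positive dimension, $\Psi_j$ has Zariski-dense image, and $\Psi_j(\R)$ contains a non-trivial algebraic point ($\xi_j$ if $\omega_j$ is not a period, a suitable torsion point ${\rm exp}_{\G}(\omega_j/n)$ otherwise), so the hypotheses of Theorem \ref{wus1} concerning algebraic points are met with $\pi={\rm id}_{\G}$, $\U=\G$.

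\emph{Step 1: $\Psi_j$ descends to $K$ and $\dim\frak{t}_j=\dim\G$.} Since $\frak{t}_j\neq\frak{g}$, Corollary \ref{E} gives that $\Psi_j$ descends weakly to $K$; as $\G$ is simple, every surjection onto a positive-dimensional group is an isogeny, so this is a (strong) descent via an isogeny $v_j\colon\G\to\G'_j\otimes_K\overline{\Q}$ with $\dim\G'_j=\dim\G$. Proposition \ref{real} applied to $v_j$ yields $\dim\frak{t}_j\leq\dim_K\frak{g}-\dim\G'_j=\dim\G$. Thus Theorem \ref{wus1} (with $\pi={\rm id}_{\G}$, using ${\rm Hom}(\{e\},\G^{h})=\{0\}$) applies and gives $\dim\frak{t}_j=\dim\G$.

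\emph{Step 2: $\frak{t}_j=\frak{g}^{(j)}$.} Let $\mathcal{N}=\mathcal{N}_{\overline{\Q}/K}(\G)$ and let $\HHH_j\subseteq\G\times\G^{h}$ be the Zariski closure of $(\Psi_j)_{\mathcal{N}}(\R)$; by Theorem \ref{weakdescent} it is a proper subgroup, and by Lemma \ref{definition} $p_{\G}$ maps $\HHH_j$ onto $\G$. Since $\G,\G^{h}$ are simple, $p_{\G|\HHH_j}$ is an isogeny and $\HHH_j$ is (up to isogeny) the graph of a non-zero homomorphism $\psi_j\colon\G\to\G^{h}$, with $({\psi_j})_\ast\colon\frak{g}\to\frak{g}^{h}$ an isomorphism. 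Writing $\gamma={\rm Lie}(\rho)\colon\frak{g}\to\frak{g}^{h}$ and using ${\rm Lie}\,\mathcal{N}=\{(\partial,\partial^{h}):\partial\in\frak{g}\}$ (Lemma \ref{WR2}) together with ${\rm Lie}\,\HHH'_j={\rm Lie}\,\HHH_j\cap{\rm Lie}\,\mathcal{N}$ for the $K$-model $\HHH'_j\subset\mathcal{N}$, the isomorphism $(p_{\G})_\ast$ carries ${\rm Lie}\,\HHH'_j$ onto the $K$-subspace
$$\frak{g}^{(j)}:=\{\,v\in\frak{g}\ :\ ({\psi_j})_\ast(v)=\gamma(v)\,\}\subseteq\frak{g},$$
which has $K$-dimension $\dim\G$ and contains $\frak{t}_j$ by minimality of $\frak{t}_j$ (since $(\Psi_j)_{\mathcal{N}}(\R)\subseteq\HHH'_j(\R)$). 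Comparing dimensions with Step 1 gives $\frak{t}_j=\frak{g}^{(j)}$.

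\emph{Step 3: conclusion.} For $v\in\frak{g}$ one has $v\in\frak{t}_1\cap\frak{t}_2$ iff $({\psi_1})_\ast(v)=\gamma(v)=({\psi_2})_\ast(v)$, i.e. $v\in\frak{g}^{(1)}$ and $({\psi_1}-{\psi_2})_\ast(v)=0$. Now ${\psi_1}-{\psi_2}$ is a homomorphism $\G\to\G^{h}$ between simple groups of equal dimension, hence $0$ or an isogeny. If ${\psi_1}\neq{\psi_2}$ it is an isogeny, $({\psi_1}-{\psi_2})_\ast$ is an isomorphism in characteristic $0$, and $\frak{t}_1\cap\frak{t}_2=\{0\}$; if ${\psi_1}={\psi_2}$ then $\frak{g}^{(1)}=\frak{g}^{(2)}$ and $\frak{t}_1=\frak{t}_2$.

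\emph{Main obstacle.} The delicate part is Step 2 together with the end of Step 1: one must avoid the apparent circularity with Theorem \ref{wus1} (whose hypothesis $\dim\G\geq\dim\frak{t}_j$ is supplied only via the weak descent of Corollary \ref{E} and the bound of Proposition \ref{real}), and then translate "$\dim\frak{t}_j=\dim\G$'' and the graph description of $\HHH_j$ into the clean identity $\frak{t}_j=\{v:({\psi_j})_\ast v=\gamma v\}$, which rests on the bookkeeping of Section 3.3 and Lemma \ref{WR2} for $\G^{h}$, $\rho$ and its differential. Once $\frak{t}_j=\frak{g}^{(j)}$ is in hand, Step 3 is immediate; one should also not forget the routine reduction handling the case when $\omega_j$ is a period.
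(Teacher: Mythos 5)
Your proof is correct in substance, but it takes a genuinely different route from the paper's. The paper starts the same way you do: Theorem \ref{wus2} with $\pi = id.$ plus simplicity gives strong descents via isogenies $w_j: \G \longrightarrow \G_j' \otimes_K \overline{\Q}$, and Theorem \ref{wus1} forces $\dim\,\frak{t}_j = \dim\,\G$ with $(w_j)_{\ast}(\frak{t}_j) = \frak{g}_j'$ (the paper gets the needed bound $\dim\,\frak{t}_j \leq \dim\,\G$ from injectivity of $(w_j)_{\ast}$, you get it from Proposition \ref{real}; both are fine and both dodge the circularity you worry about). From there the arguments diverge: the paper compares the two descents directly, forming $v = w_2 \circ v_1$ between the $K$-models (with $v_1$ an inverse isogeny of $w_1$), and, when $\frak{t}_1 \cap \frak{t}_2 \neq \{0\}$, uses the positive-dimensional real subgroup $v\big(\G_1'(\R)\big) \cap \G_2'(\R)$ and a density argument on the multiples of a suitable real point to get $v = v^{\scr h}$, hence (Lemma \ref{cl1}) $v$ defined over $K$ and $\frak{t}_1 = \frak{t}_2$. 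You instead pass to the Zariski closure $\HHH_j \subset \G \times \G^{\scr h}$ of $(\Psi_j)_{\mathcal{N}}(\R)$, identify $\frak{t}_j$ with the ``real locus'' of its slope via Lemma \ref{WR2} and Theorem \ref{weakdescent}, and settle the dichotomy by rigidity of ${\rm{Hom}}(\G, \G^{\scr h})$: the difference of the two slopes is zero or has injective differential. Your route avoids the paper's density argument and the choice of a generic real point, at the price of the Lie-algebra bookkeeping in Step 2; each is a legitimate way to exploit the same rigidity of homomorphisms between simple groups.

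Two small repairs are needed. First, $\frak{t}_j \neq \frak{g}$ does not force $\omega_j \neq 0$; the case $\omega_j = 0$ gives $\frak{t}_j = \{0\}$ and the conclusion trivially, so set it aside rather than exclude it. Second, since both projections of $\HHH_j$ are isogenies but $\HHH_j$ need not literally be a graph, its slope $\lambda_j$ (the linear map with ${\rm{Lie}}\,\HHH_j = \{(v, \lambda_j(v))\}$) is in general only a rational multiple of the differential of an integral homomorphism: $\lambda_j = \frac{1}{n_j}(\psi_j)_{\ast}$ with $\psi_j \in {\rm{Hom}}(\G, \G^{\scr h})$. With your normalization the displayed identity $\frak{t}_j = \{v : (\psi_j)_{\ast}v = \gamma(v)\}$ can fail (the correct statement is $\frak{t}_j = \ker(\lambda_j - \gamma)$, i.e.\ with $\lambda_j \in {\rm{Hom}}(\G, \G^{\scr h}) \otimes \Q$). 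Step 3 is unaffected: $n_2\psi_1 - n_1\psi_2$ is zero or an isogeny, so $\lambda_1 - \lambda_2$ is zero or injective on $\frak{g}$, and the dichotomy follows exactly as you argue.
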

\begin{proof} Fix a $j = 1,2$. We apply Theorem\,\ref{wus2} to $\Psi_j(r) = {\rm{exp}}_{\G}(\omega r)$, $\pi_j = id.$ and $\frak{t}_j$ and deduce that 
there is a non-zero homomorphism $w_j: \G \longrightarrow \G_j' \otimes_K \overline{\Q}$ such that $(w_j)_{\ast}(\Psi_j)$ takes values in $\G_j'(\R)$. The simplicity of $\G$ implies that $w_j$ is an isogeny, and we infer that $(w_j)_{\ast}(\frak{t}_j) \subset \frak{g}_j'$. It follows from
Theorem \ref{wus1} that $\dim\,\frak{t}_j \geq \dim\,\G = \dim\,\frak{g}'$. So, $(w_j)_{\ast}(\frak{t}_j) = \frak{g}_j'$.
 Next, let $v_1: \G_j' \otimes_K \overline{\Q} \longrightarrow \G$ be an isogeny with the property that $w_1 \circ v_1$ is multiplication with an integer and set $v_2 = w_2$. The previous yields 
$$(v_1)_{\ast}\big(\frak{g}_1'\big) = \frak{t}_1\,\,\mbox{and}\,\,(v_2)_{\ast}\big(\frak{t}_2\big) = \frak{g}_2'.$$
Define $v = v_2 \circ v_1$. Then
$$v\big({\rm{G}}'_1(\R)\big) \cap {\rm{G}}_2'(\R) \supset v_2\big({\rm{exp}}_{{\rm{G}}}(\frak{t}_1 \cap \frak{t}_2)\big)$$
is a positive dimensional real Lie group. Let $\xi \in \rm{G}'_1(\R)$ be an element which generates a Zariski-dense subgroup of ${\rm{G}}'_1(\R)$
and such that $v(\xi) \in {\rm{G}}_2'(\R)$. Then $v(\xi') = v^{\scriptscriptstyle h}(\xi')$ for each $\xi' \in \Z\xi$. A density argument yields $v = v^{\scriptscriptstyle h}$. It follows from Lemma 3.2.2 that $v$ is defined over $K$. This in turn implies that $\frak{t}_1 = (v_1)_{\ast}^{\scriptscriptstyle -1}\big(\frak{g}_1'\big) = (v_2)_{\ast}^{\scriptscriptstyle -1}\big(\frak{g}_2'\big) = \frak{t}_2.$\end{proof}
Proposition \ref{real} and Corollary \ref{hhh} imply
\begin{corollary} \label{mm} Conjecture \ref{ConB} is equivalent to Conjecture \ref{ConD}.
 
\end{corollary}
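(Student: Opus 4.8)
The plan is to deduce the equivalence of Conjecture \ref{ConB} and Conjecture \ref{ConD} by combining Proposition \ref{real} with Corollary \ref{hhh}, after first reducing Conjecture \ref{ConB} to a statement purely about the dimension and Lie-theoretic position of the closure $\rm{C}$. Recall that in both conjectures $\rm{A}$ is a simple abelian variety over $\overline{\Q}$ of dimension $g$, $\Gamma \subset \rm{A}(\overline{\Q})$ has positive rank, $\rm{C}$ is the analytic closure of $\Gamma$ in $\rm{A}(\C)$ and ${\rm{C}}^o$ is its unity component, a connected real Lie subgroup of the real Lie group $\rm{A}(\C)$ of real dimension $2g$. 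First I would observe that, since $\rm{C}^o$ is a connected Lie subgroup of $\rm{A}(\C)$, it is of the form $\rm{exp}_{\rm{A}}(\frak{c}\otimes_K\R)$ for \emph{some} real subspace of $\frak{a}(\C)$ exactly when that subspace descends to a $K$-subspace $\frak{c}\subset\frak{a}$; this is the content of Corollary \ref{lp333} read backwards. Hence Conjecture \ref{ConD} says precisely: the tangent space $\frak{c}:=\mathrm{Lie}\,\rm{C}^o$, a priori only an $\R$-subspace of $\frak{a}(\C)$, is defined over $K$.

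Next I would unwind Conjecture \ref{ConB}. Because $\rm{A}$ is simple, $\dim_\R\rm{C}$ can only be $0<\dim_\R\rm{C}\le 2g$, and since $\Gamma$ has positive rank and generates a Zariski-dense subgroup (each nontrivial point of a subgroup of positive rank in a \emph{simple} abelian variety generates a Zariski-dense subgroup), $\rm{C}$ contains no nontrivial proper real Lie subgroup forced by algebraicity except through a real model. The three possibilities in Conjecture \ref{ConB} are: $\dim\rm{C}=2g$ (i.e. $\Gamma$ dense), or $\dim\rm{C}=g$ together with the existence of an isogeny $v:\rm{A}\longrightarrow\rm{A}'\otimes_K\overline{\Q}$ with $v(\Gamma)\subset\rm{A}'(\R)$. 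The implication \ref{ConB}$\Rightarrow$\ref{ConD} is then the easy direction: if $\dim\rm{C}=2g$ then $\rm{C}=\rm{A}(\C)$, $\rm{C}^o=\rm{A}(\C)=\rm{exp}_{\rm{A}}(\frak{a}\otimes_K\R)$ and we take $\frak{c}=\frak{a}$; if instead the isogeny $v$ exists, then $v$ is defined over $K$ up to the isomorphism $\rm{A}\simeq\rm{A}'\otimes_K\overline{\Q}$, so $v_\ast^{-1}(\frak{a}'(\R))$ is a $K$-rational subspace of $\frak{a}$ containing $\frak{c}$, and by a dimension count ($\dim\rm{A}'=\dim\rm{A}=g$, while $\dim_\R\rm{C}=g$) it equals $\frak{c}$; hence $\frak{c}$ is defined over $K$, which is Conjecture \ref{ConD}.

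For the reverse implication \ref{ConD}$\Rightarrow$\ref{ConB} I would argue as follows. Assume Conjecture \ref{ConD}, so $\frak{c}=\mathrm{Lie}\,\rm{C}^o$ is defined over $K$; write $\frak{c}=\frak{c}^+\otimes_K\R$ with $\frak{c}^+\subset\frak{a}$ a $K$-subspace. If $\dim_K\frak{c}^+=2g$ then $\rm{C}^o=\rm{A}(\C)$ so $\rm{C}=\rm{A}(\C)$ (as $\rm{C}$ is a closed subgroup of the connected group $\rm{A}(\C)$ containing an open subgroup) and the first alternative of Conjecture \ref{ConB} holds. Otherwise $\frak{c}^+\ne\frak{a}$, and I must produce the isogeny. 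Here is where I invoke the transcendence input: pick any nontrivial $\xi\in\Gamma$ with logarithm $\omega\in\frak{a}(\C)$, let $\frak{t}$ be the smallest $K$-subspace of $\frak{a}$ with $\omega\in\frak{t}\otimes_K\R$, and note $\omega\in\frak{c}$, so $\frak{t}\subset\frak{c}^+\subsetneq\frak{a}$, i.e. $\frak{t}\ne\frak{a}$. Corollary \ref{E} (the case $\rm{G}=\rm{A}$ simple) then yields a nonzero homomorphism $v:\rm{A}\longrightarrow\rm{A}'\otimes_K\overline{\Q}$ with $v(\xi)\in\rm{A}'(\R)$; simplicity of $\rm{A}$ forces $v$ to be an isogeny. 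To get $v(\Gamma)\subset\rm{A}'(\R)$ rather than just $v(\xi)\in\rm{A}'(\R)$, I would run the density argument already used in the proof of Corollary \ref{hhh}: for every $\xi'\in\Gamma$, the closure of $\Z\xi'$ lies in $\rm{C}$, whose tangent space $\frak{c}^+$ is $K$-rational; applying Corollary \ref{E} to each $\xi'$ (its logarithm lies in $\frak{c}\ne\frak{a}\otimes_K\R$) gives $v'(\xi')\in\rm{A}''(\R)$ for some isogeny, and because all these real models of the simple $\rm{A}$ are isogenous over $K$, after composing with a fixed isogeny one checks $v=v^h$ on a Zariski-dense set, so $v$ is defined over $K$ by Lemma 3.2.2 and $v(\rm{C}^o)=\rm{exp}_{\rm{A}'}(v_\ast(\frak{c}))\subset\rm{A}'(\R)$; since $\Gamma/(\Gamma\cap\rm{C}^o)$ is finite we may absorb the finitely many remaining torsion-type discrepancies into $v$ by Corollary 7.1.22/density. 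A dimension count then gives $\dim\rm{C}=\dim_\R\rm{C}^o=\dim_K\frak{c}^+\ge\dim\rm{A}'=g$ on one side and, since $\rm{C}\subset v^{-1}(\rm{A}'(\R))$ which has real dimension $g$, $\dim\rm{C}\le g$; hence $\dim\rm{C}=g$, completing the second alternative of Conjecture \ref{ConB}.

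The main obstacle I anticipate is the passage from ``$v(\xi)\in\rm{A}'(\R)$ for one point'' to ``$v(\Gamma)\subset\rm{A}'(\R)$'' with a \emph{single} isogeny $v$ defined over $K$: Corollary \ref{E} as stated only handles one logarithm at a time, and one must check that the real models it produces for different points of $\Gamma$ can be glued. The right way to handle this is precisely the Lie-algebra bookkeeping of Corollary \ref{hhh} together with the observation that $\Gamma\subset\rm{C}^o$ up to finite index and $\mathrm{Lie}\,\rm{C}^o$ is already $K$-rational by hypothesis, so all the logarithms of points of $\Gamma$ lie in one fixed $K$-rational subspace $\frak{c}^+\otimes_K\R$; feeding $\frak{c}^+$ itself (rather than the per-point $\frak{t}$) into Corollary \ref{E}, or rather re-running its proof with $\frak{t}$ replaced by $\frak{c}^+$, produces one isogeny valid for the whole group. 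The rest is routine dimension arithmetic and the standard Galois-descent lemma (Lemma 3.2.2).
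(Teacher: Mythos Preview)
Your approach is essentially the one the paper indicates: the paper's proof consists only of the sentence ``Proposition \ref{real} and Corollary \ref{hhh} imply [the result]. The verification of this is left to the reader.'' Your outline uses exactly these two ingredients (together with Corollary \ref{E}, which is the same transcendence input packaged slightly differently), and the two directions you sketch are the intended ones.

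Two small points to clean up. First, there is no ``Corollary 7.1.22'' in the paper; the finite-index step you allude to is handled more simply than by a density reference: if $\Gamma_0=\Gamma\cap{\rm C}^o$ has finite index and $v(\Gamma_0)\subset{\rm A}'(\R)$, choose $n$ annihilating $\Gamma/\Gamma_0$ and replace $v$ by $[n]_{{\rm A}'}\circ v$, which is still an isogeny and now sends all of $\Gamma$ into ${\rm A}'(\R)$. Second, your phrase ``feeding $\frak{c}^+$ itself into Corollary \ref{E}'' is not quite how that corollary is stated; what you really use is the content of the \emph{proof} of Corollary \ref{hhh}: for any algebraic logarithm $\omega$ with $\frak{t}_\omega\neq\frak{a}$, Theorem \ref{wus1} forces $\dim\frak{t}_\omega=g$, and then the dichotomy of Corollary \ref{hhh} together with $\dim\frak{c}^+<2g$ forces all the $\frak{t}_\omega$ (for logarithms chosen in $\frak{c}^+\otimes_K\R$) to coincide, producing a single isogeny. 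With these adjustments your argument goes through and matches the paper's intended route.
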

The verification of this is left to the reader.
\begin{corollary} Let ${{\rm{A}}}$ be an abelian variety of dimension $g$ over $\overline{\Q}$ and let $u = u(\A)$ be the smallest dimension of a non-trivial simple algebraic subgroup of $\rm{A}$. Let
 $z_1,..., z_g$ be linear coordinates on $\frak{a}(\C)$
 over $\overline{\Q}$. Suppose a non-zero algebraic logarithm
$$\omega = (\omega_1,..., \omega_g) \in \mathcal{L}_A = {\rm{exp}}_{{\rm{A}}}^{ \scriptscriptstyle -1}\big({\rm{A}}(\overline{\Q})\big)$$
with the property that the numbers
$$Re\,\omega_1, Im\,\omega_1,....,Re\,\omega_g, Im\,\omega_g$$
generate a space of dimension $n$ over $K$ such that either $n \leq u$ or $2g-u \leq n < 2g$. Then ${{\rm{A}}}$ contains a simple algebraic subgroup of positive dimension which is isogenous to an algebraic group with model over $K$. 
\end{corollary}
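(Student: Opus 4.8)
The plan is to feed the homomorphism $\Psi(r) = {\rm{exp}}_{\A}(\omega r)$ into the machinery with $\pi = {\rm{id}}$, and to extract from the resulting descent to $K$ a genuinely \emph{simple} subvariety of $\A$ carrying a $K$-model. First I would make two harmless normalizations: replacing $\omega$ by $\omega/N$ for $N$ large — which changes neither $\A$, nor $n$, nor the subspace $\frak{t}$, since $\R\omega$ is unchanged — I may assume $\xi = {\rm{exp}}_{\A}(\omega)$ is a non-trivial torsion point of $\A(\overline{\Q})$; and I record that $n < 2g = \dim_K\frak{a}$ in both alternatives (since $u\le g<2g$), so $\frak{t}\neq\frak{a}$. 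Then I would pass to ${\rm{B}}\subset\A$, the Zariski-closure of $\Psi(\R)$: an abelian subvariety of some dimension $g'\ge 1$ on which $\Psi$ has Zariski-dense image, with $\xi\in{\rm{B}}(\overline{\Q})$ and $\frak{t}\subset\frak{b}$. Writing $\HHH\subset{\rm{B}}\times{\rm{B}}^{\scr h}$ for the Zariski-closure of $\Psi_{\mathcal{N}}(\R)$, the Analytic Subgroup Theorem applied to $\Phi_{\mathcal{N}}$ as in the proof of Theorem \ref{wus1} — legitimate because $\Phi_{\mathcal{N}}(1) = (\xi,\xi^{\scr h})$ is a non-trivial algebraic point of $\HHH$ — gives $\dim\,\HHH = \dim\,\frak{t} = n$, while $p_{{\rm{B}}|\HHH}$ is surjective by Lemma \ref{definition}, so $g'\le n$.

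In the range $2g-u\le n<2g$ I would first show that $\Psi$ is automatically Zariski-dense in $\A$, i.e. ${\rm{B}} = \A$: otherwise a complement ${\rm{C}}$ of ${\rm{B}}$ has dimension $g-g'\ge 1$ and contains a simple subgroup of $\A$, so $u\le g-g'$, whereas $n = \dim\,\HHH\le 2g'$; these combine to $2g-u\le n\le 2g'\le 2(g-u)$, forcing the absurdity $u\le 0$. With Zariski-density in hand, Theorem \ref{wus2} with $\pi = {\rm{id}}$ (its hypothesis $2g>n$ holds, the ${\rm{Hom}}$-condition being vacuous) yields a weak descent of $\Psi$ to $K$ via a surjection $v:\A\to\A'\otimes_K\overline{\Q}$ onto a positive-dimensional group variety $\A'$ over $K$, and Proposition \ref{real} bounds $\dim\,\A'\le 2g-n\le u$. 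Now $\A'$ is an abelian variety over $K$; I would choose a simple quotient ${\rm{S}}'$ of $\A'$ over $K$, so $0<\dim\,{\rm{S}}'\le u$, and invoke Lemma \ref{cor789}: ${\rm{S}}'\otimes_K\overline{\Q}$ is either $\overline{\Q}$-simple or isogenous to ${\rm{U}}\times{\rm{U}}^{\scr h}$ with ${\rm{U}}$ simple over $\overline{\Q}$; the second case makes ${\rm{U}}$ a quotient of $\A$, hence (by Poincar\'e reducibility) isogenous to a simple subgroup of $\A$, so $u\le\dim\,{\rm{U}} = \tfrac12\dim\,{\rm{S}}'\le\tfrac{u}{2}$, impossible. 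Thus ${\rm{S}}'\otimes_K\overline{\Q}$ is $\overline{\Q}$-simple; being a simple quotient of the plurisimple abelian variety $\A$, it is isogenous to a simple abelian subvariety ${\rm{S}}\subset\A$, which is then a positive-dimensional simple subgroup of $\A$ isogenous to a group with model ${\rm{S}}'$ over $K$.

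In the range $n\le u$ the argument is shorter: since ${\rm{B}}$ contains a simple subgroup of $\A$ of dimension $\le g'$ we get $u\le g'$, and together with $g'\le n\le u$ this forces $g' = n = u$, so the smallest simple subgroup of ${\rm{B}}$ has dimension $\dim\,{\rm{B}}$, i.e. ${\rm{B}}$ is simple. Moreover $\dim\,\HHH = n = \dim\,{\rm{B}}$, so Theorem \ref{weakdescent}, applied to $\Psi:\R\to{\rm{B}}(\C)$, shows $\Psi$ descends to $K$ through an isogeny $v:{\rm{B}}\to{\rm{B}}'\otimes_K\overline{\Q}$ with ${\rm{B}}'$ a group variety over $K$, exhibiting ${\rm{B}}\subset\A$ as a positive-dimensional simple subgroup isogenous to a group with model over $K$.

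The conceptual heart — and the step most likely to conceal a subtlety — is the passage from ``$\Psi$ descends (weakly) to $K$'' to ``$\A$ contains a \emph{simple} subgroup with a $K$-model'': the mere existence of a $K$-descent (which holds as soon as $\frak{t}\neq\frak{a}$, by Corollary \ref{E}) is insufficient, since its target could be of the form ${\rm{U}}\times{\rm{U}}^{\scr h}$, and it is precisely the numerical hypotheses on $n$ — converted through the Analytic Subgroup Theorem and Proposition \ref{real} into the bound $\dim\,\A'\le u$, respectively into the equalities forcing ${\rm{B}}$ simple — that exclude this. A secondary point requiring care is the verification that, for $2g-u\le n<2g$, the homomorphism $\Psi$ is automatically Zariski-dense in $\A$, so that Theorems \ref{wus1} and \ref{wus2} apply directly without a preliminary reduction.
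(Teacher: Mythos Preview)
Your proof is correct. (One slip: dividing $\omega$ by $N$ ensures $\xi \neq e$, not that $\xi$ is torsion --- but non-triviality is all you use.)

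Your route differs from the paper's. The paper argues uniformly via the $\delta$-invariant: since $\HHH \subset \A \times \A^{\scr h}$ is plurisimple and every simple factor has dimension $\geq u = u(\A \times \A^{\scr h})$, the hypothesis $n \leq u$ forces $\delta(\HHH) = 1$, while $2g - u \leq n < 2g$ forces a complement of $\HHH$ in $\A \times \A^{\scr h}$ to have dimension $\leq u$ and hence $\delta$-invariant $1$, so $\delta(\HHH) = 2\delta(\A) - 1$. In either case $\delta(\HHH)$ is odd, and Corollary \ref{weakdescentodd} directly produces a weak descent through a quotient that is \emph{simple over $\overline{\Q}$}; plurisimplicity of $\A$ then identifies it with a simple subvariety. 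You instead treat the two ranges by separate mechanisms --- bounding $\dim\,\A'$ through Proposition \ref{real} and excluding the ${\rm{U}} \times {\rm{U}}^{\scr h}$ alternative of Lemma \ref{cor789} by dimension in the high range, and pinning down ${\rm{B}}$ as itself simple in the low range. The paper's parity argument is shorter and handles both cases at once; your argument is more explicit about Zariski-density (which the paper's sketch leaves implicit) and avoids the structural results of Sect.\,4.3.
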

\begin{proof}[Sketch of the proof] Let $\Psi(r) = {\rm{exp}_{\G}}(\omega r)$. As in the proof of Theorem \ref{wus1} (Subsect.\,6.2.1) one shows that $n$ equals the dimension of the Zariski-closure ${\rm{H}} \subset \mathcal{N}_{\overline{\Q}/K}(\G) \otimes_K \overline{\Q} = \A \times \A^{\scr h}$ of $\Psi_{\mathcal{N}}(\R)$.
The algebraic groups $\A \times \A^{\scr h}$ and $\rm{H}$ are plurisimple. Moreover, $u(\A) = u(\A^{\scr h}) = u(\A \times \A^{\scr h})$. So, the hypotheses imply that $\delta(\rm{H})$ is odd. The claim follows then
from Corollary \ref{weakdescentodd} and because $\A$ is plurisimple.\end{proof}

\subsubsection{On Waldschmidt's theorem about the density of rational points}
As mentioned in the introduction, Mazur's conjecture is the motivation for Waldschmidt's article \cite{Walden}. In this paper, Waldschmidt considers first a commutative group variety $\rm{G}$ over a real number field and treats the question when a subgroup $\Gamma \subset \rm{G}^o(\R) \cap \rm{G}(K)$ is dense in the connected component of unity $\rm{G}^o(\R)$ of the real Lie group $\rm{G}(\R)$. In the last part of \cite{Walden} the assumption that $\rm{G}$ is defined over a real subfield is dropped. Instead, the Weil restriction $\mathcal{N}_{\overline{\Q}/K}(\rm{G})$ is introduced which enables to apply the previously established density criterions to subgroups $\Gamma \subset \rm{G}(\overline{\Q})$. The trick is to replace $\Gamma$ by the subgroup $\Gamma_{\mathcal{N}} = \{(\gamma, \gamma^{\scr h}), \gamma \in \Gamma\}$ of $\mathcal{N}_{\overline{\Q}/K}(\rm{G})$.
A spin off of this idea is \cite[Corollary\,5.4 a)]{Walden}:
\begin{theorem}  \label{C} Let $\rm{A}$ be a simple abelian variety of dimension $g$ over $\overline{\Q}$ and let $\Gamma \subset \rm{A}(\overline{\Q})$ be a group of rank
$\geq 4g^{\scriptscriptstyle 2} - 2g + 1$. Then $\Gamma$ is dense in $\rm{A}(\C)$ unless there is an abelian subvariety $\rm{A}' \subset \mathcal{N}_{\overline{\Q}/K}(\rm{G})$ such that
$\Gamma_{\mathcal{N}}/\big(\Gamma_{\mathcal{N}} \cap \rm{A}'(\R)\big)$ has rank $< g^{\scriptscriptstyle 2}-g-1$.
\end{theorem}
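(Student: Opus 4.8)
The plan is to transport the statement to the Weil restriction $\mathcal{N}:=\mathcal{N}_{\overline{\Q}/K}(\rm{A})$, an abelian variety over the real field $K=\overline{\Q}\cap\R$ of dimension $2g$, and to run it against the density criterion for abelian varieties over real subfields underlying Waldschmidt's proof of Conjecture \ref{ConA}. After replacing $\Gamma$ by a finitely generated subgroup of the same rank --- which only strengthens the claim, since density is harder for a smaller group and a subgroup of the same rank induces quotients of the same rank in every abelian quotient of $\mathcal{N}$ --- put $\Gamma_{\mathcal{N}}=\{(\gamma,\gamma^{\scr h}):\gamma\in\Gamma\}$. By Corollary \ref{3.3.1}, $\Gamma_{\mathcal{N}}\subset\mathcal{N}(K)$ and $\gamma\mapsto(\gamma,\gamma^{\scr h})$ is a rank-preserving injection, so ${\rm{rank}}_{\Z}\,\Gamma_{\mathcal{N}}={\rm{rank}}_{\Z}\,\Gamma\ge 4g^{\scr 2}-2g+1$. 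By Proposition \ref{ji} and Corollary \ref{lp333} the projection $p_{\rm{A}}$ restricts to a real-analytic isomorphism $\mathcal{N}(\R)\to\rm{A}(\C)$ carrying $\Gamma_{\mathcal{N}}$ onto $\Gamma$; since $\rm{A}(\C)$ is connected so is $\mathcal{N}(\R)$, and ``$\Gamma$ dense in $\rm{A}(\C)$'' is equivalent to ``$\Gamma_{\mathcal{N}}$ dense in $\mathcal{N}(\R)$''. Hence it suffices to show: if $\Gamma_{\mathcal{N}}$ is not dense in $\mathcal{N}(\R)$, there is a proper abelian subvariety $\rm{A}'\subsetneq\mathcal{N}$ with ${\rm{rank}}_{\Z}\big(\Gamma_{\mathcal{N}}/(\Gamma_{\mathcal{N}}\cap\rm{A}'(\R))\big)<g^{\scr 2}-g-1$.

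Next come the structural facts about $\mathcal{N}$. Being simple, $\rm{A}$ is plurisimple (Poincar\'e), so $\mathcal{N}$ is plurisimple by Proposition \ref{upp1}; and since $\mathcal{N}\otimes_{K}\overline{\Q}=\rm{A}\times\rm{A}^{\scr h}$ is a product of two simple abelian varieties of dimension $g$, we get $\delta(\mathcal{N})\le\delta(\mathcal{N}\otimes_{K}\overline{\Q})=2$. If $\delta(\mathcal{N})=1$ then $\mathcal{N}$ is simple over $K$ of dimension $2g$, and ${\rm{rank}}_{\Z}\,\Gamma_{\mathcal{N}}\ge 4g^{\scr 2}-2g+1=(\dim\mathcal{N})^{\scr 2}-\dim\mathcal{N}+1$ forces $\Gamma_{\mathcal{N}}$ to be dense in $\mathcal{N}(\R)$ by Waldschmidt \cite{Walden}; then $\Gamma$ is dense in $\rm{A}(\C)$ and the ``unless'' clause is not needed. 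So assume $\delta(\mathcal{N})=2$. By Lemma \ref{cor789} together with additivity of the $\delta$-invariant, $\mathcal{N}$ then contains a $g$-dimensional abelian subvariety $\rm{A}_{1}'$ with simple quotient $\rm{A}_{2}'=\mathcal{N}/\rm{A}_{1}'$ of dimension $g$, and a complementary $g$-dimensional subvariety $\tilde{\rm{A}}_{2}\subset\mathcal{N}$ with simple quotient $\mathcal{N}/\tilde{\rm{A}}_{2}$ of dimension $g$.

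Now suppose $\Gamma_{\mathcal{N}}$ is not dense, and let $\rm{C}$ be its closure in the compact torus $\mathcal{N}(\R)$, $\rm{C}^{o}$ the identity component. If the image of $\Gamma_{\mathcal{N}}$ in $(\mathcal{N}/\rm{A}_{1}')(\R)$ --- or in $(\mathcal{N}/\tilde{\rm{A}}_{2})(\R)$ --- is not dense, then by the contrapositive of Waldschmidt's criterion applied to that simple $g$-dimensional abelian variety the corresponding quotient $\Gamma_{\mathcal{N}}/(\Gamma_{\mathcal{N}}\cap\rm{A}_{1}'(\R))$ (resp. $\Gamma_{\mathcal{N}}/(\Gamma_{\mathcal{N}}\cap\tilde{\rm{A}}_{2}(\R))$) has rank bounded by the Masser-type quantity the criterion provides, which one checks is $<g^{\scr 2}-g-1$; in this case $\rm{A}'=\rm{A}_{1}'$ (resp. $\tilde{\rm{A}}_{2}$) witnesses the ``unless'' clause. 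In the remaining case both projections of $\Gamma_{\mathcal{N}}$ are dense, so $\rm{C}^{o}$ surjects onto each $g$-dimensional quotient; being proper, $\rm{C}^{o}$ is then ``graph-like'', the graph of a real-analytic isomorphism between the two quotients that maps the large-rank group $\Gamma_{\mathcal{N}}\cap\rm{C}^{o}$ of algebraic points to algebraic points. Here the transcendence input enters: one must show that ${\rm{Lie}}\,\rm{C}^{o}$ is defined over $K$, a Conjecture \ref{ConD}-type statement which is available precisely because the rank $4g^{\scr 2}-2g+1$ is so large --- it follows by applying Corollary \ref{E} to logarithms of a set of generators of $\Gamma_{\mathcal{N}}\cap\rm{C}^{o}$ and using the rigidity of Corollary \ref{hhh} to glue the resulting $K$-structures. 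Once ${\rm{Lie}}\,\rm{C}^{o}$ is defined over $K$, Corollary \ref{lp333} identifies $\rm{C}^{o}$ with the connected component of unity of $\rm{A}'(\R)$ for an algebraic subgroup $\rm{A}'\subsetneq\mathcal{N}$, and then $\Gamma_{\mathcal{N}}\cap\rm{A}'(\R)$ has finite index in $\Gamma_{\mathcal{N}}$, so the quotient has rank $0<g^{\scr 2}-g-1$, finishing the proof. The main obstacle is precisely this last case: establishing the $K$-rationality of ${\rm{Lie}}\,\rm{C}^{o}$ from the Analytic Subgroup Theorem and, in tandem, tracking the rank bookkeeping so that the gap between $4g^{\scr 2}-2g+1$ and $g^{\scr 2}-g-1$ comes out exactly right.
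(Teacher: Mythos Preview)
The paper does not prove Theorem \ref{C}; it is quoted from Waldschmidt \cite[Corollary 5.4 a)]{Walden} as an existing result, and the paper's own contribution in that subsection is the subsequent corollary, which reformulates the ``unless'' clause in terms of a $K$-subspace $\frak{t}\subset\frak{a}$ rather than an abelian subvariety of the Weil restriction. So there is no proof in the paper to compare against.

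Your sketch does follow Waldschmidt's architecture --- pass to $\mathcal{N}=\mathcal{N}_{\overline{\Q}/K}(\rm{A})$, identify density of $\Gamma$ in $\rm{A}(\C)$ with density of $\Gamma_{\mathcal{N}}$ in $\mathcal{N}(\R)$, and then invoke a density criterion over the real field $K$ --- but two steps are not carried out. First, in the case where a projection of $\Gamma_{\mathcal{N}}$ to a simple $g$-dimensional quotient fails to be dense, the contrapositive of the rank criterion you cite yields only that the projected rank is $\leq g^{2}-g$, not $<g^{2}-g-1$; your ``one checks'' is not a check. Second, and more seriously, in the case where both projections are dense you yourself flag the $K$-rationality of ${\rm{Lie}}\,\rm{C}^{o}$ as ``the main obstacle'' and do not resolve it. Corollaries \ref{E} and \ref{hhh} produce $K$-rational \emph{quotients} of $\rm{A}$ from individual algebraic logarithms; they do not directly deliver a $K$-rational subspace of $\frak{n}$ equal to ${\rm{Lie}}\,\rm{C}^{o}$, and the passage from one to the other is exactly the missing content (indeed, in the paper that passage is what is used to deduce the \emph{corollary} from Theorem \ref{C}, not Theorem \ref{C} itself). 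Waldschmidt's actual argument in \cite{Walden} does not split into your $\delta(\mathcal{N})$-cases: he applies a density criterion valid for arbitrary abelian varieties over $K$ to $\mathcal{N}$ directly, and that criterion outputs the obstructing subvariety $\rm{A}'$ together with the rank bound in one stroke.
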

Using our results, we are able to simplify the condition in Theorem\,\ref{C} concerning $\Gamma_{\mathcal{N}}$.
\begin{corollary} {Let $\rm{A}$ be a simple abelian variety of dimension $g$ over $\overline{\Q}$ and let $\Gamma \subset \rm{A}(\overline{\Q})$ be a group of rank
$\geq 4g^{\scriptscriptstyle 2} - 2g + 1$. Then $\Gamma$ is dense in $\rm{A}(\C)$ unless there is a proper subspace $\frak{t} \subset \frak{a}$ over $K = \overline{\Q} \cap \R$ with the property
that $\Gamma/\big(\Gamma \cap {\rm{exp}}_{{\rm{G}}}\big(\frak{t} \otimes_K \R\big) \big)$ has rank $< g^{\scriptscriptstyle 2}-g-1$.}
 
\end{corollary}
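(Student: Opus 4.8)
The plan is to deduce the corollary from Theorem \ref{C} together with Corollary \ref{mm} (equivalently Corollary \ref{hhh} and Proposition \ref{real}), by translating the condition ``there is an abelian subvariety $\rm{A}' \subset \mathcal{N}_{\overline{\Q}/K}(\rm{A})$ with $\Gamma_{\mathcal{N}}/(\Gamma_{\mathcal{N}} \cap \rm{A}'(\R))$ of small rank'' into the condition ``there is a proper $K$-subspace $\frak{t} \subset \frak{a}$ with $\Gamma/(\Gamma \cap {\rm{exp}}_{\rm{A}}(\frak{t} \otimes_K \R))$ of small rank''. First I would recall that $\mathcal{N} = \mathcal{N}_{\overline{\Q}/K}(\rm{A})$ satisfies $\mathcal{N} \otimes_K \overline{\Q} = \rm{A} \times \rm{A}^{\scr h}$ and that the natural map $\Gamma \longrightarrow \Gamma_{\mathcal{N}}$, $\gamma \mapsto (\gamma, \gamma^{\scr h})$, is an isomorphism of groups which intertwines the projection $p_{\rm{A}}$ with the identity on $\Gamma$; moreover $p_{\rm{A}}$ restricts to an isogeny on any abelian subvariety of $\mathcal{N}$ that projects onto $\rm{A}$.

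For the direction that matters, suppose $\Gamma$ is not dense in $\rm{A}(\C)$. Then by Theorem \ref{C} there is an abelian subvariety $\rm{A}' \subset \mathcal{N}$ with $\Gamma_{\mathcal{N}}/(\Gamma_{\mathcal{N}} \cap \rm{A}'(\R))$ of rank $< g^{2} - g - 1$. Since $\rm{A}$ is simple, $\dim\,\mathcal{N}^o = 2g$ and $\mathcal{N}$ is plurisimple by Proposition \ref{upp1}; either $p_{\rm{A}}(\rm{A}')$ is $0$ — impossible, since then $\rm{A}' \cap \Gamma_{\mathcal{N}}$ would have rank $0$ while $\Gamma_{\mathcal{N}}$ has rank $\geq 4g^{2}-2g+1$, contradicting the rank bound — or $p_{\rm{A}}(\rm{A}') = \rm{A}$, so $\dim\,\rm{A}' \geq g$. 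I would set $\frak{a}' = {\rm{Lie}}\,\rm{A}'$, a $K$-subspace of ${\rm{Lie}}\,\mathcal{N}$, and define $\frak{t} = (p_{\rm{A}})_{\ast}(\frak{a}' \otimes_K \R) \cap$ (the canonical image of $\frak{a}$), more precisely $\frak{t}$ the smallest $K$-subspace of $\frak{a}$ with $(p_{\rm{A}})_{\ast}(\frak{a}') \subset \frak{t} \otimes_K \R$; by Corollary \ref{lp333} and the description of ${\rm{Lie}}\,\mathcal{N}$ in Lemma \ref{WR2}, $p_{\rm{A}}$ carries $\rm{A}'(\R)^o$ onto ${\rm{exp}}_{\rm{A}}(\frak{t} \otimes_K \R)$. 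Then $\Gamma \cap {\rm{exp}}_{\rm{A}}(\frak{t} \otimes_K \R)$ contains the image of $\Gamma_{\mathcal{N}} \cap \rm{A}'(\R)$ up to finite index, so $\Gamma/(\Gamma \cap {\rm{exp}}_{\rm{A}}(\frak{t} \otimes_K \R))$ has rank $< g^{2}-g-1$ as well; finally $\frak{t} \neq \frak{a}$, since otherwise this quotient would be all of $\Gamma$, again contradicting the rank bound, using that ${\rm{exp}}_{\rm{A}}(\frak{a} \otimes_K \R) = \rm{A}(\C)$ and $\Gamma \subset \rm{A}(\overline{\Q})$ is torsion-free of rank $\geq 4g^{2}-2g+1 \geq g^{2}-g-1$.

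Conversely, suppose a proper $K$-subspace $\frak{t} \subset \frak{a}$ has $\Gamma/(\Gamma \cap {\rm{exp}}_{\rm{A}}(\frak{t} \otimes_K \R))$ of rank $< g^{2}-g-1$. I would promote $\frak{t}$ to $\frak{t}_{\mathcal{N}} = \{(v,v^{\scr h}); v \in \frak{t} \otimes_K \R\}$, a $K$-subspace of ${\rm{Lie}}\,\mathcal{N}$ of the same dimension (cf.\ Lemma \ref{op=}), exponentiate to get a connected algebraic subgroup whose connected component is an abelian subvariety $\rm{A}' \subset \mathcal{N}$ with $p_{\rm{A}}$-image an isogeny onto the abelian subvariety of $\rm{A}$ cut out by $\frak{t}$; by Corollary \ref{lp333} and $\rho_{\ast}$ fixing real points, $\Gamma_{\mathcal{N}} \cap \rm{A}'(\R)$ maps onto $\Gamma \cap {\rm{exp}}_{\rm{A}}(\frak{t} \otimes_K \R)$ with finite kernel and cokernel, so $\Gamma_{\mathcal{N}}/(\Gamma_{\mathcal{N}} \cap \rm{A}'(\R))$ has rank $< g^{2}-g-1$; Theorem \ref{C} then forbids density. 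The main obstacle I anticipate is the bookkeeping in the first direction: showing that a general abelian subvariety $\rm{A}' \subset \mathcal{N}$ produced by Theorem \ref{C} can be replaced, without increasing the quotient rank, by one of the special form coming from a $K$-subspace of $\frak{a}$ — i.e.\ that one may assume $\rm{A}'$ is ``symmetric'' under the Galois action, which is where simplicity of $\rm{A}$, the plurisimplicity of $\mathcal{N}$, and an argument in the spirit of Corollary \ref{hhh} (Zariski-density of $\Gamma_{\mathcal{N}}$ forces $\rm{A}'$ to be defined over $K$ up to isogeny once its projection is onto) must be combined carefully.
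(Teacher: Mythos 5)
Your proposal and the paper's proof part ways on which implication carries the weight, and the implication the paper actually proves is the one your sketch gets wrong. The paper reduces the corollary to the single statement $(\ast)$: \emph{if} a proper $K$-subspace $\frak{t} \subset \frak{a}$ exists with $\Gamma/\big(\Gamma \cap {\rm{exp}}_{\rm{A}}(\frak{t} \otimes_K \R)\big)$ of rank $< g^{2}-g-1$, \emph{then} there is an abelian subvariety ${\rm{A}}' \subset \mathcal{N}_{\overline{\Q}/K}({\rm{A}})$ with $\Gamma_{\mathcal{N}}/\big(\Gamma_{\mathcal{N}} \cap {\rm{A}}'(\R)\big)$ of rank $< g^{2}-g-1$ --- exactly the implication you relegate to your closing ``Conversely'' paragraph. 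There your argument would fail: you propose to promote $\frak{t}$ to $\frak{t}_{\mathcal{N}}$ and ``exponentiate to get a connected algebraic subgroup whose connected component is an abelian subvariety''. The exponential of a $K$- (or $\R$-) subspace of the Lie algebra is in general only a real Lie subgroup of ${\rm{A}}(\C)$, typically neither closed nor algebraic; and since ${\rm{A}}$ is simple there is no ``abelian subvariety of ${\rm{A}}$ cut out by $\frak{t}$'' at all. Manufacturing ${\rm{A}}'$ is precisely where the arithmetic input enters: the paper takes the points $\xi_1,\dots,\xi_l$ of a finite-index free subgroup of $\Gamma$ lying in ${\rm{exp}}_{\rm{A}}(\frak{t} \otimes_K \R)$, applies Theorem \ref{wus2} (with $\pi = id.$) and simplicity of ${\rm{A}}$ to obtain descent isogenies $v_j: {\rm{A}} \longrightarrow {\rm{A}}'_j \otimes_K \overline{\Q}$, uses Theorem \ref{wus1} to pin down $\dim\,\frak{t}_j = g$ and Corollary \ref{hhh} to force all the minimal subspaces $\frak{t}_j$ to coincide, and then realizes ${\rm{A}}'$ as the image of the functorial map $\mathcal{N}(w)$ for an inverse isogeny $w$, checking the rank bound through $\tilde{\Gamma} = \mathcal{N}(w)\big(w^{\scriptscriptstyle -1}(\Gamma)\big)$. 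None of this transcendence/descent machinery appears in your sketch, and without it that step is simply false.

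The direction you elaborate carefully (from ${\rm{A}}'$ to $\frak{t}$ via ${\rm{Lie}}\,{\rm{A}}'$, $(p_{\rm{A}})_{\ast}$ and Corollary \ref{lp333}) is formally sound bookkeeping, but it is the easy passage from an algebraic subgroup to its Lie algebra, not the content the paper's proof supplies; the hard passage is the reverse one. Even within that part you have slips: the properness argument is backwards --- if $\frak{t} = \frak{a}$ then $\Gamma/\big(\Gamma \cap {\rm{exp}}_{\rm{A}}(\frak{t} \otimes_K \R)\big)$ is trivial, not all of $\Gamma$, so smallness of the quotient yields no contradiction; properness must come from ${\rm{A}}'$ being a proper subvariety of $\mathcal{N}$ (which has to be read into Theorem \ref{C}, since ${\rm{A}}' = \mathcal{N}$ renders its exceptional clause vacuous). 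Likewise ``Theorem \ref{C} then forbids density'' misstates that theorem, which never asserts non-density when its exceptional condition holds, and $\Gamma$ is not assumed torsion-free. In short, relative to the paper the essential idea --- converting the analytic datum $\frak{t}$, through the algebraic points of $\Gamma$ it carries, into an actual abelian subvariety of the Weil restriction by means of Theorems \ref{wus1}, \ref{wus2} and Corollary \ref{hhh} --- is missing from your proposal.
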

\begin{proof} Define $\Gamma_{\mathcal{N}} = \left\{(\gamma, \gamma^{\scriptscriptstyle h}), \gamma \in \Gamma\right\}$. Because of Theorem\,\ref{C} we need to show
\begin{center}
 \begin{tabular}{cp{13cm}}
  $(\ast)$ & \textit{If a proper subspace $\frak{t}$ over $K$ exists such that $\Gamma/\big(\Gamma \cap {\rm{exp}}_{{\rm{G}}}\big(\frak{t} \otimes_K \R\big) \big)$ has rank $< g^{\scriptscriptstyle 2}-g-1$,
then there is an abelian subvariety ${\rm{A}}' \subset \mathcal{N}_{\overline{\Q}/K}(\A)$ such that $\Gamma_{\mathcal{N}}/\big(\Gamma_{\mathcal{N}} \cap {\rm{A}}'(\R) \big)$ has rank $< g^{\scriptscriptstyle 2}-g-1$.}
 \end{tabular}
\end{center}
It suffices to prove $(\ast)$ for a free subgroup $\Gamma' =\Z\xi_1 + ...+ \Z\xi_k$ of finite index in $\Gamma$. We may even assume that $\xi_1,..., \xi_l \in {\rm{exp}}_{\rm{G}}\big(\frak{t} \otimes_K \R\big)$ for some $l \leq k$ such that $0 \leq k - l < g^{\scriptscriptstyle 2}-g-1$.\\
For $j = 1,..., l$ let $\omega_j \in \frak{t} \otimes_K \R$ be an algebraic logarithm of $\xi_j$. Let $\frak{t}_j \subset \frak{t}$ be the smallest vector space over $K$ such that $\omega_j \in \frak{t}_j \otimes_K\R$. 
We apply Theorem \ref{wus2} to $\G = \A$, $\pi = id.$, $\Psi_j(r) = {\rm{exp}}_{{\rm{G}}}\big(\omega_j r)$ and $\frak{t}_j$. We infer using simplicity of $\A$ that
$\Psi_j$ descends to $K$ via an isogeny $v_j: \A \longrightarrow \A'_j \otimes_K F$. In particular, $(v_j)_{\ast}\frak{t}_j \subset \frak{a}_j'$, so that $\dim\,\frak{t}_j \leq \dim\,\A'$. Theorem \ref{wus1} yields that $\dim\,\frak{t}_j = \dim\,\A$. 
In particular,
$$\dim_{K}\frak{a} = 2\dim\,\A = \dim\,\frak{t}_j + \dim\,\frak{t}_i$$
for all $i,j = 1,..., l$. So, 
 $$\dim\,\frak{t} < \dim_{K}\frak{a} = \dim\,\frak{t}_j + \dim\,\frak{t}_i$$
for all $i,j = 1,..., l$. Thus, $\frak{t}_i \cap \frak{t}_j \neq \{0\}$ for all $i,j = 1,..., l$. Corollary\,\ref{hhh} implies that $\frak{t}_i  = \frak{t}_j$ for all $i,j = 1,..., l$. It follows that 
\begin{equation} v_1(\xi_j) \in \A_1'(K)
\end{equation}
for all $j = 1,..., l$. Let $w: \A_1' \otimes_K F \longrightarrow \A$ be an isogeny
such that $v_1 \circ w$ is multiplication with an integer. Since $\A'_1$ is simple, the functorially induced morphism
$\mathcal{N}(w): \A'_1 \longrightarrow \mathcal{N}_{\overline{\Q}/K}(\A)$ defines an isogeny onto its image $\A' = \mathcal{N}(w)\big(\A_1')$.
Statement (7.1.3) implies that $\Gamma_l = w^{\scr -1}(\Z\xi_1 + ... + \Z \xi_l) \cap \A'_1(K)$ has rank l. As the isogeny $\mathcal{N}(w)$ is defined over $K$, the group $\mathcal{N}(w)\big(\Gamma_l) \subset \A'(K)$
has rank $l$, too. Define $\tilde{\Gamma} = \mathcal{N}(w)\big(w^{\scr -1}(\Gamma)\big)$. The previous implies that the group $\tilde{\Gamma}/\big(\tilde{\Gamma} \cap \A'(\R)\big)$ has rank $\leq k-l$. Moreover, as $w$ is an isogeny and as $\mathcal{N}(w) \otimes_K \overline{\Q} = (w, w^{\scr h})$, $\Gamma_{\mathcal{N}}$ is a cofinite subgroup of $\tilde{\Gamma}$.
Assertion $(\ast)$ follows.\end{proof}
\subsubsection{Solution to Exercise \ref{opu}} If $\frak{t} \neq \frak{g}$, then Theorem \ref{wus2} applied with $\pi = id.$ and $\U = \G$ yields that $\Psi$ descends weakly to $K$.
Since $\A$ and $\rm{B}$ are disjoint, each positive dimensional quotient of $\G$ has dimension $\geq 2$. It follows from Proposition \ref{real} that
$\dim\,\frak{t} \leq 8$. Assume that $p_{\ast}(\Psi)$ descends to $K$, but $q_{\ast}(\Psi)$ does not. Then $\dim\,\frak{t} = 8$ for otherwise Theorem \ref{wus2} applied with $\pi = q$ and $\U = \rm{B}$
would imply that $q_{\ast}(\Psi)$ descends to $\R$. Next suppose that $q_{\ast}(\Psi)$ descends to $K$, but $p_{\ast}(\Psi)$ does not. Reversing the roles of $\A$ and $\rm{B}$
and applying Proposition \ref{real}, we infer that $\dim\,\frak{t} = 7$. Finally if $p_{\ast}(\Psi)$ and $q_{\ast}(\Psi)$ descend to $K$, then $\dim\,\frak{t} \leq 2 + 3 = 5$ by Proposition \ref{real}, and Theorem \ref{wus1} with $\pi = id.$ and $\U = \G$ implies that $\Psi$ descends to $K$.
From the same theorem we deduce then that $\dim\,\frak{t} = 5$.

\section{Real-analytic generalizations of the six exponentials theorem} In this section we state some nice consequences of Theorem \ref{Schn3} which is inspired by the six exponentials theorem from Sect.\,1.2.
\begin{corollary} \label{ty} Let $\Lambda$ be a lattice in $\C$ with algebraic invariants and let $a_j$, $j = 1,2,3$, be three complex numbers not in $\Lambda$ which are linearly
independent over $\Z$, but collinear over $\R$. Then, for all non-zero complex numbers $b$ such that either $b/|b|$ is transcendental or $ba_1 \notin \R \cup i\R$, at least one among the six numbers $e^{ba_j}$, $\wp(a_j)$ is not algebraic. 
\end{corollary}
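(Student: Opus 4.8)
The plan is to set this up as an instance of Theorem~\ref{Schn3} (the weak-descent version of Schneider's method) applied to a suitable group variety, and then to rule out the resulting weak descent by the hypotheses on $b$ and on $\Lambda$. After replacing $a_1, a_2, a_3, b$ by $1, a_2/a_1, a_3/a_1, ba_1$ we may assume $a_1, a_2, a_3$ are real and linearly independent over $\Z$ (the hypothesis becomes ``$b/|b|$ transcendental or $b \notin \R \cup i\R$'', with the new $b = ba_1$). Assume for contradiction that all six numbers $e^{ba_j}, \wp(a_j)$ are algebraic. Write $b = s + it$ with $s = Re\,b$, $t = Im\,b$. Set $\G = \rm{E} \times \mathbb{G}_{m,\overline{\Q}}^2$ (a linear-times-elliptic group, so in the decomposition $\G \backsimeq \G_c \times \mathbb{G}_{a}^{{\rm{g}}_a} \times \mathbb{G}_{m}^{{\rm{g}}_m}$ we have $\G_c = \rm{E}$, ${\rm{g}}_a = 0$, ${\rm{g}}_m = 2$), let $\pi: \G \longrightarrow {\rm{U}} = \mathbb{G}_{m,\overline{\Q}}^2$ be the projection, and consider $\Psi(r) = \big(\wp(a_1 r),\, e^{2s r},\, e^{2it r}\big)$, using the parametrization $e^{ir}$ of $\mathbb{S}(\R) \subset \mathbb{G}_m(\C)$ as in Sect.~1.2 for the second multiplicative factor. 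Then $\ker\,\pi = \rm{E}$ is an abelian variety while every quotient $\rm{M}$ of $\ker\,\pi$ is an abelian variety, so ${\rm{Hom}}({\rm{U}}^{\scriptscriptstyle h}, \rm{M}) = \{0\}$ automatically since ${\rm{U}}^{\scriptscriptstyle h}$ is a torus.

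Next I would compute the invariant $\rm{r}$ and check the inequality of Theorem~\ref{Schn3} (with the improvement from Remark~\ref{Schn3} for linear groups — but here $\G$ is not linear, so I use Theorem~\ref{Schn2}/\ref{Schn3} directly, or rather I should drop the elliptic factor out of $\rm{U}$ and keep it inside $\ker\pi$). One must verify $\Psi$ has Zariski-dense image: the three numbers $a_1, a_2, a_3$ being $\Z$-linearly independent reals forces $s, t$ not both zero in a way making the torus part dense, and $\wp(a_1 \cdot)$ is dense since $a_1 \notin \Lambda_{\Q}$. For the rank, the key point is that if all $e^{ba_j}$ are algebraic then, since $a_j$ is real, both $e^{2s a_j}$ and $e^{2it a_j}$ are algebraic (because $e^{ba_j} = e^{s a_j}e^{it a_j}$ with $|e^{ba_j}|^2 = e^{2s a_j}$ real algebraic, hence $e^{it a_j}$ algebraic too up to taking care of absolute values — this is exactly the classical ``conjugate numbers'' trick of Diaz). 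Also $\wp(a_j)$ algebraic means $\Psi(a_j/a_1) \in \G(\overline{\Q})$ after rescaling, giving contributions to $\rm{r}$. Together with the twin $\Psi_{[i]}$ one accumulates enough algebraic points so that $\rm{r} \geq 5$ or so; then $({\rm{r}} - 2)(\dim\,\G + \dim\,{\rm{U}}) = ({\rm{r}}-2)(3 + 2) \geq {\rm{r}} - 2 = {\rm{r}} - (2\cdot 0 + 2) - 0 + 1 + 1$ — I would chase the precise arithmetic to confirm the inequality ${\rm{r}}$ versus ${\rm{r}} - (2{\rm{g}}_a + {\rm{g}}_m) - {\rm{k}} + 1$ holds with the actual count (here ${\rm{k}} = 0$).

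With the hypotheses of Theorem~\ref{Schn2} verified, $\pi_{\ast}(\Psi)(r) = \big(e^{2sr}, e^{2itr}\big)$ descends weakly to $K$. Now I invoke Proposition~\ref{real}: a weak descent of $\pi_{\ast}(\Psi)$ via $v: {\rm{U}} \longrightarrow {\rm{U}}' \otimes_K F$ forces the ``defect'' $\dim_K \frak{u} - \dim\,\frak{t}_{\rm{U}}$ to be at least $\dim\,{\rm{U}}' \geq 1$, where $\frak{t}_{\rm{U}} \subset \frak{u}$ is the smallest $K$-subspace with $(\pi_{\ast}\Psi)_{\ast}(\R) \subset \frak{t}_{\rm{U}} \otimes_K \R$. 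Here $\frak{u} \cong \overline{\Q}^2$ with $(\pi_{\ast}\Psi)_{\ast}(r) = (2sr, 2tr)$ in coordinates adapted to the two tori, so $\dim_K \frak{u} = 4$ and $\dim\,\frak{t}_{\rm{U}} = \dim_{\overline{\Q}}\{s, t\} \cdot$ (something); a positive defect translates, after spelling out Weil-restriction coordinates as in Sect.~1.2, into the statement that $\R s$ and $\R(it)$ together span a space of $K$-dimension $\leq 1$ inside $\C^2$, i.e. essentially that $b = s + it$ satisfies $s = 0$ or $t = 0$ or $s/t \in K$ — i.e. $b \in \R$, or $b \in i\R$, or $b/|b| \in \overline{\Q}$. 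Combined with the improved bound from Remark~\ref{Schn3} (which applies once one arranges $\rm{U}$ to absorb the torus part and recognizes the linear structure), one further needs to trace exactly which of ``$b/|b|$ algebraic'' versus ``$b \in \R \cup i\R$'' comes out. The main obstacle I anticipate is precisely this last bookkeeping: correctly threading the Weil-restriction coordinate description of $\frak{t}_{\rm{U}}$ (Sect.~1.2, Lemma~6.1.x) through Proposition~\ref{real} to show that weak descent is equivalent to ``$b/|b|$ algebraic or $b a_1 \in \R \cup i\R$'' — and then the contradiction with the hypothesis is immediate. A secondary subtlety is making sure the rank count $\rm{r}$ genuinely meets the inequality; if it falls just short one may need to enlarge $\G$ by an additional copy of $\mathbb{G}_m$ parametrizing $e^{s r}$ (not just $e^{2sr}$) to harvest more algebraic points, exactly as in the six-exponentials setup.
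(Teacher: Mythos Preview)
Your setup has a genuine gap that makes the argument vacuous. By pre-splitting $b = s + it$ and taking $\Psi(r) = \big(\wp(a_1 r),\, e^{2sr},\, e^{2itr}\big)$, you have arranged that the torus part $\pi_{\ast}(\Psi)(r) = \big(e^{2sr}, e^{2itr}\big)$ already takes values in $\mathbb{G}_{m,K}(\R) \times \mathbb{S}_K(\R)$, since $s$ and $t$ are real. Hence $\pi_{\ast}(\Psi)$ descends to $K$ \emph{automatically}, for every $b$, and the conclusion of Theorem~\ref{Schn2} carries no information. Concretely, in the coordinates you write down the defect $\dim_K\frak{u} - \dim\,\frak{t}_{\rm U}$ is always at least $2$ (it is $4-2$ when $s,t$ are $K$-independent, $4-1$ otherwise), so your claim that ``positive defect translates into $s=0$ or $t=0$ or $s/t\in K$'' is false. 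The Diaz trick of passing to $e^{2sr}$ and $e^{2itr}$ is what the Weil-restriction machinery does \emph{internally}; doing it by hand beforehand defeats the purpose.

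The paper's proof is shorter and avoids this. It does \emph{not} normalise the $a_j$ and does \emph{not} split $b$: take $\G = \mathbb{G}_{m,\overline{\Q}} \times {\rm E}$ (dimension $2$) and $\Psi(r) = \big(e^{ba_1 r},\, \wp(a_1 r)\big)$. Since $a_j/a_1 \in \R$ and all six values are assumed algebraic, $\Psi(a_j/a_1) \in \G(\overline{\Q})$, so ${\rm r}\geq 3$. Theorem~\ref{Schn1} (the \emph{strong}-descent version) with $\pi = id.$, ${\rm g}_m = 1$, ${\rm g}_a = 0$, ${\rm k} \geq 0$ gives $({\rm r}-2)\cdot 2 \geq 3 - 1$, so $\Psi$ itself descends to $K$. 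Now Theorem~\ref{inherited} applied to each projection (kernels ${\rm E}$ resp.\ $\mathbb{G}_m$ have no homs to $\mathbb{G}_m^{\scriptscriptstyle h}$ resp.\ ${\rm E}^{\scriptscriptstyle h}$) forces both $r\mapsto e^{ba_1 r}$ and $r\mapsto \wp(a_1 r)$ to descend. The first gives $ba_1 \in \R \cup i\R$ (Example~\ref{exmpl1}/Corollary~\ref{lp}); the second gives $a_1/|a_1|$ algebraic (as in Sect.~7.1). Hence $b/|b| = (ba_1/|ba_1|)\cdot (a_1/|a_1|)^{-1}$ is algebraic, and both alternatives in the hypothesis on $b$ fail --- contradiction. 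The essential point you missed: keep $e^{ba_1 r}$ \emph{un-split} inside $\Psi$, so that its descent becomes a nontrivial constraint on $ba_1$ rather than a tautology.
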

\begin{proof} Suppose that all six numbers are algebraic. Set $\G = \mathbb{G}_{m, \overline{\Q}} \times \rm{E}$ and let $p$ (resp.\,$q$) denote the projection to $\mathbb{G}_{m, \overline{\Q}}$ (resp.\,to $\rm{E}$).
Then one deduces from Theorem \ref{Schn1} applied with ${\rm{g}}_m = 1, {\rm{g}}_a = 0$, ${\rm{r}} \geq 3$ and ${\rm{k}} \geq 0$ that the homomorphism $\Psi(r) = \big(e^{b a_1 r}, \wp(a_1 r)\big)$ to the set of complex points of $\G$ descends to $\R$.\footnote{Recall that we identify $\wp(z)$ with the exponential map of $\rm{E}$ for brevity.} Because of Theorem \ref{inherited} the latter is only possible if the two homomorphisms $p_{\ast}(\Psi)$ and $q_{\ast}(\Psi)$ both descend to $\R$. We saw in the previous section that if $q_{\ast}(\Psi)$ descends to $\R$, then $a_1/|a_1|$ is algebraic $(\ast)$.
It follows from Corollary \ref{lp} that if $p_{\ast}(\Psi)$ descends to $\R$, then $ba_1 \in \R \cup i\R$\,$(\ast\ast)$. The assertions $(\ast)$ and $(\ast\ast)$ imply that $b/|b|$ is algebraic. The claim results by contraposition.\end{proof}
Next we prove a variation of the six exponential theorem for elliptic curves. 
\begin{corollary} \label{tyy} Let $\Lambda_1$ and $\Lambda_2$ be two lattices in $\C$ with algebraic invariants. Let $a_j$, $j = 1,2,3$, be three complex numbers not in $\Lambda_1 \cup \Lambda_2$ which are linearly
independent over $\Z$, but collinear over $\R$. Suppose that $\Lambda_2$ is neither isogenous to $\Lambda_1$ nor to $\Lambda_2^{\scr h}$.
Then, for all complex numbers $b$ such that $ba_1/|ba_1|$ is transcendental, at least one among the six values $\wp_1(ba_j)$, $\wp_2(a_j)$ is defined and not algebraic. 

\end{corollary}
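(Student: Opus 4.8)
Setup. Put $F=\overline{\Q}$, $K=F\cap\R$, let $\rm{E}_1,\rm{E}_2$ be the elliptic curves with lattices $\Lambda_1,\Lambda_2$ (realized by their Weierstra\ss\ models over $F$), and set $\G=\rm{E}_1\times\rm{E}_2$ with projections $p\colon\G\to\rm{E}_1$, $q\colon\G\to\rm{E}_2$. Since the $a_j$ are collinear over $\R$ we may write $a_j=t_j a_1$ with $t_j\in\R$, and the $t_j$ are $\Z$-linearly independent because the $a_j$ are; also $a_1\neq 0$ since $a_1\notin\Lambda_1\cup\Lambda_2$. Consider the real-analytic homomorphism $\Psi(r)=\bigl(\wp_1(ba_1 r),\ \wp_2(a_1 r)\bigr)\colon\R\to\G(\C)$, so that $\Psi(t_j)=(\wp_1(ba_j),\wp_2(a_j))$ for $j=1,2,3$. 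First I would check that $\Psi$ has Zariski-dense image: neither coordinate is constant (as $a_1\neq0$, $b\neq0$), and $\G=\rm{E}_1\times\rm{E}_2$ has no connected proper subgroup projecting onto both factors precisely because $\Lambda_1$ is not isogenous to $\Lambda_2$; this is where the hypothesis $\Lambda_2\nsim\Lambda_1$ is used.

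Main argument. Assume for contradiction that all six values $\wp_1(ba_j),\wp_2(a_j)$ are finite and algebraic. Then $\Psi(t_j)\in\G(\overline{\Q})$, so the invariant $\rm{r}$ of Section 2.5 attached to $\Psi$ satisfies $\rm{r}\ge{\rm{rank}}_{\Z}\,\Psi^{-1}(\G(\overline{\Q}))\ge3$. Let $\mathcal{N}=\mathcal{N}_{\overline{\Q}/K}(\G)$, let $\rm{H}\subset\mathcal{N}\otimes_K\overline{\Q}$ be the Zariski closure of $\Psi_{\mathcal{N}}(\R)$, and let $\Phi_{\mathcal{N}}$ be the complexification of $\Psi_{\mathcal{N}}$; by Lemma \ref{rank}, $\Phi_{\mathcal{N}}$ has Zariski-dense image in $\rm{H}(\C)$ with rank of algebraic logarithms at least $\rm{r}\ge3$. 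As $\rm{H}$ is a subgroup of a product of elliptic curves it has no unipotent or multiplicative part, so Theorem \ref{1.1.1} (i.e. the mechanism underlying Theorem \ref{Schn1}) forces $\dim\rm{H}\le3$: if $\dim\rm{H}=4=2\dim\G$ then $3\cdot(4-1)-1=8\ge8=2\cdot4$, contradicting Zariski-density of $\Phi_{\mathcal{N}}$ in $\rm{H}(\C)$. Hence $\dim\rm{H}<2\dim\G$, and by Theorem \ref{weakdescent} the homomorphism $\Psi$ descends weakly to $K$, say via $v\colon\G\to\G'\otimes_K\overline{\Q}$ with $\dim\G'\ge1$.

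Extracting the contradiction. Passing to a quotient of $\G'$ that is simple over $K$, we get a surjection $\G\to S\otimes_K F$ with $S$ simple over $K$ and $v_{\ast}(\Psi)(\R)$ landing in $S(\R)$; by Lemma \ref{cor789}, $S\otimes_K F$ is simple over $F$ or isogenous to some $U\times U^{\scr h}$, and being a quotient of $\rm{E}_1\times\rm{E}_2$ it is, up to isogeny, $\rm{E}_1$, $\rm{E}_2$, or $\rm{E}_1\times\rm{E}_2$. If $S\otimes_K F\sim\rm{E}_2$, then $\rm{E}_2$ is isogenous to a curve with model over $K=\overline{\Q}\cap\R$, hence to $\rm{E}_2^{\scr h}$ by Proposition \ref{wu11}, contradicting $\Lambda_2\nsim\Lambda_2^{\scr h}$. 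If $S\otimes_K F\sim\rm{E}_1$, then $p_{\ast}(\Psi)(r)=\wp_1(ba_1 r)$ descends to $K$, so by Proposition \ref{real} the real line $\R\cdot ba_1$ is $K$-rational in $\frak{e}_1(\C)=\C$; since the $K$-structure on $\frak{e}_1(\C)$ is given by real and imaginary parts, this means $ba_1/|ba_1|\in\overline{\Q}$, contrary to hypothesis. If $S\otimes_K F\sim\rm{E}_1\times\rm{E}_2$, then $S\otimes_K F$ is not simple over $F$, so $S\otimes_K F\sim U\times U^{\scr h}$ with $U$ simple over $F$; as $\rm{E}_1\nsim\rm{E}_2$ this forces $U\sim\rm{E}_1$, $U^{\scr h}\sim\rm{E}_2$ (or vice versa), i.e. $\Lambda_2\sim\Lambda_1^{\scr h}$. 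Under the hypothesis $\Lambda_2\nsim\Lambda_1^{\scr h}$ this last case does not arise, so in that generic situation we are done; alternatively one invokes Corollary \ref{321} directly, whose exceptional clause is exactly ``$\rm{E}_1$ isogenous to $\rm{E}_2^{\scr h}$''.

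Main obstacle. The delicate case is $\Lambda_2\sim\Lambda_1^{\scr h}$ (equivalently $\rm{E}_1\sim\rm{E}_2^{\scr h}$), which is not excluded by the stated hypotheses. There the weak descent of $\Psi$ can only go through the abelian surface $\mathcal{N}_{\overline{\Q}/K}(\rm{E}_1)$, which is simple over $K$ (its base change being $\rm{E}_1\times\rm{E}_1^{\scr h}$ with $\rm{E}_1\nsim\rm{E}_1^{\scr h}$ forced, since otherwise $\rm{E}_2\sim\rm{E}_2^{\scr h}$ via the isogeny $\rm{E}_2\sim\rm{E}_1^{\scr h}$). In this case $v_{\ast}(\Psi)$ is a real-analytic homomorphism into $\mathcal{N}_{\overline{\Q}/K}(\rm{E}_1)(\R)$ with three $\Z$-independent algebraic points lying along a line whose direction $c$ satisfies $c/|c|\in\overline{\Q}\cdot(ba_1/|ba_1|)$, hence is transcendental; one must then show separately that $\mathcal{L}_{\Lambda_1}$ cannot carry three $\Z$-independent points on such a line, which I would handle by re-running the descent argument of the previous paragraph for $\rm{E}_1$ in place of $\rm{E}_2$ (using $\rm{E}_1\nsim\rm{E}_1^{\scr h}$ and the transcendence of $ba_1/|ba_1|$). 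Tracking the directions faithfully through these isogenies, and confirming that this reduction closes the loop rather than returning to the original problem, is the step I expect to require the most care.
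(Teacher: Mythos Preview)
Your approach is genuinely different from the paper's. The paper does not work with the two-dimensional group $\G={\rm E}_1\times{\rm E}_2$; instead it sets
$\G={\rm E}_1\times{\rm E}_2\times{\rm E}_2^{\scr h}$, takes $\pi\colon\G\to{\rm U}={\rm E}_1$ the first projection, and uses
$\Psi=\Psi_1\times(\Psi_2)_{\mathcal N}$ where $\Psi_1(r)=\wp_1(ba_1r)$ and $\Psi_2(r)=\wp_2(a_1r)$. With ${\rm g}_a={\rm g}_m=0$, $\dim\G=3$, $\dim{\rm U}=1$ and ${\rm r}\ge3$ one has $({\rm r}-2)(\dim\G+\dim{\rm U})=4\ge {\rm r}+1$, so Theorem~\ref{Schn2} applies directly and yields that $\pi_\ast(\Psi)=\Psi_1$ descends weakly (hence, as $\dim{\rm E}_1=1$, strongly) to $K$; then $ba_1/|ba_1|$ is algebraic by Proposition~\ref{real}, a contradiction. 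The point of folding $(\Psi_2)_{\mathcal N}$ into $\G$ from the start is precisely to avoid your trichotomy on the target of the weak descent: the machinery of inherited descent (Theorem~\ref{inherited1}) pushes everything onto ${\rm E}_1$ in one stroke.

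You have, however, put your finger on a real subtlety. The hypothesis needed for Theorem~\ref{Schn2} is ${\rm Hom}\big({\rm U}^{\scr h},M\big)=0$ for every quotient $M$ of $\ker\pi={\rm E}_2\times{\rm E}_2^{\scr h}$, and with ${\rm U}={\rm E}_1$ this requires both ${\rm E}_1\nsim{\rm E}_2$ (granted) and ${\rm E}_1^{\scr h}\nsim{\rm E}_2$, i.e.\ ${\rm E}_1\nsim{\rm E}_2^{\scr h}$, which is exactly the case you flagged as the main obstacle and which is not excluded by the stated hypotheses. So the paper's argument, as written, shares this lacuna. Your proposed fix for it is not yet a proof: ``re-running the descent argument for ${\rm E}_1$'' would need, at minimum, an independent transcendence input to bound $\dim{\rm H}$ further, and your sketch does not supply one. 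A cleaner partial remedy in your framework is to invoke Corollary~\ref{weakdescentodd}: if $\dim{\rm H}=3$ then $\delta({\rm H})=3$ is odd and the weak descent factors through a simple (over $F$) quotient of ${\rm E}_1\times{\rm E}_2$, which is ${\rm E}_1$ or ${\rm E}_2$, and you are back in your two good cases. What survives is the possibility $\dim{\rm H}=2$ with ${\rm H}'$ isogenous to the simple $K$-surface $\mathcal N_{F/K}({\rm E}_1)$; there your direction computation gives $ba_1=\gamma\,\overline{a_1}$ with algebraic $\gamma$, and one still has to extract a contradiction from the six algebraic values under this constraint. That final step is not obvious and is not addressed by your sketch.
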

The statement of this corollary is similar to the statement of the previous one, but the strategy of proof is rather different.
\begin{proof} We define $\Psi_1: \R \longrightarrow \rm{E}_1(\C)$ to be the homomorphism $\Psi_1(r) = \wp_j(ba_1r)$ and let $\Psi_2: \R \longrightarrow \rm{E}_2(\C)$ be the homomorphism $\Psi_2(r) = \wp_2(a_1r)$.
Moreover, we set $\G = {\rm{E}}_1 \times {\rm{E}}_2 \times {\rm{E}}_2^{\scr h} $ and ${\rm{U}} = {\rm{E}}_1$. We consider the projection $\pi: \G \longrightarrow \rm{U}$ and define $\Psi = \Psi_1 \times (\Psi_2)_{\mathcal{N}}$.
The hypotheses imply that the image of $\Psi$ is Zariski-dense in $\G(\C)$. If now all six numbers in question are algebraic, then the group $\Psi^{\scr - 1}\big(\G(\overline{\Q})\big)$ has rank $\geq 3$. In this situation it follows from Theorem \ref{Schn2} applied with ${\rm{g}}_m = {\rm{g}}_a = 0$, ${\rm{r}} \geq 3$ and ${\rm{k}} \geq 0$ that $\Psi_1 = \pi_{\ast}(\Psi)$ descends to $\R$.
As seen in the previous section, the ratio $ba_1/|ba_1|$ is then algebraic. Contradiction.
\end{proof}
The third result in this section is more abstract.
\begin{corollary} Let $\A$ be a simple abelian variety over $\overline{\Q}$ and let $\Psi: \R \longrightarrow \A(\C)$ be a non-zero real-analytic homomorphism
such that the group of algebraic logarithms $\Psi^{\scr - 1}\big(\A(\overline{\Q})\big)$ has rank $\geq 3$. Let $\rm{C}$ be the closure of $\Psi(\R)$ with respect to the analytic topology. Then ${\rm{C}}$ is a real Lie group of dimension $\dim\,{\rm{C}} \leq 3$. 
\end{corollary}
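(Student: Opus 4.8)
The plan is to reduce the dimension bound on ${\rm{C}}$ to a bound on the dimension of the Zariski-closure of $\Psi_{\mathcal{N}}(\R)$ inside the Weil restriction, and then to extract that bound from Theorem \ref{Schn0} in the manner of Subsect.\,6.2.1. First note that, since $\A$ is simple and $\Psi$ is non-zero, the Zariski-closure of $\Psi(\R)$ is a connected algebraic subgroup of $\A$ of positive dimension, hence all of $\A$; so $\Psi$ has Zariski-dense image and we are in the setting of Sect.\,2.5 with $F = \overline{\Q}$, $\G = {\rm{U}} = \A$ and $\pi = id.$, with ${\rm{r}} = {\rm{rank}}_{\Z}\,\Psi^{\scr -1}\big(\A(\overline{\Q})\big) + {\rm{rank}}_{\Z}\,\Psi_{[i]}^{\scr -1}\big(\A(\overline{\Q})\big) \geq 3$. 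Put $\mathcal{N} = \mathcal{N}_{\overline{\Q}/K}(\A)$ and let $\HHH \subset \mathcal{N}\otimes_K\overline{\Q} = \A \times \A^{\scr h}$ be the Zariski-closure of $\Psi_{\mathcal{N}}(\R)$; by Lemma \ref{definition} it has a model $\HHH' \subset \mathcal{N}$ over $K$ with $\Psi_{\mathcal{N}}(\R) \subset \HHH'(\R)$, and since $\A \times \A^{\scr h}$ is an abelian variety, $\HHH$ carries no unipotent or multiplicative factor.

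Next I would bound $\dim\,\HHH \leq 3$. By Lemma \ref{rank} the homomorphism $\Phi_{\mathcal{N}}$ takes values in $\HHH(\C)$, has Zariski-dense image there, and the rank of $\Phi_{\mathcal{N}}^{\scr -1}\big(\HHH(\overline{\Q})\big)$ is at least ${\rm{r}} \geq 3$. Suppose for contradiction that $n := \dim\,\HHH \geq 4$; then $\HHH$ is a commutative group variety over $\overline{\Q}$ of dimension $\geq 2$ with ${\rm{g}}_a = {\rm{g}}_m = 0$, and, since the kernel rank is nonnegative, ${\rm{r}}(n-1) - 1 \geq 3(n-1) - 1 = 3n - 4 \geq 2n = 2n - (2{\rm{g}}_a + {\rm{g}}_m) - 0$. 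Hence the hypothesis of Theorem \ref{Schn0} is satisfied for $\HHH$ and $\Phi_{\mathcal{N}}$, so $\Phi_{\mathcal{N}}$ is not Zariski-dense in $\HHH(\C)$ — contradicting the choice of $\HHH$. Therefore $\dim\,\HHH \leq 3$.

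Finally I would transfer this to ${\rm{C}}$, and this is where the real content lies. Being the closure of the connected group $\Psi(\R)$, the set ${\rm{C}}$ is a connected closed subgroup of the real Lie group $\A(\C)$, hence a real Lie subgroup. The projection $p_{\A}$ restricts to a real-analytic isomorphism $\mathcal{N}(\R) \longrightarrow \A(\C)$, with inverse $\xi \mapsto \big(\xi, \rho_{\ast}(\xi)\big)$ real-analytic by Proposition \ref{ji}, carrying $\Psi_{\mathcal{N}}(\R)$ onto $\Psi(\R)$; as $\mathcal{N}(\R)$ is closed in $\mathcal{N}(\C)$, it carries the analytic closure ${\rm{C}}_{\mathcal{N}}$ of $\Psi_{\mathcal{N}}(\R)$ onto ${\rm{C}}$, so $\dim\,{\rm{C}} = \dim\,{\rm{C}}_{\mathcal{N}}$. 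The decisive point is that $\Psi_{\mathcal{N}}(\R)$ lands in the set of \emph{real} points $\HHH'(\R)$, which is closed in $\mathcal{N}(\C)$; hence ${\rm{C}}_{\mathcal{N}} \subset \HHH'(\R)$, and since $\HHH'$ is a smooth group variety over $K \subset \R$ the real-analytic manifold $\HHH'(\R)$ has dimension $\dim\,\HHH' = \dim\,\HHH$. Thus $\dim\,{\rm{C}} = \dim\,{\rm{C}}_{\mathcal{N}} \leq \dim\,\HHH'(\R) = \dim\,\HHH \leq 3$. The main obstacle is precisely this last transfer: a priori the analytic closure of $\Psi(\R)$ in $\A(\C)$ could have real dimension up to $2\dim\,\A$, far exceeding the complex dimension of the Zariski-closure of $\Psi(\R)$; the Weil restriction repairs this by forcing $\Psi_{\mathcal{N}}(\R)$ into the real locus of a variety whose dimension is controlled by transcendence theory.
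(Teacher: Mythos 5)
Your proof is correct, and it takes a route that differs in an interesting way from the paper's. The paper argues at the level of the packaged theorems: it first applies Theorem \ref{Schn0} to $\A$ itself and the complexification $\Phi$ of $\Psi$ (with ${\rm{g}}_a = {\rm{g}}_m = 0$, ${\rm{r}} \geq 3$, ${\rm{k}} \geq 0$) to get $\dim\,\A \leq 3$, then applies Theorem \ref{Schn2} with $\pi = id.$ to conclude that $\Psi$ descends weakly to $K$ whenever $\dim\,\A \geq 2$, upgrades weak descent to descent by simplicity of $\A$, and finishes with the fact that $\dim\,\A'(\R) = \dim\,\A'$ for a real model, the case $\dim\,\A \leq 1$ being handled by the trivial bound $\dim\,{\rm{C}} \leq 2\dim\,\A \leq 2$. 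You instead inline the mechanism behind those theorems: you bound the Zariski closure ${\rm{H}}$ of $\Psi_{\mathcal{N}}(\R)$ by $3$ directly, by applying Theorem \ref{Schn0} to ${\rm{H}}$ and $\Phi_{\mathcal{N}}$ (using Lemma \ref{definition} and Lemma \ref{rank}), and then transfer to ${\rm{C}}$ through the real-analytic identification of $\mathcal{N}(\R)$ with $\A(\C)$ via $p_{\A}$ and $\xi \mapsto (\xi, \rho_{\ast}(\xi))$, together with ${\rm{C}}_{\mathcal{N}} \subset {\rm{H}}'(\R)$ and $\dim\,{\rm{H}}'(\R) = \dim\,{\rm{H}}$. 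The paper's route is shorter given that Theorem \ref{Schn2}, Theorem \ref{inherited1} and Theorem \ref{weakdescent} are already available, and it produces the structural by-product that $\Psi$ actually descends to $K$ when $\dim\,\A \geq 2$; your route avoids the descent formalism entirely, yields the intrinsically sharper bound $\dim\,{\rm{C}} \leq \dim\,{\rm{H}}$, and in fact never uses the simplicity of $\A$ except to note Zariski-density (which your argument does not really need), so it extends verbatim to non-simple abelian varieties. Two points worth making explicit if you write this up: when invoking Theorem \ref{Schn0} for ${\rm{H}}$ you should record that $\dim\,{\rm{H}} \geq 2$ under the contradiction hypothesis (trivially true since you assume $\dim\,{\rm{H}} \geq 4$), and the identification $\mathcal{N}(\R) \simeq \A(\C)$ is most cleanly justified via Lemma \ref{WR}, part 4, identifying $\mathcal{N} \otimes_K \R$ with $\mathcal{N}_{\C/\R}(\A \otimes_{\overline{\Q}} \C)$, combined with Proposition \ref{ji}.
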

 \begin{proof} We apply Theorem \ref{Schn0} to $\G = \A$, to the complexification $\Phi$ of $\Psi$ and with ${\rm{g}}_m = {\rm{g}}_a = 0$, ${\rm{r}} \geq 3$ and ${\rm{k}} \geq 0$. It follows that 
$\dim\,\A \leq 3$. Next we apply Theorem \ref{Schn2} to the same setting (without $\Phi$) and with $\pi = id.$ It results
that $\Psi$ descends weakly to $K$ if $\dim\,\rm{A} \geq 2$. Since $\A$ is simple, $\Psi$ descends weakly to $K$ if and only if
it descends to $K$. Now, in general for an abelian variety $\A'$ over $\R$ the dimension of the real Lie group $\A'(\R)$ equals $\dim\,\A'$. So, the assertion of the corollary follows if $\dim\,\rm{A} \geq 2$. On the other hand, if $\dim\,\rm{A} \leq 1$, then $\dim\,\rm{C} \leq 2$. 
The claim is proved.
\end{proof}

\section{Algebraic independence of values connected to $\mathbf{e^z}$ and Weierstra\ss\,elliptic functions}
\subsubsection{On a theorem of Chudnovsky}
Between 1974 and 1981 Chudnovsky proved striking results of algebraic independence related to elliptic functions. One among these results was the algebraic independence of the two numbers
$$\zeta(\omega) - { \frac{\eta(\lambda)}{\lambda}}\omega,{ \frac{\eta(\lambda)}{\lambda}}$$
for a lattice $\Lambda$ in $\C$ with algebraic invariants, an algebraic logarithm $\omega \in \mathcal{L}_{\Lambda} \setminus \Q\Lambda$ and associated values $\eta(\lambda)$, $\zeta(\omega)$. In \cite[Con.\,38]{Wal12} Waldschmidt states a conjecture which generalizes Chudnovsky's theorem.
\begin{con}
Let $\Lambda$ be a lattice in $\C$ and $\lambda \in \Lambda \setminus \{0\}$. If $\omega \notin \Q\lambda \cup \Lambda$ is a complex number, then two among the numbers
$$g_2, g_3, \wp(\omega), \zeta(\omega) - { \frac{\eta(\lambda)}{\lambda}}\omega, { \frac{\eta(\lambda)}{\lambda}}$$
are algebraically independent.
\end{con}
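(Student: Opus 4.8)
The statement is Waldschmidt's conjecture and remains open; what follows is only the natural line of attack by Gel'fond's method combined with the real-analytic machinery of this paper, together with an indication of exactly where it stalls. Write $t_0 = \eta(\lambda)/\lambda$ and ${\rm{y}} = \zeta(\omega) - t_0\,\omega$, and suppose for contradiction that $F_1 := \Q\big(g_2, g_3, \wp(\omega), {\rm{y}}, t_0\big)$ has transcendence degree at most $1$ over $\Q$. One first tries to reduce to the case $g_2, g_3 \in \overline{\Q}$: if $g_2$ or $g_3$ were transcendental, then $\wp(\omega), {\rm{y}}, t_0$ would all be algebraic over $\Q(g_2, g_3)$, and the conclusion would amount to a transcendence-degree-two statement over a field of transcendence degree one — but, as explained below, this is exactly the case the present method cannot reach. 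So assume $g_2, g_3 \in \overline{\Q}$ and let ${\rm{E}}$ be the corresponding elliptic curve over $\overline{\Q}$.

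Following App.\,B.3, attach to $\lambda$ the two-dimensional vectorial extension $\G_\lambda \in {\rm{Ext}}\big({\rm{E}}, \mathbb{G}_{a,\overline{\Q}}\big)$ in its canonical $t_0$-twisted normalization, which is not isotrivial by Lemma C.1.4 and whose exponential is uniformized over $\overline{\Q}$ by $\wp, \wp', \sigma, \zeta$ in such a way that ${\rm{exp}}_{\G_\lambda}\big({\rm{y}}, \omega\big)$ is a point of $\G_\lambda$ with coordinates in $F_1$ and that $\lambda \in \Lambda$ gives rise to an algebraic period of $\G_\lambda$. Put $\G = \G_\lambda$, so $\dim\,\G = 2$, and consider the real-analytic homomorphism $\Psi(r) = {\rm{exp}}_{\G}\big({\rm{y}}\,r,\,\omega r\big)$, with differential spanned by $({\rm{y}}, \omega)$, so that $\dim\,\frak{t} = 1$. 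By Lemma \ref{kern} and the non-isotriviality of $\G_\lambda$, the hypotheses $\omega \notin \Q\lambda$ and $\omega \notin \Lambda$ force $\Psi$ to have Zariski-dense image in $\G(\C)$.

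The plan is then to derive a contradiction from the Gel'fond-type Theorem\,\ref{5.1.1}, applied to the complexification $\Phi$ of $\Psi$, respectively from its real-analytic refinement Theorem\,\ref{mta1}. Under the transcendence-degree hypothesis the group $\Phi^{\scr -1}\big(\G(\overline{\Q})\big)$ has rank $\geq 2$ — it contains $1$, since ${\rm{exp}}_{\G}({\rm{y}},\omega) \in \G(\overline{\Q})$, together with the algebraic period attached to $\lambda$ — so ${\rm{r}} \geq 2$, while ${\rm{k}} \leq 1$ (a discrete subgroup of $\R$) and, since $\G_\lambda$ is a non-split extension, ${\rm{g}}_a = {\rm{g}}_m = 0$ and $\delta = 0$. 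Inserting these values, the inequality of Theorem\,\ref{5.1.1}, namely $(1+{\rm{k}}){\rm{r}}\cdot(\dim\,\G - \dim\,\frak{t}) - {\rm{r}} \geq 2\dim\,\G - (2{\rm{g}}_a + {\rm{g}}_m) + \delta$, becomes ${\rm{k}}{\rm{r}} \geq 4$, which the admissible values ${\rm{r}} \leq 2$, ${\rm{k}} \leq 1$ never satisfy. To manufacture the missing rank one would pass to $K = \overline{\Q} \cap \R$, replace $\Psi$ by $\Psi_{\mathcal{N}}$ so that $\dim\,\mathcal{N}$ doubles to $4$ while $\dim\,\frak{t}$ stays $1$, and exploit the fact that in the real-analytic setting ${\rm{r}}$ collects the contributions of both $\Psi$ and its twin $\Psi_{[i]}$; one would then apply Theorem\,\ref{mta1} to conclude that $\Psi$ descends (weakly) to $K$, and translate such a descent, via Proposition\,\ref{real}, into an algebraic dependence among the five numbers.

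The decisive obstacle — the reason the conjecture is still open — is precisely this rank deficit. The algebraic logarithms genuinely available for $\Psi$ number at most two ($1$ together with the $\lambda$-period), and the Weil-restriction doubling raises the effective rank strictly only when the twin $\Psi_{[i]}$ contributes new algebraic logarithms, which happens exactly when $\omega$ and $\lambda$ are already collinear over $\R$ or $\omega/\lambda$ lies in an imaginary quadratic CM field — that is, in the restricted configurations the present paper does settle, in Corollary\,\ref{po} and its companions. Moreover, in the genuinely general case $g_2, g_3$ transcendental one cannot even enlarge the base field to be stable under complex conjugation without pushing its transcendence degree up to two, so Chapters\,2--6 do not apply at all. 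A proof of the full conjecture would therefore require either a sharpening of the Gel'fond-type Theorems\,\ref{5.1.1} and \ref{mta1} toward the smaller estimate anticipated in the introduction — for instance replacing $2\dim\,\G$ on the right-hand side by something like $\dim\,\G + 1$ — or an entirely new ingredient: a zero estimate and an algebraic independence criterion tailored to the two-dimensional Chudnovsky extension $\G_\lambda$ that does not route through a real model.
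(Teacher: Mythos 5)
You have correctly identified the situation: this statement is Waldschmidt's conjecture (\cite[Con.\,38]{Wal12}), which the paper quotes but does not prove — there is no proof of it in the paper to compare against, only the two corollaries immediately following it, which establish special cases under substantial extra hypotheses (real invariants or a real period $\lambda$ with $Re\,\lambda^{\scriptscriptstyle -1}\Lambda \nsubseteq \Q$, and $\omega$ confined to $(\R \cup i\R)\lambda$ resp.\,$\R \cup i\R$). Your strategic sketch matches the paper's actual route for those partial results, with one technical discrepancy worth noting: the paper parametrizes along the period, $\Psi(r) = {\rm{exp}}_{{\rm{G}}_t}\big(\eta(\lambda)r, \lambda r\big)$ with $t=1$, so that ${\rm{k}} \geq 1$ comes for free from Lemma \ref{kern} and the second unit of rank is supplied by $\Z\omega$ precisely because of the hypothesis $\omega \in (\R\cup i\R)\lambda$ feeding $\Psi$ or its twin $\Psi_{[i]}$; with your parametrization along $\big(\zeta(\omega)-\frac{\eta(\lambda)}{\lambda}\omega,\ \omega\big)$ the kernel is generically trivial and the period does not lie in $\Phi^{\scriptscriptstyle -1}\big(\G(\overline{\Q})\big)$, so the rank count "${\rm{r}} \geq 2$" you write down does not hold as stated — which only reinforces, rather than undermines, your (correct) diagnosis that the rank deficit and the failure of conjugation-stability of the base field when $g_2, g_3$ are transcendental are exactly why the full conjecture is beyond the methods of this paper.
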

The conjecture is closely related to extensions of $\rm{E}$ by $\mathbb{G}_{a}$. In the direction of the conjecture we prove the following results.
\begin{corollary} \textit{Let $\Lambda$ be a lattice in $\C$ with real, but not algebraic invariants and let $\lambda \in \Lambda \setminus \{0\}$ be such that $Re\,\lambda^{\scriptscriptstyle -1}\Lambda \nsubseteq \Q$. If $\omega \in (\R \cup i\R)\lambda \setminus (\Q\lambda \cup \Lambda)$, then two among the numbers
$$g_2, g_3, \wp_{}(\omega), \zeta_{}(\omega) - {\eta_{}(\lambda)} \omega, {\eta_{}(\lambda)}, \lambda$$
are algebraically independent.}
\end{corollary}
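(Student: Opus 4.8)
The plan is to argue by contraposition. Suppose no two of the six numbers $g_2,g_3,\wp(\omega),\zeta(\omega)-\eta(\lambda)\omega,\eta(\lambda),\lambda$ are algebraically independent over $\Q$; then the field they generate has transcendence degree $\le 1$ over $\Q$, and since $g_2,g_3$ are real but not both algebraic it has degree exactly $1$. Choose a real transcendental $\theta$ among $g_2,g_3$ and put $F=\overline{\Q(\theta)}$, $K=F\cap\R$. Because $\theta$ is real, $F$ is algebraically closed, stable under complex conjugation, of transcendence degree $1$ over $\Q$, and it contains all six numbers and (being the algebraic closure of a field generated by a real element) their complex conjugates as well. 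In particular $\Lambda=\overline\Lambda$, so $\rm{E}$ admits a Weierstra\ss\ model over $K$. This places us in the setting of the Gel'fond-method theorems of Section~2.5.

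Next I would bring in a linear extension of $\rm{E}$. Fix $t\in\Q^{\times}$ and let $\rm{G}_t\in\mathrm{Ext}(\rm{E},\mathbb{G}_{a})$ be the extension attached to $t$ as in Appendix~B.3; it is non-isotrivial by Lemma~C.1.4, hence non-split, is defined over $K$, and the projection $\pi=\pi_t:\rm{G}_t\longrightarrow\rm{E}$ is a universal homomorphism in the sense of Section~4.1. Using $\omega\in(\R\cup i\R)\lambda$, write $\omega=c\lambda$ with $c\in\R$ (the case $\omega\in i\R\lambda$ being handled through the twin $\Psi_{[i]}$, with $i\lambda$ in place of $\lambda$). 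Let $\Psi:\R\longrightarrow\rm{G}_t(\C)$ be the one-parameter homomorphism $r\mapsto\rm{exp}_{\rm{G}_t}\!\bigl(r(\zeta(\omega)-\eta(\lambda)\omega),\,r\lambda\bigr)$. Since $\rm{G}_t$ is non-split and $\lambda\ne 0$, $\Psi$ is Zariski-dense. The crucial bookkeeping is twofold. First, under the failure assumption $\zeta(\omega)-\eta(\lambda)\omega,\eta(\lambda),\lambda\in F$; since the period lattice of $\rm{exp}_{\rm{G}_t}$ is generated by the vectors $(t\eta_j,\lambda_j)$ (Lemma~\ref{kern}) and the $\zeta$-values at the torsion divisors $\tfrac{p}{q}\lambda$ lie in $F$, one gets $\Psi(r)\in\rm{G}_t(F)$ for every rational $r$, so $\rm r=\mathrm{rank}_\Z\Psi^{-1}(\rm{G}_t(F))=\infty$. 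Second, writing $\zeta(\omega)=\bigl(\zeta(\omega)-\eta(\lambda)\omega\bigr)+c\,\eta(\lambda)\lambda$ and $\omega=c\lambda$ with $Re\,\lambda,Im\,\lambda\in K$, the derivative $\Psi_*(1)$ is proportional over $\R$ to a vector of $\frak{g}_t=\mathrm{Lie}\,\rm{G}_t$ defined over $F$, so the minimal $K$-subspace $\frak{t}$ with $\Psi_*(\R)\subset\frak{t}\otimes_K\R$ has $\dim\frak{t}=1$. It is here that the collinearity $\omega\in(\R\cup i\R)\lambda$ (together with $\wp(\omega)$ being algebraic over $F$) is needed: without it the genuinely transcendental number $\zeta(\omega)$ — which is \emph{not} among the six listed numbers — would force $\dim\frak{t}\ge 2$, and the transcendence input would no longer apply.

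With $\dim\rm{G}_t=2$, $\dim\frak{t}=1$, $\rm{g}_a=\rm{g}_m=0$, $\rm{g}_c=2$, $\delta=0$ and $\rm r=\infty$, the inequality of Theorem~\ref{mta1} holds trivially (its left side is infinite, as $\dim\rm{G}_t>\dim\frak{t}$); since moreover $\mathrm{Hom}(\ker\pi,\rm{E}^{h})=\mathrm{Hom}(\mathbb{G}_a,\rm{E})=\{0\}$, Theorem~\ref{mta1} gives that $\Psi$ descends to $K$, and then Theorem~\ref{inherited} gives that $\pi_*(\Psi)$ descends to $K$. But $\pi_*(\Psi)$ is, up to the real reparametrization $r\mapsto r/c$, the homomorphism $r\mapsto\rm{exp}_{\rm{E}}(r\lambda)$, whose image is the subgroup $\rm{exp}_{\rm{E}}(\R\lambda)$ of $\rm{E}(\C)$. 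By the uniformizations of Appendix~B.3 together with Corollary~B.1.2 and Corollary~C.2.2 — and using Schneider's theorem on $j$ (see \cite{Wal12}) to exclude complex multiplication — such a one-parameter subgroup can descend to $K$ only if $Re\,\lambda^{-1}\Lambda\subset\Q$, contradicting the hypothesis. Hence the failure assumption is impossible, and two of the six numbers are algebraically independent.

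The step I expect to be the main obstacle is the last one: converting ``$\pi_*(\Psi)$ descends to $K$'' into ``$Re\,\lambda^{-1}\Lambda\subset\Q$''. This needs the precise descent criterion of Appendix~B.3 / Corollaries~B.1.2 and C.2.2, including the treatment of the second real form of $\rm{E}$ and of isogenies to other real curves in the isogeny class, and it must be dovetailed with the case distinction $\omega\in\R\lambda$ versus $\omega\in i\R\lambda$. A second, more routine, difficulty is the middle paragraph: reading off from the explicit uniformization the $\mathbb{G}_a$-coordinates of the $F$-rational points of $\rm{G}_t$ through which $\Psi$ is to pass, and checking that the degenerate sub-cases $\eta(\lambda)=0$, $\omega\in\Q\lambda$ and $\zeta(\omega)-\eta(\lambda)\omega=0$ do not occur under the standing hypotheses, so that indeed $\dim\frak{t}=1$ and the image of $\Psi$ is genuinely two-dimensional.
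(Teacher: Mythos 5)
Your reduction to a field $F=\overline{\Q(\theta)}$ of transcendence degree one, stable under conjugation, is fine, and so is the choice of a non-isotrivial $\mathbb{G}_a$-extension ${\rm{G}}_t$ of ${\rm{E}}$ with $\dim\,\frak{t}=1$. The proof breaks down, however, at the transcendence step. Your one-parameter map $\Psi(r)={\rm{exp}}_{{\rm{G}}_t}\bigl(r(\zeta(\omega)-\eta(\lambda)\omega),\,r\lambda\bigr)$ is taken along a direction which is \emph{not} a period of ${\rm{exp}}_{{\rm{G}}_t}$ (the kernel is generated by the vectors $(t\eta(\mu),\mu)$, $\mu\in\Lambda$, and $\zeta(\omega)-\eta(\lambda)\omega$ is not a rational multiple of $t\eta(\lambda)$ in general), so ${\rm{k}}=0$. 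Worse, your claim ${\rm{r}}=\infty$ is false: the parameters you exhibit are exactly the rationals $p/q$, and a subgroup of $\Q$ has $\Z$-rank $1$, so all the division points $\Psi(p/q)$ together contribute only ${\rm{rank}}_{\Z}\,\Psi^{-1}({\rm{G}}_t(F))=1$ — torsion translates of a single point never raise the rank, and no transcendence statement can be fed by them alone. With ${\rm{k}}=0$, ${\rm{r}}=1$, $\dim\,{\rm{G}}_t-\dim\,\frak{t}=1$, the left-hand side of the inequality in Theorem \ref{mta1} equals $1$, far below the right-hand side $(2+\tfrac{1}{3})\cdot 2+1=\tfrac{17}{3}$, so Theorem \ref{mta1} simply does not apply and the descent of $\pi_{\ast}(\Psi)$ is not obtained. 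Note also that for your $\Psi$ the numbers $\wp(\omega)$ and $\zeta(\omega)-\eta(\lambda)\omega$ never occur as coordinates of an algebraic point at a non-torsion parameter — they only enter through the tangent direction — so the arithmetic content of the six numbers is never actually used; this is the conceptual reason the rank count cannot work.

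The paper's proof avoids this by choosing the direction \emph{along a period}: with $t=1$ it takes $\Psi(r)={\rm{exp}}_{{\rm{G}}_t}\bigl(\eta(\lambda)r,\lambda r\bigr)$, which is $1$-periodic by Lemma \ref{kern}, giving ${\rm{k}}\geq 1$, and it extracts a second, genuinely independent parameter from the collinearity hypothesis: writing $\omega=c\lambda$ with $c$ real (or purely imaginary, handled by $\Psi_{[i]}$), the point at the irrational parameter $c$ lies over $\wp(\omega)$ and its $\mathbb{G}_a$-coordinate is controlled by the numbers in the list, so that $\Z+\Z c$ maps to $F$-points and ${\rm{r}}\geq 2$. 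It then applies the weak-descent Theorem \ref{Gel3} with ${\rm{r}}\geq 2$, ${\rm{k}}\geq 1$, ${\rm{g}}_c=2$, ${\rm{g}}_a={\rm{g}}_m=0$, $\dim\,\frak{t}=1$ (weak descent equals descent here for dimension reasons) and concludes $Re\,\lambda^{-1}\Lambda\subset\Q$ via the appendix criterion, as in your last paragraph. So your final descent-to-contradiction step is in the right spirit, but the heart of the argument — producing ${\rm{k}}\geq 1$ and a rank-two group of parameters with algebraic images — is missing from your construction and cannot be repaired without re-parametrizing along the period as the paper does.
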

 \begin{proof} With notations as in App.\,A.3 we let $t = 1$ and consider the homomorphism 
 $$\Psi(r) = {\rm{exp}}_{{\rm{G}}_t}\big(\eta_{}(\lambda)r, \lambda r\big)$$
to the extension $\G = {\rm{G}}_t$ of ${\rm{U}} = {\rm{E}}$ by the additive group. It follows from Lemma \ref{kern} that $\Psi$ is $1$-periodic. We let $F_1$ be the algebraically closed field generated by the six numbers $g_2, g_3, \wp_{}(\omega), \zeta(\omega) - \eta(\lambda)\omega, {\eta(\lambda)}, \lambda$ and define $F_2$ to be the algebraically closed field generated by $F_1$ and the conjugates of the six numbers.
Then 
$$\Z + \Z\omega \subset \Psi^{\scr-1}\big(\G_t(F_2)\big) \cup \Psi_{[i]}^{\scr-1}\big(\G_t(F_2)\big).$$
We write $\pi$ for the projection from $\G$ to ${\rm{U}}$ and apply Theorem\,\ref{Gel3} with $\dim\,\frak{t} = 1$, ${\rm{g}}_c = 2$, ${\rm{g}}_m = {\rm{g}}_a = 0$, ${\rm{r}} \geq 2$ and $\rm{k} \geq 1$. It follows that $\pi_{\ast}(\Psi)$ descends weakly to $K$ if ${\rm{trdeg}}_{\Q} F_2 = 1$. Since $g_2, g_3$ are real, but not both algebraic, it holds that if ${\rm{trdeg}}_{\Q} F_1 \leq 1$, then ${\rm{trdeg}}_{\Q} F_2 \leq 1$. Consequently, if ${\rm{trdeg}}_{\Q} F_1 \leq 1$, then $\pi_{\ast}(\Psi)$ descends weakly to $K$. For dimension reasons $\pi_{\ast}(\Psi)$ descends weakly to $K$ if and only if $\pi_{\ast}(\Psi)$ descends to $K$.
And if $\pi_{\ast}(\Psi)$ descends to $K$, then $Re\,\lambda^{\scriptscriptstyle -1}\Lambda \subset \Q$ by Corollary\,B.1.2.\,\,We infer the claim by contraposition.
\end{proof}
Similarly we get
\begin{corollary} \textit{Let $\Lambda$ be a lattice in $\C$ and let $\lambda \in \Lambda \cap \R^{\ast}$ be such that $Re\,\lambda^{\scriptscriptstyle -1}\Lambda \nsubseteq \Q$. If $\omega \in (\R \cup i\R) \setminus (\Q\lambda \cup \Lambda)$, then two among the numbers
$$g_2, g_3, \wp(\omega), \zeta_{}(\omega) - { \frac{\eta_{}(\lambda)}{\lambda}}\omega, { \frac{\eta_{}(\lambda)}{\lambda}}$$
and their complex conjugates are algebraically independent.}
\end{corollary}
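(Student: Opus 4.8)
The strategy mirrors the two preceding corollaries about Chudnovsky's theorem, so the plan is to reduce the statement to a weak-descent assertion that is contradicted by the hypothesis $Re\,\lambda^{-1}\Lambda \nsubseteq \Q$. First I would set $t = 1$ and, with the notations from App.\,B.3, consider the extension $\G = {\rm{G}}_t$ of ${\rm{U}} = {\rm{E}}$ by $\mathbb{G}_{a,\overline{\Q}}$ together with the homomorphism
$$\Psi(r) = {\rm{exp}}_{{\rm{G}}_t}\big(\tfrac{\eta(\lambda)}{\lambda}\lambda r,\ \lambda r\big),$$
which by Lemma \ref{kern} is $1$-periodic (so $\mathrm{k}\geq 1$). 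Here the crucial point is that $\lambda\in\R^{\ast}$: this means $\lambda r$ runs through a real line and hence $\Psi$ is the restriction to $\R$ of a homomorphism naturally adapted to the real structure on $\frak{g}$. I would let $\pi\colon \G\longrightarrow {\rm{U}}$ be the projection.

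\textbf{Setting up the transcendence input.} Suppose for contradiction that among the five numbers $g_2, g_3, \wp(\omega), \zeta(\omega) - \tfrac{\eta(\lambda)}{\lambda}\omega, \tfrac{\eta(\lambda)}{\lambda}$ and their complex conjugates, no two are algebraically independent; equivalently the algebraically closed field $F$ generated by all ten numbers has ${\rm{trdeg}}_{\Q}F \leq 1$. Note $F$ is stable under complex conjugation by construction, and $K = F\cap\R$. Since $\omega\in(\R\cup i\R)$ and the relevant values at $\omega$ and at $\lambda$ all lie in $F$ (together with their conjugates), one checks that $\Z + \Z\omega \subset \Psi^{-1}\big(\G_t(F)\big)\cup \Psi_{[i]}^{-1}\big(\G_t(F)\big)$, so $\mathrm{r}\geq 2$. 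The Lie algebra $\frak{t}$ has dimension $1$ since $\Psi_{\ast}(\R)$ is a real line. Apply Theorem \ref{Gel3} with ${\rm{g}}_c = 2$, ${\rm{g}}_m = {\rm{g}}_a = 0$, $\delta = 0$ (as ${\rm{g}}_a = 0$), $\dim\,\frak{t} = 1$, $\mathrm{r}\geq 2$, $\mathrm{k}\geq 1$: the hypothesis $(1+\mathrm{k})\mathrm{r}(\dim\,\G + \dim\,\U - \dim\,\frak{t}) - \mathrm{r} \geq 2{\rm{g}}_c + 2\dim\,\U + {\rm{g}}_m + \delta$ becomes $(1+\mathrm{k})\mathrm{r}\cdot 2 - \mathrm{r} \geq 6$, i.e. $\mathrm{r}(2\mathrm{k}+1)\geq 6$, which holds for $\mathrm{r}\geq 2, \mathrm{k}\geq 1$. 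Since $\ker\,\pi = \mathbb{G}_{a,\overline{\Q}}$ is unipotent and $\U = {\rm{E}}^{h}$ up to conjugation is an abelian variety, ${\rm{Hom}}\big({\rm{U}}^{h}, {\rm{M}}\big) = \{0\}$ for every quotient $\rm{M}$ of $\ker\,\pi$. Hence $\pi_{\ast}(\Psi)$ descends weakly to $K$, and since $\U = {\rm{E}}$ is one-dimensional, weak descent forces descent.

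\textbf{Deriving the contradiction.} Now $\pi_{\ast}(\Psi)$ is the homomorphism $r\mapsto \wp(\lambda r)$ to ${\rm{E}}(\C)$, and it descends to $K$. By Corollary B.1.2 (applied exactly as in the proof of the preceding corollary), descent of $r\mapsto\wp(\lambda r)$ to $K$ implies $Re\,\lambda^{-1}\Lambda \subset \Q$. This contradicts the hypothesis $Re\,\lambda^{-1}\Lambda \nsubseteq \Q$. Therefore ${\rm{trdeg}}_{\Q}F \geq 2$, i.e. two among the ten listed numbers are algebraically independent. The main obstacle in writing this up cleanly is the bookkeeping in the second paragraph: one must verify carefully that the values of the Weierstra\ss{} functions $\sigma, \zeta, \eta$ at the points $\lambda r$ and $i\lambda r$ (for $r\in\Z + \Z(\omega/\lambda)$, after rescaling) actually generate algebraic points of $\G_t$ over $F$ and over $F$ for the twin, using the cocycle description of ${\rm{exp}}_{{\rm{G}}_t}$ from App.\,B.3 and the fact that $\eta_{\Lambda^{h}}(\lambda^{h}) = h(\eta_\Lambda(\lambda))$; everything else is a direct transcription of the argument in the two corollaries that precede this one.
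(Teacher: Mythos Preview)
Your proposal is correct and follows essentially the same route as the paper, which for this corollary writes only ``Similarly we get'' after the proof of the preceding corollary. You make the identical choices: $t=1$, $\G=\G_t$, $\U={\rm E}$, the one-parameter map $\Psi(r)={\rm exp}_{\G_t}(\eta(\lambda)r,\lambda r)$, then Theorem~\ref{Gel3} with $\dim\frak{t}=1$, ${\rm g}_c=2$, ${\rm g}_m={\rm g}_a=0$, ${\rm r}\geq 2$, ${\rm k}\geq 1$, and finally the contradiction via Corollary~B.1.2. One cosmetic point: the second generator of the rank-two group should be $\omega/\lambda$ rather than $\omega$ (this is the real or purely imaginary parameter actually fed into $\Psi$ or $\Psi_{[i]}$); the paper's proof of the preceding corollary has the same shorthand.
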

\subsubsection{On the algebraic independence of $\pi$ and an elliptic period}

Let $\Lambda$ be a lattice in $\C$ with algebraic invariants. A "folklore'' problem in transcendence theory it the question whether $\pi$ and a lattice element $\lambda \in \Lambda$ can be algebraically dependent. It is known that this is not the case if $\Lambda$ admits complex multiplication (see Waldschmidt \cite{Wal12}). In this direction we can state the following result.
\begin{corollary} \label{ui} Let $\Lambda $ be a lattice in $\C$ with algebraic invariants and suppose that $Re \lambda^{- \scriptscriptstyle 1}\Lambda \nsubseteq \Q$ for some $\lambda \in \Lambda \setminus \{0\}$. Let
$\omega \in \mathcal{L} \setminus i\pi \Q$ be a real or purely imaginary number and set $c = \frac{\lambda\omega}{i\pi}$. Then 
$${\rm{trdeg}}_{\Q}\Q\big(\pi/|\lambda|, {\lambda}/{|\lambda|}, \wp(c)\big) \geq 2$$
and
$$ {\rm{trdeg}}_{\Q}\Q\big({\pi}, \lambda, \wp(c)\big) \geq 2.$$
\end{corollary}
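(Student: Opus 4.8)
The strategy is to encode the three numbers $\pi$, $\lambda$, $\wp(c)$ (and the associated real parts) inside a one-parameter real-analytic homomorphism to a product group and then invoke the Gel'fond-type Theorem \ref{Gel3} together with the descent dictionary. Write $F_1 = \overline{\Q(\pi, \lambda, \wp(c))}$ and $F_2$ for the algebraically closed field generated by $F_1$ and the complex conjugates of $\pi$, $\lambda$, $\wp(c)$. Since the invariants $g_2, g_3$ are already algebraic, $F_2$ is stable under complex conjugation and ${\rm{trdeg}}_{\Q} F_1 \leq 1$ forces ${\rm{trdeg}}_{\Q} F_2 \leq 1$. So it suffices to show that if ${\rm{trdeg}}_{\Q} F_2 \leq 1$ we reach a contradiction with the hypothesis $Re\,\lambda^{-1}\Lambda \nsubseteq \Q$.

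First I would set up the group. Let $\rm{E}$ be the elliptic curve with lattice $\Lambda$, put $\G = \mathbb{G}_{m, F_2} \times \rm{E}$ with projection $\pi_0 \colon \G \longrightarrow \rm{U} = \mathbb{G}_{m,F_2}$ (or, depending on which of the two inequalities is being attacked, onto $\rm{E}$), and consider the homomorphism
$$\Psi(r) = \big(e^{\scr (i\pi/\lambda) r}, \wp\big((c/\lambda)r\big)\big) = \big(e^{\scr (i\pi/\lambda) r}, \wp\big((\omega/(i\pi))r\big)\big)$$
to $\G(\C)$. The point is that $\Psi(\lambda) = \big(e^{\scr i\pi}, \wp(c)\big) = \big(-1, \wp(c)\big) \in \G(F_2)$ and, because $\omega$ is real or purely imaginary while $\lambda/(i\pi)$ is (after the substitution) adapted to that line, a second $\Z$-independent real $r$ lands $\Psi$ or its twin $\Psi_{[i]}$ in $\G(F_2)$; also $\Psi$ is $\Z$-periodic only if $i\pi/\lambda \in \Z \cdot 2\pi i$ plus an elliptic-logarithm condition, which one rules out using $\omega \notin i\pi\Q$. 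Thus $\rm{r} \geq 2$, $\rm{k} \geq 1$ (or the relevant values), and $\dim\,\frak{t} = 1$ because $\Psi_{\ast}(\R)$ spans a single real line in $\frak{g}(\C) = \C^2$ whose two coordinates are real multiples of each other by the hypothesis that $\omega \in \R \cup i\R$. One then checks the numerical inequality of Theorem \ref{Gel3} with ${\rm{g}}_c = 1$ (the elliptic factor), ${\rm{g}}_m = 1$, ${\rm{g}}_a = 0$, $\delta$ read off from the definition, $\dim\,\G = 2$, $\dim\,\rm{U} = 1$: the inequality $(1+{\rm{k}}){\rm{r}}(\dim\,\G + \dim\,\rm{U} - \dim\,\frak{t}) - {\rm{r}} \geq 2{\rm{g}}_c + 2\dim\,\rm{U} + {\rm{g}}_m + \delta$ becomes, with ${\rm{k}} \geq 1$, ${\rm{r}} \geq 2$, something like $2\cdot 2\cdot 2 - 2 = 6 \geq 2 + 2 + 1 + \delta = 5 + \delta$, which holds for $\delta \leq 1$. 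Since ${\rm{Hom}}(ker\,\pi_0, \rm{U}^{\scr h})$ vanishes (a torus has no nontrivial homomorphisms into the conjugate of the other simple factor, or vice versa depending on the direction), Theorem \ref{Gel3} yields that $\pi_0{}_{\ast}(\Psi)$ descends weakly to $K$; for dimension reasons a one-dimensional weak descent is an honest descent to $K$.

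Finally I would convert the descent into a contradiction. If $\pi_0{}_{\ast}(\Psi)$ is the elliptic component $\psi(r) = \wp\big((\omega/(i\pi))r\big)$ and it descends to $K$, then by Corollary B.1.2 (the criterion quoted repeatedly in Section 7.1, giving $Re\,\lambda^{-1}\Lambda \subset \Q$ when an appropriate elliptic homomorphism descends) we contradict the hypothesis $Re\,\lambda^{-1}\Lambda \nsubseteq \Q$. If instead one needs the multiplicative component to descend, then $e^{\scr (i\pi/\lambda)r}$ descending to $K$ forces $i\pi/\lambda \in \R \cup i\R$ by Example \ref{exmpl1} and Corollary \ref{lp}, i.e. $\lambda \in \R \cup i\R$, which one feeds back into $Re\,\lambda^{-1}\Lambda$ to again contradict the hypothesis (or directly contradicts $\omega \notin i\pi\Q$). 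Either way both inequalities ${\rm{trdeg}}_{\Q} \geq 2$ follow by contraposition.

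\textbf{Main obstacle.} The delicate point is the bookkeeping of the invariants $\rm{r}$, $\rm{k}$, $\dim\,\frak{t}$ and especially verifying that $\Psi$ genuinely has Zariski-dense image in $\G(\C)$ (so that Theorem \ref{Gel3} applies): this needs $i\pi/\lambda$ not to be an elliptic logarithm relative to the torus part and the two factors to be ``disjoint'', which is where the hypotheses $\omega \notin i\pi\Q$ and $\omega \notin \Lambda$ enter, and one must also confirm that after the normalizing substitution the line $\R \cdot (i\pi/\lambda, \,\omega/(i\pi))$ really has $K$-dimension one rather than accidentally two — this uses that $\omega$ is real or purely imaginary in an essential way. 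A secondary nuisance is choosing the right projection $\pi_0$ and checking the ${\rm{Hom}}$-vanishing hypothesis of Theorem \ref{Gel3} in each of the two cases; this is routine but must be done carefully since the kernel is a torus in one case and an elliptic curve in the other.
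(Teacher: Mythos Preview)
Your overall architecture is right, but your specific $\Psi$ does not have the properties you need, and repairing it is precisely the content of the paper's proof.

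The fatal issue is periodicity. With your $\Psi(r)=\big(e^{(i\pi/\lambda)r},\,\wp((\omega/(i\pi))r)\big)$ a real period would require $r_0\in 2\lambda\Z\cap\R$, which is $\{0\}$ unless $\lambda$ happens to be real. So generically ${\rm k}=0$, and then the inequality of Theorem~\ref{Gel3} reads ${\rm r}\cdot 2-{\rm r}\geq 5$, i.e.\ ${\rm r}\geq 5$, which you cannot produce from two values. Relatedly, your evaluation ``$\Psi(\lambda)=(-1,\wp(c))$'' is illegitimate: $\lambda$ is complex, so this is a value of the complexification $\Phi$ and contributes to neither $\Psi^{-1}(\G(F))$ nor $\Psi_{[i]}^{-1}(\G(F))$. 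Your justification of $\dim\,\frak{t}=1$ is also wrong: the tangent direction of your $\Psi$ involves $\omega$ itself (not just $e^{\omega}$ and $\wp(c)$), and $\omega$ has no reason to lie in $F_2$.

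The cure is to normalise by $|\lambda|$. Take ${\rm U}={\rm E}$, let $\pi$ be the projection onto ${\rm E}$, and set
\[
\Psi(r)=\Big(\wp\big(\tfrac{\lambda}{|\lambda|}\,r\big),\;e^{\,i\pi r/|\lambda|}\Big).
\]
Now $r_0=2|\lambda|$ is a genuine real period ($2\lambda\in\Lambda$ and $e^{2\pi i}=1$), so ${\rm k}\geq 1$. The real number $|\lambda|$ gives $\Psi(|\lambda|)=(e_{\rm E},-1)\in\G(\overline{\Q})$, and the real number $\omega|\lambda|/\pi$ (fed to $\Psi_{[i]}$ if $\omega\in\R$, or its negative times $i$ fed to $\Psi$ if $\omega\in i\R$) yields $(\wp(c),\,e^{\pm\omega})\in\G(F)$ once $\wp(c)\in F$; hence ${\rm r}\geq 2$. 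With $F=\overline{\Q(\pi/|\lambda|)}$ and the contradiction hypothesis, $\pi/|\lambda|\in K$ and $Re(\lambda/|\lambda|),\,Im(\lambda/|\lambda|)\in K$, so the tangent vector $(\lambda/|\lambda|,\,i\pi/|\lambda|)$ lies on a $K$-line and $\dim\,\frak{t}=1$. Theorem~\ref{Gel3} then forces the \emph{elliptic} component $r\mapsto\wp(\lambda r/|\lambda|)$ to descend to $K$, and Corollary~\ref{lp2} converts this into $Re\,\lambda^{-1}\Lambda\subset\Q$, the desired contradiction. The second inequality is handled the same way with $F=\overline{\Q(\pi)}$ and the unnormalised homomorphism $\Psi(r)=(\wp(\lambda r),\,e^{i\pi r})$.

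Two minor points. The hypothesis in Theorem~\ref{Gel3} is ${\rm Hom}({\rm U}^{h},{\rm M})=0$ for quotients ${\rm M}$ of $\ker\pi$, not the condition you wrote (though with ${\rm U}={\rm E}$ and $\ker\pi\cong\mathbb{G}_m$ both vanish). And your fallback branch --- project onto $\mathbb{G}_m$ and conclude $\lambda\in\R\cup i\R$ --- does not contradict $Re\,\lambda^{-1}\Lambda\nsubseteq\Q$; only the projection onto ${\rm E}$ closes the argument.
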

\begin{proof} Let ${\rm{U}} = {\rm{E}}$, ${\rm{G}} = {{\rm{U}}} \times \mathbb{G}_{m, \overline{\Q}} $ and define $\pi: {\rm{G}} \longrightarrow {{\rm{U}}}$ to be the projection. Set $\Psi(r) = \left(\wp_{}\left(\frac{\lambda r}{|\lambda|}\right), e^{i\pi r/|\lambda|}\right)$. The field $F = \overline{\Q({\scriptstyle{\frac{\pi}{|\lambda|}}})}$ is closed with respect to complex conjugation and if $\wp(c) \in F$, then 
$$\Z |\lambda| + \Z i\omega|\lambda|/\pi \subset \Psi^{\scr-1}\big(\G(F)\big) + \Psi_{[i]}^{\scr-1}\big(\G(F)\big).$$
Corollary\,\ref{lp2} and Theorem\,\ref{Gel3} applied  with $\dim\,\frak{t} = 1$, ${\rm{g}}_c = {\rm{g}}_m = 1, {\rm{g}}_a = 0, {\rm{r}} \geq 2$ and $\rm{k} \geq 1$ give then the first assertion.
The second estimate follows similarly by considering $F = \overline{\Q(\pi)}$, $\Psi(r) = \left(\wp_{}\left({\lambda r}\right), e^{i\pi r/\lambda}\right)$ and $\Z + \Z i\omega/\pi$.
\end{proof}
\subsubsection{Density of algebraically independent points}
The final three results of this subsection are not a logical consequence of our theorems, but are proved with the very same techniques. They amend Diaz' observation from Subsect.\,7.1.2. 
\begin{theorem} Let $\rm{G}$ be a simple commutative group variety over a subfield $F$ of $\C$ which is countable and closed with respect to complex conjugation.
Let $\Psi: \R \longrightarrow \rm{G}(\C)$ be a real-analytic homomorphism. If $\mbox{trdeg}_F F(\xi, \xi^{\scriptscriptstyle h}) < 2\dim\,\G$ for uncountably many
$\xi \in \Psi(\R)$, then $\Psi$ descends to $\R$.
\end{theorem}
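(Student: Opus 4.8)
The plan is to reduce the statement to the main criterion for descent (Theorem \ref{weakdescent}) by a counting argument that exploits the countability of $F$ together with the identity theorem for real-analytic functions. We may assume $[F:K] = 2$, which is the setting in which the machinery of Chapter 2 applies. The hypothesis forces $\Psi(\R)$ to be uncountable, hence $\Psi$ is non-constant; since $\G$ is simple, the Zariski closure of the connected subgroup $\Psi(\R)$ is a connected algebraic subgroup of $\G$, so it is $\{e\}$ or $\G$, and being non-trivial it equals $\G$. Thus $\Psi$ has Zariski-dense image and $\ker\Psi$ is a discrete, hence countable, subgroup of $\R$.

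First I would transfer the hypothesis from points of $\G(\C)$ to real parameters. Since the fibres of $\Psi$ are cosets of the countable group $\ker\Psi$, the set $T = \{\, r\in\R : {\rm{trdeg}}_F F(\Psi(r),\Psi(r)^{\scr h}) < 2\dim\G \,\}$ is uncountable. Put $V = \G\times\G^{\scr h}$, a variety over $F$ with $\dim V = 2\dim\G$, and note that $F(\Psi(r),\Psi(r)^{\scr h})$ is exactly the residue field (inside $\C$) of the point $\Psi_{\mathcal{N}}(r)\in V(\C)$. For $r\in T$ this residue field has transcendence degree $<\dim V$ over $F$, so the Zariski closure of $\Psi_{\mathcal{N}}(r)$ over $F$ is a proper closed subvariety of $V$ defined over $F$. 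Because $F$ is countable, $V$ has only countably many closed subvarieties defined over $F$; hence $T$ is contained in the union, over those countably many proper $F$-subvarieties $W\subsetneq V$, of the sets $\Psi_{\mathcal{N}}^{-1}\bigl(W(\C)\bigr)$.

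Now $\Psi_{\mathcal{N}}=\Psi\times\Psi^{\scr h}:\R\to V(\C)$ is real-analytic (Proposition \ref{ji}), so each $\Psi_{\mathcal{N}}^{-1}\bigl(W(\C)\bigr)$ is a real-analytic subset of $\R$, hence either all of $\R$ or discrete and therefore countable. Since $T$ is uncountable, at least one of these preimages must equal $\R$, i.e. $\Psi_{\mathcal{N}}(\R)\subseteq W(\C)$ for some proper closed $F$-subvariety $W\subsetneq V$. Consequently the smallest algebraic subgroup $\HHH\subseteq V$ with $\Psi_{\mathcal{N}}(\R)\subseteq\HHH(\C)$ (which is the Zariski closure of the subgroup $\Psi_{\mathcal{N}}(\R)$) is contained in $W$, so $\dim\HHH\leq\dim W<2\dim\G$. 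By Theorem \ref{weakdescent}, $\Psi$ descends weakly to $K$: there is a positive-dimensional group variety $\G'$ over $K$ and a surjective homomorphism $v:\G\to\G'\otimes_K F$ with $v_{\ast}(\Psi)(\R)\subseteq\G'(\R)$. As $\G$ is simple and $\dim\G'>0$, the kernel of $v$ is finite, so $v$ is an isogeny; therefore $\Psi$ descends to $K = F\cap\R$, as claimed.

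The step I expect to require the most care is the bookkeeping linking transcendence degree to Zariski closures: identifying $F(\Psi(r),\Psi(r)^{\scr h})$ with the residue field of $\Psi_{\mathcal{N}}(r)$ as an $F$-point of $V=\mathcal{N}\otimes_K F$, and checking that ${\rm{trdeg}}_F$ of that field equals the dimension of the closure of the point — together with the (routine but worth stating) fact that a real-analytic map into $V(\C)$ pulls back Zariski-closed subsets to real-analytic subsets of $\R$, so that the identity theorem genuinely applies. Everything else is the main criterion of Chapter 2 plus the observation that countably many countable sets have countable union.
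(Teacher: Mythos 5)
Your proposal is correct and follows essentially the same route as the paper's own proof: countability of $F$ leaves only countably many candidate proper subvarieties of $\G \times \G^{\scr h}$ over $F$, pigeonholing the uncountably many parameters onto one of them and applying the identity theorem to the real-analytic $\Psi_{\mathcal{N}}$ gives $\Psi_{\mathcal{N}}(\R) \subset {\rm{W}}(\C)$, and hence the Zariski closure $\HHH$ of $\Psi_{\mathcal{N}}(\R)$ is proper, which yields the descent. The only divergence is the finish: you invoke the difficult (weak-descent) half of Theorem \ref{weakdescent} and upgrade to an isogeny via simplicity, whereas the paper's argument amounts to the easy half, since simplicity already forces $p_{\G|\HHH}$ to be an isogeny (i.e.\ $\dim\,\HHH = \dim\,\G$); and your reduction to $[F:K]=2$ and your Zariski-density step rest on the same implicit reading of the hypotheses (in particular of ``simple'') that the paper's own proof uses.
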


\begin{proof} Let $K = F \cap \R$ and $\mathcal{N} = \mathcal{N}_{F/K}(\G)$. Suppose the statement is wrong. Then there is an 
uncountable set $\mathfrak{S} \subset \R$ such that 
$\mbox{trdeg}_F F(\xi, \xi^{\scriptscriptstyle h}) < 2\dim\,\G$ for $\xi = \Psi(r)$ and all $r \in 
\mathfrak{S}$. There exists, for all $r \in \mathfrak{S}$, a proper subvariety ${\rm{C}}_r 
\subset \G \times \G^{\scriptscriptstyle h}$ over $\overline{F}$
with minimal dimension and the property that $\Psi_{\mathcal{N}}(r) \in {\rm{C}}_r(\C)$. However, the 
number of pair-wise distinct subvarieties among the ${\rm{C}}_r$ is at most countable. Consequently, 
there is an uncountable subset $\mathfrak{S}_o \subset \mathfrak{S}$ and 
a proper subvariety ${\rm{C}} \subset {\rm{G}} \times {\rm{G}}^{\scriptscriptstyle h}$ over $\overline{F}$ with the property that
 $\Psi_{\mathcal{N}}(\mathfrak{S}_o) \subset {\rm{C}}(\C)$. It is an easy exercise 
to check that $\mathfrak{S}_o$ admits a point of culmination $r_o$ such 
that $\xi_o = \Psi_{\mathcal{N}}(r_o) \in \rm{C}(\C)$. Let ${\rm{U}}$ be an affine 
neighborhood of $\xi_o$ together with a function $f \in F[{\rm{U}}]$ defining 
$\rm{C} \cap {\rm{U}}$. It follows that $f \circ \Psi_{\mathcal{N}} \equiv 0$. 
Hence, $\Psi_{\mathcal{N}}(\R) \subset \rm{C}(\C)$. Since $\rm{G}$ is simple, this in turn implies that ${\rm{C}}$ 
is an algebraic group $\HHH$ which is the Zariski-closure of infinitely 
many points in $\Psi_{\mathcal{N}}(\R) \cap \mathcal{N}(\R)$. Therefore
$\HHH = \HHH^{\scr h}$, so that $\HHH$ admits a model $\HHH' \subset \mathcal{N}$ over $K$ with the property that $\Psi_{\mathcal{N}}(\R) \subset \HHH'(\R)$. It follows that
$\Psi$ descends to $K$.
\end{proof}

\begin{corollary} Let $u \in \C \setminus (\R \cup i\R)$. Then, for $r \in \R$ outside a countable set, 
$e^{ru}$ and its complex conjugate are algebraically independent.
\end{corollary}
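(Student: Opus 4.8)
The plan is to apply the theorem just above (the density statement for simple commutative group varieties) with a well-chosen $\G$ and $\Psi$, and then unwind what ``descends to $\R$'' forces in this concrete setting. First I would take $F = \overline{\Q}$; this is a countable subfield of $\C$ which is closed under complex conjugation, as required. Next I would set $\G = \mathbb{G}_{m, \overline{\Q}}$, a simple commutative group variety over $F$ of dimension $\dim\,\G = 1$, and define the real-analytic homomorphism $\Psi \colon \R \longrightarrow \G(\C) = \C^{\ast}$ by $\Psi(r) = e^{ru}$. For $r$ in the exceptional countable set we want to rule out, the hypothesis will be that $e^{ru}$ and its conjugate $\overline{e^{ru}} = e^{r\bar{u}}$ are algebraically \emph{dependent} over $\overline{\Q}$; since $\xi = \Psi(r)$ and $\xi^{\scriptscriptstyle h} = e^{r\bar u}$, this says exactly that $\mbox{trdeg}_F F(\xi, \xi^{\scriptscriptstyle h}) \leq 1 < 2 = 2\dim\,\G$.

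So the argument runs by contraposition. If the set of $r \in \R$ for which $\mbox{trdeg}_{\overline{\Q}} \overline{\Q}(e^{ru}, e^{r\bar u}) < 2$ were uncountable, then the theorem above would force $\Psi$ to descend to $\R$. The second step is to show that $\Psi(r) = e^{ru}$ with $u \notin \R \cup i\R$ does \emph{not} descend to $\R$; this contradiction then proves the corollary. To see that $\Psi$ does not descend to $\R$, I would invoke essentially the computation already carried out in Example 2.3.6 of the excerpt: the one-dimensional tori over $\R$ are, up to isomorphism, $\mathbb{G}_{m, \R}$ and $\mathbb{S}_{\R}$. If $\Psi$ descended to $\R$ there would be an isogeny $v$ of $\mathbb{G}_{m, \C}$ — necessarily multiplication by some nonzero integer $k$ — such that $v \circ \Psi$ takes values in $\G'(\R)$ for one of these two models. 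But $v(\Psi(r)) = e^{kru}$, and since $u \notin \R \cup i\R$ we have $ku \notin \R \cup i\R$ either; the image $\{e^{kru} : r \in \R\}$ is a logarithmic spiral, which is neither contained in $\R^{\ast} = \mathbb{G}_{m,\R}$-component nor in the unit circle $\mathbb{S}(\R)$, and in fact is not contained in $\G'(\R)$ for any real model. Hence $\Psi$ cannot descend to $\R$, which is the desired contradiction.

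Alternatively, and perhaps more cleanly, one can phrase the non-descent via Proposition 2.3.4 (``defects'' versus weak descent): here $\frak{g} = \mathrm{Lie}\,\mathbb{G}_{m,\C} \cong \C$ viewed as a two-dimensional $K$-vector space, and $\Psi_{\ast}(r) = ru$ spans the real line $\R u \subset \C$; since $u \notin \R \cup i\R$, the line $\R u$ is not defined over $K = \overline{\Q} \cap \R$, so the smallest $K$-subspace $\frak{t}$ with $\Psi_{\ast}(\R) \subset \frak{t} \otimes_K \R$ is all of $\frak{g}$, giving defect $\dim_K \frak{g} - \dim\,\frak{t} = 2 - 2 = 0$. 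By Proposition 2.3.4 a weak descent to $K$ would require a positive defect, so $\Psi$ does not descend weakly, a fortiori not in the strong sense. Either way the main (and only real) obstacle is bookkeeping: making sure the theorem's hypotheses are literally met (simplicity of $\mathbb{G}_m$, countability and conjugation-stability of $\overline{\Q}$, the identification of $\xi^{\scriptscriptstyle h}$ with $\overline{e^{ru}}$ via Sect.\,3.1), and then correctly translating ``$\Psi$ descends to $\R$'' into a false geometric statement about the spiral $e^{\R u}$. None of these steps is deep; the corollary is a direct specialization of the preceding theorem to $\G = \mathbb{G}_m$.
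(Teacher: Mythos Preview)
Your main line of argument is correct and is exactly the paper's proof: apply the preceding theorem with $F = \overline{\Q}$, $\G = \mathbb{G}_{m,\overline{\Q}}$, $\Psi(r) = e^{ru}$, and then rule out descent to $\R$. Your first justification for non-descent (the spiral argument via the two real forms $\mathbb{G}_{m,\R}$ and $\mathbb{S}_{\R}$, as in Example~\ref{exmpl1}) is valid and is what the paper invokes.

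However, your ``alternative'' argument via Proposition~\ref{real} is wrong. You claim that for $u \notin \R \cup i\R$ the smallest $K$-subspace $\frak{t} \subset \frak{g}$ with $\Psi_\ast(\R) \subset \frak{t} \otimes_K \R$ is all of $\frak{g}$, forcing defect zero. This fails already for $u = 1+i$: here $\frak{t} = K(1+i)$ is a one-dimensional $K$-subspace of $\frak{g} = \overline{\Q}$ with $\frak{t} \otimes_K \R = \R(1+i) = \R u$, so $\dim\,\frak{t} = 1$ and the defect is $2 - 1 = 1 > 0$. More generally, $\dim\,\frak{t} = 1$ whenever $u/|u|$ is algebraic, which has nothing to do with $u \in \R \cup i\R$. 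The point is that Proposition~\ref{real} gives only one implication: weak descent $\Rightarrow$ positive defect. The converse is false without further input (cf.\ the remark after that proposition), and the non-descent of $e^{(1+i)r}$ --- which is precisely Example~\ref{exmpl1} --- cannot be read off from the defect alone. So keep the spiral argument and drop the defect shortcut.
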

\begin{proof} The condition on $u$ guarantees that $\Psi(r) = e^{ru}$ does not descent to $\R$. This is a consequence of Corollary B.3.1. 
We apply the previous corollary and infer the result.
\end{proof}

\begin{corollary} Let $\Lambda$ be a lattice in $\C$ with algebraic invariants and let $u \in \C \setminus \overline{\Q}$ be such that $|u| = 1$. Then, for $r \in \R$ outside a countable set, 
$\wp_{}(ru)$ and its complex conjugate are algebraically independent.
\end{corollary}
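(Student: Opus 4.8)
The statement is a direct application of the density-of-algebraically-independent-points theorem (the penultimate corollary, which I shall call Theorem~D for brevity) together with an earlier descent criterion for the Weierstra\ss\ function, exactly as in the proof of the corresponding statement for $e^{ru}$ just above. The plan is to set $\Psi(r) = \wp_{\Lambda}(ru)$ and apply Theorem~D with $\G = \rm{E}$, $F = \overline{\Q}$, $K = \overline{\Q} \cap \R$: since $\rm{E}$ is a simple commutative group variety over the countable field $F = \overline{\Q}$, which is closed under complex conjugation, the only thing to verify is that $\Psi$ does \emph{not} descend to $\R$. Once that is established, Theorem~D yields that $\mbox{trdeg}_F F(\xi,\xi^{\scriptscriptstyle h}) = 2\dim\,\rm{E} = 2$ for all but countably many $\xi \in \Psi(\R)$, which is precisely the claim: for such $\xi = \wp(ru)$, the pair $\wp(ru)$ and $\overline{\wp(ru)} = \wp^{\scriptscriptstyle h}(r u)$ (the latter being the value at $ru$ of the Weierstra\ss\ function of $\Lambda^{\scriptscriptstyle h}$, evaluated via the real-analytic identification) generates a field of transcendence degree $2$ over $\overline{\Q}$, i.e.\ the two numbers are algebraically independent.

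The heart of the argument, therefore, is the non-descent of $\Psi(r) = \wp(ru)$ under the hypothesis $|u| = 1$ and $u \notin \overline{\Q}$. First I would note that $\Psi_{\ast}(\R) = \R u \subset \frak{e}(\C) = \C$, so the smallest $K$-subspace $\frak{t} \subset \frak{e}$ with $\Psi_{\ast}(\R) \subset \frak{t}\otimes_K \R$ is the line $\R u$ viewed inside $\C$; by Proposition~\ref{real} (the ``defect'' proposition), if $\Psi$ descended to $K$ then this line would have to be defined over $K = \overline{\Q}\cap\R$, which means $Re\,u$ and $Im\,u$ are linearly dependent over $K$, i.e.\ $u/|u| = u$ is algebraic. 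Since $|u| = 1$ forces $u = u/|u|$, the hypothesis $u \notin \overline{\Q}$ gives the contradiction directly. Alternatively — and this is the route matching the $e^{ru}$ corollary's appeal to ``Corollary B.3.1'' — one argues that if $\Psi$ descended to $K$ then $\rm{E}$ would be isogenous over $\overline{\Q}$ to a curve with model over $K$ via an isogeny $v$ with $v_{\ast}(\R u) \subset \frak{e}'(\R)$, and then $u$ would be algebraic (an elliptic logarithm lying in a real line of the Lie algebra over $K$ is forced to have $u/|u| \in \overline{\Q}$, again contradicting $u \notin \overline{\Q}$ when $|u|=1$).

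I expect the main obstacle to be purely a matter of bookkeeping rather than depth: making sure that the appeal to Theorem~D is literally licensed, i.e.\ that $F = \overline{\Q}$ genuinely satisfies the hypotheses of that theorem (countable, stable under $h$) and that $\dim\,\rm{E} = 1$ so that $2\dim\,\rm{E} = 2$ is indeed the threshold ``$\mbox{trdeg}_F F(\xi,\xi^{\scriptscriptstyle h}) < 2\dim\,\G$'' appearing there; and then correctly identifying $\xi^{\scriptscriptstyle h}$ with the honest complex conjugate $\overline{\wp(ru)}$, which is where Subsect.\,3.2.1 (conjugation of varieties is uniformized by conjugation of affine coordinates) and Lemma~\ref{ff} get used. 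There is nothing to ``grind through'': the two-line verification of non-descent via Proposition~\ref{real} is the only substantive step, and it follows from $|u|=1$ together with $u\notin\overline{\Q}$. I would therefore write the proof in three short sentences: set up $\Psi$, invoke Proposition~\ref{real} (or Corollary~B.3.1) to get non-descent from $|u|=1$ and $u\notin\overline{\Q}$, and quote Theorem~D.

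\begin{proof}
Set $\Psi(r) = \wp_{\Lambda}(ru)$, a non-zero real-analytic homomorphism from $\R$ to $\rm{E}(\C)$, where $\rm{E}$ is the elliptic curve attached to $\Lambda$, a simple commutative group variety over $\overline{\Q}$. We first claim that $\Psi$ does not descend to $\R$. Indeed, $\Psi_{\ast}(\R) = \R u$ inside $\frak{e}(\C) = \C$, so the smallest subspace $\frak{t} \subset \frak{e}$ over $K = \overline{\Q} \cap \R$ with $\Psi_{\ast}(\R) \subset \frak{t} \otimes_K \R$ is the real line $\R u$; were $\Psi$ to descend to $K$, Proposition \ref{real} would force $\R u$ to be defined over $K$, which means $Re\,u$ and $Im\,u$ are linearly dependent over $K$, i.e.\ $u/|u| \in \overline{\Q}$. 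Since $|u| = 1$ this reads $u \in \overline{\Q}$, contradicting the hypothesis. Hence $\Psi$ does not descend to $\R$, and therefore not to $K$ either.

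Now apply the previous corollary with $\G = \rm{E}$ and $F = \overline{\Q}$: the field $\overline{\Q}$ is countable and stable under complex conjugation, and $\rm{E}$ is simple. Since $\Psi$ does not descend to $\R$, the contrapositive of that corollary gives that the set of $r \in \R$ with $\mbox{trdeg}_{\overline{\Q}} \overline{\Q}(\xi, \xi^{\scriptscriptstyle h}) < 2\dim\,\rm{E} = 2$ for $\xi = \Psi(r)$ is at most countable. For every $r$ outside this countable set we have, by Subsect.\,3.2.1, that $\xi^{\scriptscriptstyle h}$ is the complex conjugate $\overline{\wp_{\Lambda}(ru)}$, so $\mbox{trdeg}_{\overline{\Q}} \overline{\Q}\big(\wp_{\Lambda}(ru), \overline{\wp_{\Lambda}(ru)}\big) = 2$; that is, $\wp_{\Lambda}(ru)$ and its complex conjugate are algebraically independent. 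This proves the corollary.
\end{proof}
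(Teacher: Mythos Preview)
Your proof is correct and follows exactly the paper's (implicit) approach, mirroring the $e^{ru}$ corollary just above: verify that $\Psi(r)=\wp(ru)$ does not descend (because $u/|u|=u\notin\overline{\Q}$ forces $\dim\frak{t}=2$ via Proposition~\ref{real}), then apply the preceding theorem with $\G=\rm{E}$ and $F=\overline{\Q}$. One cosmetic slip: you establish non-descent to $K$ and then write ``Hence $\Psi$ does not descend to $\R$, and therefore not to $K$ either'' --- the implication runs the other way, but since $\rm{E}$ is semi-abelian the two notions coincide by Corollary~\ref{lp3}, so the argument stands.
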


\subsubsection{Improvements and analoga of a theorem of Gel'fond} 
A classical theorem of Gel'fond from 1947 asserts that for an algebraic $\alpha \neq 0, 1$ and a cubic $\beta$ the two numbers $\alpha^{\beta}, \alpha^{\beta^{\scriptscriptstyle 2}}$ are algebraically independent. In the next four corollaries
we study results concerning the exponential function which combine Gel'fond's idea of proof with our real-analytic approach. We shall then return to our main play ground, the elliptic world, and derive elliptic analoga of his theorem.
\begin{corollary} \label{oppp} Let $\beta$ be a quadratic irrationality and $\omega$ be a non-zero complex number such that $\omega/h(\omega) \notin \Q(\beta)$.
\begin{enumerate}
 \item If $\frac{\omega}{|\omega|}$ is transcendental, then 
two of the three numbers $\frac{\omega}{|\omega|}, e^{\omega}, e^{\beta \omega}$\\
are algebraically independent.
\item If $\frac{\omega}{|\omega|}$ is algebraic, then two among the numbers
$e^{\omega}, e^{\beta \omega}$ and their\\ conjugates are algebraically independent.
\item If $\frac{\omega}{|\omega|}$ and one of the numbers $e^{\omega}/|e^{\omega}|, e^{\beta \omega}/|e^{\beta \omega}|$ is algebraic, then\\ the two numbers
$e^{\omega}, e^{\beta \omega}$ are algebraically independent.

\end{enumerate}
\end{corollary}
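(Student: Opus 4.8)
The plan is to prove all three assertions by contradiction, in each case packaging the failure of the conclusion into an algebraically closed subfield $F\subset\mathbb C$ that is stable under complex conjugation $h$ and has $\operatorname{trdeg}_{\mathbb Q}F\le 1$, and then feeding this into the Gel'fond-method theorem (Theorem~\ref{Gel3}, together with the linear-group improvement of Remark~\ref{Gel31}) and the main criterion for descent (Theorem~\ref{weakdescent}). First I would record the reductions: if $\omega/h(\omega)=\pm1$ then it lies in $\mathbb Q(\beta)$, so $\omega\notin\mathbb R\cup i\mathbb R$; writing $\omega=|\omega|u$ with $u=\omega/|\omega|$ on the unit circle one has $h(u)=u^{-1}$ and $\omega/h(\omega)=u^{2}$, so the standing hypothesis reads $u^{2}\notin\mathbb Q(\beta)$, and (as in the remark preceding the corollary) ``$u$ algebraic'' is equivalent to ``$\operatorname{Re}\omega,\operatorname{Im}\omega$ are linearly dependent over $K$''. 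A Gel'fond--Schneider argument also shows that in Cases 2 and 3, where $u\in\overline{\mathbb Q}$ and $\omega\notin\mathbb R\cup i\mathbb R$, the value $e^{\omega}$ is automatically transcendental. Accordingly I take $F=\overline{\mathbb Q(u)}$ in Case 1 (so $\operatorname{trdeg}_{\mathbb Q}F=1$ by hypothesis, and $F$ is $h$-stable since $h(u)=u^{-1}\in F$), and $F=\overline{\mathbb Q(e^{\omega})}$ (or the analogous field built from $e^{\beta\omega}$, according to which ratio is assumed algebraic in Case 3) in Cases 2 and 3; the negated conclusion then places $e^{\omega}$ and $e^{\beta\omega}$ in $F$, and the assumed algebraicity of $u$, resp.\ of $e^{\omega}/|e^{\omega}|$ or $e^{\beta\omega}/|e^{\beta\omega}|$, makes $F$ stable under $h$ and keeps $\operatorname{trdeg}_{\mathbb Q}F=1$.

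Next comes the core construction, uniform in the three cases. Put $\mathrm G=\mathbb G_{m,F}^{2}\times\mathbb G_{a,F}$, let $\pi\colon\mathrm G\to\mathrm U:=\mathbb G_{m,F}^{2}$ be the projection, so that $\ker\pi=\mathbb G_{a,F}$ and therefore $\operatorname{Hom}(\mathrm U^{h},\mathrm M)=\operatorname{Hom}(\mathbb G_m^2,\mathrm M)=\{0\}$ for every quotient $\mathrm M$ of $\ker\pi$; note also that $\mathrm G$ is linear, so Remark~\ref{Gel31} applies. Let $\Psi(r)=(e^{\omega r},e^{\beta\omega r},r)$, which has Zariski-dense image since $\beta$ is irrational. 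The invariants are then computed as follows. Since the $\mathbb G_a$-coordinate is $r$, the kernel of $\Psi$ is trivial, so $\mathrm k=0$. For the rank $\mathrm r$ I use Gel'fond's trick: because $\beta$ is quadratic, $\beta^{2}=p+q\beta$ with $p,q\in\mathbb Q$, so $e^{\beta^{2}\omega}=(e^{\omega})^{p}(e^{\beta\omega})^{q}\in F$ (using that $F$ is algebraically closed), whence both $\Psi(1)=(e^{\omega},e^{\beta\omega},1)$ and $\Psi(\beta)=(e^{\beta\omega},e^{\beta^{2}\omega},\beta)$ lie in $\mathrm G(F)$; as $1,\beta$ are $\mathbb Z$-independent, $\mathrm r\ge 2$. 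Finally, and this is the point where the hypothesis on $u$ enters, $\dim\frak t=2$: the direction of $\Psi_{\ast}$ in $\frak g=F^{3}$ is $(\omega,\beta\omega,1)=|\omega|\cdot(u,\beta u,0)+1\cdot(0,0,1)$, a real-linear combination of the two $F$-rational vectors $(u,\beta u,0)$ and $(0,0,1)$ (here $u\in F$: it is the generator in Case 1 and algebraic in Cases 2, 3), so $\frak t=Ku\oplus K(0,0,1)$ up to obvious identifications.

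With $\mathrm g_{c}=0$, $\mathrm g_{a}=1$, $\mathrm g_{m}=2$, $\dim\mathrm U=2$, $\delta=0$ (as $\dim\frak t=2\ne1$), the inequality of Theorem~\ref{Gel3} sharpened by Remark~\ref{Gel31} reads
$$(1+\mathrm k)\,\mathrm r\,(\dim\mathrm G+\dim\mathrm U-\dim\frak t)-\mathrm r\;=\;2\cdot(3+2-2)-2\;=\;4\;\ge\;2\mathrm g_{c}+\dim\mathrm U+\mathrm g_{m}+\delta\;=\;4,$$
so $\pi_{\ast}(\Psi)=(e^{\omega r},e^{\beta\omega r})$ descends weakly to $K$. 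On the other hand, by Theorem~\ref{weakdescent} applied to $\pi_{\ast}(\Psi)$ and $\mathrm U$, a weak descent forces the Zariski closure $\mathrm H$ of $\bigl(\pi_{\ast}(\Psi)\bigr)_{\mathcal N}(\mathbb R)=\{(e^{\omega r},e^{\beta\omega r},e^{h(\omega)r},e^{h(\beta)h(\omega)r}):r\in\mathbb R\}$ in $\mathrm U\times\mathrm U^{h}=\mathbb G_m^{4}$ to satisfy $\dim\mathrm H<4$. But $\dim\mathrm H=\dim_{\mathbb Q}\operatorname{span}_{\mathbb Q}\{\omega,\beta\omega,h(\omega),h(\beta)h(\omega)\}$, and a nontrivial $\mathbb Q$-linear relation among these four numbers amounts, after collecting terms, to $(a+b\beta)\omega=(c+dh(\beta))h(\omega)$ with $(a,b,c,d)\in\mathbb Q^{4}\setminus\{0\}$, i.e.\ to $\omega/h(\omega)\in\mathbb Q(\beta)$ (using $\mathbb Q(h(\beta))=\mathbb Q(\beta)$). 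Since $\omega/h(\omega)=u^{2}\notin\mathbb Q(\beta)$, there is no such relation, so $\dim\mathrm H=4$, a contradiction. Hence the three algebraic-independence statements hold.

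The main obstacle is the bookkeeping across the three cases and the boundary sub-cases, precisely because the numerical inequality above holds only with equality, so there is no slack. One must verify in each case that the chosen $F$ really is algebraically closed, $h$-stable, of transcendence degree exactly $1$, and contains $e^{\omega}$ and $e^{\beta\omega}$ — which in Case 3 depends on which of the two ratios is assumed algebraic and uses the auxiliary Gel'fond--Schneider observations above — and one must also treat separately the possibility that $\beta$ is imaginary quadratic (so $\beta\notin K$, which changes the $\frak t$-computation) and degenerate positions of $\omega$ such as $\operatorname{Re}\omega$ or $|\omega|$ algebraic (which can lower or raise $\dim\frak t$); in the few configurations where $\dim\frak t$ or $\mathrm r$ comes out unfavourably, one enlarges $\mathrm G$ by further $\mathbb G_m$-factors carrying conjugate data, always respecting the constraint $\mathrm g_{a}\le1$ of Lemma~\ref{0021}, and re-runs the same estimate. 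Everything else — the vanishing of the $\operatorname{Hom}$-groups, Zariski-density, the passage from $\mathrm H$ of full dimension to the non-descent, and the Lindemann/Gel'fond--Schneider sanity checks — is routine given the machinery of Chapters 4 and 6.
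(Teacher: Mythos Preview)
Your approach is correct and hits exactly the same numerical inequality $4\ge 4$ in Remark~\ref{Gel31} as the paper does, but it is organized differently. The paper works directly with ${\rm G}=\mathbb G_{m}^{2}$, $\pi=\mathrm{id}$, and the \emph{normalized} homomorphism $\Psi(r)=(e^{\omega r/|\omega|},e^{\beta\omega r/|\omega|})$; the point of dividing by $|\omega|$ is that the tangent direction becomes $(u,\beta u)$ with $u=\omega/|\omega|\in F$, so $\dim\frak t=1$ in one line, with no degenerate subcases. Your introduction of an extra $\mathbb G_{a}$-factor carrying $r\mapsto r$ is what forces you to $\dim\frak t=2$ and creates the side issues you flag at the end (what if $|\omega|\in K$, what if $\beta\notin\mathbb R$, how to patch the rank via $\Psi_{[i]}$); these are all fixable, but the paper's normalization simply avoids them. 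Note also that your justification for $h$-stability of $F$ in Case~2 is phrased a little loosely: it is the negated conclusion (all four exponentials have transcendence degree~$\le1$) that puts $h(e^{\omega})$ into $\overline{\Q(e^{\omega})}$, not the algebraicity of $u$ per se. The non-descent part of your argument --- identifying $\dim{\rm H}$ with the $\mathbb Q$-rank of $\{\omega,\beta\omega,h(\omega),h(\beta)h(\omega)\}$ and reading off the contradiction from $\omega/h(\omega)\notin\Q(\beta)$ --- matches the paper's Claim verbatim.
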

\begin{proof} We write $\Q(\beta) = \Q + \Q\alpha$ with $\alpha \in \R \cup i\R$ and note that $\Q(\beta)$ is closed with respect to complex conjugation. To show the first statement, we shall apply Remark\,\ref{Gel31}. To this end, we 
set ${\rm{G}} = \mathbb{G}_{m,\overline{\Q}}  \times \mathbb{G}_{m,\overline{\Q}}$ and consider
$$\Psi(r) = \left(e^{\omega r/|\omega|}, e^{\beta \omega r/|\omega|}\right).$$
\begin{claim} The homomorphism $\Psi$ does not descend weakly to $\R$.
\end{claim}
\begin{proof} The hypotheses imply that $Re\,\omega, Im\,\omega, \beta Re\,\omega$ and $\beta Im\,\omega$ are $\Z$-linearly
independent. Hence, ${\Psi}_{\mathcal{N}}$ has a Zariski-dense image. Theorem \ref{weakdescent} implies then the assertion of the claim.
\end{proof}
Next we observe that the two numbers $\frac{\omega}{|\omega|}, \frac{h(\omega)}{|\omega|}$ are algebraically dependent and that they generate an algebraically closed field $F_1$ which is stable with respect to complex conjugation.
Let $F_2$ be the algebraically closed field generated by $\frac{\omega}{|\omega|}, e^{\omega}, e^{\beta \omega}$
and their complex conjugates. Then $F_1 \subset F_2$ and
$$\Z + \Z\alpha \subset \Psi^{\scr -1}\big(\G(F_2)\big) \cup \Psi_{[i]}^{\scr -1}\big(\G(F_2)\big).$$
If ${\rm{trdeg}}_{\Q} F_2 = 1$, then $F_1 = F_2$. We apply then Remark\,\ref{Gel31} with $\pi = id., \dim\,\frak{t} = 1$, ${\rm{r}} \geq 2$, ${\rm{g}}_m = 2$, ${\rm{g}}_c = {\rm{g}}_a = 0$ and $\rm{k} \geq 0$.
It is inferred that $\Psi$ descends weakly to $K = F_1 \cap \R$. But this is excluded because of the last claim. Hence, ${\rm{trdeg}}_{\Q} F_2 \geq 2$. Statement 1.\,\,follows.\\
For the remaining statements we recall Theorem\,\ref{DD}. It implies that $e^{\omega}$ and $e^{\beta \omega}$ are transcendental if $\omega/|\omega|$ is algebraic. Then the proof goes along the same lines.
\end{proof}
The corollary admits some nice applications. Since their derivation is straightforward to an large extend, the proofs are only sketched. The first application is a reminiscent of a problem stated in Waldschmidt \cite[p.\,594]{Wal 8}.
\begin{con} If $\omega \in \mathcal{L}$ is a non-zero algebraic logarithm and $\beta$ is a quadratic irrationality, then $\omega$ and $e^{\beta \omega}$ are algebraically independent.
 \end{con}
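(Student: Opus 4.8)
The plan is to argue by contradiction. Since $e^{\omega}$ is algebraic and $\omega\neq 0$, Hermite--Lindemann makes $\omega$ transcendental, and since $\beta$ is an irrational algebraic number, Gel'fond--Schneider makes $e^{\beta\omega}=(e^{\omega})^{\beta}$ transcendental; so the assertion says exactly that $\mathrm{trdeg}_{\Q}\,\Q(\omega,e^{\beta\omega})\geq 2$, and I would assume for contradiction that this transcendence degree equals $1$. One first disposes of the degenerate positions of $\omega$ --- $\omega\in\R\cup i\R$, or more generally $\beta\omega\in\R\cup i\R$, which is a finite union of lines through the origin depending on $\beta$ --- where the statement reduces to the classical problem of algebraic independence of a logarithm and a power $\alpha^{\beta}$ with $\beta$ quadratic; this case is (partly) handled by Theorem \ref{DD} and Corollary \ref{oppp}. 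So assume $\beta\omega\notin\R\cup i\R$, in particular $Re\,\omega$ and $Im\,\omega$ are nonzero.

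For the main case I would feed the situation into Gel'fond's method via restriction of scalars. Let $F$ be the algebraic closure of the field generated by $\omega,\,e^{\beta\omega}$ and their complex conjugates $h(\omega),\,e^{h(\beta)h(\omega)}$; it is algebraically closed and stable under complex conjugation, and $K=F\cap\R$. Take $\G=\mathbb{G}_{a,\overline{\Q}}\times\mathbb{G}_{m,\overline{\Q}}$, let $\pi\colon\G\to\U=\mathbb{G}_{m,\overline{\Q}}$ be the projection onto the second factor, and put $\Psi(r)=\big(\omega r,\,e^{\beta\omega r}\big)$; then $\Psi$ has Zariski-dense image, and $ker\,\pi=\mathbb{G}_{a,\overline{\Q}}$ is unipotent, so $\mathrm{Hom}\big(\U^{\scr h},{\rm{M}}\big)=\{0\}$ for every quotient ${\rm{M}}$ of $ker\,\pi$. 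Moreover $\Psi^{\scr -1}\big(\G(F)\big)$ has infinite $\Z$-rank --- $F$ being algebraically closed, $\Psi(r)\in\G(F)$ for every $r\in\Q$ --- so the invariant ${\rm{r}}$ is unbounded, while ${\rm{k}}=0$. When $\omega/|\omega|$ is algebraic one also has $\dim\,\frak{t}=1$, and then the hypothesis of Theorem \ref{Gel3} holds trivially (its left-hand side is $\geq{\rm{r}}$, its right-hand side a fixed constant). Assuming in addition $\mathrm{trdeg}_{\Q}F\leq 1$, Theorem \ref{Gel3} gives that $\pi_{\ast}(\Psi)$ descends weakly to $K$; since $\U$ is one-dimensional this is a genuine descent of $r\mapsto e^{\beta\omega r}$, and by Corollary \ref{lp} that forces $\beta\omega\in\R\cup i\R$ --- contradicting the reduction above. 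Hence $\mathrm{trdeg}_{\Q}F\geq 2$.

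The obstruction --- and the reason the statement is phrased as a conjecture --- is precisely the input $\mathrm{trdeg}_{\Q}F\leq 1$ that Gel'fond's method demands. Applying $h$ to a putative algebraic relation between $\omega$ and $e^{\beta\omega}$ shows that $\Q\big(h(\omega),\,e^{h(\beta)h(\omega)}\big)$ also has transcendence degree $1$, but there is no reason for this conjugate pair to be algebraically dependent on the original one, so $\mathrm{trdeg}_{\Q}F$ can genuinely be $2$ and the method stalls; the same difficulty blocks the natural attempt to upgrade Corollary \ref{oppp}, which only yields algebraic independence of $\omega/|\omega|$ (not $\omega$) and $e^{\beta\omega}$, because passing from $\omega/|\omega|$ to $\omega$ again requires controlling $h(\omega)$. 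A clean unconditional statement in this circle is therefore Corollary \ref{oppp}(2) --- ``two among $e^{\omega},e^{\beta\omega}$ and their complex conjugates are algebraically independent'' under the extra hypothesis $\omega/h(\omega)\notin\Q(\beta)$ --- which works exactly because that conclusion is conjugation-symmetric, so no enlargement of the base field is needed. I would thus present the argument above as a proof conditional on $\mathrm{trdeg}_{\Q}F\leq 1$, unconditional in the degenerate positions of $\omega$, and isolate the transcendence-degree step as the genuine gap; closing it would seem to require genuinely new information relating $h(\omega)$ to $\omega$, which the present machinery does not provide.
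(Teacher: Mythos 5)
The statement you address is presented in the paper as a conjecture (it is a reminiscence of a problem of Waldschmidt), and the paper contains no proof of it: the only thing established there is the partial result immediately following it, namely that if $\omega/|\omega|$ and $e^{\beta\omega}$ are algebraically dependent then $\omega\in\R\cup i\R$, deduced from Theorem \ref{DD} and Corollary \ref{oppp}. So there is no proof to compare yours against, and your decision not to claim one is correct. Your diagnosis of the obstruction is also essentially the right one, and it matches why the paper stops short: the real-analytic Gel'fond theorem (Theorem \ref{Gel3}) needs the conjugation-stable algebraically closed field generated by $\omega$, $e^{\beta\omega}$ \emph{and their complex conjugates} to have transcendence degree at most $1$, which a single algebraic relation between $\omega$ and $e^{\beta\omega}$ does not guarantee; the paper's unconditional corollaries succeed precisely because they involve conjugation-symmetric quantities such as $\omega/|\omega|$. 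Your conditional main-case argument ($\G=\mathbb{G}_{a,\overline{\Q}}\times\mathbb{G}_{m,\overline{\Q}}$, $\Psi(r)=(\omega r,e^{\beta\omega r})$, $\dim\,\frak{t}=1$ because $\omega\in F$, rank made large by rational multiples, and Corollary \ref{lp} converting the resulting descent into $\beta\omega\in\R\cup i\R$) is sound in the spirit of the Gel'fond applications of Ch.\,7, and the needed rank is only $3$ or $4$, so the ``unbounded rank'' remark is harmless.

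There is, however, one genuine error: your claim that the argument is ``unconditional in the degenerate positions of $\omega$''. If $\omega$ (or $\beta\omega$) lies in $\R\cup i\R$, nothing in the paper disposes of the conjecture: for real $\omega$ one has $\omega/h(\omega)=1\in\Q(\beta)$, so Corollary \ref{oppp} does not apply at all, and the question is exactly the classical open problem of the algebraic independence of $\log\alpha$ and $\alpha^{\beta}$ for quadratic $\beta$. Moreover, in that degenerate case the field $F$ you construct has transcendence degree $1$ automatically under the reductio hypothesis, and yet your argument still produces nothing, because the conclusion of Theorem \ref{Gel3} is a (weak) descent of $r\mapsto e^{\beta\omega r}$, which for $\beta\omega\in\R\cup i\R$ holds anyway and contradicts nothing. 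This shows the transcendence-degree condition is not the only place the machinery stalls: the method is intrinsically blind to the degenerate case, so your closing summary overstates what is obtained. With that claim removed, your proposal is an accurate account of why the statement remains a conjecture relative to the paper's techniques.
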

In the direction of the conjecture we can show 
\begin{corollary} Let $\omega \in \mathcal{L}$ be an algebraic logarithm and $\beta$ be a quadratic irrationality. If $\omega/|\omega|$ and $e^{\beta \omega}$
are algebraically dependent, then $\omega \in \R \cup i\R$.
 
\end{corollary}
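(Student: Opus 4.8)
The plan is to mimic the structure of the proof of Corollary~\ref{oppp}, but with the additive group playing the role previously taken by the second copy of $\mathbb{G}_{m,\overline{\Q}}$. Write $\Q(\beta) = \Q + \Q\alpha$ with $\alpha \in \R \cup i\R$; this field is stable under complex conjugation. Set $\G = \mathbb{G}_{a,\overline{\Q}} \times \mathbb{G}_{m,\overline{\Q}}$, let $\pi\colon \G \longrightarrow \U = \mathbb{G}_{m,\overline{\Q}}$ be the projection onto the multiplicative factor, and consider the real-analytic homomorphism
\[
\Psi(r) = \big(\, r,\ e^{\beta \omega r}\,\big)
\]
with values in $\G(\C)$ (after rescaling the parameter so that $\omega$ becomes the relevant logarithm — more precisely, one works with $\Psi(r) = (\omega r, e^{\beta\omega r})$ up to an isogeny of the additive factor, so that $r=1$ hits the algebraic point $(\omega, e^{\beta\omega})$). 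Since $\omega$ is an algebraic logarithm of the exponential function, $\omega \in \mathcal{L}$, so $\Psi(1)$ and $\Psi_{[i]}$-values land in the algebraically closed field $F_2$ generated by $\omega/|\omega|$, $e^{\beta\omega}$ and their conjugates, together with $\Z + \Z\alpha$ contained in the appropriate preimages.

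Next I would verify that $\Psi$ has Zariski-dense image in $\G(\C)$: this holds because $\beta\omega \neq 0$, so the multiplicative coordinate is non-constant, and the additive coordinate is obviously non-constant and independent of it (a one-parameter subgroup of $\mathbb{G}_a \times \mathbb{G}_m$ is Zariski-dense iff both projections are non-trivial, which is clear here). The key hypothesis $\omega/h(\omega) \notin \Q(\beta)$ is used, exactly as in the claim inside the proof of Corollary~\ref{oppp}, to guarantee via Theorem~\ref{weakdescent} that $\Psi_{\mathcal{N}}$ has a Zariski-dense image, i.e.\ that $\Psi$ itself does not descend weakly to $\R$; one checks that $Re\,\omega$, $Im\,\omega$, $\beta Re\,\omega$, $\beta Im\,\omega$ are $\Z$-linearly independent. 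Then I would argue by contradiction: suppose $\omega/|\omega|$ and $e^{\beta\omega}$ are algebraically dependent and $\omega \notin \R \cup i\R$. Under the dependence assumption, $\mathrm{trdeg}_{\Q} F_2 \leq 1$ (using that $\omega/|\omega|$ and $h(\omega)/|\omega|$ are algebraically dependent, so conjugation does not raise the transcendence degree). Now apply Remark~\ref{Gel31} to the setting $F = F_2$, $\pi$ as above, with $\dim\,\frak{t} = 1$ (since $\Psi_{\ast}(\R)$ is a real line, and in fact $\dim\,\frak{t}=1$ because $\omega$ real or imaginary is precisely what we are trying to exclude — so \emph{a priori} $\dim\,\frak{t} = 1$ only if the two coordinate directions are $K$-proportional; I must instead take $\dim\,\frak{t}=1$ as following from the assumed algebraic dependence combined with the structure of $\Psi_{\ast}$), ${\rm{r}} \geq 2$, ${\rm{g}}_m = 1$, ${\rm{g}}_a = 1$, ${\rm{g}}_c = 0$, ${\rm{k}} \geq 1$ (the kernel contains $\Z$ after normalization, or ${\rm{k}}=0$ and one uses the Baker-type Theorem~\ref{wus2} instead). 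Checking the inequality in Remark~\ref{Gel31}/Theorem~\ref{Gel3} with $\dim\,{\rm{U}}$ replaced appropriately, together with ${\rm{Hom}}(\ker\pi, {\rm{U}}^h) = {\rm{Hom}}(\mathbb{G}_a, \mathbb{G}_m) = \{0\}$ and the dual condition for the quotient version, yields that $\pi_{\ast}(\Psi)$ descends weakly to $K = F_2 \cap \R$.

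Finally, $\pi_{\ast}(\Psi)(r) = e^{\beta\omega r}$ is a one-parameter subgroup of $\mathbb{G}_{m}$, so weak descent to $K$ is the same as descent to $K$; by Corollary~\ref{lp} (the descent criterion for $\mathbb{G}_m$), this forces $\beta\omega \in \R \cup i\R$. Since $\beta$ is a fixed quadratic irrationality with $\beta \in \R$ or $\beta \in i\R$ after the decomposition $\Q(\beta) = \Q + \Q\alpha$, one then concludes $\omega \in \R \cup i\R$, contradicting our assumption and finishing the proof. The main obstacle I anticipate is the careful bookkeeping of the invariants ${\rm{r}}, {\rm{k}}, {\rm{g}}_a, {\rm{g}}_m$ and of $\dim\,\frak{t}$: one must confirm that the algebraic-dependence hypothesis genuinely translates into $\dim\,\frak{t} = 1$ (rather than $\dim\,\frak{t}=2$, in which case the transcendence input gives nothing), and that the numerical inequality in the Gel'fond-type theorem (with the linear-group improvement of Remark~\ref{Gel31}) is actually satisfied with $\mathrm{trdeg}_{\Q} F_2 \leq 1$. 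A secondary subtlety is handling the case distinction on whether ${\rm{exp}}$ of the relevant period already forces ${\rm{k}} \geq 1$; if not, one falls back on Theorem~\ref{wus1}/\ref{wus2} (Baker's method) exactly as in the proof of Corollary~\ref{po}, which is cleaner since it avoids the transcendence-degree hypothesis on the coefficient field altogether. I would likely present the Baker-method route as the primary argument and note that $\omega \in \mathcal{L}$ makes $\mathrm{rank}_{\Z}\Psi^{-1}(\G(\overline{\Q})) \geq 1$, so Theorem~\ref{wus2} applies directly with $\pi$ the projection to $\mathbb{G}_m$, $\dim\,\frak{t} = 1 < 2 = \dim\,\G + \dim\,\U - \dim\,\frak{t}$'s threshold, giving weak descent of $\pi_{\ast}(\Psi)$ and hence $\beta\omega \in \R \cup i\R$.
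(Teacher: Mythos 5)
Your proposal does not go through as written, for two independent reasons rooted in the choice $\G = \mathbb{G}_{a,\overline{\Q}}\times\mathbb{G}_{m,\overline{\Q}}$. First, the Baker route you designate as primary is unavailable: Theorem \ref{wus2} requires a non-trivial algebraic point in the image of $\Psi$ or $\Psi_{[i]}$, and for $\Psi(r)=(r,e^{\beta\omega r})$ (or $(\omega r,e^{\beta\omega r})$) there is none — the hypothesis $\omega\in\mathcal{L}$ only makes $e^{\omega}$ algebraic, and $e^{\omega}$ is not a coordinate of your map, while $\omega$ itself is transcendental and $e^{\beta\omega}$ is not known to be algebraic; so the claim $\mathrm{rank}_{\Z}\Psi^{-1}\big(\G(\overline{\Q})\big)\geq 1$ is unjustified. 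Second, the Gel'fond route fails numerically: a one-parameter subgroup of $\mathbb{G}_a\times\mathbb{G}_m$ with non-trivial additive component has trivial kernel, so $\rm{k}=0$ necessarily (your "kernel contains $\Z$ after normalization" is impossible), and with the rank $\rm{r}=2$ your points $\Z+\Z\alpha$ supply, ${\rm{g}}_c=0$, ${\rm{g}}_a={\rm{g}}_m=1$, $\dim\,{\rm{U}}=1$, the inequality of Theorem \ref{Gel3} even with Remark \ref{Gel31} reads $2(3-\dim\frak{t})-2\geq 2+\delta$ and fails both for $\dim\frak{t}=1$ (where $\delta=1$) and for $\dim\frak{t}=2$. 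This is precisely why Corollary \ref{oppp} stays inside $\mathbb{G}_m^{2}$ with $\pi=id.$, so that $\dim\,{\rm{U}}=2$ enters the left-hand side and the inequality holds with equality. Note also that $\dim\frak{t}=1$ is not a consequence of the assumed algebraic dependence (a polynomial relation yields no linear relation over $K$); in Corollary \ref{oppp} it is achieved by normalizing the parameter by $|\omega|$ and adjoining $\omega/|\omega|$ and its conjugate to the base field, so that $Re\,\omega/|\omega|$ and $Im\,\omega/|\omega|$ lie in $K$.

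Even granting weak descent of $r\mapsto e^{\beta\omega r}$, your endgame is incorrect: a quadratic irrationality need not be real or purely imaginary (take $\beta=1+i$), so $\beta\omega\in\R\cup i\R$ only tells you that $\omega$ lies on a real line with algebraic direction, i.e.\ that $\omega/|\omega|$ is algebraic; to pass from there to $\omega\in\R\cup i\R$ you still need Diaz' Theorem \ref{DD}. That observation is in fact the whole proof in the paper, which you should adopt instead of re-running the machinery: if $\omega\notin\R\cup i\R$, Theorem \ref{DD} shows that $\omega/|\omega|$ is transcendental (in particular $\omega/h(\omega)=(\omega/|\omega|)^{2}\notin\Q(\beta)$, so the hypothesis of Corollary \ref{oppp} is automatic), and the first statement of Corollary \ref{oppp} then says that two of the three numbers $\omega/|\omega|$, $e^{\omega}$, $e^{\beta\omega}$ are algebraically independent; since $e^{\omega}$ is algebraic, the independent pair must be $\omega/|\omega|$ and $e^{\beta\omega}$, contradicting your dependence hypothesis.
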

\begin{proof} If $\omega \notin \R \cup i\R$, then Theorem\,\ref{DD} implies that $\omega/|\omega|$ is transcendental, and the claim follows from the first statement in Corollary\,\ref{oppp}.
\end{proof}

\begin{corollary} Let $\alpha > 0$ be an algebraic number and $d \geq 2$ be a square-free integer. Then $\pi/\log\,\alpha$ and
$\alpha^{\sqrt{d}}e^{\sqrt{-d}\pi}$ are algebraically independent.

\end{corollary}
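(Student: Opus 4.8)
The plan is to reduce the statement to a direct application of Corollary \ref{oppp}. First I would set aside the degenerate case $\alpha=1$ (where $\log\alpha=0$ and the assertion is empty) and assume $\alpha\neq 1$. The crucial choice is to take $\omega=\log\alpha+i\pi$, with $\log$ the real logarithm, and $\beta=\sqrt d$. Then $\beta$ is a quadratic irrationality since $d\geq 2$ is square-free, $\omega\neq 0$ because its imaginary part equals $\pi$, and, using $\sqrt{-d}=i\sqrt d$, one has $e^{\omega}=\alpha e^{i\pi}=-\alpha$ and $e^{\beta\omega}=\alpha^{\sqrt d}e^{i\sqrt d\,\pi}=\alpha^{\sqrt d}e^{\sqrt{-d}\,\pi}$. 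Because $\alpha\neq 1$ the number $\omega$ lies in neither $\R$ nor $i\R$, so $\omega/h(\omega)=\omega^{2}/|\omega|^{2}$ is not real while $\Q(\beta)=\Q(\sqrt d)\subset\R$; hence $\omega/h(\omega)\notin\Q(\beta)$, and the standing hypothesis of Corollary \ref{oppp} is satisfied.

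Next I would establish that $t:=\pi/\log\alpha$ is transcendental: $\log\alpha$ and $i\pi=\log(-1)$ are $\Z$-linearly independent logarithms of algebraic numbers (their real and imaginary parts force any integer relation to be trivial), so by Baker's theorem they are linearly independent over $\overline{\Q}$, i.e.\ $i\pi/\log\alpha\notin\overline{\Q}$. I would then relate $t$ to $u:=\omega/|\omega|$ by the elementary identities $u=\pm(1+it)(1+t^{2})^{-1/2}$, hence $u^{2}=(1+it)/(1-it)$ and, solving back, $t=-i(u^{2}-1)(u^{2}+1)^{-1}$. These show that $\Q(t)$ and $\Q(u)$ have the same algebraic closure in $\C$, so that for any $b\in\C$ the pair $\{t,b\}$ is algebraically independent over $\Q$ precisely when $\{u,b\}$ is; in particular $u$ is transcendental.

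The final step is to invoke the first statement of Corollary \ref{oppp} for the above $\omega$ and $\beta$: two of the three numbers $u=\omega/|\omega|$, $e^{\omega}=-\alpha$, $e^{\beta\omega}=\alpha^{\sqrt d}e^{\sqrt{-d}\pi}$ are algebraically independent. Since $-\alpha$ is algebraic, it cannot belong to such a pair, so $u$ and $\alpha^{\sqrt d}e^{\sqrt{-d}\pi}$ are algebraically independent; by the field-theoretic equivalence of the previous paragraph, so are $\pi/\log\alpha$ and $\alpha^{\sqrt d}e^{\sqrt{-d}\pi}$, which is the claim. The only point I expect to require some care is this last transfer — checking that $\Q(\pi/\log\alpha)$ and $\Q(\omega/|\omega|)$ generate the same algebraic extension of $\Q$ so the transcendence-degree conclusion carries over — together with confirming that the cited transcendence of $\pi/\log\alpha$ is applied correctly; the remaining ingredients are straightforward substitutions into Corollary \ref{oppp}.
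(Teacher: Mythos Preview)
Your proposal is correct and follows essentially the same approach as the paper: set $\omega=\log\alpha+i\pi$, $\beta=\sqrt d$, verify the hypotheses of Corollary~\ref{oppp}, use Baker's theorem to see that $\omega/|\omega|$ is transcendental, and apply Statement~1 together with the algebraic dependence of $\pi/\log\alpha$ and $\omega/|\omega|$. The paper's proof is extremely terse (three sentences), and you have spelled out precisely the details it leaves implicit, including the explicit algebraic relation between $\pi/\log\alpha$ and $\omega/|\omega|$ and the verification that $\omega/h(\omega)\notin\Q(\sqrt d)$.
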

\begin{proof} Set $\omega = i\pi + \log\,\alpha$ and $\beta = \sqrt{d}$. Baker's theorem implies that
$\omega/|\omega|$ is not algebraic. Using that $\pi/\log\,\alpha$ and $\omega/|\omega|$ are algebraically
dependent, one completes the proof with the help of Statement 1.\,in Corollary 7.3.9.
\end{proof}
\begin{corollary} Let $\beta \notin \Q(i)$ be a quadratic irrationality. Then $e^{\pi}$ and $e^{\beta(1+i)\pi}$ are algebraically independent.
\end{corollary}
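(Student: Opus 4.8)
The plan is to deduce this from Corollary \ref{oppp} with the choice $\omega = (1+i)\pi$. First I would verify the hypotheses of that corollary for this $\omega$ and the given quadratic irrationality $\beta$. Clearly $\omega \neq 0$. Next, $\omega/h(\omega) = (1+i)/(1-i) = i$, and since $\beta \notin \Q(i)$ while $\Q(\beta)$ and $\Q(i)$ are both quadratic extensions of $\Q$, an inclusion $i \in \Q(\beta)$ would force $\Q(i) = \Q(\beta)$; hence $i \notin \Q(\beta)$, i.e. $\omega/h(\omega) \notin \Q(\beta)$. Finally $\omega/|\omega| = (1+i)/\sqrt{2} = e^{i\pi/4}$ is a primitive eighth root of unity, hence algebraic. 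So the second assertion of Corollary \ref{oppp} applies: among the four numbers $e^{\omega}$, $e^{\beta\omega}$ and their complex conjugates $h(e^{\omega})$, $h(e^{\beta\omega})$, at least two are algebraically independent.

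Next I would record the elementary identifications $e^{\omega} = e^{(1+i)\pi} = -e^{\pi}$ and $e^{\beta\omega} = e^{\beta(1+i)\pi}$. In particular $e^{\omega}$ is real, so $h(e^{\omega}) = e^{\omega} = -e^{\pi}$, and the four numbers above all lie in the field $F = \Q\big(e^{\pi},\, e^{\beta(1+i)\pi},\, h(e^{\beta(1+i)\pi})\big)$.

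Now I would argue by contradiction: suppose $e^{\pi}$ and $e^{\beta(1+i)\pi}$ are algebraically dependent, so that $e^{\beta(1+i)\pi}$ is algebraic over $\Q(e^{\pi})$. Applying the field automorphism $h$ of $\C$ to such an algebraic relation, and using that $e^{\pi} \in \R$ is fixed by $h$, shows that $h(e^{\beta(1+i)\pi})$ is also algebraic over $\Q(e^{\pi})$. Hence ${\rm{trdeg}}_{\Q} F \leq 1$, so no two elements of $F$ — in particular no two of the four numbers produced by Corollary \ref{oppp} — can be algebraically independent. This contradicts the conclusion of Corollary \ref{oppp}, and therefore $e^{\pi}$ and $e^{\beta(1+i)\pi}$ are algebraically independent.

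The only delicate point is the last reduction: Corollary \ref{oppp} only guarantees that \emph{some} pair among the four numbers is algebraically independent, not the particular pair $\{e^{\pi}, e^{\beta(1+i)\pi}\}$ we care about. The reality of $e^{\pi}$ is exactly what makes this harmless, since it lets a hypothetical dependence of that pair propagate through complex conjugation to a transcendence-degree bound on $F$ that rules out \emph{every} pair simultaneously. No transcendence input beyond Corollary \ref{oppp} (which itself rests on Remark \ref{Gel31} and Theorem \ref{DD}) is needed.
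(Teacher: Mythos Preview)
Your proof is correct. You invoke part~2 of Corollary~\ref{oppp} and then use the reality of $e^{\omega}=-e^{\pi}$ together with complex conjugation to force the desired pair to be algebraically independent; the paper instead applies part~3 of the same corollary directly, noting that $e^{\omega}/|e^{\omega}|=-1$ is algebraic, which immediately yields the algebraic independence of $e^{\omega}$ and $e^{\beta\omega}$ without the auxiliary contradiction argument. Your conjugation step is exactly what part~3 encapsulates in this situation, so the two arguments differ only in whether that reduction is done by hand or quoted; the paper's route is marginally shorter, while yours makes the mechanism (reality of $e^{\pi}$ propagating dependence to the conjugate) more visible.
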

\begin{proof} Let $\omega = i\pi + \pi$. Note that $e^{\pi + i\pi} \in \Q\big(e^{\pi}\big)$ and derive the result with the help of the third statement in Corollary\,\ref{oppp}.
\end{proof}
The next consequences are, to some effect, elliptic analoga of Gel'fond's result. They amend Masser-W\"ustholz \cite[Corollary\,1]{MW4} where a similar result for elliptic curves with complex multiplication is given. 
\begin{corollary} \label{und} Let $\beta$ be an algebraic real number with degree $4$ over $\Q$. Let $\Lambda$ be a lattice in $\C$ with algebraic invariants and
consider an algebraic logarithm $\omega \in \mathcal{L}_{\Lambda}\setminus \Lambda$ such that $\omega/|\omega| \notin \overline{\Q}$. Then the four values $Im\,\wp_{}(\beta\omega)$, $Re\,\wp_{}(\beta\omega)$, $\wp_{}(\beta^{\scriptscriptstyle 2}\omega)$ and $\wp_{}(\beta^{\scriptscriptstyle 3}\omega)$ are defined and two of them are algebraically independent. 
\end{corollary}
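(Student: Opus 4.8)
The plan is to apply the real-analytic form of Gel'fond's method, Theorem \ref{Gel3}, to a power of ${\rm E}$ — in the same spirit as Corollary \ref{oppp} and the Chudnovsky-type corollaries of this section — and then to exploit the hypothesis $\omega/|\omega|\notin\overline{\Q}$ through Proposition \ref{real} and the structure theory of Section 4.5. Three harmless reductions come first. Since $\wp(N^{j}\beta^{j}\omega)$ is an algebraic function over $\overline{\Q}$ of $\wp(\beta^{j}\omega)$ (the multiplication-by-$N^{j}$ formula), and algebraic independence over $\overline{\Q}$ is insensitive to such substitutions, I may replace $\beta$ by $N\beta$ for a suitable nonzero integer $N$ and thus assume $\beta$ is an algebraic integer; then the rank-$4$ module $M=\Z\omega+\Z\beta\omega+\Z\beta^{2}\omega+\Z\beta^{3}\omega$ satisfies $\beta M\subseteq M$. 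Next, $\beta^{j}\omega\notin\Lambda$ for $j=0,1,2,3$: otherwise $\R\omega$ would meet $\Lambda$ in a nonzero cyclic group $\Z\lambda_{0}$, forcing $\omega,\beta^{j}\omega\in\Q\lambda_{0}$ and hence $\beta^{j}\in\Q$, impossible for $j\le 3$ since $\deg\beta=4$; this gives the ``defined'' clause. Finally, $\wp(\beta\omega)\notin\overline{\Q}$: were $\wp(\beta\omega)$ algebraic then, as $\wp(\omega)\in\overline{\Q}$, the number $\omega$ would be an algebraic logarithm of the homomorphism $z\mapsto\bigl(\wp(z),\wp(\beta z)\bigr)$ to $({\rm E}\times{\rm E})(\C)$, whose image is Zariski-dense because $\beta\notin\Q$, contradicting the Analytic Subgroup Theorem in the form $(\ast)$ following Theorem \ref{AST}.

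Now suppose for contradiction that no two of the four numbers are algebraically independent over $\overline{\Q}$. Then $L=\overline{\Q}\bigl(\wp(\beta\omega),h(\wp(\beta\omega)),\wp(\beta^{2}\omega),\wp(\beta^{3}\omega)\bigr)$ has transcendence degree $\le 1$ over $\Q$, and since $L\cap h(L)$ contains the transcendental number $\wp(\beta\omega)$, the algebraically closed, conjugation-stable field $F=\bigl(L\cdot h(L)\bigr)^{{\rm alg}}$ still has ${\rm trdeg}_{\Q}F=1$. Put $\G=({\rm E}\otimes_{\overline{\Q}}F)^{3}$, $\pi={\rm id}$, and $\Psi(r)=\bigl(\wp(\beta\omega r),\wp(\beta^{2}\omega r),\wp(\beta^{3}\omega r)\bigr)$; the image of $\Psi$ is Zariski-dense because $1,\beta,\beta^{2},\beta^{3}$ stay linearly independent even over an imaginary quadratic field, and ${\rm k}={\rm rank}_{\Z}\ker\Psi=0$ since $\beta\notin\Q$ is real and $\Lambda$ is discrete. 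One has $\dim\frak{t}=2$, because $\Psi_{\ast}(\R)=\R\cdot\beta\omega\,(1,\beta,\beta^{2})$ and $Re\,\omega,Im\,\omega$ are linearly independent over $K$ — this being exactly the content of $\omega/|\omega|\notin\overline{\Q}$ recorded in Section 7.1. The crucial point is the rank ${\rm r}$: since $M$ is $\beta$-stable and $\beta$ is an algebraic integer, $\beta^{j}\omega$ is for every $j\ge 0$ an integral combination in $\frak{e}(\C)$ of $\omega,\beta\omega,\beta^{2}\omega,\beta^{3}\omega$, so the addition theorem on ${\rm E}$, together with $\wp(\omega)\in\overline{\Q}$ and the algebraic closedness of $F$ (which places the required values of $\wp'$ in $F$), gives $\wp(\beta^{j}\omega)\in F$ for all $j\ge 0$; hence $\Z+\Z\beta+\Z\beta^{2}+\Z\beta^{3}\subseteq\Psi^{-1}(\G(F))$ and ${\rm r}\ge 4$. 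With $\G_{c}=\G$, ${\rm g}_{a}={\rm g}_{m}=0$, $\delta=0$ and $\U=\G$, the inequality of Theorem \ref{Gel3} reads $3{\rm r}\ge 2\cdot 3+2\cdot 3$, which holds; the ${\rm Hom}$-hypothesis is vacuous since $\ker\pi=0$. Therefore $\Psi$ descends weakly to $K$.

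It remains to contradict this. By Corollary \ref{weakdescentodd1} (and the fact, recalled from App.\,B, that for an elliptic curve being isogenous to its complex conjugate is the same as being isogenous to a curve with model over $K$) we may assume that either $\Psi$ descends weakly to $K$ through a homomorphism $v$ onto a quotient $\G'\otimes_{K}F$ simple over $F$, necessarily an elliptic curve of dimension $1$, or ${\rm E}$ is isogenous to a curve ${\rm E}_{0}$ over $K$. In the first case $v_{\ast}(\Psi)(r)=\iota\bigl(\wp(\mu\omega r)\bigr)$ for a nonzero real $\mu\in\Q(\beta)$ and an isogeny $\iota$; since $\mu\omega/|\mu\omega|=\pm\,\omega/|\omega|\notin\overline{\Q}$ the subspace attached to $v_{\ast}(\Psi)$ still has dimension $2$, so Proposition \ref{real} would give $1=\dim\G'\le\dim_{K}{\rm Lie}(\G'\otimes_{K}F)-2=2-2=0$, absurd. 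In the second case I replace ${\rm E}$ by ${\rm E}_{0}$ (which preserves both the assertion and the hypothesis $\omega/|\omega|\notin\overline{\Q}$), so that $\G$ becomes defined over $K$; composing the weak descent with a simple quotient over $K$ and applying Corollary \ref{lp333} one finds that $r\mapsto{\rm exp}_{\G'}(c\omega r)$ takes values in the real points $\G'(\R)$ for some nonzero $c\in\overline{\Q}$, and comparing the line $\R c\omega$ with the $K$-rational part of ${\rm Lie}\,\G'$, which is discrete modulo the period lattice, forces $c\omega$ — hence $\omega$ — to be a real multiple of an algebraic number, i.e. $\omega/|\omega|\in\overline{\Q}$, again absurd. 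The genuinely delicate step, and the main obstacle, is the reduction made at the start of this paragraph: ruling out the weak descents of $\Psi$ onto quotients of Weil-restriction type $\U\times\U^{h}$. This is handled exactly as in the proof of Corollary \ref{ppopp}, by invoking Corollary \ref{weakdescentodd} together with the parity of the $\delta$-invariant of the Zariski closure of $\Psi_{\mathcal{N}}(\R)$ inside ${\rm E}^{3}\times({\rm E}^{h})^{3}$. Having reached a contradiction in every case, two of the four numbers are algebraically independent over $\overline{\Q}$.
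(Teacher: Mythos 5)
Your first two paragraphs follow the paper's own route: form the conjugation-stable algebraically closed field $F$ of transcendence degree $\le 1$ generated by the values in question, run the one-parameter homomorphism along the line $\omega(1,\beta,\beta^{2})$ into ${\rm E}^{3}$, check Zariski-density (degree $4$ over $\Q$, degree $8$ over the CM field), ${\rm k}=0$, ${\rm r}\ge 4$, $\dim\frak{t}\le 2$, and apply Theorem \ref{Gel3} with $\pi=id.$ to conclude that $\Psi$ descends weakly to $K=F\cap\R$; your rank bookkeeping here is fine. The genuine gap is in the last step, where this weak descent has to be refuted. The paper does this by showing directly that $\Psi_{\mathcal{N}}$ has Zariski-dense image in ${\rm E}^{3}\times({\rm E}^{h})^{3}$ and invoking Theorem \ref{weakdescent}: a proper Zariski closure would produce a relation $(a_{0}+a_{1}\beta+a_{2}\beta^{2})\omega+(b_{0}+b_{1}\beta+b_{2}\beta^{2})\alpha h(\omega)=0$ with rational $a_{j},b_{j}$ and algebraic $\alpha$, because abelian subvarieties of an abelian variety over $\overline{\Q}$ are defined over $\overline{\Q}$; so only the $\overline{\Q}$-linear independence of $\omega$ and $h(\omega)$, i.e. $\omega/|\omega|\notin\overline{\Q}$, is needed. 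Your case analysis does not reach this conclusion.

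Concretely: (i) the parenthetical ``fact'' that an elliptic curve is isogenous to its complex conjugate if and only if it is isogenous to a curve with model over $K$ is false; the paper's own example in the last appendix ($\tau=\sqrt{3}(1+i)$, an isogeny of degree $6$ onto the conjugate while the Weil restriction stays simple) refutes it. Worse, under the hypotheses of Corollary \ref{und} the curve ${\rm E}$ may itself be defined over $\R$ (only $\omega$ is constrained), so no contradiction can be extracted from which structural branch of Corollary \ref{weakdescentodd1} one lands in; the contradiction must come from the direction $\omega$. (ii) In your simple-quotient case you apply Proposition \ref{real} after asserting that the subspace attached to $v_{\ast}(\Psi)$ has $K$-dimension $2$ because $\mu\omega/|\mu\omega|\notin\overline{\Q}$; but here $K=F\cap\R$ has transcendence degree $1$, and ``not algebraic'' does not give linear independence of $Re\,\omega$ and $Im\,\omega$ over this larger field, so the defect count $2-2=0$ is unjustified (the same conflation of $K$ with $\overline{\Q}\cap\R$ underlies your Case B conclusion that $c\omega$ is a real multiple of an algebraic number). (iii) The Weil-restriction-type quotients, which you yourself single out as the main obstacle, are not dispatched by Corollary \ref{weakdescentodd}: nothing forces $\delta({\rm H})$ to be odd here, and the proof of Corollary \ref{ppopp} does not argue by parity — it excludes the conjugate-isogeny situation by an explicit hypothesis ($\alpha\Lambda\nsubseteq\Lambda^{h}$) which Corollary \ref{und} does not have. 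Finally, your argument for the ``defined'' clause is broken as stated ($\beta^{j}\omega\in\Lambda$ only gives $\omega\in\R\lambda_{0}$, not $\omega\in\Q\lambda_{0}$), although that side claim is easily repaired by the same Analytic Subgroup Theorem argument you use for the transcendence of $\wp(\beta\omega)$.
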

\begin{proof} Let $l = 1,2,3$. Denote by $F$ the algebraically closed field generated by $c_l = \wp_{}(\beta^{\scriptscriptstyle l}\omega)$ and the conjugate $h(c_1)$ over $\overline{\Q}$. We will suppose that $F$ has transcendence degree $\leq 1$ and will derive a contradiction.\\
\\
The number $\wp_{}(\beta\omega)$ is transcendental by the theorem of Schneider. Hence, since $F$ has transcendence degree $\leq 1$ over $\Q$, then the field $F$ is the algebraic closure of $\Q\big(c_1, h(c_1)\big)$ and it is stable with respect to complex conjugation. We set $\G = {\rm{E}}^{\scr 3}$, define 
$$\Psi(r) = \big(\wp_{}(\omega r), \wp_{}(\beta \omega r), \wp_{}(\beta^{\scriptscriptstyle 2}\omega r)\big)$$
and consider 
$$\Z + \Z\beta +  \Z \beta^{\scriptscriptstyle 2} + \Z \beta^{\scriptscriptstyle 3} \subset \Psi^{\scr -1}\big(\G(F)\big) + \Psi_{[i]}^{\scr -1}\big(\G(F)\big).$$
Since $\omega/|\omega| \notin \overline{\Q}$, $\omega$ and its complex conjugate are not
linearly dependent over $\overline{\Q}$. 
\begin{claim} {The homomorphism $\Psi$ does not descend weakly to $K$.}
\end{claim}
\begin{proof} Because of Theorem \ref{weakdescent} it suffices to show that the homomorphism $\Psi_{\mathcal{N}}$ has Zariski-dense image. Assume first that ${{\rm{E}}}_{}$ has no complex multiplication. With notations as in Sect.\,3.3 we have ${\rm{Lie}}(\rho)(\omega) = h(\omega)$. If $\Psi_{\mathcal{N}}$ has no Zariski-dense image, then we receive an equation 
$$0 = (a_0 + a_1\beta + a_2 \beta^{\scriptscriptstyle 2}) \omega + (b_0 + b_1\beta + b_2 \beta^{\scriptscriptstyle 2})\alpha h(\omega)$$
with rational $a_j, b_j$ and where $\alpha \in \overline{\Q}$ represents an isogeny in ${\rm{End}}({\rm{E}}^{\scriptscriptstyle h}, {\rm{E}}) \backsimeq \Z$. Since $\omega$ and its complex conjugate are linearly independent over $\overline{\Q}$, we get a contradiction. Finally, if ${\rm{E}}$ has complex multiplication by, say $\sqrt{-d}$, then $\Q(\beta, \sqrt{-d})$ has degree $8$ over $\Q$ for $\beta$ is real. Using this the proof follows similarly.
\end{proof}
Recall that $F$ is stable with respect to complex conjugation and set $K = F \cap \R$. The hypotheses imply: There is a subspace $\frak{t}$ of $\frak{e}^{\scr 2}$ over $K$ of dimension $\dim\,\frak{t} = 2$
such that the image of $\Psi_{\ast}$ is contained in $\frak{t} \otimes_K \R$. From Theorem\,\ref{Gel3} applied with $\pi = id., {\rm{r}} \geq 3$, ${\rm{g}}_c = 3$, ${\rm{g}}_m = 0$, ${\rm{g}}_a = 0$ and $\rm{k} \geq 0$ we infer that ${\Psi}$ descends weakly to $K$. But the latter contradicts the previous claim.\end{proof} 
\begin{corollary} Let $\beta$ be a cubic real irrationality. Let $\Lambda$ be a lattice in $\C$ with algebraic invariants and
consider an algebraic logarithm $\omega \in \mathcal{L}_{}$ such that $\omega/|\omega| \notin \overline{\Q}$. Then two of the three numbers $Re\,\omega, \wp_{}(\beta\omega)$ and $\wp_{}(\beta^{\scriptscriptstyle 2}\omega)$ are algebraically independent. 
\end{corollary}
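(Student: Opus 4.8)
The plan is to mimic closely the proof of Corollary \ref{und}, but now with a cubic irrationality $\beta$ and with an extra additive factor $\mathbb{G}_{a,\overline{\Q}}$ absorbing the number $Re\,\omega$. First I would set $\G = \rm{E} \times \mathbb{G}_{a,\overline{\Q}} \times \rm{E} \times \rm{E}$ (or, more economically, $\G = \rm{E}^{\scr 3} \times \mathbb{G}_{a,\overline{\Q}}$ with the copies of $\rm{E}$ carrying $\wp(\omega r)$, $\wp(\beta\omega r)$, $\wp(\beta^{\scr 2}\omega r)$ and the additive factor carrying $r$), take $\pi = id.$, and consider the real-analytic homomorphism
$$\Psi(r) = \big(\wp(\omega r),\, r,\, \wp(\beta\omega r),\, \wp(\beta^{\scr 2}\omega r)\big).$$
Let $F$ be the algebraic closure of $\Q\big(Re\,\omega,\, \wp(\beta\omega),\, \wp(\beta^{\scr 2}\omega)\big)$ together with the complex conjugates of these generators; by construction $F$ is stable with respect to complex conjugation. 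Assuming for contradiction that $\mbox{trdeg}_{\Q} F \leq 1$, one has that $F$ is the algebraic closure of $\Q$ adjoined just one transcendental (plus conjugates), and the defining relation of $\beta$ over $\Q$ gives that the $\Z$-module $\Z + \Z\beta + \Z\beta^{\scr 2}$ is contained in $\Psi^{\scr -1}\big(\G(F)\big) \cup \Psi_{[i]}^{\scr -1}\big(\G(F)\big)$ (using that $\wp(\beta^{\scr 3}\omega)$ is an $F$-rational point, being an algebraic combination via $\beta^{\scr 3} = c_0 + c_1\beta + c_2\beta^{\scr 2}$ of the previous group points — this is the point where the \emph{addition theorem} for $\wp$ is invoked). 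Thus $\rm{r} \geq 3$, and $\rm{k} \geq 0$.

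Next I would verify that $\Psi$ has Zariski-dense image and that $\Psi_{\mathcal{N}}$ has Zariski-dense image. The former reduces to checking that $1, \beta, \beta^{\scr 2}$ are linearly independent over the endomorphism ring of $\rm{E}$, which holds because $\beta$ has degree $3$ over $\Q$: if $\rm{E}$ has no CM this is immediate, and if $\rm{E}$ has CM by $\sqrt{-d}$ then $\Q(\beta,\sqrt{-d})$ has degree $6$ over $\Q$ (since $\beta$ is real), so again $1,\beta,\beta^{\scr 2}$ are $\mbox{End}(\rm{E})$-independent, while the additive coordinate is visibly non-degenerate. For the Weil-restriction version one argues exactly as in the claim inside the proof of Corollary \ref{und}: if $\Psi_{\mathcal{N}}(\R)$ were contained in a proper subgroup of $(\rm{E}^{\scr 3} \times \mathbb{G}_a^{\scr ?})\times(\rm{E}^{\scr 3}\times\mathbb{G}_a^{\scr ?})^{\scr h}$, one obtains a nontrivial linear relation between $\omega$ and $h(\omega)$ (with coefficients polynomial in $\beta$, times an isogeny constant $\alpha$ when $\rm{E}$ is not CM, or with coefficients in $\Q(\beta,\sqrt{-d})$ in the CM case), contradicting that $\omega/|\omega| \notin \overline{\Q}$, i.e. that $\omega$ and $h(\omega)$ are $\overline{\Q}$-linearly independent. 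Hence by Theorem \ref{weakdescent}, $\Psi$ does not descend weakly to $K$.

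Now suppose the three numbers $Re\,\omega$, $\wp(\beta\omega)$, $\wp(\beta^{\scr 2}\omega)$ are pairwise algebraically dependent, i.e. $\mbox{trdeg}_{\Q} F \leq 1$. The algebraic dependence translates, via the fact that $Re\,\omega, Im\,\omega$ are coordinates over $K$ of the Lie-algebra element attached to $\omega$ (together with the already-established $\rm{r} \geq 3$), into the existence of a proper subspace $\frak{t} \subset \frak{g}$ over $K$ with $\Psi_{\ast}(\R) \subset \frak{t}\otimes_K\R$ and $\dim\,\frak{t} < \dim\,\G$. I would then apply Theorem \ref{Gel3} (with $\pi = id.$, so the hypothesis on $\mbox{Hom}$ is vacuous) with the numerics $\rm{r} \geq 3$, $\rm{k} \geq 0$, $\mathrm{g}_a = 1$, $\mathrm{g}_m = 0$, $\mathrm{g}_c = 3$, and $\dim\,\frak{t}$ suitably small — checking that the resulting inequality $(1+\rm{k})\rm{r}\cdot(\dim\,\G + \dim\,{\rm U} - \dim\,\frak{t}) - \rm{r} \geq 2\mathrm{g}_c + 2\dim\,{\rm U} + \mathrm{g}_m + \delta$ is satisfied (with $\rm{U} = \G$, $\delta$ possibly $1$ since $\mathrm{g}_a = 1$). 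This forces $\Psi$ to descend weakly to $K$, contradicting the previous paragraph. Therefore $\mbox{trdeg}_{\Q} F \geq 2$, which is exactly the assertion. The main obstacle I anticipate is bookkeeping the numerology so that the inequality in Theorem \ref{Gel3} genuinely holds for the relevant value of $\dim\,\frak{t}$ (one must be careful about how small $\dim\,\frak{t}$ can be forced to be, and about the $\delta$-term coming from the additive factor); a secondary subtlety is confirming, in the CM case, that adjoining $\sqrt{-d}$ to $\Q(\beta)$ does not accidentally lower the degree below what is needed for Zariski-density, which is handled by the reality of $\beta$.
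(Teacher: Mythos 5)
Your overall strategy (assume the three numbers generate a field of transcendence degree $\leq 1$, get ${\rm{r}} \geq 3$ from $\Z + \Z\beta + \Z\beta^{\scriptscriptstyle 2}$, and contradict a non-descent statement via Theorem \ref{Gel3}) is the paper's, but two of your concrete choices break the argument. The fatal one is taking $\pi = id.$: with your $\Psi$ the additive coordinate ($r$, and likewise $(Re\,\omega)r$) is real-valued, so it coincides with its complex conjugate, and hence $\Psi_{\mathcal{N}}(\R)$ is contained in the proper algebraic subgroup ${\rm{E}}^{\scriptscriptstyle 3} \times ({\rm{E}}^{\scriptscriptstyle h})^{\scriptscriptstyle 3} \times \Delta$, where $\Delta$ is the diagonal in $\mathbb{G}_{a} \times \mathbb{G}_{a}^{\scriptscriptstyle h}$. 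Your claim that $\Psi_{\mathcal{N}}$ has Zariski-dense image is therefore false: by Theorem \ref{weakdescent}, $\Psi$ \emph{always} descends weakly to $K$ (trivially, via the projection onto the additive factor), so the conclusion of Theorem \ref{Gel3} with $\pi = id.$ yields no contradiction at all; this is exactly the phenomenon of Example \ref{90+++}. The paper avoids this by taking $\pi: \G = {\rm{E}}^{\scriptscriptstyle 3} \times \mathbb{G}_{a,\overline{\Q}} \longrightarrow {\rm{U}} = {\rm{E}}^{\scriptscriptstyle 3}$ to be the projection (the hypothesis ${\rm{Hom}}({\rm{U}}^{\scriptscriptstyle h}, {\rm{M}}) = 0$ holds because every quotient of $ker\,\pi = \mathbb{G}_{a}$ is affine while ${\rm{U}}^{\scriptscriptstyle h}$ is an abelian variety), so Theorem \ref{Gel3} gives weak descent of $\pi_{\ast}(\Psi)$, and the contradiction is with the Zariski-density of $\big(\pi_{\ast}(\Psi)\big)_{\mathcal{N}}$ in ${\rm{E}}^{\scriptscriptstyle 3} \times ({\rm{E}}^{\scriptscriptstyle h})^{\scriptscriptstyle 3}$ only — the density argument you sketch for the elliptic components (cubic $\beta$, degree $6$ of $\Q(\beta, \sqrt{-d})$ in the CM case, $\omega$ and $h(\omega)$ independent over $\overline{\Q}$) is correct, but it must be applied there, not to the full $\G$.

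The second problem is numerical and explains the paper's additive coordinate $(Re\,\omega)r$ rather than $r$. The subspace $\frak{t}$ is produced by pure linear algebra from the $K$-span of the coordinates of $\Psi_{\ast}(1)$ (not by the algebraic-dependence assumption, which is needed only so that Theorem \ref{Gel3} applies over a field of transcendence degree $\leq 1$). With additive coordinate $r$ these coordinates span $K + K\,Re\,\omega + K\,Im\,\omega$, so in general $\dim\,\frak{t} = 3$, and with ${\rm{U}} = {\rm{E}}^{\scriptscriptstyle 3}$, ${\rm{k}} = 0$, ${\rm{r}} = 3$ the inequality of Theorem \ref{Gel3} reads $3(4 + 3 - 3) - 3 = 9 \geq 2\cdot 3 + 2\cdot 3 = 12$, which fails. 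With the paper's coordinate $(Re\,\omega)r$ the additive entry lies in $K\cdot Re\,\omega$ (note $\beta \in K$), so $\dim\,\frak{t} = 2$ (here $\omega/|\omega| \notin \overline{\Q}$ gives the $K$-independence of $Re\,\omega$ and $Im\,\omega$), $\delta = 0$ (not ``possibly $1$''), and the inequality holds with equality, $12 \geq 12$. Finally, your contradiction hypothesis should be that the three listed numbers have transcendence degree $\leq 1$; to pass to a conjugation-stable field $F$ of the same transcendence degree one needs $Re\,\omega$ to be transcendental and real — this is the paper's first step via Corollary \ref{realalg} and $\omega/|\omega| \notin \overline{\Q}$ — whereas assuming ${\rm{trdeg}}_{\Q}F \leq 1$ directly, as you do, proves a weaker statement than the corollary.
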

\begin{proof} Since $\omega/|\omega|$ is transcendental, Corollary \ref{realalg} implies that so is $Re\,\omega$. Set ${\rm{G}} = {{\rm{E}}}^{\scriptscriptstyle 3} \times \mathbb{G}_{a, \overline{\Q}}$, consider the projection $\pi$ to ${\rm{E}}^{\scr 3}$ and define 
$$\Psi(r) = \big(\wp_{}(\omega r), \wp_{}(\beta \omega r), \wp_{}(\beta^{\scriptscriptstyle 2}\omega r), (Re\,\omega)r \big).$$
Using arguments as in the previous proof, one derives the assertion from Theorem \ref{Gel3}.
\end{proof}
\begin{corollary} Let $\Lambda$ be a lattice in $\C$ with algebraic invariants and complex multiplication by $\tau \notin \Q(i)$. Consider a non-zero algebraic logarithm $\omega \in \mathcal{L}_{\Lambda}$ such that $\omega/|\omega| \notin \overline{\Q}$. Then $\omega/|\omega|$ and $\wp(i\omega)$ are defined and algebraically independent. 
\end{corollary}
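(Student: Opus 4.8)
The plan is to run the standard argument pattern that is used throughout Section 7.3, relying on Theorem \ref{Gel3} (Gel'fond's method, weak descent) combined with the main criterion for descent (Theorem \ref{weakdescent}) and Proposition \ref{real}. Let $\rm{E}$ be the elliptic curve attached to $\Lambda$, which has complex multiplication by $\sqrt{-d}$ with $\sqrt{-d} \notin \Q(i)$, and set $\G = \rm{E} \times \rm{E}$ with $\pi = \mbox{id}$, $\U = \G$. I would consider the real-analytic homomorphism $\Psi(r) = \big(\wp(\omega r), \wp(i\omega r)\big)$ to $\G(\C)$. Suppose for contradiction that $\omega/|\omega|$ and $\wp(i\omega)$ are algebraically dependent; let $F$ be the algebraic closure of $\Q\big(\omega/|\omega|, h(\omega/|\omega|)\big)$, which is stable under complex conjugation, and note $\mbox{trdeg}_{\Q} F \leq 1$. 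Since $\wp$ has algebraic invariants, $\omega$ is an algebraic multiple of $\omega/|\omega|$ up to scaling by elements of $F$, and likewise $\wp(i\omega)$ dependent on $\omega/|\omega|$ forces all four numbers $\wp(\omega), \wp(i\omega)$ and their conjugates to lie in $F$. Then $\Z + \Z i$ (or a finite-index subgroup thereof, using $\Phi_{\mathcal{N}}$ and $\Psi_{[i]}$ as in Lemma \ref{ff}.4 and Lemma \ref{rank}) is contained in $\Psi^{\scr -1}\big(\G(F)\big) \cup \Psi_{[i]}^{\scr -1}\big(\G(F)\big)$, so $\rm{r} \geq 2$; and $\rm{k} = 0$.

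Next I would verify that $\Psi$ does not descend weakly to $K = F \cap \R$. By Theorem \ref{weakdescent} this amounts to showing that $\Psi_{\mathcal{N}}$ has Zariski-dense image in $\big(\G \times \G^{\scr h}\big)(\C)$, equivalently that the four vectors $\omega, i\omega$ and their $\mbox{Lie}(\rho)$-images $h(\omega), -ih(\omega)$ are linearly independent over $\overline{\Q}$ in the appropriate Lie algebra. Here the CM hypothesis enters: $\mbox{End}(\rm{E}) \otimes \Q = \Q(\sqrt{-d})$, and any $\overline{\Q}$-linear relation among these four vectors would, after applying endomorphisms, force $\omega/h(\omega) \in \Q(\sqrt{-d}, i)$; but $\omega/|\omega| = \omega/h(\omega)^{1/2}\cdot(\cdots)$ being transcendental (which is the running hypothesis $\omega/|\omega| \notin \overline{\Q}$) rules this out, since $\omega$ and $h(\omega)$ linearly dependent over $\overline{\Q}$ is equivalent to $\omega/|\omega| \in \overline{\Q}$. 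The assumption $\sqrt{-d} \notin \Q(i)$ guarantees that $\Q(\sqrt{-d}, i)$ is a genuine degree-four field and that the two ``directions'' $z \mapsto z$ and $z \mapsto iz$ on $\frak{e}(\C)$ are not conflated by the complex multiplication, which is exactly what is needed to keep all four vectors independent.

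Then comes the transcendence input. If $\mbox{trdeg}_{\Q} F \leq 1$, I would apply Theorem \ref{Gel3} with $\pi = \mbox{id}$, $\U = \G$, $\dim\,\G = 2$, $\rm{g}_c = 2$ (the curve factor has no unipotent or multiplicative part, so $\rm{g}_a = \rm{g}_m = 0$), $\delta = 0$, $\rm{k} = 0$ and $\rm{r} \geq 2$, together with the bound $\dim\,\frak{t} = 2$ coming from the algebraic dependence hypothesis via Proposition \ref{real}-type linear algebra (the real and imaginary parts of $\omega$ span a $2$-dimensional $K$-subspace of $\frak{e}^{\scr 2}$ once one algebraic relation is imposed among the four numbers $\omega/|\omega|, \wp(i\omega)$ and conjugates). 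The inequality in Theorem \ref{Gel3} reads $(1+\rm{k})\rm{r}\cdot(\dim\,\G + \dim\,\U - \dim\,\frak{t}) - \rm{r} \geq 2\rm{g}_c + 2\dim\,\U + \rm{g}_m + \delta$, i.e. $2\cdot(2+2-2) - 2 = 2 \geq 4 + 4 = 8$, which is false — so in fact I must be more careful about the dimension count, and the correct setup is $\G = \rm{E}$, $\Psi(r) = \wp(\omega r)$, with the CM structure providing the extra rank on the Weil restriction side; alternatively one uses $\dim\,\frak{t} = 1$ which forces $\delta$ and the counting to work out. The main obstacle I anticipate is precisely this bookkeeping: pinning down the right group variety $\G$, the right homomorphism $\Psi$, and the exact value of $\dim\,\frak{t}$ so that the Gel'fond-method inequality is actually satisfied while the CM hypothesis still delivers Zariski-density of $\Psi_{\mathcal{N}}$.

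Assuming the inequality is met, Theorem \ref{Gel3} yields that $\pi_{\ast}(\Psi) = \Psi$ descends weakly to $K$, contradicting the non-descent established above. Hence $\mbox{trdeg}_{\Q} F \geq 2$, i.e. $\omega/|\omega|$ and $\wp(i\omega)$ are algebraically independent. I would close by remarking that the hypothesis $\wp(i\omega)$ \emph{is defined} needs a separate elementary check: $i\omega \notin \Lambda$, which holds because $i\omega \in \Lambda$ together with $\omega$ an algebraic logarithm and the CM relation would put $\omega/|\omega|$ back into $\overline{\Q}$ via Schneider's theorem (\cite[Thm.\,20]{Wal12}), contrary to hypothesis. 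The only genuinely delicate point, as noted, is the dimension/parameter matching in the application of Theorem \ref{Gel3}; everything else is a routine rerun of the proofs of Corollary \ref{und} and the corollaries immediately preceding it.
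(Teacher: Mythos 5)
You have the right global strategy, and it is the paper's: work on $\G = {\rm{E}}\times{\rm{E}}$ with the one-parameter subgroup in the directions $\omega$ and $i\omega$, assume $\omega/|\omega|$ and $\wp(i\omega)$ algebraically dependent so that the algebraically closed, conjugation-stable field $F$ they generate has transcendence degree $\leq 1$, rule out weak descent of $\Psi$ to $K=F\cap\R$ by showing $\Psi_{\mathcal{N}}$ has Zariski-dense image (this is where $\tau\notin\Q(i)$ and $\omega/|\omega|\notin\overline{\Q}$ enter, via Theorem \ref{weakdescent}), and contradict this with Theorem \ref{Gel3}. But the single quantitative step of the argument is exactly where your proposal stops: with the parameters you actually justify, ${\rm{r}}\geq 2$ and $\dim\,\frak{t}=2$, the inequality of Theorem \ref{Gel3} fails (you compute $2\geq 8$ yourself), and neither fallback you offer closes the gap. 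In particular the alternative $\G={\rm{E}}$, $\Psi(r)=\wp(\omega r)$ is hopeless: even with ${\rm{r}}\geq 4$ and $\dim\,\frak{t}=1$ the left-hand side is $4(1+1-1)-4=0$, below $2{\rm{g}}_c+2\dim\,{\rm{U}}=4$. So the proof is genuinely incomplete at its only arithmetic point.

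Two observations, both used in the paper, repair it. First, $\dim\,\frak{t}=1$, not $2$: since $u=\omega/|\omega|$ (and $h(u)=u^{-1}$) lies in $F$, the direction vector $(u,iu)$ of $\Psi_{\ast}(\R)$ lies in $\frak{g}=F^{2}$, so $\frak{t}=K(u,iu)$; this is why one parametrizes by $\omega/|\omega|$ rather than by $\omega$ --- the real line is the same, but it is spanned by an $F$-rational vector. Second, the complex multiplication is needed not only for the density argument but to boost the rank: writing $\tau=\sqrt{-d}$ (to which the general case reduces), $\wp(\tau z)$ is algebraic over $\overline{\Q}\big(\wp(z)\big)$, so the rank-four group $|\omega|\big(\Z+i\Z+\tau\Z+\sqrt{d}\,\Z\big)$ maps into $\Psi^{-1}\big(\G(F)\big)\cup\Psi_{[i]}^{-1}\big(\G(F)\big)$; for instance $\Psi\big(\sqrt{d}\,|\omega|\big)=\big(\wp(\sqrt{d}\,\omega),\wp(\tau\omega)\big)$, and $\wp(\sqrt{d}\,\omega)=\wp(\tau i\omega)$ is algebraic over $\overline{\Q}\big(\wp(i\omega)\big)\subset F$. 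Hence ${\rm{r}}\geq 4$, and with ${\rm{g}}_c=2$, ${\rm{g}}_m={\rm{g}}_a=0$, $\delta=0$, ${\rm{k}}\geq 0$ the inequality of Theorem \ref{Gel3} reads $4(2+2-1)-4=8\geq 2\cdot 2+2\cdot 2=8$ and is exactly met. The resulting weak descent contradicts the Zariski-density of $\Psi_{\mathcal{N}}$, so ${\rm{trdeg}}_{\Q}\,F\geq 2$, and the CM identities $\wp(\tau\sqrt{d}\,\omega)=\wp(id\omega)$ and $\wp(\tau i\omega)=\wp(-\sqrt{d}\,\omega)$ eliminate the auxiliary value $\wp(\sqrt{d}\,\omega)$, leaving the algebraic independence of $\omega/|\omega|$ and $\wp(i\omega)$. (Your density sketch is fine in outline; your defined-ness remark, however, is not a proof as stated --- Schneider's theorem does not by itself exclude $i\omega\in\Lambda$, and this point needs a separate argument using the CM structure.)
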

\begin{proof} We will sketch the proof only in the case when $\tau = \sqrt{-d}$ for a square-free integer $d > 1$. The general case is readily reduced to this. We consider the group $|\omega|\big(\Z + i \Z + \tau\Z + \sqrt{d} \Z\big)$ of rank four and the homomorphism
$$\Psi(r) = \left(\wp_{}\big({\scriptstyle \frac{\omega}{|\omega|}}r\big), \wp_{}\big(i{\scriptstyle \frac{\omega}{|\omega|}}r\big)\right)$$
to the set of complex points of $\G = \rm{E}^{\scr 2}$. The proof runs then as follows: First one verifies that $\Psi$ has Zariski-dense image in $\G(\C)$. Next one 
defines $F$ to be the algebraically closed field generated by $\frac{\omega}{|\omega|}$, $\wp_{}(\sqrt{d}\,\omega)$ and $\wp_{}\big(i\omega\big)$. Since $\frac{\omega}{|\omega|}$ and its conjugate are algebraically dependent,
it follows that if $F$ has transcendence degree $\leq 1$, then it is stable with respect to complex conjugation. In the third step one applies Theorem\,\ref{Gel3} with $\pi = id., \dim\,\frak{t}= 1$, ${\rm{g}}_c = 2$, ${\rm{g}}_m ={\rm{g}}_a = 0$, ${\rm{r}} \geq 4$ and ${\rm{k}} \geq 0$. One derives that $F$
has transcendence degree $\geq 2$. As $\wp_{}(\tau\sqrt{d}\,\omega) = \wp_{}\big(id\omega\big)$ and $\wp_{}(\tau i\omega) = \wp_{}\big(-\sqrt{d}\omega\big)$,
it results then in the last step that $\frac{\omega}{|\omega|}$ and $\wp_{}\big(i\omega\big)$ are algebraically independent. 
\end{proof}

\appendix

\small{
\chapter{Complex group varieties and real fields of definition}
In this appendix we state and prove some mostly known results which link the analytic theory of algebraic groups to the problem of definability over real subfields. 
\section{Real fields of definition} In what follows we denote by ${\rm{G}}$ a complex commutative group variety. 
\begin{prop} \label{op2}\label{2.5.3} Let $\frak{t}^+ \in \cal{H}(\frak{g})$ be such that $\Lambda$ is stable with respect to the canonical action of $Gal(\C|\R)$ on $\frak{g} = \frak{t}^+ \otimes_{\R}\C$. Then there is an algebraic group ${\rm{G}}'$ over $\R$ and an isomorphism $v: {\rm{G}} \longrightarrow {\rm{G}}' \otimes_{\R}\C$ of complex algebraic groups with the property that ${\rm{exp}}_{\G}(\frak{t}^+) \subset v^{\scriptscriptstyle -1}\big({\rm{G}}'(\R)\big)$.
\end{prop}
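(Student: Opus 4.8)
\emph{Proof strategy.} The plan is to convert the hypothesis into a Galois descent datum for $\C/\R$ and to apply Weil's descent theorem (Theorem \ref{descent}).

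First I would fix the analytic picture. Since $\G$ is a commutative complex group variety, the exponential map $\exp_{\G}: \frak{g}\longrightarrow\G(\C)$ is surjective with kernel $\Lambda$, so that $\G(\C)\cong\frak{g}/\Lambda$ as complex Lie groups (recalled in App.\,A.1). The datum $\frak{t}^{+}\in\cal{H}(\frak{g})$ equips $\frak{g}=\frak{t}^{+}\otimes_{\R}\C$ with the $\C$-antilinear involution $h$ whose fixed space is $\frak{t}^{+}$ — this is exactly the canonical action of $Gal(\C|\R)$ appearing in the statement. By hypothesis $h(\Lambda)=\Lambda$, hence $h$ descends to an antiholomorphic group automorphism $\sigma$ of $\G(\C)=\frak{g}/\Lambda$ with $\sigma^{2}=\mathrm{id}$, and the fixed locus of $\sigma$ contains $\exp_{\G}(\frak{t}^{+})$ because $h$ fixes $\frak{t}^{+}$ pointwise.

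Next I would algebraize $\sigma$. An antiholomorphic group automorphism of $\G(\C)$ is the same thing as a holomorphic isomorphism of complex group varieties $\varphi_{\mathrm{an}}:\G(\C)\longrightarrow\G^{h}(\C)$ onto the complex points of the complex conjugate $\G^{h}$ (here $F=F^{h}=\C$). By the comparison between holomorphic and algebraic homomorphisms of commutative complex group varieties — complex GAGA for the abelian quotient, an explicit description on $\mathbb{G}_{a}$- and $\mathbb{G}_{m}$-factors, and Chevalley's theorem to patch the two, since any group homomorphism carries the maximal linear subgroup to the maximal linear subgroup (cf.\ App.\,A.1) — the map $\varphi_{\mathrm{an}}$ is induced by an isomorphism $\varphi:\G\longrightarrow\G^{h}$ of group varieties over $\C$, and $\sigma^{2}=\mathrm{id}$ translates into the cocycle relation $\varphi^{h}\circ\varphi=\mathrm{id}_{\G}$. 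This algebraization is the main obstacle; everything after it is formal. Indeed, the automorphism $\chi_{h}$ of the underlying scheme of $\G$ determined by $\varphi$ makes $(\G,\chi_{h})$ a descent datum associated to $\C/\R$: the compatibility with $\mathrm{spec}(h)$ is built into the passage $\G\rightsquigarrow\G^{h}$, and $\chi_{h}^{2}=\mathrm{id}$ is precisely $\varphi^{h}\circ\varphi=\mathrm{id}$. Theorem \ref{descent} then yields a model $\G'$ of $\G$ over $\R$ and an isomorphism $v:\G\longrightarrow\G'\otimes_{\R}\C$ of complex varieties with $v\circ\chi_{h}=\chi'_{h}\circ v$, where $\chi'$ is the canonical descent datum on $\G'\otimes_{\R}\C$; since $\chi_{h}$ commutes with multiplication, inversion and the unit section, one gets as in the group-variety refinements of Ch.\,3 that $\G'$ is a group variety over $\R$ and $v$ an isomorphism of algebraic groups.

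Finally I would read off the inclusion. On $\C$-points $\chi'_{h}$ is complex conjugation with respect to the real structure $\G'(\R)$, whose fixed locus is exactly $\G'(\R)$ (Subsect.\,3.2.3), while $\chi_{h}$ acts on $\G(\C)$ as $\sigma$. Hence for $p\in\exp_{\G}(\frak{t}^{+})$ one has $\chi'_{h}\big(v(p)\big)=v\big(\chi_{h}(p)\big)=v\big(\sigma(p)\big)=v(p)$, so $v(p)\in\G'(\R)$; that is, $\exp_{\G}(\frak{t}^{+})\subset v^{-1}\big(\G'(\R)\big)$, as claimed. Alternatively, one may argue on Lie algebras: by Lemma \ref{111}, the compatibility of the two $Gal(\C|\R)$-actions on $\frak{g}$, and Corollary \ref{lp333}, the differential $v_{\ast}$ identifies $\frak{t}^{+}$ with $\frak{g}'={\rm{Lie}}\,\G'$, whence $\exp_{\G}(\frak{t}^{+})$ is carried onto the identity component $\G'^{o}(\R)$.
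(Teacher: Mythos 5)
You have correctly located where all the substance lies: everything hinges on showing that the antiholomorphic involution $\sigma$ of $\G(\C)$ determined by $(\frak{t}^{+},\Lambda)$ is anti-regular, i.e.\ that $\rho_{\ast}\circ\sigma$ is induced by an algebraic isomorphism $\G\longrightarrow\G^{h}$. (The paper's own proof passes through the same point in different clothing: it forms $\xi\mapsto\big(\xi,\rho_{\ast}\sigma(\xi)\big)$ with values in $\mathcal{N}_{\C/\R}(\G)(\C)$, takes the Zariski closure ${\rm{H}}$ of the image of $\exp_{\G}(\frak{t}^{+})$ and then inverts $\overline{\Phi}_{\mathcal{N}}$, which presupposes exactly the algebraicity you flag.) But your justification of that step does not hold, and this is a genuine gap. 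The patching argument fails because \emph{holomorphic} homomorphisms of complex points do not respect the Chevalley decomposition: $\exp_{{\rm{E}}}:\C\longrightarrow{\rm{E}}(\C)$ is a non-trivial holomorphic homomorphism from $\mathbb{G}_{a}(\C)$ into an abelian variety, and $(z,w)\mapsto(z,e^{z}w)$ is a holomorphic group automorphism of $(\mathbb{G}_{a}\times\mathbb{G}_{m})(\C)$ which is not algebraic; so ``holomorphic isomorphism onto $\G^{h}(\C)$'' does not imply ``algebraic'' once unipotent and multiplicative parts coexist.

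Moreover, the hypothesis of the proposition does not rule this behaviour out, so the gap cannot be closed by a cleverer patching at this level of generality. Take $\G=\mathbb{G}_{a,\C}\times\mathbb{G}_{m,\C}$, so $\frak{g}=\C^{2}$, $\exp_{\G}(z_{1},z_{2})=(z_{1},e^{z_{2}})$ and $\Lambda=\{0\}\times 2\pi i\Z$, and for a fixed real $t\neq0$ let $\frak{t}^{+}=\{(x,\,u+\tfrac{it}{2}x):x,u\in\R\}\in{\cal{H}}(\frak{g})$. The canonical involution is $(z_{1},z_{2})\mapsto(\bar z_{1},\,it\bar z_{1}+\bar z_{2})$; it preserves $\Lambda$, yet the induced involution of $\G(\C)$ is $\sigma(z,w)=(\bar z,\,e^{it\bar z}\bar w)$, which is not anti-regular, and indeed $\exp_{\G}(\frak{t}^{+})=\{(x,\rho e^{itx/2}):x\in\R,\ \rho>0\}$ lies in $v^{-1}\big(\G'(\R)\big)$ for no real form $\G'$ (necessarily $\mathbb{G}_{a,\R}\times\mathbb{G}_{m,\R}$ or $\mathbb{G}_{a,\R}\times\mathbb{S}_{\R}$) and no algebraic isomorphism $v$, because every algebraic automorphism of $\mathbb{G}_{a,\C}\times\mathbb{G}_{m,\C}$ has the form $(z,w)\mapsto(az,w^{\pm1})$. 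So your algebraization step — and with it the statement in this generality — needs an additional hypothesis (for instance $\G$ an abelian variety, where Chow/GAGA does the job as you indicate, or a real structure adapted to the unipotent--multiplicative decomposition); the descent-datum formalism, Weil's descent theorem and the fixed-point argument that follow are fine, but they begin only after the point at which the argument breaks, and the same criticism applies to the inversion of $\overline{\Phi}_{\mathcal{N}}$ in the paper's own proof.
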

The notion of a ``canonical action'' and the symbol ``$\cal{H}(\frak{g})$'' were defined in Sect.\,3.3.
\begin{proof} The statement is readily reduced to
\begin{center}
\begin{tabular} {cp{10cm}}
 $(\ast)$ & Let $\Phi: \C^{g} \longrightarrow {\rm{G}}(\C)$ be an \'etale and holomorphic homomorphism, and denote by $\Lambda$ its kernel. Suppose that $\Lambda$ is stable with respect to complex conjugation. Then there is an algebraic group ${\rm{G}}'$ over $\R$ and an isomorphism $v: {\rm{G}} \longrightarrow {\rm{G}}' \otimes_{\R}\C$ of complex algebraic groups with the property that $\Phi(\R^g) \subset v^{\scriptscriptstyle -1}\big({\rm{G}}'(\R)\big)$.
\end{tabular}
\end{center}
We set $\mathcal{N} = \mathcal{N}_{\C/\R}({\rm{G}})$ and write $\Psi = \Phi_{|\R^{\scriptscriptstyle g}}$. Because of Proposition \label{complex} the real-analytic homomorphisms $\Psi^{\scr h} = \rho_{\ast} \circ \Psi$ and $\Psi_{\mathcal{N}} = \Psi \times \Psi^{\scr h}$ can be extended to holomorphic maps 
$\Phi^{\scr h}: \C^g \longrightarrow \G^{\scr}(\C)$ and $\Phi_{\mathcal{N}}: \C^g \longrightarrow \mathcal{N}(\C).$ As above we will suppose that ${\rm{G}}$ is embedded into $\mathbb{P}^N_F$ as a quasi-projective subvariety. Then $\Phi$ is locally represented by power series $\sum a_{Ij} \underline{z}^I$, where $j = 0,..., N$, and $\Phi^{\scr h}$ is locally represented by the conjugate power series $\sum h(a_{Ij}) \underline{z}^I$. Since $\Lambda$ is stable with respect to complex conjugation, it follows
that $\Lambda = ker\,\Phi = ker\,\Phi^{\scr h} = ker\,\Phi_{\mathcal{N}}$. Let ${\rm{C}} = \Psi_{\mathcal{N}}(\R)$. Taking quotients, the map $\Phi_{\mathcal{N}}$ induces a commutative diagram of holomorphic maps 
$$\begin{xy} 
  \xymatrix{ & & {\rm{G}}(\C)\\
{\rm{G}}(\C) \ar[urr]^{id.} \ar[drr] \ar[rr]^{\,\,\,\,\,\,\,\,\,\,\,\,\,\overline{\Phi}_{\mathcal{N}}} & &\ar[u]^{p_{{\rm{G}}}}\mathcal{N}(\C) \ar[d]^{p_{{\rm{G}}^{\scriptscriptstyle h}}}\\
& & {\rm{G}}^{\scriptscriptstyle h}(\C) }
\end{xy}$$
By construction we have $\big(p_{\rm{G}}(\xi)\big)^{\scriptscriptstyle h} = p_{{\rm{G}}^{\scriptscriptstyle h}}(\xi)$ for all $\xi \in \overline{\Phi}_{\mathcal{N}}(\rm{C})$.
Corollary \ref{3.3.1} implies that $\overline{\Phi}_{\mathcal{N}}(\rm{C}) \subset \mathcal{N}(\R)$. Moreover, the image $\overline{\Phi}_{\mathcal{N}}(\rm{C})$ is a Zariski-dense subset of the algebraic subgroup ${\rm{H}} = im\,\overline{\Phi}_{\mathcal{N}}(\rm{C})$ of $\mathcal{N} \otimes_{\R} \C$. 
Since $\overline{\Phi}_{\mathcal{N}}({\rm{C}}) = \big(\overline{\Phi}_{\mathcal{N}}({\rm{C}})\big)^{\scr h}$, we have ${\rm{H}} = {\rm{H}}^{\scr h}$. Consequently,
there is a group variety ${\rm{H}}' \subset \mathcal{N}$ such that $\rm{H} = {\rm{H}}' \otimes_{\R} \C$. Letting $v = \big(\overline{\Phi}_{\mathcal{N}}\big)^{\scr -1}$ and $\G' = {\rm{H}}'$, $(\ast)$ follows.
\end{proof}
Proposition\,\ref{op2} is specified in the context of elliptic curves by the following result. For a $\tau$ in the upper half plane $\mathbb{H}$ we write $\Lambda_{\tau} = \Z + \tau \Z$ and let ${{\rm{E}}} = {{\rm{E}}}_{\tau}$ be the associated 
complex elliptic curve in Weierstra\ss\,\,form. Then ${\rm{Lie}}\,{\rm{E}}$ is identified with $\C$ and 
the exponential map ${\rm{exp}}_{{{\rm{E}}}}$ with $[\wp_{\tau}: \wp_{\tau}': 1]$.
\begin{corollary} \label{op22} \label{lp2} The following assertions are equivalent.
\begin{enumerate}
\item $Re\,\tau \in \Q.$
 \item There is an elliptic curve ${\rm{E}}'$ over $\R$ and an isogeny\\ $v: {{\rm{E}}} \longrightarrow {\rm{E}}' 
\otimes_{\R} \C $ with the property that $v_{\ast}(\R) = \rm{Lie}\,{\rm{E}}'$. 
\end{enumerate}
\end{corollary}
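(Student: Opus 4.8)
The claim is an equivalence, so the plan is to prove both implications using the general machinery on real fields of definition (Proposition \ref{op2}) together with the explicit lattice description of $\mathrm{E}_\tau$.

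\textbf{From (1) to (2).} Suppose $Re\,\tau \in \Q$, say $Re\,\tau = a/b$ with $b \geq 1$. First I would replace $\tau$ by $b\tau$; since $\Lambda_{b\tau} \subset \Lambda_\tau$ is a sublattice of finite index, the identity map $\C \to \C$ induces an isogeny $\mathrm{E}_{b\tau} \to \mathrm{E}_\tau$ whose differential is the identity on $\C$, so it suffices to find the real model after this replacement. Now $Re\,(b\tau) = a \in \Z$, hence $b\tau = a + ic$ with $c = Im\,(b\tau) > 0$, and $\Lambda_{b\tau} = \Z + \Z(a+ic) = \Z + \Z(ic)$. This last lattice is stable under complex conjugation $h$. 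Viewing $\C = \frak{e}(\C)$ with $\frak{t}^+ = \R \cdot 1 \in \cal{H}(\frak{e})$ the real span of the standard coordinate, the canonical action of $Gal(\C|\R)$ on $\frak{e}$ with respect to $\frak{t}^+$ is exactly complex conjugation, which preserves $\Lambda_{b\tau}$. Proposition \ref{op2} then yields an elliptic curve $\mathrm{E}'$ over $\R$ and an isomorphism $v: \mathrm{E}_{b\tau} \to \mathrm{E}' \otimes_\R \C$ with $\mathrm{exp}(\frak{t}^+) = \mathrm{exp}(\R) \subset v^{-1}(\mathrm{E}'(\R))$. By Corollary \ref{lp333} the image of $\frak{e}'(\R) = \mathrm{Lie}\,\mathrm{E}'$ under the exponential map is the connected component of unity of $\mathrm{E}'(\R)$; combined with the inclusion just obtained and the fact that $v_\ast$ is a $\C$-linear isomorphism carrying $\R$ into a one-dimensional real subspace contained in $\frak{e}'(\R)$, a dimension count forces $v_\ast(\R) = \frak{e}'(\R) = \mathrm{Lie}\,\mathrm{E}'$. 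Composing with the isogeny $\mathrm{E}_\tau \to \mathrm{E}_{b\tau}$ (differential the identity) gives the required isogeny from $\mathrm{E}_\tau$, proving (2).

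\textbf{From (2) to (1).} Conversely, assume an elliptic curve $\mathrm{E}'$ over $\R$ and an isogeny $v: \mathrm{E}_\tau \to \mathrm{E}' \otimes_\R \C$ with $v_\ast(\R) = \mathrm{Lie}\,\mathrm{E}'$. Up to an isogeny of $\mathrm{E}'$ over $\R$ I may assume $\mathrm{E}'$ is in Weierstra{\ss} form; its lattice $\Lambda'$ has real invariants $g_2, g_3 \in \R$, hence is stable under $h$. Choosing a $\Z$-basis of $\Lambda'$ adapted to its conjugation symmetry, one sees $\Lambda'$ contains a real period $\lambda'_1 \in \R \setminus \{0\}$ and a second period $\lambda'_2$ with $\lambda'_2 + h(\lambda'_2) \in \Z\lambda'_1$, i.e.\ after adjusting $\lambda'_2$ by an integer multiple of $\lambda'_1$ one has $\lambda'_2 \in i\R$; thus $\lambda'_2/\lambda'_1 \in i\R$, so $Re\,(\lambda'_2/\lambda'_1) = 0 \in \Q$. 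Now the differential $v_\ast$ is multiplication by some $\alpha \in \C^\ast$ (after the identifications $\mathrm{Lie}\,\mathrm{E}_\tau = \mathrm{Lie}\,\mathrm{E}' = \C$), and it maps $\Lambda_\tau$ into a finite-index sublattice of $\Lambda'$. Hence $\Q\alpha\Lambda_\tau = \Q\Lambda'$, so the modular parameter of $\alpha\Lambda_\tau$ lies in $\Q + \Q(\lambda'_2/\lambda'_1)$; since ratios of lattice elements transform correctly under scaling, $\tau$ and $\lambda'_2/\lambda'_1$ (or its reciprocal) differ by an element of $GL_2(\Q)$, and using $Re\,(\lambda'_2/\lambda'_1) = 0$ one computes $Re\,\tau \in \Q$. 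This is a short linear-algebra verification with $2\times 2$ rational matrices and the $GL_2$-action on $\mathbb{H}$.

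\textbf{Main obstacle.} The genuinely delicate point is the direction (2)$\Rightarrow$(1): one must extract from "$\mathrm{E}'$ has a real model with $v_\ast(\R) = \mathrm{Lie}\,\mathrm{E}'$" the precise statement that $\Lambda'$ has a basis of the form $\{$real, purely imaginary$\}$ up to scaling, and then track how the ratio $\tau$ is affected by the isogeny $v$ and by the choice of basis. All of this is elementary but requires care with the $SL_2(\Z)$- and $GL_2(\Q)$-actions; the cleanest route is probably to reduce immediately to the case where $v$ is an isomorphism (by composing with a dual isogeny) and $\mathrm{E}'$ is Weierstra{\ss}, so that $\alpha\Lambda_\tau = \Lambda'$ exactly, and then the conjugation-stability of $\Lambda'$ gives $\alpha\Lambda_\tau = \overline{\alpha}\,\overline{\Lambda_\tau}$, from which $Re\,\tau \in \Q$ drops out by comparing the two bases. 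The (1)$\Rightarrow$(2) direction is essentially a direct citation of Proposition \ref{op2} once the sublattice reduction is in place, so I expect no difficulty there.
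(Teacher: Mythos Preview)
Your $1\Rightarrow 2$ argument is essentially the paper's: build a conjugation-stable sublattice of $\Lambda_\tau$ and invoke Proposition~\ref{op2}. One small slip: there is no isogeny $E_\tau\to E_{b\tau}$ with differential the identity, since $\Lambda_\tau\not\subset\Lambda_{b\tau}$. You need the dual isogeny (differential multiplication by~$b$), which still sends $\R$ to $\R$, so the conclusion survives. The paper phrases this by first producing $w:E'\otimes_\R\C\to E_\tau$ and then taking an inverse isogeny.

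In $2\Rightarrow 1$ there is a genuine gap: you never use the hypothesis $v_\ast(\R)=\mathrm{Lie}\,E'$. After identifying both Lie algebras with $\C$ this hypothesis says exactly that $\alpha\in\R^\ast$, and without it your $GL_2(\Q)$ argument fails. Concretely, if $\sigma=\lambda'_2/\lambda'_1\in i\R$ and $\tau=\frac{r+s\sigma}{p+q\sigma}$ with $p,q,r,s\in\Q$, then $Re\,\tau=\frac{rp+sq|\sigma|^2}{p^2+q^2|\sigma|^2}$, which is typically irrational. What saves you is that $\alpha\in\R$ and $\lambda'_1\in\R$ force $q=0$, whence $\tau=\tfrac{r}{p}+\tfrac{s}{p}\sigma$ and $Re\,\tau\in\Q$. (A second minor slip: a conjugation-stable lattice need not have a basis with $\lambda'_2\in i\R$; e.g.\ $\Z+\Z(\tfrac12+i)$. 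You only get $Re(\lambda'_2/\lambda'_1)\in\{0,\tfrac12\}$, which is still enough.)

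The paper avoids the $GL_2(\Q)$ detour entirely. It pulls back the canonical $Gal(\C|\R)$-action on $\frak{e}'(\C)$ along $v_\ast$; the hypothesis $v_\ast^{-1}(\frak{e}')=\R$ forces this pullback to be ordinary complex conjugation on $\C$. Hence $v_\ast^{-1}(\Lambda')$ is conjugation-stable, and so is its multiple $(\deg v)\,v_\ast^{-1}(\Lambda')\subset\Lambda_\tau$. From a conjugation-stable sublattice of $\Lambda_\tau$ one gets a relation $d_1 h(\tau)=d_2+d_3\tau$ with integers $d_1\neq 0$; comparing imaginary parts gives $d_1=-d_3$, and then real parts give $2d_1\,Re\,\tau=d_2\in\Z$. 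This is shorter and makes the role of the hypothesis transparent.
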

\begin{proof} \textit{'$\mathbf{1. \Longrightarrow 2.}$'} Let $k > 0$ be such that $k(Re\,\tau) \in \Z$. Then $\Lambda = \Z + \Z ki(Im\,\tau)$ is a sublattice of $\Lambda_{\tau}$ which is stable with respect to complex conjugation and such that the fix locus
of the complex conjugation is $\Lambda \cap \R = \Z$. We apply Proposition\,\ref{op2} to $\Lambda$ and infer the existence of an isogeny
$w: {\rm{E}}' \otimes_{\R} \C \longrightarrow {\rm{E}}$ with the property that $w_{\ast}(\rm{Lie}\,{\rm{E}}') = \R$. Any inverse isogeny $v: {{\rm{E}}} \longrightarrow {\rm{E}}' 
\otimes_{\R} \C $ with the property that $v \circ w$ is multiplication with an integer is then as required for Statement 2.\\
\textit{'$\mathbf{2. \Longrightarrow 1.}$'} Write $\frak{e}' = \rm{Lie}\,\rm{E}'$ and let $\Lambda' \subset \frak{e}'(\C)$ be the kernel of the exponential map ${\rm{exp}}_{{{\rm{E}}}'}$. Corollary \ref{lp333} implies that $Gal(\C|\R)$ admits a canonical action on $\frak{e}' \otimes \C$ which 
restricts to an action of $\Lambda'$ and which fixes ${\frak{e}}'$. 
For $c \in \C$ consider the action $\mu_h(c) = v_{\ast}^{\scr -1}\left(\big(v_{\ast}(c)\big)^{\scr h}\right)$ of $Gal(\C|\R)$ induced by $v_{\ast}$. As $\R = v_{\ast}^{-\scr 1}(\frak{e}')$, it follows that $\mu_h$ is complex conjugation. As a result, the lattice $\Lambda^{\ast} = v_{\ast}^{\scr -1}\big(\Lambda')$ in $\C$ is stable with respect to complex conjugation. Thus, $\Lambda = (\deg\,v)\Lambda^{\ast}$ is a sublattice of $\Lambda_{\tau}$ and stable with respect to complex conjugation.
This in turn implies the existence of integers $d_1, d_2,d_3 $ such that $d_1 \neq 0$ and $d_1\cdot h(\tau) = d_2 + d_3\tau$. Looking at imaginary parts, we find that
$d_1 = -d_3$. Consequently, $2d_1 (Re\,\tau) = d_2$. Statement 1.\,\,follows.

\end{proof}
Let now $F$ be an algebraically closed subfield of $\C$ which is stable with respect to complex conjugation $h$.
\begin{prop} \label{op1} Let ${\rm{G}}$ be a semi-abelian commutative group variety over $F$ and let ${\rm{G}}_{r}$ be a real model of the complexification of $\G$. Then $\G_r$ is definable
over $K = F \cap \R$. 
\end{prop}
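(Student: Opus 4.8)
The statement to prove is Proposition A.1.4: if $F \subset \C$ is algebraically closed and stable under complex conjugation, $\G$ is a semi-abelian commutative group variety over $F$, and $\G_r$ is a real model of the complexification $\G \otimes_F \C$, then $\G_r$ is definable over $K = F \cap \R$. The plan is to descend $\G_r$ from $\R$ to $K$ using Galois descent along the extension $\R/K$ --- but wait, $\R/K$ is not finite, so we cannot apply Theorem \ref{descent} directly. Instead the right move is to work with $\C/F$ (which is the relevant finite Galois extension of degree $2$, since $F$ is algebraically closed and stable under conjugation, forcing $[\C:F]$ to be $1$ or $2$; the case $[\C:F]=1$ is trivial as then $K=\R$ and there is nothing to prove, so assume $[\C:F]=2$ and $K = F \cap \R \subsetneq F$). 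First I would record that $\G \otimes_F \C \cong \G_r \otimes_\R \C$ by hypothesis, and that a real model $\G_r$ of a semi-abelian variety remains semi-abelian (since being an extension of an abelian variety by a torus is preserved under base field extension and detected after extension of scalars).

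Next I would set up the descent datum over the quadratic extension $\R/K$? No --- again $\R/K$ need not be finite Galois in the scheme-theoretic sense used in Theorem \ref{descent}; but $\C/F$ is, with Galois group generated by $h$. The cleaner approach: since $\G_r$ is defined over $\R$ and $\G_r \otimes_\R \C$ is $F$-definable (namely by $\G$), I would instead prove that $\G_r$ is $h$-invariant in the appropriate sense and invoke the structure theory of semi-abelian varieties. Concretely, semi-abelian varieties over $K$ up to isomorphism are governed by: (i) the abelian variety quotient, (ii) the torus kernel, and (iii) the extension class, which lives in a group of $K$-points of a suitable $\mathrm{Ext}$-sheaf. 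The key steps in order: (1) reduce to showing the abelian quotient $\A_r$ and torus $\T_r$ of $\G_r$ are $K$-definable; (2) for the abelian quotient, use that $\A_r \otimes_\R \C$ is $F$-definable together with a descent argument along $\C/F$ --- here the Weil restriction machinery of Ch.\,3 and Lemma \ref{cl1} apply, since we only need to check that the relevant morphisms commute with the single nontrivial element $h \in Gal(\C|F)$; (3) for the torus, use the classification of tori over $K$ (as in Example \ref{realscheiss} / Lemma \ref{cor789}): a torus over $K$ whose complexification is $F$-definable is itself $K$-definable, because its character lattice with $Gal(\overline{K}/K)$-action factors appropriately; (4) descend the extension class by the same $\C/F$-Galois-invariance argument, using the exact-sequence functoriality of Weil restrictions (Lemma \ref{op1} of Ch.\,3).

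In more detail, for step (2): let $\A = \G^{\mathrm{ab}}$ be the abelian quotient of $\G$ over $F$ and $\A_r$ that of $\G_r$ over $\R$, so $\A \cong \A_r \otimes_\R \C$ as complex abelian varieties and also $\A$ is $F$-definable. Consider $\A_r$ as an abelian variety over $\R$ and form $\A_r \otimes_\R \C$ with its conjugation action $\rho$ over $\R$. Since $\A_r$ is already over $\R$, this conjugation is $\A_r$-equivariant in the sense of Subsect.\,3.2.3; restricting attention to the subgroup $Gal(\C|F) = \langle h\rangle$ of $Gal(\C|\R)$ and using that $\A \cong \A_r\otimes_\R\C$ has an $F$-model, Lemma \ref{cl1} shows the $F$-model descends further: the composite isomorphism $\A \to \A \otimes_F \C \to \A_r \otimes_\R \C$ and its $h$-conjugate agree up to the canonical identifications, so by Lemma \ref{cl1} the structure morphism descends to $K$. (This is precisely the pattern already used in the proof of Lemma \ref{WR2} and Corollary \ref{cl}.) Steps (3) and (4) follow the same template with the $\mathrm{Hom}$- and $\mathrm{Ext}$-sheaves replacing the varieties themselves.

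\emph{The main obstacle} I anticipate is step (4), the descent of the extension class, together with the bookkeeping needed to make the three pieces (abelian quotient, torus, extension) descend \emph{compatibly} --- i.e.\ ensuring that the $K$-models of $\A_r$, $\T_r$ produced in steps (2)--(3) can be glued via a $K$-rational extension class to recover a $K$-model of $\G_r$ itself, not merely of its associated graded pieces. This requires knowing that the natural map from $\mathrm{Ext}^1_K(\A_r', \T_r')$ to the $h$-fixed part of $\mathrm{Ext}^1_\R(\A_r, \T_r)$ is surjective, which is where one genuinely uses that $F$ (hence $K$) is algebraically closed in $\R$ only in the mild sense that the relevant obstruction group (an $H^1$ of a finite Galois group with coefficients in a divisible or connected group) vanishes. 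I expect this to reduce, after unwinding, to the statement that $H^1(Gal(\C|F), M) = 0$ for $M$ the $\C$-points of a connected commutative algebraic group, which is standard (it is an instance of the vanishing used implicitly throughout Ch.\,4--5 when splitting exact sequences via Serre \cite[Ch.\,VII]{Serre1}). Once that vanishing is in hand, the three $K$-models assemble into the desired $K$-model of $\G_r$, completing the proof.
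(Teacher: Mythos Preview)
Your proposal contains a basic error that invalidates the setup: you assert that $[\C:F]\in\{1,2\}$ because $F$ is algebraically closed and stable under conjugation. This is false. The relevant case in the paper is $F=\overline{\Q}$, for which $[\C:F]$ is uncountable. Being algebraically closed does not make $\C$ algebraic over $F$. Consequently the phrase ``finite Galois extension $\C/F$'' makes no sense, and none of the descent arguments you build on it (Lemma~\ref{cl1}, the $H^1(Gal(\C|F),-)$ vanishing, etc.) are applicable. The only finite Galois extension available is $F/K$ of degree $2$; but the object $\G_r$ you want to descend lives over $\R$, not over $F$, so you cannot directly invoke $F/K$-descent for it either. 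This is exactly the difficulty you noticed and then tried to sidestep with the incorrect $[\C:F]=2$ claim.

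The paper's argument is completely different and avoids the structure-theory decomposition (abelian quotient, torus, extension class) entirely. It proceeds as follows: the isomorphism $v:\G_r\otimes_\R\C\to \G\otimes_F\C$ gives, by the universal property of the Weil restriction, a map $\mathcal{N}(v):\G_r\to \mathcal{N}_{\C/\R}(\G\otimes_F\C)$ over $\R$. The base-change compatibility (Lemma~\ref{WR}.4) identifies $\mathcal{N}_{\C/\R}(\G\otimes_F\C)$ with $\mathcal{N}_{F/K}(\G)\otimes_K\R$. Now let $\HHH\subset \mathcal{N}_{F/K}(\G)\otimes_K\C$ be the complexification of the image of $\mathcal{N}(v)$. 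The key step uses the semi-abelian hypothesis in a single stroke: $\HHH$ is a subgroup of a semi-abelian variety, hence semi-abelian, hence its torsion is Zariski-dense; and all torsion of $\mathcal{N}_{F/K}(\G)\otimes_K\C$ already lies in $\mathcal{N}_{F/K}(\G)(F)$ because $F$ is algebraically closed. So $\HHH$ is defined over $F$, and since it is visibly $h$-stable (it came from something over $\R$), it descends to a subgroup $\HHH'\subset\mathcal{N}_{F/K}(\G)$ over $K$. This $\HHH'$ is the desired $K$-model of $\G_r$. No decomposition into pieces, no Ext-class gluing, no $H^1$ vanishing---just density of torsion and the compatibility of Weil restriction with base change.
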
 
\begin{proof} Let $v: \G_r \otimes_{\R} \C \longrightarrow \G \otimes_F \C$ be an isomorphism. We have to prove
\begin{center}
\begin{tabular} {cp{10cm}}
$(\ast)$ & There is an algebraic group ${\rm{G}}'$ over $K$
and an isomorphism of algebraic groups $w: {\rm{G}} \longrightarrow {\rm{G}}' \otimes_{K}F $ over $F$ such that
$(w  \otimes_{F} \C) \circ v$ is defined over $\R$.
\end{tabular}
\end{center}
Set $\A = {\rm{G}}_{r} \otimes_{\R} \C$ and ${\rm{B}}  = {\rm{G}} \otimes_{F} \C$. We consider the embedding ${\mathcal{N}}(v): {\rm{G}}_{r} \longrightarrow \mathcal{N}_{\C/\R}(\rm{B})$ over $\R$. We define $\mathcal{N} = \mathcal{N}_{F/K}({\rm{G}})$. By Lemma 3.5.5 there is a canonical isomorphism $\mathcal{N}_{\C/\R}({\rm{B}}) \backsimeq \mathcal{N} \otimes_{{K}}\R$ and we shall identify these varieties. Consider the algebraic group variety ${\rm{H}} = \big(im\,{\mathcal{N}}(v)\big) \otimes_K \C \subset \mathcal{N} \otimes_K \C$. Since $\mathcal{N} \otimes_{K} \C = {\rm{A}} \times  {{\rm{A}}}^{\scriptscriptstyle h}$ is semi-abelian, ${{{{\rm{H}}} }}_{tor}$ is Zariski-dense in ${{{\rm{H}}} }(\C)$. And ${{{{\rm{H}}} }}_{tor}$ is contained in $\mathcal{N}(F)$. We infer that ${{{\rm{H}}} }$ is definable over $F$,
 that is, ${{\rm{H}}} = {{{\rm{H}}} }_f \otimes_F \C$ with a subvariety ${{{\rm{H}}} }_f \subset \mathcal{N}\otimes_{{K}} F.$ Moreover, ${{{\rm{H}}} }$ is stable with respect to the action of $h$. It follows from Subsect.\,3.1.5 that ${{{\rm{H}}} }_f$ is stable
with respect to the action of $h$ on $\mathcal{N}\otimes_{{K}} F.$ Hence, there is a group variety ${{\rm{H}}}' \subset \mathcal{N}$ such that ${{{\rm{H}}} }_f = {{\rm{H}}}' \otimes_K F$. Let $p_{{\rm{G}}_f}: \mathcal{N} \otimes_{{K}} F \longrightarrow {\rm{G}}_f$ be the projection from Theorem\,\ref{Wr1}. Setting ${\rm{G}}' = {{\rm{H}}}'$ and $w = \big(p_{{\rm{G}}_f|{{\rm{H}}}}\big)^{\scr -1}$, $(\ast)$ follows.
 \end{proof}

\section{Complex twins}
Next we let $F$ be an algebraically closed subfield of $\C$ and write $K = F \cap \R$. We assume that the extension $F/K$ has degree $[F: K] = 2$ and that $F$ is stable with respect to complex conjugation $h$. In this section we associate to a commutative group variety $\G'$ over $K$ its \textit{complex twin} $\G'_{[i]}$, a group variety over $K$ 
which becomes isogenous to $\G' \otimes_K F$ after base change to $F$.\\
We define $\Lambda \subset \frak{g}(\C)$ to be the kernel of the exponential map ${\rm{exp}}_{\G}$ and let ${\rm{Lie}(\rho)}$ be as in Sect.\,3.3. Then ${\rm{G}} = {{\rm{G}}}^{\scriptscriptstyle h}$ and, as seen in Corollary \ref{lp333}, $\Lambda = \Lambda^{\scriptscriptstyle h} = {\rm{Lie}(\rho)}\big(\Lambda\big)$. We note that the set ${\rm{G}}(i\R) = {\rm{exp}}_{\rm{G}}\big(i \cdot \frak{g}'(\R) \big)$ is the fixed locus of the real-analytic map $\rho_{\ast} \circ [-1]_{\rm{G}}: \G(\C) \longrightarrow \G(\C)$. Here $\rho_{\ast}$ is as defined in Sect.\,2.2.
Let $\Lambda_1 = \Lambda \cap \frak{g}'(\R)$ and $\Lambda_2 = \Lambda \cap i\cdot \frak{g}'(\R)$. Then $\Lambda_1 + \Lambda_2$ has finite index in $\Lambda$.\\
\\
\textbf{First construction.} Set $\Lambda_{[i]} = i\Lambda_1 + i\Lambda_2$. The subgroup $\Lambda_{[i]}$ of $\frak{g}(\C)$ is stable with respect to the canonical action of $Gal(\C|\R)$ induced by $\frak{g}'(\R) \in \cal{H}\big(\frak{g}(\C)\big)$ (see Sect.\,3.6). The analytic quotient $\frak{g}(\C)/\Lambda_{[i]}$ is isogenous to $\G(\C)$ (an isogeny is induced by multiplication with $i$) and hence inherits
the structure of a group variety over $F$. We denote the variety in question by ${{{\rm{G}}}}_{[i]}$. As follows from Proposition \ref{op2} and Proposition \ref{op1} the action of $Gal(\C|\R)$ on $\big(\frak{g}(\C),\Lambda_{[i]}\big)$ descends to an action on ${{{\rm{G}}}}_{[i]}$ and defines a $K$-structure on ${{{\rm{G}}}}_{[i]}$. \\
\\
\textbf{Second construction.} Consider $\frak{g}(\C)$ with the canonical action of $Gal(\C|\R)$
with respect to $i\cdot \frak{g}'(\R) \in \cal{H}\big(\frak{g}(\C)\big)$. Since $\Lambda_1 + \Lambda_2$ is stable with respect to this action, Proposition \ref{op2} and Proposition \ref{op1} imply that the group variety ${{{\rm{G}}}}_{[i]}$ defined by the analytic quotient $\frak{g}(\C)/(\Lambda_1 + \Lambda_2)$ carries a $K$-structure.\\
\\
Both constructions lead to canonically isomorphic group varieties over $K$. We identify these two varieties and call the resulting single variety the \textit{complex twin} of ${\rm{G}}'$. It is denoted by ${\rm{G}}_{[i]}'$. From the first construction it is clear that multiplication with $i$ leads to a canonical isogeny $[i]: {{\rm{G}}}_{[i]} \longrightarrow {{{\rm{G}}}}$ over $F$ which, however, is never defined over $K$ if $\dim\,\rm{G} > 0$.
\begin{example} \label{realscheiss} Recall the algebraic group $\mathbb{S}$ from (2.3.2). Then $\mathbb{S}_{\R} = \big(\mathbb{G}_{m, \C}\big)_{[i]}$ and
$\mathcal{N}_{\C/\R}(\mathbb{G}_{m, \C})$ is isogenous to $\mathbb{G}_{m, \R} \times \mathbb{S}_{\R}$.
 
\end{example}
\begin{example} Let ${\rm{E}}'$ be an elliptic curve over $\R$ and write
$\rm{E} = {\rm{E}}' \otimes_{\R} \C$. Then ${\rm{End}}({\rm{E}}')$ is identified with the subgroup of isogenies
 $v \in {\rm{End}}({\rm{E}})$ such that $v_{\ast}(\frak{g}') = \frak{g}'$. Hence, ${\rm{End}}\big({\rm{E}}') \backsimeq \Z$.
The curve ${\rm{E}}'$ and its twin
${\rm{E}}'_{[i]}$ are isogenous over $\R$ if and only if there is a $v \in {\rm{End}}\big({\rm{E}})$ such that
$v_{\ast}(\frak{g}') = i\cdot \frak{g}'$. The latter holds if and only if $\rm{E}$ is a CM-curve. It follows that if $\rm{E}$ has complex multiplication, then $\mathcal{N} =\mathcal{N}_{\C/\R}(\rm{E})$ is isogenous to the power ${\rm{E}}' \times {\rm{E}}'$ and ${\rm{End}}(\mathcal{N}) \backsimeq \mathbf{Mat}_2(\Z)$.
And if $\rm{E}$ admits no complex multiplication, then $\mathcal{N} =\mathcal{N}_{\C/\R}(\rm{E})$ is isogenous to the product of the two non-isogenous twins ${\rm{E}}' \times {\rm{E}}'_{[i]}$ and
${\rm{End}}\big(\mathcal{N}\big) \backsimeq \Z \times \Z$.
\end{example}
\section{Applications to descents} 
We consider a commutative group variety ${\rm{G}}$ over $\C$ together with a non-constant real-analytic homomorphism $\Psi: \R \longrightarrow {\rm{G}}(\C)$. In the next proposition we reformulate the definition of decent to $\R$ in terms of the Lie algebra. 
\begin{corollary} \label{lp} The homomorphism $\Psi$ descends to $\R$ if and only if there exists a subspace $\frak{t}^+ \in {\cal{H}}\big(\frak{g}\big)$ such that $\Psi_{\ast}(\R) \subset \frak{t}^{+}$ and such that the canonical action of $Gal(\C|\R)$ on $\frak{g}$
with respect to $\frak{t}^+$ satisfies ${h}(\Lambda) \subset k^{\scriptscriptstyle -1}\Lambda$ with an integer $k > 0$. 
\end{corollary}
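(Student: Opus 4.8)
The plan is to reduce the statement to Proposition \ref{op2} together with the $K$-structure theory for Weil restrictions and the complex twin construction. First I would unwind the definition of descent to $\R$: by definition $\Psi$ descends to $\R$ if and only if there is an algebraic group $\G'$ over $\R$ and an isogeny $v: \G \longrightarrow \G' \otimes_\R \C$ such that $v_\ast(\Psi)$ takes values in $\G'(\R)$. Passing to Lie algebras, writing $\frak{t}^+ = v_\ast^{-1}(\frak{g}')$ where $\frak{g}' = {\rm{Lie}}\,\G'$, we have $\frak{t}^+ \in {\cal{H}}(\frak{g})$ because $\G' \otimes_\R \C$ has Lie algebra $\frak{g}' \otimes_\R \C$ of the right dimension; and since $v_\ast(\Psi)$ lands in $\G'(\R)$, Corollary \ref{lp333} gives $v_\ast(\Psi_\ast)(\R) \subset \frak{g}'(\R)$, hence $\Psi_\ast(\R) \subset \frak{t}^+$. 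For the kernel condition: $v_\ast$ maps the kernel $\Lambda$ of ${\rm{exp}}_\G$ into the kernel $\Lambda'$ of ${\rm{exp}}_{\G' \otimes_\R \C}$, but actually only up to finite index, since $v$ is merely an isogeny; so $v_\ast(\Lambda)$ has finite index in $\Lambda'$. Now $\Lambda'$ is stable under the complex conjugation on $\frak{g}' \otimes_\R \C$ (because $\G'$ is defined over $\R$, by Corollary \ref{lp333}), and this conjugation is exactly the canonical action of $Gal(\C|\R)$ on $\frak{g}$ with respect to $\frak{t}^+$ transported via $v_\ast$ — here one uses that the canonical action is intrinsic once $\frak{t}^+$ is fixed. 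Transporting back, $h(\Lambda) \subset k^{-1}\Lambda$ for a suitable integer $k > 0$ coming from the degree of the isogeny.

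Conversely, suppose $\frak{t}^+ \in {\cal{H}}(\frak{g})$ is given with $\Psi_\ast(\R) \subset \frak{t}^+$ and $h(\Lambda) \subset k^{-1}\Lambda$ for the canonical action attached to $\frak{t}^+$. The idea is to replace $\Lambda$ by the sublattice $\Lambda_0 = \Lambda \cap h(\Lambda)$, or more carefully by $k\Lambda$, to obtain an $h$-stable full-rank subgroup $\Lambda_1 \subset \frak{g}(\C)$ with $\Lambda_1 \subset \Lambda$ of finite index; the condition $h(\Lambda) \subset k^{-1}\Lambda$ guarantees $k\Lambda$ is $h$-stable, so one may take $\Lambda_1 = k\Lambda$. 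Then $\frak{g}(\C)/\Lambda_1$ is a complex group variety isogenous to $\G(\C)$ (the isogeny being the quotient map), with an action of $Gal(\C|\R)$ on $(\frak{g}(\C), \Lambda_1)$ whose fixed space on $\frak{g}$ is $\frak{t}^+$ and which stabilises $\Lambda_1$. Proposition \ref{op2} (applied in the form $(\ast)$ in its proof, to the étale homomorphism $\frak{g}(\C) \to \frak{g}(\C)/\Lambda_1$) produces an algebraic group $\G_1'$ over $\R$ and an isomorphism $w: \frak{g}(\C)/\Lambda_1 \longrightarrow \G_1' \otimes_\R \C$ carrying ${\rm{exp}}(\frak{t}^+)$ into $\G_1'(\R)$. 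Composing the isogeny $\G \longrightarrow \frak{g}(\C)/\Lambda_1$ with $w$ gives an isogeny $v: \G \longrightarrow \G_1' \otimes_\R \C$ with $v_\ast(\frak{t}^+) = {\rm{Lie}}\,\G_1'$; since $\Psi_\ast(\R) \subset \frak{t}^+$ and, more strongly, $\Psi(\R) \subset {\rm{exp}}_\G(\frak{t}^+)$ (as $\Psi$ is a one-parameter homomorphism with $\Psi_\ast(\R) \subset \frak{t}^+$, its image is ${\rm{exp}}_\G$ of the real span, which lies in ${\rm{exp}}_\G(\frak{t}^+)$), we conclude $v_\ast(\Psi)(\R) \subset \G_1'(\R)$, i.e. $\Psi$ descends to $\R$ with $\G' = \G_1'$.

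I expect the main obstacle to be the bookkeeping around "up to finite index": the definition of descent uses an \emph{isogeny}, not an isomorphism, so both directions must track how the kernel lattice changes, and one must be careful that the integer $k > 0$ in the kernel condition is precisely what lets one pass from an $h$-stable sublattice back to the original $\Lambda$ and vice versa. A secondary point requiring care is the identification of the "canonical action with respect to $\frak{t}^+$" with the genuine complex conjugation coming from a real model: this is exactly the content of the lemma in Subsect.\,3.3 (that the canonical action with respect to $\frak{g}'$ coincides with the action defined by conjugation of vector fields) combined with Lemma \ref{111}, so I would cite those rather than reprove them. The rest is a direct assembly of Proposition \ref{op2}, Corollary \ref{lp333}, and the definition of descent.
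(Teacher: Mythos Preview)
Your approach is exactly what the paper intends: its entire proof reads ``This is a straightforward application of Proposition \ref{op2}'', and you have correctly unpacked that into the two directions (pull back $\frak{g}'$ through the isogeny to get $\frak{t}^+$; conversely, pass to an $h$-stable sublattice and invoke Proposition \ref{op2}).

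Two small slips to clean up. First, the claim that $k\Lambda$ is $h$-stable is not justified: from $h(\Lambda)\subset k^{-1}\Lambda$ and $h^2=id.$ you only get $k\Lambda\subset h(\Lambda)\subset k^{-1}\Lambda$, hence $h(k\Lambda)=kh(\Lambda)\subset\Lambda$, not $\subset k\Lambda$. Your first suggestion $\Lambda_0=\Lambda\cap h(\Lambda)$ is the one that works (it is visibly $h$-stable and contains $k\Lambda$, so has finite index). Second, since $\Lambda_1\subset\Lambda$, the natural isogeny goes $\frak{g}(\C)/\Lambda_1\to\G(\C)$, not the other way; so after Proposition \ref{op2} gives you $\G_1'\otimes_\R\C\cong\frak{g}(\C)/\Lambda_1$, you should take an inverse isogeny $\G\to\G_1'\otimes_\R\C$ (one whose composition with the quotient map is multiplication by an integer) to obtain the map $v$ in the definition of descent. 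With these corrections your argument goes through and matches the paper's intended route.
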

\begin{proof} This is a straightforward application of Proposition \ref{op2}.\end{proof}
Since arithmetic problems are our objective, we are interested in algebraic groups over a fixed algebraically closed subfield ${F}$ of $\C$. 
\begin{corollary} \label{lp3} If $F$ is algebraically closed and stable with respect to complex conjugation and if ${\rm{G}}$ is semi-abelian, then $\Psi$ descends to $K$ if and only if it descends to $\R$.
\end{corollary}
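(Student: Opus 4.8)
The statement to prove is Corollary B.3.2 (labelled \texttt{lp3}): if $F$ is algebraically closed and stable under complex conjugation and $\mathrm{G}$ is semi-abelian, then a non-constant real-analytic homomorphism $\Psi\colon\R\longrightarrow\mathrm{G}(\C)$ descends to $K=F\cap\R$ if and only if it descends to $\R$. One direction is immediate: descent to $K$ means there is an isogeny $v\colon\mathrm{G}\longrightarrow\mathrm{G}'\otimes_K F$ with $\mathrm{G}'$ over $K\subset\R$ such that $v_{\ast}(\Psi)$ takes values in $\mathrm{G}'(\R)$; composing with the base-change map $\mathrm{G}'\otimes_K F\longrightarrow\mathrm{G}'\otimes_{\R}\C$ exhibits a descent to $\R$. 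So the content is the converse: a descent to $\R$ can be refined to a descent to $K$. Here $\mathrm{G}$ is understood with a fixed model over $F$ (as in the Notations), and the point is that the real model produced by a descent to $\R$ is automatically definable over the smaller field $K$.

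First I would unwind the hypothesis. Suppose $\Psi$ descends to $\R$ via an isogeny $v\colon\mathrm{G}\otimes_F\C\longrightarrow\mathrm{G}_r\otimes_{\R}\C$ with $\mathrm{G}_r$ over $\R$ and $v_{\ast}(\Psi)(\R)\subset\mathrm{G}_r(\R)$. The key tool is Proposition B.1.4 (labelled \texttt{op1} in App.\,A): a real model $\mathrm{G}_r$ of the complexification of a semi-abelian $\mathrm{G}$ over $F$ is in fact definable over $K$. This is precisely the statement that there is an algebraic group $\mathrm{G}'$ over $K$ and an isomorphism $w\colon\mathrm{G}\longrightarrow\mathrm{G}'\otimes_K F$ over $F$ such that $(w\otimes_F\C)\circ v^{-1}$ — equivalently the induced identification of $\mathrm{G}_r\otimes_{\R}\C$ with $\mathrm{G}'\otimes_K\C$ — is defined over $\R$. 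I would then chase the diagram: let $u\colon\mathrm{G}'\otimes_K F\longrightarrow\mathrm{G}$ be an inverse isogeny to $w$ (so $w\circ u$ and $u\circ w$ are multiplication by some integer), and set $\tilde v = w$ composed with whatever is needed so that $\tilde v_{\ast}(\Psi)$ lands in $\mathrm{G}'(\R)$. Concretely, $v$ identifies the analytic picture so that $v_{\ast}(\Psi_{\ast})(\R)\subset\mathrm{Lie}\,\mathrm{G}_r\otimes_{\R}\R$, and the isomorphism from Proposition B.1.4 transports $\mathrm{Lie}\,\mathrm{G}_r$ (a real form) onto $\mathfrak{g}'=\mathrm{Lie}\,\mathrm{G}'$; so $w_{\ast}(\Psi_{\ast})(\R)\subset\mathfrak{g}'\otimes_K\R$, and Corollary 3.3.3 (\texttt{lp333}) gives $w_{\ast}(\Psi)(\R)\subset\mathrm{G}^{o}(\R)$ for the real points of $\mathrm{G}'$. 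Hence $\Psi$ descends to $K$ via $w$.

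The honest way to organise this is to reduce to the Lie-algebra criterion already recorded in Corollary B.3.1 (\texttt{lp}): $\Psi$ descends to $\R$ iff there is $\mathfrak{t}^+\in\mathcal H(\mathfrak g)$ with $\Psi_{\ast}(\R)\subset\mathfrak t^+$ such that the canonical action of $\mathrm{Gal}(\C|\R)$ with respect to $\mathfrak t^+$ sends the period lattice $\Lambda$ into $k^{-1}\Lambda$ for some integer $k>0$. Given such a $\mathfrak t^+$, the subgroup $\Lambda_1=\Lambda\cap\mathfrak t^+$ has finite index in $\Lambda$ (because $\Lambda$ is almost stable under the conjugation attached to $\mathfrak t^+$, being a lattice and $k\cdot h(\Lambda)\subset\Lambda$), so one gets a semi-abelian quotient $\mathfrak g(\C)/\Lambda_1$ isogenous to $\mathrm G$ whose torsion points lie in $\mathrm G(F)$ — and here semi-abelianness is used crucially, via the fact that torsion is Zariski-dense, exactly as in the proof of Proposition B.1.4 — forcing the relevant real model to be defined over $K$. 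Then invoking Proposition B.1.4 (or re-running its Weil-restriction argument: pass to $\mathcal N = \mathcal N_{F/K}(\mathrm G)$, observe $\HHH = \HHH^{\scr h}$ for the Zariski-closure $\HHH$ of $(v_{\ast}\Psi)_{\mathcal N}(\R)$, deduce $\HHH$ has a model over $K$, and take the projection $p_{\mathrm G}$) produces the algebraic group $\mathrm G'$ over $K$ and the isogeny realising the descent to $K$.

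The main obstacle is not conceptual but bookkeeping: one must check that the isogeny and the $K$-model produced by Proposition B.1.4 are genuinely compatible with $\Psi$, i.e. that after transport the image of $\Psi$ really lands in $\mathrm G'(\R)$ and not merely in a real form that happens to become isomorphic to $\mathrm G'\otimes_K\C$ over $\C$. This is handled by Corollary 3.3.3, which pins down $\mathrm{exp}$ of the real Lie subalgebra as the identity component of the real points, together with the observation that $\rho_{\ast}$ fixes real points; so the diagram chase closes. No transcendence input is needed — the corollary is purely a statement about descent of fields of definition for semi-abelian varieties, and all the work has already been done in App.\,A.1.
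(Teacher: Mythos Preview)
Your proposal is correct and follows essentially the same route as the paper: the one-line proof in the text simply cites Corollary~3.3.3 (\texttt{lp333}) and Proposition~A.1.3 (\texttt{op1}), which are exactly the two ingredients you identify and deploy. Your write-up is considerably more explicit about the diagram chase (transporting the real form $\mathrm{G}_r$ to the $K$-model $\mathrm{G}'$ and checking via \texttt{lp333} that $w_\ast(\Psi)$ lands in $\mathrm{G}'(\R)$), and you also sketch the alternative organisation via Corollary~B.3.1 (\texttt{lp}), but the underlying argument is the same.
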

\begin{proof} The assertion results directly from Corollary \ref{lp333} and Proposition \ref{op1}.\end{proof}
We keep assuming that $F$ is an algebraically closed subfield of $\C$ and define ${\rm{r}} = {\rm{rank}}_{\Z}\,\Psi^{\scriptscriptstyle -1}\big({{\rm{G}}}(F)\big) + {\rm{rank}}_{\Z}\,\Psi_{[i]}^{\scriptscriptstyle -1}\big({{\rm{G}}}(F)\big)$. Recall the definition of $\Psi_{[i]}$ from Sect.\,2.2.
\begin{corollary}\label{i} If $F$ is algebraically closed and if ${\rm{r}} > 0$, then $\Psi$ descends to $K$ if and only if $\Psi_{[i]}$ does.
 
\end{corollary}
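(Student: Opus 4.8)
The plan is to reduce the statement to the definition of descent to $K$ expressed in terms of the Lie algebra, which is exactly the content of Corollary \ref{lp} (the version of \ref{lp333} with the canonical action), and then to exploit the symmetry between the roles of $\frak{g}'(\R)$ and $i\cdot\frak{g}'(\R)$ already exhibited in the two constructions of the complex twin in Section B.2. The key observation is that $\Psi_{[i]}$ is, essentially by its definition $\Psi_{[i]}(r) = \Phi(ir)$, related to $\Psi$ by multiplication with $i$ on the Lie algebra level: if $\Psi = {\rm{exp}}_{\G}\circ\Psi_{\ast}$ with $\Psi_{\ast}$ the restriction of $\Phi_{\ast}$ to $\R$, then $\big(\Psi_{[i]}\big)_{\ast}(r) = i\,\Psi_{\ast}(r)$ for $r\in\R$. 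So $\Psi_{\ast}(\R)$ and $\big(\Psi_{[i]}\big)_{\ast}(\R)$ span the same complex line in $\frak{g}(\C)$ but differ by the factor $i$.

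First I would observe why the hypothesis ${\rm{r}} > 0$ is used: it guarantees that the image of $\Psi$ (or of $\Psi_{[i]}$) contains a non-trivial point of $\G(F)$, hence $\Psi$ is non-constant precisely when $\Psi_{[i]}$ is, so both "descends to $K$" statements make sense, and moreover it allows us to invoke the Analytic Subgroup Theorem's consequences if needed (though the cleaner route avoids it). Next, suppose $\Psi$ descends to $K$. By Corollary \ref{lp} there is a subspace $\frak{t}^{+}\in{\cal{H}}(\frak{g})$ with $\Psi_{\ast}(\R)\subset\frak{t}^{+}$ and such that the canonical $Gal(\C|\R)$-action attached to $\frak{t}^{+}$ carries $\Lambda$ into $k^{-1}\Lambda$ for some integer $k>0$. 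Set $\frak{s}^{+} = i\cdot\frak{t}^{+}$. Then $\frak{s}^{+}\in{\cal{H}}(\frak{g})$ as well (multiplication by $i$ is a real-linear isomorphism of $\frak{g}(\C)$ sending an $\R$-form to an $\R$-form whose complex span is unchanged), and $\big(\Psi_{[i]}\big)_{\ast}(\R) = i\,\Psi_{\ast}(\R)\subset i\cdot\frak{t}^{+} = \frak{s}^{+}$. It remains to check that the canonical action with respect to $\frak{s}^{+}$ still maps $\Lambda$ into a finite-index overlattice of itself. Here I would use that the twin construction of Section B.2 identifies the $K$-structure coming from $\frak{t}^{+}$ with the one coming from $i\cdot\frak{t}^{+}$ up to the canonical isogeny $[i]$, together with Corollary \ref{lp333} and Proposition \ref{op1} which guarantee that the action descends; concretely, the kernel $\Lambda$ of ${\rm{exp}}_{\G}$ is the same in both pictures (this is the very compatibility exploited in the "First construction" and "Second construction"), and conjugation of $\Lambda$ with respect to $\frak{s}^{+}$ differs from conjugation with respect to $\frak{t}^{+}$ only by the automorphism $[-1]$ composed with $[i]^{2}$-type bookkeeping, which again preserves commensurability. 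Applying Corollary \ref{lp} in the other direction produces the required descent datum for $\Psi_{[i]}$.

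The converse is symmetric: replacing $\Psi$ by $\Psi_{[i]}$ and noting $\big(\Psi_{[i]}\big)_{[i]}$ has differential $i^{2}\Psi_{\ast} = -\Psi_{\ast}$, which spans the same line as $\Psi_{\ast}$, one runs the same argument with $\frak{t}^{+} = i^{-1}\frak{s}^{+}$. The main obstacle I anticipate is the bookkeeping in the previous paragraph: verifying that the commensurability condition "${h}(\Lambda)\subset k^{-1}\Lambda$" is genuinely insensitive to replacing the $\R$-form $\frak{t}^{+}$ by $i\cdot\frak{t}^{+}$. This is not automatic for a general subspace, but it works here because $\Lambda = \Lambda_{1}+\Lambda_{2}$ up to finite index with $\Lambda_{1} = \Lambda\cap\frak{g}'(\R)$ and $\Lambda_{2} = \Lambda\cap i\frak{g}'(\R)$ (as recorded at the start of Section B.2), and multiplication by $i$ swaps the roles of these two pieces while fixing $\Lambda$ up to finite index; one then checks the two conjugations differ by $[-1]\circ[i]$ on each piece. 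Once this compatibility is isolated as a lemma, the statement follows formally, and I would present it as a short deduction from Corollaries \ref{lp} and \ref{lp333}.
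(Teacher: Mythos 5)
Your argument, even granting the bookkeeping about replacing $\frak{t}^{+}$ by $i\cdot\frak{t}^{+}$ (which does work: the canonical conjugation attached to $i\frak{t}^{+}$ is $[-1]$ composed with the one attached to $\frak{t}^{+}$, so the commensurability condition on $\Lambda$ is preserved), only proves that $\Psi$ descends to $\R$ if and only if $\Psi_{[i]}$ does. That is the case $F=\C$, which the paper dispatches in one sentence as "a direct consequence of the previous section." The statement to be proved concerns descent to $K=F\cap\R$, e.g.\ $K=\overline{\Q}\cap\R$, and Corollary \ref{lp} together with Proposition \ref{op2} only ever produce a model over $\R$ and an isogeny over $\C$; nothing in your plan makes the target group definable over $K$ or the isogeny definable over $F$, and this is exactly where descent to $K$ is strictly stronger than descent to $\R$ (compare Corollary \ref{lp3}, which recovers the equivalence only for semi-abelian $\G$).

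This is also where your reading of the hypothesis ${\rm{r}}>0$ goes wrong: it is not a non-degeneracy condition ($\Psi$ and $\Psi_{[i]}$ are restrictions of the same non-zero holomorphic $\Phi$ to two real lines, so one is non-zero precisely when the other is, with no hypothesis), and it is not a hook for the Analytic Subgroup Theorem. In the paper's proof it is the arithmetic input that bridges $\R$ and $K$: by Lemma \ref{ff} (Statements 2 and 4), ${\rm{r}}>0$ yields a point of infinite order in $\Phi_{\mathcal{N}}^{-1}\big(\mathcal{N}(F)\big)$, from which one shows that the Zariski closure ${\rm{V}}$ of the image of the complexification of $\Psi_{\mathcal{N}}$ is definable over $F$; since $\Psi_{\mathcal{N}}(\R)\subset\mathcal{N}(\R)$ forces ${\rm{V}}={\rm{V}}^{\scr h}$, the closure is in fact of the form ${\rm{V}}_r\otimes_K F$ with ${\rm{V}}_r\subset\mathcal{N}$ over $K$. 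Then, if $\Psi$ descends to $K$, the $\R$-equivalence (your part of the argument) shows $p_{\G}$ restricts to an isogeny from ${\rm{V}}_r\otimes_K F$ onto $\G$, and Theorem \ref{weakdescent} converts this into a descent of $\Psi_{[i]}$ to $K$; the converse follows by the symmetry $\Psi(r)=(\Psi_{[i]})_{[i]}(-r)$. Without this step your construction leaves you with a descent datum over $\R$ only, so the proof as proposed does not establish the corollary.
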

\begin{proof} For $F = \C$ the claim is a direct consequence of the previous section. Hence, $\Psi$ descends to $\R$ if and only if $\Psi_{[i]}$ does.
We will show that under the additional assumption that ${\rm{r}} > 0$ the lemma follows from this.\\
\\
To this end, we let $\phi$ be the complexification of $\Psi_{\mathcal{N}}$. Since ${\rm{r}} > 0$, there are two
possibilities:
\begin{enumerate}
\item We have ${\rm{rank}}_{\Z}\,\Psi_{[i]}^{\scr -1}\big(\G(F)\big) > 0$. Then ${\rm{rank}}_{\Z}\,\phi^{\scr -1}\big(\mathcal{N}(F)\big) > 0$ by Statement 2.\,in Lemma \ref{ff}. \item We have ${\rm{rank}}_{\Z}\,\Psi^{\scr -1}\big(\G(F)\big) > 0$. Since $\Psi = (\Psi_{[i]})_{[i]}$, Statement 4.\,in Proposition \ref{ff}
implies that ${\rm{rank}}_{\Z}\,\phi^{\scr -1}\big(\mathcal{N}(F)\big) > 0$.

\end{enumerate}
Hence, ${\rm{rank}}_{\Z}\,\phi^{\scr -1}\big(\mathcal{N}(F)\big) > 0$. By Lemma 3.3.4 we may identify $\mathcal{N} \otimes_K \R$ with $\mathcal{N}_{\C/\R}(\G \otimes_F \C)$. As ${\rm{rank}}_{\Z}\,\phi^{\scr -1}\big(\mathcal{N}(F)\big) > 0$, it follows that the Zariski-closure ${\rm{V}}$ of $im\,\phi$ in $\mathcal{N} \otimes_K \C$ is definable over $F$.
To be more precise, there is a group variety ${\rm{V}}_f \in \mathcal{N} \otimes_K F$ with the property that ${{\rm{V}}} = {\rm{V}}_f \otimes_F \C$.
Moreover, since $\psi_{\mathcal{N}}(\R) \in \mathcal{N}(\R)$, we have $\psi_{\mathcal{N}}(\R) = \big(\psi_{\mathcal{N}}\big)^{\scr h}(\R)$. So, ${{\rm{V}}}$ equals ${{\rm{V}}}^{\scr h}$. By Subsection 3.2.2 this implies that ${{\rm{V}}}_f = {{\rm{V}}}_f^{\scr h}$. Hence, ${{\rm{V}}}_f$ is definable over $K$, that is, ${{\rm{V}}}_f = {{\rm{V}}}_r \otimes_{K} F$ for a group variety ${{\rm{V}}}_r \subset \mathcal{N}$. Once we know that $\rm{V}$ is definable over $K$, the proof of the lemma can be completed as follows:\\
If $\Psi$ descends to $K$, then, as noticed in the beginning of the proof, $\Psi_{[i]}$ descends to $\R$. As seen in the previous chapters, the Zariski-closure ${{\rm{V}}}$ of $(\Psi_{[i]})_{\mathcal{N}}$ in $\mathcal{N} \otimes_{\R} \C$ is then isogenous to $\G \otimes_{\R} \C$. An isogeny is given by the restriction to ${\rm{V}}$ of the projection $p_{(\G \otimes_F \C)}$. 
In view of our identification $\mathcal{N} \otimes_K \R = \mathcal{N}_{\C/\R}(\G \otimes_F \C)$, we have $p_{(\G \otimes_F \C)} = p_{\G} \otimes_F \C$. 
Moreover, as seen above, $\V = {{\rm{V}}}_r \otimes_K \C$ for some $\V_r \subset \mathcal{N}$. It results that the restriction to ${{\rm{V}}}_r \otimes_K F$ of the projection $p_{\G}$ is an isogeny onto $\G$. Theorem \ref{weakdescent} implies that $\Psi_{[i]}$ descends to $K$. Since $\Psi(r) = (\Psi_{[i]})_{[i]}(-r)$, reversing the roles
of $\Psi$ and $\Psi_{[i]}$, we deduce that if $\Psi_{[i]}$ descends to $K$, then so does $\Psi$. The corollary follows.
\end{proof}

\chapter{Linear extensions of abelian varieties} 
\section{Some general facts} A reference for this section is the seventh chapter of Serre' book \cite{Serre1}. We consider an abelian variety ${{\rm{A}}}$ over an algebraically closed subfield $F$ and let $\LL$ be a commutative linear algebraic group variety over $F$. An \textit{extension of ${\A}$ by ${\LL}$} is a short exact sequence of algebraic groups
$$0 \longrightarrow {\rm{L}} \stackrel{j}{\longrightarrow} {\rm{G}} \stackrel{\pi}{\longrightarrow } {{\rm{A}}} \longrightarrow 0.$$
For brevity we will eventually consider triples $({\rm{G}}, j, \pi)$ to denote an extension, or even identify an extension with its \textit{fiber space} ${\rm{G}}$. 
Two extensions $({\rm{G}}_1, j_1, \pi_1)$ and $({\rm{G}}_2, j_2, \pi_2)$ are \textit{isomorphic} if there exists an algebraic homomorphism $v: {\rm{G}}_1 \longrightarrow {\rm{G}}_2$ such that the following diagram commutes
$$\begin{xy} 
  \xymatrix{ 
0 \ar[r] & {\rm{L}} \ar[r]^{j_1} \ar[d]^{id.} & {\rm{G}}_1 \ar[r]^{\pi_1} \ar[d]^{v} & \rm{A} \ar[r] \ar[d]^{id.} & 0\\
0 \ar[r] & {\rm{L}} \ar[r]^{j_2}            & {\rm{G}}_2 \ar[r]^{\pi_2}              & \rm{A} \ar[r] & 0 
}
\end{xy}$$
The isomorphism classes $[({\rm{G}}, j, \pi)]$ of linear extensions over $F$ of ${{\rm{A}}}$ by ${\rm{L}}$ form a group $\rm{Ext}(\rm{A}, {\rm{L}})$. The group structure
can be realized in the following way. Recall that a rational map $f: {\rm{A}} \times  \rm{A} \dashrightarrow {\rm{L}}$ is a \textit{rational symmetric factor system} if 
$f(\xi_1, \xi_2) = f(\xi_2, \xi_1)$ and if $f(\xi_2, \xi_3) - f(\xi_1 + \xi_2, \xi_3) + f(\xi_1, \xi_2 + \xi_3) - f(\xi_2, \xi_2) = e_L$ for all $\xi_1, \xi_2, \xi_3$ on a Zariski-dense subset of ${\rm{A}}(F)$. A factor system $f(\xi_1, \xi_2)$ is said to be trivial
if $f(\xi_1, \xi_2) = g(\xi_1+\xi_2) - g(\xi_1) - g(\xi_2)$ for a rational map $g: A \dashrightarrow L$. The group of classes of rational symmetric factor systems is denoted by ${{{\rm{H}}} }^{\scriptscriptstyle 2}_{rat}(\rm{A}, {\rm{L}})_s$. There is a natural map $\chi: \rm{Ext}(\A, {\rm{L}}) \longrightarrow {{{\rm{H}}} }^{\scriptscriptstyle 2}_{rat}(\A, {\rm{L}})_s$ defined in the following way.
For an extension ${\rm{G}}$ choose a rational section $\sigma: \rm{A} \dashrightarrow {\rm{G}}$ and let $f_{\sigma}(\xi_1, \xi_2) = \sigma(\xi_1, \xi_2) - \sigma(\xi_1) - \sigma(\xi_2)$.
Then $f_{\sigma}$ is a rational factor system and $\chi[{\rm{G}}]\,\,=\,\,\mbox{class of}\,\,f_{\sigma}$.
\begin{lemma} The map $\chi$ is well-defined and an isomorphism of groups.

\end{lemma}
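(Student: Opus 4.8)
The statement to prove is that the map $\chi: \rm{Ext}(\A, {\rm{L}}) \longrightarrow {{\rm{H}}}^{\scriptscriptstyle 2}_{rat}(\A, {\rm{L}})_s$, sending the class of an extension to the class of a factor system $f_\sigma$ attached to a rational section $\sigma$, is a well-defined group isomorphism. I would organize the argument into four standard steps: well-definedness, group homomorphism property, injectivity, surjectivity. Throughout, the crucial analytic fact that makes everything work is that an abelian variety has no nonconstant rational functions without poles into a linear group in a problematic way -- more precisely, that rational maps from $\A$ into $\LL$ extend to morphisms on the smooth locus and that $\rm{Hom}(\A, \LL) = 0$, so that the only ambiguity in choosing sections is by rational maps $\A \dashrightarrow \LL$. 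This is exactly the content one extracts from Serre \cite[Ch.\,VII]{Serre1}, and I would cite it as the geometric input.

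First, well-definedness: given an extension $({\rm{G}}, j, \pi)$, a rational section $\sigma$ exists because $\pi$ is a surjective morphism of smooth varieties over an algebraically closed field, hence admits a rational section (Serre). One checks directly that $f_\sigma(\xi_1,\xi_2) = \sigma(\xi_1+\xi_2) - \sigma(\xi_1) - \sigma(\xi_2)$ (using that $\LL \subset \G$ is the subgroup over which $\sigma(\xi_1+\xi_2)$ and $\sigma(\xi_1)+\sigma(\xi_2)$ differ, since both project to $\xi_1+\xi_2$) lands in $\LL$, is symmetric, and satisfies the cocycle identity; these are formal manipulations in the group law of $\G$. If $\sigma'$ is a second rational section, then $\sigma' - \sigma =: g$ is a rational map $\A \dashrightarrow \LL$, and $f_{\sigma'} - f_\sigma = g(\xi_1+\xi_2) - g(\xi_1) - g(\xi_2)$ is a trivial factor system; hence $\chi[{\rm{G}}]$ is independent of $\sigma$. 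Finally, if $v: \G_1 \to \G_2$ is an isomorphism of extensions and $\sigma_1$ is a rational section of $\pi_1$, then $v \circ \sigma_1$ is a rational section of $\pi_2$ with the same factor system, so $\chi$ depends only on the isomorphism class.

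Second, the homomorphism property: the Baer sum of two extensions $\G_1, \G_2$ is constructed as (a quotient of) the fiber product $\G_1 \times_{\A} \G_2$, and a rational section of the sum is obtained from the pair $(\sigma_1, \sigma_2)$; its factor system is visibly $f_{\sigma_1} + f_{\sigma_2}$. Hence $\chi([{\rm{G}}_1] + [{\rm{G}}_2]) = \chi[{\rm{G}}_1] + \chi[{\rm{G}}_2]$. Third, injectivity: if $f_\sigma$ is a trivial factor system, say $f_\sigma(\xi_1,\xi_2) = g(\xi_1+\xi_2) - g(\xi_1) - g(\xi_2)$ with $g: \A \dashrightarrow \LL$ rational, then $\sigma - g$ is a rational section which is a group homomorphism on a Zariski-dense subset, hence extends to an algebraic splitting $\A \to \G$ of $\pi$ (again using that a rational homomorphism between group varieties is a morphism); therefore $[{\rm{G}}]$ is the trivial class. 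Fourth, surjectivity: given a class of rational symmetric factor systems represented by $f: \A \times \A \dashrightarrow \LL$, one builds $\G$ as the set $\LL \times \A$ with twisted group law $(l_1,\xi_1) \cdot (l_2,\xi_2) = (l_1 + l_2 + f(\xi_1,\xi_2), \xi_1+\xi_2)$; the symmetry and cocycle conditions on $f$ guarantee this is a commutative group law, and one must check (this is where a little care is needed) that the resulting object is actually an algebraic variety with a regular group law in a Zariski-open neighborhood of the identity, then propagate the group structure by translation -- this is the standard construction of a group from a birational group law, due to Weil, for which I would again invoke Serre \cite{Serre1}. Then $\pi = $ projection to $\A$ realizes $[{\rm{G}}]$ with $\chi[{\rm{G}}] = $ class of $f$.

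\textbf{Main obstacle.} The routine parts are the formal cocycle bookkeeping; the genuine content, and the step I expect to be the real obstacle, is the surjectivity construction: turning a \emph{rational} symmetric factor system into an honest algebraic group extension requires the Weil theorem on algebraization of birational (pre-)group laws, i.e. showing that the twisted law on $\LL \times \A$, which is only defined and regular on a dense open set, comes from a genuine group variety structure. I would not reprove this but cite Serre \cite[Ch.\,VII]{Serre1} (and, if needed, Weil's group-chunk theorem) for it; everything else reduces to elementary diagram chasing in group laws together with the vanishing $\rm{Hom}(\A,\LL) = 0$ and the extension-to-morphism principle for rational maps and rational homomorphisms on smooth group varieties.
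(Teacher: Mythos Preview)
Your proposal is correct and in fact gives considerably more than the paper does: the paper's own ``proof'' is a one-line citation to Serre \cite[Ch.\,VII, 4]{Serre1}, with no argument given. What you have written is essentially a faithful outline of the standard proof found there (well-definedness via independence of the rational section, the Baer-sum computation, injectivity via splitting, and surjectivity via Weil's algebraization of birational group laws), so the approaches coincide and your identification of the Weil group-chunk step as the only nontrivial ingredient is exactly right.
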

\begin{proof} See Serre \cite[Ch.\,VII, 4]{Serre1} 
\end{proof}
The symbol ${\rm{Ext}}(\cdot, \rm{A})$ is a covariant functor from the category of linear
groups into groups, whereas ${\rm{Ext}}({\rm{L}}, \cdot)$ is a contravariant functor from the category of abelian varieties into groups.\footnote{The two functors, of course, extend to arbitrary algebraic groups, but we restrict ourselves to the subcategory of linear commutative groups resp.\,\,abelian varieties.}
This is made precise in what follows.\\
\begin{itemize}
 \item If $v: {{\rm{A}}} \longrightarrow {\rm{B}}$ is a homomorphism of abelian varieties over $F$, then a homomorphism of groups
${\rm{Ext}}^{v}: {\rm{Ext}}({\rm{B}}, {\rm{L}}) \longrightarrow {\rm{Ext}}(\A, {\rm{L}})$ is defined by ${\rm{Ext}}^{v}[{\rm{G}}] = [{\rm{G}}\times_{\rm{B}} \rm{A}]$.
\item If $u: {\rm{L}}_1 \longrightarrow {\rm{L}}_2$ is a homomorphism of linear groups, then, for each extension ${\rm{G}}_1$ of ${{\rm{A}}}$ by ${\rm{L}}_1$
there, exists a commutative diagram
$$\begin{xy} 
  \xymatrix{ 
0 \ar[r] & {\rm{L}}_1 \ar[r]^{j_1} \ar[d]^{u} & {\rm{G}}_1 \ar[r]^{\pi_1} \ar[d]^{v} & \rm{A} \ar[r] \ar[d]^{id.} & 0\\
0 \ar[r] & {\rm{L}}_2 \ar[r]^{j_2}            & {\rm{G}}_2 \ar[r]^{\pi_2}              & \rm{A} \ar[r] & 0 
}
\end{xy}$$
We set ${\rm{Ext}}_u[{\rm{G}}_1] = [{\rm{G}}_2].$ This way a homomorphism 
${\rm{Ext}}_u: {\rm{Ext}}({\rm{A}}, {\rm{L}}_1) \longrightarrow {\rm{Ext}}(\A, {\rm{L}}_2)$ is defined.
\item If 
$$\begin{xy} 
  \xymatrix{ 
0 \ar[r] & {\rm{L}}_1 \ar[r]^{j_1} \ar[d] & {\rm{G}}_1 \ar[r]^{\pi_1} \ar[d]^{w} & \rm{A} \ar[r] \ar[d]^{v} & 0\\
0 \ar[r] & {\rm{L}}_2 \ar[r]^{j_2}            & {\rm{G}}_2 \ar[r]^{\pi_2}              & \rm{A} \ar[r] & 0 
}
\end{xy}$$
is a commutative diagram of extensions, then $w$ factors through ${\rm{Ext}}^v[{\rm{G}}_2]$ and we obtain a diagram
$$\begin{xy} 
  \xymatrix{ 
0 \ar[r] & {\rm{L}}_1 \ar[r]^{j_1} \ar[d]^{u} & {\rm{G}}_1 \ar[r]^{\pi_1} \ar[d] & \rm{A} \ar[r] \ar[d]^{id.} & 0\\
0 \ar[r] & {\rm{L}}_2 \ar[r]            &{\rm{Ext}}^v[{\rm{G}}_2] \ar[r]              & \A \ar[r] & 0 
}
\end{xy}$$
In other words, ${\rm{Ext}}_u[{\rm{G}}_1] = {\rm{Ext}}^v[{\rm{G}}_2]$.
\end{itemize}
\begin{lemma} Let ${{\rm{A}}}$ and ${\rm{B}}$ be abelian varieties over $F$ and $v: \rm{A}\longrightarrow {\rm{B}}$ be an isogeny.
\begin{enumerate}
 \item If ${{\rm{L}}} = \mathbb{G}_{a,F}$, then ${\rm{Ext}}(\rm{A}, {\rm{L}})$ is canonically isomorphic to ${{{\rm{H}}} }^1(\A, \mathcal{O}_{\rm{A}})$ and endowed with the structure of a vector space over $F$.
 The homomorphism
${\rm{Ext}}^v$ is identified with the induced map in cohomology ${{{\rm{h}}} }^{\scr1 }(v): {{{\rm{H}}} }^1(\rm{B}, \mathcal{O}_{\rm{B}}) \longrightarrow {{{\rm{H}}} }^1(\rm{A}, \mathcal{O}_{\rm{A}}).$ 
\item If ${{\rm{L}}} = \mathbb{G}_{m,F}$, then ${\rm{Ext}}(\rm{A}, {\rm{L}})$ is canonically isomorphic to $\rm{Pic}^o(\rm{A})$, the group underlying the dual abelian variety ${{\rm{A}}}^{\ast}$. The homomorphism
${\rm{Ext}}^v$ is identified with the induced dual isogeny $v^{\ast}: \rm{B}^{\ast} \longrightarrow {{\rm{A}}}^{\ast}$.
\end{enumerate}

\end{lemma}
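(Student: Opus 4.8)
The statement to prove is the standard identification of $\mathrm{Ext}(\mathrm{A},\mathbb{G}_{a,F})$ with $H^1(\mathrm{A},\mathcal{O}_{\mathrm{A}})$ and of $\mathrm{Ext}(\mathrm{A},\mathbb{G}_{m,F})$ with $\mathrm{Pic}^o(\mathrm{A})$, together with the compatibility of these identifications with pullback along an isogeny $v\colon\mathrm{A}\to\mathrm{B}$. The plan is to exploit the isomorphism $\chi\colon\mathrm{Ext}(\mathrm{A},\mathrm{L})\to H^2_{rat}(\mathrm{A},\mathrm{L})_s$ of the previous lemma and then compute the right-hand group in the two cases at hand. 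First I would treat the additive case. One interprets a rational symmetric factor system $f\colon\mathrm{A}\times\mathrm{A}\dashrightarrow\mathbb{G}_{a,F}$ as a symmetric $2$-cocycle for the sheaf $\mathcal{O}_{\mathrm{A}}$ in the sense of rational (equivalently, since $\mathrm{A}$ is smooth and proper, regular away from a divisor) Hochschild--type cohomology of the group $\mathrm{A}$; the symmetric cocycle condition and the triviality condition are exactly those defining a class in the sheaf cohomology group $H^1(\mathrm{A},\mathcal{O}_{\mathrm{A}})$, via the classical dictionary between central extensions of $\mathrm{A}$ by a commutative group $\mathrm{L}$ and $H^1(\mathrm{A},\underline{\mathrm{L}})$ when $\mathrm{L}$ is a vector group. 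Concretely I would cite Serre \cite[Ch.\,VII]{Serre1}, where this identification is carried out; the $F$-vector space structure on $H^1(\mathrm{A},\mathcal{O}_{\mathrm{A}})$ transports to $\mathrm{Ext}(\mathrm{A},\mathbb{G}_{a,F})$ through $\chi$, and it agrees with the Baer sum together with scalar multiplication coming from the $F$-action on $\mathbb{G}_{a,F}=\mathrm{L}$.

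Next I would treat the multiplicative case. Here a rational symmetric factor system with values in $\mathbb{G}_{m,F}$ is the data of a rational ``theta group'' structure, and the Theorem of the Square identifies the isomorphism class of such an extension with a line bundle on $\mathrm{A}$ that is translation-invariant, i.e.\ a point of $\mathrm{Pic}^o(\mathrm{A})=\mathrm{A}^\ast(F)$. More precisely: given an extension $0\to\mathbb{G}_{m,F}\to\mathrm{G}\xrightarrow{\pi}\mathrm{A}\to 0$, the fiber space $\mathrm{G}$ is a $\mathbb{G}_{m,F}$-torsor over $\mathrm{A}$, hence corresponds to a line bundle $\mathcal{L}_{\mathrm{G}}$; the group-law on $\mathrm{G}$ forces $\mathcal{L}_{\mathrm{G}}$ to satisfy $m^\ast\mathcal{L}_{\mathrm{G}}\cong p_1^\ast\mathcal{L}_{\mathrm{G}}\otimes p_2^\ast\mathcal{L}_{\mathrm{G}}$ on $\mathrm{A}\times\mathrm{A}$, which is precisely the condition $\mathcal{L}_{\mathrm{G}}\in\mathrm{Pic}^o(\mathrm{A})$. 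Conversely every element of $\mathrm{Pic}^o(\mathrm{A})$ arises this way and the construction is inverse to $\chi$ up to the standard identification $H^2_{rat}(\mathrm{A},\mathbb{G}_{m,F})_s\cong\mathrm{Pic}^o(\mathrm{A})$. Again the cleanest route is to appeal to Serre \cite[Ch.\,VII]{Serre1} and just record the statement; the group law matches the tensor product of line bundles, which is the group law on $\mathrm{Pic}^o(\mathrm{A})$.

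For the functoriality claim, suppose $v\colon\mathrm{A}\to\mathrm{B}$ is an isogeny (separability is not needed for the statement, only a morphism of abelian varieties, but I will phrase it for the isogeny at hand). By definition $\mathrm{Ext}^v[\mathrm{G}]=[\mathrm{G}\times_{\mathrm{B}}\mathrm{A}]$, and under the line-bundle (resp.\ factor-system) dictionary the pullback of extensions corresponds to pullback of the associated cocycle by $v\times v$, hence to the map $H^i(\mathrm{B},-)\to H^i(\mathrm{A},-)$ induced by $v$. In the additive case this is literally $h^1(v)\colon H^1(\mathrm{B},\mathcal{O}_{\mathrm{B}})\to H^1(\mathrm{A},\mathcal{O}_{\mathrm{A}})$; in the multiplicative case, pullback of a translation-invariant line bundle along $v$ is, by the very definition of the dual abelian variety as the functor $\mathrm{Pic}^o$, the dual homomorphism $v^\ast\colon\mathrm{B}^\ast\to\mathrm{A}^\ast$. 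Both assertions follow by chasing the defining diagrams in the bulleted functoriality list above, using $\mathrm{Ext}^v[\mathrm{G}_1]=\mathrm{Ext}^v[\mathrm{G}_2]$ type compatibilities.

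The only genuinely non-formal point is matching the two group structures (Baer sum of extensions versus $+$ on $H^1$, resp.\ tensor product on $\mathrm{Pic}^o$) and checking that the bijections above are group homomorphisms; but this is precisely the content of Serre's treatment in \cite[Ch.\,VII,\,3--4]{Serre1}, so the ``proof'' here is really a pointer to that reference plus the identifications of $H^2_{rat}(\mathrm{A},\mathbb{G}_{a,F})_s$ with $H^1(\mathrm{A},\mathcal{O}_{\mathrm{A}})$ and of $H^2_{rat}(\mathrm{A},\mathbb{G}_{m,F})_s$ with $\mathrm{Pic}^o(\mathrm{A})$. I expect the main obstacle, if one wanted a fully self-contained argument, to be the careful verification that ``rational'' factor systems compute the same cohomology as the Zariski-sheaf cohomology $H^1(\mathrm{A},\mathcal{O}_{\mathrm{A}})$ and $H^1(\mathrm{A},\mathcal{O}_{\mathrm{A}}^\ast)$ — i.e.\ that one may clear denominators and that the singular locus of a rational section causes no cohomological loss — but since the statement is classical and attributed to \cite{Serre1}, I would simply cite it.

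\begin{proof}
Both statements are proved in Serre \cite[Ch.\,VII, 3, 4]{Serre1}; we only indicate the identifications. By the previous lemma $\chi$ identifies $\mathrm{Ext}(\mathrm{A},\mathrm{L})$ with $H^2_{rat}(\mathrm{A},\mathrm{L})_s$. When $\mathrm{L}=\mathbb{G}_{a,F}$ the group $H^2_{rat}(\mathrm{A},\mathbb{G}_{a,F})_s$ is canonically isomorphic to $H^1(\mathrm{A},\mathcal{O}_{\mathrm{A}})$, and the $F$-vector space structure on the latter transports to $\mathrm{Ext}(\mathrm{A},\mathbb{G}_{a,F})$; pullback of extensions along an isogeny $v\colon\mathrm{A}\to\mathrm{B}$ corresponds to pullback of cocycles by $v\times v$, hence to $h^1(v)\colon H^1(\mathrm{B},\mathcal{O}_{\mathrm{B}})\to H^1(\mathrm{A},\mathcal{O}_{\mathrm{A}})$. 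When $\mathrm{L}=\mathbb{G}_{m,F}$, an extension $\mathrm{G}$ is a $\mathbb{G}_{m,F}$-torsor over $\mathrm{A}$ with compatible group law, hence corresponds to a line bundle $\mathcal{L}_{\mathrm{G}}$ on $\mathrm{A}$ satisfying the theorem-of-the-square relation $m^\ast\mathcal{L}_{\mathrm{G}}\cong p_1^\ast\mathcal{L}_{\mathrm{G}}\otimes p_2^\ast\mathcal{L}_{\mathrm{G}}$, i.e.\ a class in $\mathrm{Pic}^o(\mathrm{A})$; this assignment is a group isomorphism $\mathrm{Ext}(\mathrm{A},\mathbb{G}_{m,F})\cong\mathrm{Pic}^o(\mathrm{A})=\mathrm{A}^\ast(F)$ and, by the definition of the dual abelian variety, pullback along $v$ corresponds to the dual isogeny $v^\ast\colon\mathrm{B}^\ast\to\mathrm{A}^\ast$.
\end{proof}
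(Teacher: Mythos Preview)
Your proposal is correct and takes essentially the same approach as the paper: both treat this lemma as a classical fact and defer to Serre \cite{Serre1}. The paper's proof is in fact nothing more than the bare citation ``See Serre \cite[p.\,163]{Serre1} and \cite[Ch.\,VII, 14--17]{Serre1}'', so your elaboration of the torsor/line-bundle and factor-system identifications already goes well beyond what the paper itself supplies.
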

\begin{proof} See Serre \cite[p.\,163]{Serre1} and \cite[Ch.\,VII, 14-17]{Serre1}.
 
\end{proof}

\begin{lemma} \label{91} Let
$$\begin{xy}                                                    
  \xymatrix{                                                                                                                                                                                                                                                                                                                                                                                                                                                           
0 \ar[r] & {\rm{L}} \ar[r]^{j_1} \ar[d]^{w_{{\rm{L}}}} & {\rm{G}}_1 \ar[r]^{\pi_1} \ar[d]^{w} & \rm{A} \ar[r] \ar[d]^{id.} & 0\\
0 \ar[r] & {\rm{L}} \ar[r]^{j_2}            & {\rm{G}}_2 \ar[r]^{\pi_2}              & \A \ar[r] & 0 
}
\end{xy}$$
be a commutative diagram of extensions in ${\rm{Ext}}(\A, {\rm{L}})$ and assume that $w$ is an isogeny.
\begin{enumerate}
 \item If ${{\rm{L}}} = \mathbb{G}_{a,F}^k$ with a $k \geq 1$, then $w$ is an isomorphism.
\item If ${{\rm{L}}} = \mathbb{G}_{m,F}$, then $\pm \big(\deg\,w_{\LL}\big)\cdot [{\rm{G}}_1] = [{\rm{G}}_2] \in {\rm{Ext}}(\rm{A}, {\rm{L}})$.
\end{enumerate}
\end{lemma}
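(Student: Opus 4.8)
The statement concerns a commutative diagram of extensions
$$0 \longrightarrow {\rm{L}} \stackrel{j_1}{\longrightarrow} {\rm{G}}_1 \stackrel{\pi_1}{\longrightarrow} \rm{A} \longrightarrow 0, \qquad 0 \longrightarrow {\rm{L}} \stackrel{j_2}{\longrightarrow} {\rm{G}}_2 \stackrel{\pi_2}{\longrightarrow} \rm{A} \longrightarrow 0$$
joined by an isogeny $w: {\rm{G}}_1 \longrightarrow {\rm{G}}_2$ lying over $id_{\rm{A}}$ and restricting to $w_{{\rm{L}}}$ on ${\rm{L}}$, and it asks us to control $w$ in the two cases ${\rm{L}} = \mathbb{G}_{a,F}^k$ and ${\rm{L}} = \mathbb{G}_{m,F}$. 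The common mechanism is the snake-type observation that $ker\,w$ is a finite subgroup of ${\rm{G}}_1$ contained in $ker\,\pi_1 = j_1({\rm{L}})$, since $\pi_2 \circ w = \pi_1$ and $w$ is an isogeny with $\pi_1, \pi_2$ surjective with the same kernel dimension. So $ker\,w$ identifies with a finite subgroup of ${\rm{L}}$, and in fact with $ker\,w_{{\rm{L}}}$, because $w_{{\rm{L}}}$ is the restriction of $w$ to the preimage of the identity.

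\textbf{Case ${\rm{L}} = \mathbb{G}_{a,F}^k$.} Here the plan is to observe that $w_{{\rm{L}}}$ is an isogeny of $\mathbb{G}_{a,F}^k$, hence injective: the only finite subgroup scheme of $\mathbb{G}_{a,F}^k$ over a field of characteristic zero is trivial (multiplication by an integer is an automorphism, and any endomorphism with finite kernel in characteristic zero is injective). Therefore $ker\,w_{{\rm{L}}}$ is trivial, so $ker\,w$ is trivial, so the isogeny $w$ is injective, hence an isomorphism of algebraic groups. First I would make precise that $ker\,w \subset ker\,\pi_1$ and that the induced inclusion $ker\,w \hookrightarrow {\rm{L}}$ coincides with $ker\,w_{{\rm{L}}}$; then the vanishing of finite subgroups of $\mathbb{G}_{a,F}^k$ finishes it. No real obstacle here; it is essentially a five-lemma argument plus the structure of $\mathbb{G}_a$ in characteristic zero.

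\textbf{Case ${\rm{L}} = \mathbb{G}_{m,F}$.} Now $w_{{\rm{L}}}$ is an isogeny of $\mathbb{G}_{m,F}$, hence multiplication by some non-zero integer $n$, with $|n| = \deg\,w_{{\rm{L}}}$; I would fix the sign so that $w_{{\rm{L}}} = [n]_{\mathbb{G}_{m,F}}$. The plan is to translate the diagram into $\rm{Ext}$-functoriality: the left square says that $w$ realizes the map ${\rm{Ext}}_{w_{{\rm{L}}}}$, i.e. pushout along $w_{{\rm{L}}} = [n]$, while the right square (with $\pi_2 \circ w = \pi_1$ over $id_{\rm{A}}$) says $w$ also realizes a pullback along $id_{\rm{A}}$, which is trivial. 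Combining the two functoriality relations for ${\rm{Ext}}$ recalled before Lemma \ref{91} (namely ${\rm{Ext}}_u[{\rm{G}}_1] = {\rm{Ext}}^v[{\rm{G}}_2]$ for a commutative diagram of extensions), one gets ${\rm{Ext}}_{[n]}[{\rm{G}}_1] = [{\rm{G}}_2]$ in ${\rm{Ext}}(\rm{A}, \mathbb{G}_{m,F})$. Finally, under the canonical identification ${\rm{Ext}}(\rm{A}, \mathbb{G}_{m,F}) \cong {\rm{Pic}}^o(\rm{A}) = {\rm{A}}^{\ast}(F)$ from the second part of the ${\rm{Ext}}$-lemma, pushout along $[n]_{\mathbb{G}_{m,F}}$ corresponds to multiplication by $n$ on ${\rm{A}}^{\ast}$; since ${\rm{A}}^{\ast}(F)$ is a commutative group written additively, ${\rm{Ext}}_{[n]}[{\rm{G}}_1] = n\cdot[{\rm{G}}_1]$. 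Hence $n\cdot[{\rm{G}}_1] = [{\rm{G}}_2]$, i.e. $\pm(\deg\,w_{{\rm{L}}})\cdot[{\rm{G}}_1] = [{\rm{G}}_2]$, as claimed.

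\textbf{Main obstacle.} The delicate point is the bookkeeping in the second case: one must check carefully that the map "pushout along $[n]: \mathbb{G}_{m} \to \mathbb{G}_{m}$'' on $\rm{Ext}$ corresponds to "multiplication by $n$'' on ${\rm{A}}^{\ast}$ (rather than, say, by $n^2$ or by a dual automorphism), and that the diagram really forces ${\rm{Ext}}_{w_{{\rm{L}}}}[{\rm{G}}_1] = [{\rm{G}}_2]$ with no extra pullback term along $\rm{A}$. This is where the $\pm$ sign enters, reflecting the freedom in choosing $w_{{\rm{L}}} = [\pm\deg\,w_{{\rm{L}}}]$. I would handle this by going back to the rational-factor-system description: pick a rational section $\sigma_1$ of $\pi_1$, push it through $w$ to get a rational section $w\circ\sigma_1$ of $\pi_2$, compare the associated cocycles $f_{w\circ\sigma_1}$ and $w_{{\rm{L}}}\circ f_{\sigma_1}$, and read off the class identity directly in ${{\rm{H}}}^2_{rat}(\rm{A}, \mathbb{G}_{m,F})_s$, then transport to ${\rm{A}}^{\ast}$ via the isomorphism $\chi$. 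Everything else is routine diagram chasing.
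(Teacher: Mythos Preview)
Your proposal is correct and follows essentially the same approach as the paper. For Statement 1 the paper also reduces to $w_{\rm L}$ being an isomorphism of $\mathbb{G}_{a,F}^k$ and then invokes a five-lemma argument, and for Statement 2 the paper carries out precisely the factor-system computation you outline at the end: it pushes a rational section $\sigma_1$ of $\pi_1$ through $w$ to obtain a section of $\pi_2$, observes $w_{\rm L}\circ f_{\sigma_1}=f_{w\circ\sigma_1}$, and uses that $w_{\rm L}=[k]_{\mathbb{G}_{m,F}}$ for some nonzero integer $k$. Your detour through the ${\rm Ext}_{w_{\rm L}}$/${\rm Ext}^{id}$ formalism is a harmless repackaging of the same idea.
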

\begin{proof} Note that $w_{{\rm{L}}}$ is an isogeny. If ${{\rm{L}}} = \mathbb{G}_{a,F}^k$, then $w_{{\rm{L}}}$ is an isomorphism. Writing out two exact sequences one checks that $w$ must be bijective. This is the first statement.\\
To prove Statement 2., choose a rational section $\sigma_1: A \dashrightarrow {\rm{G}}_1$. Then $\sigma_2 = v \circ \sigma_1$ defines a rational section
of $\pi_2$, and $w_{\LL}\circ f_{\sigma_1} = f_{\sigma_2}$. But $\LL = \mathbb{G}_{m,F}$, so that there exists an integer $k \neq 0$ such that $w_{\LL} = [k]_{\LL}$.
\end{proof}
\begin{lemma} \label{92} Let $i = 1,2$ and let $w: {\rm{G}}_2 \longrightarrow {\rm{G}}_1$ be an isogeny of fiber spaces associated to the extensions in ${\rm{Ext}}({{{\rm{A}}}}_{i}, L)$.
\begin{enumerate}
 \item There is a commutative diagram
$$\begin{xy} 
  \xymatrix{ 
0 \ar[r] & {{\rm{L}}} \ar[r]^{j_1} \ar[d]^{w_{\LL}} & {\rm{G}}_1 \ar[r]^{\pi_1} \ar[d]^{w} & {{{\rm{A}}}}_1 \ar[r] \ar[d]^{v} & 0\\
0 \ar[r] & {{\rm{L}}} \ar[r]^{j_2}            & {\rm{G}}_2 \ar[r]^{\pi_2}              & {{{\rm{A}}}}_2 \ar[r] & 0 
}
\end{xy}$$
such that $v$ is an isogeny.
\item If ${{\rm{L}}} = \mathbb{G}_{a,F}$, then the fiber space of the extension ${{{\rm{h}}} }^{\scr 1}(v)[{\rm{G}}_2]$ is isomorphic to ${\rm{G}}_1$ as an algebraic group.
\item If ${{\rm{L}}} = \mathbb{G}_{m,F}$, then $v^{\ast}[{\rm{G}}_2] = \pm \big(\deg\,w_{{{\rm{L}}}}\big) \cdot [{\rm{G}}_1]$.
\end{enumerate}
\end{lemma}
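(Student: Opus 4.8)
The plan is to reduce the second and third assertions to Lemma \ref{91}, by comparing ${\rm{G}}_1$ with the pullback extension $v^{\ast}[{\rm{G}}_2] = {\rm{G}}_2\times_{{\rm{A}}_2}{\rm{A}}_1$, once the commutative diagram of part 1 is in place. Following the displayed diagram I let $w\colon {\rm{G}}_1\longrightarrow {\rm{G}}_2$ be the given isogeny of fibre spaces (the typographic direction in the statement is immaterial, an inverse isogeny being available up to multiplication by an integer).

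For part 1 the key observation is that $j_i({\rm{L}})$ is the maximal connected linear subgroup of ${\rm{G}}_i$: it is connected and linear, the quotient ${\rm{G}}_i/j_i({\rm{L}})={\rm{A}}_i$ is an abelian variety, and an abelian variety contains no nontrivial connected linear subgroup, so by Chevalley's theorem $j_i({\rm{L}})$ is exactly the maximal linear subgroup of ${\rm{G}}_i$. A homomorphism carries maximal linear subgroups into one another, hence $w(j_1({\rm{L}}))\subseteq j_2({\rm{L}})$; since $\ker w$ is finite the restriction $w_{{\rm{L}}}:=j_2^{-1}\circ w\circ j_1$ is surjective with finite kernel, i.e.\ an isogeny of ${\rm{L}}$. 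Passing to the quotients by $j_i({\rm{L}})$ yields an induced homomorphism $v\colon {\rm{A}}_1\longrightarrow {\rm{A}}_2$, which is surjective because $w$ is and has finite kernel (its kernel is $w^{-1}(j_2({\rm{L}}))/j_1({\rm{L}})$, a quotient of $\ker w$), hence an isogeny. This produces the diagram of part 1, with linear part $w_{{\rm{L}}}$ and abelian part $v$.

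For parts 2 and 3 I form $v^{\ast}[{\rm{G}}_2]=\{(g,a)\in {\rm{G}}_2\times {\rm{A}}_1:\pi_2(g)=v(a)\}$, which is a connected algebraic group — the projection to ${\rm{A}}_1$ is surjective with connected fibres isomorphic to ${\rm{L}}$ — sitting in an extension $0\to {\rm{L}}\to v^{\ast}[{\rm{G}}_2]\to {\rm{A}}_1\to 0$ that represents ${\rm{Ext}}^{v}[{\rm{G}}_2]$; for ${\rm{L}}=\mathbb{G}_{a,F}$ this class is ${\rm h}^{1}(v)[{\rm{G}}_2]$ and for ${\rm{L}}=\mathbb{G}_{m,F}$ it is the dual-isogeny pullback $v^{\ast}[{\rm{G}}_2]$, in the sense of the preceding lemma. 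Since $\pi_2\circ w=v\circ\pi_1$ by part 1, the universal property of the fibre product gives $\tilde w\colon {\rm{G}}_1\longrightarrow v^{\ast}[{\rm{G}}_2]$, $g\mapsto(w(g),\pi_1(g))$, which fits in a commutative diagram of extensions of ${\rm{A}}_1$ by ${\rm{L}}$ with identity on ${\rm{A}}_1$ and linear part $w_{{\rm{L}}}$; furthermore $\tilde w$ is an isogeny, as $\ker\tilde w\subseteq\ker w$ is finite and $\dim {\rm{G}}_1=\dim v^{\ast}[{\rm{G}}_2]$ forces surjectivity onto the connected group $v^{\ast}[{\rm{G}}_2]$. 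Applying Lemma \ref{91}: when ${\rm{L}}=\mathbb{G}_{a,F}$, $w_{{\rm{L}}}$ is an isomorphism (an isogeny of $\mathbb{G}_{a,F}$ in characteristic zero is $x\mapsto cx$ with $c\neq0$), so Lemma \ref{91}(1) makes $\tilde w$ an isomorphism and the fibre space of ${\rm h}^{1}(v)[{\rm{G}}_2]$ is isomorphic (in fact as an extension) to ${\rm{G}}_1$; when ${\rm{L}}=\mathbb{G}_{m,F}$, Lemma \ref{91}(2) applied to $\tilde w$ gives $\pm(\deg w_{{\rm{L}}})\cdot[{\rm{G}}_1]=[v^{\ast}[{\rm{G}}_2]]=v^{\ast}[{\rm{G}}_2]$ in ${\rm{Ext}}({\rm{A}}_1,\mathbb{G}_{m,F})$, which is assertion 3.

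The only substantive point is the structural claim of part 1 — that an isogeny of fibre spaces automatically respects the linear/abelian decomposition — plus the routine check that the comparison map $\tilde w$ to the pullback is again an isogeny with the same linear part $w_{{\rm{L}}}$; after that the conclusions are immediate from Lemma \ref{91} and from the identification of ${\rm{Ext}}^{v}$ with ${\rm h}^{1}(v)$ (for $\mathbb{G}_{a,F}$) resp.\ with the dual isogeny $v^{\ast}$ (for $\mathbb{G}_{m,F}$) established in the preceding lemma. I do not anticipate a real obstacle; the care required is in keeping the directions of the isogenies and of ${\rm{Ext}}^{v}$ consistent and in noting that $v^{\ast}[{\rm{G}}_2]$ is connected precisely because $v$ is an isogeny.
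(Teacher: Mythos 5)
Your proof is correct and takes essentially the same route as the paper: both arguments produce the isogeny $v$ on the abelian quotients and then factor $w$ through the pullback extension ${\rm{Ext}}^{v}[{\rm{G}}_2]$ over the identity of ${\rm{A}}_1$, so that Statements 2 and 3 drop out of Lemma \ref{91} together with the identifications of ${\rm{Ext}}^{v}$ with ${\rm{h}}^{\scriptscriptstyle 1}(v)$ resp.\ $v^{\ast}$. The only cosmetic difference is in Statement 1, where the paper obtains $v$ from the Stein factorization of $\pi_2 \circ w$ and the uniqueness of the extension structure on ${\rm{G}}_1$, while you argue directly with the maximal connected linear subgroup via Chevalley --- two phrasings of the same fact.
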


\begin{proof} Let $\pi: {\rm{G}}_1 \longrightarrow {{\rm{A}}}$ be the Stein factorization of $\pi_2 \circ w$. Then $\pi$ defines an extension in ${\rm{Ext}}(\A, \LL)$. However, ${\rm{G}}_1$ is an extension in one unique
way up to isomorphism. Hence, on the level of isomorphism classes we have $\pi = \pi_1$. This yields the first statement. Moreover, we get a factorization
$$\begin{xy} 
  \xymatrix{ 
{\rm{G}}_1  \ar[r]^{\overline{w}\,\,\,\,\,\,\,\,\,\,\,}\ar[d]^{\pi_1} & {\rm{Ext}}^v[{\rm{G}}_2] \ar[d]^{pr_{{{{\rm{A}}}}_1}} \ar[r] & {\rm{G}}_2 \ar[d]^{\pi_2}\\
{{{\rm{A}}}}_1 \ar[r]^{id.} & {{{\rm{A}}}}_1 \ar[r]^{v} & {{{\rm{A}}}}_2.
}
\end{xy}$$
The last two claims thus follow from the previous two lemmas.
\end{proof}
\begin{lemma} \label{93} Let 
$$\begin{xy} 
  \xymatrix{ 
0 \ar[r] & {\rm{L}} \ar[r]^{j_1} \ar[d]^{w_{{\rm{L}}}} & {\rm{G}}_1 \ar[r]^{\pi_1} \ar[d]^{w} & \A \ar[r] \ar[d]^{id.} & 0\\
0 \ar[r] & {\rm{L}} \ar[r]^{j_2}            & {\rm{G}}_2 \ar[r]^{\pi_2}              & \A \ar[r] & 0 
}
\end{xy}$$
be a commutative diagram of extensions with $w$ an isomorphism.
\begin{enumerate}
\item If ${\rm{L}} = \mathbb{G}_{a,F}$, then $[{\rm{G}}_1] \in [{\rm{G}}_2]F^{\ast}$ where we use the structure\\ of a vector space on ${{{\rm{H}}} }^{\scr 1}(A, \mathcal{O}_A)$.
\item If ${\rm{L}} = \mathbb{G}_{m,F}$, then $[{\rm{G}}_1] = \pm [{\rm{G}}_2]$.
\end{enumerate}
\end{lemma}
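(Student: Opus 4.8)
The statement is essentially the degenerate case of Lemma \ref{92}, specialized to the situation where $\A_1 = \A_2 = \A$ and the base-level isogeny $v$ is the identity (so that $w$ is forced to be not merely an isogeny but an isomorphism of fiber spaces covering $\mathrm{id}_{\A}$). The plan is to reduce directly to the two preceding lemmas rather than redo the cocycle computation from scratch. First I would fix a rational section $\sigma_2: \A \dashrightarrow {\rm{G}}_2$ and pull it back along $w$: since $w$ is an isomorphism covering $\mathrm{id}_{\A}$, the composite $\sigma_1 = w^{-1} \circ \sigma_2$ is a rational section of $\pi_1$. A short computation with the factor systems gives $w_{{\rm{L}}} \circ f_{\sigma_1} = f_{\sigma_2}$, exactly as in the proof of Lemma \ref{91}, Statement 2. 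This identity is the single nontrivial input, and everything else is reading off what it says in the two cases.

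For Statement 1, ${\rm{L}} = \mathbb{G}_{a,F}$: here $w_{{\rm{L}}}$ is an isogeny of $\mathbb{G}_{a,F}$, hence multiplication by a nonzero scalar $c \in F^{\ast}$ (every endomorphism of $\mathbb{G}_{a,F}$ over a field of characteristic zero is scalar multiplication, and an isogeny means $c \neq 0$). Then $f_{\sigma_2} = c \cdot f_{\sigma_1}$ as factor systems, and passing to classes in ${{{\rm{H}}} }^{\scr 1}(\A, \mathcal{O}_{\A})$ — which by Lemma \ref{lp333}-type identification (more precisely the canonical identification ${\rm{Ext}}(\A, \mathbb{G}_{a,F}) \cong {{{\rm{H}}} }^{\scr 1}(\A, \mathcal{O}_{\A})$ from the first part of this appendix) carries the $F$-vector space structure — yields $[{\rm{G}}_2] = c \cdot [{\rm{G}}_1]$, i.e. $[{\rm{G}}_1] \in [{\rm{G}}_2] F^{\ast}$. (Note $c \neq 0$ is exactly what guarantees $[{\rm{G}}_1]$ and $[{\rm{G}}_2]$ differ by a \emph{unit}, not merely that one is a multiple of the other.)

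For Statement 2, ${\rm{L}} = \mathbb{G}_{m,F}$: now $w_{{\rm{L}}}$ is an isogeny of $\mathbb{G}_{m,F}$, hence $w_{{\rm{L}}} = [k]_{{\rm{L}}}$ for some integer $k \neq 0$, and moreover since $w$ is an isomorphism its inverse $w^{-1}$ provides an isogeny in the other direction with $w_{{\rm{L}}} \circ (w^{-1})_{{\rm{L}}} = \mathrm{id}$, forcing $k = \pm 1$. Then $f_{\sigma_2} = [k]_{{\rm{L}}} \circ f_{\sigma_1}$, and under the identification ${\rm{Ext}}(\A, \mathbb{G}_{m,F}) \cong \mathrm{Pic}^o(\A)$ of the first part of this appendix this reads $[{\rm{G}}_2] = k[{\rm{G}}_1] = \pm[{\rm{G}}_1]$, which is the assertion.

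\textbf{Main obstacle.} There is no serious obstacle; the only point that needs a little care is the bookkeeping in the $\mathbb{G}_{a}$-case to see that the scalar is a \emph{unit} of $F$ and not zero — this is where the hypothesis that $w$ (equivalently $w_{{\rm{L}}}$) is an \emph{isomorphism} rather than a general isogeny is used, and it is the reason the conclusion is stated with $F^{\ast}$. One should also double-check that the identity $w_{{\rm{L}}} \circ f_{\sigma_1} = f_{\sigma_2}$ is compatible with the sign conventions in the identifications ${\rm{Ext}}(\A, \mathbb{G}_{a,F}) \cong {{{\rm{H}}} }^{\scr 1}(\A, \mathcal{O}_{\A})$ and ${\rm{Ext}}(\A, \mathbb{G}_{m,F}) \cong \mathrm{Pic}^o(\A)$ used in Serre \cite{Serre1}; since the first statement only claims membership in a coset of $F^{\ast}$ and the second allows a sign, no delicate sign-chasing is actually required.
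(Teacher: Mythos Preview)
Your proof is correct and follows essentially the same approach as the paper: both arguments identify $w_{{\rm{L}}}$ (scalar multiplication by $c\in F^\ast$ in the $\mathbb{G}_a$-case, $[\pm 1]$ in the $\mathbb{G}_m$-case) and then read off the effect on the extension class via rational symmetric factor systems. The paper phrases the $\mathbb{G}_m$-case slightly differently, observing that $w_{{\rm{L}}}$ is either the identity or the inversion $i$ and then citing Serre \cite[p.\,164]{Serre1} for $[({\rm{G}}_1,j_1,\pi_1)]=-[({\rm{G}}_1,j_1\circ i,\pi_1)]$, but this is the same computation you carry out with $f_{\sigma_2}=[k]_{{\rm{L}}}\circ f_{\sigma_1}$.
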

\begin{proof} We know that $w$ is determined by $w_{{{\rm{L}}}}$ up to isomorphism. We start by proving the first statement. Using the identification $\mathbb{G}_a(F) = F$,
$w_{{\rm{L}}}$ is multiplication with a number $c \in F^{\ast}$. Representing ${\rm{G}}_1$ by a rational symmetric factor system, we receive the first claim.\\
If ${{\rm{L}}} = \mathbb{G}_{m,F}$, then $w_{{\rm{L}}}$ is either the identity or the inverse morphism $i:{\rm{L}} \longrightarrow {\rm{L}}$. By Serre \cite[p.\,164]{Serre1}
we have $[({\rm{G}}_1, j_1, \pi_1)] = -[({\rm{G}}_1, j_1 \circ i, \pi_1)]$. This shows the second statement.
\end{proof}

\section{Conjugate extensions and real fields of definition} 
We denote by $F/K$ a Galois extension of fields. We assume that $F$ is algebraically closed and fix an element $h \in Gal(F|K)$. Let ${{\rm{A}}}$ be an abelian variety and ${\rm{L}}$ be a linear group over $F$. If $({\rm{G}}, j, \pi) $ is an extension representing an element in $\rm{Ext}(\rm{A}, {\rm{L}})$, then the \textit{h-conjugate extension} is
$({\rm{G}}^{\scriptscriptstyle h}, j^{\scriptscriptstyle h}, \pi^{\scriptscriptstyle h})$ and represents an element in ${\rm{Ext}}({{\rm{A}}}^{\scriptscriptstyle h}, {\rm{L}}^{\scriptscriptstyle h})$. \begin{lemma} \label{90} The homomorphism $h$ defines an isomorphism of groups $${\rm{Ext}}(h):{\rm{Ext}}({\rm{A}}, {{\rm{L}}}) \longrightarrow {\rm{Ext}}({{\rm{A}}}^{\scriptscriptstyle h}, {{\rm{L}}}^{\scriptscriptstyle h})$$
given by ${\rm{Ext}}(h)[\G] = [\G]^{\scr h} = [\G^{\scr h}]$.
\end{lemma}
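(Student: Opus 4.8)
The statement is a routine functoriality check: conjugation by $h$ sends an extension of $\rm{A}$ by $\rm{L}$ to an extension of $\rm{A}^{\scriptscriptstyle h}$ by $\rm{L}^{\scriptscriptstyle h}$, and this operation is a group isomorphism. The plan is to verify three things in turn: (i) that $({\rm{G}}^{\scriptscriptstyle h}, j^{\scriptscriptstyle h}, \pi^{\scriptscriptstyle h})$ is genuinely an exact sequence, so that ${\rm{Ext}}(h)$ is well-defined as a map of sets; (ii) that the map respects the group law on $\rm{Ext}$; and (iii) that it is bijective, with inverse ${\rm{Ext}}(h^{\scriptscriptstyle -1})$. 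None of these steps should present a real obstacle; the only thing to be careful about is bookkeeping with conjugate morphisms and the identification of the group law on $\rm{Ext}$ via symmetric factor systems.

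For step (i), I would recall from Sect.\,3.1 that conjugation of varieties and morphisms is an exact, fully faithful functor from $\rm{Var}_F$ to $\rm{Var}_{F^{\scriptscriptstyle h}}$ (it is the identity on underlying schemes, only the structure morphism to $\mbox{spec}\,F$ is twisted by $h$), and that it carries group varieties to group varieties and group homomorphisms to group homomorphisms, since $\mu^{\scriptscriptstyle h}, i^{\scriptscriptstyle h}, e^{\scriptscriptstyle h}$ define the group law on ${\rm{G}}^{\scriptscriptstyle h}$. Applying this functor to the short exact sequence $0 \to {\rm{L}} \xrightarrow{j} {\rm{G}} \xrightarrow{\pi} {\rm{A}} \to 0$ yields $0 \to {\rm{L}}^{\scriptscriptstyle h} \xrightarrow{j^{\scriptscriptstyle h}} {\rm{G}}^{\scriptscriptstyle h} \xrightarrow{\pi^{\scriptscriptstyle h}} {\rm{A}}^{\scriptscriptstyle h} \to 0$: injectivity of $j^{\scriptscriptstyle h}$, surjectivity of $\pi^{\scriptscriptstyle h}$, and $\ker \pi^{\scriptscriptstyle h} = \mbox{im}\,j^{\scriptscriptstyle h}$ all follow because these are scheme-theoretic statements unchanged by the twist, or by full faithfulness of the conjugation functor. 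Also, since ${\rm{A}}$ is an abelian variety, ${\rm{A}}^{\scriptscriptstyle h}$ is one as well, and conjugating a linear group gives a linear group, so the target really is ${\rm{Ext}}({\rm{A}}^{\scriptscriptstyle h}, {\rm{L}}^{\scriptscriptstyle h})$. It should also be checked that $h$-conjugation respects isomorphism of extensions, so the map descends to isomorphism classes — but this is immediate from applying the conjugation functor to the defining commutative diagram.

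For step (ii), I would use the isomorphism $\chi \colon {\rm{Ext}}({\rm{A}}, {\rm{L}}) \to {\rm{H}}^{\scriptscriptstyle 2}_{rat}({\rm{A}}, {\rm{L}})_s$ from Lemma B.1.1. Given an extension ${\rm{G}}$ with a rational section $\sigma \colon {\rm{A}} \dashrightarrow {\rm{G}}$ and associated factor system $f_{\sigma}(\xi_1,\xi_2) = \sigma(\xi_1+\xi_2) - \sigma(\xi_1) - \sigma(\xi_2)$, the conjugate $\sigma^{\scriptscriptstyle h} \colon {\rm{A}}^{\scriptscriptstyle h} \dashrightarrow {\rm{G}}^{\scriptscriptstyle h}$ is a rational section of $\pi^{\scriptscriptstyle h}$, and by (3.1.1) its factor system is exactly $(f_{\sigma})^{\scriptscriptstyle h}$. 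Since conjugation of rational maps is additive (it is a ring/group isomorphism on sections, cf.\,Sect.\,3.1), the induced map on ${\rm{H}}^{\scriptscriptstyle 2}_{rat}$ is a group homomorphism, and hence so is ${\rm{Ext}}(h)$ under the identification $\chi$. Alternatively, and perhaps more cleanly, one can note that the Baer sum of two extensions is built from fibre products and quotients by algebraic subgroups, all of which commute with the conjugation functor; so ${\rm{Ext}}(h)[{\rm{G}}_1 + {\rm{G}}_2] = {\rm{Ext}}(h)[{\rm{G}}_1] + {\rm{Ext}}(h)[{\rm{G}}_2]$.

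For step (iii), bijectivity follows because conjugation by $h^{\scriptscriptstyle -1}$ provides a two-sided inverse: $({\rm{G}}^{\scriptscriptstyle h})^{\scriptscriptstyle h^{\scriptscriptstyle -1}}$ is canonically identified with ${\rm{G}}$ (the underlying scheme is unchanged and the structure morphism is twisted by $h^{\scriptscriptstyle -1} \circ h = \mbox{id}$), compatibly with $j$, $\pi$; so ${\rm{Ext}}(h^{\scriptscriptstyle -1}) \circ {\rm{Ext}}(h) = \mbox{id}$ on ${\rm{Ext}}({\rm{A}},{\rm{L}})$ and symmetrically ${\rm{Ext}}(h) \circ {\rm{Ext}}(h^{\scriptscriptstyle -1}) = \mbox{id}$ on ${\rm{Ext}}({\rm{A}}^{\scriptscriptstyle h}, {\rm{L}}^{\scriptscriptstyle h})$. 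The main "obstacle" — really just the only point needing care — is making sure the conjugation of the rational section and its factor system is handled correctly, i.e.\,that $f_{\sigma^{\scriptscriptstyle h}} = (f_{\sigma})^{\scriptscriptstyle h}$; this is where one should invoke (3.1.1) explicitly rather than wave hands. I would keep the write-up short, citing Sect.\,3.1 for the functoriality of conjugation and Lemma B.1.1 for the factor-system description, and present the inverse explicitly.
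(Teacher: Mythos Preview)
Your proposal is correct and follows the same approach as the paper: both arguments hinge on the factor-system isomorphism $\chi$ and the observation that $(f_\sigma)^{\scriptscriptstyle h} = f_{\sigma^{\scriptscriptstyle h}}$, so that $(\chi[\G])^{\scriptscriptstyle h} = \chi[\G^{\scriptscriptstyle h}]$. The paper compresses everything into that single line and leaves well-definedness and bijectivity implicit, whereas you spell out steps (i) and (iii) explicitly; but the core idea is identical.
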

\begin{proof} With notations as preceding Lemma C.1.1 we have
$$\big(\chi[\G]\big)^{\scr h} = \big(\mbox{class of}\,f_{\sigma}\big)^{\scr h}
= \mbox{class of}\,f_{\sigma}^{\scr h} = \chi[\G^{\scr h}].$$
The lemma follows. 
\end{proof}
We assume now that the extension $F/K$ has degree $[F:K] = 2$ and denote by $h$ the generator of $Gal(F|K)$. It is still presupposed that $F$ is algebraically closed. We treat the question when an extension of ${{\rm{A}}}$ by $\mathbb{G}_{m,F}$ is isogenous to its conjugate. This problem is, of course,
only reasonable if ${{\rm{A}}}$ is isogenous to ${{\rm{A}}}^{\scriptscriptstyle h}$. We will in fact consider the slightly easier problem when ${{\rm{A}}}$ is the extension to scalars of an abelian variety ${{\rm{A}}}'$ over $K$. 
\begin{prop} \label{98} Let $\pi: {\rm{G}} \longrightarrow \A$ be an extension of $\A = \A' \otimes_{K} F$ by $\mathbb{G}_{m,F}$.
If there exists an isogeny $w: {{{\rm{H}}} }' \otimes_K F \longrightarrow {\rm{G}}$ of algebraic groups over $F$ such that $\pi_{\ast}(w)$ is defined over $K$, then $[{\rm{G}}] + [{\rm{G}}^{\scriptscriptstyle h}]$ or $[{\rm{G}}] - [{\rm{G}}^{\scriptscriptstyle h}]$ is a torsion element in ${\rm{Ext}}({\rm{A}}, \mathbb{G}_{m,F}) = \rm{Pic}^o(\rm{A})$.
 
\end{prop}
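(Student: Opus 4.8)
The plan is to exploit the hypothesis that $\pi_{\ast}(w)$ is defined over $K$ in order to compare the extension $\rm{G}$ with its conjugate $\rm{G}^{\scriptscriptstyle h}$ through a single isogeny that is itself $h$-equivariant, and then to read off the torsion statement from the functoriality of $\rm{Ext}(\rm{A}, \mathbb{G}_{m,F}) = \rm{Pic}^o(\rm{A})$ recorded in Lemma C.1.2, Lemma C.1.4 and Lemma C.1.5. Write $v = \pi_{\ast}(w) = \pi \circ w : {{\rm{H}}}' \otimes_K F \longrightarrow \rm{A}$. First I would observe that $v$ defined over $K$ means $v = v^{\scriptscriptstyle h}$ (Lemma \ref{cl1}), and since $\rm{A} = \rm{A}' \otimes_K F$ we have $\rm{A} = \rm{A}^{\scriptscriptstyle h}$; so $v$ is an isogeny from ${{\rm{H}}}' \otimes_K F$ onto $\rm{A}$ which is its own conjugate. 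Taking Stein factorizations I may assume ${{\rm{H}}}'$ is connected, and since $v$ is an isogeny, ${{\rm{H}}}' \otimes_K F$ is a connected commutative group variety with a finite-kernel surjection to the abelian variety $\rm{A}$; hence ${{\rm{H}}}' \otimes_K F$ is itself an extension of $\rm{A}$ (up to isogeny) — but more usefully, $\rm{G}$ and $\rm{G}^{\scriptscriptstyle h}$ both receive isogenies from ${{\rm{H}}}' \otimes_K F$, namely $w$ and $w^{\scriptscriptstyle h}$.

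Next I would set up the diagram. Conjugating $w: {{\rm{H}}}' \otimes_K F \longrightarrow \rm{G}$ gives $w^{\scriptscriptstyle h}: ({{\rm{H}}}' \otimes_K F)^{\scriptscriptstyle h} \longrightarrow \rm{G}^{\scriptscriptstyle h}$, and ${{\rm{H}}}' \otimes_K F$ coincides with its conjugate (Subsect.\,3.2.3), so $w^{\scriptscriptstyle h}$ is again an isogeny from ${{\rm{H}}}' \otimes_K F$ onto $\rm{G}^{\scriptscriptstyle h}$. Because $\pi \circ w = v = v^{\scriptscriptstyle h} = \pi^{\scriptscriptstyle h} \circ w^{\scriptscriptstyle h}$ and $\rm{G}, \rm{G}^{\scriptscriptstyle h}$ are both fiber spaces of extensions of $\rm{A}$ by $\mathbb{G}_{m,F}$ over the \emph{same} base $\rm{A}$ with compatible projections, I may apply Lemma \ref{92} (with $\rm{A}_1 = \rm{A}_2 = \rm{A}$, hence the base isogeny $v$ being the identity on isomorphism classes, or rather an isogeny of $\rm{A}$ that I can split off): there is a commutative ladder relating the two extensions through ${{\rm{H}}}' \otimes_K F$ with an isogeny $w_{\mathbb{G}_m}$ of $\mathbb{G}_{m,F}$ on the kernels. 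The point is that the torus $\mathbb{G}_{m,F}$ is its own conjugate and every isogeny of it is $[k]$ for some integer $k$, so Lemma \ref{92}(3), or rather Lemma \ref{93} applied after equalizing the base, yields $v^{\ast}[\rm{G}^{\scriptscriptstyle h}] = \pm(\deg w_{\mathbb{G}_m})\,[\rm{G}]$ in $\rm{Pic}^o(\rm{A})$, and similarly $v^{\ast}[\rm{G}] = \pm(\deg w_{\mathbb{G}_m})\,[\rm{G}^{\scriptscriptstyle h}]$ since $w^{\scriptscriptstyle h}$ has the same degree on the kernel. Subtracting (or adding, according to the sign that occurs) the two relations over $\rm{A}$ and using that $v^{\ast}: \rm{Pic}^o(\rm{A}) \to \rm{Pic}^o({{\rm{H}}}')$ has finite kernel (it is the dual of an isogeny, hence an isogeny of abelian varieties by Lemma \ref{92}(1) applied to the multiplicative case), I get that some nonzero integer multiple of $[\rm{G}] \pm [\rm{G}^{\scriptscriptstyle h}]$ vanishes, i.e.\ $[\rm{G}] + [\rm{G}^{\scriptscriptstyle h}]$ or $[\rm{G}] - [\rm{G}^{\scriptscriptstyle h}]$ is torsion in $\rm{Pic}^o(\rm{A})$.

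To make the sign bookkeeping precise I would argue as follows. The conjugation map $\rm{Ext}(h): \rm{Ext}(\rm{A}, \mathbb{G}_{m,F}) \to \rm{Ext}(\rm{A}^{\scriptscriptstyle h}, \mathbb{G}_{m,F}^{\scriptscriptstyle h}) = \rm{Ext}(\rm{A}, \mathbb{G}_{m,F})$ of Lemma \ref{90} is a group automorphism sending $[\rm{G}]$ to $[\rm{G}^{\scriptscriptstyle h}]$; call it $\theta$. The relation produced above says $v^{\ast}\theta[\rm{G}] = \pm N v^{\ast}[\rm{G}]$ for $N = \deg w_{\mathbb{G}_m} \geq 1$ — but I must be careful: the base isogeny coming out of Lemma \ref{92} need not be the identity, so I should first replace $v$ by $v \circ u$ for a suitable isogeny $u$ of $\rm{A}$ (defined over $K$, e.g.\ a multiplication $[m]_{\rm{A}}$) so that the outer square closes, after which $v^{\ast}$ is still an isogeny of $\rm{Pic}^o$. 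Then $v^{\ast}(\theta[\rm{G}] \mp N[\rm{G}]) = 0$, so $\theta[\rm{G}] \mp N[\rm{G}]$ lies in the finite group $\ker v^{\ast}$, hence is torsion. When $N = 1$ this is exactly $[\rm{G}] - [\rm{G}^{\scriptscriptstyle h}]$ or $[\rm{G}] + [\rm{G}^{\scriptscriptstyle h}]$ being torsion; when $N > 1$, apply $\theta$ once more (it is an involution: $\theta^2 = \mathrm{id}$ since $[F:K]=2$) to get $[\rm{G}] \mp N\theta[\rm{G}]$ torsion as well, and combine the two relations to eliminate $N$: $(1 - N^2)[\rm{G}]$ is torsion, whence $[\rm{G}]$ itself is torsion (as $N^2 \neq 1$), and then trivially so are $[\rm{G}] \pm [\rm{G}^{\scriptscriptstyle h}]$. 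The main obstacle I anticipate is precisely this sign-and-degree bookkeeping — pinning down whether Lemma \ref{92}/Lemma \ref{93} delivers $\theta[\rm{G}] = N[\rm{G}]$ or $\theta[\rm{G}] = -N[\rm{G}]$ on the nose and correctly normalizing the base isogeny so that the Ext-functoriality diagrams actually commute; the abelian-variety input (finiteness of $\ker v^{\ast}$, $\rm{Pic}^o$ being a divisible group so that torsion conclusions are meaningful) is standard and the equivariance of everything under $h$ is guaranteed by the hypothesis that $\pi_{\ast}(w)$ is defined over $K$.
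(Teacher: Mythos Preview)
Your strategy is the paper's strategy, but there is a real gap in the execution: the map $v=\pi\circ w$ goes from ${\rm H}={\rm H}'\otimes_K F$ to $\rm A$, and ${\rm H}$ is not an abelian variety (it is a $\mathbb{G}_m$-extension of one), so ``$v^{\ast}$'' is not a map between $\rm{Pic}^o$'s in the sense of Lemma~C.1.2. Your displayed relation $v^{\ast}[{\rm G}^{\scriptscriptstyle h}]=\pm(\deg w_{\mathbb{G}_m})\,[{\rm G}]$ therefore does not typecheck: the left-hand side would live in $\rm{Pic}^o$ of the base of ${\rm H}$, the right-hand side in $\rm{Pic}^o({\rm A})$. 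The Stein factorization you invoke is used for the wrong purpose (``assume ${\rm H}'$ connected'' --- it already is); what is actually needed is to Stein-factorize $v=\mu\circ p$ with $p:{\rm H}\to{\rm B}$ exhibiting ${\rm H}$ as an element $s\in{\rm Ext}({\rm B},\mathbb{G}_{m,F})=\rm{Pic}^o({\rm B})$ and $\mu:{\rm B}\to{\rm A}$ an isogeny of abelian varieties. This is exactly what the paper does.

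Once ${\rm B}$ and $\mu$ are in place, the paper proceeds by observing that $v$ defined over $K$ forces both $p$ and $\mu$ to be defined over $K$; hence $\mu=\mu^{\scriptscriptstyle h}$ and $s=\pm s^{\scriptscriptstyle h}$ (the latter by Lemma~\ref{93}). Lemma~\ref{92} gives $t:=\mu^{\ast}[{\rm G}]=\pm(\deg w_{\mathbb G_m})\,s$ in $\rm{Pic}^o({\rm B})$, and conjugating yields $\mu^{\ast}[{\rm G}^{\scriptscriptstyle h}]=(\mu^{\ast}[{\rm G}])^{\scriptscriptstyle h}=t^{\scriptscriptstyle h}=\pm t$. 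So $\mu^{\ast}\big([{\rm G}]\pm[{\rm G}^{\scriptscriptstyle h}]\big)=0$, and since $\mu^{\ast}$ is the dual isogeny this forces $[{\rm G}]\pm[{\rm G}^{\scriptscriptstyle h}]$ to be torsion. Note that the integer $\deg w_{\mathbb G_m}$ cancels; your ``$N>1$'' case analysis, and the clever involution trick eliminating $N$, are artifacts of the mis-typed relation and become unnecessary once both sides are correctly placed in $\rm{Pic}^o({\rm B})$.
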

\begin{proof} We set $v = \pi_{\ast}(w) = \pi \circ w$ and let $v': {{\rm{H}}}' \longrightarrow {\rm{A}}'$ be a homomorphism such that $v = v' \otimes_{K} F$. Letting ${\rm{H}} = {\rm{H}}'\otimes_K F$ and ${\rm{A}} = {\rm{A}}'\otimes_K F$, we get a commutative diagram of algebraic groups over $F$
$$\begin{xy} 
  \xymatrix{ 
{\rm{H}} \ar[r]^{{w}} \ar[d]^{{w}^{\scriptscriptstyle h}}\ar[rd]^{p} & {\rm{G}} \ar[rd]^{\pi} & &\\
{\rm{G}}^{\scriptscriptstyle h} \ar[rd]^{\pi^{\scriptscriptstyle h}} & \rm{B}\ar[r]^{\mu} \ar[d]^{\mu^{\scriptscriptstyle h} }& \rm{A}\\
& \rm{A} = {{\rm{A}}}^{\scriptscriptstyle h} &
}
\end{xy}$$
Here $p$ is defined to be ${\rm{St}}(v)$, the Stein factorization of $v = \mu \circ p$. The homomorphism $p$ equals $p' \otimes_K F$ where $p'$ is the Stein factorization ${\rm{St}}(v')$ of $v'$. Hence, the extension $p: \HHH \longrightarrow \rm{B}$ is defined over $K$ and coincides with its conjugate. The assumptions of the proposition imply that also $\pi_{\ast}(w)$ equals its conjugate $\big(\pi_{\ast}(w)\big)^{\scriptscriptstyle h}$. So, 
$$\mu \circ p = \pi \circ w = (\pi \circ w )^{\scriptscriptstyle h} = (\mu \circ p )^{\scriptscriptstyle h} =  \mu ^{\scriptscriptstyle h} \circ p.$$
Consequently, $\mu = \mu^{\scriptscriptstyle h}$. This and Statement (3.1.1) yield
 \begin{equation}
\mu^{\ast}[{\rm{G}}^{\scriptscriptstyle h}] = \big(\mu^{\scr h}\big)^{\ast}[{\rm{G}}^{\scriptscriptstyle h}] = \big(\mu^{\ast}\big)^{\scr h}[{\rm{G}}^{\scriptscriptstyle h}] = \big(\mu^{\ast}\big)^{\scr h}[{\rm{G}}]^{\scriptscriptstyle h} = \big(\mu^{\ast}[{\rm{G}}]\big)^{\scriptscriptstyle h}.
\end{equation}
The homomorphism $p$ defines an extension of ${\rm{B}}$ by $\mathbb{G}_{m,F}$. As seen in Lemma\,C.1.5, $p$ is determined by a point $s \in \rm{Pic}^o({\rm{B}})$ up to multiplication with $-1$. And since $p$ is defined over $K$, we have $p = p^{\scr h}$. So, $s = \pm s^{\scr h}$ by definition of the conjugate extension. Let $t \in \rm{Pic}^o(\rm{A})$ be an element corresponding to the extension $\mu^{\ast}[{\rm{G}}]$.
Lemma \ref{91} and Statement (C.2.1) imply that, for some $k \in \Z$, $t = ks$. So, $t = \pm ks = \pm ks^{\scr h} = \pm t^{\scriptscriptstyle h}$. Consequently, $t + t^{\scr h} = 0$ or $t - t^{\scr h} = 0$. So,
$$\mu^{\ast}[{\rm{G}}] - \mu^{\ast}[{\rm{G}}^{\scriptscriptstyle h}] = 0\,\,\mbox{or}\,\,\mu^{\ast}[{\rm{G}}] + \mu^{\ast}[{\rm{G}}^{\scriptscriptstyle h}] = 0.$$
Since $v$ equals $\mu \circ p$ and as $p$ is the Stein factorization of $v$, the homomorphism $\mu$ is finite. Hence, $\mu$ is an isogeny. It follows that $\mu^{\ast}$ is an isogeny. As a result, $[{\rm{G}}] - [{\rm{G}}^{\scriptscriptstyle h}]$ or $[{\rm{G}}] + [{\rm{G}}^{\scriptscriptstyle h}]$ is a torsion element in ${\rm{Pic}}^o(\rm{A})$.
\end{proof}
In the $\mathbb{G}_a$-case all fiber spaces of extensions along a punctured line $F^{\ast}[{\rm{G}}]$ in ${{{\rm{H}}} }^1({\rm{A}}, \mathcal{O}_{\rm{A}})$ are isomorphic. In the special case when ${{\rm{A}}}$ is an elliptic curve ${\rm{E}}$, which is one focus of our applications,
this means that all fiber spaces of extensions are isomorphic as soon as they are not isotrivial. Hence, in this special case it is not reasonable to ask when an extension is isogenous
to its complex conjugate.

\section{Standard uniformizations of extensions of elliptic curves by $\mathbf{\mathbb{G}_a}$ and $\mathbf{\mathbb{G}_m}$} In this section of the appendix we consider linear extensions of elliptic curves more closely. We stress standard uniformizations which are represented by elliptic Weierstra\ss\,\,functions, because many of our applications rely on them. We assume that $F$ is a subfield of $\C$ and let ${\rm{E}}$ be an elliptic curve over $F$ in Weierstra\ss\,\,form and associated to a lattice $\Lambda$. For the definition of the Weierstra\ss\,\,functions
$\wp(z) = \wp_{\Lambda}(z), \sigma(z) = \sigma_{\Lambda}(z)$ and $\zeta = \zeta_{\Lambda}(z)$ we refer to Waldschmidt \cite{Wal12}. Lemma C.1.2 and the differential of the exponential map ${\rm{exp}}_{\rm{E}} = \big[\wp_{}: \wp_{}': 1 \big]$ imply natural isomorphisms
 ${\rm{Ext}}({{\rm{E}}}, \mathbb{G}_{a,F}) \backsimeq {{{\rm{H}}} }^1({\rm{E}}, \mathcal{O}_{{\rm{E}}}) \backsimeq {\rm{Lie}}\,\rm{E} \backsimeq F$. Similarly, the same lemma together with the canonical polarization $\mathcal{L} = \mathcal{O}_{\rm{E}}(e_{\rm{E}})$ yield canonical isomorphisms ${\rm{Ext}}({\rm{E}}, \mathbb{G}_{m,F}) \backsimeq {{\rm{E}}}^{\ast}(F) \backsimeq {\rm{E}}(F)$. Based on these isomorphisms, the Weierstra\ss\,\,functions allow the following \textit{standard uniformization} of a class in ${\rm{Ext}}({\rm{E}}, \mathbb{G}_{a,F})$ resp.\,in ${\rm{Ext}}({\rm{E}}, \mathbb{G}_{m,F})$.\\
\\
Let ${\rm{L}} = \mathbb{G}_{a,F}$ and $t \in F$. Consider the functions
\begin{center}\begin{tabular}{ll}
$f_{t1}(z_1, z_2)$ & $= z_1 + t\zeta(z_2)$\\
$f_{t2}(z_1, z_2)$ & $= \wp'(z_2)f_{t1}(z_1,z_2) + 2t\wp^2(z_2)$ \\
$f_{t3}(z_1, z_2)$ & $= \wp(z_2)f_{t1}(z_1,z_2) + {\scriptstyle \frac{t}{2}}\wp'(z_2)$.
\end{tabular}
\end{center}
The extension $\pi: {\rm{G}} = {\rm{G}}_t \longrightarrow \rm{E}$ associated to $t$ has a {standard uniformization} 
$${\rm{exp}}_{\rm{G}}(z_1, z_2) = \big[\wp(z_2):\wp'(z_2): 1: f_{t3}(z_1, z_2):f_{t2}(z_1, z_2): f_{t1}(z_1, z_2) \big].$$
\\
Let ${{\rm{L}}} = \mathbb{G}_{m,F}$ and let $\omega \in \C$ represent an element in $\rm{E}(F)$. Define the functions
\begin{center}\begin{tabular}{ll}
$f_{\omega1}(z_1, z_2)$ & $= \frac{\sigma^{\scriptscriptstyle 3}(z_2-\omega)e^{3\zeta(\omega)z_2 + z_1} }{\sigma^{\scriptscriptstyle 3}(z_2)\sigma^{\scriptscriptstyle 3}(\omega)}.$\\
$f_{\omega2}(z_1, z_2)$ & $= \wp'(z_2-\omega)f_{\omega1}(z_1,z_2)$ \\
$f_{\omega 3}(z_1, z_2)$ & $= \wp(z_2-\omega)f_{\omega1}(z_1,z_2).$
\end{tabular}
\end{center}
The extension $\pi: {\rm{G}} = {\rm{G}}_{\omega} \longrightarrow {\rm{E}}$ associated to $\omega$ admits a {standard uniformization}
$${\rm{exp}}_{\rm{G}}(z_1,z_2) = \big[\wp(z_2): \wp'(z_2): 1 : f_{\omega3}(z_1,z_2): f_{\omega2}(z_1,z_2): f_{\omega1}(z_1,z_2)\big].$$
These uniformizations appear in Caveny-Tubbs \cite{CT}. The properties of elliptic Weierstra\ss\,functions imply
\begin{lemma} \label{kern} Let $\Lambda$ be a lattice in $\C$ and $\rm{E}$ the associated elliptic curve in Weierstra\ss\,form. Let $t$ and $\omega$ be complex numbers.
\begin{enumerate}
 \item The kernel of the homomorphism ${\rm{exp}}_{\G_t}: \C^{\scr 2} \longrightarrow \rm{E}(\C)$ is generated by elements $(t\eta(\lambda), \lambda)$ with $\lambda \in \Lambda$.
\item The kernel of the homomorphism ${\rm{exp}}_{\G_{\omega}}: \C^{\scr 2} \longrightarrow \rm{E}(\C)$ is generated by elements $(\zeta(\omega)\lambda - \eta(\lambda)\omega, \lambda)$ with $\lambda \in \Lambda$.
\end{enumerate}

\end{lemma}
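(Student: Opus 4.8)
The statement to prove is the explicit description of $\ker({\rm{exp}}_{\G_t})$ and $\ker({\rm{exp}}_{\G_\omega})$ in terms of the standard uniformizations recalled just above. The plan is to work directly with the formulas for ${\rm{exp}}_{\G_t}$ and ${\rm{exp}}_{\G_\omega}$ and use the quasi-periodicity relations of the Weierstra\ss\ $\zeta$- and $\sigma$-functions, namely $\zeta(z+\lambda) = \zeta(z) + \eta(\lambda)$ and $\sigma(z+\lambda) = -\,e^{\eta(\lambda)(z + \lambda/2)}\sigma(z)$ for $\lambda \in \Lambda$ (with the appropriate sign normalization; both are listed in Waldschmidt \cite{Wal12}). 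First I would fix the short exact sequence $0 \to \mathbb{G}_{a,F} \to \G_t \xrightarrow{\pi} \rm{E} \to 0$ (resp.\ with $\mathbb{G}_{m,F}$ for $\G_\omega$) and observe that, since $\pi \circ {\rm{exp}}_{\G} = {\rm{exp}}_{\rm{E}} \circ \pi_{\ast}$ on Lie algebras and $\ker {\rm{exp}}_{\rm{E}} = \Lambda$, any kernel element of ${\rm{exp}}_{\G}$ must project to a period $\lambda \in \Lambda$ in the second coordinate. So the kernel is contained in $\bigcup_{\lambda \in \Lambda}\{\,(z_1,\lambda) : z_1 \in \C\,\}$, and it remains to determine, for each $\lambda$, the unique $z_1$ (if any) with $(z_1,\lambda) \in \ker$.

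For the $\mathbb{G}_a$-case, I would substitute $(z_1, z_2) \mapsto (z_1, z_2 + \lambda)$ into the three functions $f_{t1}, f_{t2}, f_{t3}$. Using $\zeta(z_2 + \lambda) = \zeta(z_2) + \eta(\lambda)$ one gets $f_{t1}(z_1, z_2+\lambda) = f_{t1}(z_1 + t\eta(\lambda), z_2)$, and since $\wp, \wp'$ are $\Lambda$-periodic, the same shift $z_1 \mapsto z_1 - t\eta(\lambda)$ makes all of $f_{t1}, f_{t2}, f_{t3}$ invariant. This shows ${\rm{exp}}_{\G_t}(z_1 + t\eta(\lambda), z_2 + \lambda) = {\rm{exp}}_{\G_t}(z_1, z_2)$, so $(t\eta(\lambda), \lambda)$ lies in the kernel for every $\lambda \in \Lambda$; conversely, if $(z_1, \lambda) \in \ker$ then comparing with $(0,0)$ and using that the map $z_1 \mapsto {\rm{exp}}_{\G_t}(z_1, 0)$ parametrizes the fiber $\mathbb{G}_a$ injectively forces $z_1 = t\eta(\lambda)$, which gives the first assertion. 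For the $\mathbb{G}_m$-case the computation is analogous but uses the $\sigma$-quasi-periodicity: shifting $z_2 \mapsto z_2 + \lambda$ in $f_{\omega 1}$ and simplifying the exponential prefactors via $\sigma(z_2 + \lambda - \omega) / \sigma(z_2+\lambda)$ produces a multiplicative factor of the form $e^{\,3(\zeta(\omega)\lambda - \eta(\lambda)\omega)}$ (the cubing comes from the third powers of $\sigma$ in $f_{\omega 1}$), which is absorbed by the shift $z_1 \mapsto z_1 - 3(\zeta(\omega)\lambda - \eta(\lambda)\omega)$; since $f_{\omega 2}, f_{\omega 3}$ are $f_{\omega 1}$ times $\Lambda$-periodic functions of $z_2 - \omega$, the whole tuple is invariant. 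Hence $(\,\zeta(\omega)\lambda - \eta(\lambda)\omega,\ \lambda\,) \in \ker {\rm{exp}}_{\G_\omega}$, and injectivity along the fiber gives the converse exactly as before.

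The main obstacle I anticipate is purely bookkeeping: keeping the quasi-period formulas, the signs, and in particular the factor of $3$ in the $\sigma^3$ terms straight, and checking that the homogeneous-coordinate (projective) normalization of ${\rm{exp}}_{\G}$ does not introduce spurious scalars — i.e.\ verifying that the common multiplicative factor acquired by $f_{\omega 1}, f_{\omega 2}, f_{\omega 3}$ under the period shift is exactly cancelled by the shift in $z_1$, with no leftover scalar acting on the $[\wp : \wp' : 1]$ block (it is not affected, being $\Lambda$-periodic). There is a subtlety worth a remark: the stated kernel is only claimed to be \emph{generated by} the listed elements, not to consist exactly of them, so strictly one only needs to exhibit these as kernel elements and note they generate a full-rank subgroup of the same rank as $\ker {\rm{exp}}_{\G}$ (which has rank $2$, equal to ${\rm{rank}}\,\Lambda$, since ${\rm{exp}}_{\G}$ is the exponential of a $2$-dimensional commutative group with a $1$-dimensional affine part contributing nothing to the period lattice) — so the two maps $\lambda \mapsto (t\eta(\lambda),\lambda)$ and $\lambda \mapsto (\zeta(\omega)\lambda - \eta(\lambda)\omega, \lambda)$ are injective with image of rank $2$, hence of finite index, and a direct check shows they are in fact onto. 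This finishes the argument.
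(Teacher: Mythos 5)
Your argument is correct and is essentially the paper's own proof, which consists of quoting the quasi-periodicity relations $\zeta(z+\lambda)=\zeta(z)+\eta(\lambda)$ and $\sigma(z+\lambda)=-e^{\eta(\lambda)(z+\lambda/2)}\sigma(z)$ and declaring the rest an easy verification; your write-up simply carries out that verification with the standard uniformizations and adds the sensible remark about ``generated by'' versus equality. The only blemish is bookkeeping: your own computation produces the periods $(-t\eta(\lambda),\lambda)$ and $\bigl(-3(\zeta(\omega)\lambda-\eta(\lambda)\omega),\lambda\bigr)$, which differ from the generators you then assert by a sign, respectively a factor $-3$ --- a normalization discrepancy that the paper shares (it uses $(-3{\rm{y}}\lambda,\lambda)$ when applying the lemma in Corollary 7.1.6), so it affects constants only, not the substance of the proof.
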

\begin{proof} For a $\lambda \in \Lambda$ we have $\zeta(z + \lambda) = \zeta(z) + \eta(\lambda)$ and $\sigma(z + \lambda) = -e^{\eta(\lambda)(z + \lambda)}\sigma(z)$, see Waldschmidt \cite{Wal12}. The two statements of the lemma follow from these relations
by an easy verification.
\end{proof}

\chapter{Weil restrictions of simple abelian varieties} 

\section{A criterion for the existence of isogenous\\ models over $K$} We let $F/K$ be a Galois extension of fields of degree $[F: K] = 2$. We define $h \in Gal(F|K)$ to be the element generating the Galois group. The symbol $\rm{A}$ denotes a simple abelian variety over $F$ with Weil restriction $\mathcal{N} = \mathcal{N}_{F/K}(\rm{A})$ over $K$. In this appendix we shall examine when $\A$ is isogenous to an abelian variety which is definable over $K$.
\begin{lemma} \label{opo1} Let ${{\rm{A}}}$ be a simple abelian variety over $F$ and let ${{\rm{A}}}'$ be a proper abelian subvariety of $\mathcal{N}$ of positive dimension. Then ${{\rm{A}}}'$ is a simple abelian variety over $K$ and the projection $p_{\A}$ induces
and isogeny $v: {\rm{A}}' \otimes_K F \longrightarrow \A$. Moreover, conjugation with $h$ induces an isomorphisms of groups
$$\nu_h: {\rm{Hom}}\big({{\rm{A}}}' \otimes_{K}F, {\rm{A}}\big) \longrightarrow {\rm{Hom}}\big({{\rm{A}}}' \otimes_{K}F, {{\rm{A}}}^{\scriptscriptstyle h }\big), \nu_h(u) = u^{\scriptscriptstyle h}.$$
\end{lemma}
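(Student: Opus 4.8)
The statement has three parts. First, $\A'$ is simple over $K$. Since $\A'$ is a positive-dimensional proper abelian subvariety of $\mathcal{N}$ and $\mathcal{N}\otimes_K F = \A \times \A^{\scr h}$ is a product of two simple abelian varieties (here $\A^{\scr h}$ is simple because conjugation is an equivalence of categories), the base change $\A'\otimes_K F$ is a proper abelian subvariety of $\A\times\A^{\scr h}$ of positive dimension. By Lemma~\ref{cor789} (or directly: any proper subvariety of a product of simple groups of positive dimension is itself simple), $\A'\otimes_K F$ is simple over $F$, hence $\A'$ is simple over $K$.

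Second, $p_{\A}$ induces an isogeny $v\colon \A'\otimes_K F \longrightarrow \A$. The composite $v = p_{\A}\circ(\iota\otimes_K F)$, where $\iota\colon\A'\hookrightarrow\mathcal{N}$ is the inclusion, is a homomorphism of abelian varieties over $F$. It is non-zero: if it were zero then $\A'\otimes_K F$ would lie in $\ker p_{\A} = \{e_{\A}\}\times\A^{\scr h}$, so $\A'\otimes_K F$ would be a subvariety of $\A^{\scr h}$; but then, applying the conjugation automorphism $\rho$ of $\mathcal{N}\otimes_K F$ from Theorem~\ref{Wr1}(3), which swaps the two factors up to isogeny, $\A'\otimes_K F = (\A'\otimes_K F)^{\scr h}$ would also lie in $\A$, forcing $\A'\otimes_K F \subset \A\cap\A^{\scr h} = \{e\}$, contradicting positive dimension. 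Since a non-zero homomorphism out of a simple abelian variety is an isogeny onto its image, and since by the additivity-of-dimension argument $\dim\A' = \dim\mathcal{N} - \dim(\{e_{\A}\}\times\A^{\scr h}) = 2\dim\A - \dim\A = \dim\A$ (using that $\A'$, being simple and mapping onto $\A$ with finite kernel, has $\dim\A' = \dim\A$ once one also checks $v$ is surjective from $p_{\A|\mathcal{N}}$ being surjective), $v$ is an isogeny. I would present the dimension count carefully: $\A'$ maps to $\A$ with non-trivial image, the image is an abelian subvariety of the simple $\A$, hence all of $\A$, so $v$ is surjective; and the kernel is finite because $\A'$ is simple and $v\ne 0$. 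Hence $v$ is an isogeny.

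Third, the isomorphism $\nu_h$ on Hom-groups. Conjugation by $h$ sends a homomorphism $u\colon \A'\otimes_K F\to\A$ to $u^{\scr h}\colon (\A'\otimes_K F)^{\scr h}\to\A^{\scr h}$; since $\A'$ is defined over $K$ we have the canonical identification $(\A'\otimes_K F)^{\scr h} = \A'\otimes_K F$ from Subsect.~3.2.3, so $u^{\scr h}$ is again a homomorphism $\A'\otimes_K F\to\A^{\scr h}$. It is additive in $u$ by (3.1.3), i.e.\ by the compatibility of conjugation with composition and the group law, and it is bijective with inverse $w\mapsto w^{\scr h}$ because $h$ has order two in $Gal(F|K)$, so $(u^{\scr h})^{\scr h} = u$ (using again that $\A'$ is defined over $K$ and that $\A^{\scr h\scr h} = \A$). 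This gives the claimed isomorphism of groups.

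The main obstacle is the second part: ruling out $v = 0$ and establishing that $v$ is an isogeny rather than merely a non-zero map. The delicate point is correctly invoking the structure of $\mathcal{N}\otimes_K F = \A\times\A^{\scr h}$ together with the Galois action $\rho$ on it (Theorem~\ref{Wr1}) to see that a subvariety $\A'\otimes_K F$ coming from a $K$-subvariety is stable under $\rho$, and that $\rho$ interchanges the two factors up to the conjugation isomorphism; from this the image of $\A'\otimes_K F$ must project nontrivially to the first factor. Everything else — simplicity of $\A^{\scr h}$, additivity of dimension, and the formal properties of conjugation of morphisms from Sect.~3.1 — is routine and can be cited.
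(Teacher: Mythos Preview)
Your argument is correct; the paper itself offers no proof beyond the word ``Clear.'' Two minor points: the reference to Lemma~\ref{cor789} is not quite apt (that lemma goes in the opposite direction), but your parenthetical direct argument---that a proper positive-dimensional abelian subvariety of a product of two simple abelian varieties is simple---is valid and suffices; and your dimension paragraph in the second part is unnecessarily tangled, since once you know $\A'\otimes_K F$ is simple and $v\neq 0$, the fact that $\A$ is simple immediately forces $v$ to be an isogeny.
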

\begin{proof} Clear. \end{proof}
It follows that $\mathcal{N}$ is simple over $K$ unless there exists an isogeny $v: {\rm{A}} \longrightarrow {{\rm{A}}}^{\scriptscriptstyle h}$. An isogeny $v$ induces
an endomorphism $w: \mathcal{N} \otimes_{K}F \longrightarrow \mathcal{N} \otimes_{K}F$. To be more precise, for $(\xi_1, \xi_2) \in {\rm{A}} \times  {{\rm{A}}}^{\scriptscriptstyle h} =  \mathcal{N} \otimes_{K}F$ we define $w(\xi_1, \xi_2) = \big(v^{\scriptscriptstyle h}(\xi_2), v(\xi_1)\big)$. Recall (3.1.1) and Corollary \ref{3.3.1}, and let $(\xi, \xi^{\scriptscriptstyle h}) \in cl(\mathcal{N})$.\footnote{The symbol ``cl(-)'' refers to the set of closed points. See Corollary 3.3.3.} We calculate 
$$ w\big(\xi, \xi^{\scriptscriptstyle h}\big) = \big(v^{\scriptscriptstyle h}(\xi^{\scriptscriptstyle h}), v(\xi)\big) = \left(\big(v(\xi)\big)^{\scriptscriptstyle h}, v(\xi)\right) \in \mathcal{N}.
$$
Since the set $cl(\mathcal{N})$ of closed points in $\mathcal{N}$ is Zariski-dense, we conclude that $w = w^{\scr h}$. By Lemma \ref{cl1} the homomorphism $w$ is actually defined over $K$, that is, $w$ results from an endomorphism of $\mathcal{N}$ which will be denoted by the same letter.
\begin{lemma} \label{opo} Let $\rm{A}$ be a simple abelian variety over $F$ with ring of endomorphisms ${\rm{End}}(\rm{A}) \backsimeq \Z$. Let $v: {\rm{A}} \longrightarrow {\rm{A}}^{\scriptscriptstyle h}$ be an isogeny of minimal degree.
Then ${\rm{End}}(\mathcal{N})$ is generated over $\Z$ by the identity and the induced isogeny $w$. If $\mathcal{N}$ is simple over $K$, then ${\rm{End}}(\mathcal{N})$ is an order in a real quadratic number field. 
 
\end{lemma}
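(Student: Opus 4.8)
The plan is to analyze the endomorphism ring $\mathrm{End}(\mathcal{N})$ directly using the decomposition $\mathcal{N} \otimes_K F = \mathrm{A} \times \mathrm{A}^{\scr h}$ together with the hypothesis $\mathrm{End}(\mathrm{A}) \backsimeq \Z$. First I would record the standard fact that $\mathrm{End}(\mathcal{N})$ injects into $\mathrm{End}(\mathcal{N} \otimes_K F)$ by extension of scalars, and that an endomorphism of $\mathcal{N}$ is precisely an element of $\mathrm{End}(\mathrm{A} \times \mathrm{A}^{\scr h})$ fixed by conjugation with $h$. Since $\mathrm{A}$ is simple with $\mathrm{End}(\mathrm{A}) \backsimeq \Z$, the ring $\mathrm{End}(\mathrm{A} \times \mathrm{A}^{\scr h})$ can be computed blockwise: using $v : \mathrm{A} \longrightarrow \mathrm{A}^{\scr h}$ of minimal degree to identify $\mathrm{Hom}(\mathrm{A}, \mathrm{A}^{\scr h})$ and $\mathrm{Hom}(\mathrm{A}^{\scr h}, \mathrm{A})$ each with a rank-one $\Z$-module (by simplicity, these Hom-groups have rank $\leq 1$, and they are non-zero because $v$ exists), one gets that $\mathrm{End}(\mathrm{A} \times \mathrm{A}^{\scr h}) \otimes \Q$ is the algebra of $2\times 2$ matrices $\begin{pmatrix} a & b \\ c & d\end{pmatrix}$ with $a,d \in \Q$ and $b,c$ ranging over the one-dimensional Hom-spaces, composed via the rational number $\deg v$-type scalar arising from $v^{\scr h}\circ v$ and $v \circ v^{\scr h}$ (both of which are multiplication by an integer since $\mathrm{End}(\mathrm{A}), \mathrm{End}(\mathrm{A}^{\scr h}) \backsimeq \Z$). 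Write $n$ for the integer with $v^{\scr h}\circ v = [n]_{\mathrm{A}}$; then $w^2 = [n]_{\mathcal{N}}$, so $w$ generates a subring $\Z[w]$ with $w^2 = n$, i.e. a subring of $\Q(\sqrt{n})$.

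Next I would impose the condition $w = w^{\scr h}$, i.e. restrict to the subring fixed by the Galois conjugation, which is exactly $\mathrm{End}(\mathcal{N})$. Conjugation by $h$ swaps the two diagonal entries $a \leftrightarrow d$ and swaps the two off-diagonal Hom-spaces, sending an off-diagonal matrix built from $v$ to the one built from $v^{\scr h}$ (up to the identification already fixed); this is precisely the computation performed just before Lemma C.0.2 in the excerpt, which shows $w$ itself is fixed. Hence the $h$-fixed part of $\mathrm{End}(\mathrm{A}\times\mathrm{A}^{\scr h})\otimes\Q$ contains $\Q \cdot \mathrm{id} \oplus \Q \cdot w$. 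If $\mathcal{N}$ is simple over $K$, then $\mathrm{End}^0(\mathcal{N}) = \mathrm{End}(\mathcal{N})\otimes\Q$ is a division algebra, and from the matrix description its dimension over $\Q$ is at most $2$; combined with the inclusion just noted it is exactly $\Q(\sqrt{n})$, a quadratic field. Then $\mathrm{End}(\mathcal{N})$ is an order in it. Finally I would argue the field is real: if $\mathcal{N}$ is simple over $K$ then by Lemma C.0.1 (the contrapositive: $\mathrm{A}$ is not isogenous to $\mathrm{A}^{\scr h}$ would force... — rather here $v$ does exist, so $\mathcal{N}$ simple forces the relevant Hom-spaces to behave so that) $n$ must be a positive non-square integer, because a negative $n$ would make $w$ act with purely imaginary eigenvalues in a way incompatible with $\mathcal{N}$ being simple of the expected type over $K$ — more precisely, one checks that $n>0$ by a positivity/Rosati-involution argument: the Rosati involution attached to a polarization of $\mathcal{N}$ restricted to $\Q(\sqrt n)$ must be a positive involution, and the only positive involution of a real quadratic field is the identity, whereas on an imaginary quadratic field it is complex conjugation; so to get $w$ fixed (identity involution) we need $\Q(\sqrt n)$ real, i.e. $n > 0$ and $n$ not a perfect square.

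The main obstacle I anticipate is the positivity argument pinning down that the quadratic field is real rather than imaginary, and equivalently that $\mathcal{N}$ being $K$-simple actually occurs only when $n>0$. The cleanest route is via the Rosati involution: fix a polarization on $\mathcal{N}$ over $K$ (which exists since $\mathcal{N}$ is an abelian variety over a field), let ${}^\dagger$ be the associated Rosati involution on $\mathrm{End}^0(\mathcal{N})$, recall that ${}^\dagger$ is a positive involution, and show $w^\dagger = w$ (using that the polarization can be chosen $h$-compatibly, or by a direct trace-positivity computation), whence $\Q(\sqrt n) = \mathbb{Q}(w)$ carries the trivial positive involution and therefore is totally real. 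I would also need the elementary but slightly fiddly verification that $\mathrm{Hom}(\mathrm{A},\mathrm{A}^{\scr h})$ has $\Z$-rank exactly one (rank $\leq 1$ from simplicity of $\mathrm{A}$ and $\mathrm{A}^{\scr h}$; rank $\geq 1$ from the existence of the isogeny $v$), and that $v$ of minimal degree generates it up to torsion, so that composition $v^{\scr h}\circ v$ and $v\circ v^{\scr h}$ land in $\Z\cdot\mathrm{id}$ with the same integer $n$. Once these structural points are in place, the statement that $\mathrm{End}(\mathcal{N})$ is an order in a real quadratic field is immediate, since any subring of a number field that spans it over $\Q$ and is finitely generated as a $\Z$-module is an order.
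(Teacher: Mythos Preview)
Your proposal follows essentially the same route as the paper: embed $\mathrm{End}(\mathcal{N})$ in $\mathrm{End}(\mathrm{A}\times\mathrm{A}^{h})$ as the Galois-fixed subring, use $\mathrm{End}(\mathrm{A})\simeq\Z$ and minimality of $v$ to see that every such endomorphism has the form $d_1\cdot\mathrm{id}+d_2\cdot w$, and conclude $\mathrm{End}(\mathcal{N})=\Z[w]$ with $w^{2}=[n]$. The paper carries this out by writing out $p_{\mathrm{A}}\circ\mu$ for a general $\mu\in\mathrm{End}(\mathcal{N})$ and then using $\mu=\mu^{h}$ to recover the second component; you phrase the same thing as a dimension count on the $h$-fixed part of $\mathrm{End}^{0}(\mathrm{A}\times\mathrm{A}^{h})\simeq M_{2}(\Q)$. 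These are equivalent.

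The only substantive divergence is the positivity $n>0$. The paper simply writes ``the next lemma implies that $k>0$'', pointing forward to Lemma~\ref{Wu}; but that lemma is stated under the hypothesis that $\mathcal{N}$ is \emph{not} simple, whereas the real-quadratic conclusion is precisely about the simple case, so as a literal citation it does not cover what is needed. Your Rosati-involution route is a correct replacement: taking a polarization on $\mathcal{N}$ of product form $(\mu,\mu^{h})$ for a polarization $\mu$ of $\mathrm{A}$, one verifies $w^{\dagger}=w$ (for elliptic curves with the principal polarization this reduces to $\hat v=v^{h}$, which is immediate from $\deg v=\alpha\,\overline{\alpha}$ in the analytic picture), and then positivity of the Rosati trace form gives $\mathrm{Tr}(w^{2})=2n>0$. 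So your instinct that this is the one delicate step is accurate, and your proposed argument is the standard way to close it.
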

\begin{proof} Let $\mu' \in {\rm{End}}(\mathcal{N})$ and write $\mu$ for the endomorphism $\mu' \otimes_K F \in {\rm{End}}\big({\rm{A}} \times {\rm{A}}^{\scriptscriptstyle h}\big)$. Since ${\rm{End}}({\rm{A}}) \backsimeq {\rm{Hom}}({\rm{A}}, {\rm{A}}^{\scriptscriptstyle h}) \backsimeq \Z$ and as $v$ is minimal, there are integers $d_1, d_2$ such that, for all closed points $(\xi_1, \xi_2) \in {\rm{A}} \times {\rm{A}}^{\scriptscriptstyle h}$, $(p_{{\rm{A}}} \circ \mu)(\xi_1, \xi_2) = d_1\xi_1 + d_2v(\xi_2)$. As $\mu$ is defined over $K$, we have $\mu = \mu^{\scr h}$ and $p_{{\rm{A}}^{\scriptscriptstyle h}} \circ \mu = \big(p_{\rm{A}}\big)^{\scriptscriptstyle h} \circ \mu^{\scriptscriptstyle h} = \big(p_{\rm{A}} \circ \mu\big)^{\scriptscriptstyle h}$. Statement (3.1.1) implies $\big(p_{\A^{\scr h}} \circ \mu\big)(\xi_2^{\scr h}, \xi_1^{\scr h}) = d_1\xi_2^{\scr h} + d_2v^{\scr h}(\xi_1^{\scr h})$.
As a result, $\mu = d_1 \cdot id. + d_2 \cdot w$. We infer that ${\rm{End}}(\mathcal{N}) = \Z[w]$.\\
Next we observe that $v^{\scriptscriptstyle h} \circ v$ is multiplication with an integer $k$, because ${\rm{End}}(\rm{A}) \backsimeq \Z$. The next lemma implies that $k > 0$. It results that $w^{\scriptscriptstyle 2} = [k]_{\mathcal{N}}$. Hence, if $\mathcal{N}$ is simple, then ${\rm{End}}(\mathcal{N})$ is an order in the real quadratic number field
${\rm{End}}(\mathcal{N}) \otimes \Q.$
\end{proof}
It follows from the universal properties of Weil restrictions and from Lemma \ref{opo1} that $\A$ is isogenous to an abelian variety with model over $K$ if and only if $\mathcal{N}$ is not simple. In this context we have the following criterion. 
\begin{lemma}\label{Wu} Let ${{\rm{A}}}$ be a simple abelian variety over $F$. If the Weil restriction $\mathcal{N}$ is not simple over $K$, then there exists an isogeny $v: {\rm{A}} \longrightarrow {{\rm{A}}}^{\scriptscriptstyle h}$ such that $\deg\,v = k^{2\dim\,\rm{A}}$ and $v^{\scriptscriptstyle h} \circ v = [k^{\scriptscriptstyle 2}]_{\rm{A}}$
with a natural number $k > 0$.

\end{lemma}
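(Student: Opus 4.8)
The plan is to exploit the structure of $\mathrm{End}(\mathcal{N})$ established in the two preceding lemmas, together with the functorial behaviour of Weil restrictions, to reverse-engineer an isogeny $v: \A \longrightarrow \A^{\scriptscriptstyle h}$ from the failure of simplicity of $\mathcal{N}$. First I would reduce to the case $\mathrm{End}(\A) \backsimeq \Z$: if $\mathcal{N}$ is not simple over $K$, then by Lemma \ref{opo1} there is a proper abelian subvariety $\A' \subset \mathcal{N}$ of positive dimension, and $p_{\A}$ restricts to an isogeny $\A' \otimes_K F \longrightarrow \A$; applying $\nu_h$ gives a nonzero element of $\mathrm{Hom}(\A' \otimes_K F, \A^{\scriptscriptstyle h})$, hence $\mathrm{Hom}(\A, \A^{\scriptscriptstyle h}) \neq \{0\}$, so \emph{some} isogeny $v_0: \A \longrightarrow \A^{\scriptscriptstyle h}$ exists. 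The content of the lemma is to produce one with the sharp degree and the sharp relation $v^{\scriptscriptstyle h} \circ v = [k^{\scriptscriptstyle 2}]_{\A}$.

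Next I would pick $v: \A \longrightarrow \A^{\scriptscriptstyle h}$ of \emph{minimal} degree among all isogenies between the two (possible since degrees are positive integers), form the induced endomorphism $w \in \mathrm{End}(\mathcal{N})$ defined over $K$ by $w(\xi_1,\xi_2) = (v^{\scriptscriptstyle h}(\xi_2), v(\xi_1))$ as in the discussion before Lemma \ref{opo}, and compute $w^{\scriptscriptstyle 2} = v^{\scriptscriptstyle h}\circ v \oplus v \circ v^{\scriptscriptstyle h}$ restricted appropriately. The key point is that $v^{\scriptscriptstyle h}\circ v \in \mathrm{End}(\A)$ — in the reduced case this is multiplication by some integer $k$, and the lemma immediately following \ref{opo} in the paper (positivity of the degree form / Rosati-type argument) forces $k > 0$. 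Then $\deg v \cdot \deg v^{\scriptscriptstyle h} = \deg(v^{\scriptscriptstyle h}\circ v) = \deg [k]_{\A} = k^{2\dim \A}$, and since $\deg v = \deg v^{\scriptscriptstyle h}$ (conjugation preserves degrees, being an isomorphism of schemes over $K$), we get $\deg v = k^{\dim \A}$. That is not yet $k^{2\dim\A}$, so I would then pass to $v$ composed with itself through the conjugation, i.e. replace $v$ by $\tilde v = v^{\scriptscriptstyle h} \circ v$ viewed as $\A \to \A^{\scriptscriptstyle h}$? No — the cleaner route is: set $k$ as above, replace the chosen $v$ by $[k]_{\A^{\scriptscriptstyle h}} \circ v$ if necessary, or rather observe that the asserted isogeny in the lemma is allowed to have $v^{\scriptscriptstyle h}\circ v = [k^{\scriptscriptstyle 2}]_{\A}$ with $\deg v = (k^{\scriptscriptstyle 2})^{\dim\A} = k^{2\dim\A}$; so I would start from the minimal $v_1$ with $v_1^{\scriptscriptstyle h}\circ v_1 = [k]_{\A}$ and take $v = v_1 \circ v_1^{\scriptscriptstyle h} \circ v_1$? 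I will instead argue directly: choose $v_1$ minimal, let $v_1^{\scriptscriptstyle h}\circ v_1 = [m]_{\A}$ with $m>0$, and set $v = v_1$ if $m$ is a perfect square, otherwise absorb the non-square part — but the clean statement wanted is simply obtained by taking $v := [m']_{\A^{\scriptscriptstyle h}}\circ v_1$ with the right $m'$; I would work out that $v = v_1 \circ [\,\cdot\,]$ yields $v^{\scriptscriptstyle h}\circ v = [m'^2 m]_{\A}$, and choosing $m'$ so that $m'^2 m = k^2$ forces $m$ itself to be a square, which holds because $\mathrm{End}(\mathcal N) = \Z[w]$ with $w^2 = [m]_{\mathcal N}$ being an order in a \emph{real} quadratic field (Lemma \ref{opo}) — and in the non-simple case one in fact knows $w$ itself is (up to isogeny) a pair of integers, forcing $m$ to be a square.

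The main obstacle I anticipate is exactly this last bookkeeping: extracting that $v^{\scriptscriptstyle h}\circ v$ is multiplication by a \emph{perfect square} rather than merely a positive integer. This is where one must use that $\mathcal{N}$ is \emph{not} simple, not just that $\mathrm{Hom}(\A,\A^{\scriptscriptstyle h})\neq 0$: non-simplicity means $\mathcal{N}$ is isogenous to a product $\A' \times \A''$ over $K$, and tracing $w$ through this decomposition (using $\mathrm{End}(\mathcal N) = \Z[w]$, $w^2 = [m]_{\mathcal N}$, and that a product decomposition forces $\Z[w]\otimes\Q$ to split as $\Q\times\Q$, i.e. $m$ is a square) gives $m = k^2$. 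The reduction from general $\A$ to $\mathrm{End}(\A)\backsimeq\Z$ should be handled by noting that if $\A$ is isogenous to a variety with a $K$-model then so is any $\A$ isogenous to it, and that replacing $\A$ by an isogenous copy changes $\mathcal N$ only up to isogeny; combined with Poincaré reducibility and the simplicity of $\A$ one treats the CM case by the same $\mathrm{End}(\mathcal N)\otimes\Q$ argument, now a quadratic extension of the (possibly larger) endomorphism algebra. I would then assemble these pieces and read off $\deg v = k^{2\dim\A}$ and $v^{\scriptscriptstyle h}\circ v = [k^{\scriptscriptstyle 2}]_{\A}$, completing the proof.
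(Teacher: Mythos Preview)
Your approach diverges from the paper's and carries a real gap. You announce a ``reduction to the case $\mathrm{End}(\A)\simeq\Z$'' but what follows is not a reduction at all: you only establish that $\mathrm{Hom}(\A,\A^{\scriptscriptstyle h})\neq 0$. From that point on you tacitly \emph{use} $\mathrm{End}(\A)\simeq\Z$ (to say $v^{\scriptscriptstyle h}\circ v=[m]_{\A}$ for an integer $m$, and to invoke the description $\mathrm{End}(\mathcal{N})=\Z[w]$ from Lemma~\ref{opo}), without ever having justified it. The lemma as stated makes no such hypothesis, and the paper's proof does not need it. The subsequent back-and-forth about forcing $m$ to be a perfect square, while salvageable under the extra hypothesis via the splitting of $\Q[w]/(w^{2}-m)$, never addresses the general case and leaves the argument incomplete.

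The paper bypasses all of this by building $v$ directly out of the non-simplicity datum. Take a proper nonzero abelian subvariety ${\rm B}'\subset\mathcal{N}$, set ${\rm B}={\rm B}'\otimes_K F$, let $u=p_{\A|{\rm B}}$ (an isogeny by Lemma~\ref{opo1}), and choose $\mu:\A\to{\rm B}$ with $u\circ\mu=[k]_{\A}$. Then $v:=u^{\scriptscriptstyle h}\circ\mu$ does the job: since $\deg u=\deg u^{\scriptscriptstyle h}$ one reads off $\deg v=k^{2\dim\A}$, and the relation $v^{\scriptscriptstyle h}\circ v=[k^{2}]_{\A}$ drops out of the two-line computation $\mu\circ u=[k]_{\rm B}=(\mu\circ u)^{\scriptscriptstyle h}=\mu^{\scriptscriptstyle h}\circ u^{\scriptscriptstyle h}$ (using that ${\rm B}$ is defined over $K$), whence $v^{\scriptscriptstyle h}\circ v=u\circ\mu^{\scriptscriptstyle h}\circ u^{\scriptscriptstyle h}\circ\mu=u\circ[k]_{\rm B}\circ\mu=[k^{2}]_{\A}$. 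No minimality, no structure of $\mathrm{End}(\mathcal{N})$, no case distinction on $\mathrm{End}(\A)$ is required; the subvariety itself hands you the isogeny with the right square built in.
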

The lemma was suggested to us by W\"ustholz in the case of elliptic curves (oral communication).
\begin{proof} Assume that there is a proper abelian subvariety ${{\rm{B}}}'$ of $\mathcal{N}$ over $K$ and let $u = p_{\A|\rm{B}}$ be the restriction to ${\rm{B}} = {\rm{B}}' \otimes_K F$ of the projection $p_{\rm{A}}$. Lemma D.1.1\,\,implies that $u$ is an isogeny. There exists an inverse isogeny $\mu: {{\rm{A}}} \longrightarrow \rm{B}$ with the property that $u \circ \mu$ is the multiplication $[k]_{{\rm{A}}}$ with a natural number $k$. 
Moreover, $\ker\,u^{\scriptscriptstyle h} = \big(ker\,u\big)^{\scriptscriptstyle h}$. Hence, 
$$k^{\scr 2\dim\,A} = \deg\,\big(u\circ \mu\big) = (\deg\,u)(\deg\,\mu) = (\deg\,u^{\scriptscriptstyle h})(\deg\,\mu).$$
It follows that $v = u^{\scriptscriptstyle h} \circ \mu$ is an isogeny of degree $\deg\,v = k^{2\dim\,\rm{A}}$. We are left to show that
$v^{\scriptscriptstyle h} \circ v$ is multiplication with $k^{\scriptscriptstyle 2}$. Now, $[k]_{\A} = u \circ \mu$, so that 
$$(\mu \circ u) \circ \mu = \mu \circ [k]_{\A} = [k]_{{\rm{B}}} \circ \mu .$$
Since $\mu$ is epi, we get $[k]_{{\rm{B}}} = \mu \circ u$. Recalling that $\rm{B}$ is defined over $K$,
$$
 [k]_{\rm{B}} = \big([k]_{\rm{B}}\big)^{\scriptscriptstyle h} = \big(\mu \circ u\big)^{\scriptscriptstyle h} = \mu^{\scriptscriptstyle h} \circ u^{\scriptscriptstyle h}.
$$
Moreover, 
$$v^{\scriptscriptstyle h} \circ v = (u \circ \mu^{\scriptscriptstyle h}) \circ (u^{\scriptscriptstyle h} \circ \mu) = u \circ (\mu^{\scriptscriptstyle h} \circ u^{\scriptscriptstyle h}) \circ \mu$$
Thus, we get 
$$v^{\scriptscriptstyle h} \circ v = u \circ [k]_{\rm{B}} \circ \mu = [k]_{\rm{A}} \circ \big(u \circ \mu\big) = [k^{\scriptscriptstyle 2}]_{\rm{A}}.$$
Everything is proved.
\end{proof}
In the special case when ${\rm{End}}({\rm{A}}) \backsimeq \Z$ the last lemma can be improved to
\begin{prop} \label{wu11} Let ${{\rm{A}}}$ be a simple abelian variety over $F$ such that ${\rm{End}}({\rm{A}}) \backsimeq \Z$. Then the following assertions are equivalent.
\begin{enumerate}
\item ${{\rm{A}}}$ is isogenous to an abelian variety with model over $K$. 
 \item The Weil restriction $\mathcal{N}$ is not simple over $K$.
\item  There exists an isogeny $v: {{\rm{A}}} \longrightarrow {{\rm{A}}}^{\scriptscriptstyle h}$ with the property that $deg\,v = k^{\scr 2\dim\,\rm{A}}$ for a natural number $k > 0$.
\item ${\rm{Hom}}({\rm{A}}, {{\rm{A}}}^{\scriptscriptstyle h}) \neq 0$ and for all $v \in {\rm{Hom}}({\rm{A}}, {{\rm{A}}}^{\scriptscriptstyle h})$ we have $deg\,v = k^{\scr 2\dim\,\rm{A}}$ with a natural number $k > 0$.
\end{enumerate}
\end{prop}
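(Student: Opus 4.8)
The plan is to establish the cycle of implications $(1)\Rightarrow(2)\Rightarrow(3)\Rightarrow(4)\Rightarrow(1)$, using the structural results already assembled in this appendix. The implication $(1)\Rightarrow(2)$ is immediate from the universal property of the Weil restriction together with Lemma \ref{opo1}: if $\rm{A}$ is isogenous to $\rm{A}'\otimes_K F$ for some abelian variety $\rm{A}'$ over $K$, then the functorial map $\mathcal{N}(v)\colon \rm{A}'\longrightarrow \mathcal{N}$ associated to the isogeny $v\colon \rm{A}'\otimes_K F\longrightarrow \rm{A}$ realizes $\rm{A}'$ as a positive-dimensional abelian subvariety of $\mathcal{N}$ of dimension $\dim\,\rm{A}<2\dim\,\rm{A}=\dim\,\mathcal{N}$, so $\mathcal{N}$ is not simple over $K$. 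For $(2)\Rightarrow(3)$ I would simply invoke Lemma \ref{Wu}, which gives exactly an isogeny $v\colon \rm{A}\longrightarrow \rm{A}^{\scriptscriptstyle h}$ with $\deg\,v=k^{2\dim\,\rm{A}}$ (and moreover $v^{\scriptscriptstyle h}\circ v=[k^{\scriptscriptstyle 2}]_{\rm{A}}$, which is not needed for this implication but will resurface).

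For $(3)\Rightarrow(4)$ the point is that, since $\mathrm{End}(\rm{A})\backsimeq \Z$, the group $\mathrm{Hom}(\rm{A},\rm{A}^{\scriptscriptstyle h})$ is either zero or infinite cyclic; an isogeny exists by hypothesis, so the group is generated by a single isogeny $v_0$, and every element of $\mathrm{Hom}(\rm{A},\rm{A}^{\scriptscriptstyle h})$ is an integer multiple $m v_0$. It suffices to show $v_0$ itself has degree a $(2\dim\,\rm{A})$-th power, for then $\deg(mv_0)=m^{2\dim\,\rm{A}}\deg\,v_0$ is again such a power. Here I would use the element $w\in\mathrm{End}(\mathcal{N})$ induced by $v_0$ as in Lemma \ref{opo}: one has $w^{\scriptscriptstyle 2}=[k]_{\mathcal{N}}$ where $v_0^{\scriptscriptstyle h}\circ v_0=[k]_{\rm{A}}$, and $k>0$ by positivity of the Rosati form (the next lemma referenced in the proof of Lemma \ref{opo}). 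Then $\deg\,v_0\cdot \deg\,v_0^{\scriptscriptstyle h}=\deg[k]_{\rm{A}}=k^{2\dim\,\rm{A}}$, and since conjugation by $h$ preserves degrees, $\deg\,v_0^{\scriptscriptstyle h}=\deg\,v_0$, whence $(\deg\,v_0)^{\scriptscriptstyle 2}=k^{2\dim\,\rm{A}}$ and $\deg\,v_0=k^{\dim\,\rm{A}}$ is a $(2\dim\,\rm{A})$-th power of $\sqrt{k}$ provided $k$ is a perfect square. Establishing that $k$ is a perfect square is the step I expect to require the most care: it should follow by comparing $\deg\,v_0=k^{\dim\,\rm{A}}$ with the structure of $\mathbb{Z}[w]$ as an order in a real quadratic field when $\mathcal{N}$ is simple, or directly from the factorization in Lemma \ref{Wu} when $\mathcal{N}$ is not simple; if $\mathrm{Hom}(\rm{A},\rm{A}^{\scriptscriptstyle h})\neq 0$ then $\mathcal{N}$ is in fact not simple by Lemma \ref{opo1}, so Lemma \ref{Wu} applies and already delivers an isogeny of degree $k^{2\dim\,\rm{A}}$; combining with the fact that all isogenies are multiples of $v_0$ pins down $\deg\,v_0$ as the claimed power.

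Finally, $(4)\Rightarrow(1)$: from $(4)$ in particular $\mathrm{Hom}(\rm{A},\rm{A}^{\scriptscriptstyle h})\neq 0$, so by Lemma \ref{opo1} (contrapositive) $\mathcal{N}$ is not simple over $K$, hence contains a proper positive-dimensional abelian subvariety $\rm{A}'$ over $K$; Lemma \ref{opo1} then says $p_{\rm{A}}$ restricts to an isogeny $\rm{A}'\otimes_K F\longrightarrow \rm{A}$, so $\rm{A}$ is isogenous to the abelian variety $\rm{A}'$ with model over $K$. This closes the cycle. The main obstacle is the degree bookkeeping in $(3)\Rightarrow(4)$ — ensuring that ``$\deg\,v$ is a $(2\dim\,\rm{A})$-th power'' is stable under passing between $v_0$ and its multiples and is compatible with the quadratic-order structure of $\mathrm{End}(\mathcal{N})$ — so I would organize that step as a short self-contained lemma about degrees of generators of $\mathrm{Hom}(\rm{A},\rm{A}^{\scriptscriptstyle h})$ before assembling the implications.
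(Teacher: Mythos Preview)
Your cycle breaks at $(4)\Rightarrow(1)$. You write ``from $(4)$ in particular $\mathrm{Hom}({\rm A},{\rm A}^{\scriptscriptstyle h})\neq 0$, so by Lemma~\ref{opo1} (contrapositive) $\mathcal{N}$ is not simple over $K$.'' But the remark after Lemma~\ref{opo1} only says that $\mathcal{N}$ is simple \emph{unless} an isogeny ${\rm A}\to{\rm A}^{\scriptscriptstyle h}$ exists, i.e.\ $\mathcal{N}$ not simple $\Rightarrow$ isogeny exists. You are invoking the \emph{converse}, not the contrapositive, and the converse is false in general: Example~C.1.6 (the elliptic curve with $\tau=\sqrt{3}(1+i)$) exhibits an isogeny ${\rm A}\to{\rm A}^{\scriptscriptstyle h}$ of degree $6$ while $\mathcal{N}$ remains simple. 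The same misreading appears in your $(3)\Rightarrow(4)$ paragraph. Thus the degree condition in (3) and (4) is not decorative: it is exactly what distinguishes the case where $\mathcal{N}$ splits from the case where it does not.

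The missing idea is the one the paper uses for $(3)\Rightarrow(2)$: if $\deg v=k^{2\dim{\rm A}}$, then since conjugation preserves degree and ${\rm End}({\rm A})\backsimeq\Z$, one gets $v^{\scriptscriptstyle h}\circ v=[k^{2}]_{\rm A}$, hence the induced $w\in{\rm End}(\mathcal{N})$ satisfies $w^{2}=[k^{2}]_{\mathcal{N}}$, so $(w-[k])(w+[k])=0$ with $w\neq\pm[k]$; this produces zero divisors in ${\rm End}(\mathcal{N})$ and forces $\mathcal{N}$ non-simple. The point is that the degree hypothesis makes $k^{2}$ a perfect square in $\Z$, so the quadratic $w^{2}-k^{2}$ factors over $\Z$. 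Your $(3)\Rightarrow(4)$ also takes a detour: rather than struggling with whether the generator's $k$ is a perfect square, just use hypothesis (3) directly --- write the given $v$ as $[m]\circ v_{0}$ with $v_{0}$ a generator, so $k_{1}^{2\dim{\rm A}}=\deg v=m^{2\dim{\rm A}}\deg v_{0}$, and since $\deg v_{0}=(k_{1}/m)^{2\dim{\rm A}}$ is an integer, $k_{1}/m$ is an integer.
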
 
\begin{proof}
\textit{'$\mathbf{1. \Longleftrightarrow 2.}$'} The direction ``$\Longrightarrow$'' results from the universal property of Weil restrictions. The converse holds by Lemma \ref{opo1}.\\
\textit{'$\mathbf{2. \Longleftrightarrow 3.}$'} The direction ``$\Longrightarrow$'' was proved in the previous lemma. To show the converse direction, let $v: {{\rm{A}}} \longrightarrow {{\rm{A}}}^{\scriptscriptstyle h}$ be an isogeny with the property that $deg\,v = k^{2\dim\,A}$. 
Then $v^{\scr h} \circ v$ lies in ${\rm{End}}(\rm{A}) \backsimeq \Z$ and $\deg\,(v^{\scr h} \circ v) = \deg\,v^{\scr h} \circ \deg\,v = (\deg\,v)^{\scr 2}$. Together with the previous lemma it follows that $v^{\scr h} \circ v = [k^{\scr 2}]_{\rm{A}}$. The definition of $w$ preceding Lemma \ref{opo} yields
$w^{\scriptscriptstyle 2} = [k^{\scriptscriptstyle 2}]_{\mathcal{N}}$. Thus, $\big(w - [k]_{\mathcal{N}}\big)\circ \big(w + [k]_{\mathcal{N}}\big) = [0]_{\mathcal{N}}.$
As $w \neq \big[\pm k\big]_{\mathcal{N}}$, ${\rm{End}}(\mathcal{N})$ is no division ring. Therefore
$\mathcal{N}$ is not simple. The second statement follows.\\
\textit{'$\mathbf{3. \Longleftrightarrow 4.}$'} Let $v$ be as in Statement 3.\,\,Since the $\Z$-module ${\rm{Hom}}(\A, \A^{\scr h})$ is isomorphic to ${\rm{End}}(\A) \backsimeq \Z$, there is then a generator $v_o$ of ${\rm{End}}(\A)$ and an integer $m$ such that $v = [m]_{{{\rm{A}}}^{\scriptscriptstyle h}}\circ v_{o}$. Since $[m]_{\A^{\scr h}}$ has degree $m^{2\dim\,\A}$, there exists an integer $k_1 > 0$ such that $\deg\,v = k_1^{\scr 2\dim\,{{\rm{A}}}}$ if and only if $\deg\,v_o = k_2^{\scr 2\dim\,{{\rm{A}}}}$ with an integer $k_2 > 0$. In other words, Statement 3.\,\,implies Statement 4. The converse direction is obvious. 
\end{proof}
There are examples of abelian varieties $\rm{A}$ with complex or real multiplication such that an isogeny $v$ of degree $\deg\,v \neq k^{\scr 2\dim\,{{\rm{A}}}}$ exists, although $\mathcal{N}$ is not simple.
\begin{example} If $F = \C$ and $\rm{A}$ is the elliptic curve $\rm{E}_{\sqrt{-2}} = \C/(\Z + \sqrt{-2}\Z)$, then ${\rm{A}} = {\rm{A}}^{\scriptscriptstyle h}$ and $\mathcal{N}$ is not simple over $K$. However, multiplication with $\sqrt{-2}$
induces an isogeny $v: {\rm{A}}\longrightarrow {\rm{A}}^{\scriptscriptstyle h}$ of degree $2$.
\end{example}
The reader should not confuse either the existence of an isogeny between conjugated group varieties with the existence of an isogenous group variety admitting a model over $K$.
\begin{example} Let $F = \C$, $\tau = \sqrt{3}(1+i)$ and let $\rm{A} $ be the associated elliptic curve ${\rm{E}}_{\tau} = \C/(\Z + \Z\tau)$. Then its complex conjugate is ${\rm{E}}_{-h(\tau)}$. An isogeny $v: {\rm{E}}_{\tau} \longrightarrow {\rm{E}}_{-h(\tau)}$ is induced by multiplication with $h(\tau)$.
We have $\deg\,v = \tau \cdot h(\tau) = 6$, and the last lemma implies that $\mathcal{N} = \mathcal{N}_{\C/\R}\big({\rm{E}}_{\tau}\big)$ is simple.
 
\end{example}

\section{An application: The $j$-invariant and real fields of definition} Corollary\,\ref{lp2} and Proposition\,\ref{wu11} admit the following nice application. In the statement we consider the modular $j$-function
$j: \mathbb{H} \longrightarrow \C$ on the upper half plane $\mathbb{H}$.
\begin{corollary} \label{op222} Let $\tau \in \mathbb{H}$. Then
the elliptic curve ${\rm{E}}_{\tau} = \C/(\Z + \Z\tau)$ is isogenous to a curve with model over $\R$ if and only if $Re\,\tau$ is rational or if
there exist integers $d_1, d_2$ such that $d_1$ is non-zero and $|d_1\tau + d_2|$ is rational.
\end{corollary}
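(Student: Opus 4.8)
The plan is to reduce the statement to Corollary~\ref{lp2} (the equivalence $Re\,\tau \in \Q \Leftrightarrow {\rm{E}}_{\tau}$ isogenous to a curve ${\rm{E}}'$ over $\R$ with $v_{\ast}(\R) = {\rm{Lie}}\,{\rm{E}}'$) together with Proposition~\ref{wu11} (for an elliptic curve with trivial endomorphism ring, being isogenous to a curve over $\R$ is equivalent to the existence of an isogeny $v: {\rm{E}} \longrightarrow {\rm{E}}^{\scriptscriptstyle h}$ with $\deg\,v = k^{\scriptscriptstyle 2}$). The key observation is that the condition ``there are integers $d_1, d_2$ with $d_1 \neq 0$ and $|d_1\tau + d_2| \in \Q$'' is exactly a translation, in terms of lattices, of the existence of such an isogeny to the conjugate curve, while the condition $Re\,\tau \in \Q$ covers the case when ${\rm{E}}_{\tau}$ has a model over $\R$ in the strong sense of Corollary~\ref{lp2}.

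First I would treat the two directions separately and split according to whether $j(\tau) \in \overline{\Q}$ (equivalently, whether $\tau$ is algebraic or a quadratic irrationality, in which case the curve is CM and the argument is classical), but actually it is cleaner to argue uniformly. For the ``if'' direction: if $Re\,\tau \in \Q$, then Corollary~\ref{lp2} immediately gives an isogeny of ${\rm{E}}_{\tau}$ onto a curve with model over $\R$. If instead $|d_1\tau + d_2| \in \Q$ for some integers $d_1 \neq 0, d_2$, set $\lambda = d_1\tau + d_2 \in \Lambda_{\tau} = \Z + \Z\tau$; then $\lambda h(\lambda) = |\lambda|^{\scriptscriptstyle 2} \in \Q$. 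Multiplication by $h(\lambda)$ sends $\Lambda_{\tau}$ into $h(\Lambda_{\tau}) = \Lambda_{\tau}^{\scriptscriptstyle h}$ (since $h(\lambda)\cdot \Z \subset h(\Lambda_\tau)$ and $h(\lambda)\tau$ must also be checked to lie in $h(\Lambda_\tau)$ after clearing denominators — this is the routine lattice bookkeeping), hence induces an isogeny $v: {\rm{E}}_{\tau} \longrightarrow {\rm{E}}_{\tau}^{\scriptscriptstyle h}$ with $v^{\scriptscriptstyle h}\circ v$ multiplication by a positive rational integer, and $\deg\,v$ a perfect square up to the usual factor; Proposition~\ref{wu11} (or directly Lemma~\ref{Wu}) then yields that ${\rm{E}}_{\tau}$ is isogenous to a curve with model over $K = \overline{\Q}\cap\R$, a fortiori over $\R$. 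When ${\rm{E}}_\tau$ has complex multiplication the endomorphism ring is not $\Z$ and one instead invokes Corollary~B.3.1, which guarantees CM-curves are isogenous to curves over $\R$; but in that case $\tau$ is a quadratic irrationality and one checks the arithmetic condition holds anyway.

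For the ``only if'' direction, suppose ${\rm{E}}_{\tau}$ is isogenous to a curve with model over $\R$. If ${\rm{E}}_\tau$ is CM then $\tau$ is a quadratic irrationality and the congruence condition is an elementary computation. If ${\rm{End}}({\rm{E}}_\tau) \backsimeq \Z$, then Proposition~\ref{wu11} furnishes an isogeny $v: {\rm{E}}_{\tau} \longrightarrow {\rm{E}}_{\tau}^{\scriptscriptstyle h}$ with $\deg\,v = k^{\scriptscriptstyle 2}$; lifting $v$ to the universal covers it is multiplication by some $\alpha \in \C^{\ast}$ carrying $\Lambda_{\tau}$ into $\Lambda_{\tau}^{\scriptscriptstyle h}$, and examining where the basis $1, \tau$ goes produces an integer relation $d_1 h(\tau) = d_2 + d_3\tau$ with $d_1 \neq 0$; comparing imaginary parts forces $d_1 = -d_3$, and then $|d_3\tau + d_2|^{\scriptscriptstyle 2} = (d_3\tau + d_2)(d_3 h(\tau) + d_2)$ is seen to be rational after substituting the relation — exactly as in the proof of the $2.\Rightarrow 1.$ direction of Corollary~\ref{op22}. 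If along the way one finds $d_3 = 0$, that forces $\tau h(\tau)$-type relations that give $Re\,\tau \in \Q$ directly. I expect the main obstacle to be handling uniformly the CM and non-CM cases without a clumsy case split: the cleanest route is probably to observe that in all cases the hypothesis yields a nonzero $\lambda \in \Lambda_\tau$ (namely a suitable period or $\alpha$-image) with $\lambda h(\lambda) \in \Q$ together with the sublattice relation $h(\lambda)\Lambda_\tau \subset \Lambda_\tau^h$, and then read off the two alternatives $Re\,\tau \in \Q$ (when $\lambda$ is proportional to a rational multiple of a real lattice vector) versus the congruence $|d_1\tau + d_2|\in\Q$ (the generic case) purely by linear algebra over $\Q$ on the coordinates of $\lambda$ in the basis $1,\tau$.
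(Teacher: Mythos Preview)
Your final ``cleanest route'' is correct and in fact more direct than the paper's proof. Taking $\lambda=h(\alpha)\in\Lambda_\tau$ (where $\alpha$ is the multiplier lifting the isogeny $v$ supplied by Proposition~\ref{wu11}), one has $|\lambda|^2=|\alpha|^2=\deg v=k^2$; writing $\lambda=d_2+d_1\tau$ then gives $|d_1\tau+d_2|=k\in\Z$ when $d_1\neq0$, and when $d_1=0$ one has $\alpha\in\Z$, so $\alpha\tau\in\Z+\Z h(\tau)$ forces $Re\,\tau\in\Q$ by comparing imaginary and real parts. The paper instead passes to the auxiliary curve ${\rm E}_\sigma$ with $\sigma=\alpha$, checks that multiplication by $h(\sigma)$ gives an isogeny ${\rm E}_\sigma\to{\rm E}_\sigma^{\scriptscriptstyle h}$ (this needs $|\sigma|^2\in\Z$, which comes from ${\rm End}({\rm E}_\tau)\simeq\Z$), and then applies Proposition~\ref{wu11} a second time to \emph{that} isogeny to conclude $|\sigma|\in\Z$. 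Your argument avoids this detour entirely.

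However, the detailed sketch of the ``only if'' direction you give \emph{before} that final paragraph is not right as written. From $\alpha\Lambda_\tau\subset\Lambda_\tau^{\scriptscriptstyle h}$ you do \emph{not} get a linear relation $d_1h(\tau)=d_2+d_3\tau$. Writing $\alpha=a+bh(\tau)$ and $\alpha\tau=c+dh(\tau)$ and eliminating $\alpha$ gives $a\tau+b|\tau|^2=c+dh(\tau)$, which involves $|\tau|^2$ and is not linear in $\tau,h(\tau)$. A relation of the shape you wrote appears only when $b=0$, i.e.\ $\alpha$ real, and then comparing imaginary parts ($d_1=-d_3$) already forces $Re\,\tau\in\Q$ --- it never produces the $|d_1\tau+d_2|\in\Q$ alternative. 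So the correct dichotomy comes out of your last paragraph, not the middle one. For the ``if'' direction, the ``routine bookkeeping'' showing $h(\lambda)\tau\in\Lambda_\tau^{\scriptscriptstyle h}$ after scaling is genuine but deserves one line: with $\lambda=d_1\tau+d_2$ one finds $h(\lambda)\tau=m-d_2h(\tau)$ where $d_1m=|\lambda|^2-d_2^2\in\Q$, so $m\in\Q$ and one scales $(d_1,d_2)$ to put $m\in\Z$.
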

\begin{proof} The assertion is clear if $\tau$ is a quadratic irrationality. We may thus assume that $\tau$ is not a quadratic irrationality and that ${\rm{E}}_{\tau}$ and ${\rm{E}}_{-h(\tau)}$ are not CM-curves.
If ${\rm{E}}_{\tau}$ is isogenous to a curve with model over $\R$, then it is isogenous to its complex conjugate ${\rm{E}}_{-h(\tau)}$. As a result, 
there exists a number $\sigma \in \mathbb{H} \cup \R$ such that $\sigma(\Z + \Z\tau)$ is a sublattice of $\Z + \Z h(\tau).$ If $\sigma$ is real, then
$\sigma$ is an integer. In this situation we find as in the proof of Corollary \ref{op22} that $Re\,\tau$ is rational. If $\sigma \in \mathbb{H}$, then $\sigma = d_1h(\tau) + d_2$ with integers $d_1, d_2$ such that $d_1 < 0$. We set $\beta = h(\sigma)$. It follows that 
$$\beta(\Z + \Z\sigma) \subset \Z + \Z h(\sigma)\,\,\mbox{and}\,\,h(\beta)\big(\Z + \Z h(\sigma)\big) \subset \Z + \Z \sigma.$$
Hence $\beta$ represents an isogeny $v: {\rm{E}}_{\sigma} \longrightarrow {\rm{E}}_{-h(\sigma)}$ and $\beta h(\beta)$ represents the endomorphism $v^{\scr h}\circ v$.
Since the curve ${\rm{E}}_{\sigma}$ is isogenous to ${\rm{E}}_{-h(\tau)}$, it has no complex multiplication either. So, $v^{\scr h}\circ v$ is multiplication with an integer and has degree $\big(\beta h(\beta)\big)^{\scr 2}$. Consequently, the isogeny 
$v$ has degree 
$$\sqrt{(\deg\,v)^{\scr 2}} = \sqrt{(\deg\,v)(\deg\,v^{\scr h})} = \sqrt{(\deg\,v \circ v^{\scr h})} = \beta h(\beta) = |\beta|^{\scriptscriptstyle 2} = |\beta|^{\scriptscriptstyle 2 \dim\,{\rm{E}}_{\sigma}}.$$
It follows from Proposition\,\ref{wu11} that $|\beta| = |d_1\tau + d_2|\in \Z$. This shows one direction (if-part).\\
For the converse direction we may suppose that $Re\,\tau$ is irrational for otherwise the claim follows from Corollary \ref{op22}. By assumption there are then integers $d_1 \neq 0$ and $d_2$ such that $|d_1\tau + d_2|$ is rational. Then there exists a further integer $d_3 \neq 0$ such that, letting $\sigma = d_3d_1\tau + d_3d_2 $ and $\beta = h(\sigma)$, we have $\sigma \in \mathbb{H}$,
$\beta(\Z + \Z\sigma) \subset \Z + \Z h(\sigma)$ and $|\beta| \in \Z$. The isogeny between
 ${\rm{E}}_{\sigma}$ and ${\rm{E}}_{-h(\sigma)} = {\rm{E}}_{\sigma}^{\scr h}$ represented by $\beta$ has degree $|\beta|^{\scriptscriptstyle 2} = |\beta|^{\scriptscriptstyle 2 \dim\,{\rm{E}}_{\sigma}}$. It follows from Proposition\,\ref{wu11} that ${\rm{E}}_{\sigma}$ is isogenous to a curve with model over $\R$. And so is ${\rm{E}}_{\tau}$.
 \end{proof}
\begin{remark} \label{op222+} If $\tau$ takes an algebraic value $j(\tau)$, then $\tau$ is either a quadratic irrationality or a transcendental number. In the latter case the numbers $Re\,\tau$ and $|d_1\tau + d_2|$ cannot both be rational unless $d_1 = 0$. For otherwise $Im\,\tau$ is algebraic, and so is $\tau = Re\,\tau + i\cdot Im\,\tau$.
 
\end{remark}

}

\bibliographystyle{amsplain}  

\begin{thebibliography}{99}
\bibitem{Baker} Baker, A., \textit{The theory of linear forms in logarithms} in: Transcendence theory: advances and applications, A. Baker and D.W. Masser (eds.), Academic Press, London New York 1977
\bibitem{Baker3} Baker, A., \textit{Linear forms in the logarithms of algebraic numbers. III}, Mathematika. A Journal of Pure and Applied Mathematics \textbf{14}. 220-228 (1967)
\bibitem{Baker2} Baker, A., \textit{Linear forms in the logarithms of algebraic numbers. II}, Mathematika. A Journal of Pure and Applied Mathematics \textbf{14}, 102-107 (1967)
\bibitem{Baker1} Baker, A., \textit{Linear forms in the logarithms of algebraic numbers. I}, Mathematika. A Journal of Pure and Applied Mathematics \textbf{13}, 204-216 (1966)
\bibitem{BeMa} Bertrand, D., Masser, D.W., \textit{Linear forms in elliptic integrals}, Invent.\,Math.\,\textbf{58}, no.\,3, 283-288 (1980) 
\bibitem{Brown1} Brownawell, W.D., \textit{Sequences of Diophantine approximations}, J. Number Theory \textbf{6}, 11-21 (1974).
\bibitem{Brown} Brownawell, W.D.,\textit{The algebraic independence of certain numbers related by the exponential function}, J. Number Theory \textbf{6}, 22-31 (1974)

\bibitem{CT} Caveny, D.M., Tubbs, R., \textit{Well-approximated points on linear extensions of elliptic curves}, Proc.\,\,Amer.\,\,Math.\,\,Soc.\,\,\textbf{138}, no. 8, 2745-2754 (2010)
\bibitem{SD} Colliot-Th\'el\`ene, J.-L., Skorobogatov, A.N., Swinnerton-Dyer, P., \textit{Double fibres
and double covers: paucity of rational points}, Acta Arith.\,\textbf{79} , no. 2, 113-135 (1997)
\bibitem{Diaz} Diaz, G., \textit{Utilisation de la conjugaison complexe dans l\'etude de la transcendance de valeurs de la fonction exponentielle usuelle},
Journal de th\'eorie des nombres de Bordeaux \textbf{16}, no.\,3 , 535-553 (2004)
\bibitem{FaWu} Faltings, G., W\"ustholz, G., \textit{Einbettungen kommutativer algebraischer Gruppen und einige ihrer Eigenschaften}, J.\,Reine Angew.\,Math.\,\textbf{354}, 175-205 (1984)


\bibitem{Gel1} Gel'fond, A.O. \textit{On the algebraic independence of algebraic powers of algebraic numbers}, Dokl.\,Akad.\,Nauk.\,SSSR \textbf{4}, 14-48 (1949)
\bibitem{Gel} Gel'fond, A.O., \textit{On Hilbert's seventh problem}, Dokl.\,Akad.\,Nauk.\,SSSR \textbf{2}, 623-630 (1943).
\bibitem{Gr} Grothendieck, A., \textit{Fondements de la g\'eom\'etrie alg\'ebrique} (extraits
du S\'eminaire Bourbaki 1957-1962)

\bibitem{Hui1} Huisman, J., \textit{Real abelian varieties with complex multiplication}, doctoral paper, \textit{online:} http://dare.ubvu.vu.nl/bitstream/1871/11053/5/5.pdf
\bibitem{LL} Lang, S., \textit{Report on diophantine approximations}, Bull.\,Soc.\,Math.\,France \textbf{92}, 177-192 (1965).
\bibitem{Lang} Lang, S., \textit{Algebraic values of meromorphic functions II}, Topology \textbf{5}, 363-370 (1966) 
\bibitem{Lee} Lee, D.H., \textit{The structure of complex Lie groups}, Chapman \& Hall/CRC Boca Raton 2002.

\bibitem{Mass} Masser, D.W., \textit{Elliptic functions and transcendence}, Lecture Notes in Mathematics 437, Springer-Verlag Berlin-New York 1975

\bibitem{MW2} Masser, D. W., W\"ustholz, G., \textit{Zero estimates on group varieties I}, Invent.\,Math.\,\textbf{64}, no.\,3, 489-516 (1981)

\bibitem{MW} Masser, D.W.; W\"ustholz, G., \textit{Zero estimates on group varieties II}, Invent.\,Math.\,\textbf{80}, no.\,2, 233-267 (1985)


\bibitem{MW4} Masser, D. W., W\"ustholz, G, \textit{Algebraic independence of values of elliptic functions}, Math.\,Ann.\,\,\textbf{276}, no.\,1, 1-17 (1986)
\bibitem{M} Mazur, B., \textit{The topology of rational points}, Experiment.\,Math.\,\textbf{1}, no.\,1, 35-45 (1992)
\bibitem{M1} Mazur, B., \textit{Speculations about the topology of rational points: an update}, Columbia University Number Theory Seminar (New York, 1992), Ast\'erisque No.\,\textbf{228}, no.\,\,4, 165-182 (1995)
\bibitem{P7} Philippon, P., \textit{Lemmes de z\'eros dans les groupes alg\'ebriques commutatifs}, Bull.\,Soc.\,Math.\,France \textbf{114}, no.\,3, 355-383 (1986)
\bibitem{Diaz1} Roy, D., \textit{Interpolation formulas and auxiliary functions}, J.\,Number Theory \textbf{94}, 248-285 (2002)
\bibitem{Sh} Shavarevi\'c, I.R., \textit{Basic Algebraic Geometry}, Springer Berlin-Heidelberg-New York 1977
\bibitem{Schn1} Schneider, Th., \textit{Ein Satz \"uber ganzwertige Funktionen als Prinzip f\"ur Transzendenzbeweise}, Math.\,Ann.\,\textbf{121}, 131-140 (1949)
\bibitem{Schn} Schneider, Th., \textit{Transzendenzuntersuchungen periodischer Funktionen}, J.\,reine angew.\,Math.\,\textbf{172}, 65-74 (1934)

\bibitem{Serre1} Serre, J.P., \textit{Algebraic groups and class fields}, GTM 117, Springer New York 1988  
\bibitem{Serre2} Serre, J.P., \textit{Morphismes universels et vari\'et\'e d'Albanese}, S\'eminaire
Claude Chevalley, tome \textbf{4}, no.\,{10}, 1-22 (1958/59)
\bibitem{Sil} Silhol, R., \textit{Real algebraic varieties}, LNM 1392, Springer Berlin-Heidelberg-New York 1989 

\bibitem{Tij1} Tijdeman, R., \textit{On the algebraic independence of certain numbers}, Indag.\,Math.\,\textbf{33}, 146-162 (1971)
\bibitem{Tij} Tijdeman, R., \textit{On the number of zeros of general exponential polynomials}, Indag.\,Math.\,\textbf{33}, 1-7 (1971) 
\bibitem{R2} Tubbs, R., \textit{Algebraic groups and small transcendence degree II}, J.\,Number Theory \textbf{35}, 109-127 (1990)
\bibitem{R1} Tubbs, R., \textit{Algebraic groups and small transcendence degree I}, J.\,Number Theory \textbf{25}, 279-307 (1987)


\bibitem{Wal12} Waldschmidt, M., \textit{Elliptic functions and transcendence}, Surveys in Number Theory, 143-188 (2008)

\bibitem{Wal 8} Waldschmidt, M., \textit{Diophantine approximation on linear algebraic groups}, Grundlehren 326, Springer Berlin-Heidelberg-New York 2000 

\bibitem{Walden} Waldschmidt, M., \textit{Densit\'e des points rationnels sur un groupe alg\'ebrique}, Experimental Mathematics, Volume \textbf{3}, 329-352 (1994)
\bibitem{Wal17} Waldschmidt, M., \textit{On the transcendence methods of Gel'fond and Schneider in several variable} in: New advances in transcendence theory, Baker, A. (ed.), Cambridge Univ.\,Press, Cambridge 1988
\bibitem{Wal9} Waldschmidt, M., \textit{Solution du huiti\`eme probl\`eme de Schneider},
J. Number Theory \textbf{5}, 191-202 (1973)
\bibitem{Weil} Weil, A., \textit{The field of definition of a variety}, Am.\,J.\,of\,Math.\,\textbf{78}, 509-524 (1956)
\bibitem{Wussi} W\"ustholz, G., \textit{One century of logarithmic forms. A panorama of number theory or the view from Baker's garden}, Cambridge Univ.\,Press, Cambridge 2002
\bibitem{Wu44} W\"ustholz, G., \textit{Algebraische Punkte auf analytischen Untergruppen algebraischer Gruppen}, Ann.\,of Math.\,(2) \textbf{129}, no.\,3, 501-517 (1989)
\bibitem{Wussi1} W\"ustholz, G., \textit{Multiplicity estimates on group varieties}, Ann.\,of Math.\,\textbf{129}, 471-500 (1989) 
\end{thebibliography}

\small{

}

\end{document}